\newcommand{\etocsettocmargins}[3][0pt]{%
\renewcommand{\etocaftertitlehook}
  {\let\oldpar\par\def\par{%
   \advance\leftskip by -#1\relax
   \advance\leftskip by #2\relax
   \advance\rightskip by #3\relax\oldpar
}}}
\theoremstyle{plain}
\newtheorem{thm}{Theorem}[subsection]
\newtheorem{prop}[thm]{Proposition}
\newtheorem{lem}[thm]{Lemma}
\newtheorem{cor}[thm]{Corollary}
\newtheorem{thmx}{Theorem}
\numberwithin{figure}{subsection}
\numberwithin{equation}{subsection}
\theoremstyle{definition}
\newtheorem{defn}[thm]{Definition}
\newtheorem{rem}[thm]{Remark}
\newcommand*{\rom}[1]{\expandafter\@slowromancap\romannumeral #1@}
\newcommand{\vp}{\varphi}
\newcommand{\ve}{\varepsilon}
\newcommand\op{\mathrm{op}}
\newcommand\BM{\mathrm{BM}}
\newcommand\bilat{\xi}
\newcommand\unilat{\zeta}
\newcommand{\N}{\mathbb{N}}
\newcommand{\R}{\mathbb{R}}
\newcommand{\Z}{\mathbb{Z}}
\newcommand{\calA}{\mathcal{A}}
\newcommand{\calB}{\mathcal{B}}
\newcommand{\calD}{\mathcal{D}}
\newcommand{\calF}{\mathcal{F}}
\newcommand{\calG}{\mathcal{G}}
\newcommand{\calH}{\mathcal{H}}
\newcommand{\calI}{\mathcal{I}}
\newcommand{\calS}{\mathcal{S}}
\newcommand{\bigconst}{N}
\newcommand{\Corona}{\Lambda}
\def\dist{\mathop\mathrm{dist}} 
\def\supp{\mathop\mathrm{supp}} 
\def\diam{\mathop\mathrm{diam}} 
\def\md{\mathop\mathrm{md}} 
\newcommand{\id}{\mathrm{Id}}
\newcommand{\COARSE}{{\theta}}
\newcommand {\cG}{\mathcal {G}}
\newcommand {\cB}{\mathcal {B}}
\newcommand {\cT}{\mathcal {T}}
\newcommand {\bR}{\mathbb {R}}
\newcommand{\away}{\alpha}
\newcommand{\toward}{\beta}
\title[Uniformly rectifiable metric spaces]{Uniformly rectifiable metric spaces: Lipschitz images, Bi-Lateral Weak Geometric Lemma and Corona Decompositions}
\author{David Bate}
\address{David Bate\\
	Mathematics Institute \\
	Zeeman Building \\
	University of Warwick \\
	Coventry, CV4 7AL.}
\email{david.bate@warwick.ac.uk}
\author{Matthew Hyde }
\address{Matthew Hyde\\
	Mathematics Institute \\
	Zeeman Building \\
	University of Warwick \\
	Coventry, CV4 7AL.}
\email{matthew.hyde@warwick.ac.uk}
\author{Raanan Schul}
\address{Raanan Schul\\
	Department of Mathematics \\
	Stony Brook University \\
	NY 11794-3651.}
\email{raanan.schul@stonybrook.edu}
\thanks{D. Bate and M. Hyde are supported by the European Union's Horizon 2020 research and innovation programme (Grant agreement No. 948021). R. Schul is supported by the National Science Foundation under Grant No. DMS-2154613} 
\subjclass[2010]{28A12,28A75,28A78}
\keywords{Uniform Rectifiability, $\beta$-numbers, Corona Decomposition, Geometric Lemma, Reifenberg flat, Carleson packing condition}
\begin{document}

	\maketitle

	\begin{abstract}
		In their 1991 and 1993 foundational monographs, David and Semmes  characterized uniform rectifiability for subsets of Euclidean space in a multitude of geometric and analytic ways. The fundamental geometric conditions can be naturally stated in any metric space and it has long been a question of how these concepts are related in this general setting. In this paper we prove their equivalence. Namely, we show the equivalence of Big Pieces of Lipschitz Images, Bi-lateral Weak Geometric Lemma and Corona Decomposition in any Ahlfors regular metric space. Loosely speaking, this gives a quantitative equivalence between having Lipschitz charts and approximations by nicer spaces. En route, we also study Reifenberg parametrizations. 
		
	\end{abstract}

	\etocsettocmargins{.01\linewidth}{.01\linewidth}
\etocsettocstyle{Contents}{}

\tableofcontents

\section{Introduction}

A set $E \subseteq \R^m$ with $\calH^n(E)<\infty$ is said to be
\textit{$n$-rectifiable} if it can be covered, up to a set of $\calH^n$
measure zero, by countably many Lipschitz images of $\R^n$. Here and
throughout, $\calH^n$ denotes the \textit{$n$-dimensional Hausdorff measure}. Rectifiable sets generalize the notion of $C^1$ manifolds
while still maintaining many of their nice geometric properties (in some
suitable measure theoretic sense).  This notion  goes back to
Besicovitch \cite{besicovitch1928fundamental} and
has many nice properties.
For example, a consequence of
Rademacher's Theorem is that rectifiable sets admit \textit{approximate
tangent planes} at $\calH^n$-a.e. point. Conversely, the existence of
approximate tangent planes at
$\calH^n$-a.e. point implies rectifiability, see \cite{mattila1999geometry}.
An idea that we will return to later is that the notion of tangent may
be weakened to allow the approximating tangent planes at a point to
\textit{rotate} depending on the scale, as was shown by Marstrand
\cite{marstrand1961hausdorff} and Mattila \cite{mattila1975hausdorff}. Since their inception, rectifiable sets have appeared in many areas of
analysis and geometry and unify many different topics \cite{mattila2023rectifiability}.

This article concerns itself with quantitative notions of rectifiability
and approximation by tangent planes. In the late 80s and early 90s there
was particular interest in studying the $L^2$ boundedness of certain
singular integral operators in connection with Vitushkin's conjecture
concerning geometric characterizations of \textit{removable sets} (see
\cite{tolsa2014analytic} for an introduction). In
\cite{calderon1977cauchy}, Calder\'on proved the Cauchy transform is
bounded on $L^2(\Gamma)$ for $\Gamma$ a Lipschitz graph with small
constant. The small constant assumption was later removed by Coifman,
McIntosh and Meyer \cite{coifman1982integrale}. Key to the full solution
of Vitushkin's conjecture was the development of multi-scale techniques
for rectifiability, initiated by Jones. In \cite{jones1990rectifiable}
Jones proves his Travelling Salesman Theorem, which characterizes
rectifiable curves in the plane via $\beta$-numbers, a coefficient
measuring how well a set can be approximated by a line, at each scale
and location. Okikiolu \cite{okikiolu1992characterization} later
extended this characterization to rectifiable curves in $\R^n.$ Building
on the work of Jones, David and Semmes
\cite{david1991singular,david1993analysis} developed a rich theory of
quantitative rectifiability for higher dimensional subsets of Euclidean
space, formulating a framework within which to consider
$L^2$-boundedness of $n$-dimensional singular integral operators. It was
in these works that they introduced the notion of uniform rectifiability.

\begin{defn}\label{d:UR-intro}
A set $E \subseteq \R^m$ is said to be \textit{uniformly
$n$-rectifiable} if it is \textit{Ahlfors $n$-regular} i.e. there exists
$C_0 \geq 1$ such that
\begin{align}
C_0^{-1}r^n \leq \calH^n(E \cap B(x,r)) \leq C_0 r^n
\end{align}
for all $x \in E$ and $0 < r < \diam(E)$, and has \textit{Big Pieces of
Lipschitz images (BPLI) of $\R^n$} i.e. there exist constant $\theta, L
\geq 0$ such that for each $x \in E$ and $0 < r < \diam(E)$ there is an
$L$-Lipschitz map $f \colon \R^n \to \R^m$ such that
\begin{align}
\calH^n(E \cap B(x,r) \cap f(B_n(0,r))) \geq \theta r^n,
\end{align}
where $B_n(0,r)$ denotes the ball in $\R^n$ centred at the origin of
radius $r.$
\end{defn}

In \cite{david1993analysis}, David and Semmes proved many
characterizations of uniformly rectifiable sets. To state a characterization in terms of tangents, we need some
notation. For a set $E \subseteq \R^m$, a point $x \in E$ and a radius
$0 < r < \diam(E),$ define
\begin{align}\label{e:bbeta}
b\beta_E(x,r) = \frac{1}{r}\inf_L \left( \sup_{y \in E \cap B(x,r)}
\dist(y,L) + \sup_{y \in L \cap B(x,r)} \dist(y,E)  \right),
\end{align}
where the infimum is taken over all $n$-dimensional affine subspaces $L$
in $\R^m.$ Thus, $b\beta_E$ measures how well $E$ can be
approximated by an $n$-dimensional affine subspace in a ball, with respect to
Hausdorff distance; how ``flat'' the set is. We will use it together with the following condition which quantifies an \textit{event} occurring at \textit{most} scales $r > 0$ and locations $x \in E.$

\begin{defn}\label{d:Carleson-set-intro}
Let $E \subseteq \R^{m}$ be Ahlfors $n$-regular. A non-negative Borel
measure $\mu$ on $E \times (0,\diam(E))$ is called a \textit{Carleson
measure} if there exists $C > 0$ such that $\mu(B(x,r) \times (0,r))
\leq C r^n$ for all $x \in E$ and $0 < r < \diam(E).$ The smallest such
$C$ is called the \textit{Carleson norm} of $\mu$. A set $\mathscr{A} \subseteq E
\times (0,\diam(E)$ is a \textit{Carleson set} if
$\mathds{1}_\mathscr{A}d\calH^n(x)\tfrac{dr}{r}$ is a Carleson measure.
\end{defn}

Note, if $\mathscr{A} = E \times (0,\diam(E)),$ then for $\mu =
\mathds{1}_\mathscr{A}d\calH^n\tfrac{dr}{r}$ we have $\mu(B(x,r) \times (0,r)) =
\infty$ for all $x$ and $r.$ Thus, roughly speaking, $\mathscr{A}$ is a Carleson
set if set of scales $r$ and locations $x$ such that $(x,r) \in \mathscr{A}$ is
small.

\begin{defn}\label{d:BWGL-intro}
Let $E \subseteq \R^m$ be Ahlfors $n$-regular. We say $E$ satisfies the
\textit{Bi-lateral Weak Geometric Lemma} (BWGL) if for each $\ve > 0$
the set $\{(x,r) \in E \times (0,\diam(E)) : b\beta_E(x,r) > \ve\}$ is a
Carleson set.
\end{defn}

The BWGL condition is a quantitative version of having a \textit{rotating tangent} i.e. a tangent that can depend on the scale. Marstrand
\cite{marstrand1961hausdorff} and Mattila \cite{mattila1975hausdorff}
showed that the existence of a rotating tangent plane at
$\calH^n$-a.e. point (together with a mild dimensional assumption)
ensures rectifiability. The following result of David and Semmes characterizes uniform rectifiability in terms of the BWGL.


\begin{thm}\label{t:BWGL-UR-E}
Let $E \subseteq \R^m$ be Ahlfors $n$-regular. Then $E$ is uniformly
$n$-rectifiable if and only if it satisfies the \emph{BWGL}.
\end{thm}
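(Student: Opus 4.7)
The plan is to prove the two implications separately, with rather different ingredients for each.

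\textit{Forward direction (UR $\Rightarrow$ BWGL):} The strategy is to first show that a Lipschitz image $f(\R^n)$ of $\R^n$ is itself ``bi-laterally flat on most scales,'' meaning the set where $b\beta_{f(\R^n)} > \ve$ is Carleson with constants depending only on $\mathrm{Lip}(f)$. This step rests on Rademacher's theorem: at $\mathcal{L}^n$-a.e.\ $u \in \R^n$ the map $f$ is Fr\'echet differentiable, so on small balls $B_n(u,t)$ the image $f(B_n(u,t))$ lies in a thin neighbourhood of the affine tangent plane; one then quantifies this with a Dorronsoro-type $L^2$ estimate, using the area formula to control $\calH^n(f(B_n(u,t)))$. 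To transfer BWGL from $f(\R^n)$ to $E$, I would run a ``good-$\lambda$'' argument: given $(x,r)$ with $b\beta_E(x,r) > \ve$, apply BPLI to produce an $L$-Lipschitz map $f$ with $\calH^n(E \cap B(x,r) \cap f(B_n(0,r))) \geq \theta r^n$ and note that $b\beta_E(x,r) \lesssim b\beta_{f(\R^n)}(x,r) + \mathrm{error}$, where the error measures the part of $E \cap B(x,r)$ missed by $f(B_n(0,r))$. The $\theta r^n$ lower bound forces this error to live in a Carleson set, and summing over scales gives BWGL for $E$.

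\textit{Reverse direction (BWGL $\Rightarrow$ UR):} This is the harder implication and proceeds via a corona decomposition. Fix a small $\ve$ and construct a stopping-time decomposition of $E \times (0, \diam E)$ by following, from a top ball $B(x_0, r_0)$, all sub-balls on which $b\beta_E < \ve$ and stopping at the first scale where $b\beta_E \geq \ve$. BWGL yields a Carleson packing bound on the stopped balls, hence the union of ``coherent'' stopping-time regions exhausts a positive fraction of the top ball. On a single coherent region $E$ is approximated on every scale by an affine $n$-plane whose orientation rotates slowly between consecutive scales, and one applies a Reifenberg-type parametrization to build a bi-Lipschitz map $g$ from a subset of $\R^n$ onto a set $Z$ that closely approximates $E$ in that region. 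The image of $g$ is then the Lipschitz chart required by BPLI at $(x_0,r_0)$, with constants independent of $(x_0,r_0)$ by scale-invariance.

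\textit{Main obstacle:} The crux is the Reifenberg-type parametrization used in the reverse direction. Classical Reifenberg theorems require $b\beta$ to be bounded by some universal tiny constant on \emph{every} scale, whereas BWGL only gives $b\beta_E < \ve$ on a Carleson-large set of scales, with $\ve$ merely small but not universally small. The construction therefore has to be combined with careful bookkeeping of plane rotations between consecutive stopping times, and one must argue quantitatively that the parametrized set $Z$ intersects $E$ in a set of $\calH^n$-measure at least $c r_0^n$, not merely that $Z$ is close to $E$ in Hausdorff distance. This interplay between the stopping-time geometry and the Reifenberg construction, together with the measure-theoretic lower bound, is where I expect the real work to lie.
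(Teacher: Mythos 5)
Your overall skeleton (stability of flatness under big pieces plus a Dorronsoro-type estimate for the forward direction; stopping times plus a Reifenberg-type parametrization for the reverse) is indeed the architecture this circle of ideas uses, but two of your concrete steps are wrong as stated. In the forward direction, the claim that a Lipschitz image $f(\R^n)$ satisfies the BWGL with constants depending only on $\mathrm{Lip}(f)$ is false: take $f$ affine of rank $k<n$, so $f(\R^n)$ is a $k$-plane; then $b\beta_{f(\R^n)}(x,r)$ is bounded below by a universal constant at \emph{every} scale, because no $n$-plane is bilaterally close to a $k$-plane, and the alleged Carleson set is everything. What one actually needs is big pieces of \emph{bi-Lipschitz} images (UR $\Rightarrow$ BPBI, itself a nontrivial theorem), both because bi-Lipschitz images are bilaterally flat at most scales and because they are Ahlfors regular. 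Ahlfors regularity of the pieces is also what the transfer argument really uses: the correct pointwise inequality controls $b\beta_E(x,r)$ by $b\beta_\Sigma(x,3r)$ plus the two one-sided distances $I_{E,\Sigma}$ and $I_{\Sigma,E}$, and it is the Ahlfors regularity of both sets that makes $\{I>\ve\}$ Carleson; the conclusion then comes from a counting lemma of David--Semmes type (integrate the number of bad cubes over $E\cap\Sigma$ in each cube and use the $\theta r^n$ lower bound), not from a pointwise ``good-$\lambda$'' bound involving a single ``missed mass'' error.

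In the reverse direction there are two genuine gaps. First, flatness at all scales of a coherent stopping-time region only gives, via Reifenberg, a bi-H\"older parametrization; to extract \emph{bi-Lipschitz} pieces one needs additional quantitative control (approximation of the coordinate maps by affine maps or Lipschitz graphs at most scales, i.e.\ a corona-type structure), not merely slowly rotating planes. Second, and more seriously, the plan to build a surface $Z$ close to $E$ in Hausdorff distance inside the region and then conclude $\calH^n(Z\cap E)\gtrsim r_0^n$ cannot work: Hausdorff closeness gives no lower bound whatsoever on the measure of the intersection, which can even be empty. The argument that actually closes this (David--Semmes, and Section 9 of this paper in the metric setting) constructs the chart as a bi-Lipschitz map defined on a large subset $F\subseteq E\cap B(x_0,r_0)$ of the set itself, obtained by gluing the coordinate maps of \emph{many} stopping-time regions after deleting points that lie in too many transition cubes or too close to cube boundaries, with the Carleson packing condition controlling the deleted measure; note also that a single coherent region need not carry a definite fraction of the measure of the top ball, so the ``one region, one chart'' scheme fails even before the measure issue. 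Flagging the intersection problem as the main obstacle is correct, but the fix is not finer bookkeeping of plane rotations; it is this structurally different gluing-and-counting argument.
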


With the recent interest in studying analysis on metric spaces,
questions regarding the rectifiability of such spaces naturally arise.
The definition of rectifiability in metric spaces differs slightly from
the Euclidean one above. A metric space $X$ is \textit{$n$-rectifiable} if it
can be covered, up to a set of $\calH^n$ measure zero, by countably many
Lipschitz images of \textit{subsets} of $\R^n.$ One of the first to
consider rectifiability in general metric spaces was Kirchheim
\cite{kirchheim1994rectifiable}, who proved that Lipschitz functions
$f\colon A\subset \R^n \to X$ are differentiable (in a suitable metric
sense) almost everywhere, and consequently derived a local structure
theory of rectifiable metric spaces. Using Kirchheim's metric Rademacher
theorem, it is possible to define a notion of an approximate tangent
spaces in terms of local approximation, in terms of the
Gromov--Hausdorff distance, to an $n$-dimensional Banach space. The
Gromov--Hausdorff distance is an \emph{intrinsic} property of the metric
space and hence these tangent spaces do not require additional structure
of the ambient metric space. Recently the first named author proved the
converse direction, that is, the existence of Gromov--Hausdorff
approximate tangent spaces implies rectifiability
\cite{bate2022characterising}. Since the Gromov--Hausdorff distance is
an isometric invariant of metric spaces, this result generalizes those
of Marstrand and Mattila.

Correct notions of quantitative rectifiability in metric spaces have
been sought for some time. With the notable exception of G. C. David's \cite{david2016bi}, for arbitrary metric spaces the story so far
has been entirely focussed on 1-dimensional spaces. The papers
\cite{hahlomaa2005menger,hahlomaa2008menger,schul2007ahlfors,david2020sharp} prove results analogous to the Travelling Salesman Theorem of Jones for general metric spaces. In \cite{hahlomaa2008menger, schul2007ahlfors}
the authors must assume Ahlfors regularity of the spaces while in
\cite{hahlomaa2005menger, david2020sharp} no such assumption is needed.
Instead of measuring flatness via local approximations by lines,
flatness is measured using Menger curvature. See also \cite{lerman2009high,lerman2011high} for higher dimensional sets in Hilbert space. 

For specific metric space or by assuming to ambient structure, more
progress has been made. In the Heisenberg group and for more general
Carnot groups, Travelling Salesman type theorems have been proven in
\cite{chousionis2019traveling,ferrari2007geometric,juillet2010counterexample,li2016traveling,li2016upper} with a complete solution given by S. Li in \cite{li2019stratified}. In these spaces a version of uniform rectifiability defined in terms of
big pieces of \textit{intrinsic Lipschitz graphs} seems to yield
positive results, see for example
\cite{chousionis2019intrinsic,chousionis2022strong,chousionis2023strong,fassler2020semmes,rigot2019quantitative}.
The correct notion of uniform rectifiability in parabolic space was
introduced in \cite{hofmann2003existence,hofmann2004caloric}. See
\cite{bortz2023corona} and the reference therein for results in this
direction. A Travelling Salesman Theorem for subsets of Hilbert space
was proven in \cite{schul2007subsets}, an analogue for higher
dimensional subsets can be found in \cite{hyde2022d}. A combination of
the papers \cite{badger2020subsets,badger2022subsets} provides a
Travelling Salesman Theorem for subsets of Banach spaces. See also
\cite{edelen2018effective}, who prove higher dimensional analogues for
measures in Hilbert and Banach spaces.

\bigskip

\subsection{New results}

In this paper we prove a version of Theorem \ref{t:BWGL-UR-E} for general metric spaces (Theorem \ref{t:UR-BWGL}). As in the case of going from Euclidean rectifiability to rectifiability in metric spaces, we must slightly modify the definition of UR and we must change the way in which we measure local flatness. To go from Euclidean UR to UR in metric spaces we again look at Lipschitz images of \textit{subsets} of $\R^n$ (see Definition \ref{d:UR-intro-metric}). A significant difference is that our definition of BWGL uses the Gromov-Hausdorff distance to measure closeness to an $n$-dimensional Banach space. That is, one replaces $b\beta$ in Definition \ref{d:BWGL-intro} with $\bilat_X$, a suitable Gromov-Hausdorff version of $b\beta$. See Definition \ref{d:bilat} for the definition of $\bilat_X$ and Definition \ref{d:BWGL} for the explicit definition of BWGL in metric spaces. This intrinsic notion of flatness allows us to rid ourselves of requiring ambient structure in the metric space. 

\begin{thmx}\label{t:UR-BWGL}
	Let $X$ be an Ahlfors $n$-regular metric space. Then $X$ is \emph{UR} if and only if $X$ satisfies the \emph{BWGL}. 
\end{thmx}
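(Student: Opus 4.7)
The plan is to follow the tripartite strategy of David--Semmes and obtain Theorem~\ref{t:UR-BWGL} by routing through an intermediate notion, a Corona Decomposition adapted to the metric setting. That is, I would formulate a metric Corona Decomposition with respect to the Gromov--Hausdorff flatness coefficient $\bilat_X$ (stopping-time regions over a system of dyadic ``cubes'' on $X$, each region admitting a coherent family of Banach-space approximants whose normals rotate slowly), and then prove the chain of implications
\begin{equation}
\text{UR} \;\Longrightarrow\; \text{Corona Decomposition} \;\Longrightarrow\; \text{BWGL} \;\Longrightarrow\; \text{UR}.
\end{equation}
The middle implication is the softest: within a stopping-time region the Banach approximations stay close to each other and to $X$ at every scale, so $\bilat_X(x,r)$ is controlled except on a Carleson set built from region boundaries, yielding BWGL by a standard packing estimate.

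For \textbf{UR $\Rightarrow$ Corona}, I would first use Ahlfors regularity to install Christ/David-type dyadic cubes on $X$, and then exploit the BPLI Lipschitz charts. On the big-piece image of a Lipschitz map $f\colon K \subset \R^n \to X$, one transfers the Euclidean Corona machinery using Kirchheim's metric differentiability (which produces intrinsic Banach-space derivatives) to build stopping-time regions on the image, then uses a standard ``good-$\lambda$'' iteration over all scales/locations to cover a Carleson-large portion of $X \times (0,\diam X)$ by such regions. The main technical care here is that the target of $f$ is a metric space, so the Banach approximants come from the tangent norms provided by metric differentiability rather than from ambient linear subspaces.

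The main obstacle — and the place where the Reifenberg parametrizations promised in the abstract must be developed — is \textbf{BWGL $\Rightarrow$ UR}. The analogue of David--Semmes' Euclidean argument would proceed as follows. Given BWGL one first extracts, for any $x \in X$ and $r < \diam X$, a stopping-time region $S \subset X \times (0,r)$ on which $\bilat_X < \ve$; one must then parametrize the ``good'' part of $X$ inside $S$ by a Lipschitz image of (a subset of) a fixed $n$-dimensional Banach space. In Euclidean space this is done by Reifenberg-style iteration: starting from a single approximating plane, one perturbs at each dyadic scale by the map prescribed by the next-scale approximating plane. In a general metric space there is no ambient vector space in which to perform the perturbation, so the construction must be carried out \emph{purely through Gromov--Hausdorff approximations}: at each scale and each cube of the stopping-time region, one must select $\ve$-isometries from balls in $X$ into balls in the approximating Banach space, and glue them via partitions of unity along overlaps of neighbouring cubes, verifying Cauchy-in-Lipschitz-norm estimates using the slow-rotation hypothesis provided by smallness of $\bilat_X$. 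A careful bookkeeping — controlling how the chosen Banach space changes from scale to scale, and how distortion accumulates when passing through the bridging $\ve$-isometries — produces a Lipschitz image of a subset of a Banach space covering a definite portion of $X \cap B(x,r)$. Passing from a Banach-space domain to an $\R^n$-domain is then a separate but standard step (Lipschitz quotient/projection arguments), after which BPLI follows.

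Finally, I would organize the paper so that the technical heart is isolated in a Reifenberg-parametrization theorem for stopping-time regions in arbitrary metric spaces; this is the step I expect to be by far the most delicate, since every estimate available ``for free'' from the linear structure of $\R^m$ must be replaced by an $\ve$-isometry argument between nearby (possibly non-isometric) Banach targets. Once that theorem is in hand, the three implications close the circle and Theorem~\ref{t:UR-BWGL} follows.
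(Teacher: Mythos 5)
Your architecture is the right one in spirit (the paper indeed closes a circle UR $\Rightarrow$ quantitative differentiation/BWGL $\Rightarrow$ corona decompositions $\Rightarrow$ UR, with the metric Reifenberg parametrization as the technical heart), but the step you describe as producing BPLI is where the argument genuinely breaks. A Reifenberg-style iteration driven only by smallness of $\bilat_X$ cannot ``produce a Lipschitz image of a subset of a Banach space covering a definite portion of $X\cap B(x,r)$'': at each scale the approximating norm and the gluing maps may rotate/distort by a factor $1+C\ve$, and these factors compound over scales, so the natural output of the construction is a metric on $\R^n$ that is only \emph{bi-H\"older} (exponent $1-C\ve$) equivalent to a norm, together with multi-scale GH-approximations between that model and the stopping-time region --- exactly as in the classical Reifenberg theorem, flatness alone never yields Lipschitz charts (snowflake-like sets satisfy BWGL-type smallness at the top scales of a region without carrying big Lipschitz pieces of full dimension unless one also uses the measure). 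To convert the bi-H\"older Reifenberg model into uniform rectifiability one needs an additional mechanism: in the paper this is done by showing the Ahlfors regular Reifenberg-flat model spaces are themselves UR via a lower-regularity/degree argument plus Guy C.~David's theorem on bi-Lipschitz decomposition of Lipschitz maps on LLC manifolds, then extracting from them corona decompositions by normed spaces, proving that a corona decomposition of spaces with corona decompositions is again one (CD$^2\Rightarrow$CD), and finally running the David--Semmes ``corona $\Rightarrow$ big bi-Lipschitz pieces'' construction. None of these ingredients appears in your outline, and without them the implication BWGL $\Rightarrow$ UR is not established. A related, smaller issue: the gluing you propose ``via partitions of unity along overlaps of neighbouring cubes'' cannot be performed in $X$ itself (there is no linear structure to average in); the construction has to be carried out on the tangent side, by building smooth manifolds from the nearly-compatible transition maps between Banach charts and only then comparing to $X$ through GH-approximations --- this is precisely why the coherence/cyclic-compatibility corrections of the transition maps are needed.

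On the forward direction, quoting Kirchheim's metric Rademacher theorem is not enough: it is a qualitative a.e.\ statement and does not by itself give the Carleson-type control needed to build stopping-time regions with uniform constants. What is actually required (and what the paper uses) is a \emph{quantitative} differentiation statement --- Dorronsoro's theorem for affine approximation together with the Azzam--Schul quantitative metric differentiation for Lipschitz maps out of $\R^n$ --- combined with a big-pieces stability argument, since BPLI charts only cover a definite fraction of each ball and one must transfer Carleson estimates from the charts to all of $X$. Your sketch would need to be reworked along these lines before the implication UR $\Rightarrow$ (corona or BWGL) is complete.
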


One could be tempted to prove Theorem \ref{t:UR-BWGL} by quantifying the techniques from \cite{bate2022characterising}. Indeed, this was our first attempt. In short, this fails because the Carleson condition in BWGL is not
strong enough to construct the H\"older mapping in
\cite{bate2022characterising} (we discuss this a little more in Section
\ref{s:Reif-intro} below). An alternate approach to prove Theorem \ref{t:UR-BWGL} would be to translate the classical proof of Theorem \ref{t:BWGL-UR-E} to metric spaces. However, the known proofs rely significantly on either singular integral techniques, Lipschitz graphs or the ambient Euclidean structure.

In pursuit of Theorem \ref{t:UR-BWGL}, we pass via two more conditions inspired by Euclidean uniform rectifiability. The first of these is related to the \textit{Corona Decomposition}, introduced by Semmes and David-Semmes for subsets of Euclidean space in \cite{semmes1990analysis,david1991singular}. We introduce the \textit{Corona Decomposition by normed spaces} which is a ``Gromov-Hausdorff" version of the Euclidean corona decomposition that can be defined for any metric space.
As in the Euclidean case, this serves as a key bridge between UR and the BWGL.  We also rely on a \textit{Corona Decomposition by Ahlfors regular Reifenberg flat spaces} and notions of quantitative differentiation of Lipschitz functions on metric spaces.  Theorem \ref{t:equiv} connects all these notions and others (which we define momentarily).

\begin{thmx}\label{t:equiv}
	Let $X$ be an Ahlfors $n$-regular metric space. Then, the following conditions are equivalent: 
	\begin{enumerate}
		\item $X$ is UR. 
		\item $X$ satisfies a Carlson condition for the $\gamma$-coefficients.
		\item $X$ satisfies the \emph{BWGL}.
		\item $X$ admits a corona decomposition by Ahlfors regular Reifenberg flat metrics on $\R^n.$ 
		\item $X$ admits a corona decomposition by normed spaces.  
	\end{enumerate}
\end{thmx}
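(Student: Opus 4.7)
The plan is to prove the equivalence by a cycle $(1) \Rightarrow (5) \Rightarrow (4) \Rightarrow (3) \Rightarrow (2) \Rightarrow (1)$, adapting the David--Semmes strategy while circumventing tools unavailable in the metric setting: singular integrals, Lipschitz graphs, and ambient Euclidean structure. The cycle is chosen so that each of the first four steps reduces to a stopping time/Carleson packing argument, isolating the genuinely difficult construction of Lipschitz parametrizations in the final implication.

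For $(1) \Rightarrow (5)$, the BPLI property furnishes, at each $x \in X$ and $0 < r < \diam(X)$, an $L$-Lipschitz map $f \colon B_n(0,r) \to X$ covering a fixed fraction of $X \cap B(x,r)$. Kirchheim's metric Rademacher theorem produces approximate tangent Banach spaces at almost every point of the domain of $f$, and a Semmes-style stopping time argument packs the scales on which this approximation fails into a Carleson set, yielding the corona decomposition by normed spaces. For $(5) \Rightarrow (4)$, on each coherent region where $X$ is Gromov--Hausdorff close to a fixed $n$-dimensional normed space $V$, one applies John's theorem to compare $V$ with the Euclidean norm and then runs a Reifenberg-type iteration, using the paper's Reifenberg parametrization results, to replace $V$ by an Ahlfors regular Reifenberg-flat metric on $\R^n$ approximating $X$ throughout the region while inheriting the Carleson packing. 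For $(4) \Rightarrow (3)$, any Ahlfors regular Reifenberg-flat metric on $\R^n$ is by definition uniformly Gromov--Hausdorff close to an $n$-dimensional Banach space at every scale, so $\bilat_X$ is small outside the stopping time Carleson set, giving BWGL. For $(3) \Rightarrow (2)$, the $\gamma$-coefficient is dominated pointwise by $\bilat_X$ plus a geometric error term that is itself readily seen to be Carleson.

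The main obstacle is $(2) \Rightarrow (1)$: one must construct large Lipschitz images from a purely scale-wise Carleson condition on the $\gamma$-coefficient, with no ambient target in which to interpolate. The plan is an intrinsic Reifenberg-type construction. Use the $\gamma$-Carleson condition to identify, at most scales and locations, a close approximation of $X \cap B(x,r)$ by a fixed $n$-dimensional Banach space; then glue these local approximations via a David--Toro style iterative parametrization adapted to the metric setting. The key technical point is that the Carleson packing supplies the $\ell^1$ summability of successive perturbations required to obtain bi-Lipschitz control on a definite fraction of $X \cap B(x,r)$. Carrying out this iteration without a fixed Euclidean target, working only with a scale-by-scale sequence of Gromov--Hausdorff approximations, is the crux, and is precisely where the Reifenberg parametrization theorems promised in the abstract are indispensable; in particular, these are the new techniques that replace the Carleson-driven constructions of \cite{bate2022characterising} whose analytic strength is insufficient here.
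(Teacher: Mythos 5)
Your cycle runs in the opposite direction from the paper's ($(1)\Rightarrow(2)\Rightarrow(3)\Rightarrow(4)\Rightarrow(5)\Rightarrow(1)$), and the reversal creates steps that are not merely harder but, as sketched, incorrect. The clearest problem is $(3)\Rightarrow(2)$: there is no pointwise domination of $\gamma$ by $\bilat_X$ plus a ``readily Carleson'' error --- the inequality goes the other way, $\gamma_{X,f}^K \geq \bilat_X$, which is why $(2)\Rightarrow(3)$ is the trivial implication. The coefficient $\gamma$ quantifies affine approximation of an \emph{arbitrary} $1$-Lipschitz function $f\colon X\to\R^n$, so a Carleson condition for $\gamma$ is a quantitative differentiation statement (a Dorronsoro-type theorem on $X$); in the paper it is obtained from UR via stability under big pieces, Schul's UR$\Rightarrow$BPBI, Dorronsoro's theorem and Azzam--Schul metric differentiation, and deducing it from BWGL alone requires traversing the entire cycle. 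A related gap occurs in $(4)\Rightarrow(3)$: the corona approximation of Definition \ref{d:corona-Y} is only a coarse bi-Lipschitz comparison above scale $\theta\ell(Q)$ with \emph{fixed, possibly large} constants $\Corona_1,\Corona_2$ (only $\theta$ improves, not the distortion), and a $\Corona$-bi-Lipschitz image of a Reifenberg flat space need not have small $\bilat_X$; so flatness of the approximating spaces does not transfer to BWGL($\ve$) for small $\ve$ the way you assert.

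The two remaining implications also have gaps. In $(1)\Rightarrow(5)$, Kirchheim's metric Rademacher theorem is a qualitative a.e.\ statement and cannot be ``packed'' into a Carleson condition by a stopping time; quantitative differentiation is exactly what cannot be derived that way, and even granting the $\gamma$-Carleson condition, producing coherent stopping-time regions with a single normed target and a Carleson packing of top cubes requires the big-projection/small-image-of-minimal-cubes argument (cf.\ Proposition \ref{p:UR+BP-corona}) together with the corona-of-corona step, none of which your sketch supplies. In $(2)\Rightarrow(1)$, the claim that the Carleson packing gives $\ell^1$ summability of the Reifenberg perturbations at a definite fraction of points is precisely the approach the paper explains fails: a Carleson set can still contain long runs of consecutive bad scales at every location, so no direct David--Toro style iteration runs. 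The workable route is to stop at bad scales, apply the Reifenberg parametrization only above the stopping scale (Theorem \ref{t:Reif}), which yields bi-H\"older, not bi-Lipschitz, charts, and then recover big bi-Lipschitz pieces via Guy C.\ David's decomposition theorem and the David--Semmes CD$\Rightarrow$BPBI construction of Sections \ref{s:corona-corona}--\ref{s:corona-UR}; your proposal does not account for the bi-H\"older loss or for how large Lipschitz images are ultimately assembled.
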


Crucial to the proof of Theorem \ref{t:equiv} is the analysis of Reifenberg flat metric spaces, which we return to below, see Theorem \ref{t:Reif-intro}.

\subsection{Uniformly $n$-rectifiable (UR)}

A metric variant of Definition \ref{d:UR-intro} is the following.
\begin{defn}\label{d:UR-intro-metric}
	A metric space $X$  is said to be \textit{uniformly $n$-rectifiable} (UR) if it is \textit{Ahlfors $n$-regular} (see Definition \ref{d:ADR}) and has \textit{Big Pieces of Lipschitz Images (BPLI) of $\R^n$} i.e. there exist constant $\theta, L \geq 0$ such that for each $x \in X$ and $0 < r < \diam(E)$ there is a set $F\subset B_{n}(0,r)$ and  an $L$-Lipschitz map $f \colon F \to X$ such that 
	\begin{align}
		\calH^n(X \cap B(x,r) \cap f(F)) \geq \theta r^n,
	\end{align}
	where $B_n(0,r)$ denotes the ball in $\R^n$ centered at the origin of radius $r.$ 
\end{defn}

\begin{defn}\label{d:BPBI} 
	A  metric space $X$  is said to have  \textit{Big Pieces of Bi-Lipschitz Images (BPBI) of $\R^n$}
	if it is \textit{Ahlfors $n$-regular} there exist constant $\theta, L \geq 0$ such that for each $x \in X$ and $0 < r < \diam(E)$ there is a set $F\subset B_{n}(0,r)$ and  an $L$-bi-Lipschitz map $f \colon F \to X$ such that 
	\begin{align}
		\calH^n(X \cap B(x,r) \cap f(F)) \geq \theta r^n,
	\end{align}
	where $B_n(0,r)$ denotes the ball in $\R^n$ centered at the origin of radius $r.$ 
\end{defn}

	The above list of equivalences can be further extended using \cite{schul2009bi}. Specifically, in \cite[Corollary 1.2]{schul2009bi} it is shown that UR is equivalent to having BPBI. It follows from the results in Section \ref{s:stability} that BPBI is equivalent to having $\text{BP}^2$(BI), that is, sets which have big of sets which have BPBI (see \cite{krandel2022iterating} for a precise definition of $\text{BP}^j$ and related results). Indeed, if $X$ has $\text{BP}^2$(BI) then it has a Carleson condition for the $\gamma$-coefficients (Proposition \ref{l:stability}) and is therefore UR by Theorem \ref{t:equiv}. Thus, we can add the following to the list in Theorem \ref{t:equiv}.
	\begin{enumerate}[font=\itshape]
		\setcounter{enumi}{5}
		\item $X$ has $\text{BP}^j$(BI) for some $j \geq 1.$ 
		\item $X$ has $\text{BP}^j$(BI) for all $j \geq 1.$ 
	\end{enumerate}

\subsection{The $\gamma$-coefficients and the BWGL}
Let $X$ be an Ahlfors $n$-regular metric space. For $\ve > 0$ we say $X$ satisfies the \textit{Bi-lateral Weak Geometric Lemma with parameter $\ve$} (BWGL$(\ve)$) if the set $\{(x,r) \in E \times (0,\diam(E)) : \bilat_X(x,r) > \ve\}$ is a Carleson set (see Definition \ref{d:Carleson-set}, the metric analogue of Definition \ref{d:Carleson-set-intro}). The coefficient $\bilat_X$ is a suitable Gromov-Hausdorff version of $b\beta$ (see Definition \ref{d:bilat}, the metric analogue of \eqref{e:bbeta}). In particular, $\bilat_X(x,r)$ measures how close (in Gromov-Hausdorff distance) the ball $B(x,r) \subseteq X$ is to a ball of the same radius in a $n$-dimensional Banach space. We emphasize that the Banach space, in particular the norm, can change depending on both $r$ and $x$. We say $X$ satisfies the \textit{Bi-lateral Weak Geometric Lemma} (BWGL) if for each $\ve > 0$ it satisfies the BWGL$(\ve)$. In fact, in both Theorem \ref{t:UR-BWGL} and Theorem \ref{t:equiv}, the proof shows that if $\ve$ is sufficiently small then the condition BWGL may be replaced by the condition BWGL$(\ve)$. Explicitly, one may add to Theorem \ref{t:equiv} the following. 
\begin{enumerate}[font=\itshape]
	\setcounter{enumi}{7}
	\item $X$ satisfies the BWGL$(\ve)$ for sufficiently small $\ve > 0.$ 
\end{enumerate}

 In order to quantify Rademacher's Theorem, it is natural to approximate a Lipschitz map $f \colon \R^n \to \R^n$ on a ball $B(x,r)$ by an affine function $A \colon \R^n \to \R^n,$ see \cite{dorronsoro1985characterization}. David and Semmes generalized this quantitative differentiation to $f \colon E \to \R^n$ where $E$ is an Ahlfors $n$-regular set in $\R^m$. This generalization was called the WALA (Weak Approximation of Lipschitz functions by Affine functions) condition, see \cite[Definition I.2.47]{david1993analysis}, and is stated in terms of Carleson sets.

For a Lipschitz $f \colon X \to (\R^n,\|\cdot\|),$ we introduce the coefficient $\gamma_{X,f,\|\cdot\|}^K(x,r)$ which in addition to measuring how bi-laterally flat $B_X(x,r)$ is (in the sense of $\bilat_X(x,r)$), also measures how well $f$ can be approximated by a $K$-Lipschitz affine function on the best approximating tangent space to $B_X(x,r)$ (Definition \ref{d:gamma}). A Carleson condition for the $\gamma$-coefficients is a type of quantitative differentiation for arbitrary metric spaces, see Definition \ref{d:carleson-gamma}. We will be more precise about what we mean by this in Section \ref{s:affine-approx}.

At first glance the Carleson condition for the $\gamma$-coefficients is stronger than the BWGL. Indeed, for every ball $\gamma_{X,f}^K \geq \bilat_X$. Theorem \ref{t:equiv} says that the Carleson conditions for these respective quantities are equivalent. 

An $L^p$ version of the $\gamma$ coefficients has been defined recently in \cite{azzam2023quantitative} where they prove quantitative differentiability estimates for Sobolev functions on UR subsets of $\R^n.$ 

\subsection{Corona Decompositions}

The corona decomposition for subsets of Euclidean space was introduced in \cite{semmes1990analysis,david1991singular}. Very roughly speaking, for a set $E \subseteq \R^n$ this means that there is a subset of bad pairs $(x,r) \in E \cap B(z,R) \times (0,R)$ satisfying a Carleson condition such that if we remove the corresponding balls from $E \cap B(z,R)$, then the remainder can be well-approximated by a Lipschitz graph. Here, well-approximated at a point $x$ means approximation up to the smallest $r$ such that $B(x,r)$ is disjoint from a fixed dilation of balls in the Carleson set. We make use of two analogous definitions using the Gromov-Hausdorff distance to measure approximations. In one case the approximating spaces are Reifenberg flat metrics on $\R^n$ (see below). In the other case, they are $n$-dimensional Banach spaces. See Section \ref{s:Reif}. While these conditions look quite different, it turns out they are equivalent by Theorem \ref{t:equiv}.

We mention that corona decompositions have been introduced in various specific metric spaces. In the context of Heisenberg and Carnot groups such objects have been considered in \cite{di2022extensions} and \cite{naor2022foliated}, respectively. In parabolic space, see \cite{bortz2022coronizations, bortz2023corona}. The graphs used in each of these contexts differ from those used in the Euclidean setting, but both rely on some ambient structure to define the approximations. As for our other conditions, our use of the Gromov-Hausdorff distance avoids us relying on such structure.

 \subsection{Reifenberg flat metric spaces}\label{s:Reif-intro}

\begin{defn}\label{d:RF}
	A metric space $X$ is said to be $(\delta,n)$-Reifenberg flat if $\bilat_X(x,r) \leq \delta$ for all $x \in X$ and $0 < r < \infty.$ At various points we may simply write $\text{RF}$. 
\end{defn}

Reifenberg \cite{reifenberg1960solution} first studied these sets in Euclidean space (defined as in Definition \ref{d:RF} except with $\bilat$ replaced by $b\beta$), where he shows that such sets are locally bi-H\"older equivalent to a ball in $\R^n.$ A vast generalization of this result was given more recently by David and Toro in \cite{david2012reifenberg}, to allow for sets with holes. Reifenberg flat metric spaces have been studied in \cite{cheeger1997structure} and \cite{david1999reifenberg}. In \cite{cheeger1997structure} it is shown that Reifenberg flat metric spaces are locally bi-H\"older equivalent to a ball in $\R^n$, generalising the result of Reifenberg to metric spaces. In \cite{david1999reifenberg}, the authors construct interesting embeddings of such spaces into lower dimensional Euclidean spaces. In all instances above, the definition of Reifenberg flat is slightly different to that given in Definition \ref{d:RF}. Namely, they assume the approximating tangent spaces are Euclidean. 

In Theorem \ref{t:Reif} we prove a Reifenberg type theorem for metric spaces that lies somewhere between that of Cheeger-Colding \cite{cheeger1997structure} and David-Toro \cite{david2012reifenberg}. We do not assume bi-lateral approximations at all scales as in \cite{cheeger1997structure}, but we do not have as general assumptions as those contain in \cite{david2012reifenberg}. Our result differs fundamentally in the fact that we allow non-Euclidean tangents, namely $n$-dimensional Banach spaces, which may vary at different scales and locations. Roughly speaking, we show that if $X$ is a metric space and for each $x \in X$ there exists a scale $0 < r_x < \infty$ (with the compatibility between the scales of nearby points) such that $\xi_X(x,r) \leq \delta$ for all $r_x < r < \infty$ then we can find a metric $\rho$ on $\R^n$, which is bi-H\"older equivalent to the Euclidean metric, such that $(\R^n,\rho)$ approximates $X$ up to the scales determined by the $r_x.$ The result in its full generality is rather complicated to state. Theorem \ref{t:Reif-intro} below is a simplified version of Theorem \ref{t:Reif} where the Reifenberg condition is assumed to hold up to some uniform scale across $X$. Theorem \ref{t:Reif} is well-suited to stopping-time constructions. 

\begin{thmx}\label{t:Reif-intro}
	For each $\ve > 0$ there exists $\delta > 0$ such that the following holds. Let $X$ be a metric space that is $(\delta,n)$-Reifenberg flat up to scale $s_0 > 0$, that is, $\bilat_X(x,r) \leq \delta$ for all $x \in X$ and $s_0 < r < \infty.$ Then, for each $x \in X$ and $r > 0$ there exists a metric $\rho$ which is bi-H\"older equivalent to $|\cdot|$, the Euclidean norm on $\R^n$, and a map $\bar{g} \colon B_X(x,r) \to B_\rho(0,2r)$ such that 
	\begin{align}\label{e:coarse-bi-lip}
		 | \rho(\bar{g}(y),\bar{g}(z)) - d_X(y,z)  | \leq \ve (d_X(y,z) + s_0)
	\end{align}
	for all $y,z \in B_X(x,r)$ and 
	\begin{align}\label{e:last}
		\sup_{u \in B_\rho(0,(1-\ve)r)} \inf_{\bar{g}(B_X(x,r))}\rho(u,\cdot) \leq \ve s_0. 
	\end{align}
	If additionally $X$ is Ahlfors $n$-regular, then $(\R^n,\rho)$ is Ahlfors $n$-regular with regularity constant depending on $n$ and the regularity constant for $X$.
\end{thmx}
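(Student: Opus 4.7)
The plan is to adapt the multi-scale Reifenberg parametrization of Cheeger--Colding \cite{cheeger1997structure} and David--Toro \cite{david2012reifenberg} to the setting in which the approximating tangent objects are arbitrary $n$-dimensional Banach spaces that may vary with both location and scale. I will construct a coarse parametrization $f$ from $\R^n$ into $X$ which is bi-Lipschitz up to the cutoff scale $s_0$, define $\rho$ as the pull-back of $d_X$ under $f$, and take $\bar{g}$ to be a local inverse of $f$.

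First, fix $x\in X$ and $r>0$ and work with dyadic scales $r_k:=2^{-k}r$ for $k=0,1,\dots,N$, where $N$ is the largest integer with $r_N\ge s_0$. At each scale select a maximal $r_k/10$-separated net $\{y_i^k\}_i$ in a slight enlargement of $B_X(x,r)$. The Reifenberg hypothesis $\bilat_X(y_i^k,10r_k)\le \delta$ supplies, for every net point, an $n$-dimensional Banach space $V_i^k$ and a $\delta r_k$-Gromov--Hausdorff approximation $\phi_i^k\colon B_X(y_i^k,10r_k)\to B_{V_i^k}(0,10r_k)$. Fix the root space $V_0:=V_0^0$, identify it linearly with $\R^n$, and carry its norm as the base norm $\|\cdot\|_0$.

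The heart of the argument is the construction of gluing maps $\Sigma_k\colon \R^n\to\R^n$ from the approximations at consecutive scales. For each pair $(y_i^k,y_j^{k+1})$ with $d_X(y_i^k,y_j^{k+1})\le 2r_k$, the composition $\phi_j^{k+1}\circ(\phi_i^k)^{-1}$ is a local near-isometry between balls in $V_i^k$ and $V_j^{k+1}$; after transporting these to $\R^n$ via the previously built identifications, I would glue them using a smooth partition of unity subordinate to the scale-$r_k$ net, producing a single $\Sigma_{k+1}$ satisfying $\|\Sigma_{k+1}-\mathrm{Id}\|_\infty\lesssim\delta r_k$ and $\|D\Sigma_{k+1}-\mathrm{Id}\|\lesssim\delta$, exactly as in the Euclidean construction. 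For $\delta$ small, $\Phi_N:=\Sigma_N\circ\cdots\circ\Sigma_0$ is $(1+C\delta)$-bi-Lipschitz, and extending the iteration past $N$ by trivial transitions yields a homeomorphism of $\R^n$ that is bi-H\"older from $(\R^n,|\cdot|)$ to $(\R^n,\rho)$, where $\rho$ is the pull-back metric. The parametrization $f\colon\R^n\to X$ is defined on each bottom-scale cell by pre-composing $\Phi_N^{-1}$ with $(\phi_i^N)^{-1}$; then $\bar g$ is defined on $B_X(x,r)$ as the corresponding local inverse. Estimate \eqref{e:coarse-bi-lip} reflects the irreducible $O(s_0)$ error from the bottom-scale GH-approximations $\phi_i^N$ combined with the $(1+C\delta)$-bi-Lipschitz character of $\Phi_N$, while \eqref{e:last} follows because the scale-$s_0$ net is $O(s_0)$-dense in $B_X(x,r)$ and $\bar g$ is almost distance-preserving. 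Ahlfors regularity of $(\R^n,\rho)$, when assumed for $X$, transfers through the coarse bi-Lipschitz $\bar g$ by a standard covering argument.

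The key technical obstacle, absent in the Euclidean setting of \cite{david2012reifenberg}, is that the Banach spaces $V_i^k$ do not sit in a common ambient space: the identifications $V_i^k\cong\R^n$ must be chosen consistently across both $i$ and $k$, and the Banach--Mazur drift between adjacent norms must be absorbed into the $\delta$-error budget. Concretely, one has to show that each transition $\phi_j^{k+1}\circ(\phi_i^k)^{-1}$ is close to a linear map whose operator norm with respect to $\|\cdot\|_0$ is $1+O(\delta)$, despite this map being defined between two a priori different Banach norms. Since both approximating balls are $\delta r_k$-close in Gromov--Hausdorff distance to the \emph{same} piece of $X$, their respective norms must agree up to a $(1+O(\delta))$ factor, giving the required control; summing the resulting geometric series in $k$ produces the uniform bi-Lipschitz bound for $\Phi_N$ and, in the infinite iteration, the bi-H\"older equivalence of $\rho$ with the Euclidean metric.
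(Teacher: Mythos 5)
Your overall strategy (multi-scale nets, GH-approximations at each scale, transition maps, and a limiting parametrization in the spirit of Cheeger--Colding and David--Toro) is the same one the paper follows, but two steps in your outline have genuine gaps. The first is the gluing step. Your transition maps $\phi_j^{k+1}\circ(\phi_i^k)^{-1}$ do not exist as maps (GH-approximations are neither continuous nor invertible; at best one has coarse inverses as in \eqref{e:almost-id}), and, more seriously, there is no common ambient space in which to average them with a partition of unity: the tangent spaces are abstract normed spaces, and the identification of each $V_j^{k+1}$ with $\R^n$ depends on which chart at scale $k$ you route through. Your proposed fix --- that any two norms approximating the same piece of $X$ agree up to $1+O(\delta)$ --- only gives \emph{pairwise} compatibility; it does not give the cocycle consistency on triple overlaps, nor does it control the accumulation of identification errors across overlapping charts at a fixed scale. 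This is exactly the point where the paper has to work hardest: Lemmas \ref{l:Lipschitz-GHA} and \ref{l:GH-linear} replace the rough transitions by Lipschitz maps close to affine bijections, and Proposition \ref{p:coherent} iteratively perturbs them into \emph{smooth, exactly cyclic} transition maps, which is what makes the glued objects (the manifolds $W_\ell$, $M_\ell$ and the maps $h_\ell$, $f_\ell$) well defined. Without some substitute for this coherence step, ``producing a single $\Sigma_{k+1}$'' is not defined.

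The second gap concerns the metric $\rho$ and the distortion bookkeeping. Defining $\rho$ as the pull-back of $d_X$ under $f$ cannot work: below scale $s_0$ the space $X$ carries no flatness (it could, e.g., be $s_0$-discrete there), $f$ built from coarse inverses of the bottom-scale GHAs is neither injective nor canonical, and the resulting $\rho$ would be a degenerate pseudometric at scales $\lesssim s_0$, so $(\R^n,\rho)$ would be neither bi-H\"older to $|\cdot|$ nor Ahlfors regular. The paper instead builds semi-metrics $\phi_\ell$ on the manifolds $M_\ell$ that revert to genuine norm metrics below the stopping scale, passes to the limit $\phi_\infty$, and then takes the associated chain metric (Definition \ref{d:rho'}); showing that this chain metric only differs from $\phi_\infty$ by a factor $1+C\eta$ (Lemma \ref{l:pert}) is a substantial argument you cannot skip. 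Relatedly, the claim that $\Phi_N=\Sigma_N\circ\cdots\circ\Sigma_0$ is $(1+C\delta)$-bi-Lipschitz is false as stated: each scale contributes a factor $1+C\delta$ and there are $\sim\log(r/s_0)$ scales, so one only gets bi-H\"older bounds globally. The additive estimate \eqref{e:coarse-bi-lip} is not a consequence of a global bi-Lipschitz bound; it comes from comparing $\rho$ and $d_X$ at the \emph{last common scale} of the pair $y,z$ (this is how \eqref{e:Reif3.5} is used in the paper's deduction of the theorem), and your outline does not set up that scale-by-scale comparison.
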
 

Theorem \ref{t:Reif-intro} is related to \cite[Theorem
3.7]{bate2022characterising} which constructs a H\"older map
$\iota\colon B_{\|\cdot\|}(0,r) \to \tilde X$ into a metric space
containing $X$, whose image is mostly contained in $X$. This is achieved
under the assumption that $B(x,r)$ is well approximated by tangent
planes for many $x\in X$ and all scales $r < s_0$.
This final hypothesis is crucial for the construction, which iteratively
constructs the H\"older map, scale by scale.
As alluded to above, this hypothesis does not follow from the BWGL;
Indeed, it is possible that all locations may have many consecutive
scales that are not approximated by tangent planes where the iteration
cannot continue.
To the best of our knowledge, this situation is unworkable.

Instead, we resort to the weaker conclusion from Theorem
\ref{t:Reif-intro} to gain the advantage that we only require
approximations at scales $r > s_0$. This hypothesis can be
obtained from the BWGL, after a decomposition into stopping time regions.
More precisely, we obtain a corona decomposition by Reifenberg flat
metric spaces (see Section \ref{s:final}). In order to prove UR, we show that RF metric spaces have corona
decompositions by normed spaces (see Section \ref{s:Reif}).
By combining these two results, we see that the BWGL implies a corona
decomposition by sets with corona decompositions by normed spaces.
With some more work, UR follows from this conclusion (see Sections
\ref{s:corona-corona} and \ref{s:corona-UR}).

We give a more detailed overview of our proof strategy in Section
\ref{s:outline} below.

\subsection{Other remarks}
The Ribe program (originally formulated by Bourgain
in \cite{bourgain1986metrical}) asks for a dictionary between results in
metric geometry and Banach space geometry, see
\cite{naor2012introduction}
and
\cite{naor2018metric}.
A standard question related to this program (see e.g. Naor's
\cite[Problem 2]{naor2018metric}) is the question of which metric spaces bi-Lipschitz embed into which
others, with specific focus given to Euclidean targets
\cite{heinonen2003geometric,ostrovskii2013metric,semmes1999bilipschitz}.
In Theorem \ref{t:UR-BWGL} and Theorem \ref{t:equiv}   we give a {\it
characterization} of metric spaces which have, up to controlled
distortion, uniformly big pieces which have linear structure, that is,
uniformly big pieces which bi-Lipschitzly map into a Euclidean space of the same dimension.
This characterization is in terms of intrinsic metric quantities.

One naturally wants to ask whether this can be used to map a whole
(Uniformly Rectifiable) metric space $X$ bi-Lipschitzly into Euclidean
space.
Semmes, in his 1999 paper  \cite[Proposition 2.10]{semmes1999bilipschitz},  showed that one may deform the distance on $X$ by an
$A_1$ weight and map the resulting metric space bi-Lipschitzly into
Euclidean space. In fact, his result is proven in the more general
category of \textit{Doubling} metric spaces which have \textit{Big Pieces of Euclidean
Space}. We will not define these concepts here, but rather refer to the
beautiful exposition of \cite{semmes1999bilipschitz}  and note that
 \cite[Section 4]{semmes1999bilipschitz} is  devoted to UR metric spaces. Thus, for an Ahlfors regular space $X$ satisfying BWGL, one has the conclusion of \cite[Proposition 2.10]{semmes1999bilipschitz}.

Another natural question is to what extent can the quantitative nature be salvaged if one does not assume Ahlfors regularity and the dimension
is larger than 1 (we have already discussed the one dimensional case  in
the first part of the introduction).
In the Euclidean case, there are plenty of results in this direction (see e.g. \cite{david2012reifenberg, azzam2015characterization, tolsa2015characterization, azzam2018analyst, edelen2018effective, villa2019higher,azzam2019quantitative,hyde2020analyst}) and a natural question is if these can
be transported to the metric setting.
We note that Theorem \ref{t:Reif-intro} and Theorem \ref{t:Reif} as well
as the stopping time style of the proof of Theorem \ref{t:equiv} suggest that a discrete version of  Theorem \ref{t:equiv}  holds and
gives some hope for further generalizations. See Section \ref{s:example} for an example in this direction. 

The expert will note that we do not have, in Theorem \ref{t:equiv}, a condition on the square summability of some suitable variant of the Jones $\beta$-number i.e. that $\beta^2(x,r) d\calH^n(x)\tfrac{dr}{r}$ is a Carleson measure, as in \cite{david1993analysis}.  We leave this as an open question. Generalising the main result in \cite{azzam2014quantitative} would be a first step towards such a condition. 

\bigskip

\subsection{Structure of the paper and outline of the proof of Theorem \ref{t:equiv}}\label{s:outline}

\textit{We took care in formatting the table of contents so that it may serve the reader as a guide through the proof}. Here, we give a more detailed outline. 

In Section \ref{s:prelim} we discuss general preliminaries needed for the rest of the paper. Section \ref{s:UR} focusses of preliminaries related to the theory of uniform rectifiability. It is there that we introduce the $\bilat$ and $\gamma$ coefficients and study their fundamental properties. The implication (1) implies (2) is proven in Section \ref{s:affine-approx} and is carried out in two stages. We first show that the Carleson condition for the $\gamma$-coefficients is \textit{stable under big pieces} (Proposition \ref{l:stability}) which, roughly speaking, means if $X$ is a metric space, $\calF$ is a family of metric spaces which satisfies the Carleson condition for the $\gamma$-coefficients, and $X$ has big pieces of $\calF$ (see Definition \ref{d:UR-metric}) then $X$ also satisfies the Carleson condition for the $\gamma$-coefficients. The second stage consist of showing that metric spaces which are bi-Lipschitz equivalent to $\R^n$ satisfy a Carleson condition for the $\gamma$-coefficients (see Definition \ref{d:carleson-gamma} and Proposition \ref{l:approx-bi-lip}). If $X$ is a UR metric space, we can show that $X$ has big pieces of bi-Lipschitz images of $\R^n$ (after embedding in some suitable Banach space $\calB$). Thus, by combining the above two stages we complete the proof of this implication.

Section \ref{s:Reif-approx} is independent of the work in Sections \ref{s:UR} and \ref{s:affine-approx}. In Section \ref{s:Reif-approx} we prove (the more general statement of) Theorem \ref{t:Reif-intro} inspired by the scheme described in \cite{cheeger1997structure}. We need to construct the metric $\rho$ and the map $\bar{g}.$ We obtain $\rho$ by first constructing a sequence $M_\ell$ of smooth connected manifolds, $f_\ell \colon M_\ell \to M_{\ell+1}$ of diffeomorphisms and $\phi_\ell$ of semi-metrics on $M_{\ell}$. Pulling the $\phi_\ell$ back to $\R^n$ via the $f_\ell$ and taking a limit gives a semi-metric $\phi_\infty$ on $\R^n$ (Lemma \ref{l:phi-cauchy}). We then obtain the metric $\rho$ from $\phi_\infty$ (Definition \ref{d:rho'}). The $M_\ell$ are constructed in two stages. In the first stage we construct a sequence $W_\ell$ of (possibly) non-connected manifolds (Section \ref{s:disconnected}) and a sequence of smooth mappings $h_\ell \colon W_{\ell} \to W_{\ell+1}$ (Section \ref{s:smooth}). In the second stage we glue the $W_\ell$ using the maps $h_\ell$ to construct the connected manifolds $M_\ell$ (Section \ref{s:M}). The maps $f_\ell$ are also constructed in Section \ref{s:M} and the semi-metrics in Section \ref{s:metric-map}.  In Section \ref{s:quasi} and Section \ref{s:bar-g} respectively we construct the metric $\rho$ and the map $\bar{g}$ as in Theorem \ref{t:Reif-intro}  and prove the desired properties. In Section \ref{s:AR}, we consider the Ahlfors regular case. In Section \ref{s:last}, we show how Theorem \ref{t:Reif-intro} follows from Theorem \ref{t:Reif}.

We then return to proving the rest of Theorem \ref{t:equiv}. The implication (2) implies (3) is immediate from the fact that $\gamma_{X,f}^K \geq \bilat_X$ (see the paragraph before Theorem \ref{t:equiv} or Lemma \ref{l:WALAM-BWGL}). 

In Section \ref{s:Reif} we show that Ahlfors regular Reifenberg flat metric spaces admit a corona decomposition by normed spaces (see Theorem \ref{t:corona-reif} and Definition \ref{d:CDV}). This is carried out in two stages. In stage one we show, using Theorem \ref{t:Reif} and a result of Guy C. David \cite{david2016bi} on bi-Lipschitz decomposition of Lipschitz maps on metric spaces, that Ahlfors regular Reifenberg flat metric spaces are UR (Proposition \ref{p:reif-UR}). A short description of this step is given at the beginning of Section \ref{s:RF-UR}. The implication (1) implies (2) tells us now that our space satisfies a Carleson condition for the $\gamma$-coefficients. In stage two we leverage this fact to construct a corona decomposition for Reifenberg flat sets. This strategy is explained after the statement of Proposition \ref{p:UR+BP-corona}.

The implication (8) implies (4), and hence (3) implies (4), is proven in Section \ref{s:final} (see Proposition \ref{t:final}). The argument is an application of Theorem \ref{t:Reif} inside a stopping-time argument.

The implication (4) implies (5) is based on Corollary \ref{c:BWGL implies CD^2} and the main result in Section \ref{s:corona-corona}. We showed in Section \ref{s:Reif} that Ahlfors regular Reifenberg flat metric spaces admit a corona decomposition by normed spaces. Thus, any space satisfying (4) has a corona decomposition by spaces which admit corona decompositions by normed spaces ($\text{CD}^2(\text{normed spaces})$), see Corollary \ref{c:BWGL implies CD^2}. Then, based on original arguments by David and Semmes, we show that any space satisfying $\text{CD}^2(\text{normed spaces})$ also has a corona decomposition by normed spaces (Proposition \ref{prop:corona-of-corona-1}).

The implication (5) implies (1) is carried out in Section \ref{s:corona-UR}. In fact, we show that (5) implies BPBI which implies immediately UR. The proof follows closely the arguments in \cite[Section 16-17]{david1991singular} which give the analogous implication in Euclidean space. We have included the details for completeness. 

\subsection{Acknowledgments} 

We thank Matthew Badger for comments on a preliminary version of the introduction to the paper.

\newpage

\section{Preliminaries}\label{s:prelim}
\etocsettocstyle{Contents of this section}{}
\etocsettocmargins[1.5em]{.175\linewidth}{.175\linewidth}

\localtableofcontents

\bigskip

\subsection{Notation} 
Throughout the paper $C$ will denote some absolute constant (depending perhaps on $n$) which may change from line to line. If there exists $C \geq 1$ such that $a \leq Cb,$ then we shall write $a \lesssim b.$ If the constant $C$ depends on a parameter $t$, we shall write $a \lesssim_t b.$ We shall write $a \sim b$ if $a \lesssim b$ and $b\lesssim a,$ similarly we define $a \sim_t b.$ We shall not keep track of how the constants depend on $n$. 

Suppose $(X,d)$ is a metric space. Whenever we want to emphasize the metric on $X$ we will write $d_X.$ We denote by $B_X(x,r)$ or $B_d(x,r)$ the closed ball in $X$ centred at $x$ of radius $r$. If the context is clear we may simply write $B(x,r).$ If $B$ is a ball in $X$ we may write $x_B$ to denote its centre and $r_B$ to denote its radius. For sets $E,F \subseteq X$ let 
\begin{align*}
	\text{dist}(E,F) &= \inf\{d(x,y) : x \in E, \ y \in F \}; \\
	\text{diam}(E) &= \sup\{d(x,y) : x,y \in F \}
\end{align*}
The $n$-dimensional Hausdorff measure of $E$, is defined as 
\begin{align}
	\calH^n(E)  = \lim_{\delta \to 0}  \inf \left\{ \sum_i \diam(U_i)^n : A \subseteq \bigcup_i U_i \ \text{and} \ \diam U_i \leq \delta \right\}.
\end{align}
At various point throughout the paper we will write ${\dist}_d$, $\diam_d$ and $\calH^n_d$ when we wish to emphasize the metric with which these quantities are defined. If the metric is induced by a norm $\|\cdot\|,$ we may also write ${\dist}_{\|\cdot\|},$ $\diam_{\|\cdot\|}$ and $\calH^n_{\|\cdot\|}.$ 

Suppose $f \colon X \to Y$ is a map between metric spaces, $L > 0$ and $\alpha \in (0,1)$. We say $f$ is $L$-\textit{Lipschitz} if 
\begin{align}
	d_Y(f(x),f(y)) \leq L d_X(x,y)
\end{align}
for all $x,y \in X$ and $L$-\textit{bi-Lipschitz} if
\begin{align}
	L^{-1}d_X(x,y) \leq d_Y(f(x),f(y)) \leq L d_X(x,y).
\end{align}
for all $x,y \in X.$ If there exists $C \geq 1$ such that 
\[ d_Y(f(x),f(y)) \leq Cd_X(x,y)^\alpha\]
for all $x,y \in X$ then we say $f$ is \textit{H\"older with exponent} $\alpha.$ If there is $C \geq 1$ such that 
\begin{align}\label{e:bi-holder}
	C^{-1}d_X(x,y)^\frac{1}{\alpha} \leq d_Y(f(x),f(y)) \leq Cd_X(x,y)^\alpha.
\end{align}
for all $x,y \in X$ then we say $f$ is \textit{bi-H\"older with exponent $\alpha.$} We say two metric spaces $X$ and $Y$ are \textit{bi-H\"older equivalent with exponent $\alpha$} if there exists a surjective map $f \colon X \to Y$ which is bi-H\"older with exponent $\alpha.$

\bigskip

\subsection{Normed space and Banach-Mazur distance}
Let $Q(n)$ denote the set of all $n$-dimensional normed spaces. Define the \textit{Banach-Mazur distance} between $X,Y \in Q(n)$ by
\[ d_\BM(X,Y) = \inf \left\{  \|T\|_\op \, \|T^{-1}\|_\op  \right\},\] 
where the infimum is taken over all isomorphisms $T$ from $X$ onto $Y$. 

\begin{rem}\label{r:BM}
	It is known by a result of John \cite{john1948extremum} that $d_\BM(X,Y) \leq n$ for all $X,Y \in Q(n).$ Furthermore, if $I(n)$ denotes the space of isometry classes of $Q(n),$ then $(I(n), \log  d_\BM)$ defines a compact metric space.
\end{rem}

For a pair of normed spaces $X$ and $Y$, let $\calB(X,Y)$ denote the space of bounded linear operators from $X$ to $Y$. We equip $\calB(X,Y)$ with the operator norm defined by 
\[ \|T\|_{\calB(X,Y)} = \sup_{\|x\|_X=1} \|T(x)\|_Y.\] 
In the case that $X = Y$ we will write $\calB(X)$ and $\|\cdot\|_{\calB(X)}$ for short. If the context is clear, we will simply write $\|T\|_\op.$ Let $U \subseteq X$ be open. A mapping $f : X \to Y$ is in $C^1(U)$ if its Fr\'echet derivative $Df$ exists and is continuous at each $x \in U.$ We say $f \in C^2(U)$ if $f,Df \in C^1(U)$. For such a map, let $D^2f$ denote Fr\'echet derivate of $Df.$ If $x \in U$, then $Df(x) \in \calB(X,Y)$ and $D^2f(x) \in \calB(X,\calB(X,Y)).$ For $r > 0$ and $m \in \{1,2\},$ we define 
\begin{align}\label{d:norm}
	 \|f\|_{C^m(U),r} = \sup_{x \in U } \left\{ r^{-1}\|f(x)\|_F + \sum_{k=1}^m r^{k-1}\|D^kf(x)\|_\op \right\},
\end{align}
where $D^1f(x) = Df(x).$ 

\bigskip

\subsection{Gromov-Hausdorff approximations} In this section we define the notion of Gromov-Hausdorff approximations and prove some useful properties concerning them.

\begin{defn}\label{d:GHA}
	Let $(X,d_X)$ and $(Y,d_Y)$ be compact metric spaces and $\delta > 0$. We say a map $\phi \colon X \to Y$ is a $\delta$-\textit{Gromov-Hausdorff  approximation} ($\delta$-GHA) if it is a $\delta$-\textit{isometry}, meaning
	\begin{align}
		\sup_{x,y \in X} \abs{ d_X(x,y) - d_Y(\phi(x),\phi(y)) } \leq \delta,
	\end{align}
	and if $\phi(X)$ is $\delta$-\textit{dense in} $Y$, meaning 
	\begin{align}
		\sup_{u \in Y} {\dist}_Y(u,\phi(X)) \leq \delta. 
	\end{align}
\end{defn}

\begin{rem}
	We do not impose that $\phi$ be Lipschitz. Note however that if $\mathcal{N}$ is a well-separated collection of points in $X$ then $\phi|_\mathcal{N}$ is Lipschitz. Under certain assumptions on $X$ and $Y$, we can always $C(n)\delta$-GHA $\vp \colon X \to Y$ which well-approximates $\phi$ (see Lemma \ref{l:Lipschitz-GHA}).
\end{rem}

Suppose $B$ is a ball in some $n$-dimensional normed space and $\phi : X \to B$ is a $\delta$-GHA. While $\phi$ is not necessarily Lipschitz, it is possible to find a nearby $C\delta$-GHA which is Lipschitz. Before proving this we needs some auxiliary results. 

\begin{lem}\label{l:p-lambda}
	Suppose $(Y,\|\cdot\|)$ is a normed space, $z \in Y$ and $\lambda > 0$. Define a map $p_{z,\lambda} : Y \to B_Y(z,\lambda)$ by 
	\[ p_{z,\lambda}(y) = z + \frac{y-z}{\max\{1,\|y-z\| / \lambda\} }. \]
	Then $p_{z,\lambda}$ is 2-Lipschitz and 
	\begin{align}\label{e:p-y}
		\|p_{z,\lambda}(y)- y\| \leq \max\{0,\|y-z\| - \lambda\}
	\end{align}
	for all $y \in Y.$ 
\end{lem}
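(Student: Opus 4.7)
By translation invariance, I can assume without loss of generality that $z = 0$, and write $p := p_{0,\lambda}$ so that $p(y) = y/\max\{1,\|y\|/\lambda\}$. The map is the radial retraction onto $B(0,\lambda)$: it is the identity on the ball and sends $y \notin B(0,\lambda)$ to $\lambda y / \|y\|$.

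The bound \eqref{e:p-y} is a direct case check. If $\|y\| \leq \lambda$, then $p(y) = y$ and both sides equal $0$. If $\|y\| > \lambda$, then
\[ p(y) - y = \frac{\lambda y}{\|y\|} - y = -\left(1 - \frac{\lambda}{\|y\|}\right)y, \]
so $\|p(y) - y\| = \|y\| - \lambda$, matching the claim.

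For the 2-Lipschitz bound, I would split into three cases according to whether $y_1, y_2$ lie inside or outside $B(0,\lambda)$. When both are inside, $p$ acts as the identity. When exactly one is outside (say $\|y_1\| \leq \lambda < \|y_2\|$), I would write
\[ p(y_1) - p(y_2) = (y_1 - y_2) + \left(1 - \frac{\lambda}{\|y_2\|}\right)y_2, \]
whose norm is at most $\|y_1 - y_2\| + (\|y_2\| - \lambda)$; since $\|y_2\| - \lambda \leq \|y_2\| - \|y_1\| \leq \|y_2 - y_1\|$, the factor of $2$ appears. The main case, with both points outside, is the only substantive calculation: I would use the identity
\[ \frac{y_1}{\|y_1\|} - \frac{y_2}{\|y_2\|} = \frac{y_1(\|y_2\| - \|y_1\|) + \|y_1\|(y_1 - y_2)}{\|y_1\|\|y_2\|}, \]
apply the triangle inequality together with the reverse triangle inequality $|\|y_2\| - \|y_1\|| \leq \|y_1 - y_2\|$, and then use $\lambda \leq \|y_2\|$ after multiplying by $\lambda$ to conclude $\|p(y_1) - p(y_2)\| \leq 2\|y_1 - y_2\|$.

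The only step that is not purely mechanical is the two-outside case; even there the factor of $2$ comes from combining the two terms in the splitting of $y_1\|y_2\| - y_2\|y_1\|$, so I do not anticipate any genuine obstacle.
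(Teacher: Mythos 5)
Your proposal is correct and follows essentially the same elementary argument as the paper: the bound \eqref{e:p-y} is the identical case check, and the Lipschitz estimate rests on the same triangle/reverse-triangle inequalities. The only difference is organizational — the paper merges your mixed and both-outside cases into a single computation (assuming $\|x\|\leq\|y\|$ and using $1\leq s(x)\leq\|y\|/\lambda$, $\|x\|\leq\lambda s(x)$), while you treat them separately; both routes are sound.
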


\begin{proof}
	By translating it suffices to prove the case $z = 0.$ Let us write $p_\lambda = p_{0,\lambda}.$ It is easy to check that $\|p_{\lambda}(y)\| \leq \lambda$ for all $y \in Y$ so $p_{\lambda}$ takes values in $B_Y(0,\lambda)$. We now consider \eqref{e:p-y}. Let $y \in Y$ and, to ease notation, let $s(y) = \max\{1,\|y\|/\lambda\}.$  If $\|y\| \leq \lambda$ then $s(y) = 1$ so that $p_{\lambda}(y) = y$ and \eqref{e:p-y} follows immediately. Suppose instead that $\|y\| > \lambda.$ Then $s(y) = \|y\|/\lambda$ and 
	\[\|p_\lambda(y) - y \| = \bigg\| \frac{\lambda y}{\|y\|} - y \bigg\| =   \big| \lambda - \|y\| \big|  = \|y\| - \lambda. \]
	Finally, we check that $p_\lambda$ is 2-Lipschitz. Let $x,y \in Y$ and suppose without loss of generality that $\|x\| \leq \|y\|.$ If $\|y\| \leq \lambda$ then $p_{\lambda}(x) = x$ and $p_{\lambda}(y) = y$ so there is nothing to show. Suppose instead that $\|y\| > \lambda.$ In this case, we have $1 \leq s(x) \leq \|y\|/\lambda$ and $\|x\| \leq \lambda s(x).$ Combining this with the triangle inequality, we get 
	\begin{align}
		\| p_\lambda(x) - p_\lambda(y) \| &= \bigg\|\frac{x}{s(x)} - \frac{\lambda y}{\|y\|} \bigg\|  =  \frac{1}{s(x)}\bigg\| x - \frac{ \lambda s(x) y}{\|y\|} \bigg\| \leq \| x- y \| + \bigg| 1 - \frac{ \lambda s(x)}{\|y\|} \bigg|  \|y\| \\
		&\leq \| x-y \| + \bigg( 1- \frac{\|x\|}{\|y\|} \bigg) \|y\| \leq 2\|x -y \|,
	\end{align}
	which finishes the proof.
\end{proof}

\begin{lem}\label{l:Lipschitz-extend}
	Let $L \geq 0,$ $X$ be a metric space and $Y$ an $n$-dimensional normed space. Suppose $A \subseteq X$ and $f : A \to Y$ is $L$-Lipschitz, then there exists an $nL$-Lipschitz extension $F : X \to Y$ of $f.$ 
\end{lem}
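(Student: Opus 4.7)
The plan is to reduce to the scalar case by choosing a well-behaved basis of $Y$ and then applying the classical McShane extension theorem coordinatewise.

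First I would invoke Auerbach's lemma to pick a basis $\{e_1,\dots,e_n\}$ of $Y$ with dual basis $\{e_1^*,\dots,e_n^*\}\subset Y^*$ such that $\|e_i\|_Y=1$ and $\|e_i^*\|_{Y^*}=1$ for every $i$. This is the step responsible for the factor $n$ and is the key ingredient; without it one would only obtain an extension constant involving the product of basis norms and dual-basis norms, which could be arbitrarily large. Set $f_i:=e_i^*\circ f\colon A\to\R$, so that each $f_i$ is $L$-Lipschitz since $\|e_i^*\|_{Y^*}=1$ and $f$ is $L$-Lipschitz.

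Next, I would apply McShane's extension theorem to each coordinate: the function
\[
F_i(x) := \inf_{a\in A}\bigl(f_i(a) + L\, d_X(x,a)\bigr),\qquad x\in X,
\]
defines an $L$-Lipschitz map $F_i\colon X\to\R$ with $F_i|_A = f_i$. Finally, define
\[
F(x) := \sum_{i=1}^n F_i(x)\, e_i,\qquad x\in X.
\]
Since $\sum_i f_i(a) e_i = f(a)$ for $a\in A$, we have $F|_A=f$, and the triangle inequality together with $\|e_i\|_Y=1$ gives
\[
\|F(x)-F(y)\|_Y \leq \sum_{i=1}^n |F_i(x)-F_i(y)|\,\|e_i\|_Y \leq nL\, d_X(x,y),
\]
as required.

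The only genuine obstacle is the factor $n$, which is forced by the passage through a basis and dual basis: this is exactly why Auerbach's lemma is needed. Everything else is a direct application of McShane's one-dimensional extension, applied $n$ times.
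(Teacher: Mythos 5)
Your proof is correct. It takes the same overall route as the paper — reduce to the scalar case and apply McShane's theorem coordinatewise, with the factor $n$ coming from a distortion-$n$ identification of $Y$ with a coordinate structure — but the key lemma you use differs. The paper invokes its Remark on the Banach--Mazur distance (John's theorem) to get a linear isomorphism $T \colon Y \to \ell^n_\infty$ with $\|T\|_\op \leq 1$ and $\|T^{-1}\|_\op \leq n$, extends each component of $T \circ f$ by McShane, and composes with $T^{-1}$; you instead take an Auerbach basis $\{e_i\}$ with unit biorthogonal functionals $\{e_i^*\}$, extend each $e_i^* \circ f$, and resum. These are essentially equivalent devices: the coordinate map $y \mapsto (e_1^*(y),\dots,e_n^*(y))$ of your Auerbach system is precisely an isomorphism onto $\ell^n_\infty$ with norm $\leq 1$ and inverse norm $\leq n$, so both arguments produce the same constant $nL$. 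What your version buys is self-containedness — Auerbach's lemma has an elementary proof (maximize a determinant over unit vectors) and you never need the compactness-of-$I(n)$ remark — while the paper's version is shorter given that the Banach--Mazur bound is already stated and used elsewhere in the paper. One small point to make explicit in your write-up: the identity $F|_A = f$ uses the biorthogonality $e_i^*(e_j) = \delta_{ij}$, which is part of what Auerbach's lemma provides, so it is worth stating that the basis is biorthogonal rather than only that the norms are $1$.
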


\begin{proof}
	Let $\ell^n_\infty = (\R^n,\|\cdot\|_\infty).$ By Remark \ref{r:BM}, we have $d_\BM(Y,\ell_\infty^n) \leq n$ so there exists an isomorphism $T : Y \to \ell^n_\infty$ satisfying $\| T \|_{\op} \leq 1$ and $\|T^{-1}\|_{\op} \leq n.$ Let $g = T \circ f : A \to \ell^n_\infty.$ By applying the McShane Extension Theorem \cite[Theorem 6.2]{heinonen2012lectures} to each component of $T \circ f,$ we obtain an $L$-Lipschitz extension $g : X \to \ell^n_\infty$ of $T \circ f.$ It is then easy to check that $T^{-1} \circ g : X \to Y$ is the required $nL$-Lipschitz extension of $f.$ 
\end{proof}

\begin{lem}\label{l:Lipschitz-GHA}
	Let $0 < \delta < 1,$ $(X,d)$ a compact metric space and $(Y,\|\cdot\|)$ an $n$-dimensional normed space. Suppose $B \subseteq Y$ is a ball of radius $r > 0$ and $\phi :X \to B$ is a $\delta r$-GHA. Then, there exists a $20n\delta r$-GHA $\varphi : X \to B$ which is $20n$-Lipschitz such that 
	\begin{align}\label{e:phi-var}
		\sup_{x \in X} \| \phi(x) - \varphi(x)\| \leq 20n\delta r .
	\end{align}
\end{lem}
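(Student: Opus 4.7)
\textbf{Proof plan for Lemma \ref{l:Lipschitz-GHA}.}

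My plan is the standard ``net + extend + project'' approach. First I would choose a maximal $\delta r$-separated subset $\mathcal{N} \subseteq X$. For any two distinct points $x,y \in \mathcal{N}$ the $\delta r$-isometry property of $\phi$ gives
\begin{align}
\|\phi(x)-\phi(y)\| \leq d(x,y) + \delta r \leq 2\, d(x,y),
\end{align}
so $\phi|_\mathcal{N}$ is $2$-Lipschitz. Then I would apply Lemma \ref{l:Lipschitz-extend} to extend $\phi|_\mathcal{N}$ to a $2n$-Lipschitz map $F \colon X \to Y$ taking values in the full normed space $Y$ (not necessarily in $B$).

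Next, letting $z_0 = x_B$, I would set $\varphi = p_{z_0,r} \circ F \colon X \to B$, where $p_{z_0,r}$ is the retraction from Lemma \ref{l:p-lambda}. Since $p_{z_0,r}$ is $2$-Lipschitz, $\varphi$ is automatically $4n$-Lipschitz, and in particular $20n$-Lipschitz. To bound $\|\varphi-\phi\|_\infty$, for each $x \in X$ I would pick $x' \in \mathcal{N}$ with $d(x,x') \leq \delta r$ (possible by maximality) and use $F(x') = \phi(x') \in B$ together with the Lipschitz bound for $F$ and the isometry bound for $\phi$ to estimate
\begin{align}
\|F(x) - \phi(x)\| \leq \|F(x)-F(x')\| + \|\phi(x')-\phi(x)\| \leq (2n+2)\,\delta r.
\end{align}
In particular $\|F(x) - z_0\| \leq r + (2n+2)\delta r$, so Lemma \ref{l:p-lambda} gives $\|\varphi(x) - F(x)\| \leq (2n+2)\delta r$, and the triangle inequality yields $\|\varphi(x)-\phi(x)\| \leq 2(2n+2)\delta r \leq 8n\delta r$, which in particular proves \eqref{e:phi-var}.

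Finally I would verify the $20n\delta r$-GHA conditions. For the isometry property, for any $x,y \in X$,
\begin{align}
\bigl| \|\varphi(x)-\varphi(y)\| - d(x,y) \bigr| \leq \|\varphi(x)-\phi(x)\| + \|\varphi(y)-\phi(y)\| + \bigl|\|\phi(x)-\phi(y)\|-d(x,y)\bigr| \leq (16n+1)\delta r,
\end{align}
and for the density property, any $u \in B$ is within $\delta r$ of some $\phi(x)$, hence within $(8n+1)\delta r$ of $\varphi(x)$. Both quantities are $\leq 20n\delta r$, which closes the argument. There is no real obstacle here; the only subtle point is remembering that $F$ may exit $B$ by up to order $n \delta r$, which is why the radial retraction $p_{z_0,r}$ rather than mere truncation is required to stay inside $B$ while keeping the Lipschitz constant under control.
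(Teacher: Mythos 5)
Your proposal is correct and follows essentially the same route as the paper: a maximal $\delta r$-net on which $\phi$ is $2$-Lipschitz, extension via Lemma \ref{l:Lipschitz-extend} to a $2n$-Lipschitz map, projection back into $B$ with $p_{x_B,r}$ from Lemma \ref{l:p-lambda}, and the same triangle-inequality bookkeeping for \eqref{e:phi-var} and the GHA properties. The constants differ only cosmetically from the paper's and all fit within the stated $20n$ bounds.
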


\begin{proof}
	By translating and scaling we may assume $x_B = 0$ and $r = 1.$ Let $\mathcal{N}$ be a maximal net in $X$ such that $d(x,y) \geq \delta$ for all distinct $x,y \in \mathcal{N}.$ Since $\phi$ is a $\delta$-GHA, if $x,y \in \mathcal{N}$ are distinct then 
	\[ \| \phi(x) - \phi(y) \| \leq \delta + d(x,y) \leq 2d(x,y). \]
	It follows that $\phi|_\mathcal{N}$ is $2$-Lipschitz. By Lemma \ref{l:Lipschitz-extend}, we may extend $\phi|_{\mathcal{N}}$ to a $2n$-Lipschitz map $\tilde{\varphi} : X \to Y$. The map $\tilde{\varphi}$ is almost what we need, however, it may still take values outside $B$. We instead define 
	\[ \varphi = p_{0,1} \circ \tilde{\varphi} : X \to B,\]
	where $p_{0,1}$ is the map from Lemma \ref{l:p-lambda}. Since $p_{0,1}$ is 2-Lipschitz we know $\varphi$ is $4n$-Lipschitz. Next, let us check \eqref{e:phi-var}. Suppose $x \in X$ and choose $x' \in \mathcal{N}$ such that $d(x,x') \leq \delta.$ Using that $\tilde{\varphi}$ is $2n$-Lipschitz and $\tilde{\varphi}(x') = \phi(x')$, we have 
	\begin{align}\label{e:tilde-phi}
		\| \tilde{\varphi}(x) - \phi(x) \| \leq \| \tilde{\varphi}(x) - \tilde{\varphi}(x') \| +  \| \phi(x') - \phi(x) \| \leq (2n + 1)d(x,x') + \delta \leq 4n\delta. 
	\end{align} 
	Since $\phi$ takes values in $B$ we must have $\| \tilde{\varphi}(x) \| \leq 1 + 4n\delta$ (recall that $B$ is centred at the origin with unit radius). With the definition of $\varphi,$ \eqref{e:p-y} implies $\|\varphi(x) - \tilde{\varphi}(x) \| \leq 4n\delta$. Combined with \eqref{e:tilde-phi}, this gives
	\begin{align}
		\|\varphi(x) - \phi(x) \| \leq \| \varphi(x) - \tilde{\varphi}(x)\| + \| \tilde{\varphi}(x) - \phi(x) \| \leq 8n\delta
	\end{align}
	and completes the proof of \eqref{e:phi-var}. It only remains for us to check that $\varphi$ defines a $20n\delta$-GHA. By \eqref{e:phi-var}, the triangle inequality and the fact that $\phi$ is a $\delta$-GHA, if $x,y \in X$ then 
	\begin{align}
		\left| d(x,y) - \| \varphi(x) - \varphi(y) \| \right| &\leq 	\left| d(x,y) - \| \phi(x) - \phi(y) \| \right|  + \| \varphi(x) - \phi(x) \| \\
		&\hspace{2em}+ \| \varphi(y) - \phi(y) \| \leq 20n\delta,
	\end{align}
	so $\varphi$ defines a $20n\delta$-isometry. Furthermore, if $u \in B$ then there exists $x \in X$ such that $\| \phi(x) - u \| \leq \delta.$ By \eqref{e:phi-var} we have $\| \varphi(x) - u \| \leq 9n\delta$ and so $\varphi(X)$ is $9n$-dense in $B$.  
\end{proof}

If one of $X$ or $Y$ is a normed space, $B \subseteq X$ and $B' \subseteq Y$ are balls, and $\phi \colon B \to B'$ is a $\delta$-GHA, the next lemma tells us that $\phi(x_B)$ is mapped close to $x_{B'}.$ Additionally, for another ball $B'' \subseteq B$, by slightly modifying the restriction of $\phi$ to $B''$ we are able to construct a $C\delta$-GHA from $B''$ to a ball of the same radius in $Y$.

\begin{lem}\label{l:GH-scale}
	Let $0 < 2\delta < \alpha < 1.$ Suppose $(X,d_X)$ and $(Y,d_Y)$ are metric spaces and that one of $X$ or $Y$ is a normed space whose metric is induced by the norm. Let $B \subseteq X$ and $B' \subseteq Y$ be balls of common radius $r > 0$ and suppose $\phi \colon B \to B'$ is a $\delta r$-GHA. First, we have 
	\begin{align}\label{e:near-centre}
		\phi(x_{B}) \in 10\delta B'
	\end{align}
	and 
	\begin{align}\label{e:dist-sim}
		d_Y(\phi(x),x_{B'}) \leq d_X(x,x_B) + 11\delta r
	\end{align}
	for all $x \in B.$ 	Additionally, if $z \in B$ is such that $B_X(z,\alpha r) \subseteq B,$ there exists a map $\vp \colon B_X(z,\alpha r) \to B_Y(\phi(z),\alpha r)$ which is a $50\delta r$-GHA such that 
	\begin{align}\label{e:phi-tildephi}
		d_Y(\phi(x),\vp(x)) \leq \delta r
	\end{align}
	for all $x \in B_X(z,\alpha r).$ 
\end{lem}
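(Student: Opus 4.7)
The plan is to split along which of $X$ or $Y$ is normed (Case~A: $Y$ normed; Case~B: $X$ normed) and, when needed, pass between them via an approximate inverse $\psi \colon B' \to B$. The inverse is built using the $\delta r$-density of $\phi(B)$ in $B'$: for each $u \in B'$ pick $\psi(u) \in B$ with $d_Y(\phi(\psi(u)),u) \leq \delta r$, and verify that $\psi$ is a $3\delta r$-GHA satisfying $d_X(\psi(\phi(x)),x) \leq 2\delta r$ for all $x \in B$.

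For (1) in Case~A, set $p = \phi(x_B)$ and, if $p \neq x_{B'}$, consider the antipodal boundary point $u = x_{B'} - r(p-x_{B'})/\|p-x_{B'}\| \in \partial B'$. Density gives $y \in B$ with $\|\phi(y)-u\| \leq \delta r$, and comparing the lower bound $\|\phi(y)-p\| \geq r + \|p-x_{B'}\| - \delta r$ to the GHA upper bound $\|\phi(y)-p\| \leq d_X(y,x_B) + \delta r \leq r + \delta r$ forces $\|p-x_{B'}\| \leq 2\delta r$. For Case~B I apply the same argument to $\psi$, whose target ball $B$ now lies in the normed space $X$, obtaining $d_X(\psi(x_{B'}),x_B) \leq 6\delta r$; combining the GHA of $\phi$ with $d_Y(\phi(\psi(x_{B'})),x_{B'}) \leq \delta r$ then yields $d_Y(\phi(x_B),x_{B'}) \leq 8\delta r \leq 10\delta r$. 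Statement~(2) follows at once from (1) and the triangle inequality.

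For (3), in Case~A I would define $\varphi = p_{\phi(z),\alpha r} \circ \phi|_{B_X(z,\alpha r)}$ using the radial projection from Lemma~\ref{l:p-lambda}; since $\|\phi(x)-\phi(z)\| \leq \alpha r + \delta r$, the projection displaces $\phi(x)$ by at most $\delta r$, controlling both the displacement $d_Y(\varphi(x),\phi(x))$ and the $O(\delta r)$ isometry error. In Case~B, where projecting inside $Y$ is not available, I instead contract the source in $X$ and set $\varphi(x) = \phi(z + (1-\delta/\alpha)(x-z))$; the contracted point lies in $B_X(z,\alpha r - \delta r)$, so $\phi$ sends it into $B_Y(\phi(z),\alpha r)$, and the displacement is of order $\delta r$ by composition with the GHA.

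The main obstacle is verifying $50\delta r$-density of $\varphi(B_X(z,\alpha r))$ in $B_Y(\phi(z),\alpha r)$ in Case~B, since a priori a given $u \in B_Y(\phi(z),\alpha r)$ need not lie in $B'$, so the $\delta r$-density of $\phi(B)$ in $B'$ cannot be invoked directly. My plan is to combine (2) with the hypothesis $B_X(z,\alpha r) \subseteq B$ and normed $X$ (which gives $d_X(z,x_B) \leq (1-\alpha)r$) to bound $d_Y(u,x_{B'}) \leq r + O(\delta r)$. For $u \in B'$ density yields $y_0 \in B$ with $\phi(y_0)$ near $u$, the GHA places $y_0$ in $B_X(z,\alpha r + O(\delta r))$, and $p_{z,\alpha r}$ projects $y_0$ into $B_X(z,\alpha r)$ with only $O(\delta r)$ error. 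Points $u$ lying in the thin shell $B_Y(\phi(z),\alpha r) \setminus B'$ are handled by approximating $u$ by its nearest image $\phi(y)$ via the approximate inverse $\psi$, with all accumulated $\delta r$ errors absorbed into the $50\delta r$ slack.
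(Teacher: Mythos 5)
For parts \eqref{e:near-centre} and \eqref{e:dist-sim}, and for the normed-$Y$ case of the final statement, your argument is essentially the paper's: the antipodal-point comparison for \eqref{e:near-centre} is the paper's contradiction argument run directly, and $\vp=p_{\phi(z),\alpha r}\circ\phi$ with the $\delta r$ displacement bound from \eqref{e:p-y} is exactly the paper's construction. The paper only writes out the normed-$Y$ case and dismisses the other as ``very similar,'' so your explicit treatment of the normed-$X$ case (approximate inverse $\psi$ for \eqref{e:near-centre}, contraction of the source for the last part) is a reasonable way to fill that in; note only that the source-contraction gives $d_Y(\vp(x),\phi(x))\le 2\delta r$ rather than the stated $\delta r$ (you must shrink by at least $\delta r$ to absorb the isometry error, and then pay that error again through $\phi$), which is harmless for every application but does not literally reproduce \eqref{e:phi-tildephi}.

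The genuine gap is the step you yourself flag: the density of $\vp(B_X(z,\alpha r))$ in $B_Y(\phi(z),\alpha r)$ when this target ball is not contained in $B'$. Your proposed fix --- approximate such a $u$ ``by its nearest image $\phi(y)$ via the approximate inverse $\psi$'' --- cannot work, because $\psi$ is only defined on $B'$ and the hypothesis gives $\delta r$-density of $\phi(B)$ only inside $B'$; a point of $B_Y(\phi(z),\alpha r)\setminus B'$ need not be close to $\phi(B)$ at all. In fact the claim you need there is false for a general metric target: take $X=\R$, $B=[-1,1]$, $z=1-\alpha$, and let $Y$ consist of an isometric copy $\{P_t\}_{t\in[-1,1]}$ of $B$ together with a centre point $c$ with $d(c,P_t)=\min(1,|t|+\delta)$ and an extra point $u$ with $d(u,P_t)=\tfrac12+|t-\tfrac12|$, $d(u,c)=1+\delta$ (one checks this is a metric); then $\phi(t)=P_t$ is a $\delta$-GHA onto $B'=B_Y(c,1)$, the hypothesis $B_X(z,\alpha r)\subseteq B$ holds, $u\in B_Y(\phi(z),\alpha r)\setminus B'$, yet $d(u,\phi(B))\ge\tfrac12$, so no map within $\delta r$ of $\phi$ can be $50\delta r$-dense. (A similar failure occurs even when $Y$ is normed: take $X$ a maximal $\delta$-net of the Euclidean unit ball, $\phi$ the inclusion, and $z$ a net point near the boundary.) So the obstacle is not specific to your Case~B, is not closable by bookkeeping, and is in fact glossed over in the paper's own proof as well (the choice of $x\in B$ with $\|\phi(x)-\tilde u\|\le\delta$ tacitly assumes $\tilde u\in B'$). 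The honest repair is to add the hypothesis that $B_Y(\phi(z),\alpha r)$ is (up to $C\delta r$) contained in $B'$ --- e.g.\ $z=x_B$, as in the paper's applications in Lemmas \ref{l:T-alpha} and \ref{l:K-alpha} --- or to state the density conclusion only for $B_Y(\phi(z),\alpha r)\cap B'$; your write-up should make that restriction explicit rather than attempt the thin-shell argument.
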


\begin{proof}
	We will prove the case when $Y$ is a normed space with norm $\|\cdot\|.$ The proof in the other case is very similar and we omit the details. By translating and scaling we may assume $x_{B'} = 0$ and $r = 1.$ We start with \eqref{e:near-centre}. Suppose towards a contradiction that $\| \phi(x_{B}) \| > 10\delta$. Let $z = -\phi(x_B)/\|\phi(x_B)\|$ so that $\| z - \phi(x_B) \| \geq 1 + 10\delta$ and let $x \in B$ such that $\| \phi(x) - z \| \leq \delta.$ Then, 
	\begin{align}
		d(x,x_B) \geq \|\phi(x) - \phi(x_B) \| - \delta \geq \|\phi(x) - z \| + \| z - \phi(x_B) \| - \delta \geq 1 + 9 \delta.
	\end{align}
	However, since $x \in B$ we have $d(x,x_B)  \leq 1,$ which is a contradiction. Equation \eqref{e:dist-sim} follows immediately from \eqref{e:near-centre} and the fact that $\phi$ is a $\delta$-GHA since for any $x \in B$ we have 
	\begin{align}
	\| \phi(x) \| \leq \|\phi(x) - \phi(x_B)\| + 10\delta \leq d(x,x_B) + 11\delta. 
	\end{align}
	Fix now some $z \in B$ such that $B(z,\alpha) \subseteq B$ and let us define $\vp \colon B_X(z,\alpha) \to B_Y(\phi(z),\alpha)$. The map $\phi|_{B_X(z,\alpha)}$ is almost what we need, however it may take values outside $B_Y(\phi(z),\alpha)$. To remedy this we instead define
	\begin{align}
		\vp = p_{\phi(z),\alpha} \circ \phi|_{B_X(z,\alpha)} \to B_Y(\phi(z),\alpha), 
	\end{align}
	where $p_{\phi(z),\alpha} : Y \to \alpha B_Y(\phi(z),\alpha)$ is the map from Lemma \ref{l:p-lambda}. Since $\phi$ is a $\delta$-GHA, if $x \in B_X(z,\alpha)$ then $\| \phi(x) - \phi(z) \|\leq \alpha + \delta$ and \eqref{e:phi-tildephi} follows from this, \eqref{e:p-y} and the definition of $\vp$. It remains to check that $\vp$ is a $50\delta$-GHA. By \eqref{e:phi-tildephi}, if $x,y \in B_X(z,\alpha)$ then 
	\begin{align}
		\left|\|\vp(x)-\vp(y)\| - d(x,y)\right| &\leq \left|\|\phi(x)-\phi(y)\| - d(x,y)\right| + \|\vp(x) - \phi(x) \| \\
		&\hspace{2em}+ \| \vp(y) - \phi(y) \| \leq 3\delta
	\end{align}
	and so $\vp$ defines a $3 \delta$-isometry. Suppose now that $u \in B_Y(\phi(z),\alpha)$. Let $\tilde{u} \in B_Y(\phi(z),\alpha - 2\delta)$ such that $\|u-\tilde{u}\| \leq 2\delta$ (such a point exists for example by Lemma \ref{l:p-lambda}) and choose a point $x \in B$ such that $\|\phi(x) - \tilde{u} \| \leq \delta$. By the triangle inequality and using that $\phi$ is a $\delta$-GHA we have 
	\begin{align}
		d(x,z) \leq \| \phi(x) - \phi(z) \| + \delta \leq \| \phi(x) - \tilde{u} \| + \|\tilde{u} - \phi(z) \|+ \delta \leq \delta + \alpha - 2\delta + \delta = \alpha. 
	\end{align}
	This gives $x \in B_X(z,\alpha).$ Now, after combining the above observations with \eqref{e:phi-tildephi} we get 
	\begin{align}
		\| \vp(x) - u \| \leq \|\vp(x) - \phi(x) \| + \|\phi(x) - \tilde{u} \| + \| \tilde{u} - u \| \leq 3\delta.
	\end{align}
	and conclude that $\vp(B_X(z,\alpha))$ is $3\delta$-dense. 
	This completes the proof. 
\end{proof}

\begin{rem}
	While this result holds for all choices of $0 < 2\delta < \alpha < 1$ it is only really interesting for $\delta$ much smaller than $\alpha.$ 
\end{rem}

\begin{lem}\label{l:GH-comp}
	Let $X,Y$ and $Z$ be compact metric spaces and $0 < \delta,\ve < 1.$ Suppose $f : X \to Y$ is a $\delta$-GHA and $g: Y \to Z$ is an $\ve$-GHA. Then, $g \circ f : X \to Z$ is an $(\delta + 2\ve)$-GHA. 
\end{lem}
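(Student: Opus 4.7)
The plan is to verify the two defining conditions of a Gromov--Hausdorff approximation (the isometry condition and the density condition) directly for $g \circ f$, using each property of $f$ and $g$ in turn. This is a routine composition argument, so the main task is book-keeping the constants correctly; no single step is an obstacle.

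First, I will check the $(\delta + 2\varepsilon)$-isometry condition. For $x, y \in X$ I would write
\begin{align}
\bigl| d_X(x,y) - d_Z(g(f(x)), g(f(y))) \bigr|
&\leq \bigl| d_X(x,y) - d_Y(f(x), f(y)) \bigr| \\
&\quad + \bigl| d_Y(f(x), f(y)) - d_Z(g(f(x)), g(f(y))) \bigr|,
\end{align}
and then bound the first summand by $\delta$ using that $f$ is a $\delta$-GHA and the second by $\varepsilon$ using that $g$ is an $\varepsilon$-GHA. This already gives the sharper bound $\delta + \varepsilon \leq \delta + 2\varepsilon$.

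Next, I will verify that $(g \circ f)(X)$ is $(\delta + 2\varepsilon)$-dense in $Z$. Given $w \in Z$, the $\varepsilon$-density of $g(Y)$ in $Z$ produces $y \in Y$ with $d_Z(g(y), w) \leq \varepsilon$, and the $\delta$-density of $f(X)$ in $Y$ produces $x \in X$ with $d_Y(f(x), y) \leq \delta$. Applying the $\varepsilon$-isometry property of $g$ to $f(x)$ and $y$ yields
\begin{align}
d_Z(g(f(x)), g(y)) \leq d_Y(f(x), y) + \varepsilon \leq \delta + \varepsilon,
\end{align}
and then the triangle inequality gives $d_Z(g(f(x)), w) \leq \delta + 2\varepsilon$, as required.

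Combining the two estimates shows $g \circ f$ is a $(\delta + 2\varepsilon)$-GHA. I do not anticipate any subtleties: the compactness hypothesis is not really used in the argument (it only ensures the suprema in Definition \ref{d:GHA} are finite so the definitions make sense), and the slightly asymmetric constant $\delta + 2\varepsilon$ arises purely from needing to invoke the isometry property of $g$ when transferring $\delta$-density from $Y$ to $Z$.
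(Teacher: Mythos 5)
Your argument is correct and is essentially the same as the paper's proof: both split the isometry estimate via the triangle inequality to get $\delta+\varepsilon$, and both obtain density by chaining the $\varepsilon$-density of $g(Y)$, the $\delta$-density of $f(X)$, and the $\varepsilon$-isometry property of $g$ to reach $\delta+2\varepsilon$. No changes needed.
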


\begin{proof}
	Set $h = g\circ f$. Since $f$ is a $\delta$-isometry and $g$ is an $\ve$-isometry, if $x,y \in X$ then
	\begin{align}
		\abs{ d_X(x,y) - d_Z(h(x),h(y) } &\leq \abs{ d_X(x,y) - d_Y(f(x),f(y)) } \\
		&\hspace{2em} + \abs{ d_Y(f(x),f(y)) - d_Z(h(x),h(y)) }  \leq \delta + \ve
	\end{align}
	 and so $h$ define a $(\delta + \ve)$-isometry. Now suppose $z \in Z.$ Since $g(Y)$ is $\ve$-dense in $Z$, there exists $y \in Y$ such that $d_Z(g(y),z) < \ve.$ Since $f(X)$ is $\delta$-dense in $X,$ there exists $x \in X$ such that $d_Y(f(x),y) \leq \delta.$ Using this with the fact that $g$ is an $\ve$-isometry, we have 
	\begin{align}
		d_Z(h(x),z) &\leq d_Z(h(x),g(y)) + d_Z(g(y),z) \leq d_Z(f(x),y) + \ve + d_Z(g(y),z) \\
		&\leq \delta+ 2\ve. 
	\end{align}
	We conclude that $h(X)$ is $(\alpha + 2\ve)$-dense in $Z,$ and this finishes the proof.  
\end{proof}

In the final part of this section we show that, for $\delta$ small enough, if there exists a $\delta r$-GHA between balls of radius $r$ in two norms spaces then actually there exists a global affine map between the spaces which is bi-Lipschitz with small constant i.e. the balls in two normed spaces are close then the spaces are small perturbations of each other.

We need several preliminary lemmas. The first of these is a useful topological observation. A proof in the case $r =1$ and $\|\cdot\| = |\cdot|$ (the Euclidean norm) can be found in \cite[Lemma 7.3]{bate2020purely}. The proof is easily modified to give the more general statement. 

\begin{lem}\label{l:topology}
	Let $0 < \delta < \tfrac{1}{2}$. Suppose $\|\cdot\|$ is a norm on $\R^n$, $B = B_{\|\cdot\|}(x,r)$ is a ball and $f \colon B \to \R^n$ is a continuous function such that $\|f(x) - x \| \leq \delta r$ for all $x \in \partial B$ then $f(B) \supseteq B(x,(1-\delta)r).$ 
\end{lem}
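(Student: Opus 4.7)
The statement is a standard topological fact — essentially the no-retraction theorem phrased as a surjectivity statement — and is directly analogous to \cite[Lemma 7.3]{bate2020purely}, which handles the Euclidean case $\|\cdot\| = |\cdot|$ with $r=1$. The plan is to follow that argument with the obvious modifications. Since the underlying manifold $\partial B$ is still homeomorphic to $S^{n-1}$ regardless of which norm on $\R^n$ we choose, all topological inputs transfer verbatim; the only quantitative input is the reverse triangle inequality, which is available in any normed space.

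After translating we may assume $x = 0$. Because $f$ is continuous and $B$ is compact, $f(B)$ is closed, so it suffices to produce a preimage under $f$ for every $y_0$ with $\|y_0\| < (1-\delta)r$; boundary points of $B_{\|\cdot\|}(0,(1-\delta)r)$ are then recovered by approximation. Fix such a $y_0$ and introduce the straight-line homotopy $H\colon [0,1] \times B \to \R^n$ given by
\[
H_t(z) \;=\; (1-t)(z - y_0) + t(f(z) - y_0) \;=\; (z - y_0) + t(f(z) - z).
\]

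The crux of the argument is to verify that $H_t$ never vanishes on $\partial B$. For $z \in \partial B$ one has $\|z\| = r$, so by the reverse triangle inequality $\|z - y_0\| \geq r - \|y_0\| > \delta r$, and combining with the hypothesis $\|f(z) - z\| \leq \delta r$ yields
\[
\|H_t(z)\| \;\geq\; \|z - y_0\| - t\,\|f(z) - z\| \;>\; \delta r - t\delta r \;\geq\; 0
\]
for every $t \in [0,1]$. By homotopy invariance of the Brouwer degree, $\deg(f(\cdot) - y_0,\,\mathrm{int}(B),\,0) = \deg(\mathrm{id} - y_0,\,\mathrm{int}(B),\,0) = 1$, since $y_0 \in \mathrm{int}(B)$. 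Nonzero degree forces $f(z) = y_0$ for some $z \in \mathrm{int}(B)$, which is the desired conclusion.

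The argument has essentially no obstacle beyond careful bookkeeping: the one point requiring genuine attention is the strict inequality $\|H_t(z)\| > 0$ on $\partial B$ uniform in $t$, and this is exactly where the \emph{open} condition $\|y_0\| < (1-\delta)r$ (rather than $\leq$) must be used before the closing approximation step.
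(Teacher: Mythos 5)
Your proof is correct: the homotopy estimate on $\partial B$ is verified with the strict inequality in the right place, the degree argument gives a preimage of every interior point of $B_{\|\cdot\|}(x,(1-\delta)r)$, and compactness of $f(B)$ closes up the boundary case. The paper itself offers no proof, only the citation to \cite[Lemma 7.3]{bate2020purely} (Euclidean case) with the remark that the modification is routine, and your degree-theoretic argument is exactly the standard one underlying that reference, carried out for a general norm.
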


\begin{cor}\label{c:identity-infty}
	Let $\|\cdot\|$ be a norm on $\R^n$ and $H \colon \R^n \to \R^n$ a continuous function. Suppose there exists $R > 0$ such that $H(x) = x$ for all $x \in \R^n$ satisfying $\| x \| > R.$ Then $H$ is surjective. 
\end{cor}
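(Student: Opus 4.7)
The plan is to reduce the statement to a direct application of Lemma \ref{l:topology}. Given an arbitrary target point $y \in \R^n$, the proof splits naturally into two cases according to whether $y$ lies inside or outside the region where $H$ could differ from the identity.

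If $\|y\| > R$, the hypothesis immediately gives $H(y) = y$, so $y$ lies in the image of $H$. This disposes of the easy case.

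For the remaining case $\|y\| \leq R$, I would pick a radius $R'$ large enough that both $R' > R$ (so the boundary of the ball of radius $R'$ sits in the region where $H$ is the identity) and $R' > \tfrac{4}{3}\|y\|$ (so the conclusion of the topological lemma at scale $\delta = 1/4$ will cover $y$); any $R' > \max\{R, 2\|y\|\}$ works. Set $B = B_{\|\cdot\|}(0, R')$. On the boundary $\partial B$ we have $\|x\| = R' > R$, so by hypothesis $H(x) = x$ for every $x \in \partial B$, hence $\|H(x) - x\| = 0 \leq \delta R'$ trivially for any $\delta \in (0, 1/2)$. Applying Lemma \ref{l:topology} with $\delta = 1/4$ to the continuous map $H|_B \colon B \to \R^n$ then yields $H(B) \supseteq B(0, \tfrac{3}{4}R')$. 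By the choice of $R'$, the point $y$ lies in $B(0, \tfrac{3}{4}R')$, and therefore $y \in H(B) \subseteq H(\R^n)$.

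Since $y \in \R^n$ was arbitrary, $H$ is surjective. I do not anticipate any genuine obstacle: the hypothesis forces the boundary perturbation to vanish, so the constraint $\delta < 1/2$ in Lemma \ref{l:topology} is trivially satisfied, and the single free parameter $R'$ can always be chosen large enough to capture any prescribed target.
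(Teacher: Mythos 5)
Your proof is correct and follows essentially the same route as the paper: choose a ball of radius larger than $\max\{R, 2\|y\|\}$, note $H$ is the identity on its boundary, and apply Lemma \ref{l:topology} with $\delta = \tfrac{1}{4}$. The only cosmetic difference is your initial case split on $\|y\| > R$, which the paper's single choice of radius makes unnecessary.
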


\begin{proof}
	Let $x \in \R^n$ and fix $r > 2\max\{\|x\|,R\}.$ Our hypotheses on $H$ gives $H(y) = y$ for all $y \in \partial B_{\|\cdot\|}(0,r)$. Apply Lemma \ref{l:topology} with $\delta = \tfrac{1}{4}$ implies $B_{\|\cdot\|}(0,\|x\|) \subseteq B_{\|\cdot\|}(0,\tfrac{3r}{4})  \subseteq H(B_{\|\cdot\|}(0,r))$ which proves the result. 
\end{proof}

\begin{cor}\label{c:topology}
	Let $0 < \delta < \tfrac{1}{10}.$ Suppose $\|\cdot\|_1,\|\cdot\|_2$ are two norms on $\R^n$ and $T \colon (\R^n,\|\cdot\|_1) \to (\R^n,\|\cdot\|_2)$ is affine and $(1+\delta)$-bi-Lipschitz. Then, 
	\[ T(B_{\|\cdot\|_1}(x,r)) \supseteq B_{\|\cdot\|_2}(T(x),(1-C\delta) r)\]
	for all $x \in \R^n$ and $r > 0.$ 
\end{cor}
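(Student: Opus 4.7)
The plan is to avoid Lemma \ref{l:topology} entirely and argue directly from the fact that an affine $(1+\delta)$-bi-Lipschitz map is bijective with an inverse that is itself affine and $(1+\delta)$-bi-Lipschitz. So the natural approach is to take a point $z$ in the purported image ball and exhibit its preimage inside $B_{\|\cdot\|_1}(x,r)$.

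Concretely, let $z \in B_{\|\cdot\|_2}(T(x),(1-C\delta)r)$ for a constant $C\geq 1$ to be specified. The inverse $T^{-1} \colon (\R^n,\|\cdot\|_2) \to (\R^n,\|\cdot\|_1)$ is affine and $(1+\delta)$-bi-Lipschitz, and the lower bi-Lipschitz bound of $T$ applied at the pair $(T^{-1}(z),x)$ reads
\begin{equation}
\|T^{-1}(z)-x\|_1 \;\leq\; (1+\delta)\,\|z-T(x)\|_2 \;\leq\; (1+\delta)(1-C\delta)\,r.
\end{equation}
Expanding $(1+\delta)(1-C\delta) = 1 + (1-C)\delta - C\delta^2$, we see that for any $C\geq 1$ this is at most $r$. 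Hence $T^{-1}(z) \in B_{\|\cdot\|_1}(x,r)$, so $z = T(T^{-1}(z)) \in T(B_{\|\cdot\|_1}(x,r))$, giving the required inclusion (with $C=1$ in fact).

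There is essentially no obstacle here: the entire content is the trivial bi-Lipschitz bound for $T^{-1}$, which exists and is affine precisely because $T$ is an affine bi-Lipschitz self-map of $\R^n$. The topological Lemma \ref{l:topology} is not needed for this corollary; it is used for more genuinely nonlinear perturbations (as in Corollary \ref{c:identity-infty}). If one nevertheless wanted a topological proof, one could translate so that $x=0$ and $T(x)=0$, identify the two copies of $\R^n$ via the linear part $A$ of $T$ to turn $T$ into a small perturbation of the identity in $\|\cdot\|_2$, and then apply Lemma \ref{l:topology} on the ball $B_{\|\cdot\|_2}(0,r)$; but this adds nothing beyond the one-line argument above.
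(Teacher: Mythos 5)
Your proof is correct, and it takes a genuinely different and more elementary route than the paper. The paper proves the corollary by conjugating the radial retraction $p_{x,1-\delta}$ of Lemma \ref{l:p-lambda} by $T$, checking that $f = T\circ p_{x,1-\delta}\circ T^{-1}$ moves points of $B_{\|\cdot\|_2}(T(x),r)$ by at most $C\delta r$, and then invoking the topological Lemma \ref{l:topology} (a Brouwer-degree type statement) to conclude that $f$, and hence $T$ restricted to the unit ball, covers the slightly smaller ball. Your argument bypasses the topology entirely: since $T$ is affine and injective (by the lower bi-Lipschitz bound), its linear part is a bijection of $\R^n$, so $T^{-1}$ exists globally, and the lower bound $(1+\delta)^{-1}\|T^{-1}(z)-x\|_1 \leq \|z-T(x)\|_2$ immediately places $T^{-1}(z)$ in $B_{\|\cdot\|_1}(x,r)$ whenever $\|z-T(x)\|_2\leq(1-\delta)r$, since $(1+\delta)(1-\delta)\leq 1$. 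Note that the paper's proof already uses $T^{-1}$ in defining $f$, so you are not using any structural fact the paper avoids; you are simply exploiting invertibility directly rather than through a degree argument, which also yields the explicit constant $C=1$. As you observe, Lemma \ref{l:topology} is genuinely needed elsewhere (e.g. Corollary \ref{c:identity-infty} and the nonlinear maps $\hat H$ in Section \ref{s:Reif-approx}), where the perturbed map is not affine and no global inverse is available a priori; for this particular corollary your one-line argument suffices.
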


\begin{proof}
	By scale invariance we may assume $r =1.$ Let $p = p_{x,1-\delta} \colon (\R^n,\|\cdot\|_1) \to B_{\|\cdot\|_1}(x,1)$ be the map from Lemma \ref{l:p-lambda} and define $f \coloneqq T \circ p \circ T^{-1}.$ Let $y = T(x)$. Then, $f$ maps $B_{\|\cdot\|_1}(y,1)$ to itself and satisfies $\| f(z) - z \|_2 \leq 4\delta$ for all $z \in B_{\|\cdot\|_2}(y,1)$. Lemma \ref{l:topology} now gives $f(B_{\|\cdot\|_2}(y,r)) \supseteq B_{\|\cdot\|_2}(y,1-4\delta).$ Since $p \circ T^{-1}(B_{\|\cdot\|_2}(y,1)) \subseteq B_{\|\cdot\|_1}(x,1),$ this proves the result. 
\end{proof}

The following is due to Mankiewicz \cite{mankiewicz1972extension}.  

\begin{lem}\label{l:find-isometry}
	Suppose $X$ and $Y$ are normed spaces and $E \subseteq X$ and $F \subseteq Y$ are closed convex sets with non-empty interior. If $f : E \to F$ is a surjective isometry then there exists an affine isometry $T : X \to Y$ such that $f|_E = T|_E.$ 
\end{lem}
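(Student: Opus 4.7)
My plan is to deduce Lemma \ref{l:find-isometry} from the Mazur-Ulam theorem (any surjective isometry between normed spaces sending $0$ to $0$ is linear) via a localization of Mazur--Ulam's ``center of symmetry'' midpoint argument inside a small ball of $E$, producing a linear isometry $T \colon X \to Y$ which agrees with $f$ near an interior point, followed by propagation of the equality $f = T$ over the whole interior of $E$ by a connectedness argument.

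First, composing $f$ with translations on both sides, I may assume $0 \in \mathrm{int}(E)$, $f(0) = 0 \in \mathrm{int}(F)$, in which case it suffices to exhibit a linear isometry $T \colon X \to Y$ with $f = T$ on $E$. Choose $r > 0$ with $B_X(0,4r) \subseteq E$; since $f^{-1} \colon F \to E$ is also a surjective isometry fixing $0$ and $0 \in \mathrm{int}(F)$, shrinking $r$ arranges $B_Y(0,4r) \subseteq F$ as well. For $x,y \in B_X(0,r)$ with midpoint $m = (x+y)/2$, Mazur--Ulam characterizes $m$ as the unique point in $\bigcap_k H_k$, where
\[ H_0 = \{ w \in X : \|w-x\| = \|w-y\| = \tfrac{1}{2}\|x-y\| \}, \qquad H_{k+1} = \{ w \in H_k : \sup_{v \in H_k} \|w-v\| \leq \tfrac{1}{2}\diam(H_k) \}, \]
exploiting that the reflection $\sigma(w) = x+y-w$ preserves each $H_k$. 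Since $H_0 \subseteq B_X(m, \|x-y\|/2) \subseteq B_X(0,2r) \subseteq E$, the entire iteration lies in $E$; the analogous sets built from $f(x), f(y)$ lie in $B_Y(0,2r) \subseteq F$; and because $f$ is an isometry, it sends the first nested family onto the second, forcing $f(m) = (f(x)+f(y))/2$. A continuous midpoint-preserving map fixing $0$ is linear, so $f|_{B_X(0,r)}$ is the restriction of a linear isometry, which extends by homogeneity to a linear isometry $T \colon X \to Y$.

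To upgrade $f = T$ from $B_X(0,r)$ to all of $E$, let $U = \{ p \in \mathrm{int}(E) : f = T \text{ on some neighborhood of } p \text{ in } E \}$. Then $U$ is open and contains $B_X(0,r)$; I claim it is relatively closed in $\mathrm{int}(E)$. If $p_n \in U$ converges to $p \in \mathrm{int}(E)$, then $f(p) = T(p)$ by continuity, and $f(p) \in \mathrm{int}(F)$ because $f$ restricted to $E$ is a homeomorphism between convex sets with non-empty interior (hence preserves ambient interiors). The localized Mazur--Ulam argument from the previous step then runs in a small ball around $p$ sitting inside both $\mathrm{int}(E)$ and $f^{-1}(\mathrm{int}(F))$, showing $f$ is affine on that ball; matching linear parts with $T$ on the nearby open set where $f = T$ forces the affine extension to coincide with $T$, so $p \in U$. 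Since $\mathrm{int}(E)$ is convex and hence connected, $U = \mathrm{int}(E)$, and continuity of $f$ and $T$ together with $E = \overline{\mathrm{int}(E)}$ yields $f = T$ on all of $E$, which completes the proof. The main obstacle throughout is ensuring that the Mazur--Ulam iteration sets stay inside $E$ on the domain side and inside $F$ on the image side: both inclusions are guaranteed only when the reflection center $p$ satisfies $p \in \mathrm{int}(E)$ and $f(p) \in \mathrm{int}(F)$ with enough room to spare, which is precisely what dictates the localization and the choice of the propagating set $U$.
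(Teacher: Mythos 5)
There is no proof of this statement in the paper to compare against: the paper simply quotes it as Mankiewicz's extension theorem \cite{mankiewicz1972extension}. What you have written is therefore an attempt to reprove Mankiewicz's result, and the local half of your argument is sound: for $x,y$ in a ball $B_X(0,r)$ with $B_X(0,4r)\subseteq E$, $B_Y(0,4r)\subseteq F$ and $f(0)=0$, the sets $H_k$ built from $x,y$ (and their images built from $f(x),f(y)$) do stay inside $E$ and $F$ respectively, $f$ carries one nested family bijectively onto the other, and the reflection argument identifies the intersections with the two midpoints; midpoint preservation plus continuity then gives a linear isometry $T$ agreeing with $f$ on the small ball.

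The gap is in the propagation. The relative closedness of $U$ in $\mathrm{int}(E)$ rests on the assertion that $f(p)\in\mathrm{int}(F)$ ``because $f$ restricted to $E$ is a homeomorphism between convex sets with non-empty interior (hence preserves ambient interiors).'' That general principle is false: in an infinite-dimensional Banach space $X$, the closed half-space $\{\phi\ge 0\}$ is homeomorphic to $X$ itself (Klee), so a homeomorphism between closed convex sets with non-empty interior need not send interior points to interior points; even in finite dimensions the claim needs invariance of domain, which you neither invoke nor are entitled to in the generality of the lemma (arbitrary normed spaces $X,Y$). For \emph{isometries} the interior-to-interior property is true, but it is essentially the content of Mankiewicz's theorem beyond Mazur--Ulam — it is exactly the point where your relativized $H_k$-construction breaks down, since near $\partial F$ the reflection $v\mapsto f(x)+f(y)-v$ may leave $F$ and the midpoint is no longer metrically pinned down inside $F$. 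So the step ``$p_n\to p$ implies the localized Mazur--Ulam argument runs at $p$'' is unjustified, and with it the closedness of $U$. (A smaller, fixable issue of the same kind occurs at the start: arranging $0\in\mathrm{int}(E)$ with $f(0)=0\in\mathrm{int}(F)$ presupposes a point of $\mathrm{int}(E)$ sent into $\mathrm{int}(F)$; this one can be rescued because $f^{-1}(\mathrm{int}(F))$ is relatively open and non-empty in $E=\overline{\mathrm{int}(E)}$, but the closedness step has no such rescue.) To make the proof complete you must either prove that a surjective isometry between closed convex sets with non-empty interior maps interior onto interior, or restructure the argument so that it never needs a ball around $f(p)$ inside $F$ — either way this is the genuinely hard part of the lemma, and at present it is missing.
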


We find the affine map discussed above by a compactness argument, thus, it is useful to know what happens if there exist $\delta$-GHA with arbitrary small $\delta$. The following is sufficient for our needs. 

\begin{lem}\label{l:find-isometry2} 
	Let $\|\cdot\|_1,\|\cdot\|_2$ be norms on $\R^n$ and suppose $\phi_i : B_{\|\cdot\|_1}(0,1)\rightarrow B_{\|\cdot\|_2}(0,1)$ is a sequence of $\delta_i$-\emph{GHA} with $\delta_i \to 0.$ There exists a subsequence $\phi_{i_k}$ such that the following holds. For all $\ve > 0$ there is $K \in \N$, depending only on $\ve$, and a linear map $T : \R^n \to \R^n$ such that 
	\begin{align}\label{e:T-isom2}
		\|T(x) - T(y) \|_2 = \|x-y\|_1 \mbox{ for all } x,y \in \R^n 
	\end{align}
	and 
	\[ \sup_{x \in B_{\|\cdot\|_1}(0,1)} \| \phi_{i_k}(x) - T(x) \|_2 \leq \ve \mbox{ for all } k \geq K.  \]
\end{lem}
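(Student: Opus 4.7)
The plan is to pass to a subsequence via Arzelà–Ascoli, recognize the uniform limit as a surjective isometry between unit balls, and then invoke Mankiewicz's theorem (Lemma \ref{l:find-isometry}) to extend this isometry to a global linear isometry $T$ of $\R^n$. The full subsequence will converge uniformly to $T$, which gives the claim.

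First, since the $\phi_i$ are not assumed continuous, I would replace them by Lipschitz GHAs. For each $i$ large enough that $\delta_i < 1$, apply Lemma \ref{l:Lipschitz-GHA} with $r=1$ to obtain a $20n$-Lipschitz $20n\delta_i$-GHA $\varphi_i : B_{\|\cdot\|_1}(0,1) \to B_{\|\cdot\|_2}(0,1)$ with $\sup \|\phi_i - \varphi_i\|_2 \leq 20n\delta_i$. The family $\{\varphi_i\}$ is uniformly bounded (values in the target ball) and uniformly Lipschitz, and the domain $B_{\|\cdot\|_1}(0,1)$ is compact, so Arzelà–Ascoli yields a subsequence $\varphi_{i_k}$ converging uniformly to some $20n$-Lipschitz map $\Phi : B_{\|\cdot\|_1}(0,1) \to B_{\|\cdot\|_2}(0,1)$.

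Next, I would check that $\Phi$ is a surjective isometry fixing the origin. Passing to the limit in the $20n\delta_{i_k}$-isometry inequality for $\varphi_{i_k}$ gives $\|\Phi(x)-\Phi(y)\|_2 = \|x-y\|_1$ for all $x,y$. For surjectivity, given $u \in B_{\|\cdot\|_2}(0,1)$, the density clause of the GHA provides $x_k \in B_{\|\cdot\|_1}(0,1)$ with $\|\varphi_{i_k}(x_k) - u\|_2 \leq 20n\delta_{i_k}$; by compactness, a further subsequence satisfies $x_k \to x$, and uniform convergence gives $\Phi(x) = u$. Finally, Lemma \ref{l:GH-scale} (equation \eqref{e:near-centre}) shows $\phi_{i_k}(0) \to 0$, hence $\varphi_{i_k}(0) \to 0$, hence $\Phi(0) = 0$.

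Since $B_{\|\cdot\|_1}(0,1)$ and $B_{\|\cdot\|_2}(0,1)$ are closed convex sets with non-empty interior and $\Phi$ is a surjective isometry between them, Lemma \ref{l:find-isometry} produces an affine isometry $\tilde T : (\R^n,\|\cdot\|_1) \to (\R^n,\|\cdot\|_2)$ extending $\Phi$. Because $\tilde T(0) = \Phi(0) = 0$, the map $T := \tilde T$ is linear and satisfies \eqref{e:T-isom2}. Then, given $\ve > 0$, uniform convergence $\varphi_{i_k} \to T$ on $B_{\|\cdot\|_1}(0,1)$ and the estimate $\sup \|\phi_{i_k} - \varphi_{i_k}\|_2 \leq 20n\delta_{i_k}$ together yield $\sup_{x \in B_{\|\cdot\|_1}(0,1)} \|\phi_{i_k}(x) - T(x)\|_2 \leq \ve$ for all sufficiently large $k$, with the threshold $K$ depending only on $\ve$ (and the chosen subsequence).

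The only non-trivial ingredient is the isometric rigidity step, but this is handed to us by Lemma \ref{l:find-isometry}; the rest is a standard compactness argument. The one place to be careful is remembering that the original $\phi_i$ may be discontinuous, which is why the Lipschitz approximation of Lemma \ref{l:Lipschitz-GHA} must be introduced before invoking Arzelà–Ascoli, and why the final $\ve$-bound must absorb both the approximation error $20n\delta_{i_k}$ and the uniform convergence error.
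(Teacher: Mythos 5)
Your proof is correct and follows essentially the same route as the paper: replace the $\phi_i$ by Lipschitz GHAs via Lemma \ref{l:Lipschitz-GHA}, apply Arzelà--Ascoli, identify the uniform limit as a surjective isometry of the balls, invoke Mankiewicz (Lemma \ref{l:find-isometry}), and deduce linearity from $\|\phi_{i_k}(0)\|_2\lesssim\delta_{i_k}$ via \eqref{e:near-centre}. The only difference is cosmetic: you track the $20n\delta_{i_k}$ approximation error explicitly, where the paper simply says one may assume each $\phi_i$ is $20n$-Lipschitz.
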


\begin{proof}
	Denote $B_1 = B_{\|\cdot\|_1}(0,1)$ and $B_2 = B_{\|\cdot\|_2}(0,1)$. By Lemma \ref{l:Lipschitz-GHA}, we may as well assume that each $\phi_i$ is $20n$-Lipschitz. Then, by Arzel\`a-Ascoli, we can find a subsequence $\{\phi_{i_k}\}$, which converge uniformly to a function $f : B_1 \to B_2.$ If we can show that $f$ is the restriction of a linear isometry $T \colon (\R^n,\|\cdot\|_1) \to (\R^n,\|\cdot\|_2)$ then we are done. To see that this holds, first notice that
		\begin{align}
		\abs{ \| f(x) - f(y) \|_2 -\|x-y\|_1} &= \lim_{k \to \infty}  \abs{  \| \phi_{i_k}(x) - \phi_{i_k}(y) \|_2 - \|x-y\|_1 }\\
		&\leq  \lim_{k \to \infty} \delta_{i_k} = 0
	\end{align}
	for all $x,y \in B_1.$ Hence, $f$ is an isometry. Additionally, if $u \in B_2$ then there exist points $x_{i_k} \in B_1$ such that $\| \phi_{i_k}(x_{i_k}) - u \| \leq \delta_{i_k}.$ Since $B_1$ is compact we find a further subsequence $x_{i_{k_j}}$ which converges to a point $x \in B_1.$ Since the sequence $\phi_{i_{k_j}}$ converges uniformly to $f$ on $B_1,$ we have 
	\begin{align}
		\| f(x) - u \| = \lim_{j \to \infty} \| \phi_{i_{k_j}}(x_{i_{k_j}}) - u \| = 0,
	\end{align}
	so $f$ is a surjective isometry. Now, by Lemma \ref{l:find-isometry} there exists an affine map $T : \R^n \to \R^n$ satisfying \eqref{e:T-isom2} and such that $T|_{B_1} = f|_{B_1}$. By \eqref{e:near-centre} we know $\| \phi_{i_k}(0) \| \leq 10 \delta_{i_k}$ which implies $T(0) = f(0) = 0,$ so $T$ is in fact linear. 
\end{proof}

\begin{lem}\label{l:GH-linear}
	For all $0 < \ve < 1$ there exists $0 < \delta < 1$ such that the following holds. Let $\|\cdot\|_1,\|\cdot\|_2$ be two norms on $\R^n$ and suppose $\phi : B_{\|\cdot\|_1}(x_1,r)\rightarrow B_{\|\cdot\|_2}(x_2,r) $ a $\delta r$-\emph{GHA}. Then, there exists an affine map $T : \R^n \rightarrow \R^n$ such that 
	\begin{align}
		(1 - C\ve)\| x-y \|_1 \leq \|T(x) - T(y) \|_2 \leq (1+C\ve)\|x-y\|_1 \mbox{ for all } x,y \in \R^n
	\end{align}
	and 
	\begin{align}
		\sup_{x \in B_{\|\cdot\|_1}(x_1,r)} \|\phi(x) - T(x)\|_2 \leq \ve r. 
	\end{align}
\end{lem}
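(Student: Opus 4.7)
The plan is a compactness-and-contradiction argument. First, by translating and dilating (which rescales the GHA constant in Definition \ref{d:GHA}), reduce to the case $x_1=x_2=0$, $r=1$, and by Lemma \ref{l:Lipschitz-GHA}, at the cost of enlarging $\delta$ by a factor $C(n)$, assume $\phi$ is $20n$-Lipschitz. Suppose for contradiction that for some $\ve > 0$ no such $\delta > 0$ exists; then for every $i \geq 1$ there are norms $\|\cdot\|_{1,i}, \|\cdot\|_{2,i}$ on $\R^n$ and a $\delta_i$-GHA $\phi_i \colon B_{\|\cdot\|_{1,i}}(0,1) \to B_{\|\cdot\|_{2,i}}(0,1)$ with $\delta_i \to 0$, such that no affine map satisfies the desired conclusions for $\phi_i$.

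Next, apply John's theorem (cf. Remark \ref{r:BM}) to choose linear isomorphisms $U_i, V_i$ of $\R^n$ so that the pulled-back norms $\|\cdot\|'_{1,i} := \|U_i(\cdot)\|_{1,i}$ and $\|\cdot\|'_{2,i} := \|V_i(\cdot)\|_{2,i}$ satisfy uniform Euclidean sandwich bounds $|x| \leq \|x\|'_{j,i} \leq \sqrt n \, |x|$. Since $U_i, V_i$ are isometries for the pulled-back norms, $\tilde\phi_i := V_i^{-1}\circ \phi_i \circ U_i$ is a $\delta_i$-GHA between the corresponding unit balls, and these balls all lie inside $B_{|\cdot|}(0,\sqrt n)$ and contain $B_{|\cdot|}(0,1)$. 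Extending $\tilde\phi_i$ to $\R^n$ via Lemma \ref{l:Lipschitz-extend} and using that John's theorem yields uniform bounds on $U_i^{\pm 1}, V_i^{\pm 1}$, we pass to a common subsequence, using Heine-Borel for $U_i, V_i$ and Arzel\`a-Ascoli for both the norms $\|\cdot\|'_{j,i}$ (which are $1$-Lipschitz in the Euclidean norm and uniformly bounded) and the $\tilde\phi_i$, so that $U_i \to U^*$, $V_i \to V^*$, $\|\cdot\|'_{j,i} \to \|\cdot\|^*_j$ uniformly on compact sets, and $\tilde\phi_i \to f$ uniformly on $B_{|\cdot|}(0,\sqrt n)$.

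Passing to the limit in the $\delta_i$-GHA condition, $f$ restricts to a surjective isometry $B_{\|\cdot\|_1^*}(0,1) \to B_{\|\cdot\|_2^*}(0,1)$, so by Lemma \ref{l:find-isometry} it extends to a linear isometry $T^* \colon (\R^n, \|\cdot\|_1^*) \to (\R^n, \|\cdot\|_2^*)$. Setting $T_i := V_i \circ T^* \circ U_i^{-1}$, a direct computation using that $T^*$ is an isometry between the limit norms, together with the uniform convergences established above, yields, for $i$ large, $\bigl|\|T_i x - T_i y\|_{2,i} - \|x-y\|_{1,i}\bigr| = o(1)\|x-y\|_{1,i}$ and $\sup_{B_{\|\cdot\|_{1,i}}(0,1)}\|\phi_i - T_i\|_{2,i} \to 0$, contradicting the choice of the $\phi_i$. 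The main technical obstacle is orchestrating the compactness step so that all the varying objects (two sequences of norms, two sequences of John's-theorem isomorphisms, and the GHAs) converge along a common subsequence, and then translating those convergences back through the conjugations $U_i, V_i$ to control $T_i$ uniformly on $B_{\|\cdot\|_{1,i}}(0,1)$ in the original varying norms $\|\cdot\|_{1,i}, \|\cdot\|_{2,i}$.
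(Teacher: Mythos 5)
Your proposal follows essentially the same route as the paper: argue by contradiction, regularize the GHAs to be uniformly Lipschitz via Lemma \ref{l:Lipschitz-GHA}, use compactness to extract a limiting surjective isometry between balls of limit norms, upgrade it to an affine isometry by Mankiewicz (Lemma \ref{l:find-isometry}), and transport it back to the varying norms to contradict the choice of the $\phi_i$. The only real packaging difference is that the paper uses compactness of the Banach--Mazur compactum (Remark \ref{r:BM}) and conjugates by near-isometries $T_{1,i},T_{2,i}$ so that everything happens between two \emph{fixed} balls and Lemma \ref{l:find-isometry2} can be quoted, whereas you normalize by John maps and take uniform limits of the norm functions themselves; since John's theorem is what underlies Remark \ref{r:BM}, this is the same compactness in different clothing, at the cost that you must redo the content of Lemma \ref{l:find-isometry2} over $i$-dependent unit balls (the claim that the uniform limit $f$ is a surjective isometry from $B_{\|\cdot\|_1^*}(0,1)$ onto $B_{\|\cdot\|_2^*}(0,1)$, and that $\tilde\phi_i$ is close to $T^*$ on the slightly larger varying balls, each need a short approximation argument using the uniform Lipschitz bounds and the norm convergence, which your setup does provide).

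One step is wrong as stated, though it is dispensable: you cannot get convergent subsequences of the John isomorphisms $U_i,V_i$ from Heine--Borel. John's theorem controls $U_i$ only as a map from $(\R^n,|\cdot|)$ to $(\R^n,\|\cdot\|_{1,i})$, not as a Euclidean matrix; if $\|\cdot\|_{1,i}$ degenerates relative to $|\cdot|$ (for instance $\|x\|_{1,i}=\max\{i|x_1|,\,|x_2|/i\}$ in the plane, for which $U_i$ is essentially $\mathrm{diag}(1/i,i)$), the matrices $U_i$ and $U_i^{-1}$ are unbounded and no limits $U^*,V^*$ exist. Fortunately the final verification never needs them: using only the identities $\|U_iw\|_{1,i}=\|w\|'_{1,i}$ and $\|V_iw\|_{2,i}=\|w\|'_{2,i}$, the uniform convergence $\|\cdot\|'_{j,i}\to\|\cdot\|_j^*$ (which, by homogeneity and the uniform sandwich, gives multiplicative $(1\pm o(1))$ comparability of the norms), and the uniform convergence $\tilde\phi_i\to T^*$, one gets $\|T_ix-T_iy\|_{2,i}=\|L^*(U_i^{-1}(x-y))\|'_{2,i}=(1\pm o(1))\|x-y\|_{1,i}$ (with $L^*$ the linear part of $T^*$) and $\sup_{B_{\|\cdot\|_{1,i}}(0,1)}\|\phi_i-T_i\|_{2,i}\to 0$, exactly the contradiction you want. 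So the proof is correct after deleting the compactness claim for $U_i,V_i$; also note your stated sandwich $|x|\le\|x\|'_{j,i}\le\sqrt n|x|$ makes the pulled-back unit balls sit \emph{inside} the Euclidean unit ball (and the norms $\sqrt n$-Lipschitz, not $1$-Lipschitz), so the constants should be stated consistently, but this is cosmetic.
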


\begin{proof}
	By translating, scaling and composing $\phi$ with an affine transformation, it is enough find a linear map in the case where $r = 1$ and both balls are centred at the origin. Suppose the lemma is false (with these extra assumption), then there exists $0 < \ve <1$ such that the following holds. For each $i \in \N,$ there are norms $\| \cdot\|_{1,i},\| \cdot\|_{2,i}$ on $\R^n$ and a $\tfrac{1}{i}$-GHA $\phi_i \colon B_1^i \to B_2^i$, where $B_1^i = B_{\| \cdot\|_{1,i}}(0,1)$ and $B_2^i = B_{\| \cdot\|_{2,i}}(0,1)$, such that
	\begin{align}\label{e:>ve}
		\sup_{x \in B_1^i} \|\phi_i(x) - T(x)\|_{2,i} > \ve
	\end{align}
	for all linear maps $T : \R^n \rightarrow \R^n$ satisfying 
	\begin{align}\label{e:bi-lip-T} 
		(1-C\ve)\| x-y \|_{1,i} \leq \|T(x) - T(y) \|_{2,i}\leq (1+C\ve)\|x-y\|_{1,i}  \mbox{ for all } x,y \in \R^n.
	\end{align}
	Recalling that $(I(n),\log d_{\text{BM}})$ is compact (see Remark \ref{r:BM}), we may suppose there are norms $\|\cdot\|_1$ and $\|\cdot\|_2$ such that $(\R^n,\| \cdot\|_{1,i}) \to (\R^n, \| \cdot\|_1)$ and $(\R^n,\| \cdot\|_{2,i}) \to (\R^n,\| \cdot\|_2)$ in $I(n).$ Let $B_1 = B_{\| \cdot\|_1}(0,1)$ and $B_2 = B_{\| \cdot\|_2}(0,1).$ 
	
	We define a sequence of maps $\varphi_i : B_1 \to B_2$ as follows. For $i \in \N$ let
	\[ \delta_1^i = 2 \log d_\BM( (\R^n,\| \cdot\|_1), (\R^n,\| \cdot\|_{1,i})) \mbox{ and } \delta_2^i = 2 \log ( (\R^n,\| \cdot\|_2), (\R^n,\| \cdot\|_{2,i})). \]
	By definition, there exist linear isomorphisms $T_{1,i} : (\R^n,\| \cdot\|_1) \to (\R^n,\| \cdot\|_{1,i})$ and $T_{2,i} : (\R^n,\| \cdot\|_{2,i}) \to (\R^n,\| \cdot\|_2)$ such that   
	\begin{align}\label{e:Tnorm}
		\|T_{j,i}\| \leq 1 \mbox{ and } \|T_{j,i}^{-1}\| \leq e^{\delta_j^i/2} \leq 1 + \delta_j^i, \quad j =1,2.
	\end{align}
	Then, let 
	\[ \varphi_i \coloneqq T_{2,i} \circ \phi_i \circ T_{1,i} \colon B_1 \to B_2. \]
	We claim 
	\begin{align}\label{e:GHA}
		\varphi_i \mbox{ is a } C(\delta_i^1 + \delta_i^2 + \tfrac{1}{i})\mbox{-GHA}. 
	\end{align}	
	Indeed, suppose first that $x,y \in B_1.$ By \eqref{e:Tnorm} and since $\phi_i$ is a $\tfrac{1}{i}$-GHA, we have 
		\begin{align}
		\|\varphi_i(x) - \varphi_i(y)\|_2 &\leq \| \phi_i \circ T_{1,i}(x) - \phi_i \circ T_{1,i}(y) \|_{2,i} \leq \|T_{1,i}(x) - T_{1,i}(y)\|_{1,i} + \tfrac{1}{i}  \\
		&\leq \|x-y\|_1 + \tfrac{1}{i}
	\end{align}
	and 
	\begin{align}
		\|\varphi_i(x) - \varphi_i(y)\|_2 &\geq (1-C\delta_2^i) \| \phi_i \circ T_{1,i}(x) - \phi_i \circ T_{1,i}(y) \|_{2,i} \\
		&\geq (1-C\delta_2^i)\left(\|T_{1,i}(x) - T_{1,i}(y)\|_{1,i} - \tfrac{1}{i}\right) \\
		&\geq (1-C\delta_1^i)(1-C\delta_2^i)\left(\|x-y\|_1 - \tfrac{1}{i}\right) \\
		&\geq \|x-y\|_1 - C\delta_1^i - C\delta_2^i - \tfrac{1}{i}, 
	\end{align}
	where we used the fact that $\|x-y\|_1 \leq 2$ in the final inequality. Since $x,y \in B_1$ were arbitrary, combining the above two estimates we have that $\vp_i$ is a $C(\delta_1^i + \delta_2^i + \tfrac{1}{i})$-isometry.	
	
	Now, let $u \in B_2.$ By \eqref{e:Tnorm} and Corollary \ref{c:topology} we have $T_{2,i}(B_2^i) \supseteq B_{\|\cdot\|_2}(0,1-C\delta_2^i).$ Hence, there exists $z \in B_2^i$ such that $\|T_{2,i}(z) - u\|_2 \leq C \delta_2^i.$ Since $\phi_i$ is a $\tfrac{1}{i}$-GHA, there exists $y \in B_1^i$ such that $\| \phi_i(y) - z \|_{2,i} \leq \tfrac{1}{i}.$ Again, by \eqref{e:Tnorm} and Corollary \ref{c:topology} we have $T_{1,i}(B_1) \supseteq B_{\|\cdot\|_{1,i}}(0,1-C\delta_1^i)$, so there exists $x \in B_1$ such that $\| T_{1,i}(x) - y\|_{1,i} \leq C \delta_1^i.$ Combining the above, with more applications of \eqref{e:Tnorm}, gives
	\begin{align}\label{e:varphi-isom}
		\| \varphi_i(x) - u \|_2 &\leq \|T_{2,i} \circ \phi_i \circ T_{1,i}(x) - T_{2,i}\circ \phi_i(y) \|_2 + \| T_{2,i} \circ \phi_i(y) - T_{2,i}(z) \|_2 \\
		&\hspace{2em} +  \|T_{2,i}(z) - u \|_2 \\ 
		&\leq \| T_{1,i}(x) - y \|_{1,i} + \tfrac{1}{i} + \| \phi_i(y) - z \|_{2,i} + \| T_{2,i}(z) - u \|_2 \\
		&\leq C(\delta_1^i + \delta_2^i + \tfrac{1}{i}). 
	\end{align}
	Since $u \in B_2$ was arbitrary, it follows that $\vp_i(B_1)$ is $C(\delta_1^i + \delta_2^i + \tfrac{1}{i})$-dense in $B_2.$ This now finishes the proof of \eqref{e:GHA}. 
	
	Since each $\varphi_i : B_1 \rightarrow B_2$ is a $C(\delta_1^i + \delta_2^i + \tfrac{1}{i})$-GHA and $\delta_1^i + \delta_2^i + \tfrac{1}{i} \to 0$, we can apply Lemma \ref{l:find-isometry2} to find a subsequence of the $\varphi_{i}$ (which we do not relabel), an integer $K \geq 0$ depending on $\ve,$ and a linear map $\tilde{T} : \R^n \to \R^n$ such that 
	\begin{align}\label{e:T-isom} \|\tilde{T}(x) - \tilde{T}(y)\|_2 = \|x-y\|_1 \mbox{ for all } x,y \in \R^n
	\end{align}
	and 
	\begin{align}\label{e:var-T}
		\sup_{x \in B_1} \| \varphi_{i}(x) - \tilde{T}(x) \|_2 < \tfrac{\ve}{4} \mbox{ for all } i \geq K.
	\end{align}
	Suppose $i \geq K$ is such that 
	\begin{align}\label{e:delta_i}
		C(\delta_1^i + \delta_2^i + \tfrac{1}{i}) \leq \tfrac{\ve}{2}
	\end{align}
	and set $T \coloneqq T_{2,i}^{-1} \circ \tilde{T} \circ T_{1,i}^{-1}.$ Then, $T$ is a linear map from $\R^n$ to itself and satisfies \eqref{e:bi-lip-T} by \eqref{e:Tnorm}, \eqref{e:T-isom} and \eqref{e:delta_i}. Now, let $x \in B_1^i$ and choose $x' \in B_{\|\cdot\|_{1,i}}(0,1-C\delta_1^i)$ such that $\|x - x'\|_{1,i} \leq C\delta_1^i.$ It follows that $T^{-1}(x') \in B_1$ by \eqref{e:Tnorm}. By \eqref{e:bi-lip-T}, \eqref{e:delta_i} and since $\phi_i$ is a $\tfrac{1}{i}$-GHA, we have
	\begin{align}
		\| \phi_i(x) - T(x) \|_{2,i} &\leq \|\phi_i(x') - T(x') \|_{2,i} + \| \phi_i(x') - \phi_i(x) \|_{2,i} + \| T(x') - T(x) \|_{2,i} \\
		&\leq  \|\phi_i(x') - T(x') \|_{2,i}  + C(\delta_1^i + \delta_2^i + \tfrac{1}{i}) \\
		&\leq \|\phi_i(x') - T(x') \|_{2,i} + \tfrac{\ve}{2}. 
	\end{align}
	Since $T^{-1}(x') \in B_1,$ \eqref{e:Tnorm} and \eqref{e:var-T} give
	\begin{align}
		\|\phi_i(x') - T(x') \|_{2,i} &=  \| T_2^{-1} \circ \varphi_i \circ T_1^{-1}(x') - T_2^{-1} \circ \tilde{T} \circ T_1^{-1}(x') \|_{2,i}  \\
		&\leq (1+\delta_2^i) \| \varphi_i \circ T_1^{-1}(x') - \tilde{T} \circ T_1^{-1}(x') \|_2 \leq \tfrac{\ve}{2}. 
	\end{align}  
	Combining the above two estimates we get 
	\[  \|\phi_i(x) -T(x)\|_{2,i} \leq \ve. \]
	Since $x \in B_{1,i}$ was arbitrary this contradict \eqref{e:>ve} and we finish the proof of the lemma. 	
\end{proof}

\bigskip

\subsection{Maps between normed spaces} In this section we include various result concerning mappings between normed spaces which will be important in the next section. 

\begin{lem}\label{l:invertible}
	Let $0 < \ve , r < 1/2$, $m \in \{1,2\}$ and $(Y,\|\cdot\|)$ be an $n$-dimensional normed space. Suppose $H : Y \to Y$ is in $C^m(Y)$, surjective and satisfies $\| H - \emph{Id} \|_{C^m(Y),r} \leq \ve.$ Then $H$ is invertible with $H^{-1} \colon Y \to Y$ in $C^m(Y)$. Moreover,
	\begin{align}\label{e:H^-1}
		\| H^{-1} - \emph{Id} \|_{C^m(Y),r} \lesssim \ve.
	\end{align}
\end{lem}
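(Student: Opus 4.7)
The plan is to first establish bijectivity of $H$, then apply the $C^m$ inverse function theorem, and finally bound each component of $\|H^{-1}-\id\|_{C^m(Y),r}$ directly from the hypotheses on $H$.

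Injectivity will come from the mean-value type identity $H(y)-H(x)-(y-x)=\int_0^1 (DH(x+t(y-x))-\id)(y-x)\,dt$, which together with the hypothesis $\|DH(x)-\id\|_\op\le\ve$ gives $\|H(y)-H(x)\|\ge(1-\ve)\|y-x\|$. Combined with the assumed surjectivity this makes $H\colon Y\to Y$ a bijection with $(1-\ve)^{-1}$-Lipschitz inverse. Because $\|DH(x)-\id\|_\op\le\ve<1$, a Neumann series argument shows $DH(x)$ is invertible at every $x$ with $\|DH(x)^{-1}\|_\op\le(1-\ve)^{-1}$, so the classical inverse function theorem on Banach spaces (applied in the finite-dimensional normed space $Y$) gives that $H^{-1}\in C^m(Y)$, with $DH^{-1}(y)=DH(H^{-1}(y))^{-1}$.

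For the three terms in $\|H^{-1}-\id\|_{C^m(Y),r}$: setting $y=H(x)$ yields $H^{-1}(y)-y=x-H(x)$, so $r^{-1}\|H^{-1}(y)-y\|\le\ve$ directly. For the first derivative, writing $A=DH(H^{-1}(y))$ and using the identity $A^{-1}-\id=A^{-1}(\id-A)$ gives
\[
\|DH^{-1}(y)-\id\|_\op\le\|A^{-1}\|_\op\cdot\|A-\id\|_\op\le\tfrac{\ve}{1-\ve}\lesssim\ve.
\]
When $m=2$, differentiating the identity $DH^{-1}(y)=DH(H^{-1}(y))^{-1}$ using the derivative formula $D(A\mapsto A^{-1})[B]=-A^{-1}BA^{-1}$ gives
\[
D^2 H^{-1}(y)=-DH(H^{-1}(y))^{-1}\bigl(D^2 H(H^{-1}(y))[DH^{-1}(y)\,\cdot\,]\bigr)DH(H^{-1}(y))^{-1},
\]
so that $\|D^2 H^{-1}(y)\|_\op\le (1-\ve)^{-3}\|D^2H(H^{-1}(y))\|_\op\le (1-\ve)^{-3}\ve/r$, and multiplying by $r$ yields $r\|D^2 H^{-1}(y)\|_\op\lesssim\ve$. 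Summing these three estimates gives \eqref{e:H^-1}.

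The only subtlety will be keeping track of the three separate scale factors ($r^{-1}$, $1$, $r$) in the definition of $\|\cdot\|_{C^m(Y),r}$ and making sure each estimate uses the right normalization from the hypothesis; the rest is essentially the textbook $C^m$ inverse function theorem together with standard Neumann-series and quotient-rule manipulations, with all operator-norm estimates uniform in $x$ because the hypotheses are uniform in $x$.
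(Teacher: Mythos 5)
Your proposal is correct and follows essentially the same route as the paper: invertibility via the inverse function theorem together with a uniform bound on $\|DH(x)^{-1}\|_\op$, then the identities $H^{-1}(y)-y=x-H(x)$, $D[H^{-1}]-\id = D[H^{-1}](\id-DH)$ and the inverse-derivative formula to estimate the three scaled terms of $\|H^{-1}-\id\|_{C^m(Y),r}$. The only difference is cosmetic: you prove injectivity of $H$ explicitly via the mean-value estimate (the paper deduces global invertibility from surjectivity plus pointwise invertibility of $DH$), which if anything makes the bijectivity step slightly more complete.
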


\begin{proof}
	By surjectivity and the Inverse function Theorem, to verify that $H$ is invertible with inverse $H^{-1} \in C^m(Y),$ it suffices to check that $DH(x)$ is injective (hence invertible) for each $x \in Y.$ To see this, suppose $v,w \in Y$. Then, 
		\begin{align}
	  	\| DH(x)(v) - DH(x)(w)\| &= \| DH(x)(v-w)\|  \\
		&\geq \|v-w\| - \|DH(x)(v-w) - (v-w)\| \\
		&\geq \|v-w\|/2. 
	\end{align}
	It now only remains to prove \eqref{e:H^-1}. The above inequality implies 
	\[\| DH(x)^{-1} \|_{\calB(Y)} \leq 2\]
	for all $x \in Y.$ Thus, $\|D[H^{-1}](z)\|_{\calB(Y)} \leq 2$ for all $z \in Y$ by the Inverse Function Theorem. So, for any $z \in Y,$ the estimate $\| H - \text{Id} \|_{C^m(Y),r} \leq \ve$ then implies
	\begin{align}
		\|H^{-1}(z)  - z \| = \| H^{-1}(z) - H(H^{-1}(z)) \| \leq \ve r
	\end{align}
	and 
	\begin{align}
		\| D[H^{-1}](z) -  \text{Id} \|_{\calB(Y)} &= \| D[H^{-1}](z)  - D[H\circ H^{-1}](z) \|_{\calB(Y)} \\
		& = \| D[H^{-1}](z) - DH(H^{-1}(z)) D[H^{-1}](z) \|_{\calB(Y)} \\
		& \leq \ve \| D[H^{-1}](z) \|_{\calB(Y)}  \leq 2\ve . 
	\end{align}
	Since $z$ was arbitrary, this completes the proof in the case $m =1.$ If $m=2$ we proceed as follows. Define a map $G : Y \to B(Y)$ by setting $G(x) = DH(H^{-1}(x))$. In this way $D[H^{-1}](x) = G(x)^{-1}$ (here we identify $\calB(Y)$ with the set of all $n \times n$ matrices and $G(x)^{-1}$ denotes the inverse matrix of $G(x)$). By the derivative formula for inverse matrices, the chain rule and our estimates for $H$ and $H^{-1}$, we have 
	\begin{align}
		\|D^2[H^{-1}](z)(y)\|_{\calB(E)} &= \|D[H^{-1}](x) \circ DG(z)(y) \circ D[H^{-1}](x) \|_{\calB(E)} \\
		&\lesssim \|[D^2H(H^{-1}(x)) \left(D[H^{-1}](x)(y)\right)\|_{\calB(E)} \\
		&\leq \frac{\ve}{r}\| D[H^{-1}](x)(y) \| \lesssim \frac{\ve}{r}\|y\|
	\end{align} 
	for all $y \in Y.$ It now follows that $\| D^2[H^{-1}](z) \|_{\calB(Y,\calB(Y))} \lesssim \ve/r$ and this finishes the proof.
\end{proof}

\begin{lem}\label{l:modification}
	For each $\eta > 0$ there exists $\ve > 0$ such that the following holds. Let $m \in \{1,2\},$ $r > 0,$ $(Y,\|\cdot\|)$ be an $n$-dimensional normed space and $U_1 \subseteq U_2 \subseteq Y$ be open bounded sets. Suppose that $\dist(U_1,U_2^c) \geq \eta r$ and $H \colon U_2 \to Y$ is in $C^m(U_2)$ and satisfies
	\begin{align}\label{e:H-Id}
		\| H - \emph{Id}  \|_{C^m(U_2),r} \leq \ve. 
	\end{align}
	Then, there exists a map surjective map $\hat{H} : Y \to Y$ in $C^m(Y)$ such that
	\begin{align}
		\hat{H}|_{U_1} &= H|_{U_1}, \label{e:H1} \\
		\hat{H}|_{U_2^c} &= \emph{Id}, \label{e:H2} \\
		\hat{H}(x) &= H(x) \mbox{ whenever } H(x) = x \label{e:H3}.
	\end{align}
	Furthermore, $\hat{H}$ is invertible with inverse $\hat{H}^{-1} : Y \to Y$ in $C^m(Y)$ and 
	\begin{align}
		\| \hat{H} - \emph{Id} \|_{C^m(Y),r} &\lesssim_\eta \ve \mbox{ and } \| \hat{H}^{-1} - \emph{Id} \|_{C^m(Y),r} \lesssim_\eta \ve. \label{e:H4}
	\end{align}
\end{lem}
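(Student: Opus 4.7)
The plan is to construct $\hat{H}$ as a smooth interpolation between $H$ on $U_1$ and $\text{Id}$ on $U_2^c$, exploiting the buffer $\dist(U_1,U_2^c) \geq \eta r$ as room for the transition. First, I would build a cutoff $\psi \colon Y \to [0,1]$ of class $C^m$ with $\psi \equiv 1$ on $U_1$, $\psi \equiv 0$ on $U_2^c$, and derivative bounds $\|D\psi(x)\|_\op \lesssim_\eta 1/r$ and (when $m=2$) $\|D^2\psi(x)\|_\op \lesssim_\eta 1/r^2$. Fixing a linear isomorphism $T \colon Y \to \R^n$ with $\|T\|_\op, \|T^{-1}\|_\op \lesssim_n 1$ (available by John's theorem, cf. Remark \ref{r:BM}), such a $\psi$ can be obtained by pulling back through $T$ a standard Euclidean mollification of the indicator of an intermediate set $V$ with $U_1 \subseteq V \subseteq U_2$ and $\dist(U_1, V^c), \dist(V, U_2^c) \geq \eta r/3$; the derivative bounds transfer through $T$ up to constants depending only on $n$.

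Next, I would define
\[
\hat{H}(x) = \begin{cases} x + \psi(x)\bigl(H(x) - x\bigr), & x \in U_2, \\ x, & x \in U_2^c, \end{cases}
\]
with $\psi(x)(H(x) - x)$ understood as $0$ wherever $\psi(x) = 0$. Since $\psi$ vanishes in a neighborhood of $\partial U_2$, the two pieces patch together into a global $C^m$ map $Y \to Y$. Properties \eqref{e:H1}--\eqref{e:H3} are then immediate: on $U_1$, $\psi \equiv 1$ gives $\hat{H} = H$; on $U_2^c$, $\hat{H} = \text{Id}$ by definition; and if $H(x) = x$, then $\psi(x)(H(x)-x) = 0$ so $\hat{H}(x) = x = H(x)$.

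For the norm bound in \eqref{e:H4}, differentiating on $U_2$ yields
\[
D\hat{H}(x) - \text{Id} = D\psi(x) \otimes \bigl(H(x) - x\bigr) + \psi(x)\bigl(DH(x) - \text{Id}\bigr),
\]
together with a product-rule expansion for $D^2\hat{H}(x)$ featuring the four terms $D^2\psi\cdot(H - \text{Id})$, $D\psi\cdot(DH - \text{Id})$ (twice), and $\psi \cdot D^2 H$. Substituting the hypotheses $\|H(x) - x\| \leq \ve r$, $\|DH(x) - \text{Id}\|_\op \leq \ve$ and $\|D^2 H(x)\|_\op \leq \ve/r$ from \eqref{e:H-Id} against the cutoff estimates for $\psi$ should yield $\|\hat{H} - \text{Id}\|_{C^m(Y),r} \lesssim_\eta \ve$, the first bound in \eqref{e:H4}. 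Choosing $\ve$ small enough (depending on $\eta$) that this quantity falls below the threshold needed in Lemma \ref{l:invertible}, and noting that $\hat{H} = \text{Id}$ outside the bounded set $U_2$ so that Corollary \ref{c:identity-infty} gives surjectivity, Lemma \ref{l:invertible} then delivers the existence and $C^m$-regularity of $\hat{H}^{-1}$ along with $\|\hat{H}^{-1} - \text{Id}\|_{C^m(Y),r} \lesssim_\eta \ve$.

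The main obstacle I anticipate is the bookkeeping in the $D^2$ estimate when $m = 2$: the Leibniz rule produces mixed terms in which $D^2\psi$ (of order $1/r^2$) pairs with $H - \text{Id}$ (of order $\ve r$) and $D\psi$ (of order $1/r$) pairs with $DH - \text{Id}$ (of order $\ve$), each of which must be shown to contribute at most $\ve/r$ to $\|D^2\hat{H}(x)\|_\op$. The $\eta^{-1}$ factors from the cutoff's derivative growth are harmless because they are absorbed into the $\lesssim_\eta$ convention, but keeping the dependence on $\ve$ linear requires careful use of the smallness hypotheses rather than crude triangle-inequality splits.
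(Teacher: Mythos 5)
Your proposal is correct and follows essentially the same route as the paper: define $\hat{H} = \mathrm{Id} + \varphi\,(H - \mathrm{Id})$ with a bump function $\varphi$ satisfying $\|D^k\varphi\|_\op \lesssim_\eta r^{-k}$, verify \eqref{e:H1}--\eqref{e:H3} directly, estimate the $C^m$ norm by the product rule against \eqref{e:H-Id}, and obtain invertibility from surjectivity (Corollary \ref{c:identity-infty}) together with Lemma \ref{l:invertible}. The only difference is that you spell out the construction of the cutoff via John's theorem and Euclidean mollification, which the paper leaves implicit.
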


\begin{proof}
	Let $\eta > 0$ be fixed. Since $\dist(U_1,U_2^c) \geq \eta r,$ there exists a smooth bump function $\varphi : Y \to \R$ such that $\varphi \equiv 1$ on $U_1,$ $\varphi \equiv 0$ on $U_2^c$ and $\| D^k \varphi \|_\op \lesssim_\eta r^{-k}$ for each $k \in \{1,2\}.$ Let $\hat{H} = (H - \mbox{Id}) \varphi + \mbox{Id}.$ Then, $\hat{H} \in C^m(Y)$ and it is simple to check that \eqref{e:H1}, \eqref{e:H2} and \eqref{e:H3} hold with this definition. If $x \in Y$ then by \eqref{e:H-Id}, we have 
	\begin{align}
		\| \hat{H}(x) - x \| = \| (H(x) - x) \varphi(x)\| \leq \| H(x) - x \| \, |\varphi(x)| \lesssim \ve r
	\end{align}
	and 
	\begin{align}
		\| D\hat{H}(x)  - I_n \|_\op &= \| D[\hat{H} - \mbox{Id}](x) \|_\op  \\
		&\leq \| D[H-\mbox{Id}](x)  \|_\op |\varphi(x)| + \| H(x) - x \| \, \| D  \varphi(x) \|_\op   \lesssim_\eta \ve,
	\end{align}
	where $I_n$ is the identity matrix on $\R^n$. If $m=1$ this proves the first estimate in \eqref{e:H4}. If $m=2$ and $y \in Y$ is such that $\|y\| =1,$ then we have the further estimate 
	\begin{align}
		\| D^2[\hat{H}-\text{Id}](x)(y) \|_{\calB(Y)} &\leq \| D^2[H-\text{Id}](x)(y)\|_{\calB(Y)} |\varphi(x)| \\
		&\hspace{2em} + 2\|D[H-\text{Id}](x)\|_{\calB(Y)}|D\varphi(x)(y)| \\
		&\hspace{4em}+ \|H(x) - x\|  \|D^2\varphi(x)(y)\|_{\calB(Y)} \lesssim_\eta  \ve r^{-1}
	\end{align}
	so that $\|D^2[\hat{H}-\text{Id}](x)\|_{\op} \lesssim_\eta \ve r^{-1}.$ This now proves the first half \eqref{e:H4} in this case. Since $\hat{H}$ is continuous and equal to the identity outside a large ball (by \eqref{e:H2}), it is surjective by Corollary \ref{c:identity-infty}. Then, supposing $\ve$ is small enough with respect to $\eta,$ the first part of \eqref{e:H4} and Lemma \ref{l:invertible} imply that $\hat{H}$ is invertible with inverse $\hat{H}^{-1} : Y \to Y$ in $C^m(Y)$ satisfying the second half of \eqref{e:H4}.
\end{proof}

\begin{lem}\label{l:composition}
	Let $(Y,\|\cdot\|)$ be a normed space and fix parameters $0 < \ve < 1$, $r > 0$, $k \geq 1$ and $m \in \{1,2\}.$ For each $1 \leq i \leq k$ let $U_i \subseteq Y$ be an open set, $f_i : U_i \to Y$ a map in $C^m(U_i)$, $g_i \colon Y \to Y$ a $(1+\ve)$-bi-Lipschitz affine map, and suppose that $\| f_i - g_i \|_{C^m(U_i),r} \leq \ve$. If $U \subseteq Y$ is open and $f_k \circ \dots \circ f_1$ is defined on $U$ then  
	\begin{align}\label{e:comp}
		\|f_k \circ \cdots \circ f_1 - g_k \circ \cdots g_1 \|_{C^m(U),r} \lesssim_k \ve. 
	\end{align}
\end{lem}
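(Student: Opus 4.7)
The plan is to proceed by induction on $k$. The case $k=1$ is immediate from the hypothesis, so assume the statement has been proved for $k$ compositions and consider $k+1$. Write $F_j = f_j \circ \cdots \circ f_1$ and $G_j = g_j \circ \cdots \circ g_1$, and let $L_j \in \calB(Y)$ denote the linear part of the affine map $g_j$, so that $g_j(y) = g_j(0) + L_j(y)$ with $\|L_j\|_\op \leq 1+\ve$ and $\|L_j^{-1}\|_\op \leq 1+\ve$. I would begin with the clean decomposition
\begin{align}
F_{k+1} - G_{k+1} = (f_{k+1} - g_{k+1}) \circ F_k + L_{k+1} \circ (F_k - G_k),
\end{align}
which follows from affinity of $g_{k+1}$. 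The inductive hypothesis (applied to both the statement itself and, built into the proof, to the pointwise derivative bound $\|DF_k - L_k \circ \cdots \circ L_1\|_\op \lesssim_k \ve$) gives in particular that $\|DF_k\|_\op \lesssim_k 1$ and, by integration from any point where the estimate on $\|F_k - G_k\|$ is controlled, that $F_k$ stays in the domain of $f_{k+1}$ where $f_{k+1}$ has been assumed defined on a set containing $F_k(U)$.

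For the pointwise estimate, I would apply the hypothesis $\|f_{k+1} - g_{k+1}\|_{C^m(U_{k+1}),r} \leq \ve$ to the first summand (contributing $\ve r$) and $\|L_{k+1}\|_\op \cdot \|F_k - G_k\| \leq (1+\ve) \cdot C_k \ve r \lesssim_{k+1} \ve r$ to the second. For the first derivative, the chain rule gives
\begin{align}
DF_{k+1}(x) - DG_{k+1}(x) = \bigl(Df_{k+1}(F_k(x)) - L_{k+1}\bigr) \circ DF_k(x) + L_{k+1} \circ \bigl(DF_k(x) - DG_k(x)\bigr),
\end{align}
and I would bound the first factor by $\ve$ (hypothesis) times $\|DF_k\|_\op \lesssim_k 1$, and the second by $(1+\ve)$ times the inductive bound $\lesssim_k \ve$. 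Summing gives $\lesssim_{k+1} \ve$.

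For $m=2$, since $G_{k+1}$ is affine, $D^2 G_{k+1} \equiv 0$, so I only need to bound $D^2 F_{k+1}$. The chain rule for second derivatives gives, for $y \in Y$,
\begin{align}
D^2 F_{k+1}(x)(y) = D^2 f_{k+1}(F_k(x))\bigl(DF_k(x)(y)\bigr) \circ DF_k(x) + Df_{k+1}(F_k(x)) \circ D^2 F_k(x)(y).
\end{align}
The hypothesis gives $\|D^2 f_{k+1}\|_\op \leq \ve/r$, so combined with $\|DF_k\|_\op \lesssim_k 1$ the first term is bounded in operator norm by $\lesssim_k (\ve/r)\|y\|$. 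For the second term, $\|Df_{k+1}\|_\op \leq \|L_{k+1}\|_\op + \ve \lesssim 1$ and the inductive bound gives $\|D^2 F_k(x)\|_\op \lesssim_k \ve/r$, yielding the desired $\|D^2 F_{k+1}(x)\|_\op \lesssim_{k+1} \ve/r$. Putting the three estimates together with the weights $r^{-1}$, $1$, $r$ from the definition of $\|\cdot\|_{C^m(U),r}$ in \eqref{d:norm} gives \eqref{e:comp}.

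The main bookkeeping obstacle is carrying along the uniform bounds $\|DF_k\|_\op \lesssim_k 1$ and (for $m=2$) $\|D^2 F_k\|_\op \lesssim_k \ve/r$ as part of the inductive hypothesis, rather than attempting to extract them after the fact; once these are built in, each step reduces to straightforward chain-rule bookkeeping with constants that depend only on $k$.
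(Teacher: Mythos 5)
Your proposal is correct and follows essentially the same route as the paper: induction on $k$, the same affine-decomposition/triangle-inequality estimate for the pointwise term, the same chain-rule comparison of $DF_{k+1}$ with the product of linear parts, and the same second-derivative bookkeeping using $D^2G_{k+1}\equiv 0$ together with $\|D^2 f_{k+1}\|\leq \ve/r$ and the inductive bound $\|D^2F_k\|\lesssim_k \ve/r$. The only cosmetic difference is that you carry the auxiliary bounds $\|DF_k\|_\op\lesssim_k 1$ explicitly in the induction, while the paper extracts them within the inductive step (and note the domain issue you mention needs no integration argument, since $f_{k+1}\circ F_k$ being defined on $U$ already means $F_k(U)\subseteq U_{k+1}$).
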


\begin{proof}
	We prove the result by induction on $k$. The base case $k =1$ holds by assumption. Now, fix some $k \geq 1$ for which the result holds and consider the case $k+1.$ Let $F_k = f_k \circ \cdots \circ  f_1$ and $G_k = g_j \circ \dots \circ g_1.$ We proceed with \eqref{e:comp}. Fix $U \subseteq Y$ on which $f_{k+1} \circ F_k$ is well-defined and let $x \in U.$ Using the estimate \begin{align}\label{e:f-g}
		\| f_{k+1} - g_{k+1} \|_{C^m(U_i),r} \leq \ve
	\end{align} 
	with \eqref{e:comp} for $k$ and the fact that $g_{k+1}$ is $(1+\ve)$-bi-Lipschitz, we get 
	\begin{align}
		\| f_{k+1}(F_k(x)) - g_{k+1}(G_k(x))\| &\leq \| f_{k+1}(F_k(x)) - g_{k+1}(F_k(x)) \| \\
		&\hspace{2em} + \|g_{k+1}(F_k(x)) - g_{k+1}(G_k(x))\| \lesssim \ve r.
	\end{align}
	Since $g_{k+1}$ and $G_k$ are affine, there exists linear maps $L_1,L_2 \colon Y \to Y$ such that $Dg_{k+1}(y) = L_1$ and $DG_k(y) = L_1$ for all $y \in Y.$ Since each $g_i$ is $(1+\ve)$-bi-Lipschitz, we have that $L_1$ is $(1+\ve)$-bi-Lipschitz and both $G_k$ and $L_2$ are $(1+C\ve)$-bi-Lipschitz for some $C$ depending on $k$. Since $G_k$ is $(1+C\ve)$-bi-Lipschitz and $\|F_k - G_k\|_{C^m(U),r} \lesssim_k \ve$ by induction, it follows that $\|DF_k(x)\|_\op \lesssim_k \ve$. Using the above with \eqref{e:comp} for $k$ and \eqref{e:f-g}, we have 
	\begin{align}
		\| D[f_{k+1} \circ F_k](x) - L_1L_2 \|_{\calB(Y)} &\leq \|Df_{k+1}(F_k(x)) DF_k(x) - L_1 DF_k(x) \|_{\calB(Y)} \\
		&\hspace{2em} + \|L_1 DF_k(x) - L_1L_2\|_{\calB(Y)} \\
		&\leq \ve \| DF_k(x) \|_{\calB(Y)} + (1+\ve)\|DF_k(x) - L_2 \|_{\calB(Y)} \\
		&\lesssim_k \ve. 
	\end{align}
	If $m=1$ this completes the proof. Suppose then that $m =2.$ Since $DG_k(y) = L_1$ and $Dg_{k+1}(y) = L_2$ for all $y \in Y$, we have $D^2G_k(y) = D^2g_{k+1}(y) = 0$ for all $y \in Y.$ Using \eqref{e:comp} for $k$ with \eqref{e:f-g} now implies $\| D^2f_{k+1}(y)(z)\|_{\calB(Y)} \leq \tfrac{\ve}{r} \|z\|$ and $\| D^2F_{k}(y)(z)\|_{\calB(Y)} \lesssim_k \tfrac{\ve}{r}\|z\|$ for all $y,z \in Y.$ Hence, 
	\begin{align}
		\| D^2[f_{k+1}\circ F_k](x)(y) \|_{\calB(Y)} &\leq \|D^2f_{k+1}(F_k(x)) (DF_k(x)(y)) DF_k(x) \|_{\calB(Y)} \\
		&\hspace{4em} + \| Df_{k+1}(F_k(x)) D^2F_k(x)(y) \|_{\calB(Y)} \\
		&\lesssim_k \frac{\ve}{r}\| DF_k(x)(y) \| \,\|DF_k(x)\|_{\calB(Y)} \\
		&\hspace{2em} + \frac{\ve}{r} \|Df_{k+1}(F_k(x))\|_{\calB(Y)} \|y\| \lesssim \frac{\ve}{r} \|y\|.
	\end{align}
	This implies $\|D^2[f_{k+1} \circ F_k](x)\|_{\calB(Y,\calB(Y))} \lesssim_k \tfrac{\ve}{r}$ and completes the proof. 
\end{proof}

\begin{lem}\label{l:C^2-comp}
	Let $\ve,r > 0$ and $\Lambda\geq 1$. Suppose $f : \R^n \to \R^n$ is in $C^1(\R^n)$ such that $\| f - \emph{Id} \|_{C^1(\R^n),r}  \leq \ve$ and $g : \R^n \to \R^n$ is invertible such that $g,g^{-1} \in C^2(\R^n)$ and 
	\begin{align}\label{e:g-control}
		\sup_{x \in \R^n} \left(\|Dg(x)\| + \|D[g^{-1}](x)\| + r\|D^2g(x) \| + r \|D^2[g^{-1}](x)\| \right)\leq \Lambda.
	\end{align}
	Then, 
	\begin{align}
		\| g \circ f \circ g^{-1} - \emph{Id} \|_{C^1(\R^n),r} \lesssim_\Lambda \ve
	\end{align}
\end{lem}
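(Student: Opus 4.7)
The plan is to unwind the definition $\|F\|_{C^1(\R^n),r} = \sup_x \bigl(r^{-1}\|F(x)\| + \|DF(x)\|_{\op}\bigr)$ and estimate the $C^0$ and $D$ contributions separately via the chain rule and mean value inequalities, using the $C^2$ control on $g$ to compare $Dg$ at nearby points. Throughout write $y = g^{-1}(x)$.

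For the $C^0$ part, since $g \circ g^{-1} = \mathrm{Id}$, we can write $g \circ f \circ g^{-1}(x) - x = g(f(y)) - g(y)$. The mean value inequality together with the bound $\sup \|Dg\|_{\op} \leq \Lambda$ from \eqref{e:g-control} gives
\begin{align}
\|g(f(y)) - g(y)\| \leq \Lambda \, \|f(y) - y\| \leq \Lambda \varepsilon r,
\end{align}
so $r^{-1}\|g \circ f \circ g^{-1}(x) - x\| \lesssim_\Lambda \varepsilon$.

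For the derivative part, the chain rule yields
\begin{align}
D[g \circ f \circ g^{-1}](x) = Dg(f(y)) \, Df(y) \, D[g^{-1}](x).
\end{align}
Differentiating $g \circ g^{-1} = \mathrm{Id}$ gives $I_n = Dg(y) D[g^{-1}](x)$, so
\begin{align}
D[g \circ f \circ g^{-1}](x) - I_n = \bigl[Dg(f(y)) Df(y) - Dg(y)\bigr] D[g^{-1}](x).
\end{align}
The factor $D[g^{-1}](x)$ is bounded by $\Lambda$. For the bracket, I split
\begin{align}
Dg(f(y)) Df(y) - Dg(y) = Dg(f(y))\bigl[Df(y) - I_n\bigr] + \bigl[Dg(f(y)) - Dg(y)\bigr].
\end{align}
The first summand has operator norm at most $\Lambda \cdot \varepsilon$ using the hypothesis on $f$ and the bound on $Dg$. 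The second summand is where the $C^2$ control on $g$ enters: the mean value inequality applied to $Dg$, with $\|D^2 g\|_{\op} \leq \Lambda/r$, gives
\begin{align}
\|Dg(f(y)) - Dg(y)\|_{\op} \leq \frac{\Lambda}{r} \|f(y) - y\| \leq \Lambda \varepsilon.
\end{align}
Combining these with $\|D[g^{-1}](x)\|_{\op} \leq \Lambda$ yields $\|D[g\circ f \circ g^{-1}](x) - I_n\|_{\op} \lesssim_\Lambda \varepsilon$, finishing the proof.

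There is no real obstacle here: the lemma is a direct chain-rule computation, and the only slightly delicate point is that we need the $C^2$ control on $g$ (not just Lipschitz control) to estimate the difference $Dg(f(y)) - Dg(y)$, since the points $f(y)$ and $y$ are only $O(\varepsilon r)$ apart and we want a bound of order $\varepsilon$ rather than a constant. Note that the bounds on $D^2[g^{-1}]$ stated in \eqref{e:g-control} are not needed for this $C^1$ estimate; they would only matter if one were proving a $C^2$ analogue.
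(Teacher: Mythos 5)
Your proof is correct and follows essentially the same route as the paper's: the $C^0$ bound via the mean value inequality with $\|Dg\|\leq\Lambda$, and the derivative bound by chain rule plus a split into a "$Df-I_n$" term and a "$Dg(f(y))-Dg(y)$" term handled by the mean value inequality for $Dg$ using $\|D^2g\|\leq\Lambda/r$; the only difference is the (equivalent) algebraic arrangement of the splitting, and your remark that the $D^2[g^{-1}]$ bound is not needed is accurate.
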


\begin{proof}
	Fix $x \in \R^n.$ Applying the Mean Value Theorem with \eqref{e:g-control} and the estimate on $\|f-\text{Id}\|_{C^1(\R^n),r}$, we first have 
	\begin{align}
		\| g \circ f \circ g^{-1}(x) - x \| &= \| g \circ f \circ g^{-1}(x) - g \circ g^{-1}(x) \| \\
		& \leq \Lambda \|f \circ g^{-1}(x) - g^{-1}(x) \| \leq L\ve r. 
	\end{align}
	To estimate the derivatives, we first note that $\| D[g \circ f \circ g^{-1}](x)  - I_n \|$ is at most 
	\begin{align}
		 &\| D[g](f(g^{-1}(x))) - D[g](g^{-1}(x) \| \,  \| D[f](g^{-1}(x))\| \, \| D[g^{-1}](x) \| \\
		&\hspace{2em} + \|D[g](g^{-1}(x))  D[f](g^{-1}(x))  D[g^{-1}](x) -I_n\|
	\end{align}
	By the Mean Value Theorem, \eqref{e:g-control} and the estimate on $\|f-\text{Id}\|_{C^1(\R^n),r}$, the first term is at most 
	\begin{align}
		\Lambda^3r^{-1} \| f(g^{-1}(x)) - g^{-1}(x) \| \leq \ve \Lambda^3. 
	\end{align}
	Since $D[g](g^{-1}(x)) D[g^{-1}](x) = I_n,$ the second term is as most 
	\begin{align}
		\| D[g](g^{-1}(x)) ( D[f](g^{-1}(x)) - \text{Id}) D[g^{-1}](x) \| \leq \ve \Lambda^2,
	\end{align}
	which completes the proof. 
\end{proof}

\begin{lem}\label{l:linear-C^0}
	Let $\|\cdot\|$ be a norm on $\R^n$, $z \in \R^n$ and $\ve,r > 0.$ Let $H : \R^n \to \R^n$ be an affine function such that $\| H(x) - x \| \leq \ve r$ for all $x \in B_{\|\cdot\|}(z,r)$. Then, 
	\begin{align}\label{e:linear1}
		\| DH \cdot v  - v \| \lesssim \ve \| v \| 
	\end{align}
	for all $v \in \R^n.$ 
\end{lem}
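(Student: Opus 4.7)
The idea is to exploit the fact that $H$ being affine allows us to pass from pointwise control on $B_{\|\cdot\|}(z,r)$ to global control on the linear part by a simple scaling argument, since $DH$ is a constant linear map independent of $x$.

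First, I would write $H$ in the form $H(x) = Ax + b$ with $A \in \calB(\R^n)$ linear and $b \in \R^n$ constant, so that $DH = A$ everywhere and the desired conclusion becomes $\|(A - I)v\| \lesssim \ve \|v\|$ for all $v \in \R^n$. Since $H$ is affine, the translation term $b$ cancels when taking differences: for any $x,y \in \R^n$ one has $H(x) - H(y) = A(x-y)$, which gives the identity
\begin{equation}
(A-I)(x - y) = (H(x) - x) - (H(y) - y).
\end{equation}

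Next, I would use this identity to push the pointwise smallness assumption into a statement about vectors rather than points. Given any $w \in \R^n$ with $\|w\| \leq r$, both $z$ and $z + w$ lie in $B_{\|\cdot\|}(z, r)$, so applying the identity with $x = z + w$ and $y = z$ together with the hypothesis yields
\begin{equation}
\|(A-I)w\| \leq \|H(z+w) - (z+w)\| + \|H(z) - z\| \leq 2\ve r.
\end{equation}

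Finally, to pass from vectors of norm at most $r$ to arbitrary vectors, I would use homogeneity of the linear map $A - I$. For any $v \in \R^n$ with $v \neq 0$, set $w = rv/\|v\|$, so $\|w\| = r$; the previous step gives $\|(A-I)w\| \leq 2\ve r$, and by linearity
\begin{equation}
\|(A-I)v\| = \frac{\|v\|}{r} \|(A-I)w\| \leq 2\ve \|v\|,
\end{equation}
which is the claim. There is no real obstacle here: the entire argument rests on the two elementary facts that affine maps have constant derivative and that their increments depend only on the difference of the arguments, so the estimate at the single scale $r$ propagates to all scales for free via linearity.
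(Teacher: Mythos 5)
Your proof is correct and follows essentially the same route as the paper: both arguments cancel the affine translation by differencing values of $H$ at two points of the ball (the paper evaluates at the center after normalizing $z=0$, $r=1$) and then conclude by homogeneity of the linear part. No gaps.
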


\begin{proof}
	Without loss of generality we shall assume $z = 0$ and $r =1.$ Since $H$ is affine it is of the form $H = y + L$ for some linear map $L : \R^n \to \R^n$ and some point $y \in \R^n.$ Evaluating $H$ at the origin and using \eqref{e:linear1} gives $\| y \|_k \leq \ve .$ Then, for any $v \in \R^n,$ we have 
	\begin{align}
		\| DH \cdot v - v \|_k &=  \|v\| \bigg\| L\left(\frac{v}{\|v\|}\right) - \frac{v}{\|v\|} \bigg\|_k \leq  \|v\| \left( \bigg\| H\left(\frac{ v}{\|v\|}\right) - \frac{v}{\|v\|} \bigg\|_k + \|y\|_k \right) \\
		&\leq 2\ve \|v\|
	\end{align}
\end{proof}

\bigskip

\subsection{Cyclic maps} As in \cite{gallier2012parametric} and \cite{violo2021functional}, it will be convenient for us to introduce the following compatibility condition for functions defined on normed spaces. 

\begin{defn}\label{d:cyclic}
	Let $X,Y,Z$ be normed space and $r > 0.$ Suppose $f : X \to Y, g : Y \to Z$ and $h: X \to Z$ are bijective. We say the maps $f,g,h$ are $r$-\textit{cyclic} if the following holds. Set
	\begin{align}\label{e:def-I} I_{2,1} = f, \ I_{1,2} = f^{-1},  \  I_{3,2} = g, \ I_{2,3} = g^{-1}, \  I_{3,1} =h, \ I_{1,3} = h^{-1}
\end{align}
	and denote $B_1 = B_X(0,r), \ B_2 = B_Y(0,r)$ and $B_3 = B_Z(0,r).$ For each $\sigma \in S_3$ (the symmetric group) we have that
	\begin{align}
		\begin{split}\label{e:cyclic-hyp}
			&\mbox{ for all } x \in  B_{\sigma(1)} \mbox{ such that } I_{\sigma(2),\sigma(1)}(x) \in B_{\sigma(2)} \mbox{ and } I_{\sigma(3),\sigma(2)} \circ I_{\sigma(2),\sigma(1)}(x) \in  B_{\sigma(3)} \\ 
			&\mbox{ we have } I_{\sigma(3),\sigma(1)}(x)   = I_{\sigma(3),\sigma(2)} \circ I_{\sigma(2),\sigma(1)}(x).
		\end{split}
	\end{align}
\end{defn}

\begin{rem}\label{r:necessary}
	The reason for introducing such a notion will become more clear in later sections. For now, observe the following. Suppose $(U_i,\vp_i),$ $i=1,2,3$, are coordinate patches in a manifold $M$ satisfying $\vp_i(U_i) = B_i$. If $I_{i,j} = \vp_i \circ \vp_j^{-1}$ are their transition maps, then \eqref{e:cyclic-hyp} must hold. In Section \ref{s:disconnected} we will construct a manifold from transition maps satisfy the condition in \eqref{e:cyclic-hyp}. 
\end{rem}

\begin{rem}\label{r:cyc}
	It turns out we do not need to check \eqref{e:cyclic-hyp} for every $\sigma \in S_3.$ Indeed, suppose $\sigma \in S_3$ satisfies \eqref{e:cyclic-hyp}. If $\sigma' \in S_3$ is such that $\sigma'(1) = \sigma(3), \sigma'(2) = \sigma(2)$ and $\sigma'(3) = \sigma(1)$ then \eqref{e:cyclic-hyp} (with $\sigma'$) also holds. In particular, one only needs to consider the permutations $(1,2,3), (3,1,2)$ and $(1,3,2).$ 
\end{rem}

The following lemma gives us a way to check the cyclic condition. Roughly speaking it says that if we know \eqref{e:cyclic-hyp} is satisfied for at least one permutation and approximately satisfied for all remaining permutation, each with parameter $r$, then in fact \eqref{e:cyclic-hyp} is satisfies for all permutation with a slightly smaller parameter. 

\begin{lem}\label{l:check-cyc}
	
	Let $X,Y,Z$ be normed spaces and $r > 0.$ Suppose $f : X \to Y, g : Y \to Z$ and $h: X \to Z$ are bijective. Let $I_{i,j}$ and $B_i$ as in Definition \ref{d:cyclic}. Additionally, let $\|\cdot\|_1, \, \|\cdot\|_2$ and $\|\cdot\|_3$ denote the norms on $X,Y$ and $Z$ respectively. Let $L \geq 1$ and $\ve > 0$ such that $L\ve < 1$ and assume that each $I_{i,j}$ is $L$-bi-Lipschitz and for each $\sigma \in S_3$ and each $x \in  B_{\sigma(1)}$ that 
	\begin{align}\label{e:almost-cyc} \| I_{\sigma(3),\sigma(2)}\circ I_{\sigma(2),\sigma(1)}(x) - I_{\sigma(3),\sigma(1)}(x) \|_{\sigma(3)} \leq \ve r.
	\end{align}
	Moreover, suppose \eqref{e:cyclic-hyp} is satisfied for $\sigma = (1,2,3).$ Then the maps $I_{1,2},I_{2,3}$ and $I_{1,3}$ are $(1 - L\ve)r$-cyclic. 
\end{lem}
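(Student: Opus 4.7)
The plan is to verify the cyclic condition \eqref{e:cyclic-hyp} at the reduced radius $r' := (1-L\ve)r$ for every $\sigma \in S_3$. By Remark \ref{r:cyc} it suffices to treat $\sigma \in \{(1,2,3), (3,1,2), (1,3,2)\}$, and $\sigma = (1,2,3)$ is included in the hypothesis at the larger radius $r$, which immediately implies it at $r'$. So all that remains is to upgrade from the known $(1,2,3)$-cyclic condition to the other two permutations at radius $r'$, using the nearness estimate \eqref{e:almost-cyc}. Write $B_i' \subseteq B_i$ for the ball of radius $r'$ in the $i$-th space ($i = 1,2,3$).

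For $\sigma = (3,1,2)$, I would fix $x \in B_3'$ with $I_{1,3}(x) \in B_1'$ and $I_{2,1}\circ I_{1,3}(x) \in B_2'$, set $y := I_{1,3}(x) \in B_1$, and apply the given $(1,2,3)$ cyclic condition at $y$. Its first containment hypothesis $I_{2,1}(y) \in B_2$ is immediate. For the second, $I_{3,2}\circ I_{2,1}(y) \in B_3$, I would invoke \eqref{e:almost-cyc} at $y$ with $\sigma = (1,2,3)$, together with the algebraic identity $I_{3,1}(y) = I_{3,1}\circ I_{1,3}(x) = x$ built into \eqref{e:def-I}, to obtain $\|I_{3,2}\circ I_{2,1}(y) - x\|_3 \leq \ve r$; since $\|x\|_3 \leq r'$ and $L\geq 1$, this places the point in $B_3$. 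The $(1,2,3)$ cyclic conclusion then reads $x = I_{3,2}\circ I_{2,1}\circ I_{1,3}(x)$, and applying $I_{2,3}$ yields the desired identity $I_{2,3}(x) = I_{2,1}\circ I_{1,3}(x)$.

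For $\sigma = (1,3,2)$, I would instead apply the $(1,2,3)$ cyclic condition directly at the point $x \in B_1'$ already given. To verify $I_{2,1}(x) \in B_2$, compare it with $I_{2,3}\circ I_{3,1}(x) \in B_2'$ via \eqref{e:almost-cyc} for $\sigma = (1,3,2)$; to verify $I_{3,2}\circ I_{2,1}(x) \in B_3$, compare it with $I_{3,1}(x) \in B_3'$ via \eqref{e:almost-cyc} for $\sigma = (1,2,3)$. Both comparisons put the target within $\ve r$ of a point in the $r'$-ball, hence in the $r$-ball by $L \geq 1$. The $(1,2,3)$ conclusion $I_{3,1}(x) = I_{3,2}\circ I_{2,1}(x)$ then becomes, after applying $I_{2,3}$, the required identity $I_{2,3}\circ I_{3,1}(x) = I_{2,1}(x)$.

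The proof is essentially bookkeeping rather than genuine obstacle: the only real subtlety is noticing that in the $(3,1,2)$ case one must apply the given cyclic condition at $I_{1,3}(x)$ rather than at $x$ itself, and then exploit the inverse relation $I_{3,1}\circ I_{1,3} = \mathrm{id}$ to collapse the iterated composition at exactly the right moment. The bi-Lipschitz constant $L$ in fact never enters the containment estimates directly; the condition $L \geq 1$ already suffices for the argument to close with the shrinkage $(1-L\ve)r$, so the $L$-factor in the conclusion provides only a safety margin beyond what the triangle inequality requires.
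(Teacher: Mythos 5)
Your proof is correct and follows essentially the same route as the paper: reduce via Remark \ref{r:cyc} to the permutations $(1,2,3),(3,1,2),(1,3,2)$, and for $(3,1,2)$ apply the assumed $(1,2,3)$ relation at the shifted point $y=I_{1,3}(x)$ after verifying $I_{3,2}\circ I_{2,1}(y)\in B_3$ by a perturbation estimate. The only (harmless) difference is in that verification: the paper bounds $\|I_{3,2}(I_{2,1}(y))-x\|_3\le L\ve r$ using the $L$-bi-Lipschitz property of $I_{3,2}$ together with \eqref{e:almost-cyc} for $\sigma=(3,1,2)$ at $x$, whereas you get the sharper bound $\ve r$ directly from \eqref{e:almost-cyc} for $\sigma=(1,2,3)$ at $y$ and the identity $I_{3,1}(y)=x$ — which explains your observation that $L$ never enters your estimates, and both bounds fit inside the stated $(1-L\ve)r$ margin.
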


\begin{proof}
	As mentioned in Remark \ref{r:cyc}, we only need to check \eqref{e:cyclic-hyp} for the permutations $(1,2,3),(3,1,2)$ and $(1,3,2).$ The case $(1,2,3)$ holds by hypothesis. Consider next $(3,1,2).$ Let $\lambda = (1-L\ve)$ and let $x \in \lambda B_3$ such that $y = I_{1,3}(x) \in \lambda B_1$ and $I_{2,1}(y) = I_{2,1}(I_{1,3}(x)) \in \lambda B_2.$ Using that $I_{3,2}$ is $L$-bi-Lipschitz with \eqref{e:almost-cyc}, we have 
	\begin{align}
		\| I_{3,2}(I_{2,1}(y)) - x\|_{3} \leq L \| I_{2,1}(y) - I_{2,3}(x) \|_2 = L \| I_{2,1}(I_{1,3}(x)) - I_{2,3}(x) \|_3 \leq L\ve r. 
	\end{align}
		By our choice of $\lambda,$ this implies $I_{2,3}(I_{2,1}(y)) \in B_3.$ Using \eqref{e:cyclic-hyp} in the case of $(1,2,3)$ then gives 
	\begin{align}
		I_{2,3}(x) = I_{2,3}(I_{3,1}(y)) = I_{2,3}(I_{3,2}(I_{2,1}(y))) = I_{2,1}(I_{1,3}(x))). 
	\end{align}
	The proof for $(1,3,2)$ is almost identical, we omit the details. 
\end{proof}

\bigskip

\subsection{Christ-David cubes and Carleson conditions}

At various points in the paper It will be convenient for us to work with a version of ``dyadic cubes'' tailored to $X$. These are the so-called Christ-David cubes, which were first introduced by David \cite{david1988morceaux} and generalized in \cite{christ1990b} and \cite{hytonen2012non}. The following is a combination of the formulation in \cite{christ1990b} and \cite{hytonen2012non}.

\begin{lem}\label{cubes}
	Let $X$ be a doubling metric space and $X_k$ be a sequence of maximal $\varrho^k$-separated nets, where $\varrho = 1/1000$ and let $c_0 = 1/500.$ Then, for each $k \in \Z$, there is a collection $\mathcal{D}_k$ of ``cubes'' such that the following conditions hold.
	\begin{enumerate}
		\item For each $k \in \Z, \ X = \bigcup_{Q \in \calD_k}Q.$
		\item If $Q_1,Q_2 \in \calD = \bigcup_{k}\calD_k$ and $Q_1 \cap Q_2 \not= \emptyset,$ then $Q_1 \subseteq Q_2$ or $Q_2 \subseteq Q_1.$ 
		\item For $Q \in \calD_k$ set $\ell(Q) = 5\varrho^k.$ Then there is $x_Q \in X_k$ such that
		\begin{align*}
			B_X(x_Q,c_0\ell(Q)) \subseteq Q \subseteq B_X(x_Q , \ell(Q)). 
		\end{align*}
		\item If $X$ is Ahlfors $n$-regular, then there exists $C \geq 1$ such that $\calH^n( \{ x \in Q : \dist(x,X \setminus Q) \leq \eta \varrho^k \} ) \lesssim \eta^\frac{1}{C} \ell(Q)^d$ for all $Q \in \calD$ and $\eta > 0.$
	\end{enumerate}
\end{lem}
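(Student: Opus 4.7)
The plan is to follow the standard Christ construction \cite{christ1990b} as refined by Hyt\"onen--Martikainen \cite{hytonen2012non}. First I would fix a well-ordering $\prec$ on $\bigcup_{k} X_k$ and, using maximality of $X_k$, choose ancestor maps $a_k\colon X_{k+1} \to X_k$ by letting $a_k(y)$ be the $\prec$-smallest point of $X_k$ within distance $\varrho^k$ of $y$. Write $a_{k,m} := a_k \circ a_{k+1} \circ \cdots \circ a_{m-1}$ and $D_m(y) := a_{k,m}^{-1}(y)$ for the iterated ancestor and the descendants of $y \in X_k$ at level $m \geq k$. For each $y \in X_k$, define the preliminary cube
\[
\widetilde Q_{k,y} \; := \; \overline{\bigcup_{m \geq k} \bigcup_{z \in D_m(y)} B(z, c_0 \varrho^m)},
\]
and then, processing the points of $X_k$ in $\prec$-order, let $Q_{k,y} := \widetilde Q_{k,y} \setminus \bigcup_{y' \prec y} Q_{k,y'}$. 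Set $\calD_k := \{ Q_{k,y} : y \in X_k,\ Q_{k,y} \neq \emptyset\}$ with $x_{Q_{k,y}} := y$ and $\ell(Q_{k,y}) := 5 \varrho^k$.

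For property (1), every $x \in X$ lies in $B(z, c_0 \varrho^m)$ for some $z \in X_m$ at sufficiently large $m$ by density of the nets, so $x \in \widetilde Q_{k,y}$ and hence $x \in Q_{k,y}$ for some $y$. For the ball containment in (3), the ball $B(y, c_0 \ell(Q_{k,y}))$ enters $\widetilde Q_{k,y}$ at the $m = k$, $z = y$ term, and the $\varrho^k$-separation of $X_k$ combined with $c_0 = 1/500 < \varrho/2$ prevents any $y' \neq y$ from claiming it. The upper containment $Q_{k,y} \subseteq B(y, \ell(Q_{k,y}))$ follows from the geometric bound $d(z,y) \leq \sum_{j=k}^{m-1} \varrho^j < \varrho^k/(1 - \varrho)$ for $z \in D_m(y)$, so that $B(z, c_0 \varrho^m) \subseteq B(y, 5\varrho^k)$ with the specific values $\varrho = 1/1000$, $c_0 = 1/500$. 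Nesting (2) is automatic: descendant trees are themselves nested (any $z \in D_m(y)$ also lies in $D_m(a_{k-1}(y))$), and the $\prec$-pruning is compatible across scales because removing $Q_{k,y'}$ at scale $k$ carries along all its descendant cubes at finer scales.

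The main obstacle is property (4), the small-boundary estimate in the Ahlfors regular case. Following Christ, I would prove exponential decay in shells by showing that a definite proportion of the descendants $z \in D_{k+j}(x_Q)$ are \emph{interior}, in the sense that $B(z, c_0 \varrho^{k+j})$ does not meet $X \setminus Q$, with the proportion depending only on the doubling constant (controlled by $C_0$). Ahlfors regularity then converts counts of net points into measure and yields
\[
\calH^n\bigl(\{x \in Q : \dist(x, X \setminus Q) \leq \varrho^{k+j}\}\bigr) \lesssim (1-c)^j \ell(Q)^n
\]
for some $c = c(n, C_0) > 0$. Choosing $C$ so that $(1-c)^{1/C} \leq \varrho$ and taking $\eta = \varrho^j$ packages all $\eta > 0$ into the stated stretched-exponential estimate. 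The delicate part is a careful combinatorial count, using the doubling property, of how many points of $X_{k+j}$ near $\partial Q$ could have been reassigned to a neighboring cube under a different choice of $\prec$; this is the only place Ahlfors regularity is used substantively.
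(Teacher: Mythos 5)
The paper does not actually prove this lemma; it is quoted as a known result of Christ and Hyt\"onen--Martikainen, so what you are attempting is a reconstruction of the standard argument, and your overall architecture (nets, ancestor maps, descendant balls, order-based disjointification, iterative small-boundary estimate) is the right one. However, there is a genuine gap in your ancestor rule, and it kills property (3). You assign each $w\in X_{k+1}$ to the $\prec$-smallest point of $X_k$ within distance $\varrho^k$ of $w$; this rule does not respect proximity. Concretely, work in $\R$ with $y=0$, $y'=\varrho^k$ both in $X_k$ (separation exactly $\varrho^k$ is allowed), and $w=\varrho^k/200\in X_{k+1}$, so that $d(w,y)=\varrho^k/200$ and $d(w,y')=0.995\,\varrho^k\leq\varrho^k$. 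If $y'\prec y$, your rule sets $a_k(w)=y'$, so the ball $B(w,c_0\varrho^{k+1})$ (and all of $w$'s descendants' balls) lies in $\widetilde Q_{k,y'}$, yet it sits well inside $B(y,c_0\ell(Q_{k,y}))=B(y,\varrho^k/100)$. Since $y'\prec y$, your pruning removes exactly this region from $Q_{k,y}$, so $B(x_Q,c_0\ell(Q))\not\subseteq Q$. The separation argument you invoke cannot repair this: note $c_0=1/500$ is \emph{not} less than $\varrho/2=1/2000$, and in any case $\widetilde Q_{k,y'}$ reaches out to distance roughly $\varrho^k/(1-\varrho)$ from $y'$, i.e.\ past the separation radius; also the $m=k$ term of $\widetilde Q_{k,y}$ only contributes $B(y,c_0\varrho^k)$, whose radius is $c_0\ell(Q)/5$, so even the containment $B(y,c_0\ell(Q))\subseteq\widetilde Q_{k,y}$ needs the descendants plus closure. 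The missing idea, which is precisely the point of the Christ and Hyt\"onen--Martikainen constructions, is that the parent assignment must be proximity-respecting: at a minimum, whenever $d(w,y)\leq\varrho^k/2$ for some $y\in X_k$ one must set $a_k(w)=y$ (e.g.\ nearest-point assignment with ties broken by $\prec$). Then any child assigned to $y'\neq y$ satisfies $d(w,y)\geq\varrho^k/2$, its descendants stay at distance $\gtrsim\varrho^k/2-\varrho^{k+1}/(1-\varrho)$ from $y$, and the central ball is genuinely protected.

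A second, related gap is your claim that nesting (2) is ``automatic.'' If you disjointify each generation separately with an arbitrary well-order, a point $x\in Q_{k+1,w}$ with $a_k(w)=y$ may also lie in $\widetilde Q_{k,y'}$ through a different descendant chain with $y'\prec y$; your level-$k$ pruning then assigns $x$ to a cube other than $Q_{k,y}$, so $Q_{k+1,w}$ meets two distinct level-$k$ cubes without being contained in either. The cited constructions avoid this by making the disjointification coherent across scales (a level-$k$ cube is defined as the union of the cubes of its children, equivalently the order on children is made compatible with the order on parents), and you need to build this in rather than assert it. Your outline of (4) is the standard iteration and is fine in spirit once the construction is repaired, although the mechanism is not a count of ``reassignments under a different choice of $\prec$''; it is the fact that, by doubling/regularity, near every point of $Q$ there is a next-generation net point whose entire cube lies at definite distance from $X\setminus Q$, which gives the geometric decay of the shells and hence the $\eta^{1/C}$ bound.
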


Given a collection of cubes $\calD$ and $Q  \in \calD,$ define 
\begin{align}\label{d:subcubes}
	\calD(Q) = \{R \in \calD : R \subseteq Q\}.
\end{align}
Let $\text{Parent}(Q)$ be the unique cube $R \in \calD$ such that $R \supseteq Q$ and $\ell(Q) = \varrho \ell(Q).$ Let $\text{Child}(Q)$ denote the collection of cubes $R \subseteq Q$ such that $Q = \text{Parent}(R).$ Let $\text{Sibling}(Q)$ be the collection of cubes $R$ such that $\text{Parent}(R) = \text{Parent}(Q).$ 

\begin{rem}\label{r:contained-cubes}
	If $A > 2$ and $Q,R \in \calD$ are such that $Q \subseteq R$ it is easy to see that $AB_Q \subseteq AB_R.$ For a proof see \cite[Lemma 2.10]{azzam2018analyst}.
\end{rem}

\begin{defn}\label{d:packing}
	Let $X$ be an Ahlfors $n$-regular metric space and $\calD$ a system of Christ-David cubes for $X$. A collection of cubes $\calB \subseteq \calD$ satisfies a \textit{Carleson packing condition} if there exists $C > 0$ such that
	\begin{align}
		\sum_{\substack{Q \in \calB \\ Q \subseteq Q_0}} \ell(Q)^n \leq C\ell(Q_0)^n
	\end{align}
	for all $Q_0 \in \calD.$ 
\end{defn}

The Carleson packing condition is a natural measure of smallness. If $\calB$ satisfies a Carleson packing condition then roughly speaking there are not too many cubes in $\calB.$ The Carleson packing condition is a discretized version of the usual Carleson type conditions defined below. 

\begin{defn}\label{d:Carleson-set}
	Let $X$ be an Ahlfors $n$-regular metric space. A non-negative Borel measure $\mu$ on $X \times (0,\diam(X))$ is called a \textit{Carleson measure} if there exists $C > 0$ such that $\mu(B(x,r) \times (0,r)) \leq C r^n$ for all $x \in X$ and $0 < r < \diam(X).$ The smallest such $C$ is called the \textit{Carleson norm} of $\mu$. A set $\mathscr{A} \subseteq X \times (0,\diam(X))$ is a \textit{Carleson set} if $\mathds{1}_\mathscr{A}d\calH^n(x)\tfrac{dr}{r}$ is a Carleson measure.  
\end{defn}

It is easy to show the following, which allows us to pass between the conditions and discrete Carleson conditions. 

\begin{lem}\label{l:set-packing}
	Let $K > 1$ and $X$ be Ahlfors $(C_0,n)$-regular. Let $f$ a real valued function on $X \times (0,\diam(X))$ and suppose there exists $C \geq 1$ such that $r f(x,r) \leq Csf(y,s)$ for all $x,y \in X$ and $0 \leq r \leq s < \diam(X)$ which satisfy $B(x,r) \subseteq B(y,s).$ Let $\calD$ be a system of dyadic cubes for $X$. For each $Q \in \calD,$ set $f(Q) \coloneqq f(x_Q,K\ell(Q)).$ Then, $\{(x,r) \in X \times (0,\diam(X)) : f(x,r) > \ve \}$ is a Carleson set for each $\ve > 0$ if and only if $\{Q \in \calD : f(Q) > \ve\}$ satisfies a Carleson packing condition of each $\ve > 0.$ The Carleson norms depending quantitatively on each other and $C_0.$ 
\end{lem}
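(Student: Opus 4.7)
The core idea will be that the hypothesis $rf(x,r)\leq Csf(y,s)$ for $B(x,r)\subseteq B(y,s)$ acts as a quasi-monotonicity of $rf(\cdot,\cdot)$ in ball containment, letting me translate between continuous level sets $\{f>\ve\}$ and their dyadic counterparts by choosing, for each ball, a nearby Christ--David cube of comparable size whose dilate contains (resp.\ is contained in) it.

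\textbf{Packing implies Carleson.} Given $(x,r)$ with $f(x,r)>\ve$, I would pick the unique cube $Q\ni x$ of the coarsest dyadic generation with $\ell(Q)\geq r/(K-1)$, which forces $\ell(Q)\in[r/(K-1),\,r/(\varrho(K-1)))$. Since $x\in Q$, one has $B(x,r)\subseteq B(x_Q,r+\ell(Q))\subseteq B(x_Q,K\ell(Q))$, and the hypothesis yields
\[
f(Q)=f(x_Q,K\ell(Q))\geq \frac{r}{CK\ell(Q)}f(x,r)\geq \ve',\qquad \ve':=\tfrac{\ve\varrho(K-1)}{CK},
\]
so $Q\in\calB_{\ve'}$. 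To bound $\mu(B(x_0,r_0)\times(0,r_0))$ where $\mu=\mathds{1}_{\mathscr{A}_\ve}d\calH^n\,dr/r$, observe that for each $r\in(0,r_0)$ the level set $\{x\in B(x_0,r_0):f(x,r)>\ve\}$ is covered by cubes in $\calB_{\ve'}$ from a bounded number of dyadic scales, all lying inside $B(x_0,Cr_0)$. Since $\int_{\ell(Q)\sim r}\,dr/r=O(1)$ and Ahlfors regularity gives $\calH^n(Q)\lesssim \ell(Q)^n$, swapping sum and integral yields
\[
\mu(B(x_0,r_0)\times(0,r_0))\lesssim \sum_{\substack{Q\in\calB_{\ve'}\\ Q\subseteq B(x_0,Cr_0),\,\ell(Q)\lesssim r_0}}\ell(Q)^n.
\]
I would then finish by covering $B(x_0,Cr_0)$ with $\lesssim 1$ cubes of size $\sim r_0$ (using doubling / Ahlfors regularity) and applying the packing hypothesis on each.

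\textbf{Carleson implies packing.} For each $Q\in\calB_\ve$ I would thicken the single ball $(x_Q,K\ell(Q))$ into a positive-measure region of $\mathscr{A}_{\ve'}$: for any $y\in B(x_Q,K\ell(Q)/2)$ and $s\in[3K\ell(Q)/2,\,2K\ell(Q)]$, the triangle inequality gives $B(x_Q,K\ell(Q))\subseteq B(y,s)$, so
\[
f(y,s)\geq \frac{K\ell(Q)}{Cs}f(Q)\geq \ve/(2C)=:\ve'.
\]
Thus the box $\mathscr{S}_Q:=B(x_Q,K\ell(Q)/2)\times[3K\ell(Q)/2,\,2K\ell(Q)]$ lies inside $\mathscr{A}_{\ve'}$ with $\mu(\mathscr{S}_Q)\sim \ell(Q)^n$. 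The family $\{\mathscr{S}_Q\}$ has bounded overlap because the $r$-window pins $\ell(Q)$ to $\lesssim 1$ dyadic scales, and at each such scale the separation of the net $X_k$ combined with Ahlfors regularity bounds the number of cube centres $x_Q$ in any fixed ball by a constant. For $Q\in\calD(Q_0)$ the inclusion $\mathscr{S}_Q\subseteq B(x_{Q_0},C\ell(Q_0))\times(0,C\ell(Q_0))$ is immediate, so summing and applying the Carleson norm of $\mathscr{A}_{\ve'}$ gives
\[
\sum_{Q\in\calB_\ve\cap\calD(Q_0)}\ell(Q)^n\lesssim \mu\bigl(\mathscr{A}_{\ve'}\cap (B(x_{Q_0},C\ell(Q_0))\times(0,C\ell(Q_0)))\bigr)\lesssim \ell(Q_0)^n.
\]

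\textbf{Main difficulty.} Conceptually there is no real obstacle once the quasi-monotonicity is exploited. The only care needed is tracking the multiplicative loss $\ve\to\ve'$ in each direction (depending on $K$, $C$, $\varrho$, and the Ahlfors constant $C_0$), and rigorously verifying the bounded overlap of $\{\mathscr{S}_Q\}$ from the partition property and separation of Christ--David cubes at a common level. Both are routine checks and no deeper ideas seem to be required.
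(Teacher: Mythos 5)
The paper offers no proof of this lemma (it is asserted as easy), and your argument is correct and is precisely the standard one: the quasi-monotonicity $rf(x,r)\leq Csf(y,s)$ transfers badness between a ball and a nearby Christ--David cube of comparable scale in both directions, with the $dr/r$ integral over the pinned scale window plus a bounded cover by cubes of size $\sim r_0$ giving packing $\Rightarrow$ Carleson, and the boxes $\mathscr{S}_Q$ of bounded overlap giving Carleson $\Rightarrow$ packing. The only points to tidy are the word ``coarsest'' (you mean the finest generation with $\ell(Q)\geq r/(K-1)$, as your stated interval shows) and the trivial truncation for the boundedly many cubes with $2K\ell(Q)$ comparable to $\diam(X)$, whose packing contribution is absorbed directly by Ahlfors regularity.
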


Finally, we define what we mean by stopping-time region. 

\begin{defn}\label{StoppingTime}
	Suppose $X$ is a metric space and $\calD$ is a system of Christ-David cubes on $X$. A collection $S \subseteq \calD$ is called a \textit{stopping-time region} if the following conditions hold.
	\begin{enumerate}
		\item There is a \textit{top cube} $Q(S) \in S$ such that $Q(S)$ contains all cubes in $S$. 
		\item If $Q \in S$ and $Q \subseteq R \subseteq Q(S),$ then $R \in S$.
		\item If $Q \in S$, then all siblings of $Q$ are also in $S$. 
	\end{enumerate}
	Let $\min(S)$ denote the \textit{minimal cubes} of $S$ i.e. those cubes in $S$ whose children are not in $S$. 
\end{defn}

\newpage

\section{UR and BWGL in metric spaces and the $\gamma$-coefficients}\label{s:UR}
\etocsettocstyle{Contents of this section}{}
\etocsettocmargins[1.5em]{.175\linewidth}{.175\linewidth}

\localtableofcontents

\bigskip

\subsection{Main definitions}

Although we have explained roughly each of the conditions (1), (2) and (3) in the introduction, we shall define them explicitly here. We shall also prove some useful properties concerning them. 

\begin{defn}\label{d:ADR}
	A metric space $X$ is said to be \textit{Ahlfors $n$-regular} if there exists a constant $C_0 \geq 1$ such that for each $x \in X$ and $0 < r < \diam(X)$ we have 
	\begin{align}
		C_0^{-1} r^n \leq \mathcal{H}^n(B_X(x,r)) \leq  C_0 r^n.
	\end{align}
	If only the first inequality holds we say $X$ is \textit{Ahlfors lower $n$-regular}. If only the second inequality holds we say $X$ is \textit{Ahlfors upper $n$-regular}. We call $C_0$ the \textit{regularity constant}. If the context is clear we may simply write Ahlfors regular, Ahlfors lower regular or Ahlfors upper regular. 
\end{defn}

\begin{defn}\label{d:UR-metric}
	Let $X$ be an Ahlfors $n$-regular metric space and $\calF$ a family of Ahlfors $n$-regular metric spaces with uniform regularity constant. We say that $X$ has \textit{big pieces of} $\calF$ if there exists a constant $\theta > 0$ such that for each $x \in X$ and $0 < r < \diam(X)$ there exists $F \in \calF$ which satisfies 
	\begin{align}
		\calH^n(F \cap B_X(x,r))  \geq \theta r^n. 
	\end{align}
\end{defn}

Both the BWGL and the Carleson condition for the $\gamma$-coefficients are stated in terms of Carleson sets, see Definition \ref{d:Carleson-set}. The relevant coefficient for defining the BWGL is $\bilat_X$, which is a Gromov-Hausdorff version of $b\beta$, measuring local approximations by $n$-dimensional Banach spaces. 

\begin{defn}\label{d:bilat}
	Let $(X,d)$ be a metric space, $x \in X$ and $0 < r < \diam(X).$ For a norm $\| \cdot \|$ on $\R^n$, let $\Phi(x,r,\|\cdot\|)$ denote the collection of all mappings of the form $\vp : B(x,r) \rightarrow B_{\|\cdot\|}(0,r).$ For $\vp \in \Phi(x,r,\|\cdot\|)$, define
	\begin{align}
		\unilat_X(x,r,\|\cdot\|,\vp) &=  \frac{1}{r} \sup_{y,z \in B(x,r)}\left| \, d(y,z) - \|\vp(y) - \vp(z) \| \, \right| ;\\
		\eta_X(x,r,\|\cdot\|,\vp) &= \frac{1}{r}\sup_{u\in B_{\|\cdot\|}(0,r)} {\dist}_{\|\cdot\|}(u,\vp(B(x,r)));\\
		\bilat_X(x,r,\|\cdot\|,\varphi) &= \unilat_X(x,r,\vp,\|\cdot\|) + \eta_X(x,r,\vp,\|\cdot\|), 
	\end{align}
	and
	\[\bilat_X(x,r) = \inf_{\|\cdot\|} \inf_{\vp \in \Phi(x,r,\|\cdot\|)} \bilat_{X}(x,r,\|\cdot\|,\vp).\]
\end{defn}

\begin{rem}
	If $X \subseteq \R^m$ and $\|\cdot\|$ is the Euclidean norm on $\R^n$ then the coefficient $\unilat_X$ behaves in some sense like $\beta^2$, where $\beta$ is the Jones $\beta$-number, as defined in \cite{jones1990rectifiable}. This behaviour was suggested in \cite{david1999reifenberg} and a rigorous statement was proven in \cite{violo2022remark}. See also \cite{badger2020subsets} and \cite{edelen2018effective}.
\end{rem}

\begin{defn}\label{d:BWGL}
	Let $X$ be an Ahlfors $n$-regular metric space. For $\ve > 0$ we say $X$ satisfies the \textit{Bi-lateral Weak Geometric Lemma with parameter $\ve$} (BWGL$(\ve)$) if the set $\{(x,r) \in E \times (0,\diam(E)) : \bilat_X(x,r) > \ve\}$ is a Carleson set. We say $X$ satisfies the \textit{Bi-lateral Weak Geometric Lemma} (BWGL) if for every $\ve > 0$ it satisfies the BWGL$(\ve)$. 
\end{defn}

To define the Carleson condition for the $\gamma$-coefficients we of course need to define the $\gamma$-coefficients. For a metric space $X,$ a ball $B_X(x,r)$, a norm $\|\cdot\|$ and a Lipschitz $f \colon X \to (\R^n,\|\cdot\|),$ we introduce the coefficient $\gamma_{X,f,\|\cdot\|}^K(x,r)$ which as well as measuring how bi-laterally flat $B_X(x,r)$ is (in the sense of $\bilat_X(x,r)$), also measures how well $f$ is approximated by $K$-Lipschitz ``affine functions on $B_X(x,r)$". Since $X$ is an arbitrary metric space, it doesn't make sense to talk about affine functions. The coefficient really measures how well $f$ is well-approximated by affine functions defined on the best approximating tangent space to $B_X(x,r)$ and is defined below.

\begin{defn}\label{d:gamma}
	Let $(X,d)$ be a metric space, $x \in X$ and $0 < r < \diam(X).$ Let $\| \cdot \|_0$ and $\|\cdot\|$ be norms on $\R^n$. For $K \geq 1$ and mappings $f :X \rightarrow (\R^n,\|\cdot\|_0)$ and $\vp \colon B(x,r) \to (\R^n,\|\cdot\|)$, define
	\[\Omega_{X,f,\|\cdot\|_0}^K(x,r,\|\cdot\|,\vp) = \frac{1}{r} \inf_A \sup_{y \in B(x,r)} \| f(y) - A(\vp(y))\|_0,\] 
	where the infimum is taken over all affine mappings $A : \R^n \rightarrow \R^n$ with $\text{Lip}(A) \leq K$ when viewed as a function from $(\R^n,\|\cdot\|)$ to $(\R^n,\|\cdot\|_0)$. Then, define
	\[\gamma_{X,f,\|\cdot\|_0}^K(x,r) = \inf_{\|\cdot\|} \inf_{\vp \in \Phi(x,r,\|\cdot\|)} [\bilat_X(x,r,\|\cdot\|,\varphi) + \Omega_{X,f,\|\cdot\|_0}^K(x,r,\|\cdot\|,\vp)].\]	
\end{defn}

The Carleson condition for the $\gamma$-coefficients is a sort of quantitative differentiation condition for Lipschitz functions on a space $X$. It states that, at most scales and locations, the space $X$ is approximately bi-laterally flat and Lipschitz functions are well-approximated by affine functions with uniform gradient on $X$. 

\begin{defn}\label{d:carleson-gamma}
	Let $X$ be an Ahlfors $n$-regular metric space. We say $X$ satisfies a \textit{Carleson condition for the $\gamma$-coefficients} if there exists $K \geq 1$ and for each $\ve > 0$ a constant $C = C(\ve)$ such that if $\|\cdot\|_0$ is a norm and $f \colon X \to ( \R^n,\|\cdot\|_0)$ is 1-Lipschitz, then 
	\begin{align}\label{d:gamma-large}
		\{(x,r) \in \diam(X) \times (0,\diam(X)) : \gamma_{X,f}^K(x,r) > \ve\}
	\end{align}
	is Carleson set with Carleson norm $C$.
\end{defn}

The following is immediate from the definitions. 

\begin{lem}\label{l:WALAM-BWGL}
	Let $X$ be a metric space, $x \in X$ and $0 < r < \diam(X)$. Let $K \geq 1,$ $\|\cdot\|_0$ a norm on $\R^n$ and $f \colon X \to (\R^n,\|\cdot\|_0)$ a 1-Lipschitz function. Then, $\gamma_{X,f,\|\cdot\|_0}^K(x,r) \geq \bilat_X(x,r).$ In particular, a Carleson condition for the $\gamma$-coefficients implies the \emph{BWGL}. 
\end{lem}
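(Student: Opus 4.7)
The claim is essentially immediate from unwinding the definitions, so my plan is simply to compare the two infima term-by-term and then translate a pointwise inequality into a containment of sets.

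First I would observe that the functional
\[
\Omega_{X,f,\|\cdot\|_0}^K(x,r,\|\cdot\|,\vp) = \frac{1}{r}\inf_{A}\sup_{y\in B(x,r)}\|f(y)-A(\vp(y))\|_0
\]
is non-negative, being an infimum of suprema of norms. Therefore, for every choice of norm $\|\cdot\|$ on $\R^n$ and every $\vp\in\Phi(x,r,\|\cdot\|)$,
\[
\bilat_X(x,r,\|\cdot\|,\vp)+\Omega_{X,f,\|\cdot\|_0}^K(x,r,\|\cdot\|,\vp)\geq \bilat_X(x,r,\|\cdot\|,\vp).
\]
Taking the double infimum over $\|\cdot\|$ and $\vp$ on both sides yields $\gamma_{X,f,\|\cdot\|_0}^K(x,r)\geq \bilat_X(x,r)$.

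For the second assertion, suppose $X$ satisfies a Carleson condition for the $\gamma$-coefficients with constant $K\geq 1$, and fix $\ve>0$. Choose any $1$-Lipschitz function $f\colon X\to (\R^n,\|\cdot\|_0)$ (for instance the constant map, which is trivially $1$-Lipschitz for any norm $\|\cdot\|_0$). By the pointwise inequality just established,
\[
\{(x,r)\in X\times(0,\diam(X)):\bilat_X(x,r)>\ve\}\subseteq\{(x,r)\in X\times(0,\diam(X)):\gamma_{X,f,\|\cdot\|_0}^K(x,r)>\ve\}.
\]
Since the right-hand side is a Carleson set by hypothesis, and any subset of a Carleson set is again Carleson (the indicator is pointwise smaller, so $\mathds{1}_{\mathscr A}\,d\calH^n\tfrac{dr}{r}$ is dominated by a Carleson measure), the left-hand side is a Carleson set as well. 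As $\ve>0$ was arbitrary, $X$ satisfies the BWGL.

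There is no substantive obstacle here — the content of the lemma is entirely in the definitions, with the only point worth flagging being that the Carleson condition for $\gamma$ quantifies over \emph{all} $1$-Lipschitz $f$, so we are free to pick a convenient one (any constant map) to extract the BWGL conclusion, which involves no Lipschitz function at all.
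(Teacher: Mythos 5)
Your proof is correct and is exactly the argument the paper has in mind — the paper states this lemma as "immediate from the definitions," and your term-by-term comparison ($\Omega\ge 0$ gives $\gamma\ge\bilat$ pointwise) followed by fixing one convenient $1$-Lipschitz $f$ and using that subsets of Carleson sets are Carleson is precisely that unwinding. Nothing further is needed.
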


\begin{rem}
	In \cite{david1993analysis}, David and Semmes introduced the \textit{Weak Approximation of Lipschitz functions by Affine functions} (WALA) for subsets of Euclidean space. Roughly speaking, $X \subseteq \R^m$ satisfies the WALA condition if for any given 1-Lipschitz function $f \colon X \to \R^m$, and for most $(x,r) \in X \times \diam(X)$ there exists an affine function defined on $\R^n$ which well-approximates $f$ in $L^\infty(B_X(x,r))$. By most, we mean the complement of these pairs is a Carleson set. While it is tempting to think of the Carleson condition for the $\gamma$-coefficients as a metric version of the WALA ($f$ is well-approximated by affine functions at most scales and locations), this is not really the case. It is more accurately a metric version of BWGL + WALA. We need to include the BWGL condition so that the geometry of $X$ is well-represented in the tangent space. Roughly speaking, this allows us to view the affine function on the tangent space as being an affine function on $X$. The Carleson condition on the $\gamma$-coefficients is seemingly much stronger than the WALA. As we will see from Theorem \ref{t:equiv}, a Carleson condition on the $\gamma$-coefficients is equivalent to UR. In Euclidean space, it is still an open problem to determine whether or not WALA implies UR.
\end{rem}




\bigskip

\subsection{Preliminaries with the $\gamma$-coefficient} Let us start by proving some basic properties regarding the coefficients $\gamma_{X,f,\|\cdot\|_0}^K.$


\begin{lem}
	Let $X$ be a metric space, $x \in X$ and $0 < r < \diam(X)$. If $\|\cdot\|$ is a norm on $\R^n$ and $\vp \in \Phi(x,r,\|\cdot\|)$ then $\| \vp(x) \| \leq 10 \bilat_X(x,r,\|\cdot\|,\vp) r.$ 
\end{lem}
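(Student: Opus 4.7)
The plan is to observe that the statement is essentially a restatement of the conclusion \eqref{e:near-centre} of Lemma \ref{l:GH-scale}, once one recognizes that any $\vp \in \Phi(x,r,\|\cdot\|)$ is automatically a Gromov--Hausdorff approximation from $B_X(x,r)$ onto $B_{\|\cdot\|}(0,r)$ with quantitative constant controlled by $\bilat_X(x,r,\|\cdot\|,\vp)$. Write $\delta = \bilat_X(x,r,\|\cdot\|,\vp)$. By definition $\delta = \unilat_X(x,r,\|\cdot\|,\vp) + \eta_X(x,r,\|\cdot\|,\vp)$, so both summands are bounded by $\delta$. The first bound says $\vp$ is a $\delta r$-isometry, while the second says $\vp(B_X(x,r))$ is $\delta r$-dense in $B_{\|\cdot\|}(0,r)$. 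In the language of Definition \ref{d:GHA}, $\vp$ is a $\delta r$-GHA between $B_X(x,r)$ and $B_{\|\cdot\|}(0,r)$, which are balls of the same radius $r$.

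With this identification, Lemma \ref{l:GH-scale} applies with $B = B_X(x,r)$, $B' = B_{\|\cdot\|}(0,r)$ and target normed space $(\R^n,\|\cdot\|)$, yielding $\vp(x) \in 10\delta B_{\|\cdot\|}(0,r)$, i.e.\ $\|\vp(x)\| \leq 10\delta r = 10\,\bilat_X(x,r,\|\cdot\|,\vp)\,r$. This completes the proof.

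If one prefers a self-contained argument (essentially reproducing the short proof of \eqref{e:near-centre}), the strategy is by contradiction: assume $\|\vp(x)\| > 10\delta r$ and consider $z = -r\vp(x)/\|\vp(x)\| \in B_{\|\cdot\|}(0,r)$, which satisfies $\|z - \vp(x)\| = r + \|\vp(x)\| > r + 10\delta r$. By the density bound $\eta_X(x,r,\|\cdot\|,\vp) \leq \delta$, for every $\eta > 0$ there exists $y \in B_X(x,r)$ with $\|\vp(y) - z\| \leq \delta r + \eta$. Then the isometry bound $\unilat_X(x,r,\|\cdot\|,\vp) \leq \delta$ gives
\begin{equation*}
 d(y,x) \;\geq\; \|\vp(y) - \vp(x)\| - \delta r \;\geq\; \|z - \vp(x)\| - \|\vp(y) - z\| - \delta r \;>\; r + 8\delta r - \eta,
\end{equation*}
which, for $\eta$ small, contradicts $y \in B_X(x,r)$.

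There is no real obstacle here: the lemma is a direct bookkeeping consequence of the already-proved Lemma \ref{l:GH-scale}, and is presumably recorded as a standalone statement only to be cited in the $\gamma$-coefficient manipulations that follow. The only minor care needed is to notice that the infimum defining $\eta_X$ may not be attained, so one either invokes $\eta_X \leq \delta$ to extract near-optimal points $y$ (as above) or appeals directly to \eqref{e:near-centre}, which handles this subtlety internally.
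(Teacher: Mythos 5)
Your proposal is correct and matches the paper's approach: the paper's own proof consists of the single remark that the argument is ``very similar to \eqref{e:near-centre}'' of Lemma \ref{l:GH-scale}, which is exactly the reduction (and the contradiction argument with the antipodal point $z = -r\vp(x)/\|\vp(x)\|$) that you carry out. The only microscopic slack is in the degenerate case $\bilat_X(x,r,\|\cdot\|,\vp)=0$, where your final bound $d(y,x) > r + 8\delta r - \eta$ is no longer a contradiction; but since you already recorded the exact identity $\|z-\vp(x)\| = r + \|\vp(x)\|$, keeping it in the last chain of inequalities gives $d(y,x) \geq r + \|\vp(x)\| - 2\delta r - \eta$ and disposes of that case as well.
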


\begin{proof}
	The proof very similar to \eqref{e:near-centre}, we omit the details. 
\end{proof}

\begin{lem}
	Let $K \geq 1$, $X$ be a metric space, $\|\cdot\|_0$ a norm and $f:X \rightarrow (\R^n,\|\cdot\|_0)$ be $1$-Lipschitz. Then, 
	 
	\begin{align}\label{e:u1}
		\bilat_X(x,r) \leq \gamma_{X,f,\|\cdot\|_0}^K(x,r) \lesssim 1
	\end{align}
	for all $x \in X$ and $0 < r< \diam(X).$ Furthermore, if $y \in X$ and $ 0 < s < \diam(X)$ are such that $B(x,r) \subseteq B(y,s),$ then 
	\begin{align}\label{e:u1'}
		\bilat_X(x,r) \lesssim \frac{s}{r} \bilat_X(y,s)
	\end{align}
	and 
	\begin{align}\label{e:u2}
		\gamma_{X,f,\|\cdot\|_0}^K(x,r) \lesssim_K \frac{s}{r} \gamma_{X,f,\|\cdot\|_0}^K(y,s). 
	\end{align}
\end{lem}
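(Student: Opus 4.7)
The left inequality in \eqref{e:u1} is immediate from the definition, since for any admissible $\|\cdot\|$, $\vp$, the quantity $\Omega^K_{X,f,\|\cdot\|_0}$ is non-negative, so $\bilat_X(x,r,\|\cdot\|,\vp) \leq \bilat_X(x,r,\|\cdot\|,\vp) + \Omega^K_{X,f,\|\cdot\|_0}(x,r,\|\cdot\|,\vp)$, and taking infima preserves the inequality (this is essentially Lemma \ref{l:WALAM-BWGL}). For the upper bound $\gamma \lesssim 1$, I will plug in the trivial test data: fix any norm $\|\cdot\|$ on $\R^n$, let $\vp \equiv 0 \in \Phi(x,r,\|\cdot\|)$, and let $A$ be the constant affine map $A \equiv f(x)$, which is $0$-Lipschitz (so trivially $K$-Lipschitz). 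Then $\unilat_X \leq 2$, $\eta_X \leq 1$, and $\Omega^K \leq 1$ because $\|f(y)-f(x)\|_0 \leq d(y,x) \leq r$ by the $1$-Lipschitz hypothesis on $f$.

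For \eqref{e:u1'} and \eqref{e:u2}, both claims follow by restriction. The key tool is Lemma \ref{l:GH-scale} which, given a $\delta s$-GHA $\vp \colon B(y,s) \to B_{\|\cdot\|}(0,s)$, produces a $50\delta s$-GHA $\vp' \colon B(x,r) \to B_{\|\cdot\|}(\vp(x),r)$ (taking $\alpha = r/s$, which is legal provided $2\delta < r/s$). Translating by $-\vp(x)$ gives $\vp''(z) := \vp'(z) - \vp(x) \in \Phi(x,r,\|\cdot\|)$, also a $50\delta s$-GHA, hence
\begin{align}
	\bilat_X(x,r,\|\cdot\|,\vp'') \leq 50\delta \cdot \tfrac{s}{r}.
\end{align}
Taking the infimum over $\|\cdot\|$ and $\vp$ yields \eqref{e:u1'} in the regime $2\bilat_X(y,s) < r/s$; in the complementary regime, $\tfrac{s}{r}\bilat_X(y,s) \geq 1/2$ and \eqref{e:u1'} follows from the bound $\bilat_X(x,r) \lesssim 1$ already established in \eqref{e:u1}.

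For \eqref{e:u2}, use the same $\vp''$ together with a recentered affine map $\tilde{A}(w) := A(w + \vp(x))$, which is still $K$-Lipschitz since translation preserves the Lipschitz constant. Writing $\psi(z) = \vp'(z)$, the estimate \eqref{e:phi-tildephi} gives $\|\vp(z)-\psi(z)\| \leq \delta s$ for $z \in B(x,r)$, so for each such $z$,
\begin{align}
	\|f(z) - \tilde{A}(\vp''(z))\|_0 = \|f(z) - A(\psi(z))\|_0 \leq \|f(z) - A(\vp(z))\|_0 + K\delta s.
\end{align}
Dividing by $r$ and taking the sup yields $\Omega^K(x,r,\|\cdot\|,\vp'') \leq \tfrac{s}{r}\bigl[\Omega^K(y,s,\|\cdot\|,\vp) + K\delta\bigr]$, and combining with the $\bilat$ bound above,
\begin{align}
	\gamma^K_{X,f,\|\cdot\|_0}(x,r) \leq \bilat_X(x,r,\|\cdot\|,\vp'') + \Omega^K(x,r,\|\cdot\|,\vp'') \lesssim_K \tfrac{s}{r}\bigl[\bilat_X(y,s,\|\cdot\|,\vp) + \Omega^K(y,s,\|\cdot\|,\vp)\bigr].
\end{align}
Taking the infimum over $(\|\cdot\|,\vp,A)$ gives \eqref{e:u2} in the regime $2\delta < r/s$; otherwise we use $\gamma_{X,f}^K(x,r) \lesssim 1 \lesssim \tfrac{s}{r}\gamma_{X,f}^K(y,s)$ as before. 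There is no real obstacle; the only mild care needed is handling the degenerate regime where Lemma \ref{l:GH-scale} does not apply, which is absorbed by the universal upper bound from \eqref{e:u1}.
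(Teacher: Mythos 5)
Your proof is correct and takes essentially the same route as the paper: both restrict a near-optimal map (and the same affine approximation, recentred) from $B(y,s)$ to $B(x,r)$ and repair it with the radial projection $p_{z,\lambda}$, which is precisely what your appeal to Lemma \ref{l:GH-scale} packages, with the degenerate regime absorbed by the universal bound $\bilat_X(x,r) \leq \gamma_{X,f,\|\cdot\|_0}^K(x,r) \lesssim 1$. The only nit is that Lemma \ref{l:GH-scale} is stated for $\alpha = r/s < 1$, so the boundary case $r = s$ deserves a one-line remark (the lemma's proof goes through verbatim for $\alpha = 1$), a case the paper's direct computation with $p_{\phi(x),r}$ handles without comment.
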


\begin{proof}
	Let $x \in X$ and $0 < r < \diam(X).$ We start with \eqref{e:u1}. The first inequality is immediate from Lemma \ref{l:WALAM-BWGL}. For the second inequality, let $|\cdot|$ denote the Euclidean norm on $\R^n,$ let $\vp \in \Phi(x,r,|\cdot|)$ such that $\vp(y) = 0$ for all $z \in B(x,r)$ and let $A$ be a translation of $\R^n$ such that $A(0)= f(x).$ Notice that $A$ is an affine map and 1-Lipschitz from $\R^n$ to itself. With these choices, it is easy to see that $\unilat_X(x,r,|\cdot|,\vp) \leq 2$ and $\eta_X(x,r,|\cdot|,\vp) \leq 1.$ Since $f$ is 1-Lipschitz, $f(z) \in B_{\|\cdot\|}(A(0),r)$ for all $z \in B(x,r).$ Thus, $\Omega_{X,f,\|\cdot\|}(x,r,|\cdot|,\vp) \leq 1$ and \eqref{e:u1} follows. 
	
	Now let $y \in X$ and $0 < s < \diam(X)$ such that $B(x,r) \subseteq B(y,s).$ We prove \eqref{e:u1'}. Let $\ve > 0$, $\| \cdot \| \in \mathcal{N}$ and $\phi \in \Phi(y,s,\|\cdot\|)$ such that 
	\begin{align}\label{e:bilat-opt}
		\unilat_X(y,s,\|\cdot\|,\phi) + \eta_X(y,s,\|\cdot\|,\phi) \leq \bilat_X(y,s) + \ve. 
	\end{align}
	Recall the properties of $p_{z,\lambda}$ from Lemma \ref{l:p-lambda} and let $\tilde{\phi} = p_{\phi(x),r} \circ \phi.$ Then $\tilde{\phi}$ maps $B(x,r)$ to $B_{\|\cdot\|}(\phi(x),r)$ and if $z \in B(x,r)$ then 
	\begin{align}
		\begin{split}\label{e:near-phi}
		\| \tilde{\phi}(z) - \phi(z) \| &\leq \max\{0,\|\phi(z) - \phi(x)\| - r\} \\
		&\leq \max\{0,d(z,x) + s\unilat_X(y,s,\|\cdot\|,\phi) - r \} \\
		&\leq s \unilat_X(y,s,\|\cdot\|,\phi). 
		\end{split}
	\end{align}	
	Let us estimate $\bilat_X(x,r),$ starting with $\unilat_X(x,r,\|\cdot\|,\tilde{\phi}).$ By the triangle inequality, \eqref{e:near-phi} and the definition of $\unilat_X$, we have 
	\begin{align}
		\begin{split}\label{e:beta-est}
		r\unilat_X(x,r,\|\cdot\|_,\tilde{\phi}) &= \sup_{w,z \in B(x,r)} 	| d(w,z) - \| \tilde{\phi}(w) - \tilde{\phi}(z) \| \,| \\
		&\leq 	\sup_{w,z \in B(x,r)} | d(w,z) - \| \phi(w) - \phi(z) \| | + 2s \unilat_X(y,s,\|\cdot\|,\phi) \\
		&\leq 3s \unilat_X(y,s,\|\cdot\|,\phi).
	\end{split}
	\end{align}
	To estimate $\eta_X(x,r,\|\cdot\|,\tilde{\phi}),$ let us fix some $u \in B_{\|\cdot\|}(\phi(x),r).$ Then, let $\tilde{r} = r - s \bilat_X(y,s,\|\cdot\|,\phi) $ and set  \[\tilde{u} = p_{\phi(x),\tilde{r}}(u) \in B_{\|\cdot\|}(\phi(x),\tilde{r}).\]
	Observe that
	\begin{align}
		\| \tilde{u} \| \leq \| \tilde{u} - \phi(x) \| + \|\phi(x)\| \leq \tilde{r} + d(x,y) + s \unilat_X(y,s,\|\cdot\|,\phi) \leq r + (s-r) = s, 
	\end{align}
	so that $\tilde{u} \in B_{\|\cdot\|}(0,s).$ Then, by definition, there exists a point $z \in B(y,s)$ such that 
	\[\| \tilde{u} - \phi(z) \|\leq s \eta_X(y,s,\|\cdot\|,\phi).\]
	In fact we have $z \in B(x,r)$ since
	\begin{align}
		d(x,z) &\leq \| \phi(x) - \phi(z) \| + s \unilat_X(y,s,\|\cdot\|,\phi) \\
		&\leq \| \phi(x) - \tilde{u}\| + \|\tilde{u} - \phi(z) \| + s \unilat_X(y,s,\|\cdot\|,\phi) \\
		&\leq \tilde{r} + s \eta_X(y,s,\|\cdot\|,\phi) + s \unilat_X(y,s,\|\cdot\|,\phi) = r. 
	\end{align}
	From this inequality we also get $\phi(y) \in B(\phi(x),r)$, hence, $\tilde{\phi}(y) = p_{\phi(x),r}(\phi(y)) = \phi(y).$ This then gives 
	\begin{align}
		\| u - \tilde{\phi}(y) \| &\leq \| u - \tilde{u} \| + \| \tilde{u} - \phi(y) \| \leq s\bilat_X(y,s,\|\cdot\|,\phi) + s \eta_X(y,s,\|\cdot\|,\phi) \\
		&\leq 2s\bilat_X(y,s,\|\cdot\|,\phi). 
	\end{align}
	Since $u \in B_{\|\cdot\|}(\phi(x),r)$ is arbitrary, this implies 
	\begin{align}\label{e:eta-est}
		r \eta_X(x,r,\|\cdot\|,\tilde{\phi}) \leq  2s \bilat_X(y,s,\|\cdot\|,\phi).
	\end{align}
	Since $\ve$ was arbitrary, combining \eqref{e:beta-est} and \eqref{e:eta-est} finishes the proof of \eqref{e:u1'}. 
	
	We now prove \eqref{e:u2}. Let $\tau > 0$, $\| \cdot \| \in \mathcal{N}$ and $\phi \in \Phi(y,s,\|\cdot\|)$ such that 
	\begin{align}
		\begin{split}\label{e:gamma-opt'}
		\unilat_X(y,s,\|\cdot\|,\phi) &+ \eta_X(y,s,\|\cdot\|,\phi) + \Omega_{X,f,\|\cdot\|_0}^K(y,s,\|\cdot\|,\phi)\\
		&\leq \gamma_{X,f,\|\cdot\|_0}^K(y,s) + \ve. 
		\end{split}
	\end{align}
	Define $\tilde{\phi}$ as below \eqref{e:bilat-opt} so that the estimates \eqref{e:near-phi}, \eqref{e:beta-est} and \eqref{e:eta-est} still hold. It only remains to estimate $\Omega_{X,f,\|\cdot\|_0}^K(x,r,\|\cdot\|,\tilde{\phi}).$ For this, let $\delta > 0$ and $A \colon (\R^n,\|\cdot\|) \to (\R^n,\|\cdot\|_0)$ be affine and $K$-Lipschitz such that 
	\begin{align}\label{e:omega-opt}
		\frac{1}{s} \sup_{z \in B(y,s)} \| f(z) - A(\phi(z)) \|_0 \leq \Omega_{X,f,\|\cdot\|_0}^K(y,s,\|\cdot\|,\phi) + \delta. 
	\end{align}
	Using \eqref{e:near-phi}, \eqref{e:omega-opt} and the fact that $A$ is $K$-Lipschitz, we have 
	\begin{align}
		\sup_{z \in B(x,r)} \| f(z)  - A(\tilde{\phi}(z)) \|_0 &\leq \sup_{z \in B(x,r)} \| f(z)  - A(\phi(z)) \|_0 + \| A(\phi(z)) - A(\tilde{\phi}(z)) \|_0 \\
		&\leq s[\Omega_{X,f,\|\cdot\|_0}^K(y,s,\|\cdot\|,\phi) + \delta] + Ks\unilat_X(y,s,\|\cdot\|,\phi).
	\end{align}
	Since $\delta$ is arbitrary, this implies 
	\begin{align}\label{e:omega-est}
		r \Omega_{X,f,\|\cdot\|_0}^K(x,r,\|\cdot\|,\tilde{\phi}) \leq s\Omega_{X,f,\|\cdot\|_0}^K(y,s,\|\cdot\|,\phi) + Ks\unilat_X(y,s,\|\cdot\|,\phi).
	\end{align}
	Then, since $\tau$ was arbitrary, this, \eqref{e:gamma-opt'}, \eqref{e:beta-est} and \eqref{e:eta-est} imply \eqref{e:u2}. 
\end{proof}

\begin{defn}
	Let $(Z,d)$ be a metric space and suppose $X,Y \subseteq Z$. For $x \in X$ and $0 < r < \diam(X)$ define 
	\[I_{X,Y}(x,r) = \frac{1}{r} \sup_{\substack{y \in E\cap B(x,r)\\ \dist(y,Y) \leq r}} \dist(x,Y),\]
	where we set the supremum over the empty set to be zero. 
\end{defn}

\begin{lem}\label{l:beta-error}
	Let $(Z,d)$ be a metric space and suppose $X_1,X_2 \subseteq Z$ are such that $X_1 \cap X_2 \neq \emptyset.$ Let $x \in X_1 \cap X_2$, $0 < r < \min\{\diam(X_1),\diam(X_2)\}$ and suppose $f : X \rightarrow (\R^n,\|\cdot\|_0)$ is 1-Lipschitz. Write $f_1$ and $f_2$ for the restrictions of $f$ to $X_1$ and $X_2$, respectively. Then 
	\[ \gamma_{X_1,f_1,\|\cdot\|_0}^K(x,r) \lesssim_K \gamma_{X_2,f_2,\|\cdot\|_0}^K(x,3r) + I_{X_1,X_2}(x,r) + I_{X_2,X_1}(x,3r). \]
\end{lem}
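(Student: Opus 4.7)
The idea is to transplant a near-optimal witness for $\gamma_{X_2,f_2,\|\cdot\|_0}^K(x,3r)$ to $X_1$ via a closest-point selection from $X_1$ to $X_2$. Write $I = I_{X_1,X_2}(x,r)$, $I' = I_{X_2,X_1}(x,3r)$, and $\gamma' = \gamma_{X_2,f_2,\|\cdot\|_0}^K(x,3r)$. We may assume $I, I', \gamma'$ are each smaller than a fixed small absolute constant, as otherwise the conclusion is immediate from $\gamma_{X_1,f_1}^K(x,r) \lesssim_K 1$ via \eqref{e:u1}. Fix small $\epsilon, \tau > 0$ and choose a norm $\|\cdot\|$, $\phi \in \Phi(x,3r,\|\cdot\|)$, and $K$-Lipschitz affine $A\colon(\R^n,\|\cdot\|)\to(\R^n,\|\cdot\|_0)$ so that $\bilat_{X_2}(x,3r,\|\cdot\|,\phi) + \Omega_{X_2,f_2,\|\cdot\|_0}^K(x,3r,\|\cdot\|,\phi) \leq \gamma' + \epsilon$; abbreviate $\unilat, \eta, \bilat, \Omega$ for the corresponding values at $(x, 3r, \|\cdot\|, \phi)$.

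For each $y \in B_{X_1}(x,r)$ select $y' \in X_2$ with $d(y,y') \leq Ir + \tau$; this is possible because $x \in X_2$ forces $\dist(y, X_2) \leq d(y,x) \leq r$, so the truncation condition in the definition of $I_{X_1, X_2}$ is satisfied. The smallness of $I$ ensures $d(y', x) \leq 3r$, so $y' \in B_{X_2}(x,3r)$. Set
\[
\tilde{\phi}(y) := p_{0,r}\bigl(\phi(y') - \phi(x)\bigr) \in B_{\|\cdot\|}(0, r), \qquad \tilde{A}(v) := A(v + \phi(x)),
\]
using the $2$-Lipschitz projection from Lemma \ref{l:p-lambda}; then $\tilde{A}$ is $K$-Lipschitz affine. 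Since $\|\phi(y') - \phi(x)\| \leq d(y',x) + 3r\unilat \leq r + Ir + 3r\unilat + \tau$, \eqref{e:p-y} gives the projection bound $\|\tilde{\phi}(y) - (\phi(y') - \phi(x))\| \leq Ir + 3r\unilat + \tau$.

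The estimates for $\unilat_{X_1}(x,r,\|\cdot\|,\tilde{\phi})$ and $\Omega_{X_1,f_1,\|\cdot\|_0}^K(x,r,\|\cdot\|,\tilde{\phi})$ with the choice $\tilde A$ then follow from the triangle inequality, using near-isometry of $\phi$, $K$-Lipschitzness of $A$, $1$-Lipschitzness of $f$, and the projection bound above; both quantities are readily seen to be $\lesssim_K I + \gamma' + \epsilon + \tau/r$. The main obstacle is the $\eta_{X_1}$ estimate: given $u \in B_{\|\cdot\|}(0,r)$, we need $y \in B_{X_1}(x,r)$ — \emph{not} merely in the slightly larger ball $B_{X_1}(x, r(1+O(\gamma'+I')))$ — with $\tilde{\phi}(y)$ close to $u$.

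To achieve this, first shrink $u$ by setting $u' := p_{0,(1-\delta)r}(u)$, with $\delta := 3I' + 3\eta + 3\unilat + \tau/r$ (still less than $1$ under the smallness assumptions), so that $\|u-u'\| \leq \delta r$. The bound $\|\phi(x)\| \lesssim r \bilat$ from the lemma preceding the statement places $u' + \phi(x) \in B_{\|\cdot\|}(0,3r)$, so one may choose $w \in B_{X_2}(x,3r)$ with $\|\phi(w) - (u' + \phi(x))\| \leq 3r\eta$. Near-isometry of $\phi$ combined with the choice of $\delta$ then yields
\begin{align*}
d(w,x) &\leq \|\phi(w) - \phi(x)\| + 3r\unilat \\
&\leq \|\phi(w) - (u' + \phi(x))\| + \|u'\| + 3r\unilat \\
&\leq 3r\eta + (1-\delta)r + 3r\unilat \leq r - 3rI' - \tau.
\end{align*}
Since $w \in X_2 \cap B(x,3r)$ and $\dist(w, X_1) \leq d(w,x) \leq 3r$, the definition of $I_{X_2, X_1}(x, 3r)$ supplies $y \in X_1$ with $d(y,w) \leq 3rI' + \tau$, whence $d(y,x) \leq r$ so $y \in B_{X_1}(x,r)$. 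Tracing the chain $\tilde{\phi}(y) \approx \phi(y') - \phi(x) \approx \phi(w) - \phi(x) \approx u' \approx u$ via the near-isometry of $\phi$ and the bounds $d(y, y') \leq Ir + \tau$ and $d(y,w) \leq 3rI' + \tau$ shows $\|\tilde{\phi}(y) - u\| \lesssim r(I + I' + \gamma' + \epsilon) + O(\tau)$. Summing the three estimates and letting $\epsilon, \tau \to 0$ yields $\gamma_{X_1,f_1,\|\cdot\|_0}^K(x,r) \lesssim_K I + I' + \gamma'$, as desired.
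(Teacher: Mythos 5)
Your proof is correct and follows essentially the same route as the paper's: transplant a near-optimal GHA for $X_2$ to $X_1$ via a closest-point selection controlled by $I_{X_1,X_2}$, project into $B_{\|\cdot\|}(0,r)$, reuse the (translated) affine map for the $\Omega$ term, and for the density estimate shrink $u$, locate a point of $X_2$ near it, and pull it back to $X_1$ using $I_{X_2,X_1}$. The only cosmetic differences are your recentering by $\phi(x)$ and the explicit reduction to small parameters, where the paper instead works directly with the near-centering bound $\|\vp_2(x)\| \lesssim r(\unilat_2+\eta_2)$.
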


\begin{proof}
	Let $0 < \ve < 1$ and let $\|\cdot\|$ be a seminorm on $\R^n$ and $\vp_2 \in \Phi(x,3r,\|\cdot\|)$ such that
	\begin{align}
		\begin{split}\label{e:infrho}
		\Omega_{X_2,f_2,\|\cdot\|_0}^K(x,3r,\|\cdot\|,\vp_2) &+ \unilat_{X_2}(x,3r,\|\cdot\|,\vp_2) + \eta_{X_2}(x,3r,\|\cdot\|,\vp_2) \\
		&\leq \gamma_{X_2,f_2,\|\cdot\|_0}^K(x,3r) + \ve.
		\end{split}
	\end{align}
	For brevity, in the remainder of the proof we will write $\Omega_2 \coloneqq \Omega_{X_2,f_2,\|\cdot\|_0}^K(x,3r,\|\cdot\|,\vp_2)$, $\unilat_{2} \coloneqq \unilat_{X_2}(x,3r,\|\cdot\|,\vp_2)$ and $\eta_{2} \coloneqq \eta_{X_2}(x,3r,\|\cdot\|,\vp)$. Let us observe for later that 
	\begin{align}\label{e:near-centre-GHA} 
		\| \vp_2(x) \| \leq 30r (\unilat_2 + \eta_2)
	\end{align}
	by \eqref{e:near-centre}. We start by constructing a map $\vp_1 : X_1 \cap B(x,r) \to B_{\|\cdot\|}(0,r)$ as follows. For each $p \in X_1 \cap B(x,r)$, let $\tilde{p} \in X_2$ be a point in $X_2$ such that 
	\begin{align}\label{e:p-tildep}
		d(p,\tilde{p}) \leq \dist(p,X_2) + \ve r \leq r I_{X_1,X_2}(x,r) + \ve r \leq 2r,
	\end{align}
	so that $\tilde{p} \in X_2 \cap B(x,3r).$ Using \eqref{e:near-centre-GHA} and \eqref{e:p-tildep} with the fact that $p \in X_1 \cap B(x,r)$ we have 
	\begin{align}\label{e:normA'}
		\|\vp_2(\tilde{p})\| &= \|\vp(x) - \vp(\tilde{p})\| +Cr(\unilat_2 + \eta_2) \leq d(x,\tilde{p}) + Cr( \unilat_{2} +\eta_2) \\
		&\leq d(x,p) + d(p,\tilde{p}) + Cr( \unilat_{2} +\eta_2) \\
		&\leq r + rI_{X_1,X_2}(x,r)  + Cr (\unilat_{2} +\eta_2) + \ve r.
	\end{align}
	By Lemma \ref{l:p-lambda} we can find a point $\vp_1(p) \in B_{\|\cdot\|}(0,r)$ such that 
	\begin{align}\label{e:A-A'}
		\|\vp_1(p) - \vp_2(\tilde{p})\| \leq rI_{X_1,X_2}(x,r)  + Cr( \unilat_{2} +\eta_2) + \ve r.
	\end{align}

	Let us estimate $\unilat_{X_1}(x,r,\|\cdot\|,\vp_1).$ Let $y,z \in X_1 \cap B(x,r).$ Since $\tilde{y},\tilde{z} \in X_2 \cap B(x,3r)$ we know by the definition of $\unilat_2$ that $ \| \vp_2(\tilde{y}) - \vp_2(\tilde{z})\|  - d(\tilde{y},\tilde{z}) \lesssim \unilat_2.$ Using this with \eqref{e:p-tildep} and \eqref{e:A-A'}, we have
	\begin{align}
		\| \vp_1(y) - \vp_1(z) \| - d(y,z) &\leq \| \vp_1(y) - \vp_2(\tilde{y}) \| + \| \vp_2(\tilde{y}) - \vp_2(\tilde{z})\| \\
		&\hspace{2em} +\|\vp_2(\tilde{z}) - \vp_1(z) \| - d(\tilde{y},\tilde{z}) + d(y,\tilde{y}) + d(z,\tilde{z}) \\
		&\lesssim r I_{X_1,X_2}(x,r) + r\unilat_{2} +r\eta_2 + \ve r.
	\end{align}
	Similarly, 
	\begin{align}
		d(y,z) - \| \vp_1(y) - \vp_1(z) \| &\leq r I_{X_1,X_2}(x,r) +  r\unilat_{2} +r\eta_2 + \ve r. 
	\end{align}
	Combining the above two inequality, we have 
	\begin{align}\label{e:beta-bound}
		\unilat_{X_1}(x,r,\|\cdot\|,\vp_1)\lesssim \unilat_{2} +\eta_2 + I_{X_1,X_2}(x,r) + \ve.
	\end{align}

	Now we estimate $\eta_{X_1}(x,r,\|\cdot\|,\vp_1).$ Let $u \in B_{\|\cdot\|}(0,r)$ and let $\tilde{u} = \lambda u$ where $0 < \lambda \leq 1$ is maximal such that 
	\begin{align}\label{e:tildeu}
		\|\tilde{u}\| \leq r - r(33\unilat_{2} + 33\eta_{2} + 3I_{X_2,X_1}(x,3r)) -\ve r.
	\end{align}
	This choice of $\tilde{u}$ will become more clear shortly, observe for the moment that 
	\begin{align}\label{e:u-tildeu}
		\|u - \tilde{u}\| \leq r(33\unilat_{2} + 33\eta_{2} + 3I_{X_2,X_1}(x,3r)) +\ve r.
	\end{align}
	By definition of $\eta_2$, there exists $q \in X_2 \cap B(x,3r)$ such that $\| \tilde{u} - \vp_2(q) \| \leq 3r \eta_{2}.$ Let $p \in X_1$ be a point in $X_1$ such that $d(q,p) \leq \dist(q,X_1) + \ve r$ and let $\tilde{p}$ be as in \eqref{e:p-tildep}. Observe that
	\begin{align}
		d(p,x) &\leq d(p,q) + d(q,x) \leq \dist(q,X_1) + \ve r + \| \vp_2(q) - \vp_2(x) \| + 3r\unilat_{2} \\
		&\leq 3r I_{X_2,X_1}(x,3r) + \ve r + \| \tilde{u} \| + \| \tilde{u} - \vp_2(q) \| + \|\vp_2(x)\| + 3r\unilat_{2} \\
		&\leq 3r I_{X_2,X_1}(x,3r) + \ve r + \| \tilde{u} \| + 33r \eta_{2} + 33r\unilat_{2} \leq r,
	\end{align}
	so that $p \in X_1 \cap B(x,r).$ This is why we chose $\tilde{u}$ as in \eqref{e:tildeu}. Notice also, since $\dist(p,X_2) \leq d(p,q),$ we have
	\begin{align}
		d(q,\tilde{p}) &\leq d(q,p) + d(p,\tilde{p}) \leq \dist(q,p) + \dist(p.X_2) + \ve r \leq 2d(q,p) + \ve r\\
		&\leq 2\dist(q,X_1) + 3 \ve r \leq 6r I_{X_2,X_1}(x,3r) + 3\ve r. 
	\end{align}
	Using this estimate with the definition of $\unilat_2$, the definition of $q$ after \eqref{e:u-tildeu} and \eqref{e:A-A'}, it follows that 
	\begin{align}
		\|\tilde{u} - \vp_1(p)\| &\leq \| \tilde{u} - \vp_2(q) \| + \|\vp_2(q) - \vp_2(\tilde{p}) \| + \|\vp_2(\tilde{p}) - \vp_1(p) \| \\
		&\lesssim r( \unilat_{2} + \eta_{2} + I_{X_2,X_1}(x,3r) + \ve ).
	\end{align}
	With \eqref{e:u-tildeu}, this implies
	\begin{align}\label{e:eta-bound}
		\eta_{X_1}(x,r,\|\cdot\|,\vp_1) \lesssim \unilat_{2} + \eta_{2} + I_{X_2,X_1}(x,3r) + \ve.
	\end{align}
	
	Finally, we can estimate $\Omega_{X_1,f_1,\|\cdot\|_0}^K(x,r,\|\cdot\|,\vp_1).$ Let $A : \R^n \rightarrow \R^n$ be an affine function such that $\|A(x) - A(y)\|_0 \leq K\|x-y\|$ for all $x,y \in \R^n$ and 
	\[ \frac{1}{3r} \sup_{y \in X_2 \cap B(x,3r)} \|f_2(x) - A(\vp_2(x))\|_0 \leq \Omega_2 + \ve.\]
	Let $y \in X_1 \cap B(x,r)$ and let $\tilde{y} \in X_2 \cap B(x,3r)$ be as in \eqref{e:p-tildep}. By \eqref{e:p-tildep}, \eqref{e:A-A'} and recalling that $f_1,f_2$ are the restrictions of $f$ to $X_1,X_2$, respectively, we have 
	\begin{align}
		\|f_1(y) - A(\vp_1(y))\|_0 &\leq \|f(y) - f(\tilde{y})\|_0 + \|f_2(\tilde{y}) - A(\vp_2(\tilde{y}))\|_0 \\
		&\hspace{2em} + \|A(\vp_2(\tilde{y})) - A(\vp_1(y))\|_0 \\
		&\leq d(y,\tilde{y}) + 3r(\Omega_2 + \ve) + K\|\vp_2(\tilde{y}) - \vp_1(y)\| \\
		&\lesssim_K  r(\unilat_2 + \Omega_2 + I_{X_1,X_2}(x,r) + \ve).
	\end{align}
	Thus, 
	\begin{align}\label{e:omega-bound}
		\Omega_{f_1,X_1}^K(x,r,\vp_1) \lesssim_K \unilat_2 + \Omega_2 + I_{X_1,X_2}(x,r) + \ve. 
	\end{align}
	Combining this with \eqref{e:infrho}, \eqref{e:beta-bound}, \eqref{e:eta-bound} and taking $\ve \to 0$ proves the lemma. 
\end{proof}

\begin{rem}
	It is easy to show that $r I_{X,Y}(x,r) \leq s I_{X,Y}(y,s)$ any $x,y \in X$ and $0 < r \leq s < \diam(X)$ such that $B(x,r) \subseteq B(y,s).$ In particular, the above estimate may be written as 
	\begin{align}\label{e:modified-error} \hspace{1em} \gamma_{X_1,f_1,\|\cdot\|_0}^K(x,r) \lesssim_K \gamma_{X_2,f_2,\|\cdot\|_0}^K(x,3r) + I_{X_1,X_2}(x,3r) + I_{X_2,X_1}(x,3r). 
	\end{align}
	Although slightly weaker, this estimate will ease the notation in Section \ref{s:affine-approx}.
\end{rem}

\newpage

\section{UR implies BWGL and affine approximations}\label{s:affine-approx}
\etocsettocstyle{Contents of this section}{}
\etocsettocmargins{.01\linewidth}{.01\linewidth}

\localtableofcontents

\bigskip

\subsection{Main result} Our goal in this section is to prove the following.

\begin{thm}\label{t:approx-UR}
	Let $X$ be an Ahlfors $n$-regular metric space. If $X$ is UR then $X$ satisfies a Carleson condition for the $\gamma$-coefficients. 
\end{thm}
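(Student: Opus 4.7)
The plan follows the two-stage strategy indicated in Section \ref{s:outline}. After isometrically embedding $X$ into a Banach space $\calB$ (e.g.\ via the Kuratowski embedding into $\ell^\infty$), Schul's theorem \cite{schul2009bi} upgrades UR to Big Pieces of Bi-Lipschitz Images of $\R^n$ (BPBI): every ball $B_X(x, r)$ contains a set of $\calH^n$-measure at least $\theta r^n$ which is the $L$-bi-Lipschitz image of a subset of $\R^n$. It then suffices to establish two independent propositions: (i) bi-Lipschitz images of subsets of $\R^n$ satisfy the Carleson condition for the $\gamma$-coefficients, and (ii) the Carleson condition is stable under big pieces.

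For (i), let $g \colon U \subseteq \R^n \to X \subseteq \calB$ be $L$-bi-Lipschitz and $f \colon X \to (\R^n, \|\cdot\|_0)$ be $1$-Lipschitz. Both $g$ (viewed in $\calB$) and $f \circ g \colon U \to \R^n$ are Lipschitz maps on a Euclidean domain, so Dorronsoro's quantitative differentiation theorem \cite{dorronsoro1985characterization} produces, for each $\ve > 0$, a Carleson set of pairs $(u, r) \in U \times (0, \diam U)$ outside of which both maps are $\ve r$-close in $L^\infty(B(u, r))$ to affine maps $A_g(v) = T_g(v - u) + g(u)$ and $A_f(v) = T_f(v - u) + f(g(u))$. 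For such good $(u, r)$, I would equip $\R^n$ with the norm $\|\cdot\|_*$ whose unit ball is $T_g(B_{\R^n}(0, 1))$ and define $\vp(y) = g^{-1}(y) - u$ on an appropriate ball $B_X(g(u), r')$ with $r' \sim r/L$; the affine approximation of $g$ forces $\vp$ to be a $C \ve r'$-GHA into a ball in $(\R^n, \|\cdot\|_*)$, bounding $\bilat_X(g(u), r')$ by $C \ve$. The affine map $A(w) := T_f T_g^{-1} w + f(g(u))$ is Lipschitz from $(\R^n, \|\cdot\|_*)$ to $(\R^n, \|\cdot\|_0)$ with constant depending only on $L$, and the affine approximation of $f \circ g$ yields $\|f(y) - A(\vp(y))\|_0 \lesssim \ve r'$ on $B_X(g(u), r')$, controlling $\Omega_{X, f, \|\cdot\|_0}^K$. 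Pushing the Carleson set forward from $U$ to $X$ via $g$, together with Ahlfors regularity on both sides, gives the Carleson condition on $X$.

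For (ii), fix a $1$-Lipschitz $f \colon X \to (\R^n, \|\cdot\|_0)$ and a cube $Q_0 \in \mathcal{D}$. Select a big piece $F \subseteq Q_0$ belonging to the good family $\mathcal{F}$ with $\calH^n(F) \geq \theta \ell(Q_0)^n$. For each cube $Q \subseteq Q_0$ containing a point of $F$ within distance $\lesssim \ell(Q)$, Lemma \ref{l:beta-error} (applied with $X_1 = X$ and $X_2 = F$, noting $I_{F, X} \equiv 0$ since $F \subseteq X$) gives
\begin{align}
\gamma_{X, f}^K(x_Q, \ell(Q)) \lesssim_K \gamma_{F, f|_F}^K(x_Q, 3\ell(Q)) + I_{X, F}(x_Q, 3\ell(Q)).
\end{align}
The bad cubes for $F$ pack with Carleson bound by hypothesis. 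Cubes where $I_{X, F}(x_Q, 3\ell(Q)) > \ve$ contain a point at distance $\gtrsim \ve \ell(Q)$ from $F$ lying in $Q_0 \setminus F$; a Vitali-type covering argument against $\calH^n(Q_0 \setminus F) \leq (1 - \theta) \ell(Q_0)^n$ yields a Carleson packing for such cubes. Cubes lacking any nearby point of $F$ are handled by applying the big-pieces hypothesis anew inside their descendants and iterating geometrically, as in David--Semmes' big-pieces packing lemma \cite[Ch.\ I.3]{david1993analysis}. The main obstacle will be the stability stage: since $I_{X, F}$ is not itself a $\gamma$-quantity, each scale of iteration generates its own bad-cube population that must be packed independently, requiring uniform control of the Lipschitz constant $K$ and of the Carleson constants $C(\ve)$ across all iteration levels.
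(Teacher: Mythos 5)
Your overall architecture (big pieces of bi-Lipschitz images via \cite{schul2009bi}, plus stability of the $\gamma$-Carleson condition under big pieces) is the same as the paper's, but stage (i) as you describe it contains a genuine gap: you apply Dorronsoro's theorem to the map $g$ itself, viewed as a Lipschitz map from a Euclidean domain into the Banach space $\calB$, in order to extract an affine approximation $A_g(v)=T_g(v-u)+g(u)$ and then build the norm $\|\cdot\|_*$ and the GHA from $T_g$. Dorronsoro-type affine approximation fails for Lipschitz maps into general Banach targets such as $\ell^\infty(X)$: for example, the Kuratowski embedding $g(t)=(|t-s|)_s$ of $[0,1]$ into $\ell^\infty$ is an isometric (hence $1$-bi-Lipschitz) embedding, yet at \emph{every} scale and location the image of the midpoint of $a,b$ lies at distance $|b-a|/2$ from the midpoint of $g(a),g(b)$, so $g$ is not $\ve r$-approximable by any affine map at any $(u,r)$; the exceptional set is everything rather than a Carleson set. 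So no linear part $T_g$ is available, and your construction of $\|\cdot\|_*$ and of the GHA collapses, even though the conclusion for this $X$ (an interval) is trivially true. The paper avoids this precisely by never approximating $g$ affinely: the norm entering $\bilat_X$ comes from the metric differentiation coefficient $\md_g$ and the Azzam--Schul theorem \cite{azzam2014quantitative} (Theorem \ref{t:metric-diff}, Corollary \ref{c:AS}), which only approximates the pulled-back distance $d(g(x),g(y))$ by a norm and is valid for arbitrary metric targets; the GHA is essentially $g^{-1}$ composed with a retraction (Lemma \ref{l:Omega-Sigma}), and Dorronsoro \cite{dorronsoro1985characterization} is used only for $h=f\circ g$, which is genuinely $\R^n$-valued, to control $\Omega$.

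A secondary mismatch: the stability proposition (Proposition \ref{l:stability}) requires the big-piece family to be uniformly Ahlfors $n$-regular (Definition \ref{d:UR-metric}), since the packing arguments (Lemma \ref{l:Carleson-I} and the integration over $R\cap Y$ feeding into Lemma \ref{l:David-Semmes}) use the regularity of the approximating space $Y$, not just of $X$. Bi-Lipschitz images of arbitrary subsets $U\subseteq\R^n$, and raw big pieces $F$ with $\calH^n(F)\geq\theta\ell(Q_0)^n$, are not Ahlfors regular, so your stage (ii) cannot be run with them directly. The paper fixes this with the bi-Lipschitz extension lemma (Lemma \ref{l:extension}, after \cite[Proposition 17.4]{david1991singular} and \cite{johnson1986extensions}), extending the map from $A\subseteq\R^n$ to all of $\R^n$ into the enlarged space $\calB\times\R^m$, so that the good family in Lemma \ref{l:big-pieces} consists of full images $g(\R^n)$, which are Ahlfors regular; Proposition \ref{l:approx-bi-lip} is then stated for such $\Sigma=g(\R^n)$. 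Apart from these points, your sketch of the stability stage (comparison via Lemma \ref{l:beta-error} with the $I_{X,Y}$ error terms, packing of cubes where $I_{X,Y}$ is large) is in the spirit of the paper's proof, though the paper packs through Lemma \ref{l:David-Semmes} directly rather than iterating the big-pieces hypothesis across scales, which also removes the uniformity-of-constants concern you flag at the end.
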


The proof of Theorem \ref{t:approx-UR} goes roughly as follows. Suppose $\mathscr{Y}$ is a collection of Ahlfors $n$-regular sets with uniform regularity constant and suppose each $Y \in \mathscr{Y}$ satisfies the Carleson condition for the $\gamma$-coefficients with a uniform constant $K$ and uniform Carleson norms. We first show that \textit{if} $X$ has big pieces of elements of $\mathscr{Y}$ (in the sense of Definition \ref{d:UR-metric}) \textit{then} $X$ itself satisfies a Carleson condition for the $\gamma$-coefficients. That is, the Carleson condition for the $\gamma$-coefficients is \textit{stable} under the big pieces functor. This result is inspired by the stability result for BWGL first shown for subsets of Euclidean space and the Heisenberg group by Rigot \cite{rigot2019quantitative}. The results in \cite{rigot2019quantitative} were inspired by the stability result for the WGL (a one-sided version of the BWGL) in Euclidean spaces in \cite[Part IV]{david1993analysis}. 

If $X$ is uniformly $n$-rectifiable then it has big pieces of spaces which are bi-Lipschitz equivalent to $\R^n$ (after embedding in a suitable Banach space). The second part of the proof of Theorem \ref{t:approx-UR} is showing that these spaces satisfy a Carleson condition for the $\gamma$-coefficients with uniform $K$ and Carleson norms.

\bigskip

\subsection{Stability of Carleson condition for $\gamma$ under big pieces}\label{s:stability} The goal now is to prove the following proposition, which states that the Carleson condition for the $\gamma$-coefficients is stable under big pieces. 

\begin{prop}\label{l:stability}
	Let $Z$ be a metric space, $C_1,K \geq 1$, and let 
	\begin{align}
		\calF = \{Y \subseteq Z : \ &Y \mbox{ is Ahlfors $n$-regular with regularity constant $C_1$ and satisfies a } \\
		&\hspace{2em}\mbox{ Carelson  condition for the $\gamma$-coefficients with constant} \\
		&\hspace{2em} \mbox{ $K$ and Carleson norms $\{C(\ve)\}_{\ve > 0}$}\}.
	\end{align} 
If $X \subseteq Z$ is Ahlfors $n$-regular with regularity constant $C_0$ and has big pieces of $\calF$ then $X$ satisfies a Carleson condition for the $\gamma$-coefficients with constant $K$. The family of Carleson norms depends only on $C_0,C_1,\{C(\ve)\}, n$ and the big pieces constant $\theta.$ 
\end{prop}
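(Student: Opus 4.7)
Fix a norm $\|\cdot\|_0$ on $\R^n$ and a $1$-Lipschitz map $f \colon X \to (\R^n, \|\cdot\|_0)$, fix $\ve > 0$, and choose a Christ-David cube system $\mathcal{D}$ for $X$. By Lemma \ref{l:set-packing} it suffices to show that
\[
	\mathcal{B}_\ve \coloneqq \{Q \in \mathcal{D} : \gamma_{X,f,\|\cdot\|_0}^K(x_Q, 3\ell(Q)) > \ve\}
\]
satisfies a Carleson packing condition uniformly over top cubes, with constant depending only on $C_0,C_1,\theta,n$ and the family $\{C(\cdot)\}$. The plan is a single-step stopping-time argument combined with the cleaner transfer inequality \eqref{e:modified-error} derived from Lemma \ref{l:beta-error}.

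\textbf{Stopping time from big pieces.} Let $C_K$ be the implicit constant in \eqref{e:modified-error} and set $\eta = \ve/(3C_K)$. For each top cube $Q_0 \in \mathcal{D}$, the big pieces hypothesis provides $Y_0 = Y_{Q_0} \in \calF$ with $\calH^n(X \cap B(x_{Q_0}, 3\ell(Q_0)) \cap Y_0) \geq \theta (3\ell(Q_0))^n$. Define $\mathsf{Stop}(Q_0)$ to be the collection of maximal cubes $S \in \mathcal{D}(Q_0)$ at which at least one of the following first fails: (i) $x_S \in Y_0$; (ii) $\max\{I_{X,Y_0}(x_S, 9\ell(S)), I_{Y_0,X}(x_S,9\ell(S))\} \leq \eta$; (iii) $\gamma_{Y_0, f|_{Y_0}, \|\cdot\|_0}^K(x_S, 9\ell(S)) \leq \eta$. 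For any $Q \in \mathcal{D}(Q_0)$ not strictly contained in some stopped cube, all three conditions hold at $Q$, so \eqref{e:modified-error} yields $\gamma_{X,f,\|\cdot\|_0}^K(x_Q, 3\ell(Q)) \leq 3C_K\eta = \ve$, and thus $Q \notin \mathcal{B}_\ve$. Consequently every element of $\mathcal{B}_\ve \cap \mathcal{D}(Q_0)$ lies inside some $S \in \mathsf{Stop}(Q_0)$, so the packing problem reduces to establishing a uniform defect estimate
\[
	\sum_{S \in \mathsf{Stop}(Q_0)} \ell(S)^n \leq (1-\sigma)\ell(Q_0)^n
\]
for some $\sigma = \sigma(\theta, C_0, C_1, \eta, C(\eta)) > 0$, after which a standard geometric iteration along the stopping tree yields a Carleson packing bound of order $\sigma^{-1}\ell(Q_0)^n$.

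\textbf{Defect estimate and main obstacle.} For the defect, note the stopped cubes are pairwise disjoint in $Q_0$, so by Ahlfors regularity of $X$ one has $\sum_S \ell(S)^n \sim \calH^n(\bigcup_S S)$; it thus suffices to exhibit $G_0 \subseteq X \cap Q_0$ disjoint from all stopped cubes with $\calH^n(G_0) \gtrsim \theta\, \ell(Q_0)^n$. The natural candidate is the set of $x \in X \cap Q_0 \cap Y_0$, of measure at least $\theta \ell(Q_0)^n$ by the big-piece inequality, which additionally lie away from the thin $\mathcal{D}$-boundary layer (discarded via Lemma \ref{cubes}(4), negligibly small) and never enter a sub-cube violating (ii) or (iii). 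The Carleson hypothesis on $Y_0$ together with Lemma \ref{l:set-packing} applied to $Y_0$ bounds the total $\ell^n$-mass of (iii)-type cubes by $C(\eta)\ell(Q_0)^n$; analogously, type-(ii) cubes are packed by a volume/$\beta$-number-style argument using Ahlfors regularity of both $X$ and $Y_0$. The main technical obstacle is precisely this type-(ii) packing, since neither $I_{X,Y_0}$ nor $I_{Y_0,X}$ is a priori Carleson; the remedy, in the spirit of Rigot's stability argument \cite{rigot2019quantitative}, is to integrate the quantitative distance of $X$ from $Y_0$ across scales and apply Ahlfors regularity to absorb the scales geometrically, yielding a bound of the form $C'(\eta,C_0,C_1) \ell(Q_0)^n$. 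Choosing $\eta$ sufficiently small relative to $\theta$, $C_0$, $C_1$ and $C(\cdot)$ guarantees the total $\ell^n$-mass removed from $G_0$ by failures of (ii) or (iii) is strictly less than $\theta\ell(Q_0)^n/2$, giving $\calH^n(G_0) \geq (\theta/2)\ell(Q_0)^n$ and hence a uniform $\sigma > 0$, completing the proof.
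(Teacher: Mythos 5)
Your reduction to a packing condition and your use of \eqref{e:u2}/\eqref{e:modified-error} are in the right spirit, but the heart of the argument --- the defect estimate --- does not go through, and this is a genuine gap rather than a technicality. Your stopping time stops at the \emph{first} scale where $I_{X,Y_0}$, $I_{Y_0,X}$ or $\gamma^K_{Y_0}$ exceeds $\eta$, and you then need the union of stopped cubes to miss a subset of $Q_0$ of measure $\gtrsim \theta\ell(Q_0)^n$. Carleson conditions cannot deliver this: they bound the total mass of the offending cubes by $C(\eta)\ell(Q_0)^n$ with a \emph{large} constant, and the maximal cubes of a family satisfying a Carleson packing condition may perfectly well cover all of $Q_0$ (e.g.\ the family of all cubes of a single generation). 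Moreover the smallness you invoke runs in the wrong direction: shrinking $\eta$ enlarges the sets $\{I_{X,Y_0}>\eta\}$ and $\{\gamma^K_{Y_0}>\eta\}$ and worsens their Carleson norms, so no choice of $\eta$ makes "the total mass removed from $G_0$" less than $\theta\ell(Q_0)^n/2$, and no uniform $\sigma>0$ exists. This is precisely the difficulty that the paper's proof is built to avoid: instead of demanding that good points avoid all bad cubes, it works with the counting function $\sum_{Q\ni x}\chi_{\calB}(Q)$, integrates it over the big piece $R\cap Y$ using Corollary \ref{c:distance} and the Carleson hypothesis on $Y$, and applies Chebyshev with a \emph{large} threshold $N$, so that the good set (of measure $\geq \theta(c_0\ell(R))^n/2$) consists of points lying in at most $N$ bad cubes --- they are allowed to lie in bad cubes, just not in too many. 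The conclusion is then obtained from Lemma \ref{l:David-Semmes}, whose proof runs the measure-theoretic iteration correctly; note also that even if your defect estimate held, iterating it with $\ell(S)^n$ rather than $\calH^n$ loses a factor $C_0^2$ per generation, so the geometric series must be run at the level of measures.

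A second, independent problem is your stopping condition (i), $x_S\in Y_0$. The transfer inequality \eqref{e:modified-error} (Lemma \ref{l:beta-error}) requires the \emph{center} of the ball to lie in $X\cap Y_0$, but Christ--David cube centers form a fixed net in $X$ and will generically miss the big piece entirely --- possibly already at $Q_0$, in which case $Q_0$ itself is stopped and the defect estimate is vacuously false; in general (i) fails on a huge proportion of cubes, again forcing the stopped cubes to swallow most of $Q_0$. The paper sidesteps this by never transferring at cube centers: for a point $x\in R\cap Y$ and a bad cube $Q\ni x$ it first recenters via the monotonicity estimate \eqref{e:u2} to pass from $\gamma^K_{X,f}(3Q)$ to $\gamma^K_{X,f}(x,4\ell(Q))$, applies \eqref{e:modified-error} at the point $x$ of the intersection, and then records the badness as membership of $(x,r)$ in one of the sets $\mathscr{B}^i_\delta$ for a whole interval of radii, which is what makes the subsequent integration in $dr/r$ and the Carleson bounds effective. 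If you want to salvage a stopping-time proof you would need to replace (i) by a proximity condition to $Y_0$ and, more importantly, replace the ``first failure'' rule by a counting or summed-failure rule; at that point you have essentially reconstructed the proof of Lemma \ref{l:David-Semmes}.
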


To prove Proposition \ref{l:stability} it will be useful to work with Christ-David cubes (Lemma \ref{cubes}). Below we formulate the BWGL and the Carleson condition for the $\gamma$-coefficients in terms of Christ-David cubes.

\begin{defn}\label{d:notation-cubes}
	Suppose $(X,d)$ is a doubling metric space and $\mathcal{D}$ is a system of Christ-David cubes on $X$. For $A \geq 1,$ a cube $Q \in \mathcal{D}$, a norm $\|\cdot\|$ on $\R^n$ and a map $\vp \in \Phi(x_Q,A\ell(Q),\|\cdot\|)$, define
	\begin{align}
		\unilat_X(AQ,\|\cdot\|,\vp) &\coloneqq \unilat_X(x_Q,A\ell(Q),\|\cdot\|,\vp); \\
		\eta_X(AQ,\|\cdot\|,\vp) &\coloneqq \eta_X(x_Q,A\ell(Q),\|\cdot\|,\vp); \\
		\bilat_X(AQ) &\coloneqq \bilat_X(x_Q,A\ell(Q)). 
	\end{align}
	If $\|\cdot\|_0$ is another norm and $f : X \to (\R^n,\|\cdot\|_0)$ is 1-Lipschitz, we also define 
	\begin{align}
		\Omega_{X,f,\|\cdot\|_0}^K(AQ,\|\cdot\|,\vp)&\coloneqq\Omega_{X,f,\|\cdot\|_0}^K(x_Q,A\ell(Q),\|\cdot\|,\vp); \\
		\gamma_{X,f,\|\cdot\|_0}^K(AQ) &\coloneqq \gamma_{X,f,\|\cdot\|_0}^K(x_Q,A\ell(Q)).		
	\end{align}
	If there are further metric spaces $Y,Z$ such that $X,Y \subseteq Z,$ define 
	\begin{align}
		I_{X,Y}(AQ) = I_{X,Y}(x_Q,A\ell(Q)). 
	\end{align}
\end{defn}

As an immediate consequence of Lemma \ref{l:set-packing}, \eqref{e:u2} and \eqref{e:u1'} we obtain the following discretized formulations of the BWGL and the Carleson condition for the $\gamma$-coefficients. The definition of Carleson packing condition be found in Definition \ref{d:packing}.

\begin{lem}\label{l:cubes-BWGL}
	Let $A > 1$, $X$ be an Ahlfors $n$-regular metric space and $\calD$ a system of Christ-David cubes on $X$. Then, $X$ satisfies the \emph{BWGL} if and only if for $\ve > 0$ the set $\{Q \in \calD : \bilat_{X}(AQ) > \ve\}$ satisfies a Carleson packing condition.
\end{lem}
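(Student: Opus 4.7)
The plan is to realize this as a direct application of Lemma \ref{l:set-packing} with $f(x,r) = \bilat_X(x,r)$ and with the constant $K$ in that lemma taken to be the parameter $A$ appearing here. Once the quasi-monotonicity hypothesis of Lemma \ref{l:set-packing} is checked, the conclusion is automatic: the lemma gives, for each fixed $\ve > 0$, the equivalence between the set $\{(x,r) \in X \times (0,\diam(X)) : \bilat_X(x,r) > \ve\}$ being Carleson and the set $\{Q \in \calD : \bilat_X(AQ) > \ve\}$ satisfying a Carleson packing condition, with Carleson norms depending quantitatively on one another (and on $C_0$, $A$, and $n$). Quantifying over all $\ve > 0$ on both sides yields the stated biconditional, matching the definitions of BWGL (Definition \ref{d:BWGL}) and of the discrete packing condition (Definition \ref{d:packing}).

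It remains to verify the hypothesis of Lemma \ref{l:set-packing}, namely the existence of $C \geq 1$ such that
\begin{equation}
r \bilat_X(x,r) \leq C s \bilat_X(y,s)
\end{equation}
whenever $x,y \in X$ and $0 < r \leq s < \diam(X)$ satisfy $B(x,r) \subseteq B(y,s)$. This is precisely the content of \eqref{e:u1'}, which asserts $\bilat_X(x,r) \lesssim \tfrac{s}{r}\bilat_X(y,s)$ under exactly this ball-containment assumption. Thus the hypothesis holds with an absolute constant (depending only on $n$), and no further work is needed.

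No serious obstacle is expected: the entire content of the lemma is the observation that $\bilat_X$ is quasi-monotone under ball containment, which has already been recorded in \eqref{e:u1'}. One minor bookkeeping point is that a single application of Lemma \ref{l:set-packing} is for a fixed $\ve$; to match Definition \ref{d:BWGL}, which requires the Carleson property for every $\ve > 0$, one simply applies the lemma separately at each $\ve$, noting that the Carleson norm on one side controls the Carleson norm on the other side (possibly at a slightly different threshold, since the constant $C$ in \eqref{e:u1'} may be absorbed by replacing $\ve$ with $\ve/C$). This quantitative dependence is harmless for the statement as phrased.
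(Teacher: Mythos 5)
Your proposal is correct and is exactly the paper's argument: the paper obtains this lemma as an immediate consequence of Lemma \ref{l:set-packing} applied with $f = \bilat_X$ and $K = A$, with the quasi-monotonicity hypothesis supplied by \eqref{e:u1'}. Nothing to add.
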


\begin{lem}\label{l:cubes-wala}
	Let $A > 1,$ $X$ be an Ahlfors $n$-regular metric space and $\calD$ a system of Christ-David cubes on $X$. Then, $X$ satisfies a Carleson condition for the $\gamma$-coefficients if and only if there exists $K \geq 1$ and for each $\ve > 0$ a constant $C = C(\ve)$ such that if $\|\cdot\|_0$ is a norm and $f : X \to (\R^n,\|\cdot\|_0)$ is 1-Lipschitz then $\{Q \in \calD : \gamma_{X,f,\|\cdot\|_0}^K(AQ) > \ve\}$ satisfies a Carleson packing condition with Carleson norm $C$.	
\end{lem}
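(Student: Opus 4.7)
The plan is to deduce Lemma \ref{l:cubes-wala} directly from the abstract Lemma \ref{l:set-packing} applied with $f(x,r) = \gamma_{X,g,\|\cdot\|_0}^{K}(x,r)$, for each fixed $1$-Lipschitz map $g \colon X \to (\R^n,\|\cdot\|_0)$. The definition of $\gamma^K_{X,g,\|\cdot\|_0}(AQ)$ in Definition \ref{d:notation-cubes} is precisely the evaluation $f(x_Q, A\ell(Q))$ appearing in Lemma \ref{l:set-packing} with its parameter $K$ replaced by our $A$. So the entire content of the proof is to verify the single hypothesis of Lemma \ref{l:set-packing}, namely the scale monotonicity $r f(x,r) \lesssim s f(y,s)$ whenever $B(x,r) \subseteq B(y,s)$, with a constant independent of the function $g$ and the target norm $\|\cdot\|_0$.

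First I would observe that this monotonicity is exactly inequality \eqref{e:u2}: whenever $B(x,r) \subseteq B(y,s)$,
\begin{equation}
r\,\gamma_{X,g,\|\cdot\|_0}^{K}(x,r) \;\lesssim_{K}\; s\,\gamma_{X,g,\|\cdot\|_0}^{K}(y,s).
\end{equation}
The implicit constant depends on $K$ and on the ambient dimension $n$, but on nothing else; in particular it is independent of the choice of $g$ and of $\|\cdot\|_0$. This uniformity is the key point and is the reason the two formulations of the Carleson condition transfer cleanly from one to the other.

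With the hypothesis of Lemma \ref{l:set-packing} in hand, both implications follow. For the forward direction, suppose $X$ satisfies a Carleson condition for the $\gamma$-coefficients with constant $K$ and Carleson norms $\{C(\ve)\}_{\ve>0}$. Fix an arbitrary $1$-Lipschitz $g \colon X \to (\R^n,\|\cdot\|_0)$ and $\ve > 0$; by assumption $\{(x,r) : \gamma^K_{X,g,\|\cdot\|_0}(x,r) > \ve\}$ is Carleson with norm at most $C(\ve)$. Lemma \ref{l:set-packing} then converts this into a Carleson packing estimate for $\{Q \in \calD : \gamma^K_{X,g,\|\cdot\|_0}(AQ) > \ve\}$ with packing constant depending only on $\ve$, $K$, $A$, $n$ and the regularity constant $C_0$ of $X$; crucially, not on $g$ or $\|\cdot\|_0$. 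The converse direction is identical in spirit: starting from a uniform discrete Carleson packing condition for each 1-Lipschitz $g$, Lemma \ref{l:set-packing} produces the continuous Carleson set condition with quantitatively equivalent norms, yielding Definition \ref{d:carleson-gamma}.

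There is no real obstacle in this argument; the lemma is essentially a bookkeeping statement asserting that the discrete reformulation is equivalent to the continuous one, made possible by the scale monotonicity built into $\gamma^K$ at the cost of a $K$-dependent constant. The only point requiring mild attention is to state and track that the constants produced by Lemma \ref{l:set-packing} do not depend on the particular pair $(g,\|\cdot\|_0)$, so that the same family $\{C(\ve)\}$ witnesses both conditions in the sense of Definition \ref{d:carleson-gamma}.
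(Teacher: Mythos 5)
Your argument is correct and is exactly the paper's own proof: the lemma is stated there as an immediate consequence of Lemma \ref{l:set-packing} combined with the monotonicity estimate \eqref{e:u2}, which is precisely the hypothesis you verify (with the dilation parameter of Lemma \ref{l:set-packing} played by $A$). Your added remark that the implicit constant in \eqref{e:u2} depends only on $K$ and $n$, and hence is uniform over all $1$-Lipschitz $f$ and all target norms $\|\cdot\|_0$, is the right point to track and matches the paper's intent.
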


It will be via Lemma \ref{l:cubes-wala} that we prove Proposition \ref{l:stability}. We need to collect some more results before beginning the proof. 

\begin{lem}\label{l:BO}
	Let $(X,d)$ be an Ahlfors $(C_0,n)$-regular metric space and $s > 0$. Let $a > 0$, $A \geq 1$ and suppose $\mathcal{N} \subseteq X$ is an $as$-separated net with $\diam(\mathcal{N}) \leq As$. Then, $|\mathcal{N}| \lesssim_{C_0,n,A,a} 1.$
\end{lem}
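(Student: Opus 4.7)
The plan is a standard volume packing argument based on Ahlfors regularity. Since $\mathcal{N}$ is $as$-separated, the collection of closed balls $\{B_X(x, as/2) : x \in \mathcal{N}\}$ is pairwise disjoint. Fix a basepoint $x_0 \in \mathcal{N}$ (the conclusion is trivial if $\mathcal{N} = \emptyset$). The diameter bound $\diam(\mathcal{N}) \leq As$ together with the triangle inequality gives
\[
\bigsqcup_{x \in \mathcal{N}} B_X(x, as/2) \subseteq B_X(x_0, (A + a/2)s).
\]

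The main computation is then a single double-sided application of Ahlfors $(C_0,n)$-regularity: bounding each disjoint small ball from below and the enveloping ball from above yields
\[
|\mathcal{N}| \cdot C_0^{-1}(as/2)^n \;\leq\; \sum_{x \in \mathcal{N}} \mathcal{H}^n(B_X(x, as/2)) \;\leq\; \mathcal{H}^n(B_X(x_0, (A+a/2)s)) \;\leq\; C_0 ((A+a/2)s)^n,
\]
so $|\mathcal{N}| \leq C_0^2 (2A/a + 1)^n$, a constant depending only on $C_0, n, A, a$.

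There is essentially no obstacle; the only technicality is that Ahlfors regularity is only stated for radii $r < \diam(X)$. This is harmless: if $as/2 \geq \diam(X)$, the separation condition already forces $|\mathcal{N}| \leq 1$; if $as/2 < \diam(X) \leq (A+a/2)s$, one replaces the enveloping ball by $X$ itself and uses $\mathcal{H}^n(X) \leq C_0 \diam(X)^n \leq C_0((A+a/2)s)^n$, which follows by continuity of measure from below applied to $B_X(x_0, r)$ as $r \uparrow \diam(X)$ together with the upper regularity bound. Either way, the packing estimate above delivers the claim.
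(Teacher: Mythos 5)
Your proof is correct and is essentially the paper's own argument: a volume packing bound obtained by applying two-sided Ahlfors regularity to disjoint small balls centred at the net points inside a single enveloping ball (the paper uses radius $as/4$ rather than $as/2$, which also sidesteps the possibility that closed balls of radius exactly $as/2$ touch when two net points are at distance exactly $as$). Your additional treatment of radii at or above $\diam(X)$ is a small refinement the paper does not bother with.
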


\begin{proof}
	Write $\mathcal{N} = \{z_i\}_{i \in I}.$ For each $i \in \mathcal{N},$ let $B_i = B(z_i, as/4).$ Pick some $i_0 \in I$ and let $B = B(z_{i_0},2As).$ Then, $B_i \subseteq B$ for each $i \in I$ and the result now follows by Ahlfors regularity since  
	\begin{align}
		| \mathcal{N} |  C_0^{-1} (as)^n \leq \sum_{i \in I} \calH^n(B_i) \leq \calH^n(B) \leq C_0 (2As)^n. 
	\end{align}
\end{proof}

\begin{lem}\label{l:Carleson-I}
	Suppose $(Z,d)$ is a metric space and $X,Y \subseteq Z$ are Ahlfors $n$-regular. Let $C_0$ be the regularity constant for $X$ and  $\mathcal{D}$ a system of Christ-David cubes for $X.$ Then, $\{Q \in \calD : I_{X,Y}(3Q) > \ve\}$ satisfies a Carleson packing condition for each $\ve > 0$ with Carleson norm depending only on $C_0,n$ and $\ve.$ 
\end{lem}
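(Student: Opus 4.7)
The plan is to establish the Carleson packing bound by a direct bounded-overlap argument, using only the Ahlfors regularity of $X$ (the regularity of $Y$ is not actually needed). Fix $Q_0 \in \calD$ and let $\calQ := \{Q \in \calD(Q_0) : I_{X,Y}(3Q) > \ve\}$. For each $Q \in \calQ$, the definition of $I_{X,Y}$ furnishes a witness point $y_Q \in X \cap B(x_Q, 3\ell(Q))$ satisfying
\[ 3\ve\,\ell(Q) < \dist(y_Q, Y) \leq 3\ell(Q). \]
Associated to $y_Q$, I will form the shadow set $V_Q := X \cap B(y_Q, \ve\ell(Q))$. Ahlfors lower regularity of $X$ yields $\calH^n(V_Q) \gtrsim_{C_0} (\ve\ell(Q))^n$, and since $\dist(\cdot,Y)$ is $1$-Lipschitz, every $y \in V_Q$ obeys the two-sided bound
\[ 2\ve\,\ell(Q) < \dist(y,Y) \leq (3+\ve)\ell(Q) \leq 4\ell(Q). \]

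The heart of the argument is a bounded-overlap estimate: each $y \in X$ lies in at most $C(C_0,n)\log(1/\ve)$ of the sets $V_Q$. Indeed, the two-sided bound above confines the side-length $\ell(Q) = 5\varrho^k$ to the multiplicative interval $[\dist(y,Y)/4,\,\dist(y,Y)/(2\ve))$, whose length is $2/\ve$, so at most $\lesssim \log(1/\ve)$ admissible scales $k$ remain. For each such $k$, the centres $x_Q$ of cubes $Q \in \calD_k$ with $y \in V_Q$ lie in $B(y, 20\varrho^k)$ and are $\varrho^k$-separated (Lemma \ref{cubes}), so Lemma \ref{l:BO} caps their number by a constant depending only on $C_0$ and $n$.

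Finally I assemble the pieces. For every $Q \subseteq Q_0$, the triangle inequality gives $V_Q \subseteq B(x_Q, 4\ell(Q)) \subseteq B(x_{Q_0}, 5\ell(Q_0))$, so integrating the multiplicity bound against $d\calH^n$ yields
\[ \sum_{Q \in \calQ} (\ve\ell(Q))^n \lesssim \sum_{Q \in \calQ} \calH^n(V_Q) = \int_X \#\{Q \in \calQ : y \in V_Q\}\,d\calH^n(y) \lesssim_{C_0,n} \log(1/\ve)\,\ell(Q_0)^n, \]
where the final inequality uses upper Ahlfors regularity of $X$. Dividing by $\ve^n$ delivers the desired Carleson packing estimate with Carleson norm $\lesssim_{C_0,n} \ve^{-n}\log(1/\ve)$. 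No single step is genuinely hard; the only delicate point is tracking the two-sided control on $\dist(y,Y)$ in terms of $\ell(Q)$ so as to confine the admissible scales and thereby bound the overlap.
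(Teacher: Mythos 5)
Your proof is correct and follows essentially the same route as the paper's: both hinge on the two-sided pinching of $\dist(\cdot,Y)$ near the witness point $y_Q$, which confines $\ell(Q)$ to an $\ve$-dependent range of scales and, combined with the $\varrho^k$-separation of cube centres (Lemma \ref{l:BO}) and the Ahlfors regularity of $X$ alone, produces an $\ve$-dependent bounded-overlap family of "shadows" inside $B(x_{Q_0},C\ell(Q_0))$ whose total measure is $\lesssim \ell(Q_0)^n$. The only difference is cosmetic: the paper realizes the shadows as Christ-David cubes $T(Q)$ of sidelength $\approx \ve\ell(Q)$ containing $y_Q$ (treating the bounded multiplicity of $Q\mapsto T(Q)$ as a separate step), whereas you use the balls $V_Q=X\cap B(y_Q,\ve\ell(Q))$ directly, and your remark that the regularity of $Y$ is never used is equally true of the paper's argument.
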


\begin{proof}
	Let $\ve > 0$ and $Q_0 \in \mathcal{D}$. Without loss of generality we may suppose $\ve < 1.$ For each $Q \subseteq Q_0$ such that $I_{X,Y}(3Q) > \ve$ there exists a point $y_Q \in X \cap 3B_Q$ such 
	\begin{align}\label{e:y_Q}
		3\ve \ell(Q) < \dist(y_Q,Y) \leq 3\ell(Q).
	\end{align}
	Choose $k = k(Q)$ to be the smallest positive integer such that $5\varrho^k \ve^{-1}\leq \ell(Q)$ and let $T(Q) \in \mathcal{D}_{k}$ such that $y_Q \in T(Q)$. In particular, $3B_Q \cap T(Q) \neq \emptyset.$ Let 
	\[C_* = 1+  \tfrac{3}{\ve\varrho}.\]
	By maximality and using the fact that $\ve < 1,$  we have 
	\begin{align}\label{e:ell(T)} 
		\ell(T(Q)) \leq \frac{\ell(T(Q))}{\ve} \leq \ell(Q) < \frac{\ell(T(Q))}{\ve \varrho} \leq C_* \ell(T(Q)).
	\end{align}
	Since $3B_Q \cap T(Q) \neq \emptyset,$ the first two inequalities imply 
	\begin{align}\label{e:Tin}
		T(Q) \subseteq 6B_Q. 
	\end{align}
	Furthermore, using that $y_Q \in T(Q)$ with \eqref{e:y_Q} and the second and third inequalities in \eqref{e:ell(T)}, we have 
	\begin{align}
		\begin{split}\label{e:good}
		2\ell(T(Q)) &\leq \dist(y_Q,Y) - \ell(T(Q)) \leq \dist(x_{T(Q)},Y) \\
		&\leq \dist(y_Q,Y) + \ell(T(Q)) \leq C_* \ell(T(Q)).
		\end{split}
	\end{align}
	We claim 
	\begin{align}\label{e:T(Q)}
		|\{Q \in \mathcal{D} : T(Q) = T\}| \lesssim_{C_0,n} 1. 
	\end{align}
	Indeed, suppose $T \in \calD$. By \eqref{e:ell(T)}, if $Q,Q' \in \calD$ are distinct cubes such that $T(Q) = T(Q') = T$, then $\ell(Q) = \ell(Q') = 5\varrho^{k_0}$ for some $k_0 \in \Z.$ Recall from Lemma \ref{cubes} that the centred of the cubes in $\calD_k$ are chosen from a $\varrho^k$-separated net, for such cubes we have $d(x_Q,x_{Q'}) \geq \varrho^{k_0}$. Furthermore, since $3B_Q \cap T \neq \emptyset$ and $3B_{Q'} \cap T \neq \emptyset$, the triangle inequality and the second inequality in \eqref{e:ell(T)} implies $d(x_Q,x_{Q'}) \lesssim \varrho^{k_0}$. Applying Lemma \ref{l:BO} now gives \eqref{e:T(Q)}. 
	
	Let $\calG$ be the set of $T \in \mathcal{D}$ such that $T \subseteq 6B_{Q_0}$ and $2\ell(T) \leq \dist(x_{T},Y) \leq C_* \ell(T)$. If $Q \subseteq Q_0$ then $6B_Q \subseteq 6B_{Q_0}$ by Remark \ref{r:contained-cubes}. Thus, it follows from \eqref{e:Tin} and \eqref{e:good} that $T(Q) \in \calG$ for each $Q$ such that $I_{X,Y}(3Q) > \ve.$  This, the third inequality in \eqref{e:ell(T)} and \eqref{e:T(Q)} imply
	\begin{align}\label{e:suffices}
		\sum \{ \ell(Q)^n : Q \subseteq Q_0, \ I_{X,Y}(3Q) > \ve\} &\lesssim_{\ve} \sum \{ \ell(T(Q))^n : Q \subseteq Q_0, \ I_{X,Y}(3Q) > \ve\} \\
		&\lesssim_{C_0,n} \sum \{ \ell(T)^n : T \in \calG, \ T \subseteq 6B_{Q_0}\}.
	\end{align}
	We observe now that $\calG$ has bounded overlap. Indeed, let $p \in X$ and let $\calG(p) = \{ T \in \calG : p \in T\}$. Since $\dist(\cdot,Y)$ is 1-Lipschitz, if $T \in \calG(p)$ then 
	\begin{align}
		\ell(T) \leq \dist(x_T,Y) - d(x_T,p) \leq \dist(p,Y) \leq  \dist(x_T,Y) + d(x_T,p) \leq 2C_* \ell(T)
	\end{align} 
	This, and the dependency of $C_*$ on $\ve$, imply $| \{k \in \Z : \calG(p) \cap \calD_k \neq \emptyset\} | \lesssim_{\ve} 1.$ Since $\calD_k$ is disjoint for each $k \in \Z$ we have $|\calG(p) \cap \calD_k| \leq 1$. Thus, $|\calG(p)| \lesssim_{\ve} 1$ as required. 
	
	We use bounded overlap and Ahlfors regularity to conclude the lemma since 
	\begin{align}
		\sum \{ \ell(T)^n : T \in \calG, \ T \subseteq 6B_{Q_0}\}  &\lesssim_{C_0}  \sum \{ \calH^n(T) : T \in \calG, \ T \subseteq 6B_{Q_0}\} \\
		&\lesssim_{\ve} \calH^n(X \cap 6B_{Q_0}) \lesssim_{C_0} \ell(Q_0)^n.
	\end{align}
\end{proof}

By Lemma \ref{l:set-packing}, we obtain the following corollary. 

\begin{cor}\label{c:distance}
	Let $Z$ be a metric space and suppose $X,Y\subseteq Z$ are Ahlfors $n$-regular. Let $C_0$ be the regularity constant for $X$. For all $\ve> 0$ the set $\{ (x,r) \in X \cap \R^+ : I_{X,Y}(x,r) > \ve\}$ is a Carleson set with Carleson norm depending only on $C_0,n$ and $\ve.$
\end{cor}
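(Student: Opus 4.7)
The strategy is to combine Lemma \ref{l:Carleson-I}, which already furnishes the discretized (Christ-David cube) version of the conclusion, with Lemma \ref{l:set-packing}, the general transfer principle between Carleson packing conditions on cubes and Carleson sets on $X \times (0,\diam(X))$. So the task reduces to verifying the hypothesis of Lemma \ref{l:set-packing} for the function $f(x,r) = I_{X,Y}(x,r)$ and then invoking it with $K=3$ so that $f(Q)$ coincides with $I_{X,Y}(3Q)$ in the cube notation of Definition \ref{d:notation-cubes}.

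The only nontrivial point is the monotonicity hypothesis $r f(x,r) \leq C s f(y,s)$ whenever $B(x,r) \subseteq B(y,s)$ and $r \leq s$. For $f = I_{X,Y}$ this is immediate with $C = 1$: if $z \in X \cap B(x,r)$ satisfies $\dist(z,Y) \leq r$, then $z \in X \cap B(y,s)$ and $\dist(z,Y) \leq r \leq s$, so the pair $z$ participates in the supremum defining $s I_{X,Y}(y,s)$ with the same value $\dist(z,Y)$. Taking the supremum over such $z$ on the left gives
\[ r I_{X,Y}(x,r) \;=\; \sup_{\substack{z\in X\cap B(x,r)\\ \dist(z,Y)\leq r}} \dist(z,Y) \;\leq\; \sup_{\substack{z\in X\cap B(y,s)\\ \dist(z,Y)\leq s}} \dist(z,Y) \;=\; s I_{X,Y}(y,s). \]

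With this monotonicity in hand, Lemma \ref{l:set-packing} applied with $K = 3$ converts the Carleson packing condition of Lemma \ref{l:Carleson-I} into the Carleson set condition for $\{(x,r) : I_{X,Y}(x,r) > \varepsilon\}$, with Carleson norm depending quantitatively on $C_0$, $n$ and $\varepsilon$. I do not anticipate any real obstacle: the argument is essentially a two-line bookkeeping exercise chaining the two preceding results.
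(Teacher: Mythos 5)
Your proposal is correct and is exactly the paper's argument: the paper derives Corollary \ref{c:distance} directly from Lemma \ref{l:Carleson-I} via the transfer principle in Lemma \ref{l:set-packing}, and the monotonicity $rI_{X,Y}(x,r)\leq sI_{X,Y}(y,s)$ that you verify is the same fact the paper asserts (with constant $1$) in the remark following Lemma \ref{l:beta-error}. Nothing further is needed.
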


To conclude the proof of Proposition \ref{l:stability} we require the following.

\begin{lem}[{\cite[Lemma IV.1.12]{david1993analysis}}]\label{l:David-Semmes}
	Let $X$ be an Ahlfors $n$-regular metric space and $\mathcal{D}$ a system of dyadic cubes for $X$. Let $\alpha : \mathcal{D} \to [0,\infty)$ be given and suppose there are $N > 0$ and $\eta > 0$ such that 
	\[ \mathcal{H}^n \left( \left\{x \in R : \sum_{\substack{Q \subseteq R \\ x \in Q  }} \alpha(Q) \leq N \right\} \right) \geq \eta \ell(R)^n.\] 
	for all $R \in \mathcal{D}.$ Then, 
	\[ \sum_{Q \subseteq R} \alpha(Q) \ell(Q)^n \lesssim_{N, \eta} \ell(R)^n\]
	for all $R \in \mathcal{D}.$ 
\end{lem}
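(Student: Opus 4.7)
The plan is to run an iterated stopping-time argument, following the classical good-$\lambda$ strategy. For a fixed $R \in \calD$, define
\[ T_R(x) \coloneqq \sum_{\substack{Q \subseteq R\\ x \in Q}} \alpha(Q), \]
so that $\int_R T_R\, d\calH^n = \sum_{Q \subseteq R} \alpha(Q)\, \calH^n(Q)$, which by Ahlfors regularity is comparable to $\sum_{Q \subseteq R} \alpha(Q)\,\ell(Q)^n$. The hypothesis reads $\calH^n(\{T_R \leq N\}) \geq \eta\,\ell(R)^n$. Reducing the conclusion to a distribution bound on $T_R$ is therefore the natural route, and I would prove that $\calH^n(\{T_R > \lambda\})$ decays exponentially in $\lambda$.

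First I would set up the stopping family. For any $R' \in \calD$, let $\calM_{R'}$ be the collection of maximal cubes $Q \subseteq R'$ with $T_Q \coloneqq \sum_{Q \subseteq Q' \subseteq R'} \alpha(Q') > N$, and let $F(R') = \bigcup_{Q \in \calM_{R'}} Q$. A chain argument along the nested ancestors of a fixed point identifies $F(R')$ with $\{T_{R'} > N\}$, so the hypothesis combined with the upper Ahlfors bound $\calH^n(R') \leq C_0 \ell(R')^n$ gives
\[ \calH^n(F(R')) \leq \calH^n(R') - \eta\,\ell(R')^n \leq \bigl(1 - \eta/C_0\bigr)\calH^n(R') \eqqcolon \theta\,\calH^n(R'), \]
with $\theta < 1$.

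Next, I would iterate. Set $\calM_R^{0} = \{R\}$ and $\calM_R^{k+1} = \bigcup_{Q \in \calM_R^{k}} \calM_Q$, with $F^k(R) = \bigcup_{Q \in \calM_R^{k}} Q$. Because the hypothesis applies to every cube in $\calD$, the previous estimate run on each generation gives
\[ \calH^n(F^{k+1}(R)) \leq \theta\,\calH^n(F^{k}(R)) \leq \theta^{k+1} \calH^n(R). \]
Maximality of $\calM_{R'}$ moreover implies $T_Q \leq N + \alpha(Q)$ for $Q \in \calM_{R'}$, and combining this with the identity $T_{R'}(x) = T_Q + T_Q(x) - \alpha(Q)$ for $x \in Q$ gives $T_{R'}(x) \leq N + T_Q(x)$. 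Induction on $k$ then yields $\{T_R > kN\} \subseteq F^k(R)$, hence $\calH^n(\{T_R > kN\}) \leq \theta^k \calH^n(R)$.

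Finally I would finish by the layer-cake formula:
\[ \int_R T_R\, d\calH^n = \int_0^\infty \calH^n(\{T_R > \lambda\})\, d\lambda \leq \sum_{k=0}^\infty N\theta^k \calH^n(R) = \frac{N}{1-\theta}\calH^n(R), \]
and Ahlfors regularity converts this into $\sum_{Q \subseteq R} \alpha(Q)\,\ell(Q)^n \lesssim_{N,\eta} \ell(R)^n$, as required. The main technical point to verify carefully is the telescoping inequality $T_{R'}(x) \leq N + T_Q(x)$ for $x \in Q \in \calM_{R'}$, which relies on the nested-tree structure of $\calD$ (Lemma~\ref{cubes}(2)) and the maximality defining $\calM_{R'}$. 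A minor convergence caveat is that $T_R$ could a priori be infinite on a set of positive measure; this is harmless because one can first apply the argument to the truncated sums over cubes of sidelength $\geq \varrho^j$, getting a bound independent of $j$, and then let $j \to \infty$ by monotone convergence.
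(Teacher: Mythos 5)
The paper does not prove this lemma itself---it cites \cite[Lemma IV.1.12]{david1993analysis}---and your argument is correct and is essentially the standard stopping-time proof of that result: generations of maximal cubes on which the ancestral sum exceeds $N$, geometric decay of their measure coming from the hypothesis together with Ahlfors regularity, the telescoping bound $T_{R'}(x)\leq N+T_Q(x)$, and a layer-cake integration. The only cosmetic issue is the double use of the symbol $T_Q$ for both the ancestral sum $\sum_{Q\subseteq Q'\subseteq R'}\alpha(Q')$ and the function $T_Q(x)=\sum_{Q''\subseteq Q,\,x\in Q''}\alpha(Q'')$; the mathematics, including the truncation/measurability remark at the end, is sound.
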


\begin{proof}[Proof of Proposition \ref{l:stability}]
	Let $\mathcal{D}$ be a system of Christ-David cubes for $X$ and $\ve > 0$. Suppose $\|\cdot\|$ is a norm and $f\colon X \to (\R^n,\|\cdot\|)$ is a 1-Lipschitz mapping. Set $\calB = \{Q \in \mathcal{D} : \gamma_{X,f,\|\cdot\|}^K(3Q) > \ve\}.$ By Lemma \ref{l:cubes-wala} and Lemma \ref{l:David-Semmes}, it suffices to find constants $N,\eta > 0$ depending only on $C_0,C_1,C(\ve),n$ and $\theta$ such that
	\begin{align}\label{e:large-good}
		\calH^n\left(\left\{x \in R : \sum_{\substack{Q \subseteq R \\ x \in Q}} \chi_{\calB}(Q) \leq N	\right\}\right) \geq \eta \ell(R)^n
	\end{align}
	for all $R \in \calD.$ 
	
	Fix a cube $R \in \calD$ and recall the constant $c_0$ from Lemma \ref{cubes}. By assumption we can find an Ahlfors $n$-regular metric space $Y = Y_R\subseteq Z$ with regularity constant $C_1$ satisfying a Carleson condition for the $\gamma$-coefficients with constant $K$ and Carleson norms $\{C(\ve)\}_{\ve>0}$ and such that 
	\begin{align}\label{e:E_R large}
		\calH^n(R \cap Y) \geq \calH^n(X \cap Y \cap c_0B_R) \geq \theta (c_0\ell(R))^n.
	\end{align}
	For $\delta > 0$ let 
	\begin{align}
		\mathscr{B}^1_\delta &= \{(x,t) \in X \times \R^+ : \gamma_{X,f,\|\cdot\|}^K(x,t) > \delta \}; \\ 
		\mathscr{B}^2_\delta&= \{(x,t) \in X \times \R^+ : I_{X,Y}(x,t) > \delta\}, \\
		\mathscr{B}_\delta^3 &= \{(x,t) \in Y \times \R^+ : \gamma_{Y,f,\|\cdot\|}^K(x,t) > \delta\}, \\
		\mathscr{B}_\delta^4 &= \{(x,t) \in Y \times \R^+ : I_{Y,X}(x,t) > \delta\}.
	\end{align}
	Let $C_2$ (resp. $C_3$) be the implicit constant in \eqref{e:u2} (resp. \eqref{e:modified-error}). Then, let 
	\[\ve_1 = \frac{3\ve}{4C_2}, \  \ve_2 = \frac{\ve_1}{3C_3} \mbox{ and } \ve_3 = \frac{\ve_2}{2C_2}.\] 
	Suppose $x \in R \cap Y$ and $Q \in \calB$ is such that $x \in Q \subseteq R$. Observe first that $3B_Q \subseteq B(x,4\ell(Q))$, hence,  $(x,4\ell(Q)) \in \mathscr{B}_{\ve_1}^1$ by \eqref{e:u2}. Now, applying \eqref{e:modified-error} we find $(x,12\ell(Q)) \in \mathscr{B}^i_{\ve_2}$ for some $i \in \{2,3,4\}.$ Finally, if $(x,12\ell(Q)) \in \mathscr{B}_{\ve_2}^i$ and $12\ell(Q) \leq r < 24\ell(Q)$ then $(x,r) \in \mathscr{B}_{\ve_3}^i$ by \eqref{e:u2}. Combining these observations we get  
	\begin{align}
		\sum_{\substack{Q \subseteq R \\ x \in Q}} \chi_{\calB}(Q) &\leq \sum_{\substack{Q \subseteq R \\ x \in Q}} \chi_{\mathscr{B}_{\ve_1}^1}(x,4\ell(Q)) \leq  \sum_{\substack{Q \subseteq R \\ x \in Q}} \sum_{i=2}^{4} \chi_{\mathscr{B}_{\ve_2}^i}(x,12\ell(Q)) \\
		&\leq \sum_{\substack{Q \subseteq R \\ x \in Q}} \int_{12\ell(Q)}^{24\ell(Q)} \sum_{i=2}^{4} \chi_{\mathscr{B}_{\ve_3}^i}(x,r) \,  \frac{dr}{12\ell(Q)} \leq 2 \sum_{i=2}^4 \int_0^{24\ell(R)}  \chi_{\mathscr{B}_{\ve_3}^i}(x,r) \, \frac{dr}{r}
	\end{align}
	Fix some arbitrary $z \in R \cap Y.$ Then, $R \cap Y \subseteq B_Y(z,24\ell(Q)).$ Now, using the above estimate with Corollary \ref{c:distance} along with the fact that $Y$ satisfies a Carleson condition for the $\gamma$-coefficients, we have 
	\begin{align}\label{e:R_N}
		\int_{R \cap Y} \sum_{\substack{Q \subseteq R \\ x \in Q}} \chi_{\mathcal{B}}(Q) \, d\calH^n(x) &\leq 2  \sum_{i=2}^4  \int_{B_Y(z,24\ell(R))} \int_0^{24\ell(R)}\chi_{\mathscr{B}_{\ve_3}^i}(x,r) \, \frac{dr}{r}d\mathcal{H}^n(x) \\
		&\lesssim_{C_0,C_1,C(\ve_3),\ve,n} \ell(R)^n.
	\end{align}
	Thus, for $N \geq 1$ large enough depending on each of $C_0,C_1,C(\ve_3),\ve,n$ and $\theta$, we have 
	\[\calH^n\left(\left\{x \in R \cap Y : \sum_{\substack{Q \subseteq R \\ x \in Q}} \chi_{\mathcal{B}}(Q) > N\right\}\right) \leq \frac{\theta(c_0\ell(R))^n}{2}.\] 
	If we set $\eta = \theta(c_0\ell(R))^n/2$, then \eqref{e:large-good} now follows from the above inequality and \eqref{e:E_R large}. 
\end{proof}

\bigskip

\subsection{UR implies big pieces of bi-Lipschitz images of $\R^n$ in Banach spaces}

In this section we prove the following. 

\begin{lem}\label{l:big-pieces}
	Suppose $X$ is UR with constants $C_0,L$ and $\theta$. Then there exists $L'$, depending on $L,n$ and $\theta$, a Banach space $\calB$ and an isometric embedding $j \colon X \to \calB$ such that $j(X)$ has big pieces of 
	\begin{align}
		\calF = \{ g(\R^n) : g \colon \R^n \to \calB \mbox{ is } L'\mbox{-bi-Lipschitz}\}
	\end{align}
	in $\calB.$	The big pieces constant depends only on $\theta$. 
\end{lem}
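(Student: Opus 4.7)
The plan is to combine an isometric Kuratowski-type embedding of $X$ with Schul's theorem \cite[Corollary 1.2]{schul2009bi} that UR implies BPBI, and then to upgrade the resulting partial bi-Lipschitz parametrization (from a subset $F$ of $\R^n$) to a map defined globally on $\R^n$ by landing in a slightly larger Banach space. The central trick is that the ``extra direction'' used to make a Lipschitz extension globally bi-Lipschitz can be absorbed into the isometric embedding of $X$ itself, so that the extension still passes through $j(X)$ on $F$.

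First I would isometrically embed $X$ into $\calB_0 := \ell^\infty(X)$ via the Kuratowski map $j_0$. By \cite[Corollary 1.2]{schul2009bi} (as noted after Theorem \ref{t:equiv}), UR implies BPBI, so for each $x \in X$ and $0 < r < \diam(X)$ there exist $F \subseteq B_n(0, r)$ and an $L_0$-bi-Lipschitz map $f \colon F \to X$ with $\calH^n(f(F) \cap B_X(x, r)) \geq \theta_0 r^n$, the constants $L_0, \theta_0$ depending on the UR data. Extend $f^{-1} \colon f(F) \to \R^n$ to an $L_1$-Lipschitz map $\psi \colon X \to \R^n$ by coordinate-wise McShane (so $L_1 \lesssim_n L_0$), and extend $j_0 \circ f \colon F \to \calB_0$ to a Lipschitz map $\bar f \colon \R^n \to \calB_0$ by coordinate-wise McShane in $\ell^\infty(X)$ (preserving the Lipschitz constant $L_0$).

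Next I would set $\calB := \calB_0 \oplus_\infty \R^n$ and define
\begin{align}
j(y) := (j_0(y), L_1^{-1} \psi(y)), \qquad g(u) := (\bar f(u), L_1^{-1} u).
\end{align}
The map $j$ is isometric because the perturbation $L_1^{-1} \psi$ is $1$-Lipschitz, so the $\ell^\infty$-norm of $j(y) - j(y')$ is realized by the first coordinate and equals $d_X(y, y')$. The map $g$ is $L_0$-Lipschitz, with a lower bi-Lipschitz bound of $L_1^{-1}$ supplied directly by the identity in the second coordinate; hence $g$ is $L'$-bi-Lipschitz for some $L' \lesssim_n L_0$. Crucially, for $u \in F$ we have $\psi(f(u)) = u$ (since $\psi$ extends $f^{-1}$), hence $g(u) = j(f(u))$, so $g(F) = j(f(F)) \subseteq j(X) \cap g(\R^n)$. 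Since $j$ is isometric,
\begin{align}
\calH^n(j(X) \cap g(\R^n) \cap B_\calB(j(x), r)) \geq \calH^n(f(F) \cap B_X(x, r)) \geq \theta_0 r^n.
\end{align}

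The main obstacle is making $g$ globally bi-Lipschitz without losing the big pieces property: a naive pairing of a Lipschitz extension $\bar f$ with the identity in an extra direction would produce a globally bi-Lipschitz map, but its image would meet $j(X)$ only in a negligible set. The key idea is to correspondingly shift the embedding $j$ in the same extra direction using a Lipschitz extension $\psi$ of $f^{-1}$, which is permitted (without spoiling the isometry of $j$) precisely because of the $\ell^\infty$ direct sum. The only remaining subtlety is the coordinate-wise McShane extension into $\R^n$, where the $\ell^\infty$-vs-Euclidean norm equivalence absorbs the factor $n$ into $L'$.
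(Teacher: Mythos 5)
Your single-ball construction is clever and essentially correct as far as it goes: pairing a Lipschitz extension $\bar f$ of $j_0\circ f$ with the identity in an extra $\R^n$ factor, and compensating by twisting the embedding with a Lipschitz extension $\psi$ of $f^{-1}$, does produce an isometric embedding (the $\ell^\infty$-sum with a $1$-Lipschitz second coordinate preserves distances) and a globally bi-Lipschitz $g$ with $g(F)=j(f(F))$, giving the measure bound for that one ball. The genuine gap is quantifier order: your Banach space is fine, but your embedding $j(y)=(j_0(y),L_1^{-1}\psi(y))$ depends on the ball $B(x,r)$, because $\psi$ extends $f^{-1}$ for the particular BPBI map $f=f_{x,r}$. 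The lemma asserts the existence of \emph{one} Banach space $\calB$ and \emph{one} isometric embedding $j$, chosen before quantifying over $x$ and $r$, such that the fixed set $j(X)$ has big pieces of the fixed family $\calF$; this is essential for the downstream application, where Proposition \ref{l:stability} is applied to a fixed subset of a fixed ambient space $Z$ (the Christ--David cubes, the distance functionals $I_{X,Y}$, and the Carleson estimates all live on one fixed copy of $X$ inside $Z$). Producing a different isometric copy $j_{x,r}(X)$ for each ball proves a strictly weaker statement than the lemma.

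The paper avoids this by keeping the embedding fixed --- the Kuratowski map followed by the inclusion $\ell^\infty(X)\hookrightarrow \ell^\infty(X)\times\R^m$ --- and instead extending $f$ itself to a globally $L'$-bi-Lipschitz map via Lemma \ref{l:extension} (the Banach-valued version of David--Semmes' bi-Lipschitz extension, Proposition 17.4 of \cite{david1991singular}, using Lipschitz extension into Banach spaces from \cite{johnson1986extensions}); then only $g$ varies with $(x,r)$ and the image automatically contains $f(A)\subseteq j(X)$. If you want to rescue your approach, you would have to stack all the compensating coordinates into a single target, e.g.\ $\calB=\ell^\infty(X)\oplus_\infty \ell^\infty\bigl(\mathcal{I};\ell^\infty_n\bigr)$ with $\mathcal{I}$ indexing the balls, placing a basepoint-normalized copy of each $L_1^{-1}\psi_{x,r}$ in its own slot (normalization is needed for the vector to lie in the $\ell^\infty$-sum when $\diam X=\infty$) and, for each fixed ball, extending all the slot maps $\psi_{x',r'}\circ f$ consistently so that $g$ still agrees with $j\circ f$ on $F$ while the $(x,r)$-slot supplies the lower bi-Lipschitz bound. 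This seems workable but requires several checks (uniform Lipschitz bounds across slots, boundedness, and the consistency of the extensions) that are absent from your proposal; as written, the argument does not prove the lemma as stated.
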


For the proof of Lemma \ref{l:big-pieces} require the following result.

\begin{lem}\label{l:extension}
	Let $\calB$ be a Banach space, $A \subseteq \R^n$, $L \geq 1$ and suppose $f \colon A \to \calB$ is $L$-bi-Lipschitz onto its image. Then there exist constants $m$ and $L'$, depending only on $L$ and $n,$ such that the following holds. Let $\calB' = \calB \times \R^m$ and equip $\calB'$ with the metric $d_{\calB'} = \sqrt{d_\calB^2 + d_{\R^m}^2}.$ Then, identifying $\calB$ with a subset of $\calB'$, there exists an extension $g \colon \R^n \to \calB'$ of $f$ such that $g$ is $L'$-bi-Lipschitz onto its image. 
\end{lem}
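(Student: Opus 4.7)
The plan is to produce $g$ in the form $g(x) = (\tilde f(x), h(x)) \in \calB \times \R^m = \calB'$, where $\tilde f\colon \R^n \to \calB$ is a Lipschitz extension of $f$ to all of $\R^n$, and $h\colon \R^n \to \R^m$ is an auxiliary Lipschitz map that vanishes on $A$ and supplies the missing lower bi-Lipschitz control off $A$. Under the identification $\calB \hookrightarrow \calB \times\{0\}$, the vanishing of $h$ on $A$ guarantees that $g$ extends $f$.

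For $\tilde f$ I would use a standard Whitney-type extension. Decompose $\R^n\setminus A$ into Whitney cubes $\{Q_j\}$ with $\diam Q_j \sim \dist(Q_j, A)$, pick for each $j$ a base point $a_j\in A$ with $\dist(Q_j, a_j) \lesssim \diam Q_j$, take a partition of unity $\{\psi_j\}$ subordinate to a mild fattening of the $Q_j$, and set $\tilde f = \sum_j \psi_j(\cdot)\, f(a_j)$ off $A$ and $\tilde f|_A=f$. Since $\calB$ is a Banach space the convex combinations are defined, and a standard estimate yields a Lipschitz constant $L_1 \lesssim_n L$. For $h$, I would use the same Whitney data: by the doubling property of $\R^n$, the cubes $\{Q_j\}$ can be sorted into $K = K(n)$ color classes so that distinct cubes $Q_i, Q_j$ sharing a color satisfy $\dist(Q_i, Q_j) \geq 100 \max(\diam Q_i, \diam Q_j)$. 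For each color $c$ I would set
\[
  h_c(x) = \sum_{j : c(j) = c} \varphi_j(x)\bigl(x - x_{Q_j},\;\diam Q_j\bigr) \in \R^n \times \R,
\]
where $\varphi_j$ is a smooth bump equal to $1$ on $Q_j$, supported in a small fattening of $Q_j$ (hence disjoint from $A$), with $|\nabla\varphi_j| \lesssim (\diam Q_j)^{-1}$, and $x_{Q_j}$ is a fixed corner of $Q_j$. Concatenating over colors yields $h\colon \R^n \to \R^m$ with $m = (n+1)K$. The color separation implies that for $x$ in a Whitney cube $Q_j$ the color-$c(j)$ block of $h$ records both the position $x - x_{Q_j}$ and the scale $\diam Q_j$, while all other color blocks only see cubes very far from $x$; in particular $h|_A = 0$ and $h$ is $L_2(n)$-Lipschitz.

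With $g = (\tilde f, h)$ the upper bi-Lipschitz bound is immediate. For the lower bound, fix $x,y\in \R^n$ and separate into two regimes via a threshold $\alpha_0 = \alpha_0(L, n) \in (0, 1)$: if $\max(\dist(x,A), \dist(y,A)) \leq \alpha_0 |x-y|$, I replace $x, y$ by nearby points of $A$ and combine the bi-Lipschitz bound for $f$ with the Lipschitz bound for $\tilde f$ to obtain $\|\tilde f(x) - \tilde f(y)\| \gtrsim |x-y|$; otherwise, at least one of $x, y$ lies in a Whitney cube of diameter comparable to $|x-y|$, and the $h$ component provides the needed lower bound through the position and scale coordinates of the appropriate color block. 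The main obstacle will be the intermediate regime where $|x - y|$ is comparable to both $\dist(x, A)$ and $\dist(y, A)$: there neither piece of $g$ dominates on its own, and one must calibrate the threshold $\alpha_0$, the color-separation constant, and the fattening parameter for the $\varphi_j$ jointly in terms of $L$ and $L_1$ to ensure the $\tilde f$ and $h$ contributions combine to give a uniform lower bound, yielding the final bi-Lipschitz constant $L' = L'(L, n)$.
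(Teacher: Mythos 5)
For context: the paper does not actually reprove this lemma. It observes that the proof of \cite[Proposition 17.4]{david1991singular} (the case $\calB=\R^d$) uses only the triangle inequality in the codomain together with a Lipschitz extension theorem for maps into the codomain, quotes \cite{johnson1986extensions} for the Banach-valued extension, and concludes that the same proof goes through verbatim. Your proposal is essentially a from-scratch reconstruction of that David--Semmes argument: a Whitney-type Lipschitz extension of $f$ (your convex-combination construction is a perfectly good substitute for citing the Banach-valued extension theorem, since the codomain is linear), plus auxiliary coordinates built from colored Whitney cubes recording position and scale. So the route is the same one the paper leans on; the issue is whether your sketch closes, and it does not yet.

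The gap is in the lower bound, exactly where you flag it, and as written that step would fail. With the color-separation constant fixed at $100$ and $K=K(n)$ colors, take $x\in Q_j$ and $y$ in a distinct cube $Q_{j'}$ of the same color with $\diam Q_{j'}=\diam Q_j$, $y-x_{Q_{j'}}=x-x_{Q_j}$, and congruent, identically colored local Whitney configurations around $x$ and $y$: then $h(x)=h(y)$, while the separation only forces $|x-y|\gtrsim 100\,\diam Q_j$, i.e. $\dist(x,A)\approx\dist(y,A)\approx c\,|x-y|$ for a constant $c$ depending on $n$ alone, not on $L$. In that regime the $\tilde f$ estimate gives only $\|\tilde f(x)-\tilde f(y)\|\geq L^{-1}|x-y|-C_nL\bigl(\dist(x,A)+\dist(y,A)\bigr)$, which is vacuous once $L^2\gtrsim 1/c$; so for large $L$ neither component of $g$ controls $|x-y|$, and the argument as set up does not even yield injectivity. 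The cure is the one you allude to but do not carry out, and it conflicts with your earlier choices: the same-color separation must be taken of order $L^2$ (so that an $h$-collision forces $\dist(x,A)+\dist(y,A)\lesssim |x-y|/L^2$, putting the pair back into the regime where $\tilde f$ wins), which makes the number of colors, hence $m$, depend on $L$ as well as $n$ --- allowed by the statement of the lemma, but contradicting your fixed $K=K(n)$, $m=(n+1)K$, separation $100$. Until the intermediate regime is written out with this $L$-dependent calibration (or an equivalent device, as in the David--Semmes proof), the crucial step of the lemma is asserted rather than proved.
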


This result is proved in the case $\calB  = \R^d$ in \cite[Proposition 17.4]{david1991singular} (note, the roles of $d$ and $n$ are reversed there). In this case we simply have $\calB' = \R^{d + m}$ with $d_{\calB'}$ the Euclidean metric. The essential properties of $\R^d$ (the codomain) used in the proof of \cite[Proposition 17.4]{david1991singular} are the triangle inequality and the fact that $L$-Lipschitz functions from subsets of $\R^n$ into $\R^d$ can be extended to $L'$-Lipschitz functions from $\R^n$ into $\R^d$ with $L'$ depending only on $L$ and $n$. The analogous extension property for Lipschitz functions taking values in Banach spaces was proven in \cite{johnson1986extensions}. Thus, the exact same proof gives Lemma \ref{l:extension}.

\begin{proof}[Proof of Lemma \ref{l:big-pieces}]
	Let $m$ be the constant from Lemma \ref{l:extension}, let $i \colon X \to \ell^\infty(X)$ be the Kuratowski embedding and set $\calB = \ell^\infty(X) \times \R^m$. From now on we will identify $\ell^\infty(X)$ with a subset of $\calB$ and $i$ with an isometric embedding of $X$ in $\calB$ in the obvious way.  
	
	Let $Y = i(X)$ and let us show that $Y$ has big pieces of $\calF$ in $\calB.$ Fix $x \in Y$ and $0 < r < \diam(Y)$. Notice that $Y$ is UR since $X$ is UR and $i$ is an isometry. In particular, by \cite[Corollary 1.2]{schul2009bi}, it has BPBI in the sense of Definition \ref{d:BPBI} with constants $L_*$ (depending on $L,n$ and $\theta$) and $\theta_*$ (depending only on $\theta$). Thus, there exists a set $A \subseteq \R^n$ and an $L_*$-bi-Lipschitz map $f \colon A \to Y \subseteq \ell^\infty(X)$ such that 
	\begin{align}\label{e:big'}
		\calH^n(f(A) \cap B_Y(x,r)) \geq \theta_* r^n. 
	\end{align}
	By Lemma \ref{l:extension} we can find an $L'$-bi-Lipschitz extension $g \colon \R^n \to \calB$ of $f$ with $L'$ depending only on $L_*$ and $n.$ It now follows that \eqref{e:big'} that 
	\begin{align}
		\calH^n(g(\R^n) \cap B_Y(x,r)) \geq \calH^n(f(A) \cap B_{Y}(x,r))  \geq \theta_* r^n
	\end{align}
	as required. 
\end{proof}

\bigskip

\subsection{A Carleson condition for $\gamma$ on bi-Lipschitz images of $\R^n$}

The main goal of this section is to prove the following.

\begin{prop}\label{l:approx-bi-lip}
	Let $(\Sigma,d)$ be a metric space and suppose there exists $L \geq 1$ and an $L$-bi-Lipschitz map $g : \R^n \rightarrow \Sigma$ such that $\Sigma = g(\R^n).$ Then $\Sigma$ satisfies a Carleson condition for the $\gamma$ coefficients with constant $K = K(L,n).$ The Carleson norm for the set in \eqref{d:gamma-large} depends only on $\ve,L$ and $n.$   
\end{prop}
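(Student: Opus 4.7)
The strategy is to transfer the problem to $\R^n$ via the bi-Lipschitz map $g$ and apply Dorronsoro-type quantitative affine approximation for Lipschitz maps on Euclidean space. By Lemma \ref{l:cubes-wala} it suffices to fix a Christ-David cube system $\calD$ on $\Sigma$ and show that for every $1$-Lipschitz $f \colon \Sigma \to (\R^n, \|\cdot\|_0)$ and every $\ve > 0$, the collection $\{Q \in \calD : \gamma^K_{\Sigma,f,\|\cdot\|_0}(3Q) > \ve\}$ satisfies a Carleson packing condition, with constants depending only on $L$, $n$, and $\ve$.

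First I would handle the $\Omega$-part. The composition $F = f \circ g \colon \R^n \to (\R^n, \|\cdot\|_0)$ is $L$-Lipschitz, so Dorronsoro's theorem (applied coordinatewise, passing from the $L^p$ version to an $L^\infty$ version at scale $r$ via a standard Chebyshev/pigeonhole loss) yields that for each $\ve > 0$ the set of pairs $(y,r) \in \R^n \times (0,\infty)$ with
\begin{equation}
\inf_{A \text{ affine}} \sup_{z \in B_{\R^n}(y,r)} \| F(z) - A(z) \|_0 > \ve r
\end{equation}
is a Carleson set. Transferring back through $g$ (which distorts radii and measures by bounded factors), this shows that only a Carleson-packed family of cubes $Q \in \calD$ can fail to admit an affine approximation to $F$ on $g^{-1}(3 B_Q)$ with error $\leq \ve \ell(Q)$.

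Second, and this is the main obstacle, I would obtain the $\bilat$-part by a parallel mechanism applied to $g$ itself. The goal is to show that for most $(y,r) \in \R^n \times (0,\infty)$ there is an affine map $\R^n \to Y$, into some Banach space $Y$ isometrically containing $\Sigma$, whose $L^\infty$ error on $B(y,r)$ is $\leq \ve r$. Such a map induces a norm $N_{y,r}$ on $\R^n$ and a GHA witnessing $\bilat_\Sigma(x,s) \lesssim \ve$ for balls $B_\Sigma(x,s)$ corresponding to $B(y,r)$ through $g$. The subtlety is that vector-valued Dorronsoro fails for general Banach targets, so one must exploit that $g$ is \emph{bi-Lipschitz} (not merely Lipschitz): the pullback metric $\rho(y_1,y_2) = d_\Sigma(g(y_1),g(y_2))$ is a genuine metric on $\R^n$ bi-Lipschitz to $|\cdot|$, and for such metrics a quantitative Rademacher/Kirchheim statement is available by testing against the scalar-valued $L$-Lipschitz coordinate functions $\rho(\cdot, z)$ for $z$ in a well-chosen net and invoking scalar Dorronsoro with uniformity in $z$.

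Combining the two steps: for a cube $Q$ good for both approximations, the affine map approximating $g$ produces the norm $N_Q$ and the map $\vp \in \Phi(x_Q, 3\ell(Q), N_Q)$ showing $\bilat_\Sigma(3Q) \lesssim \ve$, while the affine approximation of $F$ (composed with $\vp^{-1}$ and controlled via John's theorem $d_\BM \leq n$) yields an affine map $A \colon (\R^n, N_Q) \to (\R^n, \|\cdot\|_0)$ of Lipschitz constant $\lesssim_{L,n} 1$ realizing $\Omega^K_{\Sigma,f,\|\cdot\|_0}(3Q) \lesssim \ve$ for a fixed $K = K(L,n)$. The packing bound for bad cubes then follows from Lemma \ref{l:set-packing} applied to the union of the two Carleson sets produced in the preceding two steps.
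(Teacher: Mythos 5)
Your overall architecture matches the paper's: Dorronsoro applied to $h=f\circ g$ handles the $\Omega$-part, a quantitative differentiation statement for $g$ handles the $\bilat$-part, and the two are glued cube by cube (the paper's Lemma \ref{l:I_Q} supplies the bounded-multiplicity correspondence $Q\mapsto I_Q$ you gesture at when "transferring back through $g$", and Lemma \ref{l:Omega-Sigma} carries out the combination, with John's theorem controlling the Lipschitz constant of the affine map exactly as you describe, before Lemmas \ref{l:set-packing} and \ref{l:cubes-wala} convert packing into the Carleson condition).

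The gap is in your second step. You assert that a quantitative Kirchheim statement for the pullback metric $\rho(y_1,y_2)=d(g(y_1),g(y_2))$ follows by applying scalar Dorronsoro to the distance functions $\rho(\cdot,z)$ for $z$ in a well-chosen net, "with uniformity in $z$". As stated this fails. If $z$ lies in (or near) the ball being tested, then $\rho(z,z)=0$ and $\rho(y,z)\ge L^{-1}|y-z|$ force a conical singularity at $z$: for any affine $A$ and $t\sim \ell(I)$ one has $A(z+te)+A(z-te)=2A(z)$, while the two distance values are each at least $L^{-1}t$, so $\rho(\cdot,z)$ is never within $\ve\ell(I)$ of an affine function on $3I$ once $\ve<1/(4L)$ — already for $g=\mathrm{Id}$ the function $|\cdot-z|$ is a counterexample — so the Dorronsoro bad set for these test functions is large at exactly the scales you need, no matter how regular $g$ is. If instead you only test base points far from the ball, affine approximability of boundedly many such distance functions does not determine the interior metric to within $\ve\ell(I)$, and you give no argument that it would. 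Passing from scalar information of this kind to the statement that $\rho$ is $\ve\ell(I)$-close to a seminorm on $3I$ outside a Carleson-packed family of cubes is precisely the content of the quantitative metric differentiation theorem of Azzam and Schul, which the paper invokes as a black box (Theorem \ref{t:metric-diff}) and then upgrades from seminorms to norms in $\mathcal{N}_{2L}$ via the bi-Lipschitz lower bound (Corollary \ref{c:AS}); note that the differentiation theorem itself needs only Lipschitz, bi-Lipschitzness entering through this upgrade, the construction of the GHA from $g^{-1}$, and the cube correspondence — not because "vector-valued Dorronsoro fails", which concerns the target of $f$ and is already handled by John's theorem in finite dimensions. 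With Theorem \ref{t:metric-diff} and Corollary \ref{c:AS} substituted for your proposed mechanism, the rest of your outline is the paper's proof.
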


For the proof of Proposition \ref{l:approx-bi-lip} we begin by setting some notation and recalling some important results. For a cubes $I$ in $\R^n$ let $x_I$ denote its centre and $\ell(I)$ denote its side-length. Let $\Delta$ be the set of dyadic cubes in $\R^n.$ Let $\mathcal{S}\mathcal{N}$ be the collection of all semi-norms on $\R^n$, $\mathcal{N}$ the collection of all norms on $\R^n$ and $\mathcal{N}_L$ the collection of norms on $\R^n$ such that 
\[ L^{-1}\|x\| \leq |x| \leq L \|x\|  \mbox{ for all } x \in \R^n,\] 
where $|\cdot|$ denotes the Euclidean norm on $\R^n.$ The following is a consequence of Dorronsoro's Theorem \cite{dorronsoro1985characterization}.

\begin{thm}\label{t:dorr}
	For each $L \geq 1$ there exists $K = K(L,n)$ such that the following holds. Suppose $\|\cdot\|_0 \in \mathcal{N}$, $f : \R^n \rightarrow (\R^n,\|\cdot\|_0)$ is $L$-Lipschitz and $\ve > 0.$ For each $J \in \Delta,$ we have 
	\begin{align}
		\sum \{ \ell(I)^n : I \in \Delta, \ I \subseteq J, \ \Omega_{\R^n,f,\|\cdot\|_0}^K(x_I,3\ell(I),|\cdot|,\emph{Id}) > \ve\} \lesssim_{\ve,L,n} \ell(J)^n.
	\end{align}
\end{thm}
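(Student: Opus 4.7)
The plan is to reduce to the classical $L^{2}$-Dorronsoro theorem \cite{dorronsoro1985characterization}. Define
\[
\beta_{2,f}(B(x,r)) := \frac{1}{r}\inf_{A}\left(\fint_{B(x,r)}\|f(y)-A(y)\|_0^2\,dy\right)^{1/2},
\]
where the infimum runs over all affine $A\colon \R^n\to\R^n$. Applying the scalar case of Dorronsoro's theorem componentwise, and absorbing the constant from the equivalence $\|\cdot\|_0\asymp_n|\cdot|$ (Remark \ref{r:BM}) into implicit constants, one obtains the Carleson packing
\[
\sum_{I\subseteq J,\,I\in\Delta}\beta_{2,f}(B(x_I,12\ell(I)))^{2}\,\ell(I)^{n}\lesssim_{n} L^{2}\ell(J)^{n}
\]
for every $J\in\Delta$. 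The task is then to show that whenever $\Omega_{\R^n,f,\|\cdot\|_0}^{K}(x_I,3\ell(I),|\cdot|,\mathrm{Id})>\ve$, the quantity $\beta_{2,f}$ at a comparable, slightly larger scale is also bounded below by a power of $\ve$.

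First I would control the Lipschitz constant of the $L^2$-optimal affine map. Let $A(y)=a+T(y-x)$ realise $\beta_{2,f}(B(x,r))$, so in particular $\beta_{2,f}(B(x,r))\le L$ by taking $A\equiv f(x)$. Since $\fint_{B(x,r)}\|A-a\|_0^{2}\asymp_{n}\|T\|_{\mathrm{op}}^{2}r^{2}$, the triangle inequality and the Lipschitz bound $\fint\|f-f(x)\|_0^{2}\le L^{2}r^{2}$ give $\|T\|_{\mathrm{op}}\lesssim_{n} L$. Thus setting $K:=K(L,n):=C_{n}(L+1)$ makes the $L^{2}$-optimal $A$ admissible in the infimum defining $\Omega^{K}$.

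Next I would upgrade $L^2$ control to $L^\infty$ control using the Lipschitz property. With $A$ as above, $g:=f-A$ is $(L+K)$-Lipschitz and satisfies $\fint_{B(x,r)}\|g\|_0^{2}\le \beta_{2,f}(B(x,r))^{2}r^{2}$. If the maximum $M:=\sup_{B(x,r/2)}\|g\|_0$ is attained at $x_0$, then $\|g\|_0\ge M/2$ on $B(x_0,M/(2(L+K)))\cap B(x,r)$, and a direct volume estimate (considering the two cases $M\le(L+K)r$ and $M>(L+K)r$) yields
\[
M\lesssim_{n,L} r\cdot \beta_{2,f}(B(x,r))^{2/(n+2)}.
\]
Contrapositively, if $\Omega^{K}(x,r/2)>\ve$ then $\beta_{2,f}(B(x,r))\gtrsim_{n,L}\ve^{(n+2)/2}$.

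Finally I would combine everything. If $\Omega^{K}(x_{I},3\ell(I))>\ve$, applying the previous step with $r=6\ell(I)$ gives $\beta_{2,f}(B(x_{I},6\ell(I)))\gtrsim_{n,L}\ve^{(n+2)/2}$, and since $B(x_{I},6\ell(I))\subseteq B(x_{I},12\ell(I))$, monotonicity of the $L^{2}$-infimum (together with the comparison of $\fint$ on nested balls, which costs only a factor of $2^n$) preserves this lower bound at scale $12\ell(I)$. Summing via the Dorronsoro packing displayed above produces
\[
\sum\bigl\{\ell(I)^{n}:I\subseteq J,\ \Omega^{K}(x_{I},3\ell(I))>\ve\bigr\}\lesssim_{n,L}\ve^{-(n+2)}\ell(J)^{n},
\]
as required. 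The main technical obstacle is the scale bookkeeping: relating the ball $B(x_{I},3\ell(I))$ appearing in the theorem to a comparable (and necessarily slightly larger) ball on which Dorronsoro's theorem delivers the square-function bound, while ensuring the map from cubes to their enlargements is finite-to-one. Keeping all constants tracked uniformly in the norm $\|\cdot\|_0$ is guaranteed by the fact that every step depends on $\|\cdot\|_0$ only through the John constant of Remark \ref{r:BM}.
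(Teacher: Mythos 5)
Your route is genuinely different from the paper's: the paper takes the sup-norm, Lipschitz-affine form of Dorronsoro's theorem as known for Euclidean targets and only supplies the reduction from a general norm $\|\cdot\|_0$ to the Euclidean one via a linear isomorphism, whereas you rederive the $\Omega^K$ (i.e.\ $L^\infty$) statement from the $L^2$ coefficients $\beta_{2,f}$. The core of that derivation is sound: the bound $\|T\|_{\mathrm{op}}\lesssim_n L$ for the linear part of the $L^2$-optimal affine map (so one may take $K=C_n(L+1)$), the $L^2\to L^\infty$ upgrade via the Lipschitz property of $f-A$ and a volume estimate, and the Chebyshev argument with loss $\ve^{-(n+2)}$ all work and would give a more self-contained proof than the paper's citation.

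The genuine gap is in the very first step, where you ``absorb the constant from the equivalence $\|\cdot\|_0\asymp_n|\cdot|$''. No such equivalence holds: Remark \ref{r:BM} (John's theorem) provides a \emph{linear isomorphism} $T$ with $\|T\|_{\op}\,\|T^{-1}\|_{\op}\leq n$, not that the identity map between $(\R^n,\|\cdot\|_0)$ and $(\R^n,|\cdot|)$ is bi-Lipschitz with dimensional constants; for an anisotropic norm such as $\|v\|_0=|v_1|+M\sum_{i\geq 2}|v_i|$ the comparison constant is $M$, which is arbitrary. Consequently a map that is $L$-Lipschitz into $(\R^n,\|\cdot\|_0)$ can have arbitrarily large Euclidean Lipschitz constant, so the componentwise application of Dorronsoro does not give the packing $\lesssim_n L^2\ell(J)^n$ as written, and uniformity in the norm cannot be discarded: the theorem is applied later (Lemma \ref{l:packing-bi-lip}) with norms over which there is no control. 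The repair is exactly the paper's reduction: choose $T$ with $\|T\|_{\op}\leq 1$ and $\|T^{-1}\|_{\op}\leq n$, set $g=T\circ f$ (so $g$ is $C_nL$-Lipschitz into Euclidean space), run your argument for $g$, and transfer back using $\|f(y)-A(y)\|_0=\|T^{-1}\bigl(g(y)-TA(y)\bigr)\|_0\leq n\,|g(y)-TA(y)|$ together with the fact that $A\mapsto TA$ changes Lipschitz constants only by factors of $n$, which yields $\Omega^{C_nK}_{\R^n,f,\|\cdot\|_0}\lesssim_n\Omega^{K}_{\R^n,g,|\cdot|}$ (equivalently, the corresponding comparison for the $\beta_2$ numbers). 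With that substitution your proof closes.
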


\begin{rem}
	Dorronsoro's Theorem implies Theorem \ref{t:dorr} immediately for $L$-Lipschitz functions $g : \R^n \to \R^n$ and the coefficients $\Omega_{\R^n,g,|\cdot|}^K(\cdot,\cdot,|\cdot|,\text{Id})$. In the general case we proceed as follows. Let $T \colon (\R^n,\|\cdot\|_0) \to \R^n$ be a $\sqrt{n}$-bi-Lipschitz linear isomorphism (whose existence is guaranteed by Remark \ref{r:BM}) and set $g \coloneqq T \circ f.$ It is easy to show \[\Omega_{\R^n,f,\|\cdot\|_0}^{Kn^\frac{1}{2}} \lesssim_n \Omega_{\R^n,g,|\cdot|}^K\]
	so that Theorem \ref{t:dorr} with $f$ and $\|\cdot\|_0$ follows from the same result with $g$ and $|\cdot|.$ 
\end{rem}

For a metric space $(X,d),$ a Lipschitz function $f : \R^n \rightarrow X$ and a cube $I$ in $\R^n$ (not necessarily dyadic), define
\[ \text{md}_f(I) = \frac{1}{\ell(I)} \inf_{\|\cdot\| \in \mathcal{S} \mathcal{N}} \sup_{x,y \in I} \abs{d(f(x),f(y)) -\|x-y\| }\]
and 
\[ \text{md}_f^L(I) = \frac{1}{\ell(I)} \inf_{\|\cdot\| \in \mathcal{N}_L} \sup_{x,y \in I} \abs{d(f(x),f(y)) -\|x-y\| }\]
The following is proved in \cite{azzam2014quantitative}.
\begin{thm}\label{t:metric-diff}
	Let $L \geq 1,$ $f : \R^n \rightarrow X$ be $L$-Lipschitz function and $\ve > 0$. Then, for each $J \in \Delta$, 
	\[ \sum \{ \ell(I)^n : I \in \Delta, \ I \subseteq J, \ \emph{md}_f(3I) > \ve \} \lesssim_{d,\ve,L} \ell(J)^n. \]
\end{thm}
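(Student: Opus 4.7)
\emph{Proof plan.} The plan is to reduce Theorem \ref{t:metric-diff} to a Dorronsoro-type $L^\infty$-estimate for Banach-valued Lipschitz maps by an isometric embedding, and then to establish the latter via a stopping-time argument that exploits compactness of the space of seminorms.

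I would begin by applying a Kuratowski-type isometric embedding $\iota \colon \overline{f(\R^n)} \to \ell^\infty$ and set $\tilde f = \iota \circ f \colon \R^n \to \ell^\infty$; this is still $L$-Lipschitz, and because $\iota$ is an isometry, $\md_f(I) = \md_{\tilde f}(I)$. Moreover, a seminorm $\|\cdot\|$ with $\|v\| \leq CL|v|$ realising $\md_{\tilde f}(3I)$ up to additive error $\ve$ is equivalent, via the standard embedding of the normed quotient $(\R^n/\ker\|\cdot\|,\|\cdot\|)$ into $\ell^\infty$, to a linear map $T \colon \R^n \to \ell^\infty$ with $\|T\|_\op \leq CL$ approximating $\tilde f$ affinely in $L^\infty(3I)$ to error $\ve \ell(I)$. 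So the theorem becomes a Carleson packing statement for the dyadic cubes on which $\tilde f$ admits no such affine approximation.

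To establish that Carleson packing, I would combine the scalar $L^\infty$-Dorronsoro of Theorem \ref{t:dorr} with compactness in the space $\calS$ of seminorms on $\R^n$ bounded by $CL|\cdot|$. This space is compact in the uniform topology on $S^{n-1}$, so it admits a finite $\ve$-net $\calS_\ve$ of cardinality $N = N(\ve,L,n)$. For each cube $I \subseteq J$, assign a near-optimal approximating seminorm $n(I) \in \calS$, rounded to an element of $\calS_\ve$. I would then run a stopping-time argument on the dyadic tree rooted at $J$, with stopping criteria being either (a) a change of the rounded seminorm, which can occur at most $N$ times along any chain of descending cubes and yields a trivial Carleson structure by a standard tree-counting, or (b) the failure of one of finitely many scalar affine approximations associated with $n(I)$ and its candidate refinements (concretely, the coordinate projections of $\tilde f$ along a fixed finite set of test directions chosen to pin down seminorms in $\calS$ to accuracy $\ve$). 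Applying Theorem \ref{t:dorr} to the corresponding finitely many auxiliary $1$-Lipschitz real-valued functions then provides the Carleson packing on stops of type (b), and combining with (a) completes the argument.

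The hard part is the Banach-valued $L^\infty$-Dorronsoro step, i.e., upgrading the coordinatewise scalar Dorronsoro of Theorem \ref{t:dorr} into a simultaneous affine control in $\ell^\infty$. Since $\ell^\infty$ is infinite-dimensional, a naive coordinate-by-coordinate application does not suffice; the crucial input is compactness in the space $\calS$ of seminorms (rather than in the space of linear maps into $\ell^\infty$), which is what allows the stopping-time tree to be finite-branching in the seminorm parameter and thus makes the scalar Dorronsoro applicable to only finitely many auxiliary functions. Tracking the dependence of the resulting Carleson constant on $\ve$ through $N(\ve)$ and through these scalar invocations is the delicate quantitative step.
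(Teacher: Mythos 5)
Your reduction step is where the argument breaks, and it breaks fatally. You claim that a seminorm $\|\cdot\|$ realising $\md_{\tilde f}(3I)$ is \emph{equivalent} to an affine map $x\mapsto c+T(x)$ into $\ell^\infty$ approximating $\tilde f$ in $L^\infty(3I)$ to error $\ve\ell(I)$. Only one direction is true: an affine approximation forces $\md$ to be small. The converse fails, because two maps with (approximately) the same pullback distances need not be close to each other, and Kuratowski-type embeddings are the standard counterexample: their coordinates are distance functions, which are cone-like rather than affine-like. Concretely, take $X=(\R^n,|\cdot|)$ and $f=\mathrm{Id}$, so $\md_f(3I)=0$ for every cube, and let $\tilde f=\iota\circ f$ with $\iota(x)=(|x-q_j|-|q_j|)_j$ for a countable dense set $\{q_j\}$. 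Given any cube $I$, pick $q_j$ within $\ve\ell(I)$ of the centre of $3I$; if $a$ is any affine real function with $|a(x)-|x-q_j|-c|\leq\ve\ell(I)$ on $B(q_j,\ell(I)/2)$, then comparing $a(q_j+tv)+a(q_j-tv)=2a(q_j)$ with $|tv|+|-tv|=2t$ at $t=\ell(I)/2$ gives $\ell(I)\leq 4\ve\ell(I)$, impossible for $\ve<1/4$. Hence for small $\ve$ \emph{every} cube fails your affine-approximation condition while $\md\equiv 0$, so the ``Carleson packing statement for cubes on which $\tilde f$ admits no such affine approximation'' — the statement your whole stopping-time scheme is designed to prove — is simply false ($\sum_{I\subseteq J}\ell(I)^n=\infty$). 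This is not a fixable technicality: quantitative differentiation (vector-valued $L^\infty$-Dorronsoro) genuinely fails for $\ell^\infty$-valued Lipschitz maps, and the entire point of metric differentiation is that one must control the pullback metric by seminorms rather than the map by affine maps. The compactness-of-seminorms net and the ``test direction'' bookkeeping are therefore aimed at a false target; moreover, even taken on their own terms, it is never explained how failure of $\md_f(3I)\leq\ve$ would be detected by the failure of Dorronsoro-type affine approximation for one of finitely many scalar functions, since the obstruction in the example above lives in coordinates (distance functions) that are never close to affine.

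For comparison: the paper does not prove Theorem \ref{t:metric-diff} at all — it is quoted from \cite{azzam2014quantitative} — and the argument there works directly with the pullback distances $d(f(x),f(y))$ (equivalently, with the family of real-valued $1$-Lipschitz functions $z\mapsto d(f(z),f(y))$ and a scale-by-scale comparison of the pullback metric with seminorms), never asserting that the map itself is affinely approximable in the target. Any repair of your plan would have to take that form: keep the scalar Dorronsoro input, but apply it to quantities built from the pullback metric (or to well-chosen distance/coordinate functions compared against \emph{norms of affine maps}, not against the affine maps themselves), rather than to a vector-valued affine approximation of $\tilde f$ in $\ell^\infty$.
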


We record the following corollary.

\begin{cor}\label{c:AS}
	Let $L \geq 1,$ $f : \R^n \rightarrow X$ be an $L$-bi-Lipschitz function and $\ve > 0$. Then, for each $J \in \Delta$, 
	\[ \sum \{ \ell(I)^n : I \in \Delta, \ I \subseteq J, \ \emph{md}^{2L}_f(3I) > \ve \} \lesssim_{d,\ve,L} \ell(J)^n. \]
\end{cor}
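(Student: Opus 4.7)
The strategy is to reduce to Theorem \ref{t:metric-diff} by showing that, because $f$ is $L$-bi-Lipschitz (not merely Lipschitz), any seminorm $\|\cdot\|$ on $\R^n$ that almost achieves the infimum in the definition of $\md_f(3I)$ must in fact lie in $\mathcal{N}_{2L}$, provided $\md_f(3I)$ is small enough in terms of $L$. Once this is established, the passage from $\md_f$ to $\md_f^{2L}$ costs nothing on the relevant range of scales, and the corollary becomes a direct consequence of Theorem \ref{t:metric-diff}.

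The key step is the following pointwise comparison. Fix a cube $I \subset \R^n$, let $0 < \delta < 1$ and choose a seminorm $\|\cdot\| \in \mathcal{SN}$ with
\[
\sup_{x,y\in 3I}\bigl| d(f(x),f(y)) - \|x-y\| \bigr| \le \ell(I)\bigl(\md_f(3I)+\delta\bigr).
\]
For any $v\in\R^n$ with $|v|=1$ the point $y = x_I + \ell(I)v$ lies in $3I$ (since $|v|_\infty\le 1 \le 3/2$), so applying the above with $x=x_I$ and using the $L$-bi-Lipschitz bound $L^{-1}\ell(I)\le d(f(x),f(y))\le L\ell(I)$ yields, after dividing by $\ell(I)$,
\[
L^{-1} - \bigl(\md_f(3I)+\delta\bigr) \;\le\; \|v\| \;\le\; L + \bigl(\md_f(3I)+\delta\bigr).
\]
Provided $\md_f(3I)+\delta \le 1/(2L)$, this forces $1/(2L) \le \|v\| \le 2L$ for every unit vector $v$; in particular $\|\cdot\|$ is a norm and lies in $\mathcal{N}_{2L}$. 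Sending $\delta\to 0$ gives
\[
\md_f^{2L}(3I) \;\le\; \md_f(3I) \qquad \text{whenever } \md_f(3I) \le \tfrac{1}{2L}.
\]

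Now let $\ve>0$ and set $\ve' = \min\{\ve,\tfrac{1}{2L}\}$. The contrapositive of the previous display shows
\[
\bigl\{I\in\Delta : I\subseteq J,\ \md_f^{2L}(3I) > \ve\bigr\}
\;\subseteq\;
\bigl\{I\in\Delta : I\subseteq J,\ \md_f(3I) > \ve'\bigr\},
\]
and Theorem \ref{t:metric-diff} (applied with parameter $\ve'$, noting that bi-Lipschitz maps are in particular Lipschitz with constant $L$) bounds the total $n$-volume of the right hand family by a constant times $\ell(J)^n$, with implicit constant depending only on $d$, $\ve$, and $L$. I do not anticipate a serious obstacle: the only delicate point is checking that the testing vectors $x_I + \ell(I)v$ stay inside $3I$ for a full sphere of directions, and this is immediate from the fact that $|v|_\infty\le|v|_2=1$.
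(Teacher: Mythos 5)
Your proposal is correct and follows essentially the same route as the paper: you show that when $\md_f(3I)$ is small enough in terms of $L$, bi-Lipschitzness forces any near-optimal seminorm into $\mathcal{N}_{2L}$, so the bad set for $\md_f^{2L}$ is contained in the bad set for $\md_f$ at a comparable threshold, and the packing bound then follows from Theorem \ref{t:metric-diff}. The only (harmless) slip is that $\md_f(3I)$ is normalized by $\ell(3I)=3\ell(I)$, so your error estimate should carry a factor $3$ and the smallness threshold becomes roughly $1/(6L)$ rather than $1/(2L)$, which only affects constants.
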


\begin{proof}
	Without loss of generality we may suppose $ 0 < \ve < (4L)^{-1}$ (larger values for $\ve$ are immediate from this case). Suppose $I \in \Delta$ is such that $\text{md}_f(3I) \leq \ve.$ It suffices to show $\text{md}_f^{2L}(3I) \leq \ve$. To see that this holds, let $0 < \delta < (4L)^{-1}$ and $\| \cdot \| \in \mathcal{N}$ such that 
	\begin{align}\label{e:mdf}
		\sup_{x,y \in 3I} \abs{d(f(x),f(y)) -\|x-y\| } \leq (\text{md}_f(3I) + \delta) \ell(3I) \leq (\ve + \delta) \ell(3I).
	\end{align}
	Since $\delta$ is arbitrary, if we show $\| \cdot \| \in \mathcal{N}_{2L}$ then we are done. Let $z \in \R^n$ and choose $x,y \in 3I$ such that $|x - y| = \ell(3I)$ and $z = \lambda (y-x)$ for some $\lambda > 0.$ By \eqref{e:mdf}, our choice of $\ve$ and $\delta$, and the fact that $f$ is $L$-bi-Lipschitz, we have 
	\begin{align}
		L^{-1}|z| &= L \lambda |y-x| \leq \lambda d(f(x),f(y)) \leq \lambda \|x -y \| + (\ve + \delta)\lambda \ell(3I) \\
		&= \| z \| + (2L)^{-1}|z|. 
	\end{align} 
	After rearranging, the above inequality implies $|z| \leq 2L \| z \|.$ For the reverse inequality, we use \eqref{e:mdf} again to get 
	\begin{align}
		\|z\| = \lambda \|y-x \| \leq \lambda d(f(x),f(y)) + 2\ve \lambda \ell(3I) \leq L \lambda |y-x| + 2 \ve \lambda\ell(3I) \leq 2L |z|.  
	\end{align}
	Thus, $\| \cdot \| \in \mathcal{N}_{2L}$. 
\end{proof}

For the remainder of the section we fix a metric space $(\Sigma,d)$ and suppose there exists an $L$-bi-Lipschitz function $g : \R^n \to \Sigma$ such that $\Sigma = g(\R^n).$ Let $\calD$ be a system of Christ-David cubes for $\Sigma.$

\begin{lem}\label{l:I_Q}
	For each $Q \in \calD$ there exists $I_Q \in \Delta$ such that $\ell(I_Q) \sim_L \ell(Q)$ and $3B_Q \subseteq g(3I_Q).$ For each $I \in \Delta$, we have
	\begin{align}\label{e:I_Q}
		|\{Q \in \calD : I_Q = I\}| \lesssim_{L,n} 1.
	\end{align}
\end{lem}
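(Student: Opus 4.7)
The plan is to define $I_Q$ as a dyadic cube containing $g^{-1}(x_Q)$ at an appropriate scale, and to transfer the ball containment through the bi-Lipschitz map $g$. The multiplicity bound then follows by a packing argument at a single scale, using that $\Sigma$ inherits Ahlfors $n$-regularity from $\R^n$ via $g$.

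Let $y_Q = g^{-1}(x_Q)$, well-defined by the injectivity of $g$. The $L$-bi-Lipschitz property gives
\[ g^{-1}(3B_Q) \subseteq B_{\R^n}(y_Q, 3L\ell(Q)), \]
where $B_{\R^n}$ denotes the Euclidean ball. Let $j \in \Z$ be the smallest integer with $2^{-j} \geq 3L\ell(Q)$; then $3L\ell(Q) \leq 2^{-j} < 6L\ell(Q)$. Choose $I_Q \in \Delta$ of side length $2^{-j}$ containing $y_Q$, so that $\ell(I_Q) = 2^{-j} \sim_L \ell(Q)$. Since $y_Q \in I_Q$ and $3I_Q$ is the concentric cube of side length $3\ell(I_Q)$, the $\ell^\infty$-distance from $y_Q$ to $\partial(3I_Q)$ is at least $\ell(I_Q)$; for points interior to a cube this coincides with the Euclidean distance to the boundary. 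Therefore
\[ B_{\R^n}(y_Q, 3L\ell(Q)) \subseteq B_{\R^n}(y_Q, \ell(I_Q)) \subseteq 3I_Q, \]
and applying $g$ gives $3B_Q \subseteq g(3I_Q)$, as required.

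For the multiplicity bound, fix $I \in \Delta$ and suppose $Q \in \calD$ satisfies $I_Q = I$. Then $\ell(Q) \sim_L \ell(I)$, so such $Q$ lie in one of at most $\lesssim_L 1$ generations $\calD_k$ of $\calD$. At each such generation the centers $x_Q$ are $\varrho^k$-separated, as they are drawn from the net $X_k$ of Lemma \ref{cubes}, and all lie in $g(3I)$, which has diameter at most $3L\sqrt{n}\,\ell(I)$ by the $L$-Lipschitz property of $g$. Since $g$ is bi-Lipschitz and $\R^n$ is Ahlfors $n$-regular, $\Sigma$ is Ahlfors $n$-regular with constant depending only on $L$ and $n$. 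Lemma \ref{l:BO} then bounds the number of such centers at each generation by $\lesssim_{L,n} 1$, and summing over the bounded number of admissible generations yields \eqref{e:I_Q}. The only substantive content in the proof is the comparability of Euclidean scales with $\Sigma$-scales via $g$; no step poses a serious obstacle.
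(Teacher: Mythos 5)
Your proof is correct and follows essentially the same route as the paper: take $I_Q$ to be a dyadic cube containing $g^{-1}(x_Q)$ at scale comparable to $L\ell(Q)$, transfer the ball containment through the bi-Lipschitz map, and get the multiplicity bound from the $\varrho^k$-separation of cube centers at a fixed generation together with the packing lemma (Lemma \ref{l:BO}) applied to the Ahlfors regular space $\Sigma$. The only differences are cosmetic (your normalization of the scale of $I_Q$, and your margin computation for $B_{\R^n}(y_Q,\ell(I_Q))\subseteq 3I_Q$, which is if anything slightly more careful than the paper's).
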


\begin{proof}
	Let $k_0 \in \Z$ be such that $2^{k_0-1} \leq L < 2^{k_0}.$ For $Q \in \calD$ let $k(Q) \in \Z$ such that $2^{k(Q)-1} \leq \ell(Q) < 2^{k(Q)}.$ Let $I_Q$ be the unique cube such that $g^{-1}(x_Q) \in I_Q$ and $\ell(I_Q) = 2^{k_0 + k(Q)}.$ Clearly $\ell(I_Q) \sim_L \ell(Q).$ Let $x \in 3B_Q$ and let $y \in \R^n$ such that $x = g(y).$ Then, since $g$ is $L$-bi-Lipschitz, 
	\begin{align}
		| y - x_{I_Q} | &\leq  | g^{-1}(x) - g^{-1}(x_Q) | + |g^{-1}(x_Q) - x_{I_Q}| \\
		&\leq 3L \ell(Q) < 3 \cdot 2^{k^0+k(Q)} = 3\ell(I_Q).
	\end{align}
	It is easy to check $B(x_{I_Q},3\ell(I_Q)) \subseteq 3I_Q$ so that $y \in 3I_Q$ and $3B_Q \subseteq 3I_Q.$ We finish the proof by checking \eqref{e:I_Q}. 
	
	Let $I \in \Delta$ and suppose $Q,Q' \in \calD$ are such that $I_Q = I_{Q'} = I$. Then, $\ell(Q) = \ell(Q') = 5\varrho^k$ for some $k \in \Z$. Since $\{c_0B_Q : Q \in \calD_k\}$ are disjoint this implies $d(x_Q,x_{Q'}) \gtrsim \ell(Q).$ Since $g(x_Q) \in I,$ we have $|g(x_Q)-x_{I}| \lesssim_n \ell(I) \lesssim_L \varrho^k.$ Thus, $d(x_Q,g^{-1}(x_I)) \lesssim_{n,L} \varrho^k.$ Similarly, $d(x_{Q'},g^{-1}(x_I)) \lesssim_{n,L} \varrho^k$. Thus, $d(x_{Q},x_{Q'}) \lesssim_{n,L} \varrho^k.$ We can now apply Lemma \ref{l:BO} to conclude \eqref{e:I_Q}. 
\end{proof}

\begin{lem}\label{l:Omega-Sigma}
	Suppose $\|\cdot\|_0$ is a norm and $f \colon \Sigma \to (\R^n,\|\cdot\|_0)$ is a map. Set $h \coloneqq f \circ g.$ Then, for $Q \in \calD$ and $K \geq 1,$  
	\[ \gamma_{\Sigma,f,\|\cdot\|_0}^{K}(3Q) \lesssim_{K,L} \Omega_{\R^n,h,\|\cdot\|_0}^{\frac{K}{2L}}(x_{I_Q},3\ell(I_Q),|\cdot|,\emph{Id}) + \emph{md}_g^{2L}(3I_Q), \]
	with $I_Q$ as in Lemma \ref{l:I_Q}.
\end{lem}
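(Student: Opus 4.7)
My plan is to construct, for each $Q \in \calD$, a single norm $\|\cdot\|^*$ on $\R^n$ and a map $\vp \in \Phi(x_Q, 3\ell(Q), \|\cdot\|^*)$ that simultaneously bound both summands in $\gamma^K_{\Sigma,f,\|\cdot\|_0}(3Q) = \inf_{\|\cdot\|, \vp}[\bilat + \Omega]$. Writing $p := g^{-1}(x_Q)$, $r := 3\ell(Q)$, $r' := 3\ell(I_Q)$, and $m := \text{md}_g^{2L}(3I_Q)$, the first step is to pick $\|\cdot\|^* \in \mathcal{N}_{2L}$ nearly realising the infimum in $m$, so that
\[ \bigl|d(g(x), g(y)) - \|x - y\|^*\bigr| \leq (m + \delta)\, r' \quad \text{for all } x, y \in 3I_Q. \]
Since $3B_Q \subseteq g(3I_Q)$ by Lemma \ref{l:I_Q} and $g$ is injective, $g^{-1}(3B_Q) \subseteq 3I_Q$, so I may define $\tilde\vp(y) := g^{-1}(y) - p$ on $3B_Q$ and set $\vp := p_{0, r} \circ \tilde\vp$, using the retraction of Lemma \ref{l:p-lambda} to ensure the image lies in $B_{\|\cdot\|^*}(0, r)$.

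The $\bilat_\Sigma(3Q, \|\cdot\|^*, \vp)$ estimate splits into two parts. For $\unilat_\Sigma$, the md-inequality controls $|d(y, z) - \|\tilde\vp(y) - \tilde\vp(z)\|^*|$, and because $\|\tilde\vp(y)\|^* \leq d(y, x_Q) + (m + \delta) r' \leq r + (m + \delta) r'$, the retraction $p_{0, r}$ moves each point by at most $(m + \delta) r'$. For $\eta_\Sigma$, given $u \in B_{\|\cdot\|^*}(0, r)$, I would retract slightly to $\tilde u$ with $\|\tilde u\|^* \leq r - C_L m r'$ and $\|u - \tilde u\|^* \lesssim_L m r'$, set $q := p + \tilde u$; then $\|q - p\|^* = \|\tilde u\|^*$, the md-inequality gives $d(g(q), x_Q) \leq \|q - p\|^* + (m + \delta) r' \leq r$, so $y := g(q) \in 3B_Q$ satisfies $\tilde\vp(y) = \tilde u$, and hence $\|u - \vp(y)\|^* \lesssim_L m r'$. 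Together, $\bilat_\Sigma(3Q, \|\cdot\|^*, \vp) \lesssim_L m$.

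For the $\Omega^K_{\Sigma, f, \|\cdot\|_0}(3Q, \|\cdot\|^*, \vp)$ estimate, fix an affine $A \colon (\R^n, |\cdot|) \to (\R^n, \|\cdot\|_0)$ with $|\cdot|$-Lipschitz constant $K/(2L)$ nearly realising $\Omega^{K/(2L)}_{\R^n, h, \|\cdot\|_0}(x_{I_Q}, r', |\cdot|, \text{Id})$, and set $\tilde A(z) := A(z + p)$. Since $\|\cdot\|^* \in \mathcal{N}_{2L}$ gives $|v| \leq 2L \|v\|^*$, the map $\tilde A$ becomes $K$-Lipschitz as a map $(\R^n, \|\cdot\|^*) \to (\R^n, \|\cdot\|_0)$, exactly the Lipschitz constraint in the definition of $\gamma^K$. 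For $y \in 3B_Q$, setting $w := g^{-1}(y) \in 3I_Q$, we have $f(y) = h(w)$ and $|\vp(y) + p - w| = |\vp(y) - \tilde\vp(y)| \lesssim_L m r'$, giving
\[ \|f(y) - \tilde A(\vp(y))\|_0 \leq \|h(w) - A(w)\|_0 + \tfrac{K}{2L}\,|\vp(y) + p - w| \lesssim_{K, L} \bigl(\Omega^{K/(2L)}_{\R^n, h, \|\cdot\|_0}(x_{I_Q}, r', |\cdot|, \text{Id}) + m\bigr) r'. \]

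The main obstacle is ensuring that the auxiliary $\R^n$-points ($q$ in the $\eta$ step, and $w$ together with $\vp(y) + p$ in the $\Omega$ step) lie in the regions where the md-inequality and the affine approximation are valid, namely $3I_Q$ and $B_{|\cdot|}(x_{I_Q}, r')$. In high dimension the cube $3I_Q$ and the Euclidean ball $B_{|\cdot|}(x_{I_Q}, r')$ are not comparable, so these containments require a mild strengthening of Lemma \ref{l:I_Q}: choose $\ell(I_Q) \sim_L \ell(Q)$ sufficiently large by enlarging $k_0$ in its proof, and absorb any residual Lipschitz error from $h$ and $A$ on corner regions into the (implicitly $n$-dependent) $\lesssim_{K, L}$ constant. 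Combining the $\bilat$ and $\Omega$ bounds, dividing by $r \sim_L r'$, yields the lemma.
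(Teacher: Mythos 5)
Your construction is essentially the paper's own proof: you pick a near-optimal norm in $\mathcal{N}_{2L}$ for $\md_g^{2L}(3I_Q)$, take $\vp$ to be $g^{-1}$ (translated) followed by the retraction of Lemma \ref{l:p-lambda}, estimate $\unilat$, $\eta$ and $\Omega$ separately, and use the $\mathcal{N}_{2L}$ condition to convert the $\tfrac{K}{2L}$-Lipschitz Euclidean affine approximant of $h$ into a $K$-Lipschitz affine map on $(\R^n,\|\cdot\|^*)$, exactly as the paper does. The cube-versus-ball containments you flag are glossed over in the paper's argument as well (its constants are allowed to depend on $n$), and they are settled by the same routine adjustments you suggest: a slightly larger choice of $I_Q$ for the membership of the auxiliary points in $3I_Q$, and a dimension-dependent dilation in the Dorronsoro coefficient (which Theorem \ref{t:dorr} tolerates) rather than a raw Lipschitz error on corner regions, since the latter would only be of size $O(\ell(I_Q))$ and could not be absorbed multiplicatively.
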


\begin{proof}
	Let $\delta > 0$ and suppose $\|\cdot\| \in \mathcal{N}_{2L}$ is a norm on $\R^n$ such that 
	\begin{align}\label{e:AS}
		\sup_{x,y \in 3I_Q} \abs{ d(g(x),g(y)) - \|x-y\| } \leq \ell(3I_Q) ( \text{md}_g^{2L}(3I_Q) + \delta).
	\end{align}
	Let $y_Q = g^{-1}(x_Q)$ and set $\vp = p_{y_Q,3\ell(Q)} \circ g^{-1},$ where $p_{z,\lambda}$ is the map from Lemma \ref{l:p-lambda}. Then $\vp$ maps $\Sigma \cap 3B_Q$ to $B_{\|\cdot\|}(g^{-1}(x_Q),3\ell(Q))$ and  
	\begin{align}\label{e:vp-g}
		\|\vp(x) - g^{-1}(x)\| \leq \max\{0, \|g^{-1}(x) - g^{-1}(x_Q)\| - 3\ell(Q)\}.
	\end{align}	
	We begin by estimating $\unilat_\Sigma(3Q,\|\cdot\|,\vp)$. If $x \in \Sigma \cap 3B_Q \subseteq g(3I_Q)$,  then \eqref{e:AS} and \eqref{e:vp-g} imply
	\begin{align}\label{e:tildeA}
		\| \vp(x) - g^{-1}(x) \| &\leq \max\{0 ,  \|g^{-1}(x) - g^{-1}(x_Q)\| - 3\ell(Q)\} \\
		&\leq \max\{0, d(x,y) +  \ell(3I_Q)({\md}^{2L}_g(3I_Q) + \delta) - 3\ell(Q)\} \\
		&\leq \ell(3I_Q)({\md}^{2L}_g(3I_Q) + \delta).
	\end{align}
	By \eqref{e:AS} and \eqref{e:tildeA}, if $x,y \in \Sigma \cap 3B_Q$ then
	\begin{align}
		\abs{d(x,y) - \|\vp(x) - \vp(y)\| } &\leq \big|d(x,y) - \|g^{-1}(x) - g^{-1}(y)\|\big| + \| \vp(x) - g^{-1}(x) \|  \\
		&\hspace{2em} + \| \vp(y) - g^{-1}(y) \|  \\
		&\leq 3\ell(3I_Q)({\md}^{2L}_g(3I_Q) + \delta).
	\end{align}
	Taking the supremum over all $x,y \in \Sigma \cap 3B_Q$ and recalling that $\ell(I_Q) \sim \ell(Q),$ we have 
	\begin{align}\label{e:beta-sigma}
		\unilat_\Sigma(3Q,\|\cdot\|,\vp) \lesssim_L {\md}^{2L}_g(3I_Q) + \delta.
	\end{align}
	
	Now for $\eta_\Sigma(3Q,\|\cdot\|,\vp).$ Let $u \in B_{\|\cdot\|}(y_Q,3\ell(Q))$, $\lambda = \ell(3I_Q)({\md}^{2L}_g(3I_Q) + \delta)$ and choose a point $\tilde{u}  \in B_{\|\cdot\|}(y_Q,3\ell(Q) - 2\lambda)$ such that $\| u -\tilde{u} \| \leq 2\lambda.$ Such a point can be found by Lemma \ref{l:p-lambda}. By \eqref{e:AS} we have $x = g(\tilde{u}) \in \Sigma \cap 3B_Q$ since 
	\[d(x_Q,x) = d(g(y_Q),g(\tilde{u})) \leq \| y_Q - \tilde{u} \| + \lambda \leq 3\ell(Q) -\lambda.\]
	This, along with \eqref{e:tildeA}, gives
	\begin{align}
		{\dist}_{\|\cdot\|}(u,\vp(\Sigma \cap 3B_Q)) &\leq {\dist}_{\|\cdot\|}(u,\vp(x)) \leq \| u- g^{-1}(x) \| + \|g^{-1}(x) - \vp(x) \| \\
		&= \|u - \tilde{u}\| +  \|g^{-1}(x) - \vp(x) \| \leq 3\lambda.
	\end{align}
	Since $u \in B_{\|\cdot\|}(y_Q,3\ell(Q))$ was arbitrary, this implies
	\begin{align}\label{e:eta-sigma}
		\eta_{\Sigma}(3Q,\|\cdot\|,\vp) \lesssim_L {\md}^{2L}_g(3I_Q) + \delta.
	\end{align}
	
	It remains to estimate $\Omega_{\Sigma,f,\|\cdot\|_0}^K(3Q,\|\cdot\|,\vp).$ Recall that $h = f \circ g.$ Let $A : \R^n \rightarrow (\R^n,\|\cdot\|_0)$ be an affine function such that $\text{Lip}(A) \leq \tfrac{K}{2L}$ and 
	\[\sup_{y \in 3I_Q} \|h(y) - A(y)\|_0 \leq \ell(3I_Q)[\Omega_{\R^n,h,\|\cdot\|_0}^{\frac{K}{2L}}(x_{I_Q},3\ell(I_Q),|\cdot|,\text{Id})+ \delta].\]
	Since $\|\cdot\| \in \mathcal{N}_{2L},$ we have $\text{Lip}(A) \leq K$ when viewed as a function from $(\R^n,\|\cdot\|)$. Let $x \in 3B_Q$ and set $y = g^{-1}(x) \in 3I_Q.$ By the definition of $A$ and \eqref{e:tildeA} we have
	\begin{align}
		\|f(x) - A(\vp(x))\|_0 &\leq \|f(x) - A(g^{-1}(x))\|_0 + \|A(g^{-1}(x)) - A(\vp(x)) \|_0 \\
		&\leq \|h(y) - A(y) \|_0 + K \|g^{-1}(y) - \vp(y)\| \\
		&\lesssim_{K} \ell(3I_Q)[\Omega_{\R^n,h,\|\cdot\|_0}^{\frac{K}{2L}}(x_{I_Q},3\ell(I_Q),|\cdot|,\text{Id}) + \text{md}^{2L}_g(3I_Q) + 2\delta].
	\end{align} 
	Taking the supremum over all $x \in 3B_Q$ and recalling that $\ell(I_Q) \sim_L \ell(Q)$ we have 
	\begin{align} \Omega_{\Sigma,f,\|\cdot\|_0}^K(3Q,\|\cdot\|,\vp) \lesssim \Omega_{\R^n,h,\|\cdot\|_0}^{\frac{K}{2L}}(x_{I_Q},3\ell(I_Q),|\cdot|,\text{Id}) + {\md}^{2L}_g(3I_Q) + 2 \delta.
	\end{align}
	Combining this with \eqref{e:beta-sigma} and \eqref{e:eta-sigma} then taking $\delta  \to 0$ finishes the proof of the lemma. 
	
\end{proof}

\begin{lem}\label{l:packing-bi-lip}
	There exists $K = K(L,n) \geq 1$ and for each $\ve > 0$ a constant $C = C(\ve)$ such that if $\|\cdot\|_0$ is a norm and $f : \Sigma \to (\R^n,\|\cdot\|_0)$ is 1-Lipschitz then $\{Q \in \calD : \gamma_{X,f,\|\cdot\|_0}^K(3Q) > \ve\}$ satisfies a Carleson packing condition with Carleson norm $C.$ 
\end{lem}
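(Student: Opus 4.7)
\textbf{Proof plan for Lemma \ref{l:packing-bi-lip}.} The plan is to reduce the Carleson packing for $\Sigma$ to the two Euclidean packing results already assembled: Dorronsoro's theorem (Theorem \ref{t:dorr}) applied to $h := f \circ g$, and the metric differentiation packing for $g$ (Corollary \ref{c:AS}). The transfer from dyadic cubes $\Delta$ in $\R^n$ to Christ--David cubes $\calD$ on $\Sigma$ is then done through the assignment $Q \mapsto I_Q$ from Lemma \ref{l:I_Q}.

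First I choose the constant $K$. Since $g$ is $L$-bi-Lipschitz and $f$ is $1$-Lipschitz, the composition $h = f \circ g \colon \R^n \to (\R^n, \|\cdot\|_0)$ is $L$-Lipschitz. Let $K_0 = K_0(L,n)$ be the constant produced by Theorem \ref{t:dorr} for $L$-Lipschitz maps, and set $K := 2L \cdot K_0$, so that $K/(2L) = K_0$. This choice depends only on $L$ and $n$, as required.

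Next, given $\ve > 0$ fix $Q_0 \in \calD$ and consider $\calB_{Q_0} := \{Q \in \calD : Q \subseteq Q_0, \ \gamma^K_{\Sigma, f, \|\cdot\|_0}(3Q) > \ve\}$. By Lemma \ref{l:Omega-Sigma}, there exists a constant $c = c(K,L) > 0$ so that every $Q \in \calB_{Q_0}$ satisfies
\begin{equation}
\Omega^{K_0}_{\R^n, h, \|\cdot\|_0}\bigl(x_{I_Q}, 3\ell(I_Q), |\cdot|, \mathrm{Id}\bigr) > c\ve \qquad \text{or} \qquad \mathrm{md}^{2L}_g(3I_Q) > c\ve.
\end{equation}
Partition $\calB_{Q_0} = \calB^{\mathrm{I}}_{Q_0} \cup \calB^{\mathrm{II}}_{Q_0}$ accordingly. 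Using $\ell(I_Q) \sim_L \ell(Q)$ and the bounded multiplicity \eqref{e:I_Q} of the map $Q \mapsto I_Q$, the total packing satisfies
\begin{equation}
\sum_{Q \in \calB_{Q_0}} \ell(Q)^n \lesssim_{L,n} \sum_{I \in \calA^{\mathrm{I}}} \ell(I)^n + \sum_{I \in \calA^{\mathrm{II}}} \ell(I)^n,
\end{equation}
where $\calA^{\mathrm{I}} = \{I_Q : Q \in \calB^{\mathrm{I}}_{Q_0}\}$ and $\calA^{\mathrm{II}} = \{I_Q : Q \in \calB^{\mathrm{II}}_{Q_0}\}$.

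It remains to bound each Euclidean sum by $\ell(Q_0)^n$. For this I localise the cubes $I_Q$ with $Q \subseteq Q_0$ inside a controlled region of $\R^n$: since $x_Q \in Q_0$ gives $|g^{-1}(x_Q) - g^{-1}(x_{Q_0})| \leq L\,\ell(Q_0)$, and $g^{-1}(x_Q) \in I_Q$ with $\ell(I_Q) \lesssim_L \ell(Q_0)$, every such $I_Q$ lies in a Euclidean ball of radius $\lesssim_L \ell(Q_0)$ about $g^{-1}(x_{Q_0})$. This ball is covered by a bounded number $N = N(L,n)$ of dyadic cubes $J_1, \ldots, J_N \in \Delta$ of side-length comparable to $\ell(Q_0)$, and each $I_Q$ (having comparable scale) is contained in some $J_i$. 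Applying Theorem \ref{t:dorr} to $h$ on each $J_i$ bounds $\sum_{I \in \calA^{\mathrm{I}}} \ell(I)^n \lesssim_{\ve, L, n} \ell(Q_0)^n$, and applying Corollary \ref{c:AS} to $g$ on each $J_i$ bounds $\sum_{I \in \calA^{\mathrm{II}}} \ell(I)^n \lesssim_{\ve, L, n} \ell(Q_0)^n$. Combined, this yields the desired Carleson packing with norm $C = C(\ve, L, n)$.

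The only mildly delicate step is the localisation argument in the last paragraph, where one must verify that the $I_Q$'s living above $Q_0$ really fit inside boundedly many comparable dyadic cubes so that Dorronsoro and Corollary \ref{c:AS} can be invoked with the correct side-length scaling; this is where the bi-Lipschitz bound on $g$ and the comparability $\ell(I_Q) \sim_L \ell(Q)$ together with Remark \ref{r:contained-cubes} do all the work.
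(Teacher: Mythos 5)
Your proposal is correct and follows essentially the same route as the paper: set $h=f\circ g$, take $K=2L\tilde K$ with $\tilde K$ from Theorem \ref{t:dorr}, split the bad cubes via Lemma \ref{l:Omega-Sigma}, transfer to dyadic cubes using $\ell(I_Q)\sim_L\ell(Q)$ and the bounded multiplicity \eqref{e:I_Q}, and conclude with Theorem \ref{t:dorr} and Corollary \ref{c:AS}. The only difference is cosmetic: the paper simply sums over $I\subseteq I_{Q_0}$, whereas you cover the relevant region by boundedly many comparable dyadic cubes $J_i$, which is a slightly more careful (and equally valid) way of doing the same localisation.
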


\begin{proof}
	Let $h = f \circ g$ so that $h$ is $L$-Lipschitz from $\R^n$ from $(\R^n,\|\cdot\|_0).$ Let $\tilde{K} = \tilde{K}(L,n)$ be the constant from Theorem \ref{t:dorr} and set $K = 2\tilde{K}L.$ By Lemma \ref{l:Omega-Sigma} there exists a constant $C = C(K,L) \geq 1$ such that 
	\begin{align}
		 \{Q \in \calD : \gamma_{\Sigma,f}^{K}(3Q) > \ve\} &\subseteq \{ Q \in \calD : \Omega_{\R^n,h,\|\cdot\|_0}^{\tilde{K}}(x_{I_Q},3\ell(I_Q),|\cdot|,\text{Id}) > \ve/C\} \\
	&\hspace{2em} \cup \{Q \in \calD :  \text{md}_g^{2L}(3I_Q) > \ve/C\}.
\end{align}
	Then, by Theorem \ref{t:dorr}, Corollary \ref{c:AS} and Lemma \ref{l:I_Q}, we have 
	\begin{align}
		&\sum_{\substack{Q \in \calD \\ Q \subseteq Q_0 }} \{ \ell(Q)^n : \gamma_{\Sigma,f,\|\cdot\|_0}^{K}(3Q) > \ve\} \\
		&\hspace{2em} \lesssim_{L,n} \sum_{\substack{I \in \Delta \\ I \subseteq I_{Q_0}}} \{ \ell(I)^n : \Omega_{\R^n,h,\|\cdot\|_0}^{\tilde{K}}(x_I,3\ell(I),|\cdot|,\text{Id}) > \ve/C\}  \\
		&\hspace{4em}+ \sum_{\substack{I \in \Delta \\ I \subseteq I_{Q_0}}} \{ \ell(I)^n : \text{md}^{2L}_g(3I) > \ve/C\} \lesssim_{L,\ve,n} \ell(I_{Q_0})^n \lesssim_{L,n} \ell(Q_0)^n. 
	\end{align}
\end{proof}

We conclude the proof of Proposition \ref{l:approx-bi-lip} (hence, the proof of Theorem \ref{t:approx-UR}) by combining Lemma \ref{l:cubes-wala} with Lemma \ref{l:packing-bi-lip}.

\newpage

\section{Metric Reifenberg Flat (RF) parametrizations with varying tangents}\label{s:Reif-approx}


\etocsettocstyle{Contents of this section}{}
\etocsettocmargins{.01\linewidth}{.01\linewidth}

\localtableofcontents

\bigskip

\subsection{Main result}

	In this section we prove our generalized Reifenberg theorem. The argument is inspired by the scheme of \cite{cheeger1997structure} and its presentation in \cite{violo2021functional}. The following is our main result.

\begin{thm}\label{t:Reif}
	Let $(X,d)$ a metric space, $x_0 \in X$ and $\delta,\ve > 0$.  For each $\ell \geq 0$, let $r_\ell = 10^{-\ell},$ $\{x_{j,\ell}\}_{j \in J_\ell}$ a collection of points in $B_X(x_0,1)$ and $\{ \| \cdot \|_{j,\ell}\}_{j \in J_\ell}$ a collection of norms on $\R^n.$ For $j \in J_\ell$ let 
	\[B^{j,\ell} = B_X(x_{j,\ell},r_\ell) \mbox{ and } B_{j,\ell} = B_{\|\cdot\|_{j,\ell}}(0,r_\ell).\] 
	Suppose each of the following conditions is satisfied. 
	\begin{enumerate}
		\item At the top scale $J_0 = \{j_0\}$ and $x_{j_0,0} = x_0.$
		\item For each $\ell \geq 0$ and $i,j \in J_\ell$ with $i \neq j,$ we have 
		\begin{align}\label{e:thm12}
			d_X(x_{i,\ell},x_{j,\ell}) \geq r_\ell.
		\end{align}
		\item For each $\ell \geq 1$ and $j \in J_\ell$ there exists $i \in J_{\ell-1}$ such that
		\begin{align}\label{e:thm13} 
			x_{j,\ell} \in 2B^{i,\ell-1}.
		\end{align}			
		\item  For each $\ell \geq 0$ and $j \in J_\ell$ there exist maps $\away_{j,\ell} : 100B_{j,\ell} \to 100B^{j,\ell}$ and $\toward_{j,\ell} : 100B^{j,\ell} \to 100B_{j,\ell}$ each of which is a $\delta r_\ell$-GHA and such that
		\begin{align}\label{e:centre-pos}
			\alpha_{j,\ell}(0) = x_{j,\ell} \mbox{ and } \beta_{j,\ell}(x_{j,\ell}) = 0;
		\end{align}
		\begin{align}\label{e:almost-id}
			\| \toward_{j,\ell} \circ \away_{j,\ell} - \emph{Id} \|_{C^0} \leq \delta r_\ell \mbox{ and }  \| \away_{j,\ell} \circ \toward_{j,\ell} - \emph{Id} \|_{C^0} \leq \delta r_\ell. 
		\end{align}
	\end{enumerate}  

	If $\delta$ is chosen small enough, depending on $\ve$ and $n$, there exists a metric $\rho$ on $\R^n$ which is 
	\begin{align}\label{e:Reif-Holder}
		\mbox{bi-H\"older equivalent to $\|\cdot\|_{j_0,0}$ with exponent $1-\ve$},
\end{align}
	 as in \eqref{e:bi-holder}, such that 
	 \begin{align}\label{e:Reif-Reif}
	 	\mbox{$(\R^n,\rho)$ is $(\ve,n)$-Reifenberg flat}
	 \end{align}
  and 
	\begin{align}\label{e:Reif1}
		| \rho(x,y) - \|x-y\|_{j_0,0} | \leq \ve \mbox{ for all $x,y \in \R^n.$ }
	\end{align} 
	
	Furthermore, there are maps $g : 6B_{j_0,0} \to 6B^{j_0,0}$ and $\bar{g} \colon 6B^{j_0,0} \to 6B_{j_0,0}$ satisfying the following conditions. Suppose $\ell \geq 0$, $j \in J_\ell$ and let $x,y \in 6B^{j,\ell}$ and $w,z \in 6B_{j_0,\ell}$ such that $g(w),g(z) \in 6B^{j,\ell}.$ Then,
	\begin{align}
		d_X(g(\bar{g}(x)),x) \leq \ve r_\ell   \label{e:Reif2.0},\\
		\rho(\bar{g}(g(w)),w) \leq \ve r_\ell  \label{e:Reif2.5}, \\
		| d_X(g(w),g(z)) - \rho(w,z) | \leq \ve r_\ell \label{e:Reif3.0}, \\
		|\rho(\bar{g}(x),\bar{g}(y)) - d_X(x,y) | \leq \ve r_\ell. \label{e:Reif3.5}
\end{align}
	Additionally, 
	\begin{align}\label{e:regularity}
		\mbox{ if $X$ is Ahlfors $n$-regular, then $(\R^n,\rho)$ is Ahlfors $n$-regular. }
	\end{align}
	In \eqref{e:regularity}, the regularity constant depends on $n$ and the regularity constant for $X$.
\end{thm}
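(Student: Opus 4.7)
The plan is to follow the Cheeger--Colding scheme from \cite{cheeger1997structure}, as refined in \cite{violo2021functional}, adapted so that the tangent spaces $(\R^n, \|\cdot\|_{j,\ell})$ are allowed to be varying, possibly non-Euclidean normed spaces. At each scale $\ell \geq 0$ I will produce a smooth $n$-manifold $M_\ell$ whose coordinate patches are (slight shrinkings of) the balls $B_{j,\ell}$ in the normed spaces, together with smooth diffeomorphisms $f_\ell \colon M_\ell \to M_{\ell+1}$ encoding how scale $\ell$ refines to scale $\ell + 1$. Pulling back the norm metric on $M_\ell$ via the composition $f_{\ell-1} \circ \cdots \circ f_0$ gives a sequence $\phi_\ell$ of semi-metrics on the top patch $B_{j_0,0}$ whose limit $\phi_\infty$ will, after a standard construction (as in Definition \ref{d:rho'}), yield $\rho$.

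To build $M_\ell$, first I consider the transition maps $\beta_{j',\ell} \circ \alpha_{j,\ell}$ between overlapping patches. By hypothesis these are $C\delta r_\ell$-GHAs between balls in two normed spaces, so by Lemma \ref{l:GH-linear} each is $C\delta r_\ell$-close to a $(1+C\delta)$-bi-Lipschitz affine map. To get a genuine manifold, the transitions must satisfy the cyclic compatibility of Definition \ref{d:cyclic}, which fails only up to $C\delta r_\ell$; here Lemma \ref{l:check-cyc} is decisive since it lets me enforce exact compatibility on one permutation and inherit it for the others. Repeated applications of Lemma \ref{l:modification} let me modify each affine transition on a slightly smaller bump region so that the cyclic condition holds exactly while preserving the $C^2$ bound of size $\delta$. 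This produces the disconnected manifold $W_\ell$ (Section \ref{s:disconnected}), and gluing across the overlap relation yields the connected $M_\ell$ (Section \ref{s:M}). The descent maps $h_\ell \colon W_\ell \to W_{\ell+1}$ are built locally from $\beta_{j',\ell+1} \circ \alpha_{j,\ell}$ and, after an analogous bump-modification, combine into the diffeomorphism $f_\ell$.

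With $f_\ell$ in hand, the semi-metric $\phi_\ell$ on $M_\ell$ obtained by pullback satisfies, by Lemma \ref{l:composition} applied to the $\ell$-fold composition of maps each of which is $(1+C\delta)$-bi-Lipschitz and $C\delta$-close to an affine isometry (in the $C^m$ norm at scale $r_\ell$), a Cauchy estimate of the form
\begin{align}
\|\phi_{\ell+1} - \phi_\ell\|_{C^0} \lesssim \delta r_\ell,
\end{align}
which is summable. The limit $\phi_\infty$ then induces the metric $\rho$. The bi-H\"older estimate (\ref{e:Reif-Holder}) with exponent $1-\varepsilon$ comes from the accumulated bi-Lipschitz factor $(1+C\delta)^\ell$ acting on scales $r_\ell = 10^{-\ell}$, which for $\delta$ small enough yields exponent arbitrarily close to $1$. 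Reifenberg-flatness (\ref{e:Reif-Reif}) and estimate (\ref{e:Reif1}) are inherited from the fact that on each patch $B_{j,\ell}$, the metric $\rho$ is within $O(\delta r_\ell)$ of $\|\cdot\|_{j,\ell}$, so the trivial GHA to the norm ball witnesses $\bilat \leq \varepsilon$. The map $\bar g$ is defined by tracking $x \in 6B^{j_0,0}$ consistently through the patches $\beta_{j,\ell}$, and $g$ is built analogously from the $\alpha_{j,\ell}$; estimates (\ref{e:Reif2.0})--(\ref{e:Reif3.5}) follow from the same Cauchy control together with the GHA inequality (\ref{e:almost-id}).

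The main obstacle is the first stage: maintaining both the cyclic compatibility (to have a true manifold) and sufficient bi-Lipschitz and $C^2$ control across all scales simultaneously, since the norms vary with both $j$ and $\ell$ and all transitions are only approximately affine. The work is primarily an intricate bookkeeping exercise, balancing Lemma \ref{l:GH-linear}, Lemma \ref{l:modification}, Lemma \ref{l:composition} and Lemma \ref{l:check-cyc}; the $r_\ell$-dependent scaling in Definition \ref{d:norm} is what makes the errors from successive composition summable rather than exponentiating. For (\ref{e:regularity}), once $g,\bar g$ are in place and approximately preserve the $n$-scale, Ahlfors regularity transfers by a standard covering argument that compares $\calH^n_\rho$-balls with $\calH^n_X$-balls at each scale $r_\ell$ using the net of centers $\{x_{j,\ell}\}$.
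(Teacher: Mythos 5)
Your outline follows the same route as the paper (approximate transitions made exactly cyclic by bump modifications, disconnected manifolds $W_\ell$ glued into connected $M_\ell$, descent maps $f_\ell$, pullback semi-metrics, limit, then $g$ and $\bar g$), so the architecture is right; but two steps you dismiss as routine are exactly where the proof is hard, and as stated they do not go through. First, the semi-metric $\phi_{\ell+1}$ cannot be the naive pullback of the Finsler path metric of $M_{\ell+1}$: since $f_\ell$ is only $(1+C\ve)$-bi-Lipschitz globally (Lemma \ref{l:f-bilip1}), for points at mutual distance $\gg r_\ell$ the change at one scale can be of size $\ve\, d_\ell(x,y)$, not $\ve r_\ell$, so your claimed Cauchy bound $\|\phi_{\ell+1}-\phi_\ell\|_{C^0}\lesssim \delta r_\ell$ fails and the product of factors $(1+C\ve)$ over all scales diverges. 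One must instead freeze the metric outside the scale-$(\ell+1)$ patches, i.e.\ set $\phi_{\ell+1}=d_{\ell+1}$ only on each $6D^{j,\ell+1}$ and $\phi_{\ell+1}=\phi_\ell\circ f_\ell^{-1}$ elsewhere, as in \eqref{d:rho}; only then does one get the estimate $|\phi_{\ell+1}(f_\ell x,f_\ell y)-\phi_\ell(x,y)|\lesssim\ve\min\{\phi_\ell(x,y),r_\ell\}$ of Lemma \ref{l:phi-cauchy}, and this comes from bi-Lipschitzness of $f_\ell$ on patches, not from Lemma \ref{l:composition}. Second, and more seriously, the limit $\phi_\infty$ is only a semi-metric with no triangle inequality, and passing to the chain metric $\rho$ of Definition \ref{d:rho'} is not a ``standard construction that induces $\rho$'': a priori chaining could collapse distances to something much smaller than $\phi_\infty$, and every conclusion of the theorem (the lower bi-H\"older bound, Reifenberg flatness, and all the GHA-type estimates for $g,\bar g$) hinges on the two-sided comparison $\rho\sim\phi_\infty$ of Lemma \ref{l:pert}. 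Proving that comparison is the most delicate part of the paper: it requires surgery on near-optimal chains (removing links that linger at a fixed scale, controlling the removed links via the map $g$ back into $X$, and splitting into the regimes $\calS_1,\calS_2,\calS_3$), none of which is addressed in your proposal.

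Two smaller points. For the exact cyclicity you correctly cite Lemmas \ref{l:GH-linear}, \ref{l:modification}, \ref{l:check-cyc}, but ``repeated applications'' must be organized so that later modifications do not destroy compatibilities already achieved; the paper does this by partitioning $J_\ell$ into $N_0(n)$ well-separated families (Lemma \ref{l:families}) and iterating over ordered triples of families a bounded number of times (Lemma \ref{l:I^p}), which is what keeps the accumulated $C^2$ error of size $C(n)\ve$ rather than blowing up. And Ahlfors regularity does not ``transfer by a standard covering argument'': the upper bound needs the decomposition of a $\rho$-ball into the set $E_\infty$ (where $g$ is bi-Lipschitz) plus stopped regions with boundedly overlapping images in $X$, and the lower bound is topological — one builds a map close to the identity on a norm ball and invokes the degree-type Lemma \ref{l:topology} (Brouwer), since $\bar g$ is not known to be surjective onto $\rho$-balls.
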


After some preparation in Sections \ref{s:transitions} - \ref{s:M}, the proof of Theorem \ref{t:Reif} begins in Section \ref{s:metric-map} and concludes in Section \ref{s:AR}. 

\begin{rem}
	Conditions (1)-(3) impose a sort of tree like structure on our collection of balls $B^{j,\ell},$ with $B^{j_0,0}$ forming the root of the tree. These conditions appear naturally in stopping-time arguments, as we shall see in Section \ref{s:final}. Condition (4) is really saying that $\bilat_X(100B^{j,\ell}) \leq \delta/100$ (recall the definition from Definition \ref{d:bilat}). We state this condition with two mapping because it will be useful for us to have a GHA defined on the whole of the ball in $X$ and a GHA defined on the whole of the tangent ball. Equation \eqref{e:almost-id} is just saying that the $\alpha$ is roughly the inverse of $\beta$ and vice-versa. It is easy to construct the maps $\alpha$ and $\beta$ satisfying these conditions if one knows that $\bilat_X(100B^{j,\ell}) \leq \delta/100$ for each ball. Thus, one should interpret the hypotheses of Theorem \ref{t:Reif} as saying that $X$ is $(\delta,n)$-Reifenberg flat in a tree of balls (outside of this tree we assume nothing). 
	
	With this structure, we construct a metric $\rho$ which is $(\ve,n)$-Reifenberg flat and bi-H\"older equivalent to $\|\cdot\|_{j_0,0}$. Moreover, \eqref{e:Reif1} says that $\rho$ is only a small perturbation of $\|\cdot\|_{j_0,0}$ relative to the top scale. The maps $g$ and $\bar{g}$ act as a kind of multi-scale Gromov-Hausdorff approximation between $X$ and $(\R^n,\rho)$. In particular, the existence of these maps tells us that balls in the tree are bi-laterally well-approximated by $(\R^n,\rho).$

\end{rem}

\begin{rem}
	Combining \eqref{e:Reif1} - \eqref{e:Reif3.5} we also have  
	\begin{align}\label{e:g-near-centre}
		d_X(g(0),x) \leq 10\ve  \mbox{ and } \rho(\bar{g}(x),0) \leq 10\ve .
	\end{align}
	Indeed, suppose towards a contradiction that $\rho(\bar{g}(x),0) \geq 5\ve.$ By \eqref{e:Reif1} this implies $\| \bar{g}(x) \|_{j_0,0} \geq 4\ve.$ Let $z = -6\bar{g}(x)/\|\bar{g}(x)\|_{j_0,0}$ so that $\| \bar{g}(x) - z \| \geq 6 + 4\ve.$ Combining \eqref{e:Reif1}, \eqref{e:Reif2.0} and \eqref{e:Reif3.0}, we now have $\rho(\bar{g}(x),z) \geq 6+3\ve$ and 
	\begin{align}
		d_X(x,g(z)) \geq d_X(g(\bar{g}(x)),g(z)) - \ve \geq \rho(\bar{g}(x),z) - 2\ve \geq 6+\ve.
	\end{align}
	Since $g$ takes values in $6B^{j_0,0},$ this is a contradiction and proves the second inequality in \eqref{e:g-near-centre} with the slightly better constant $5\ve.$ To prove the first inequality in \eqref{e:g-near-centre}, we combine this improved estimate with \eqref{e:Reif2.5} and \eqref{e:Reif3.5} to get 
	\begin{align}
		d_X(g(0),x) \leq \rho(\bar{g}(g(0)),\bar{g}(x)) + \ve \leq \rho(\bar{g}(g(0)),0) + \rho(\bar{g}(x),0) + \ve \leq 10\ve. 
	\end{align}
\end{rem}

\begin{rem}
	For our application it is desirable that $(\R^n,\rho)$ is linearly locally contractible (see Definition \ref{d:LLC}). It is possible to show this directly via the construction however we chose to prove more generally that all $(\delta,n)$-Reifenberg flat sets are linearly locally contractible for $\delta$ small enough (see Proposition \ref{p:Reif-LLC}). We actually use Theorem \ref{t:Reif} to prove this fact. 
\end{rem}

We prove the conclusions of Theorem \ref{t:Reif} with $\ve$ replaced by $C\ve$ for some $C$ depending only on $n$, which is sufficient. The conclusions of Theorem \ref{t:Reif} are stronger as $\ve$ becomes smaller, so we restrict our attention to $\ve$ sufficiently small. Let 
\[0 < \ve \leq \delta < 1/100.\] 
Throughout the remainder of this section we let $(X,d)$ be a fixed metric space, $\{x_{j,\ell}\}_{\ell \geq 0,j \in J_\ell}$ be a collection of points in $B_X(0,1)$, $\{\|\cdot\|_{j,\ell}\}_{\ell \geq 0,j \in J_\ell}$ be a collection of norms on $\R^n,$ and we assume conditions (1)-(4) of Theorem \ref{t:Reif} hold with parameter $\delta.$   

\bigskip

\subsection{Constructing smooth transition maps}\label{s:transitions}

One can think of $\toward_{j,\ell} \colon 100B^{j,\ell} \to 100B_{j,\ell}$ as a rough coordinate patch in $X$. If $\toward_{i,\ell},\toward_{j,\ell}$ are nearby coordinate patches (in the sense that $15B^{i,\ell} \cap 15B^{j,\ell} \neq \emptyset$), their `transition map' is given by $\toward_{j,\ell} \circ \away_{i,\ell}$ (Lemma \ref{l:alpha-beta-circ}). As mentioned in Section \ref{s:outline}, we prove Theorem \ref{t:Reif} by first constructing a sequence of approximating smooth manifolds for $X$. Keeping this in mind, Proposition \ref{p:coherent} below produces, for each nearby pair $i,j \in J_\ell$, a smooth perturbation $\tilde{I}_{i,j,\ell} \colon \R^n_{j,\ell} \to \R_{i,\ell}^n$ of the map $\toward_{j} \circ \away_i$ satisfying a necessary cyclic condition (see Remark \ref{r:necessary}).  A suitable restriction of these maps will be the transition maps for the manifolds we construct in Section \ref{s:disconnected}.


It may be useful to recall from \eqref{d:norm} that $\|\cdot\|_{C^m(U),r}$ denotes the uniform norm of the first $m$ Fr\'echet derivatives.

\begin{prop}\label{p:coherent}
	There exists constants $\ve_1 = \ve_1(n)$ and $\delta_ 1 = \delta_1(\ve)$ such that the following holds. Suppose $0 < \ve < \ve_1$ and $0 < \delta < \delta_1$. For $\ell \geq 0$, let 
	\begin{align}\label{e:F_ell}
		\calF_\ell = \{(i,j) \in J_\ell \times J_\ell : 15B^{i,\ell} \cap 15B^{j,\ell} \neq \emptyset\}.
	\end{align}
	Then, for each $(i,j) \in \calF_\ell$ there exists a smooth diffeomorphism $\tilde{I}_{i,j,\ell} \colon \R_{j,\ell}^n \to \R_{i,\ell}^n$ and an affine map $T_{i,j,\ell} : \R^n_{j,\ell} \to \R^n_{i,\ell}$ satisfying the following conditions. First, both $\tilde{I}_{i,j,\ell}$ and $T_{i,j,\ell}$ are $(1+C\ve)$-bi-Lipschitz and 
	\begin{align}\label{e:prop'}
		\|\tilde{I}_{i,j,\ell}  -T_{i,j,\ell}\|_{C^2(\R^n_{j,\ell}),r_\ell} \lesssim \ve.
	\end{align}
	Second, we have 
	\begin{align}\label{e:prop}
		\|\tilde{I}_{i,j,\ell}(x)  - \toward_{i,\ell} \circ \away_{j,\ell}(x) \|_{i,\ell} \lesssim \ve r_\ell
	\end{align}
	for all $x \in 45B_{j,\ell}.$ Finally, for any triple $(i,j),(j,k),(i,k) \in \calF_\ell,$ the maps $\tilde{I}_{i,j,\ell},\tilde{I}_{j,k,\ell}$ and $\tilde{I}_{i,k,\ell}$ are $9r_\ell$-cyclic in the sense of Definition \ref{d:cyclic}. 
\end{prop}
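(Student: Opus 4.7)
The plan is to first construct, for each pair $(i,j) \in \calF_\ell$, an affine model $T_{i,j,\ell}\colon \R^n_{j,\ell}\to\R^n_{i,\ell}$ approximating $\toward_{i,\ell}\circ\away_{j,\ell}$, and then smoothly perturb it to obtain a diffeomorphism $\tilde I_{i,j,\ell}$ of $\R^n$ so that the cyclic condition holds exactly.

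\textbf{Step 1 (affine approximation).} For $(i,j)\in\calF_\ell$ the hypothesis $15B^{i,\ell}\cap 15 B^{j,\ell}\neq\emptyset$ gives $d_X(x_{i,\ell},x_{j,\ell})\leq 30 r_\ell$, so an elementary computation shows $\away_{j,\ell}(50 B_{j,\ell})\subseteq 100 B^{i,\ell}$ and the composition $\toward_{i,\ell}\circ\away_{j,\ell}$ is well defined on $50 B_{j,\ell}$. By hypothesis (4) each factor is a $\delta r_\ell$-GHA, so Lemma \ref{l:GH-comp} makes the composition a $3\delta r_\ell$-GHA into a comparable ball in $\R^n_{i,\ell}$. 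Re-centering with Lemma \ref{l:GH-scale} and applying Lemma \ref{l:GH-linear} (taking $\delta$ small in terms of $\ve$) produces an affine $(1+C\ve)$-bi-Lipschitz bijection $T_{i,j,\ell}\colon\R^n\to\R^n$ with $\|T_{i,j,\ell}(x)-\toward_{i,\ell}\circ\away_{j,\ell}(x)\|_{i,\ell}\lesssim\ve r_\ell$ on $45 B_{j,\ell}$. Taking $\tilde I_{i,j,\ell}=T_{i,j,\ell}$ as a first approximation automatically satisfies \eqref{e:prop'} and \eqref{e:prop} with the $(1+C\ve)$-bi-Lipschitz bound.

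\textbf{Step 2 (approximate cyclicity).} For any triple $(i,j),(j,k),(i,k)\in\calF_\ell$, combining Step 1 with the bi-Lipschitz control on the $T$'s and the near-identity $\away_{j,\ell}\circ\toward_{j,\ell}\approx\mathrm{Id}$ from \eqref{e:almost-id} yields the chain
\begin{align*}
T_{i,j,\ell}\circ T_{j,k,\ell}\;\approx\;\toward_i\circ\away_j\circ\toward_j\circ\away_k\;\approx\;\toward_i\circ\away_k\;\approx\;T_{i,k,\ell},
\end{align*}
with each error $O(\ve r_\ell)$ in $C^0$ on a ball of radius at least $10 r_\ell$ in $\R^n_{k,\ell}$. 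This is precisely the almost-cyclic hypothesis \eqref{e:almost-cyc} of Lemma \ref{l:check-cyc} for every permutation.

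\textbf{Step 3 (exact cyclicity, the main obstacle).} By Remark \ref{r:cyc} and Lemma \ref{l:check-cyc}, it suffices to arrange exact cyclicity for the single permutation $\sigma=(1,2,3)$ per triple at scale slightly above $9r_\ell$. I would fix a total order on $J_\ell$, and for each ordered 2-face $i<j<k$ in $\calF_\ell$ define $\tilde I_{i,k,\ell}$ on a ball slightly larger than $9 r_\ell$ to equal $\tilde I_{i,j,\ell}\circ\tilde I_{j,k,\ell}$, then smoothly interpolate back to the affine $T_{i,k,\ell}$ outside a somewhat larger ball using Lemma \ref{l:modification}. By Step 2 each such modification has $C^2$-size $O(\ve r_\ell)$, so Lemma \ref{l:invertible} ensures that the modified map remains a $(1+C\ve)$-bi-Lipschitz smooth diffeomorphism, and the bounds \eqref{e:prop'}, \eqref{e:prop} are preserved. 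The genuinely subtle point — which I expect to be the main difficulty — is that a given pair $(i,k)$ participates in many 2-faces through different intermediate indices $j$, so the many compositions $\tilde I_{i,j,\ell}\circ\tilde I_{j,k,\ell}$ all implicitly constrain $\tilde I_{i,k,\ell}$. One resolves this by processing pairs in order of increasing span in the total order and selecting a \emph{canonical} intermediate index (e.g.\ the smallest admissible $j$) to define $\tilde I_{i,k,\ell}$; the approximate cyclicity of Step 2 guarantees that all other intermediate choices produce values agreeing with the canonical one up to $O(\ve r_\ell)$, so the non-canonical triples still satisfy the almost-cyclicity hypothesis of Lemma \ref{l:check-cyc} with a slightly larger constant, and exact $9r_\ell$-cyclicity follows.
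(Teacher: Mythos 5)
Your Steps 1 and 2 match the paper's Lemmas \ref{l:T-alpha} and \ref{l:T-cyclic} and are fine. The gap is in Step 3, at the very last sentence. Lemma \ref{l:check-cyc} does \emph{not} upgrade approximate cyclicity to exact cyclicity: its hypotheses require \eqref{e:cyclic-hyp} to hold \emph{exactly} for at least one permutation, and only then does almost-cyclicity \eqref{e:almost-cyc} for the remaining permutations yield exact cyclicity at a slightly smaller radius. Under your scheme, for a pair $(i,k)$ you enforce the exact relation only through the canonical intermediate index $j$; for a non-canonical triple $(i,j',k)$ no permutation holds exactly, so Lemma \ref{l:check-cyc} is not applicable and the $O(\ve r_\ell)$ agreement you retain is genuinely weaker than what the proposition asserts. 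Exactness for \emph{all} triples cannot be dropped: it is precisely what makes the relation $\sim$ in the construction of $W_\ell$ transitive (Remark \ref{r:necessary}), and an $O(\ve r_\ell)$ error there breaks the quotient construction.

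The paper resolves this differently. It first partitions $J_\ell$ into $\bigconst_0(n)$ families with $100 r_\ell$-separation (Lemma \ref{l:families}); since two indices adjacent in $\calF_\ell$ to a common pair are within $60 r_\ell$ of each other, each pair $(j,k)$ has at most one admissible intermediate index per family, so triples are organised by their triple of family labels, of which there are only $N_1=\binom{\bigconst_0}{3}$, a number depending only on $n$. It then iterates a single-modification lemma (Lemma \ref{l:single-mod}), which corrects one of the \emph{factor} maps $I_{k,j}$ by composing with a near-identity correction $\hat H$ built via Lemma \ref{l:modification}; crucially the correction equals the identity outside $I_{j,i}(\lambda B_i)$ and fixes every point where the cyclic relation already holds (\eqref{e:I1}--\eqref{e:I3}). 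These two properties are what allow the induction in Lemma \ref{l:I^p} to show that relations enforced in earlier rounds survive later modifications, and the bound $N_1=N_1(n)$ on the number of rounds keeps the accumulated $C^2$ error $\lesssim \ve$ and the loss of radius from $10r_\ell$ down to only $9r_\ell$. Your ordering-by-span idea addresses the survival of the \emph{canonical} relations, but without a mechanism like \eqref{e:I1}--\eqref{e:I2} and the family structure you have no way to make the non-canonical triples exactly cyclic, which is the heart of the proposition.
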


Fix $\ell \geq 0$ for the remainder of this subsection. When the context is clear, we will write $J = J_\ell$ and $\calF = \calF_\ell$. For $j \in J$, we will also write 
\[ x_{j} = x_{j,\ell}, \ \|\cdot\|_{j} = \| \cdot\|_{j,\ell}, \ B^{j} = B^{j,\ell}, \  B_{j} = B_{j,\ell} \mbox{ and } \R^n_j = (\R^n,\|\cdot\|_j).\] 

\begin{lem}\label{l:alpha-beta-circ}
	Let $(i,j) \in \calF$. Then, $\toward_i \circ \away_j \circ \toward_j \circ \away_i$ is well-defined on $45B_i$ and
	\begin{align}
		\| \toward_i \circ \away_j \circ \toward_j \circ \away_i(x) - x \|_i \lesssim \delta r_\ell
	\end{align}
	for all $x \in 45B_{i}.$ 
\end{lem}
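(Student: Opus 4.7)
The plan is to trace $x \in 45B_i$ through the four-fold composition one step at a time, verify each step lies in the appropriate domain, and then collapse the estimate using the two approximate-identity bounds in \eqref{e:almost-id} together with the $\delta r_\ell$-isometry property of $\beta_i$.

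First I would verify that each map is defined. Since $(i,j) \in \calF$, we have $15B^i \cap 15B^j \neq \emptyset$, so $d_X(x_i,x_j) \leq 30 r_\ell$. Because $\alpha_i(0) = x_i$ and $\alpha_i$ is a $\delta r_\ell$-isometry on $100B_i$, the point $\alpha_i(x)$ lies within $(45+\delta)r_\ell$ of $x_i$, hence within $(75+\delta)r_\ell$ of $x_j$. Thus $\alpha_i(x) \in 100B^j$ and $\beta_j(\alpha_i(x)) \in 100B_j$ is well-defined, and $\alpha_j$ may be applied to it. The approximate identity \eqref{e:almost-id} gives
\begin{align}
  d_X\bigl(\alpha_j \circ \beta_j(\alpha_i(x)),\, \alpha_i(x)\bigr) \leq \delta r_\ell,
\end{align}
so the point $\alpha_j \circ \beta_j \circ \alpha_i(x)$ lies within $(45+2\delta)r_\ell$ of $x_i$, in particular in $100B^i$; hence $\beta_i$ may be applied at the final step.

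To finish, I would insert $\beta_i \circ \alpha_i(x)$ as an intermediate point and apply the triangle inequality:
\begin{align}
  \bigl\|\beta_i \circ \alpha_j \circ \beta_j \circ \alpha_i(x) - x\bigr\|_i
  &\leq \bigl\|\beta_i \circ \alpha_j \circ \beta_j \circ \alpha_i(x) - \beta_i \circ \alpha_i(x)\bigr\|_i \\
  &\quad + \bigl\|\beta_i \circ \alpha_i(x) - x\bigr\|_i.
\end{align}
The second term is at most $\delta r_\ell$ directly from \eqref{e:almost-id}. For the first term, $\beta_i$ being a $\delta r_\ell$-isometry on $100B^i$ converts it into $d_X(\alpha_j \circ \beta_j \circ \alpha_i(x), \alpha_i(x)) + \delta r_\ell \leq 2\delta r_\ell$, by the bound already noted. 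Adding the two contributions yields the claimed $\lesssim \delta r_\ell$ estimate.

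There is no real obstacle here; the only care required is bookkeeping the size of the intermediate points so that each composition remains inside the $100$-scale neighborhoods on which $\alpha_{\cdot}, \beta_{\cdot}$ are defined. The slack built into the hypothesis (domains of radius $100r_\ell$ versus the working radius $45r_\ell$ plus the $30r_\ell$ separation and $O(\delta r_\ell)$ errors) is generous enough that all containment checks are straightforward.
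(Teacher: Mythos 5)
Your proof is correct and follows essentially the same route as the paper: check that each intermediate point stays well inside the $100$-scale domains, insert $\toward_i\circ\away_i(x)$ as the comparison point, and combine \eqref{e:almost-id} with the $\delta r_\ell$-isometry property of $\toward_i$ via the triangle inequality. The only (harmless) difference is that you bound $d_X(\away_i(x),x_i)$ using \eqref{e:centre-pos} directly, whereas the paper invokes \eqref{e:dist-sim}.
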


\begin{proof}
	Let $x \in 45B_i.$ First, by \eqref{e:dist-sim} we have $d(\away_i(x),x_i) \leq 45r_\ell + 11\delta r_\ell \leq 46r_\ell$ (recalling that $\delta < 1/100$). In particular, $\away_i(x) \in 46B^i \subseteq \text{dom}(\toward_j)$. It now follows from \eqref{e:almost-id} that $\away_j ( \toward_j ( \away_i(x) ))\in 47B^i \subseteq \text{dom}(\toward_i)$. Thus, $\toward_i \circ \away_j \circ \toward_j \circ \away_i$ is well-defined. Then, using \eqref{e:almost-id} with the fact that $\away_j,\toward_i$ are $\delta r_\ell$-GHA, we get
	\begin{align}
		\| \toward_i ( \away_j ( \toward_j (\away_i(x)))) - x \|_i &\leq \|  \toward_i ( \away_j ( \toward_j (\away_i(x)))) - \toward_i(\away_i(x)) \|_i + \| \toward_i(\away_i(x)) - x \|_i \\
		&\leq \| \away_j(\toward_j(\away_i(x))) - \away_i(x) \|_i + \delta r_\ell \lesssim \delta r_\ell. 
	\end{align}
\end{proof}

\begin{lem}\label{l:T-alpha}
	There exists $\delta_1 = \delta_1(\ve)$ such that the following holds. Suppose $0 < \delta \leq \delta_1.$ Then, for each $(i,j) \in \calF$ there exists an affine $(1 + C\ve)$-bi-Lipschitz map $T_{i,j} : \R^n_i \to \R^n_j$ such that 
	\begin{align}\label{e:T-alpha_i}
		\| T_{i,j}(x) - \toward_i \circ \away_j(x) \|_i \lesssim \ve r_\ell
	\end{align}
	for all $x \in 45B_j$. Additionally, if we let $T_{j,i} = T_{i,j}^{-1},$ then 
	\begin{align}\label{e:T-alpha_j}
		\| T_{j,i}(x) - \toward_j \circ \away_i(x) \|_j \lesssim \ve r_\ell 
	\end{align}
	for all $x \in 45B_i.$ 
\end{lem}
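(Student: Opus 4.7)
\textbf{Construction of $T_{i,j}$.} The plan is to produce $T_{i,j}$ as the affine approximation furnished by Lemma \ref{l:GH-linear} applied to a suitable composition of $\toward_i$ and $\away_j$. Apply Lemma \ref{l:GH-scale} to the $\delta r_\ell$-GHA $\away_j \colon 100B_j \to 100B^j$ at $z=0$ with restriction radius $50r_\ell$, obtaining a $C\delta r_\ell$-GHA $\Phi \colon 50B_j \to B_X(x_j,50r_\ell)$ with $\|\Phi-\away_j\|_{C^0}\le \delta r_\ell$. Similarly, apply Lemma \ref{l:GH-scale} to $\toward_i \colon 100B^i \to 100B_i$ at $z=x_j$ (which lies in $70B^i$ since $(i,j)\in\calF$ forces $d(x_i,x_j)\le 30r_\ell$) with radius $50r_\ell$, obtaining a $C\delta r_\ell$-GHA $\Psi\colon B_X(x_j,50r_\ell)\to B_{\|\cdot\|_i}(\toward_i(x_j),50r_\ell)$ with $\|\Psi-\toward_i\|_{C^0}\le\delta r_\ell$. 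By Lemma \ref{l:GH-comp}, $\Psi\circ\Phi$ is a $C\delta r_\ell$-GHA between two balls of common radius $50r_\ell$. Choosing $\delta=\delta_1(\ve)$ sufficiently small, Lemma \ref{l:GH-linear} yields an affine, $(1+C\ve)$-bi-Lipschitz map $T_{i,j}\colon \R^n_j\to\R^n_i$ with $\sup_{50B_j}\|T_{i,j}-\Psi\circ\Phi\|_i\le C\ve r_\ell$. For $x\in 45B_j$ one has $\away_j(x)\in B_X(x_j,50r_\ell)$ by \eqref{e:dist-sim}, so the triangle inequality together with the isometry property of $\Psi$ and the bounds $\|\Psi-\toward_i\|_{C^0}\le\delta r_\ell$, $\|\Phi-\away_j\|_{C^0}\le\delta r_\ell$ gives \eqref{e:T-alpha_i}.

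\textbf{The inverse estimate.} Running the same construction with the roles of $i$ and $j$ exchanged produces a second affine, $(1+C\ve)$-bi-Lipschitz map $\tilde T_{j,i}\colon\R^n_i\to\R^n_j$ satisfying $\|\tilde T_{j,i}(x)-\toward_j\circ\away_i(x)\|_j\lesssim\ve r_\ell$ for $x\in 45B_i$. Setting $T_{j,i}:=T_{i,j}^{-1}$, it suffices to prove $\|T_{j,i}-\tilde T_{j,i}\|_{C^0(45B_i)}\lesssim\ve r_\ell$. Consider first the smaller ball $14B_i$: since $\|\tilde T_{j,i}(0)\|_j\le\|\toward_j(x_i)\|_j+C\ve r_\ell\le 31r_\ell$ and $\tilde T_{j,i}$ is $(1+C\ve)$-bi-Lipschitz, we have $\tilde T_{j,i}(x)\in 45B_j$ for $x\in 14B_i$; a similar computation using \eqref{e:dist-sim} and $d(x_i,x_j)\le 30r_\ell$ shows that $\toward_j\circ\away_i(x)\in 45B_j$ as well. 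Hence for $x\in 14B_i$ we may chain
\[
T_{i,j}(\tilde T_{j,i}(x))\overset{(a)}{\approx}T_{i,j}(\toward_j\circ\away_i(x))\overset{(b)}{\approx}\toward_i\circ\away_j\circ\toward_j\circ\away_i(x)\overset{(c)}{\approx}x,
\]
where $(a)$ uses the symmetric estimate for $\tilde T_{j,i}$ together with the $(1+C\ve)$-Lipschitz property of $T_{i,j}$, $(b)$ is \eqref{e:T-alpha_i} applied at the point $\toward_j\circ\away_i(x)\in 45B_j$, and $(c)$ is Lemma \ref{l:alpha-beta-circ}. Each step contributes at most $C\ve r_\ell$, so $\|T_{i,j}(\tilde T_{j,i}(x))-x\|_i\lesssim\ve r_\ell$ on $14B_i$. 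Composing with the $(1+C\ve)$-Lipschitz map $T_{j,i}=T_{i,j}^{-1}$ yields $\|\tilde T_{j,i}-T_{j,i}\|_{C^0(14B_i)}\lesssim\ve r_\ell$. Since $\tilde T_{j,i}-T_{j,i}$ is affine, Lemma \ref{l:linear-C^0} (applied to $\tilde T_{j,i}-T_{j,i}+\mathrm{Id}$) bounds its linear part by $C\ve$ in operator norm, which together with the $C\ve r_\ell$ bound at $0$ propagates the closeness to all of $45B_i$, proving \eqref{e:T-alpha_j}.

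\textbf{Principal difficulty.} The substantive obstacle lies in the last step: the point $\toward_j\circ\away_i(x)$ can be as far as $77r_\ell$ from $0$ in $\R^n_j$ when $x\in 45B_i$, so the first-stage estimate for $T_{i,j}$, which only holds on $45B_j$, cannot be invoked directly at that point for arbitrary $x\in 45B_i$. Our remedy is to work with the smaller ball $14B_i$, where the relevant intermediate images do fall inside $45B_j$, and then exploit the \emph{affinity} of $T_{j,i}$ and $\tilde T_{j,i}$ via Lemma \ref{l:linear-C^0} to propagate the resulting closeness from $14B_i$ to $45B_i$. All other ingredients (the existence of well-behaved restrictions, composability of GHAs, existence of the affine approximant) are direct applications of the lemmas already established.
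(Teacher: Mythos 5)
Your proposal is correct, and the first half (building $T_{i,j}$ from Lemma \ref{l:GH-scale}, Lemma \ref{l:GH-comp} and Lemma \ref{l:GH-linear}) is essentially the paper's construction; but you handle \eqref{e:T-alpha_j} by a genuinely different route. The paper sidesteps your ``principal difficulty'' at the source: it runs the whole construction on the larger ball $80B_j$ (GHAs $\phi_j\colon 80B_j\to 80B^j$ and $\phi_i\colon 80B^j\to B_{\|\cdot\|_i}(\toward_i(x_j),80r_\ell)$), so the approximation $\|T_{i,j}-\toward_i\circ\away_j\|\lesssim\ve r_\ell$ is proved on $79B_j$ rather than $45B_j$. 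Then, since $\toward_j\circ\away_i(x)\in 77B_j$ for $x\in 45B_i$, the inverse estimate follows in one chain: $\|T_{j,i}(x)-\toward_j\circ\away_i(x)\|_j\lesssim\|x-T_{i,j}\circ\toward_j\circ\away_i(x)\|_i$, which is controlled by Lemma \ref{l:alpha-beta-circ} together with \eqref{e:T-alpha_i} applied at that intermediate point — no second affine map and no propagation step are needed. Your workaround (a symmetric map $\tilde T_{j,i}$, closeness to $T_{i,j}^{-1}$ on $14B_i$ via the $\alpha\circ\beta$ chain, then propagation to $45B_i$ by affinity and Lemma \ref{l:linear-C^0}) is valid and checks out quantitatively (for $x\in 14B_i$ the point $\toward_j\circ\away_i(x)$ lands in $(44+C\delta)B_j$, inside the range of your first-stage estimate), at the cost of roughly doubling the length of the argument; it does have the mild virtue that your radius-$50$ application of Lemma \ref{l:GH-scale} only needs $B_X(x_j,50r_\ell)\subseteq 100B^i$, a more comfortable containment than the paper's radius-$80$ version given only $d(x_i,x_j)\le 30r_\ell$. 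If you wanted to shorten your write-up, simply proving your first-stage estimate on the largest ball your construction allows (as the paper does) would let you drop $\tilde T_{j,i}$ and the Lemma \ref{l:linear-C^0} step entirely.
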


\begin{proof}
	Let $(i,j) \in \calF.$ We start by constructing $T_{i,j}.$ Using the final part of Lemma \ref{l:GH-scale}, there exists a $C\delta r_\ell$-GHA $\phi_j: 80B_{j} \to 80B^j$ such that $d(\away_j(x) , \phi_j(x)) \leq C\delta r_\ell$ for all $x \in 80B_{j}$ (recall that $\away_j(0) = x_j$ by \eqref{e:centre-pos}). Letting $z = \toward_i(x_j)$ and applying Lemma \ref{l:GH-scale} again gives a $C\delta r_\ell$-GHA $\phi_i : 80B^j \to B_{\|\cdot\|_i}(z,80r_\ell)$ such that $\| \toward_i(x) - \phi_i(x)\|_i \leq C\delta r_\ell $ for all $x \in 80B^j.$ By Lemma \ref{l:GH-comp}, 
	\[ \phi_{i,j} \coloneqq \phi_i \circ \phi_j : 80B_{j} \to B_{\|\cdot\|_i}(z,80r_\ell)\]
	defines a $C\delta r_\ell$-GHA. By Lemma \ref{l:GH-linear}, as long as $\delta$ is chosen small enough depending on $\ve,$ there exists an affine $(1+C\ve)$-bi-Lipschitz map $T_{i,j} : \R^n_j \to \R^n_i$ such that 
	\[ \| T_{i,j}(x) - \phi_{i,j}(x) \|_i \leq \ve r_\ell \]
	for all $x \in 80B_{j}.$ 
	
	We now consider \eqref{e:T-alpha_i}, which we prove for a slightly larger ball. Suppose $x \in 79B_{j}.$ By \eqref{e:dist-sim}, $\away_j(x) \in 80B^j$ and by construction $\phi_j(x) \in 80B^j.$ Taking $\delta$ small enough depending on $\ve$, and using the relation between $\away_j,\phi_j$ and $\toward_i,\phi_i$ on $80B^j,$ we have 
	\begin{align}
		\| T_{i,j}(x) - \toward_i \circ \away_j(x)\|_i &\leq \| T_{i,j}(x) - \phi_{i,j}(x) \|_i + \| \phi_i \circ \phi_j(x) - \toward_i \circ \phi_j(x) \|_i \\
		&\hspace{2em}+ \| \toward_i \circ \phi_j(x) - \toward_i \circ \away_j(x) \|_i \\
		&\leq \ve r_\ell + C\delta r_\ell + d(\phi_j(x),\away_j(x)) + C\delta r_\ell \lesssim \ve r_\ell. 
	\end{align}
	
	We move on to \eqref{e:T-alpha_j}. Suppose $x \in 45B_{i}.$ Using \eqref{e:dist-sim} again, we have $\away_i(x) \in 46B^i \subseteq 76B^j$, where the final inclusion follows from the triangle inequality since $15B^i \cap 15B^j \neq \emptyset$. Another application of \eqref{e:dist-sim} then gives $\toward_j \circ \away_i(x) \in 77B^{j} \subseteq 79B^j.$ Finally, by Lemma \ref{l:alpha-beta-circ}, \eqref{e:T-alpha_i} and since $T_{j,i}$ is $(1+C\ve)$-bi-Lipschitz, we get
	\begin{align}
		\| T_{j,i}(x) - \toward_j \circ \away_i(x) \|_j &\lesssim \| x - T_{i,j} \circ \toward_j \circ \away_i(x) \|_i \\
		&\leq \| x - \toward_i\circ \away_j \circ \toward_j \circ \away_i(x) \|_i \\
		&\hspace{2em} + \|  \toward_i\circ \away_j \circ \toward_j \circ \away_i(x) -  T_{i,j} \circ \toward_j \circ \away_i(x) \|_i \lesssim \ve r_\ell. 
	\end{align}
\end{proof}

\begin{rem}
	For the remainder of this subsection we will assume $\delta < \delta_1$ so that the maps $T_{i,j}$ are well defined. 
\end{rem}

\begin{lem}\label{l:T-cyclic}
	Let $(i,j), (j,k) , (i,k) \in \calF$ and $\id \colon \R^n \to \R^n$ be the identity map. Then,
	\begin{align}
		\| T_{k,i} \circ T_{i,j} \circ T_{j,k} - \id \|_{C^2(12B_k),r_\ell} \lesssim \ve. 
	\end{align}
\end{lem}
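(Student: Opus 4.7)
The composition $S := T_{k,i} \circ T_{i,j} \circ T_{j,k}$ is affine (a composition of affine maps), so $D^2 S \equiv 0$ and the $C^2$-norm in \eqref{d:norm} reduces to controlling $r_\ell^{-1}\sup_{x \in 12B_k}\|S(x) - x\|_k$ and $\sup_x \|DS(x) - \id\|_{\op}$. The plan is to compare $S$ to the six-fold composition
$$\Psi := \toward_k \circ \away_i \circ \toward_i \circ \away_j \circ \toward_j \circ \away_k,$$
which is close to the identity on $12B_k$ because the middle pair $\away_j \circ \toward_j$ nearly cancels by \eqref{e:almost-id}, reducing $\Psi$ to the four-fold composition governed by Lemma \ref{l:alpha-beta-circ}.

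The first step is domain tracking. For $x \in 12B_k$, iterated use of \eqref{e:dist-sim} together with the hypothesis $(i,j),(j,k),(i,k) \in \calF$ (which forces $d(x_i,x_j), d(x_j,x_k), d(x_i,x_k) \le 30 r_\ell$) gives $\away_k(x) \in 13 B^k \subseteq 43 B^j \cap 43 B^i$, hence $\toward_j(\away_k(x)) \in 44B_j \subseteq 45 B_j$ and $\toward_i(\away_k(x)) \in 44 B_i \subseteq 45 B_i$. This ensures that each appearance of $\toward_\bullet \circ \away_\bullet$ in $\Psi$ falls in the $45B_\bullet$ region where Lemma \ref{l:T-alpha} applies, and that the small $C\ve r_\ell$ perturbations accumulated do not push intermediate points out of these enlarged balls.

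The second step is peeling $S$ into $\Psi$ one layer at a time. Apply \eqref{e:T-alpha_j} with indices $(j,k)$ to write $T_{j,k}(x) = \toward_j \circ \away_k(x) + e_1$ with $\|e_1\|_j \lesssim \ve r_\ell$. Since $T_{i,j}$ is $(1+C\ve)$-bi-Lipschitz, propagating and applying \eqref{e:T-alpha_i} gives $T_{i,j} T_{j,k}(x) = \toward_i \circ \away_j \circ \toward_j \circ \away_k(x) + e_2$ with $\|e_2\|_i \lesssim \ve r_\ell$, and \eqref{e:almost-id} together with the GHA property of $\toward_i$ lets us replace $\away_j \circ \toward_j$ by the identity at cost $C\delta r_\ell$. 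A final application of \eqref{e:T-alpha_j} for $(k,i)$ and of Lemma \ref{l:alpha-beta-circ} then yields
$$\sup_{x \in 12 B_k}\|S(x) - x\|_k \;\le\; \sup_{x \in 12 B_k}\|S(x) - \Psi(x)\|_k + \sup_{x \in 12 B_k}\|\Psi(x) - x\|_k \;\lesssim\; \ve r_\ell.$$

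Finally, since $S$ is affine, write $S(y) = Ly + v$. Taking $y = 0$ gives $\|v\|_k \lesssim \ve r_\ell$, and for any unit vector $u \in \R^n_k$ the point $12 r_\ell u$ lies in $12 B_k$, so
$$\|L u - u\|_k = \tfrac{1}{12 r_\ell}\|L(12 r_\ell u) - 12 r_\ell u\|_k \le \tfrac{1}{12 r_\ell}\bigl(\|S(12 r_\ell u) - 12 r_\ell u\|_k + \|v\|_k\bigr) \lesssim \ve,$$
so $\|DS - \id\|_{\op} = \|L - \id\|_{\op} \lesssim \ve$. Combined with $D^2 S \equiv 0$, this yields $\|S - \id\|_{C^2(12B_k),r_\ell} \lesssim \ve$. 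The main bookkeeping obstacle is ensuring that the $45B_\bullet$ domain hypotheses of \eqref{e:T-alpha_i}--\eqref{e:T-alpha_j} are satisfied at each of the three substitutions, which is the reason for shrinking from $45B_k$ down to $12B_k$ in the statement.
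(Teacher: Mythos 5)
Your proposal is correct and follows essentially the paper's argument: replace each $T_{\cdot,\cdot}$ by its Gromov--Hausdorff model $\toward\circ\away$ via Lemma \ref{l:T-alpha} (with the domain tracking through \eqref{e:dist-sim}), cancel the inner $\away_j\circ\toward_j$ via \eqref{e:almost-id}, and use affineness to get the derivative bounds (your hand computation is exactly Lemma \ref{l:linear-C^0}, which the paper invokes). The only cosmetic difference is that the paper first factors out $T_{k,i}$ through its $(1+C\ve)$-bi-Lipschitz linear part and compares $T_{i,j}\circ T_{j,k}$ with $T_{i,k}$, whereas you carry $T_{k,i}$ along and finish with Lemma \ref{l:alpha-beta-circ}; both amount to the same estimates.
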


\begin{proof}
	Let $H \coloneqq T_{k,i} \circ T_{i,j} \circ T_{j,k}$ and suppose $x \in 12B_{k}.$ Since $T_{k,i}$ is $(1+C\ve)$-bi-Lipschitz, the same is true of its linear part, $L_{k,i}.$ Hence, 
	\begin{align}
		\| T_{k,i} \circ T_{i,j} \circ T_{j,k}(x) - x \|_k &= \| L_{k,i} \left( T_{i,j} \circ T_{j,k}(x) - T_{i,k}(x) \right) \|_k\\
		&\lesssim \| T_{i,j} \circ T_{j,k}(x) - T_{i,k}(x) \|_i .
	\end{align}
	Applying \eqref{e:dist-sim} twice we have $\toward_j \circ \away_k(x) \in 45B_{j}$ so we can apply Lemma \ref{l:T-alpha} to the map $T_{i,j}$ at this point. Using also that $T_{i,j}$ is bi-Lipschitz, and applying Lemma \ref{l:T-alpha} to the maps $T_{j,k}$ and $T_{i,k}$ at $x$, we get  
	\begin{align}
		\| T_{i,j} \circ T_{j,k}(x) - T_{i,k}(x) \|_i &\leq \|T_{i,j} (T_{j,k}(x)) - T_{i,j}(\toward_j \circ \away_k(x)) \|_i  \\
		&\hspace{2em} + \| T_{i,j}(\toward_j \circ \away_k(x)) - \toward_i \circ \away_k(x)\|_i \\
		&\hspace{4em} + \| \toward_i \circ \away_k(x) - T_{i,k}(x) \|_i \\
		&\lesssim \| \toward_i \circ \away_j \circ \toward_j \circ \away_k(x) - \toward_i \circ \away_k(x) \|_i + C\ve r_\ell.
	\end{align} 
	Now, using \eqref{e:almost-id}, the fact that $\toward_i$ is a $\delta r_\ell$-GHA and $\delta \leq \ve$, we have  
	\begin{align}
		\| \toward_i \circ \away_j \circ \toward_j \circ \away_k(x) - \toward_i \circ \away_k(x) \|_i &\leq d(\away_j \circ \toward_j \circ \away_k(x) , \away_k(x) ) + C\ve r_\ell\\
		&\leq C\delta r_\ell + C\ve r_\ell \lesssim \ve r_\ell. 
	\end{align}
	Combining the above three estimates gives 
	\begin{align}\label{e:H-x}
		\|H(x) - x \|_k \lesssim \ve r_\ell.
	\end{align}
	Since $H$ is affine, this and Lemma \ref{l:linear-C^0} now imply 
	\begin{align}\label{e:DH-I}
		\|DH(x) \cdot y -y \|_k \lesssim \ve \|y\| 
	\end{align}
	for all $y \in \R^n.$ The estimates on the second derivatives are trivial since $H$ is affine and $\id$ is linear so that $D^2H$ and $D^2\id$ are zero. This observation together with \eqref{e:H-x} and \eqref{e:DH-I} completes the proof. 
\end{proof}

If $0 < \lambda < 12$ and $\ve \ll \lambda$ then Lemma \ref{l:T-cyclic} says the maps $T_{i,j}$ are almost $\lambda r_\ell$-cyclic in the sense of Definition \ref{d:cyclic}. Suppose there exist maps $I_{i,j}$ which well approximate the $T_{i,j}$ (so that the $I_{i,j}$ are also almost $\lambda r_\ell$-cyclic), the following lemma finds smooth perturbations $\hat{I}_{i,j}$ of the $I_{i,j}$ which are exactly $\lambda' r_\ell$-cyclic for some parameter $\lambda'$ which is slightly smaller than $\lambda$. The $\hat{I}_{i,j}$ still well approximate the $T_{i,j}$ so it is possible to apply the lemma again. We will prove Proposition \ref{p:coherent} by carefully iterating this lemma.

\begin{lem}\label{l:single-mod}
	The exists a constant $C_1 \geq 1$ depending only on $n$ and for each $C_2 \geq 1$ and $N \geq 1$ a constant $\ve_0 = \ve_0(C_2,N) > 0$ such that the following holds. Suppose $\ve \leq \ve_0$, let $1/N < \lambda \leq 10,$ $(i,j), (j,k),(i,k) \in \calF$ and suppose for each distinct pair $a,b \in \{ i,j,k \}$ that there is a smooth diffeomorphism $I_{a,b} : \R^n_b \to \R^n_a$ such that $I_{a,b}^{-1} = I_{b,a}$ and 
	\begin{align}\label{e:I-T}
		\| I_{a,b} - T_{a,b} \|_{C^2(\R^n_b),r_\ell} \leq C_2\ve. 
	\end{align}
	Then there exists a smooth diffeomorphism $\hat{I}_{k,j} : \R^n_j \to \R^n_k$ such that 
	\begin{align}
		&\hspace{1em}\hat{I}_{k,j} = I_{k,j} \mbox{ outside } I_{j,i}(\lambda B_i); \label{e:I1} \\
		&\hspace{1em}\hat{I}_{k,j}(I_{j,i}(x)) = I_{k,i}(x) \mbox{ for all } x \in \lambda B_i \mbox{ such that } I_{k,j}(I_{j,i}(x)) = I_{k,i}(x);  \label{e:I2}  \\
		&\hspace{1em}\hat{I}_{k,j}, I_{j,i},I_{k,i} \mbox{ are } ( \lambda - \tfrac{1}{N})r_\ell\mbox{-cyclic}. \label{e:I3} 
	\end{align}
	Furthermore, letting $\hat{I}_{j,k} = \hat{I}^{-1}_{k,j},$ we have 
	\begin{align}\label{e:I4}
		\| I_{j,k} - \hat{I}_{j,k} \|_{C^2(\R^n_k),r_\ell} \leq C_1C_2 \ve \mbox{ and } \| I_{k,j} - \hat{I}_{k,j} \|_{C^2(\R^n_j),r_\ell} \leq C_1C_2 \ve. 
	\end{align}
\end{lem}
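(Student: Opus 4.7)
The plan is to modify $I_{k,j}$ only inside $I_{j,i}(\lambda B_i)$, using the ``round trip'' map of the three given $I_{a,b}$'s together with Lemma \ref{l:modification}, and then to verify that the resulting map is genuinely cyclic via Lemma \ref{l:check-cyc}.

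\textbf{Step 1 (the round trip map).} Define
\begin{align}
F \coloneqq I_{j,k}\circ I_{k,i}\circ I_{i,j}\colon \R^n_j\to \R^n_j.
\end{align}
By Lemma \ref{l:T-cyclic}, $T_{j,k}\circ T_{k,i}\circ T_{i,j}$ is a $C\ve$-perturbation of the identity in the $C^2(12 B_j),r_\ell$-norm. Combining this with the hypothesis \eqref{e:I-T} and Lemma \ref{l:composition} (applied with each $f_m = I_{\cdot,\cdot}$, $g_m = T_{\cdot,\cdot}$), I obtain
\begin{align}
\|F-\mathrm{Id}\|_{C^2(U_2),r_\ell}\lesssim C_2 \ve,
\end{align}
where I set $U_2\coloneqq I_{j,i}(\lambda B_i)$ and the implicit constant depends only on $n$. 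A shrunken copy $U_1\coloneqq I_{j,i}((\lambda-\tfrac{1}{2N})B_i)$ satisfies $\mathrm{dist}(U_1,U_2^c)\gtrsim_N r_\ell$ because $I_{j,i}$ is $(1+C\ve)$-bi-Lipschitz.

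\textbf{Step 2 (smooth modification).} Applying Lemma \ref{l:modification} to $F$ on $U_1\subset U_2$ (with $\eta\sim 1/N$), provided $\ve \leq \ve_0(C_2,N)$, yields a smooth surjection $\hat F\colon \R^n_j\to\R^n_j$, itself a diffeomorphism with $C^2$ inverse, such that $\hat F=F$ on $U_1$, $\hat F=\mathrm{Id}$ off $U_2$, $\hat F(y)=y$ whenever $F(y)=y$, and
\begin{align}
\|\hat F-\mathrm{Id}\|_{C^2(\R^n_j),r_\ell}+\|\hat F^{-1}-\mathrm{Id}\|_{C^2(\R^n_j),r_\ell}\lesssim_N C_2\ve.
\end{align}
I now set
\begin{align}
\hat I_{k,j}\coloneqq I_{k,j}\circ\hat F,\qquad \hat I_{j,k}\coloneqq \hat F^{-1}\circ I_{j,k}.
\end{align}

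\textbf{Step 3 (verifying \eqref{e:I1}, \eqref{e:I2}, \eqref{e:I4}).} Since $\hat F=\mathrm{Id}$ outside $U_2=I_{j,i}(\lambda B_i)$, \eqref{e:I1} is immediate. For \eqref{e:I2}, if $x\in\lambda B_i$ and $I_{k,j}(I_{j,i}(x))=I_{k,i}(x)$, then $y\coloneqq I_{j,i}(x)$ satisfies $F(y)=I_{j,k}\circ I_{k,i}\circ I_{i,j}(y)=I_{j,k}(I_{k,j}(y))=y$, so $\hat F(y)=y$ and hence $\hat I_{k,j}(y)=I_{k,j}(y)=I_{k,i}(x)$. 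For \eqref{e:I4}, the estimate $\|\hat F-\mathrm{Id}\|_{C^2,r_\ell}\lesssim_N C_2\ve$ combined with the bi-Lipschitz and $C^2$ control on $I_{k,j},I_{j,k}$ (which follows from \eqref{e:I-T} and the fact that $T_{k,j}$ is affine and $(1+C\ve)$-bi-Lipschitz) gives $\|\hat I_{k,j}-I_{k,j}\|_{C^2,r_\ell}\lesssim_N C_2\ve$ by the chain rule; the same bound for the inverse follows symmetrically. Choosing $\ve_0$ small enough with respect to $N$ absorbs the $N$-dependence into the constant $C_1$ depending only on $n$.

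\textbf{Step 4 (verifying the cyclic condition \eqref{e:I3}).} This is the delicate part. I apply Lemma \ref{l:check-cyc} with maps $f=I_{j,i}$, $g=\hat I_{k,j}$, $h=I_{k,i}$ at scale $\lambda r_\ell$, with $L=1+C\ve$. First I verify the almost-cyclic bound \eqref{e:almost-cyc}: combining \eqref{e:I-T}, \eqref{e:I4}, Lemma \ref{l:T-cyclic} and Lemma \ref{l:composition}, all three compositions $\hat I_{k,j}\circ I_{j,i}$, $I_{j,k}\circ \hat I_{j,k}^{-1}\cdot(\cdots)$ etc.\ differ from $I_{k,i}$ (respectively from the suitable direct map) by at most $C_1 C_2 \ve r_\ell$ on $\lambda B_{\sigma(1)}$. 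Second, I need exact cyclicity for at least one permutation at the full scale $\lambda r_\ell$: by Step 3's verification of \eqref{e:I2} (applied at \emph{every} point $x\in(\lambda-\tfrac{1}{2N})B_i$, where $I_{j,i}(x)\in U_1$ forces $\hat F(I_{j,i}(x))=F(I_{j,i}(x))$ and hence $\hat I_{k,j}(I_{j,i}(x))=I_{k,i}(x)$), the permutation $\sigma=(1,2,3)$ is exactly cyclic on $(\lambda-\tfrac{1}{2N})B_i$. Then Lemma \ref{l:check-cyc}, applied with $L\ve\lesssim \ve$, yields that $\hat I_{k,j},I_{j,i},I_{k,i}$ are $(\lambda-\tfrac{1}{N})r_\ell$-cyclic, provided $\ve_0$ is small enough that $L\cdot C_1C_2\ve\leq 1/(2N)$.

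\textbf{Main obstacle.} The bookkeeping between Lemma \ref{l:modification} and Lemma \ref{l:check-cyc} is the trickiest part: I must choose $U_1$ large enough that the exact cyclic condition given by Lemma \ref{l:check-cyc} is inherited on the required scale $\lambda-1/N$, but also leave enough of a buffer $\mathrm{dist}(U_1,U_2^c)\gtrsim r_\ell/N$ for the $C^2$-modification in Lemma \ref{l:modification} to work, while keeping all approximation constants independent of $N$ (except through the final choice of $\ve_0$). Once $U_1=I_{j,i}((\lambda-\tfrac{1}{2N})B_i)$ is chosen as above and $\ve_0$ is small depending on $C_2$ and $N$, every step fits together cleanly.
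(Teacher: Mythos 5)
Your proposal is correct in substance and uses the same two key tools as the paper (the smooth modification Lemma \ref{l:modification} applied to a ``round--trip'' composition, followed by Lemma \ref{l:check-cyc}), but it implements them on the opposite side: you correct on the domain, taking $F=I_{j,k}\circ I_{k,i}\circ I_{i,j}\colon\R^n_j\to\R^n_j$, modification region $U_2=I_{j,i}(\lambda B_i)$, and $\hat I_{k,j}=I_{k,j}\circ\hat F$, whereas the paper corrects on the codomain, taking $H=I_{k,i}\circ I_{i,j}\circ I_{j,k}\colon\R^n_k\to\R^n_k$, $U_2=I_{k,j}\circ I_{j,i}(\lambda B_i)\cap\lambda B_k$ as in \eqref{d:U_2}, and $\hat I_{k,j}=\hat H\circ I_{k,j}$. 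Your choice buys real simplifications: $U_2$ is never empty, so the paper's separate ``$U_1=\emptyset$'' case disappears; \eqref{e:I1} and \eqref{e:I2} are one-line checks; and the exact cyclicity of the base permutation holds at \emph{every} $x\in(\lambda-\tfrac1{2N})B_i$, which makes the final application of Lemma \ref{l:check-cyc} cleaner than the paper's argument with $\lambda'''=\lambda-\tfrac{3}{4N}$.

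Two caveats. First, the paper intersects its $U_2$ with $\lambda B_k$ precisely so that the region sits inside $12B_k$, where Lemma \ref{l:T-cyclic} is stated; your $U_2=I_{j,i}(\lambda B_i)$ need not lie in $12B_j$, since $(i,j)\in\calF$ only forces $d(x_i,x_j)\le 30r_\ell$, so $U_2$ can reach out to roughly $42B_j$. The estimate $\|F-\mathrm{Id}\|_{C^2(U_2),r_\ell}\lesssim C_2\ve$ you need is still true, but you must say why: the triple $T$-composition is affine, its closeness to the identity on $12B_j$ upgrades to a global bound on its linear part by Lemma \ref{l:linear-C^0} (as in the proof of Lemma \ref{l:T-cyclic}), hence the $C^0$ bound extends to any $O(r_\ell)$-sized region, and \eqref{e:I-T} is global so Lemma \ref{l:composition} then applies on $U_2$. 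Second, your remark that ``choosing $\ve_0$ small enough with respect to $N$ absorbs the $N$-dependence into $C_1$'' is not valid: the constant coming from Lemma \ref{l:modification} with $\eta\sim 1/N$ multiplies $\ve$ in \eqref{e:I4}, and no smallness of $\ve$ removes a multiplicative constant. You should simply accept a constant $C_1(n,N)$ here; this matches what the paper's own proof actually yields and is harmless because in the only application (Lemma \ref{l:I^p}, via Remark \ref{r:ve_0}) one has $N=N_1(n)$.
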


\begin{proof}
	Let $\lambda' = \lambda - \tfrac{1}{2N}$, $\lambda'' = \lambda - \tfrac{1}{N}$ and set
	\begin{align}
		U_1 &= I_{k,j} \circ I_{j,i}\left(B_i(0,\lambda'r_\ell)\right) \cap B_k(0,\lambda'r_\ell); \label{d:U_1}\\
		U_2 &=  I_{k,j} \circ I_{j,i}\left(B_i(0,\lambda r_\ell)\right) \cap B_k(0,\lambda r_\ell). \label{d:U_2}
	\end{align}
	Suppose $(a,b) = (i,j)$ or $(a,b) = (j,i).$ Since $U_2 \subseteq 12B_k$, we can combine Lemma \ref{l:composition} with Lemma \ref{l:T-cyclic} and \eqref{e:I-T} to get,
	\begin{align}
		\begin{split}\label{e:Icomp}
		\| I_{k,a} \circ I_{a,b} \circ I_{b,k} - \mbox{Id} \|_{C^2(U_2),r_\ell} &\leq \| I_{k,a} \circ I_{a,b} \circ I_{b,k} - T_{k,a} \circ T_{a,b} \circ T_{b,k} \|_{C^2(U_2),r_\ell} \\
		&\hspace{2em} + \| T_{k,a} \circ T_{a,b} \circ T_{b,k} - \id
		\|_{C^2(U_2),r_\ell} \\
		&\lesssim C_2\ve .
	\end{split}
	\end{align}
	
	If $U_1 = \emptyset$, it turns out the functions $I_{k,j},I_{j,i}$ and $I_{k,i}$ are already $\lambda''r_\ell$-cyclic and we can just take $\hat{I}_{a,b} = I_{a,b}$ for each distinct $a,b \in \{i,j,k\}$. To see that the maps are $\lambda''r_\ell$-cyclic it suffices to check, by Remark \ref{r:cyc}, that \eqref{e:cyclic-hyp} holds for the permutations $(i,j,k),(k,i,j)$ and $(i,k,j).$ We will show \eqref{e:cyclic-hyp} is vacuous in each case i.e. there are no points satisfying the hypotheses of \eqref{e:cyclic-hyp}. For $(i,j,k)$ we notice immediately, using the definition of $U_1$ in \eqref{d:U_1} and the fact that $\lambda'' < \lambda'$, that there is no $x \in \lambda'' B_i$ such that $I_{j,i}(x) \in \lambda'' B_j$ and $I_{k,j}(I_{j,i}(x)) \in \lambda'' B_k.$ For $(k,i,j),$ suppose there exists $x \in \lambda'' B_k$ such that $y = I_{i,k}(x) \in \lambda''B_i$ and $z = I_{j,i}(y) \in \lambda''B_j.$ Since $x \in \lambda'' B_k,$ \eqref{e:Icomp} implies $I_{k,j}(I_{j,i}(y)) = I_{k,j}(I_{j,i}(I_{i,k}(x)))  \in \lambda' B_k$ as long as $\ve$ is small enough depending on $C_2.$ This gives $U_1 \neq \emptyset$ which is a contradiction. Finally, for $(i,k,j),$ suppose there exists $x \in \lambda''B_i$ such that $w = I_{k,i}(x) \in \lambda''B_k$ and $I_{j,k}(I_{k,i}(x)) \in \lambda'' B_j.$ Since $w \in \lambda'' B_k$, \eqref{e:Icomp} implies $I_{k,j}(I_{j,i}(x)) = I_{k,j}(I_{j,i}(I_{i,k}(w))) \in \lambda' B_k$ so that again $U_1 \neq \emptyset,$ a contradiction.
	
	Suppose instead that $U_1 \neq \emptyset.$ We'd like to apply Lemma \ref{l:modification} to the map $H = I_{k,i} \circ I_{i,j} \circ I_{j,k}$ and the sets $U_1$ and $U_2.$ Equation \eqref{e:H-Id} follows immediately from \eqref{e:Icomp}. We will now verify that
	\begin{align}\label{e:separation}
		\dist(U_1,U_2^c) \geq \frac{r_\ell}{10N}.
	\end{align}
	Let $w \in U_1$ and $z \in U_2^c.$ If $z \not\in B_k(0,\lambda r_\ell)$ then $\| w- z\|_k \geq r_\ell/2N \geq r_\ell/10N$ since $U_1 \subseteq B_k(0,\lambda'r_\ell)$. Suppose instead that $z \in B_k(0,\lambda r_\ell).$ Let $x \in B_{i}(0,\lambda' r_\ell)$ such that $w = I_{k,j}(I_{j,i}(x))$ and $y \in \R^n$ such that $z = I_{k,j}(I_{j,i}(y)).$ It must be that $y \notin B_i(0,\lambda r_\ell)$, otherwise $z \in U_2$ by the definition \eqref{d:U_2}. Thus, $\| x - y \|_i \geq r_\ell/2N$. Applying \eqref{e:I-T}, using that each $T_{ab}$ is $(1+C\ve)$-bi-Lipschitz, and choosing $\ve$ small enough depending on $C_2$ and $N$, we have 
	\begin{align}
		\| w-z \|_k &= \| I_{k,j}(I_{j,i}(x))  - I_{k,j}(I_{j,i}(y)) \|_k \\
		&\geq \| T_{k,j}(T_{j,i}(x))  - T_{k,j}(T_{j,i}(y)) \|_k - CC_2\ve r_\ell  \\
		&\geq \frac{\| x-y\|_i}{2} - CC_2\ve r_\ell \geq \frac{r_\ell}{10N}. 
	\end{align}
	In either case $\|w-z\|_k \geq r_\ell/10N$. Since $w,z$ were arbitrary this completes the proof of \eqref{e:separation} 
	
	Choosing again $\ve$ small enough depending on $C_2$ and $N$, we can apply Lemma \ref{l:modification}. Let $\hat{H}$ be the resulting modification of $H$ satisfying \eqref{e:H1}-\eqref{e:H4}. Define
	\[ \hat{I}_{k,j} = \hat{H} \circ I_{k,j} \mbox{ and } \hat{I}_{j,k} = \hat{I}_{k,j}^{-1}. \]
	Let us check \eqref{e:I1}-\eqref{e:I4}, beginning with \eqref{e:I1}. By definition of $U_2$ (see \eqref{d:U_2}) if $x \in I_{j,i}(\lambda B_i)^c$ then $I_{k,j}(x) \in U_2^c.$ Equation \eqref{e:I1} now follows from \eqref{e:H1} and the definition of $\hat{I}_{k,j}$ above. For \eqref{e:I2}, suppose that $y \in \lambda B_i$ is such that $I_{k,j}(I_{j,i}(y)) = I_{k,i}(y).$ Applying $H$ to both sides, and recalling that $I_{a,b}^{-1} = I_{b,a},$ we get $H(I_{k,i}(y)) = I_{k,i}(y)$ so that $\hat{H}(I_{k,i}(y)) = I_{k,i}(y)$ by \eqref{e:H3}. This gives  
	\begin{align}
		I_{k,i}(y) = \hat{H} \circ I_{k,i}(y) = \hat{H} \circ I_{k,j} \circ I_{j,i}(y) = \hat{I}_{k,j} \circ I_{j,i}.
	\end{align}
	Equation \eqref{e:I4} follows immediately from \eqref{e:H4}, Lemma \ref{l:invertible} and Lemma \ref{l:composition} as long as $C_1$ is chosen sufficiently large.  For \eqref{e:I3}, let $\lambda''' = \lambda - \tfrac{3}{4N}$ and let $z \in B_i(0,\lambda'''r_\ell)$ such that $I_{j,i}(z) \in B_j(0,\lambda''' r_\ell)$ and $\hat{I}_{k,j}(I_{j,i}(z)) \in B_k(0,\lambda''' r_\ell).$ By \eqref{e:I4}, $I_{k,j}(I_{j,i}(z)) \in B_k(0,\lambda'r_\ell)$ so in fact $I_{k,j}(I_{j,i}(z)) \in U_1.$ Then, \eqref{e:H2} implies  
	\begin{align}
		\hat{I}_{k,j}(I_{j,i}(z)) = \hat{H}(I_{k,j}(I_{j,i}(z))) = H(I_{k,j}(I_{j,i}(z))) = I_{k,i}(z).
	\end{align} 
	We now get \eqref{e:I3} from Lemma \ref{l:check-cyc} since $\lambda''' - \tfrac{1}{4N} = \lambda - \tfrac{1}{N}.$ 
\end{proof}

Before iterating Lemma \ref{l:single-mod}, we need one more result and a little more notation. 

\begin{lem}\label{l:families}
	There exists $\bigconst_0 = \bigconst_0(n)$ and a partition of $J_\ell$ into families $\{J^m_\ell\}_{m=1}^{\bigconst_0}$ such that 
	\begin{align}\label{e:large-sep}
		d(x_i,x_j) \geq 100r_\ell
	\end{align}
	for all distinct $i,j \in J^m_\ell$. For $i \in J_\ell,$ let $m(i)$ be such that $i \in J_\ell^{m(i)}.$ If $(i,j),(i,k) \in \calF$ then $m(i) \neq m(j)$ and $m(j) \neq m(k).$ 
\end{lem}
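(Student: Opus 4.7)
\textbf{Proof plan for Lemma \ref{l:families}.}

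The plan is to define a ``conflict'' graph on $J_\ell$ whose edges encode all pairs $(i,j)$ that we are forbidden to place in a common family, then show this graph has bounded degree and color it greedily. Concretely, I will say $i,j \in J_\ell$ \emph{conflict}, and write $i\sim j$, if $i\ne j$ and $d_X(x_i,x_j)<100\, r_\ell$. A proper coloring of this graph immediately gives conclusion \eqref{e:large-sep}. Moreover, if $(i,j)\in\calF_\ell$ then $15B^{i,\ell}\cap 15B^{j,\ell}\ne\emptyset$, so $d_X(x_i,x_j)\le 30\, r_\ell<100\,r_\ell$ and thus $i\sim j$; if additionally $(i,k)\in\calF_\ell$ then by the triangle inequality $d_X(x_j,x_k)\le 60\, r_\ell<100\,r_\ell$, so $j\sim k$. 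Hence a proper coloring also delivers $m(i)\ne m(j)$ and $m(j)\ne m(k)$, which is what we want.

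The bulk of the argument is a degree bound for the conflict graph, and here I will use hypothesis (4). Fix $j\in J_\ell$ and let $I_j=\{i\in J_\ell : d_X(x_i,x_j)<100\,r_\ell\}$. Every $x_i$ with $i\in I_j$ lies in $100 B^{j,\ell}$, so $\beta_{j,\ell}(x_i)$ is defined and lies in $100 B_{j,\ell}$. Because $\beta_{j,\ell}$ is a $\delta r_\ell$-GHA with $\delta<1/100$, hypothesis (2) gives
\[
\|\beta_{j,\ell}(x_i)-\beta_{j,\ell}(x_k)\|_{j,\ell}\;\ge\; d_X(x_i,x_k)-\delta r_\ell\;\ge\;\tfrac{r_\ell}{2}
\]
for distinct $i,k\in I_j$. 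So $\{\beta_{j,\ell}(x_i)\}_{i\in I_j}$ is an $\tfrac{r_\ell}{2}$-separated subset of a ball of radius $100\,r_\ell$ in the $n$-dimensional normed space $\R^n_{j,\ell}$. A standard volume-packing argument in any $n$-dimensional normed space (comparing disjoint balls of radius $r_\ell/4$ around these points with the enlarged ambient ball, using that Lebesgue measure on $\R^n$ scales by $t^n$ under dilations regardless of which norm is used to define the ball) bounds $|I_j|$ by a constant depending only on $n$; call this constant $N_1(n)$.

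Thus the conflict graph has maximum degree at most $N_1(n)-1$, so a greedy coloring produces a partition $J_\ell=\bigsqcup_{m=1}^{N_0}J_\ell^m$ with $N_0=N_1(n)$, in which no two conflicting indices share a color. By the reductions above, this partition satisfies both \eqref{e:large-sep} and the $\calF_\ell$-related coloring condition. The only nontrivial step is the degree bound, which is a one-line consequence of hypothesis (4) once one observes the separation of the $x_i$'s transfers to the tangent ball via $\beta_{j,\ell}$; no further obstacle is anticipated.
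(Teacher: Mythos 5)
Your proof is correct and rests on exactly the same key mechanism as the paper's: transferring the $r_\ell$-separation of the $x_{i,\ell}$ through the $\delta r_\ell$-GHA $\beta_{j,\ell}$ into the ball $100B_{j,\ell}$ and invoking a volume-packing bound in the $n$-dimensional normed space. The only (cosmetic) difference is the combinatorial wrapper — you bound the degree of a conflict graph and color it greedily, whereas the paper iteratively extracts maximal $100r_\ell$-separated subfamilies and uses the same packing bound to limit the number of iterations — so the two arguments are essentially identical.
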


\begin{proof}
	We start by proving the following claim. Let $\bigconst \geq 1$ and suppose we have defined sets $J_\ell^1, \dots, J_\ell^{\bigconst}$ satisfying \eqref{e:large-sep} and 
	\begin{align}\label{e:not-full}
		J_\ell \setminus \bigcup_{m=1}^{\bigconst} J_\ell^m \neq\emptyset.
	\end{align}
	Then, there is $\bigconst_0 = \bigconst_0(n)$ such that 
	\begin{align}\label{e:upper-m}
		\bigconst \leq \bigconst_0.
	\end{align}
	To prove the claim, pick some $j \in J_\ell \setminus \bigcup_{m=1}^{\bigconst} J_\ell^m.$ By maximality, for each $1 \leq m \leq \bigconst$ there exists $j_m \in J_\ell^m$ such that $d(x_j,x_{j_m}) \leq 100r_\ell,$ in particular, $x_{j_m} \in 100B^j.$ Let $z_{j_m} = \toward_j(x_{j_m}) \in 100B_j$. Since $\toward_j$ is a $\delta r_\ell$-GHA, Theorem \ref{t:Reif} (2) implies 
	\[ \|z_{j_m} - z_{j_{m'}} \|_j \geq r_\ell/2 \]
	for all $1 \leq m,m' \leq \bigconst.$ A standard volume argument in $\R^n_j$ then implies \eqref{e:upper-m}. 
	
	We return now to the construction of the partition, which we define iteratively. First, let $J_\ell^1$ be a maximal collection of indices in $J_\ell$ such that \eqref{e:large-sep} holds for all distinct $i,j \in J_\ell^1$. Let $\bigconst \geq 1$ and suppose we have defined sets $J_\ell^1, \dots, J_\ell^{\bigconst}$ each satisfying \eqref{e:large-sep}. If \eqref{e:not-full} holds then we let $J_\ell^{\bigconst+1}$ be a maximal collection of indices in $J_\ell \setminus \bigcup_{m=1}^{\bigconst} J_\ell^m$ such that \eqref{e:large-sep} holds for all distinct $i,j \in J_\ell^{\bigconst+1}$. Otherwise, we stop. Equation \eqref{e:upper-m} guarantees the process terminates with the correct bound on the numbers of sets in our partition. 
\end{proof}

\begin{rem}
	We will write $J^m$ for $J_\ell^m$ when the context is clear. 
\end{rem}

\begin{rem}\label{r:ve_0}
	Let 
	\begin{align}
		N_1 \coloneqq {\bigconst_0 \choose 3} \quad \mbox{ and } \quad \ve_1 \coloneqq \sup\{\ve_0(pC_1^p,N_0) : p \in \N, \ 1 \leq p \leq N_0\}
	\end{align} 
	with $\bigconst_0$ as in Lemma \ref{l:families} and $C_1,\ve_0$ as in Lemma \ref{l:single-mod}. Tracking the dependencies, it follows that $N_1 = N_1(n)$ and $\ve_1 = \ve_1(n).$ 
\end{rem}

Consider a finite sequence $\{D_p\}_{p=0}^{N_1}$ of 3-tuples defined as follows. First, let $D_0 = (3,2,1).$ Then, if $D_p = (a,b,c)$ has been defined for some $p \geq0,$ let 
\begin{align}
	D_{p+1}= 
	\begin{cases}
		(a,b,c+1) & \mbox{ if }  c < b-1;\\
		(a,b+1,1) & \mbox { if } c = b-1 \mbox{ and } b < a-1; \\
		(a+1,2,1) & \mbox{ if }  c = b-1 \mbox{ and } b = a-1. 
	\end{cases}
\end{align}

Now we can iterate Lemma \ref{l:single-mod}. 

\begin{lem}\label{l:I^p}
	Suppose $0 < \ve \leq \ve_1$. Then, for each $0 \leq p \leq N_1$ and $(i,j) \in \calF$ there are smooth diffeomorphisms $I_{i,j}^p : \R^n_j \to \R^n_i$ such that $(I_{i,j}^p)^{-1} = I_{j,i}^p$ and 
	\begin{align}\label{e:I'-T'}
		\| I_{i,j}^p - T_{i,j} \|_{C^2(\R^n_j),r_\ell} \leq pC_1^p \ve. 
	\end{align}
	Furthermore, suppose $(i,j),(j,k),(i,k) \in \calF$ are such that $m(i) < m(j) < m(k)$. If $(m(k),m(j),m(i)) = D_{p'}$ for some $1 \leq p' \leq p  <N_1$ then $I_{i,j}^p,I_{j,k}^p,I_{i,k}^p$ are $\lambda_p r_\ell$-cyclic, where $\lambda_p = 10 - \tfrac{p}{N_1}.$
\end{lem}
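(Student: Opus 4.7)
The argument will proceed by induction on $p$. For the base case $p=0$, take $I^0_{i,j} := T_{i,j}$; each $T_{i,j}$ is an affine $(1+C\ve)$-bi-Lipschitz diffeomorphism with $(T_{i,j})^{-1} = T_{j,i}$ by Lemma \ref{l:T-alpha}, the bound $\|I^0_{i,j} - T_{i,j}\|_{C^2(\R^n_j),r_\ell} = 0 = 0\cdot C_1^0\ve$ is trivial, and the cyclic requirement is vacuous since the range $1\le p'\le 0$ is empty. For the inductive step, I will read off $D_{p+1}=(a,b,c)$ and let $\calT$ denote the collection of triples $(i,j,k)$ with $(i,j),(j,k),(i,k)\in\calF_\ell$ and $(m(k),m(j),m(i))=(a,b,c)$. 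To each triple in $\calT$ I plan to apply Lemma \ref{l:single-mod} with $\lambda=\lambda_p$, $N=N_1$, and $C_2$ equal to the current $C^2$-defect of the step-$p$ maps, producing a smooth modification $\hat I^{p+1}_{k,j}$ of $I^p_{k,j}$ that enforces the $\lambda_{p+1}r_\ell$-cyclic relation for the triple, agrees with $I^p_{k,j}$ outside $I^p_{j,i}(\lambda_p B_{i,\ell})$, and by \eqref{e:I4} satisfies $\|\hat I^{p+1}_{k,j}-I^p_{k,j}\|_{C^2(\R^n_j),r_\ell}\lesssim pC_1^{p+1}\ve$. The triangle inequality with the inductive hypothesis, after enlarging $C_1$ if necessary, then yields $\|I^{p+1}_{k,j}-T_{k,j}\|_{C^2(\R^n_j),r_\ell}\le (p+1)C_1^{p+1}\ve$; functions whose first two level coordinates differ from $(a,b)$ are left unchanged. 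The admissibility condition $\ve\le\ve_1$ from Remark \ref{r:ve_0} keeps us in the regime where Lemma \ref{l:single-mod} applies.

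The first consistency check will be that, when distinct triples $(i,j,k),(i',j,k)\in\calT$ share $(k,j)$, the simultaneous modifications of the single function $I^p_{k,j}$ are compatible. Since $i,i'\in J^c_\ell$ with $i\ne i'$, Lemma \ref{l:families} gives $d(x_{i,\ell},x_{i',\ell})\ge 100 r_\ell$; combined with \eqref{e:T-alpha_j} and the $\delta r_\ell$-GHA property of $\toward_{j,\ell}$, this shows that the two modification supports are, up to $O(\delta r_\ell)$, $\|\cdot\|_{j,\ell}$-balls of radius at most $10 r_\ell$ centered at points of mutual distance $\ge 99 r_\ell$, hence disjoint. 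So the local modifications assemble into a single diffeomorphism $I^{p+1}_{k,j}$ realizing the cyclic relation for every admissible $i$. Triples differing in $k$ or $j$ operate on genuinely distinct functions and produce no interaction.

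The hard part will be verifying that the new modification does not destroy any cyclic relation from an earlier step $p_*\le p$ that the inductive hypothesis guarantees. A case analysis on the enumeration rule for $D_p$ shows that the only problematic scenario is when the earlier relation concerns a triple $(i',j,k)$ with $(m(k),m(j),m(i'))=(a,b,c')$ for some $c'<c$: both step-$p_*$ and step-$(p+1)$ modifications act on the same function $I_{k,j}$, in supports $I_{j,i'}(\lambda_{p_*-1}B_{i',\ell})$ and $I^p_{j,i}(\lambda_p B_{i,\ell})$, which need not be disjoint because $i$ and $i'$ lie in different families $J^c_\ell, J^{c'}_\ell$. The key observation will be that compatibility on the overlap is forced by the cyclic relations for the auxiliary triples $(i',i,j)$ and $(i',i,k)$, whose level-types $(c',c,b)$ and $(c',c,a)$ correspond to $D$-tuples $(b,c,c')$ and $(a,c,c')$ strictly preceding $D_{p+1}$ in the enumeration (the first because $b<a$ lowers the first coordinate, the second because within the block of fixed first coordinate $a$ we have $c<b$). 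Both are therefore available as inductive hypotheses; applying them, with the help of Remark \ref{r:cyc} to extract the needed permutation, produces the chain $\hat I^{p+1}_{k,j}(I^p_{j,i'}(x')) = \hat I^{p+1}_{k,j}(I^p_{j,i}(I^p_{i,i'}(x'))) = I^p_{k,i}(I^p_{i,i'}(x')) = I^p_{k,i'}(x')$, matching exactly the value fixed at step $p_*$. A parallel check, using the same enumeration ordering, confirms that any intermediate modifications of $I_{k,j}$ occurring at steps between $p_*$ and $p+1$ (namely at $D$-tuples $(a,b,c^*)$ with $c'<c^*<c$) are handled by the same mechanism. Iterating through all $p\le N_1$ then completes the induction.
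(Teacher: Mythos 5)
Your proposal tracks the paper's proof almost step for step: the same base case $I^0=T$, the same induction along the enumeration $\{D_p\}$ in which at step $p+1$ only the maps $I_{k,j}$ with levels $(m(k),m(j))=(a,b)$ adjacent to some $i$ of level $c$ are modified via Lemma \ref{l:single-mod} (with $C_2=pC_1^p$, $\lambda=\lambda_p$, $N=N_1$), the same triangle-inequality bookkeeping for \eqref{e:I'-T'}, and the same observation that the only previously established relations at risk are those of type $(a,b,c')$ with $c'<c$. One remark: your ``first consistency check'' addresses a configuration that cannot occur. Two distinct $i,i'$ in the same family $J^c_\ell$ cannot both satisfy $(i,j)\in\calF$, since $\calF$-adjacency forces $d(x_i,x_j),d(x_{i'},x_j)\le 30r_\ell$, contradicting the $100r_\ell$ separation in \eqref{e:large-sep}; so there is at most one modification of each $I_{k,j}$ per step, which is what the paper implicitly uses, and your disjoint-supports argument is harmless but moot.

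Where you genuinely diverge is the preservation of earlier relations on the overlap of supports. The paper argues via \eqref{e:I1} off the support and \eqref{e:I2} on it (the modification fixes every point where the relation already held), while you recompute the value on the overlap by chaining the freshly enforced relation \eqref{e:I3} for the current triple with the cyclic relations of the auxiliary triples $(i',i,j)$ and $(i',i,k)$, whose types $(b,c,c')$ and $(a,c,c')$ do precede $D_{p+1}$. This route works and in fact spells out a point the paper treats very tersely, but it has a prerequisite you never verify: the pair $(i,i')$ must belong to $\calF_\ell$, since otherwise $I^p_{i,i'}$ is not defined and the auxiliary triples are outside the scope of the induction hypothesis. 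This is true but needs an argument: if $I^p_{j,i}(\lambda_pB_i)$ meets $I^p_{j,i'}(\lambda_pB_{i'})$ at $y=I^p_{j,i}(w)=I^p_{j,i'}(x')$, then \eqref{e:I'-T'}, \eqref{e:T-alpha_j} and the fact that $\toward_j$ is a $\delta r_\ell$-isometry give $d(\away_i(w),\away_{i'}(x'))\lesssim\ve r_\ell$, whence $\away_i(w)\in 15B^{i}\cap 15B^{i'}$ and so $(i,i')\in\calF_\ell$. You should add this, together with the ball-membership bookkeeping (the $\lambda_p-\tfrac{1}{2N_1}$ buffers and the final upgrade through Lemma \ref{l:check-cyc}) that you currently only gesture at via Remark \ref{r:cyc}; with those details supplied the argument closes.
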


\begin{proof}
	We prove the lemma by induction. For each $(j,k) \in \calF$ let 
	\[I_{j,k}^0 = T_{j,k}.\]
	The case $p = 0$ is then immediate. For the inductive step, suppose that we have established the result for some $0 \leq p < N_1.$ Let 
	\[D_{p+1} = (a_3,a_2,a_1).\]
	We will produce the maps $I_{j,k}^{p+1}$ by modifying the maps $I_{j,k}^p.$ However, we will only modify a map $I_{j,k}^{p}$ if $m(j) = a_2$, $m(k) = a_3$ and there exists $i \in J$ with $m(i) = a_1$ such that $(i,j),(i,k) \in \calF.$ Otherwise we simply set $I_{j,k}^{p+1}  = I_{j,k}^p.$ Let us describe how we perform the modification in this first case. Fix indices $i,j,k \in J$ as above. 
	By \eqref{e:I'-T'} in the case $p$, the maps $I_{i,j}^p,I_{j,k}^p,I_{i,k}^p$ satisfy the hypotheses of Lemma \ref{l:single-mod} with $C_2 = pC_1^p$ and $\lambda = \lambda_p.$ By assumption we have $\ve \leq \ve_1$ which by Remark \ref{r:ve_0} implies $\ve \leq \ve_1(C_2,N_1).$ We can now apply Lemma \ref{l:single-mod} with the constant $C_2$ and $\lambda$ as above to get maps $I_{j,k}^{p+1}$ and $I_{k,j}^{p+1}$. \\
	
	By construction, for each $(i,j) \in \calF$ we have $(I_{i,j}^{p+1})^{-1} =I_{j,i}^{p+1}$ and, by \eqref{e:I4}, \eqref{e:I'-T'} and the triangle inequality, 
	\begin{align}
		\begin{split}\label{e:hyp-ell}
		\| I_{i,j}^{p+1} - I^p_{i,j} \|_{C^2(\R^n_j),r_\ell} &\leq pC_1^{p+1}\ve ; \\
		\| I_{i,j}^{p+1} - T_{i,j} \|_{C^2(\R^n_j),r_\ell} &\leq (p+1)C_1^{p+1} \ve.
	\end{split}
	\end{align}
	That is, \eqref{e:I'-T'} holds in the case $p+1.$ We are left to check the cyclic condition. Let $(i,j),(j,k),(i,k) \in \calF$ such that $m(i) < m(j) < m(k)$ and $(m(k),m(j),m(i)) = D_{p'}$ for some $1 \leq p' \leq p+1.$ Since we only modified maps $I_{j,k}^p$ and $I_{k,j}^p$ with $m(j) = a_2$ and $m(k) = a_3$ we only need to consider $p'$ with $D_{p'} = (a_3,a_2,b_1)$ for some $1 \leq b_1 \leq a_1.$ The remaining cyclic relations are true by hypothesis. If $b_1 = a_1$ then $I_{i,j}^{p+1},I_{j,k}^{p+1},I_{i,k}^{p+1}$ are $\lambda_{p+1} r_\ell$-cyclic by Lemma \ref{l:single-mod}. 
	
	It only remains to consider the case $b_1 < a_1.$ Let $\lambda = \lambda_p - 1/(2{N_1})$ and suppose $x \in \lambda B_i$ is such that $I_{j,i}^{p+1}(x) \in \lambda B_j$ and $I_{k,j}^{p+1}(I_{j,i}^{p+1}(x)) \in \lambda B_k.$ Since $\lambda <  \lambda_p$ and $I_{j,i}^{p+1} = I_{j,i}^p$ (we didn't modify these maps at this stage), we have 
	\begin{align}\label{e:Il1}
		x \in \lambda_p B_i \mbox{ and } I_{j,i}^p(x) \in \lambda_p B_j.
	\end{align}
	Additionally, if $\ve$ small enough depending on $C_1$ and ${N_1},$ \eqref{e:hyp-ell} gives 
	\begin{align}\label{e:Il2}
		I_{k,j}^p(I_{j,i}^p(x)) = I_{k,j}^p(I_{j,i}^{p+1}(x))\in \lambda_p B_k.
	\end{align}
	The inductive hypothesis implies $I_{k,j}^p(I_{j,i}^{p}(x))  = I_{k,i}^p(x),$ so, if we can show
	\begin{align}\label{e:ell+1-cy}
		I_{k,j}^{p+1}(I_{j,i}^{p}(x)) = I_{k,j}^p(I_{j,i}^{p}(x)),
	\end{align}
	then using again that $I_{j,i}^{p+1} = I_{j,i}^p$ and $I_{i,k}^{p+1} = I_{i,k}^p$ (we also didn't modify this map), we have
	\begin{align}
		I_{k,j}^{p+1}(I_{j,i}^{p+1}(x)) = I_{k,j}^{p+1}(I_{j,i}^{p}(x)) = I_{k,j}^p(I_{j,i}^{p}(x)) = I_{k,i}^p(x) = I_{k,i}^{p+1}(x). 
	\end{align}
	The full cyclic relation follows from this and Lemma \ref{l:check-cyc}. 
	
	Thus, our goal for the rest of the proof is to show \eqref{e:ell+1-cy}. By construction, the map $I_{k,j}^{p+1}$ differs from $I_{k,j}^{p}$ only if there exists $i' \in J$ with $m(i') = a_1$ such that $(i',j),(i',k) \in \calF.$ We restrict our attention to this case. Let $y = I_{j,i}^{p}(x)$. If $y \in I_{j,i'}^p(\lambda B_{i'})^c \subseteq I_{j,i'}^p(\lambda_{p+1} B_{i'})^c,$ then $I_{k,j}^{p+1}(y) = I_{k,j}^p(y)$ by \eqref{e:I1}. This gives exactly \eqref{e:ell+1-cy}. Suppose instead that there exists $z \in \lambda B_{i'}$ such that $I_{j,i'}^p(z) = y = I_{j,i}^p(x).$ Since $\lambda < \lambda_{p},$ we have $z \in \lambda_{p} B_{i'}$. By \eqref{e:Il1} and \eqref{e:Il2} we also have $I_{j,i'}(z) = y \in \lambda_{p} B_j$ and $I_{k,j}^p(I_{j,i'}(z)) \in \lambda_p B_k.$ It follows from the induction hypothesis that
	\begin{align}
		I_{k,j}^p(I_{j,i'}^p(z)) = I_{k,i'}^p(z). 
	\end{align}
	Then, by \eqref{e:I2},
	\begin{align}
		I_{k,j}^{p+1}(I_{j,i}^p(x)) = I_{k,j}^{p+1}(I_{j,i'}^p(z)) = I_{k,j}^p(I_{j,i'}(z)) = I_{k,j}^p(I_{j,i}(x)).
	\end{align}
	This establishes \eqref{e:ell+1-cy} and closes the induction.

\end{proof}

\begin{proof}[Proof of Proposition \ref{p:coherent}]
	Let $\delta_1$ be as in Lemma \ref{l:T-alpha} and $\ve_1$ be as in Remark \ref{r:ve_0}. For $(i,j) \in \calF_\ell$ we set $\tilde{I}_{i,j,\ell} = I_{i,j,\ell}^{N_1}.$ Equation \eqref{e:prop'} and the cyclic properties are immediate from Lemma \ref{l:I^p}, using the fact that $N_1 = N_1(n).$ Equation \eqref{e:prop} follows from \eqref{e:prop'} and \eqref{e:T-alpha_i}.  
\end{proof}

\bigskip

\subsection{Construction of the (possibly) non-connected manifolds $W_\ell$}\label{s:disconnected}
In this subsection we construct a sequence of Finsler manifolds $W_\ell$ whose transition maps will be a suitable restriction of those constructed in the previous section. Let $\ve_1, \delta_1 > 0$ be the constants appearing in Proposition \ref{p:coherent} and suppose for the remainder of this subsection that $0 < \ve < \ve_1$ and $0 < \delta < \delta_1$. Additionally, fix some $\ell \geq 0.$ As in the previous section we will write $\calF = \calF_\ell$ when the context is clear. 

For $(i,j) \in \calF,$ let  
\begin{align}\label{e:defn-I}
	\Omega_{i,j} = \tilde{I}^{-1}_{i,j}(8B_i) \cap 8B_j, \mbox{ and } I_{i,j} = \tilde{I}_{i,j}|_{\Omega_{i,j}},
\end{align}
where $\tilde{I}_{i,j} = \tilde{I}_{i,j,\ell}$ is the map from Proposition \ref{p:coherent}. Since $\tilde{I}_{i,j}^{-1}$ is an open mapping (since it is a diffeomorphism) and both $8B_i$ and $8B_j$ are open in $\R^n$, we have that $\Omega_{i,j}$ is open in $\R^n.$ 

\begin{defn}
	For $j \in J_\ell$ let $\text{in}_j \colon 8B_j \to \bigsqcup_{j \in J_\ell} 8B_j$ be the natural injection. Define a relation $\sim$ on $\bigsqcup_{j \in J_\ell} 8B_j$ by declaring $x \sim y$ if $x = y$ or if there exists $i,j \in J_\ell$ with $i \neq j$ and $x' \in 8B_i, y' \in 8B_j$ such that $\text{in}_i(x') = x, \ \text{in}_j(y') =y$ and $y' = I_{j,i}(x').$ 
\end{defn}

\begin{lem}
	The relation $\sim$ is an equivalence relation.
\end{lem}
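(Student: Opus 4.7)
The plan is to verify reflexivity, symmetry, and transitivity of $\sim$. Reflexivity is built into the first clause of the definition. For symmetry, suppose $x \sim y$ with $x \neq y$, and write $x = \text{in}_i(x'), y = \text{in}_j(y')$ with $i \neq j$ and $y' = I_{j,i}(x')$, so that $x' \in \Omega_{j,i}$ and $\tilde I_{j,i}(x') = y'$. Since the construction in Lemma~\ref{l:I^p} ensures $\tilde I_{i,j,\ell} = (\tilde I_{j,i,\ell})^{-1}$, I have $\tilde I_{i,j}(y') = x' \in 8B_i$, and combined with $y' \in 8B_j$ this yields $y' \in \tilde I_{i,j}^{-1}(8B_i) \cap 8B_j = \Omega_{i,j}$. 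Hence $I_{i,j}(y') = x'$ and $y \sim x$.

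For transitivity, suppose $x \sim y$ and $y \sim z$. If any two of $x,y,z$ coincide the conclusion is immediate, so assume they are pairwise distinct. Write $x = \text{in}_i(x'), y = \text{in}_j(y'), z = \text{in}_k(z')$ with $i \neq j$, $j \neq k$, $y' = I_{j,i}(x')$ and $z' = I_{k,j}(y')$. If $i = k$, the inverse relation $\tilde I_{i,j} = \tilde I_{j,i}^{-1}$ yields $z' = \tilde I_{i,j}(\tilde I_{j,i}(x')) = x'$ and hence $x = z$. The interesting case is $i \neq k$; my strategy is first to show $(i,k) \in \calF_\ell$, so that $\tilde I_{i,k}$ exists, and then to invoke the $9 r_\ell$-cyclic property of Proposition~\ref{p:coherent}.

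To establish $(i,k) \in \calF_\ell$, set $p = \away_{i,\ell}(x') \in X$. The $\delta r_\ell$-isometry property of $\away_{i,\ell}$ together with \eqref{e:centre-pos} gives $d_X(p, x_{i,\ell}) \leq (8 + \delta) r_\ell$, so $p \in 15B^{i,\ell}$. For the other side, the approximation \eqref{e:prop} applied to $\tilde I_{j,i}$, combined with the $\delta r_\ell$-GHA property of $\away_{j,\ell}$ and \eqref{e:almost-id}, shows $d_X(\away_{j,\ell}(y'), p) \lesssim (\ve + \delta) r_\ell$. Running the analogous argument on $z' = \tilde I_{k,j}(y')$ using $z' \in 8B_k$ and $\away_{k,\ell}(0) = x_{k,\ell}$ yields $d_X(\away_{j,\ell}(y'), x_{k,\ell}) \leq (8 + C(\ve+\delta)) r_\ell$. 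Combining these, $d_X(p, x_{k,\ell}) < 15 r_\ell$ for $\ve, \delta$ sufficiently small, so $p \in 15B^{k,\ell}$ as well and $(i,k) \in \calF_\ell$. The cyclic property, applied with the identity permutation in Definition~\ref{d:cyclic} to the points $x' \in 8B_i \subseteq 9B_i$, $\tilde I_{j,i}(x') = y' \in 9B_j$, $\tilde I_{k,j}(y') = z' \in 9B_k$, now gives $\tilde I_{k,i}(x') = z'$; combined with $x' \in 8B_i$ and $z' \in 8B_k$ this shows $x' \in \Omega_{k,i}$ and $I_{k,i}(x') = z'$, so $x \sim z$.

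The main obstacle is the first step of the $i \neq k$ case: producing the geometric witness $p \in 15B^{i,\ell} \cap 15B^{k,\ell}$ forcing $(i,k) \in \calF_\ell$. This requires careful tracking of the various GHA error terms, but it works because $y' \in 8B_j$ forces $\away_{j,\ell}(y')$ to sit within essentially $8 r_\ell$ of both centres $x_{i,\ell}$ and $x_{k,\ell}$, up to an $O((\ve+\delta) r_\ell)$ correction. Once $(i,k) \in \calF_\ell$ is in hand, the cyclic condition of Proposition~\ref{p:coherent} performs the remaining algebraic bookkeeping automatically.
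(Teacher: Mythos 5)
Your proof is correct and follows the same route as the paper: reduce to transitivity and apply the $9r_\ell$-cyclic property of Proposition \ref{p:coherent} to identify $\tilde{I}_{k,i}(x')$ with $z'$ and conclude $x' \in \Omega_{k,i}$. In fact you are more careful than the paper's own proof, which invokes the cyclic property without verifying that $(i,k) \in \calF_\ell$ (so that $\tilde{I}_{i,k}$ is even defined) and without treating the degenerate case $i=k$; your GHA argument producing the common point $p \in 15B^{i,\ell} \cap 15B^{k,\ell}$ supplies exactly that implicit step.
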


\begin{proof}
	Reflexivity and symmetry are immediate, so we only need to verify transitivity. Suppose $x,y,z \in \bigsqcup_{j \in J_\ell} 8B_j$ are such that $x \sim y$ and $y \sim z.$ Without loss of generality, we may assume $x,y,z$ are distinct. Then, there exists distinct indices $i,j,k \in J_\ell$ and points $x' \in 8B_i, \ y' \in 8B_j, \ z' \in  8B_k$ such that $x = \text{in}_i(x'), \ y = \text{in}_j(y')$ and $z = \text{in}_k(z').$ By definition, we have 
	\[ y' = I_{j,i}(x') \in 8B_j \mbox{ and } z' = I_{k,j}(y') = I_{k,j} \circ I_{j,i}(x') \in 8B^i.\]
	By Proposition \ref{p:coherent}, the maps $\tilde{I}_{i,j},\tilde{I}_{j,k},\tilde{I}_{i,k}$ are $9r_\ell$-cyclic, so that $z = \tilde{I}_{k,i}(x)$. Since $z' \in 8B^i$ and $x' \in 8B^i$ this implies $x \in \Omega_{k,i}$, hence, $z' = I_{k,i}(x').$  By definition this means $x \sim z,$ as required.
\end{proof}

Let 
\begin{align}\label{e:W_ell}
	W_\ell = \bigsqcup_{j \in J_\ell} 8B_j / \sim .
\end{align}
Let $Q : \bigsqcup_{j \in J_\ell} 8B_j \to W_\ell$ be the quotient map induced by the equivalence relation $\sim$ and for each $j \in J_\ell$ let 
\[ q_j \coloneqq Q \circ \text{in}_j \colon 8B_j \to W_\ell. \] 
Let $\tau_\ell$ be the coarsest topology on $W_\ell$ such that each $q_j$ is continuous i.e. 
\[ \tau_\ell = \{ V \subseteq W_\ell : q_j^{-1}(V) \mbox{ is an open subset of } \R^n \mbox{ for all } j \in J_\ell\}.\]
By definition $\text{in}_j$ is injective and by construction $Q$ is injective on $\text{in}_j(8B_j)$. Thus, $q_j$ is injective and hence invertible. For $j \in J_\ell$ and $0 < \lambda \leq 8,$ let 
\begin{align}\label{e:U}
	\lambda U^j = q_j(\lambda B_j) \mbox{ and } \varphi_j = q_j^{-1} : 8U^j \to 8B_j.
\end{align}


\begin{figure}

	\tikzset{every picture/.style={line width=0.75pt}} 
	
	\begin{tikzpicture}[x=0.75pt,y=0.75pt,yscale=-1,xscale=1]
		
		\draw[name path=ML]   (112.67,330.67) -- (105.34,348.34) -- (87.67,355.67) -- (69.99,348.34) -- (62.67,330.67) -- (69.99,312.99) -- (87.67,305.67) -- (105.34,312.99) -- cycle ;
		\draw[name path=MR]   (452,327.09) -- (446.05,341.25) -- (432.39,348.27) -- (417.42,344.87) -- (408.13,332.63) -- (408.88,317.3) -- (419.32,306.03) -- (434.55,304.1) -- (447.46,312.42) -- cycle ;
		\draw    (117.67,334) .. controls (167.83,384.16) and (357.16,365.71) .. (399.75,336.23) ;
		\draw [shift={(401,335.33)}, rotate = 145.13] [color={rgb, 255:red, 0; green, 0; blue, 0 }  ][line width=0.75]    (10.93,-4.9) .. controls (6.95,-2.3) and (3.31,-0.67) .. (0,0) .. controls (3.31,0.67) and (6.95,2.3) .. (10.93,4.9)   ;
		\draw    (114.33,299.33) .. controls (146.67,259.73) and (360.01,269.14) .. (403.73,297.79) ;
		\draw [shift={(114.33,299.33)}, rotate = 315.8] [color={rgb, 255:red, 0; green, 0; blue, 0 }  ][line width=0.75]    (10.93,-4.9) .. controls (6.95,-2.3) and (3.31,-0.67) .. (0,0) .. controls (3.31,0.67) and (6.95,2.3) .. (10.93,4.9)   ;
		\draw    (75,481.67) .. controls (67.08,474.41) and (30.41,422.72) .. (68.49,392.58) ;
		\draw [shift={(69.67,391.67)}, rotate = 140.13] [color={rgb, 255:red, 0; green, 0; blue, 0 }  ][line width=0.75]    (10.93,-4.9) .. controls (6.95,-2.3) and (3.31,-0.67) .. (0,0) .. controls (3.31,0.67) and (6.95,2.3) .. (10.93,4.9)   ;
		\draw    (418.1,494.3) .. controls (455.89,463.78) and (441.36,402.19) .. (421,392.34) ;
		\draw [shift={(416.33,495.67)}, rotate = 320.13] [color={rgb, 255:red, 0; green, 0; blue, 0 }  ][line width=0.75]    (10.93,-4.9) .. controls (6.95,-2.3) and (3.31,-0.67) .. (0,0) .. controls (3.31,0.67) and (6.95,2.3) .. (10.93,4.9)   ;
		\draw [name path=BR]  (284.33,449.67) -- (344.33,457.01) -- (395,479.67) -- (393,498.34) -- (380.33,514.34) -- (371,518.67) -- (354.33,511.34) -- (320.33,519.34) -- (304.33,502.01) -- (295.67,519.34) -- (258.33,523.34) -- (232.33,516.01) -- (211.67,506.67) -- (216.33,486.01) -- (212.33,473.34) -- (235.67,462.67) -- (249,451.34) -- cycle ;
		\draw[name path=BL]   (162.33,454.34) -- (222.33,461.67) -- (273,484.34) -- (271,503.01) -- (258.33,519.01) -- (249,523.34) -- (232.33,516.01) -- (198.33,524.01) -- (182.33,506.67) -- (173.67,524.01) -- (136.33,528.01) -- (110.33,520.67) -- (89.67,511.34) -- (94.33,490.67) -- (90.33,478.01) -- (113.67,467.34) -- (127,456.01) -- cycle ;

		\draw[ fill={rgb, 255:red, 74; green, 144; blue, 226 }  ,fill opacity=1 , name intersections={of=BL and BR}] (intersection-1) -- (273,484.34)  -- (271,503.01) -- (258.33,519.01) -- (intersection-2) -- (intersection-3) -- (211.67,506.67) -- (216.33,486.01) -- (212.33,473.34) -- cycle ;
		
		\draw[name path=TL]  (173.67,40.34) -- (233.67,47.67) -- (284.33,70.34) -- (282.33,89.01) -- (269.67,105.01) -- (260.33,109.34) -- (248.33,119.67) -- (211,125.67) -- (200.33,129.67) -- (183,130.34) -- (147.67,114.01) -- (121.67,106.67) -- (101,97.34) -- (97,76.34) -- (101.67,64.01) -- (120.33,47.01) -- (138.33,42.01) -- cycle ;
		\draw[name path=TR]   (286.33,35.67) -- (346.33,43.01) -- (397,65.67) -- (395,84.34) -- (382.33,100.34) -- (373,104.67) -- (361,115.01) -- (323.67,121.01) -- (313,125.01) -- (295.67,125.67) -- (260.33,109.34) -- (234.33,102.01) -- (213.67,92.67) -- (209.67,71.67) -- (214.33,59.34) -- (233,42.34) -- (251,37.34) -- cycle ;

		\draw[fill={rgb, 255:red, 74; green, 144; blue, 226 }  ,fill opacity=1 , name intersections={of=TR and TL}] (intersection-2) -- (233.67,47.67) -- (284.33,70.34) -- (282.33,89.01) -- (269.67,105.01) -- (intersection-1) -- (234.33,102.01) -- (213.67,92.67) -- (209.67,71.67) -- (214.33,59.34) -- cycle;

		\draw    (77.5,291) .. controls (52.75,261.3) and (55.44,133.59) .. (86.55,99.02) ;
		\draw [shift={(87.5,98)}, rotate = 129.12] [color={rgb, 255:red, 0; green, 0; blue, 0 }  ][line width=0.75]    (10.93,-3.29) .. controls (6.95,-1.4) and (3.31,-0.3) .. (0,0) .. controls (3.31,0.3) and (6.95,1.4) .. (10.93,3.29)   ;
		\draw    (400.5,102) .. controls (440.08,141.6) and (454.17,253.73) .. (419.55,287.99) ;
		\draw [shift={(418.5,289)}, rotate = 312.68] [color={rgb, 255:red, 0; green, 0; blue, 0 }  ][line width=0.75]    (10.93,-3.29) .. controls (6.95,-1.4) and (3.31,-0.3) .. (0,0) .. controls (3.31,0.3) and (6.95,1.4) .. (10.93,3.29)   ;
		
		\draw  [name path = LL, color={rgb, 255:red, 74; green, 144; blue, 226 }  ,draw opacity=1 ][line width=0.75] [line join = round][line cap = round] (98.65,310) .. controls (94.63,312) and (94.91,318.34) .. (94,322) .. controls (89.16,341.36) and (101,340.48) .. (101,350.25) ;
		
		\draw[name intersections={of=ML and LL} , fill={rgb, 255:red, 74; green, 144; blue, 226 }]  (intersection-1) -- (105.34,348.34)-- (112.67,330.67)-- (105.34,312.99) --(intersection-2)(98.65,310) .. controls (94.63,312) and (94.91,318.34) .. (94,322) .. controls (89.16,341.36) and (101,340.48) .. (101,350.25) ;

		\draw[name path = RL, color={rgb, 255:red, 74; green, 144; blue, 226 }  ,draw opacity=1 ][line width=0.75] [line join = round][line cap = round] (420,305.8) ..  controls (423.27,314.53) and (426.88,319.06) .. (426,327) .. controls (425.37,332.67) and (417,337.42) .. (414.5,341) ;

		\draw[name intersections={of=MR and RL}, fill={rgb, 255:red, 74; green, 144; blue, 226 } ] (intersection-1) -- (408.13,332.63) -- (408.88,317.3) -- (419.32,306.03) -- (intersection-2) (420,305.8) ..  controls (423.27,314.53) and (426.88,319.06) .. (426,327) .. controls (425.37,332.67) and (417,337.42) .. (414.5,341) ;

		\draw (113.33,307.4) node [anchor=north west][inner sep=0.75pt]    {$\Omega _{i,j}$};
		\draw (380.33,307.4) node [anchor=north west][inner sep=0.75pt]    {$\Omega _{j,i}$};
		\draw (402,360.07) node [anchor=north west][inner sep=0.75pt]    {$8B_{i,\ell } \subset \mathbb{R}^{n}$};
		\draw (45.67,358.4) node [anchor=north west][inner sep=0.75pt]    {$8B_{j,\ell } \subset \mathbb{R}^{n}$};
		\draw (210,342.07) node [anchor=north west][inner sep=0.75pt]    {$( \beta _{j} \circ \alpha _{i})^{-1}$};
		\draw (229.33,278.07) node [anchor=north west][inner sep=0.75pt]    {$I_{i,j}$};
		\draw (240.67,310.35) node [anchor=north west][inner sep=0.75pt]   [align=left] {\eqref{e:prop}};
		\draw (400,30.74) node [anchor=north west][inner sep=0.75pt]    {$8U^{i,\ell }{} \subset W_{\ell}$};
		\draw (238.63,311.35) node [anchor=north west][inner sep=0.75pt]  [rotate=-90.17]  {$\approx $};
		\draw (33,30.74) node [anchor=north west][inner sep=0.75pt]    {$8U^{j,\ell} \subset W_{\ell }$};
		\draw (75.33,172.41) node [anchor=north west][inner sep=0.75pt]    {$\varphi _{j}^{-1}$};
		\draw (398.67,180.41) node [anchor=north west][inner sep=0.75pt]    {$\varphi _{i}$};
		\draw (31.67,521.4) node [anchor=north west][inner sep=0.75pt]    {$8B^{j,\ell } \subset X$};
		\draw (373,522.07) node [anchor=north west][inner sep=0.75pt]    {$8B^{i,\ell } \subset X$};
		\draw (58.67,421.41) node [anchor=north west][inner sep=0.75pt]    {$\beta _{j}$};
		\draw (416,427.41) node [anchor=north west][inner sep=0.75pt]    {$\alpha _{i}$};

	\end{tikzpicture}
	
	\caption{The manifold $W_\ell$}
	\label{f:figure-for-5.2}
\end{figure}
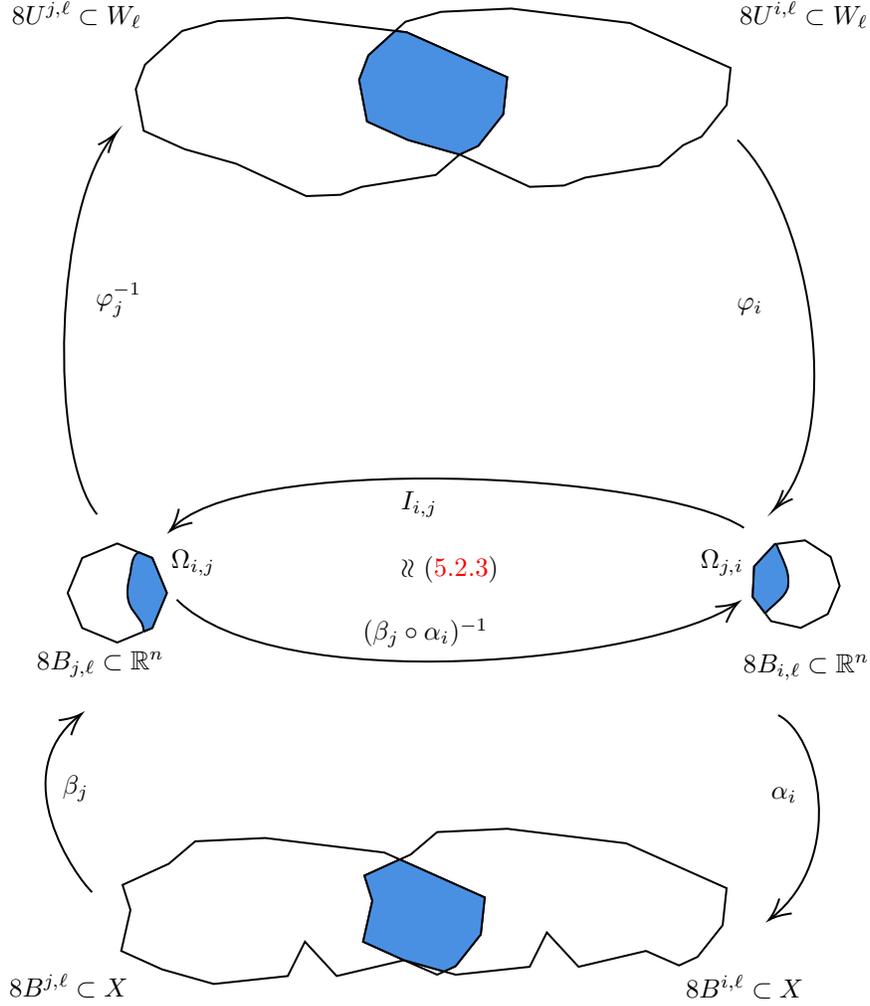

\begin{rem}\label{l:W_0}
	Recalling from Theorem \ref{t:Reif} (1) that $J_0 = \{j_0\},$ we simply have $W_0 = 8B_{j_0}$ and $q_{j_0} = \text{Id}.$  
\end{rem}

\begin{rem}\label{r:transition'}
	It follows by construction that if $i,j \in J_\ell$ and $x = \vp_j(w)$ for some $w \in 8U^i \cap 8U^j$ then $x \in \Omega_{j,i}.$ Furthermore, for each $x \in \Omega_{i,j}$ we have
	\begin{align}\label{r:transition}
		I_{i,j}(x) = q_i^{-1}(q_j(x)) = \varphi_i(\varphi_j^{-1}(x)).
	\end{align}
\end{rem}

\begin{lem}
	The space $(W_\ell,\tau_\ell)$ is a topological manifold i.e. a locally Euclidean Hausdorff space. The collection $\{(8U^j,\varphi_j) : j \in J\}$ is a smooth atlas for $W_\ell$. 
\end{lem}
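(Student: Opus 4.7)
The proof has three essentially independent parts: (a) each $q_j$ is a homeomorphism onto its image $8U^j$, which is open; (b) the transition maps are smooth diffeomorphisms, making the charts into a smooth atlas; (c) $W_\ell$ is Hausdorff. Parts (a) and (b) are formal consequences of the construction, while (c) is the genuine content and uses the careful choice of $\Omega_{i,j}$ as a preimage of the open ball $8B_i$.

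First I would dispatch (a). Injectivity of $q_j$ is built into the quotient construction, and continuity is immediate from the definition of $\tau_\ell$. To see that $q_j$ is open, let $V\subseteq 8B_j$ be open; for every $i\in J_\ell$ I would compute
\begin{equation*}
q_i^{-1}(q_j(V))=
\begin{cases}
V & \text{if } i=j,\\
\tilde I_{i,j}(V\cap \Omega_{i,j}) & \text{if } i\neq j,
\end{cases}
\end{equation*}
the second case following directly from the definition of $\sim$ and Remark \ref{r:transition'}. Since $\tilde I_{i,j}$ is a diffeomorphism and $V\cap\Omega_{i,j}$ is open, the image is open, and hence $q_j(V)$ is open in $W_\ell$. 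Combined with injectivity and continuity this shows $q_j$ is a homeomorphism onto its image, and that $8U^j$ is open in $W_\ell$. The cover property $W_\ell=\bigcup_j 8U^j$ is immediate from the definition of $Q$.

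Next, for (b), I would identify $8U^i\cap 8U^j$. A point lies in both iff it equals $q_j(x)=q_i(y)$ for some $x\in 8B_j$, $y\in 8B_i$, which forces $x\in\Omega_{i,j}$ and $y=I_{i,j}(x)\in\Omega_{j,i}$, so $8U^i\cap 8U^j=q_j(\Omega_{i,j})$. Therefore $\vp_j(8U^i\cap 8U^j)=\Omega_{i,j}$, and Remark \ref{r:transition'} gives $\vp_i\circ\vp_j^{-1}=I_{i,j}=\tilde I_{i,j}|_{\Omega_{i,j}}$, which is smooth with smooth inverse $I_{j,i}$. Thus $\{(8U^j,\vp_j)\}_{j\in J_\ell}$ is a smooth atlas, making $W_\ell$ a smooth manifold once Hausdorffness is established.

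The main obstacle is (c), which is where the specific definition $\Omega_{i,j}=\tilde I_{i,j}^{-1}(8B_i)\cap 8B_j$ is essential. Given distinct points $w,w'\in W_\ell$, I would pick indices with $w\in 8U^j$, $w'\in 8U^i$, and set $x=\vp_j(w)$, $y=\vp_i(w')$. If $x\in\Omega_{i,j}$, then $w=q_i(I_{i,j}(x))\in 8U^i$, so $w$ and $w'$ lie in a common chart and are separated inside the open set $8U^i$ using the homeomorphism $\vp_i$; the symmetric case $y\in\Omega_{j,i}$ is handled identically. Otherwise $x\in 8B_j\setminus\Omega_{i,j}$, which by the preimage definition of $\Omega_{i,j}$ forces $\tilde I_{i,j}(x)\notin 8B_i$; in particular $\tilde I_{i,j}(x)\neq y$. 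Using Hausdorffness of $\R^n$ I choose disjoint open neighborhoods $U_1\ni \tilde I_{i,j}(x)$ and $U_2\ni y$, and set
\begin{equation*}
V_x=\tilde I_{i,j}^{-1}(U_1)\cap 8B_j, \qquad V_y=U_2\cap 8B_i.
\end{equation*}
Since $\tilde I_{i,j}(V_x\cap\Omega_{i,j})\subseteq U_1$ is disjoint from $V_y\subseteq U_2$, the computation from (a) yields $q_j(V_x)\cap q_i(V_y)=\emptyset$, and because $q_j, q_i$ are open maps these are the required disjoint open neighborhoods of $w,w'$. Thus $W_\ell$ is Hausdorff, completing the proof.
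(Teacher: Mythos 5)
Your proof is correct, and parts (a) and (b) (openness of $q_j$ via the computation of $q_i^{-1}(q_j(V))$, and smoothness of the transition maps via Remark \ref{r:transition'}) coincide with the paper's argument. Where you genuinely diverge is the Hausdorff property. The paper splits into cases according to whether the second point lies on $\partial(8U^i)$: off the boundary it separates with $8U^i$ and the complement of $\overline{8U^i}$, and on the boundary it runs a quantitative argument, choosing a radius $\eta$ comparable to the distance to $\partial(8B_i)$ and using the $(1+C\ve)$-bi-Lipschitz bound on $\tilde I_{i,j}$ to force the contradiction $\tilde I_{i,j}(\tilde y)\in\partial(8B_i)$ while being well inside $8B_i$. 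Your argument instead exploits only that $\tilde I_{i,j}$ is a globally defined homeomorphism of $\R^n$: since $x\notin\Omega_{i,j}$ gives $\tilde I_{i,j}(x)\notin 8B_i$, hence $\tilde I_{i,j}(x)\neq y$, you can pull back disjoint Euclidean neighborhoods and conclude disjointness of $q_j(V_x)$ and $q_i(V_y)$ directly from the description of the gluing. This is cleaner and avoids all metric estimates; the paper's route only needs continuity plus bi-Lipschitzness, but both arguments in fact rely on the global extension of $\tilde I_{i,j}$ furnished by Proposition \ref{p:coherent}, so nothing extra is consumed. One small omission: when the chosen indices satisfy $(i,j)\notin\calF_\ell$, the map $\tilde I_{i,j}$ and the set $\Omega_{i,j}$ are not defined, so your Case 3 does not literally apply; but in that situation no identification exists between the $i$-th and $j$-th copies, so $8U^i$ and $8U^j$ are themselves disjoint open neighborhoods and the separation is trivial. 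With that one sentence added, your proof is complete.
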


\begin{proof}
	To prove $W_\ell$ is locally Euclidean, it suffices to check $8U^j$ is open in $W_\ell$ and $q_j : 8B_j \to 8U^j$ is a homeomorphism for each $j \in J_\ell$. This follows if we show $q_j(V) \in \tau_\ell$ for each open $V \subseteq 8B_j$, which is equivalent to showing that $q_i^{-1}(q_j(V))$ is open in $\R^n$ for all $i \in J$. Fix such an open $V$ and $i \in J.$ If $i=j,$ then $q_j^{-1}(q_j(V)) = V$ which is open by assumption. If $i\neq j,$ by \eqref{e:defn-I} and \eqref{r:transition} we have 
	\begin{align}
		q_i^{-1}(q_j(V)) = I_{i,j}(V) = \tilde{I}_{i,j}(V \cap \Omega_{i,j}). 
	\end{align}
	Since $V$ and $\Omega_{i,j}$ are open in $\R^n,$ and $\tilde{I}_{i,j}$ is an open mapping (since it is a diffeomorphism from $\R^n$ onto itself), we have that $q_i^{-1}(q_j(V))$ is open.
	
	Now, let us check that $W_\ell$ is Hausdorff. Suppose $x,y \in W_\ell.$ If $x,y \in 8U^{j}$ for some $j \in J_\ell$ then it is easy to find disjoint open sets $V_x \ni x$, $V_y \ni y$ in $W_\ell$ by using the Hausdorff property of $8B_{j}.$ Suppose then that there exist $i,j \in J_\ell$ such that $x \in 8U^{i} \setminus 8U^j$ and $y \in 8U^j \setminus 8U^i.$ Since $8U^i$ and $8U^i \setminus \overline{8U^i}$ are open in $W_\ell$, if $y \not \in \partial(8U^i)$ then again there exist open sets $V_x \ni x$, $V_y \ni y$ in $W_\ell$, so we may support further that $y \in 8U^j \cap \partial(8U^i).$  In this case, let $\tilde{x} = \psi_i(x), \ \tilde{y} = \psi_j(y),$ 
	\[ \eta = \max\{ {\dist}_i(\tilde{x},\partial(8B_i)) , {\dist}_i(\tilde{y},\partial(8B_i)) \} /4,\]
	and set 
	\begin{align}
		V_x = \psi_i^{-1}(B_{\|\cdot\|_i}(\tilde{x},\eta)) \quad \mbox{ and } \quad V_y = \psi_j^{-1}(B_{\|\cdot\|_j}(\tilde{y},\eta)). 
	\end{align}
	It is clear that $V_x \ni x$, $V_y \ni$ and that both sets are open in $W_\ell.$ It only remains to check that they are disjoint. Suppose towards a contradiction that there exists $z \in V_x \cap V_y.$ By Remark \ref{r:transition'} we have $\psi_j(z) \in B_{\|\cdot\|_j}(\tilde{y},\eta) \cap  \Omega_{i,j}$ and $\tilde{I}_{i,j}(\psi_j(z))  = I_{i,j}(\psi_j(z)) \in B_{\|\cdot\|_i}(\tilde{x},\eta)$. Since $\tilde{I}_{i,j}$ is $(1+C\ve)$-bi-Lipschitz, this implies 
	\begin{align}\label{e:close-boundary}
		\| \tilde{x} - \tilde{I}_{i,j}(\tilde{y}) \|_i &\leq \eta + \| \tilde{I}_{i,j}(\psi_j(z)) -  \tilde{I}_{i,j}(\tilde{y}) \|_i \\
		&\leq \eta + (1+C\ve) \| \psi_j(z) - \tilde{y} \|_j \leq 3\eta \leq 3\dist(\tilde{x},\partial(8B_i))/4.
	\end{align}
	However, since $y \in 8U^j \cap \partial(8U^i)$ and $\psi_j$ is continuous we have $\tilde{y} \in 8B_j \cap \partial(\Omega_{i,j}).$ Moreover, since $\tilde{I}_{i,j}$ is continuous, we have $\tilde{I}_{i,j}(\tilde{y}) \in \partial(8B_i)$ so that $ \| \tilde{x} - \tilde{I}_{i,j}(\tilde{y}) \|_i \geq \dist(\tilde{x},\partial(8B_i)).$ This contradicts \eqref{e:close-boundary}. 
	
	The fact that $\{(8U^j,\varphi_j) : j \in J\}$ is a smooth atlas for $W_\ell$ is a smooth atlas for $W_\ell$ follows immediately from \eqref{r:transition} and the fact that $I_{i,j}$ is smooth by Proposition \ref{p:coherent}. 
\end{proof}

From now on we view $W_\ell$ as equipped with the above smooth structure. In particular, if $x \in W_\ell$ then the tangent space $T_xW_\ell$ is well-defined. We equip $W_\ell$ with a Finsler metric by defining, for each $x \in W_\ell,$ a norm $\|\cdot\|_{W_\ell,x}$ on $T_xW_\ell$ as follows. Let $\{\theta_{j,\ell}\}_{j \in J_\ell}$ be a partition of unity subordinate to $\{8U^j\}_{j \in J_\ell}.$ Then, for each $v \in T_xW_\ell,$ let 
\begin{align}
	\|v\|_{W_{\ell},x} = \sum_{j \in J_\ell} \theta_{j,\ell}(x)\| D\vp_j(x) \cdot v\|_j. 
\end{align}
We equip $T_xW_\ell$ with this norm so that the following is true.

\begin{lem}\label{l:Dpsi}
	Let $j \in J_\ell$ and $x \in 8U^j.$ The map $D\varphi_j(x) \colon T_xW_\ell \to T_{\varphi_j(x)}\R_j^n$ is $(1+C\ve)$-bi-Lipschitz. 
\end{lem}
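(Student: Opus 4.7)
The plan is to unfold the definition of the Finsler norm on $T_xW_\ell$ and compare the contribution from each chart to the contribution of the chart $(8U^j,\varphi_j)$, using the bi-Lipschitz control on the transition maps supplied by Proposition \ref{p:coherent}.

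First I would fix $v \in T_xW_\ell$ and write
\[
\|v\|_{W_\ell,x} = \sum_{i \in J_\ell} \theta_{i,\ell}(x)\,\|D\varphi_i(x)\cdot v\|_i.
\]
The only $i$ contributing to this sum are those with $\theta_{i,\ell}(x) > 0$, and for these $i$ we have $x \in 8U^i \cap 8U^j$. By Remark \ref{r:transition'}, on a neighborhood of $x$ in $W_\ell$ the identity $\varphi_i = I_{i,j}\circ \varphi_j = \tilde I_{i,j}\circ \varphi_j$ holds (with $\varphi_j(x)\in \Omega_{j,i}$), so the chain rule gives
\[
D\varphi_i(x) = D\tilde I_{i,j}(\varphi_j(x))\circ D\varphi_j(x).
\]

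Next, I would invoke Proposition \ref{p:coherent}, which says that $\tilde I_{i,j}\colon \R^n_j\to \R^n_i$ is $(1+C\ve)$-bi-Lipschitz. Consequently its Fr\'echet derivative $D\tilde I_{i,j}(y)$ is a $(1+C\ve)$-bi-Lipschitz linear isomorphism at every $y \in \R^n_j$. Applied at $y=\varphi_j(x)$, this yields
\[
(1+C\ve)^{-1}\,\|D\varphi_j(x)\cdot v\|_j \;\le\; \|D\varphi_i(x)\cdot v\|_i \;\le\; (1+C\ve)\,\|D\varphi_j(x)\cdot v\|_j
\]
for every index $i$ with $\theta_{i,\ell}(x) > 0$. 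In particular this holds for $i=j$ (trivially, since then $\tilde I_{j,j}$ plays no role and $D\varphi_j(x)\cdot v = D\varphi_j(x)\cdot v$).

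Finally, using $\sum_i \theta_{i,\ell}(x) = 1$, I would average the two-sided bound over $i$ to conclude
\[
(1+C\ve)^{-1}\,\|D\varphi_j(x)\cdot v\|_j \;\le\; \|v\|_{W_\ell,x} \;\le\; (1+C\ve)\,\|D\varphi_j(x)\cdot v\|_j,
\]
which is exactly the statement that $D\varphi_j(x)\colon T_xW_\ell\to T_{\varphi_j(x)}\R^n_j$ is $(1+C\ve)$-bi-Lipschitz (after absorbing constants into $C$). The only mild subtlety — and the step most worth verifying carefully — is the appeal to Remark \ref{r:transition'} to justify the equality $\varphi_i = \tilde I_{i,j}\circ\varphi_j$ on an open neighborhood of $x$, so that the chain rule is applicable; this reduces to checking that $\varphi_j(x)\in \Omega_{j,i}$ whenever $x\in 8U^i\cap 8U^j$, which is exactly the content of that remark. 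Beyond this, the argument is a direct combination of the chain rule, convexity of the bi-Lipschitz inequality under partition-of-unity averaging, and the uniform control $\|\tilde I_{i,j} - T_{i,j}\|_{C^2,r_\ell}\lesssim \ve$ provided by Proposition \ref{p:coherent}.
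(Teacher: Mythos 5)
Your argument is correct and is essentially the paper's own proof: identify the contributing charts via the partition of unity, use Remark \ref{r:transition'} to write $\varphi_i = \tilde I_{i,j}\circ\varphi_j$ on the open set $8U^i\cap 8U^j$ and apply the chain rule, invoke the $(1+C\ve)$-bi-Lipschitz bound on $\tilde I_{i,j}$ from Proposition \ref{p:coherent}, and average over the partition of unity. No substantive differences to report.
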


\begin{proof}
	Let $v \in T_x  W_{\ell}.$ Suppose $i \in J_\ell$ is such that $\theta_{i,\ell}(x) \neq 0$. Since $\supp(\theta_{i,\ell}) \subseteq 8U^j$ we have $x \in 8U^i \cap 8U^j$ so that $\vp_i(x) = I_{i,j}(\vp_j(x))$ by \eqref{r:transition}. Then, 
	\begin{align}
		D\varphi_j(x) = D[I_{i,j} \circ \varphi_j](x) = DI_{i,j}(\varphi_j(x)) \circ D\varphi_j(x). 
	\end{align}
	Since $I_{i,j}$ is $(1+C\ve)$-bi-Lipschitz, this gives 
	\[(1+C\ve)^{-1} \|D\varphi_{j}(x)\cdot v \|_j \leq \| D\varphi_i(x) \cdot v\|_i \leq (1+C\ve) \|D\varphi_{j}(x)\cdot v \|_j.\]
	Since $\sum_{j \in J_\ell}\theta_{j,\ell}(x) = 1,$ this implies 
	\[(1+C\ve)^{-1}\|v\|_{W_{\ell},x} \leq  \|D\varphi_j(x) \cdot v\|_j \leq (1+C\ve)\|v\|_{W_\ell,x}, \]
	which completes the proof. 
\end{proof}

\bigskip

\subsection{Smooth maps between manifolds}\label{s:smooth} In this section we construct a sequence of smooth mappings $f_\ell \colon E_\ell \to W_{\ell+1}$ for some suitable subset $E_\ell \subseteq W_{\ell}.$ Let's give a rough description of how this is done. Suppose $j \in J_{\ell+1}$. Similar to Lemma \ref{l:T-alpha}, the flatness condition implies the tangent spaces corresponding to the indices $j$ and $i(j)$ (as in Theorem \ref{t:Reif} (3)) are roughly compatible i.e. there exists a smooth $(1+C\ve)$-bi-Lipschitz map $K_{i(j),j}$ between them which well-approximates the rough transition maps given by $X$ (Lemma \ref{l:K-alpha}). Pushing $8U^{j,\ell+1} \subseteq W_{\ell+1}$ through the map
\begin{align}\label{e:discuss}
	 \vp^{-1}_{i(j),\ell}  \circ K_{i(j),j} \circ\vp_{j,\ell+1}
\end{align}
we find a corresponding region $C^{j,\ell+1}_0$ in $W_{\ell}$. We contract each of these regions slightly and take $E_\ell$ to be their union (see \eqref{e:h_ell1}). Let $h_{j,\ell+1} \colon C^{j,\ell+1}_0 \to 8U^{j,\ell+1}$ be the inverse of the mapping in \eqref{e:discuss}. For nearby patches $C^{i,\ell+1}_0$ and $C^{j,\ell+1}_0$, these mappings are small perturbations of each other (Lemma \ref{l:psi-almost}). A modification procedure similar to that in the previous section then produces a smooth mapping $h_\ell \colon E_\ell \to W_{\ell+1}$ so that, locally, $h_\ell$ is small perturbation of the preliminary map $h_{j,\ell+1}$ (Lemma \ref{l:partial-h}, see in particular (3)).

Before stating the main proposition, we state some preliminary results and notation. For each $\ell \geq 0$ and $j \in J_{\ell+1}$ choose some $i(j) \in J_\ell$ as in Theorem \ref{t:Reif} (3). We recall that this means 
\begin{align}\label{e:i(j)}
	x_{j,\ell+1} \in 2B^{i(j),\ell}.
\end{align}
The first result is a slight modification of Lemma \ref{l:T-alpha}.

\begin{lem}\label{l:K-alpha}
	Let $\ell \geq 0$ and $j \in J_{\ell+1}$. Then there exists an affine $(1 + C\ve)$-bi-Lipschitz map $K_{i(j),j} : \R^n_j \to \R^n_{i(j)}$ such that 
	\begin{align}\label{e:K1}
		\| K_{i(j),j} - \toward_{i(j)} \circ \away_j(x) \|_{i(j)} \lesssim \ve r_\ell
	\end{align}
	for all $x \in 45B_j$. Additionally, if we let $K_{j,i(j)} = K_{i(j),j}^{-1},$ then 
	\begin{align}\label{e:K2}
		\| K_{j,i(j)}(x) - \toward_j \circ \away_{i(j)}(x) \|_j \lesssim \ve r_{\ell}
	\end{align}
	for all $x \in \toward_{i(j)}(99B^j).$ 
\end{lem}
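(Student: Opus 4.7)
The plan is to follow the proof of Lemma \ref{l:T-alpha} almost verbatim, with the child--parent relation $x_{j,\ell+1} \in 2B^{i(j),\ell}$ playing the role of the equal-scale overlap condition $15B^i \cap 15B^j \neq \emptyset$. The goal is to find an affine bi-Lipschitz approximation to $\toward_{i(j)} \circ \away_j$, which ``ascends'' from scale $r_{\ell+1}$ in $\R^n_j$ up into $X$ and then ``descends'' to scale $r_\ell$ in $\R^n_{i(j)}$. The error estimate \eqref{e:K1} is stated relative to the coarser scale $r_\ell$, which is exactly the accuracy to which $\toward_{i(j)}$ represents the metric on $X$, so nothing is lost by working at the parent's scale throughout.

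First, applying Lemma \ref{l:GH-scale} to $\away_j \colon 100B_j \to 100B^{j,\ell+1}$ (a $\delta r_{\ell+1}$-GHA) I would obtain, for a suitable sub-radius $R$ slightly below $100 r_{\ell+1}$, a $C\delta r_{\ell+1}$-GHA $\phi_j \colon B_{\|\cdot\|_j}(0,R) \to B_X(x_{j,\ell+1},R)$ satisfying $d_X(\phi_j(x),\away_j(x)) \leq \delta r_{\ell+1}$. Since $x_{j,\ell+1} \in 2B^{i(j),\ell}$, the target ball $B_X(x_{j,\ell+1},R)$ is contained in $100B^{i(j),\ell}$, so a second application of Lemma \ref{l:GH-scale} to $\toward_{i(j)}$ at the point $x_{j,\ell+1}$ yields a $C\delta r_\ell$-GHA $\phi_{i(j)} \colon B_X(x_{j,\ell+1},R) \to B_{\|\cdot\|_{i(j)}}(z,R)$, where $z := \toward_{i(j)}(x_{j,\ell+1})$, with $\|\phi_{i(j)}(y) - \toward_{i(j)}(y)\|_{i(j)} \leq \delta r_\ell$. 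Composing via Lemma \ref{l:GH-comp} gives a $C\delta r_\ell$-GHA between balls of common radius $R$ in two normed spaces, to which Lemma \ref{l:GH-linear} (for $\delta$ small enough in terms of $\ve$) associates an affine $(1+C\ve)$-bi-Lipschitz map $K_{i(j),j} \colon \R^n_j \to \R^n_{i(j)}$ with $\|K_{i(j),j} - \phi_{i(j)} \circ \phi_j\|_{C^0(B_{\|\cdot\|_j}(0,R))} \leq \ve R \lesssim \ve r_\ell$. A four-term triangle inequality, using the GHA property of $\toward_{i(j)}$ to absorb $d_X(\phi_j(x),\away_j(x))$, then yields \eqref{e:K1} on the full ball $B_{\|\cdot\|_j}(0,R)$.

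For \eqref{e:K2} the strategy mimics the proof of \eqref{e:T-alpha_j}. The key input is an inter-scale variant of Lemma \ref{l:alpha-beta-circ}: iterating \eqref{e:almost-id} and the GHA inequalities shows $\toward_{i(j)} \circ \away_j \circ \toward_j \circ \away_{i(j)}$ is a $C\delta r_\ell$-perturbation of the identity on $\toward_{i(j)}(99B^{j,\ell+1})$. Given $x \in \toward_{i(j)}(99B^{j,\ell+1})$, setting $y := \toward_j \circ \away_{i(j)}(x)$ and applying the triangle inequality gives $\|y\|_j \leq 99 r_{\ell+1} + O(\delta)r_\ell$, which lies inside $B_{\|\cdot\|_j}(0,R)$ as long as $R$ is chosen strictly larger than $99 r_{\ell+1}$ and $\delta$ is small. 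Since $K_{j,i(j)} = K_{i(j),j}^{-1}$ is $(1+C\ve)$-bi-Lipschitz,
\[
\|K_{j,i(j)}(x) - y\|_j \lesssim \|x - K_{i(j),j}(y)\|_{i(j)} \leq \|x - \toward_{i(j)}\circ\away_j(y)\|_{i(j)} + \|\toward_{i(j)}\circ\away_j(y) - K_{i(j),j}(y)\|_{i(j)},
\]
where the first term is $\lesssim \delta r_\ell$ by the approximate-identity calculation and the second is $\lesssim \ve r_\ell$ by \eqref{e:K1} applied at $y$. The only mild obstacle is the bookkeeping: one must choose the sub-radius $R$ slightly larger than $99 r_{\ell+1} = 9.9 r_\ell$ so that \eqref{e:K1} holds at the point $\toward_j \circ \away_{i(j)}(x)$, while respecting the domain $100B_j$ of $\away_j$ and staying inside $100B^{i(j),\ell}$ for $\toward_{i(j)}$; a safety margin $R = 99.5 r_{\ell+1}$, say, suffices for all $\delta$ sufficiently small.
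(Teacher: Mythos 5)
Your proposal is correct and follows essentially the same route as the paper: the paper likewise mimics Lemma \ref{l:T-alpha}, applying the last part of Lemma \ref{l:GH-scale} to $\toward_{i(j)}$ on $100B^j$ (using $x_{j,\ell+1}\in 2B^{i(j),\ell}$ so that $100B^j\subseteq 100B^{i(j)}$), composing with $\away_j$ via Lemma \ref{l:GH-comp}, invoking Lemma \ref{l:GH-linear} to get the affine $(1+C\ve)$-bi-Lipschitz $K_{i(j),j}$, proving \eqref{e:K1} on a slightly larger ball, and then deducing \eqref{e:K2} from the inter-scale approximate-identity estimate for $\toward_{i(j)}\circ\away_j\circ\toward_j\circ\away_{i(j)}$, \eqref{e:K1} at the point $\toward_j\circ\away_{i(j)}(x)$, and the bi-Lipschitzness of $K_{j,i(j)}$. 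The only differences are cosmetic: the paper uses $\away_j$ directly on $100B_j$ rather than first regularizing it through Lemma \ref{l:GH-scale}, and works with radius $100r_{\ell+1}$ instead of your sub-radius $R$.
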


\begin{proof}
	The proof is very similar to Lemma \ref{l:T-alpha}. By Theorem \ref{t:Reif} (3) we have $100B^j \subseteq 100B^{i(j)}.$ Let $z = \toward_{i(j)}(x_j).$ By the final part of Lemma \ref{l:GH-scale}, there exists a $C\delta r_\ell$-GHA $\phi_{i(j)} : 100B^j \to B_{\|\cdot\|_{i(j)}}(z,100r_{\ell+1})$ such that $\|\toward_{i(j)}(x) - \phi_{i(j)}(x)\|_{i(j)} \leq C\delta r_\ell$ for all $x \in 100B^j.$ By Lemma \ref{l:GH-comp}, 
	\[ \phi_{i(j),j} \coloneqq \phi_{i(j)} \circ \away_j : 100B_{j} \to B_{\|\cdot\|_{i(j)}}(z,100r_{\ell+1})\]
	defines a $C\delta r_\ell$-GHA. By Lemma \ref{l:GH-linear}, as long as $\delta$ is chosen small enough depending on $\ve,$ there exists an affine $(1+C\ve)$-bi-Lipschitz map $K_{i(j),j} : \R^n_j \to \R^n_{i(j)}$ such that 
	\[ \| K_{i(j),j}(x) - \phi_{i(j),j}(x) \|_i \leq \ve r_\ell \]
	for all $x \in 100B_{j}.$ 
	
	We prove \eqref{e:K1} for a slightly larger ball. Suppose $x \in 100B_{j}.$ By definition, $\away_j(x) \in 100B^j$. Taking $\delta$ small enough depending on $\ve$, and using the above relation between $\toward_{i(j)},\phi_{i(j)}$ on $100B^j,$ we have 
	\begin{align}
		\| K_{i(j),j}(x) - \toward_{i(j)} \circ \away_j(x)\|_{i(j)} &\leq \| K_{i(j),j}(x) - \phi_{i(j),j}(x) \|_{i(j)} \\
		&\hspace{2em}+ \| \phi_{i(j)} \circ \away_j(x) - \toward_{i(j)} \circ \away_j(x) \|_{i(j)} \\
		&\leq \ve r_\ell + C\delta r_\ell \lesssim \ve r_\ell. 
	\end{align}
	Let us take a look at \eqref{e:K2}. Suppose $x \in \toward_j(99B^j).$ By \eqref{e:almost-id} we have $\away_{i(j)}(x) \in 100B^j.$ By definition, we have $\toward_j \circ \away_{i(j)}(x) \in 100B_{j}.$ Then, using Lemma \ref{l:alpha-beta-circ}, \eqref{e:K1} and since $K_{j,i(j)}$ is $(1+C\ve)$-bi-Lipschitz, the left-hand side of \eqref{e:K2} is at most 
	\begin{align}
		&\| x - K_{i(j),j} \circ \toward_j \circ \away_{i(j)}(x)\|_{i(j)} \\
		&\hspace{4em} \leq \| x - \toward_{i(j)}\circ \away_j \circ \toward_j \circ \away_{i(j)}(x) \|_{i(j)} \\
		&\hspace{6em}+ \|  \toward_{i(j)}\circ \away_j \circ \toward_j \circ \away_{i(j)}(x) -  K_{{i(j)},j} \circ \toward_j \circ \away_{i(j)}(x) \|_{i(j)} \lesssim \ve r_\ell.
	\end{align}
\end{proof}


\begin{figure}

	\tikzset{every picture/.style={line width=0.75pt}} 
	
	\begin{tikzpicture}[x=0.75pt,y=0.75pt,yscale=-1,xscale=1]
		
		\draw [shift={(15,-15)}]  (173,329.5) -- (164.99,348.79) -- (149.31,363.65) -- (129.55,370.7) -- (110.24,368.3) -- (95.79,357.02) -- (89.53,339.43) -- (92.88,319.57) -- (105.08,301.98) -- (123.33,290.7) -- (143.45,288.3) -- (160.84,295.35) -- (171.5,310.21) -- cycle ;
		\draw    (133,142.5) .. controls (93.6,171.02) and (110.29,230.08) .. (139.92,257.29) ;
		\draw [shift={(141,258.5)}, rotate = 227.29] [color={rgb, 255:red, 0; green, 0; blue, 0 }  ][line width=0.75]    (10.93,-4.9) .. controls (6.95,-2.3) and (3.31,-0.67) .. (0,0) .. controls (3.31,0.67) and (6.95,2.3) .. (10.93,4.9)   ;
		\draw    (498,297) .. controls (537.6,267.3) and (547.8,212.61) .. (514.04-12,169.79-10) ;
		\draw [shift={(513-12,168.5-10)}, rotate = 50.86] [color={rgb, 255:red, 0; green, 0; blue, 0 }  ][line width=0.75]    (10.93,-4.9) .. controls (6.95,-2.3) and (3.31,-0.67) .. (0,0) .. controls (3.31,0.67) and (6.95,2.3) .. (10.93,4.9)   ;
		\draw    (205,325) .. controls (240.64,359.16) and (403.2,377.63) .. (449-10.63,350.34-8) ;
		\draw [shift={(451-10,349.5-8)}, rotate = 145.36] [color={rgb, 255:red, 0; green, 0; blue, 0 }  ][line width=0.75]    (10.93,-4.9) .. controls (6.95,-2.3) and (3.31,-0.67) .. (0,0) .. controls (3.31,0.67) and (6.95,2.3) .. (10.93,4.9)   ;
		\draw  [shift={(0,10)}]  (438,304) .. controls (294.45,285.69) and (174.42,215.91) .. (174.95,165.04) ;
		\draw [shift={(174.75,173.5)}, rotate = 85.37] [color={rgb, 255:red, 0; green, 0; blue, 0 }  ][line width=0.75]    (10.93,-4.9) .. controls (6.95,-2.3) and (3.31,-0.67) .. (0,0) .. controls (3.31,0.67) and (6.95,2.3) .. (10.93,4.9)   ;
		\draw    (210,133) .. controls (249.8,103.15) and (362.86,79.74) .. (436.89,126.79) ;
		\draw [shift={(438,127.5)}, rotate = 212.97] [color={rgb, 255:red, 0; green, 0; blue, 0 }  ][line width=0.75]    (10.93,-4.9) .. controls (6.95,-2.3) and (3.31,-0.67) .. (0,0) .. controls (3.31,0.67) and (6.95,2.3) .. (10.93,4.9)   ;
		\draw  [shift={(15,-15)}] [fill={rgb, 255:red, 74; green, 144; blue, 226 }  ,fill opacity=1 ] (155.32,322.22) -- (149.13,332.04) -- (138.92,338.11) -- (128.59,338.11) -- (122.08,332.04) -- (121.89,322.22) -- (128.09,312.39) -- (138.3,306.32) -- (148.63,306.32) -- (155.13,312.39) -- cycle ;
		\draw [shift={(0,119.93-94.93)}] [fill={rgb, 255:red, 74; green, 144; blue, 226 }  ,fill opacity=1 ] (188,113) -- (180.26,124.17) -- (168.21,131.07) -- (156.47,131.07) -- (149.51,124.17) -- (150,113) -- (157.74,101.83) -- (169.79,94.93) -- (181.53,94.93) -- (188.49,101.83) -- cycle ;
		\draw  [fill={rgb, 255:red, 74; green, 144; blue, 226 }  ,fill opacity=1 ] (492,138) -- (485.43,149.17) -- (474.12,156.07) -- (462.37,156.07) -- (454.69,149.17) -- (454,138) -- (460.57,126.83) -- (471.88,119.93) -- (483.63,119.93) -- (491.31,126.83) -- cycle ;
		\draw [shift={(-8,8)}] [fill={rgb, 255:red, 74; green, 144; blue, 226 }  ,fill opacity=1 ] (502,317) -- (494.07,328.76) -- (481.52,336.02) -- (469.16,336.02) -- (461.71,328.76) -- (462,317) -- (469.93,305.24) -- (482.48,297.98) -- (494.84,297.98) -- (502.29,305.24) -- cycle ;

		\draw (126,90.4) node [anchor=north west][inner sep=0.75pt]    {\eqref{e:C} $8C^{j,\ell +1} \subset W_{\ell}$};
		\draw (100,370.4) node [anchor=north west][inner sep=0.75pt]    {$8B_{i( j) ,\ell } \subset \mathbb{R}^{n}$};
		\draw (455,90.4) node [anchor=north west][inner sep=0.75pt]    {$8U^{j,\ell +1} \subset W_{\ell +1}$};
		\draw (455,339.42+18) node [anchor=north west][inner sep=0.75pt]    {$8B_{j,\ell +1} \subset \mathbb{R}^{n}$};
		\draw (321,68.4) node [anchor=north west][inner sep=0.75pt]    {$h_{j,\ell +1}$ \eqref{e:h}};
		\draw (69,183.4) node [anchor=north west][inner sep=0.75pt]    {$\varphi _{i( j) ,\ell }$};
		\draw (270,238.4) node [anchor=north west][inner sep=0.75pt]    {$\Phi _{j ,\ell +1}$ \eqref{e:psi}};
		\draw (286,371.4) node [anchor=north west][inner sep=0.75pt]    {$K_{j,i( j)}$ \eqref{e:K1}};
		\draw (485,219.4) node [anchor=north west][inner sep=0.75pt]    {$\varphi _{j,\ell +1}^{-1}$};

	\end{tikzpicture}
	
	\caption{The preliminary map $h_{j,\ell+1}$}
	\label{f:figure-for-5.4a}
\end{figure}
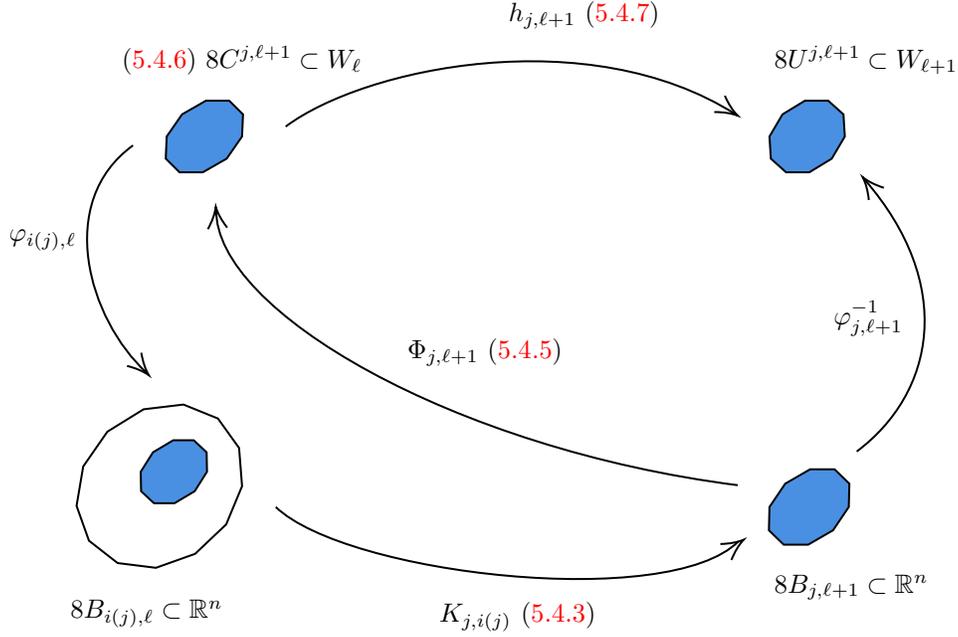

For each $\ell \geq 0$ and $j \in J_{\ell+1}$, let 
\begin{align}\label{e:psi}
	\Phi_{j,\ell+1}  = \varphi_{i(j),\ell}^{-1} \circ K_{i(j),j}|_{8B_{j,\ell+1}},
\end{align}
where $\varphi_{i(j),\ell}$ is the chart map from the previous section. Let $N_0 = N_0(n)$ be as in Lemma \ref{l:families}. For each integer $0 \leq m < 8N_0,$ let $\lambda_m = 8 - \tfrac{m}{N_0}$ and define 
\begin{align}\label{e:C}
	C_m^{j,\ell+1} = \Phi_{j,\ell+1}(\lambda_m B_{j,\ell+1}) \subseteq W_\ell.
\end{align}
Also, define the map
\begin{align}\label{e:h}
	h_{j,\ell+1} \coloneqq \varphi_{j,\ell+1}^{-1} \circ \Phi_{j,\ell+1}^{-1}|_{C_0^j} : C_0^{j,\ell+1} \to W_{\ell+1}.
\end{align}
and note that
\[ h_{j,\ell+1}(C_0^j) = 8U^{j,\ell+1} \]
by construction. See Figure \ref{f:figure-for-5.4a}. Finally, define the set 
\begin{align}\label{e:h_ell1} 
	E_\ell \coloneqq  \bigcup_{j \in J_{\ell+1}} C_{N_0}^{j,\ell+1}.
\end{align}


We can now state the main proposition of this section. See Figure \ref{f:figure-for-5.4b}. 

\begin{figure}[h!]

	\tikzset{every picture/.style={line width=0.75pt}} 
	
	\begin{tikzpicture}[x=0.75pt,y=0.75pt,yscale=-1,xscale=1]
		
		\draw[shift={(-30,+40)}]    (275.5,144) .. controls (305.1,114.3) and (366.84,153.21) .. (407.29,124.88) ;
		\draw [shift={(408.5-30,40+124)}, rotate = 147.13] [color={rgb, 255:red, 0; green, 0; blue, 0 }  ][line width=0.75]    (10.93,-3.29) .. controls (6.95,-1.4) and (3.31,-0.3) .. (0,0) .. controls (3.31,0.3) and (6.95,1.4) .. (10.93,3.29)   ;
		
		\draw   (137.79,181) -- (130.25,194.44) -- (116,200) -- (103.38,194.44) -- (99.79,181) -- (107.33,167.56) -- (121.58,162) -- (134.2,167.56) -- cycle ;
		\draw  [fill={rgb, 255:red, 74; green, 144; blue, 226 }  ,fill opacity=1 ] (159.66,200) -- (157.84,211.42) -- (149.28,218.48) -- (137.27,218.48) -- (126.39,211.42) -- (120.79,200) -- (122.61,188.58) -- (131.17,181.52) -- (143.18,181.52) -- (154.06,188.58) -- cycle ;
		\draw   (182.78,191.93) -- (178.87,203.98) -- (168.62,211.42) -- (155.95,211.42) -- (145.7,203.98) -- (141.78,191.93) -- (145.7,179.88) -- (155.95,172.43) -- (168.62,172.43) -- (178.87,179.88) -- cycle ;
		\draw   (205.51,183.84) -- (197.39,196.81) -- (182.48,200.25) -- (169.5,192.13) -- (166.07,177.21) -- (174.18,164.24) -- (189.1,160.8) -- (202.08,168.92) -- cycle ;
		\draw   (195.06,158.4) -- (187.86,176.33) -- (172.65,187.18) -- (158.33,184.59) -- (153.3,170.08) -- (160.51,152.14) -- (175.72,141.3) -- (190.03,143.89) -- cycle ;

		\draw[shift={(-50,+30)}]   [line width=0.75] [line join = round][line cap = round] (514,133) .. controls (516,128.97) and (514.51,126.97) .. (515,125) .. controls (517.54,114.83) and (542.04,97.01) .. (553,99) .. controls (564.62,101.11) and (564.37,112.87) .. (566,121) .. controls (568.37,132.85) and (576.73,143.68) .. (571,158) .. controls (566.8,168.5) and (560.08,168.15) .. (549,169) .. controls (548.61,169.03) and (544.97,177.27) .. (544,178) .. controls (536.53,183.6) and (527.07,179.25) .. (517.28,181.96) .. controls (516,184) and (514.67,186.1) .. (507.53,189.05) .. controls (494.09,191.16) and (485.9,186.41) .. (485,181) .. controls (484.75,179.53) and (483.92,172.32) .. (483,172) .. controls (478.88,171.31) and (475.82,172.09) .. (472,171) .. controls (466.69,169.48) and (457.65,157.41) .. (459,152) .. controls (461.93,140.3) and (484.12,119.12) .. (497,132) .. controls (501.41,136.41) and (499.95,144.95) .. (503,148) .. controls (507.26,152.26) and (518.42,144.31) .. (519,142) .. controls (520.9,134.42) and (514,137.65) .. (514,133) ;

		\draw [shift={(-50,+30)}] [fill={rgb, 255:red, 74; green, 144; blue, 226 }  ,fill opacity=1 ]  (521,170.5) -- (517.28,181.96) .. controls (516,184) and (514.67,186.1) .. (507.53,189.05) .. controls (494.09,191.16) and (485.9,186.41) .. (485,181) .. controls (484.75,179.53) and (483.92,172.32) .. (483,172) -- (485.72,159.04) -- (495.47,151.95) -- (507.53,151.95) -- (517.28,159.04) -- cycle ;

		\draw (71,129.4) node [anchor=north west][inner sep=0.75pt]    {\eqref{e:h_ell1} $E_{\ell } \subset W_{\ell }$};
		\draw (83,201.4) node [anchor=north west][inner sep=0.75pt]    {$C^{1,\ell+1}_{N_0}$};
		\draw (113,223.4) node [anchor=north west][inner sep=0.75pt]    {$C^{2,\ell+1}_{N_0}$};
		\draw (154,218.4) node [anchor=north west][inner sep=0.75pt]    {$C^{3,\ell+1}_{N_0}$};
		\draw (211-10,180.4+10) node [anchor=north west][inner sep=0.75pt]    {$C^{4,\ell+1}_{N_0}$};
		\draw (323-30,138.4) node [anchor=north west][inner sep=0.75pt]    {$h_{\ell }$ \eqref{e:h_ell2}};
		\draw (380,129.4) node [anchor=north west][inner sep=0.75pt]    {$W_{\ell+1}^* \subset W_{\ell +1}$};
		\draw (380,145.4) node [anchor=north west][inner sep=0.75pt] 
		{\eqref{e:h_ell3''}};
		\draw (410,223.4) node [anchor=north west][inner sep=0.75pt]    {$7U^{2,\ell}$};
		
	\end{tikzpicture}
	
	\caption{The map $h_\ell$}
	\label{f:figure-for-5.4b}
\end{figure}

\begin{prop}\label{p:h_ell}
	For each $\ell \geq 0$ there exists a diffeomorphism $h_\ell \colon E_\ell \to W_{\ell+1}$ such that $Dh_\ell(x) \colon T_xW_\ell \to T_{h_\ell(x)}W_{\ell+1}$
	is $(1+C\ve)$-bi-Lipschitz for each $x \in E_\ell.$ If $x \in C_{N_0}^{j,\ell+1}$ for some $j \in J_{\ell+1}$, then 
	\begin{align}\label{e:h_ell2}
		h_\ell(x) = \varphi_{j,\ell+1}^{-1} \circ \hat{H}_j \circ \Phi_{j,\ell+1}^{-1}(x) = \varphi_{j,\ell+1}^{-1} \circ \hat{H}_j \circ K_{j,i(j)} \circ \varphi_{i(j),\ell}(x)
	\end{align}
	for some diffeomorphism $\hat{H}_j \colon \R^n_j \to \R^n_j$ satisfying 
	\begin{align}\label{e:h_ell3}
		\|\hat{H}_j - \emph{Id}\|_{C^1,r_\ell} \lesssim \ve.
	\end{align}
	If $j \in J_{\ell+1}$ and $A \in \N$ such that $1 \leq A \leq 6$ then
	\begin{align}\label{e:h_ell3'}
		AU^{j,\ell+1} \subseteq h_\ell\left(C^{j,\ell+1}_{(7-A)N_0}\right) \subseteq h_\ell(C^{j,\ell+1}_{N_0}) \subseteq h_\ell(E_\ell \cap 3U^{i(j)}).
	\end{align}
	In particular,
	\begin{align}\label{e:h_ell3''}
		\bigcup_{j \in J_{\ell+1}} 6U^{j,\ell+1} \subseteq W_{\ell+1}^* \coloneqq  h_{\ell}(E_\ell).
	\end{align}
	Finally, we have 
	\begin{align}\label{e:h_ell4}
		E_{\ell+1} \subseteq \bigcup_{j \in J_{\ell+1}} 3U^{j,\ell+1}.
	\end{align}
\end{prop}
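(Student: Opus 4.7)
The plan is to build $h_\ell$ by smoothly gluing the preliminary diffeomorphisms $\{h_{j,\ell+1}\}_{j \in J_{\ell+1}}$ from \eqref{e:h}, using an iterative modification procedure analogous to the one in Section 5.2. The technical heart of the argument is a pairwise $C^2$-closeness statement: for overlapping $i,j \in J_{\ell+1}$, the composition
\[
\vp_{j,\ell+1} \circ h_{j,\ell+1} \circ h_{i,\ell+1}^{-1} \circ \vp_{i,\ell+1}^{-1},
\]
read in $\R^n_i$, is a $(1+C\ve)$-bi-Lipschitz diffeomorphism that is $C\ve r_{\ell+1}$-close in $\|\cdot\|_{C^2,r_{\ell+1}}$ to the cyclic transition $\tilde I_{j,i,\ell+1}$ of Proposition \ref{p:coherent}. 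Unwinding \eqref{e:h}, \eqref{e:psi}, and \eqref{r:transition}, this reduces to comparing $K_{j,i(j)} \circ I_{i(j),i(i),\ell} \circ K_{i(i),i}$ with $\tilde I_{j,i,\ell+1}$; both approximate the rough transition $\beta_{j,\ell+1}\circ\alpha_{i,\ell+1}$ to order $\ve r_{\ell+1}$ (for the first by chaining Lemma \ref{l:K-alpha}, Proposition \ref{p:coherent} at level $\ell$, \eqref{e:almost-id}, and the GHA hypothesis; for the second by \eqref{e:prop}). Lemma \ref{l:composition} then gives the required $C^2$-control.

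Granted this approximation, I would apply Lemma \ref{l:families} to partition $J_{\ell+1}$ into $N_0=N_0(n)$ families $\{J^m_{\ell+1}\}_{m=1}^{N_0}$ and define $h_\ell$ in $N_0$ stages. At stage $1$, set $h_\ell:=h_{j,\ell+1}$ on each $C^{j,\ell+1}_{N_0}$ with $j \in J^1_{\ell+1}$; by Lemma \ref{l:families} these patches are pairwise disjoint. At stage $m \geq 2$, for each $j \in J^m_{\ell+1}$, apply Lemma \ref{l:modification} in the chart $\R^n_j$ (with $U_1 = \lambda_{mN_0}B_{j,\ell+1}$, $U_2 = \lambda_{(m-1)N_0}B_{j,\ell+1}$) to replace the current map on $C^{j,\ell+1}_{mN_0}$ by one of the form $\vp_{j,\ell+1}^{-1}\circ \hat H_j \circ \Phi_{j,\ell+1}^{-1}$, where $\hat H_j$ is a diffeomorphism of $\R^n_j$ satisfying $\|\hat H_j - \id\|_{C^1,r_{\ell+1}}\lesssim \ve$ (using the pairwise closeness above together with Lemma \ref{l:invertible}), and leave the map untouched outside $C^{j,\ell+1}_{(m-1)N_0}$. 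Since the number of stages is bounded by $n$, the cumulative constant in \eqref{e:h_ell3} is still $\lesssim \ve$. The resulting $h_\ell$ is a smooth bijection from $E_\ell$ onto $W_{\ell+1}^*$; it is a diffeomorphism with $(1+C\ve)$-bi-Lipschitz tangent map by combining \eqref{e:h_ell3}, the bi-Lipschitz property of $K_{j,i(j)}$, Lemma \ref{l:Dpsi}, and Lemma \ref{l:invertible}.

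For \eqref{e:h_ell3'}, via \eqref{e:h_ell2} the claim reduces to $\hat H_j((A+1)B_{j,\ell+1})\supseteq AB_{j,\ell+1}$ for $1 \leq A \leq 6$ (note $\lambda_{(7-A)N_0}=A+1$), which follows from Corollary \ref{c:topology} applied to $\hat H_j$ once $\ve$ is small. The inclusion $h_\ell(C^{j,\ell+1}_{N_0})\subseteq h_\ell(E_\ell \cap 3U^{i(j),\ell})$ reduces to $C^{j,\ell+1}_{N_0}\subseteq 3U^{i(j),\ell}$; since $C^{j,\ell+1}_{N_0}=\vp_{i(j),\ell}^{-1}(K_{i(j),j}(7B_{j,\ell+1}))$ and \eqref{e:i(j)}, \eqref{e:K1}, \eqref{e:dist-sim} imply $\|K_{i(j),j}(0)\|_{i(j)}\leq 2r_\ell + C\ve r_\ell$, we get $\|K_{i(j),j}(x)\|_{i(j)}\leq 2r_\ell + C\ve r_\ell + (1+C\ve)\cdot 7r_{\ell+1} < 3r_\ell$ for $x \in 7B_{j,\ell+1}$. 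Finally, \eqref{e:h_ell4} follows by the same size computation one level down, applied to $k\in J_{\ell+2}$ and $i(k)\in J_{\ell+1}$.

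The main obstacle is the opening $C^2$-closeness step: the cyclic property of the $\tilde I_{\cdot,\cdot,\ell+1}$ at level $\ell+1$ is essential, since without it the discrepancy between $K_{j,i(j)}\circ I_{i(j),i(i),\ell}\circ K_{i(i),i}$ and $\tilde I_{j,i,\ell+1}$ would compound during the iterative modification and spoil the uniform bound \eqref{e:h_ell3}. Once that approximation is in hand, everything else is a careful bookkeeping adaptation of the machinery already assembled in Section 5.2.
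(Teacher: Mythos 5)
Your architecture is the paper's own: after unwinding \eqref{e:h} and \eqref{r:transition}, your pairwise closeness statement is essentially Lemma \ref{l:almost-transition} together with Lemma \ref{l:psi-almost}, and the gluing over the $N_0(n)$ families of Lemma \ref{l:families} by iterating Lemma \ref{l:modification}, followed by the topological argument and the size computation of Lemma \ref{l:C-U} for \eqref{e:h_ell3'}--\eqref{e:h_ell4}, is exactly how the paper proceeds. But the gluing step as you wrote it would fail in two concrete ways. First, you feed Lemma \ref{l:modification} the full balls $U_1=\lambda_{mN_0}B_{j,\ell+1}$, $U_2=\lambda_{(m-1)N_0}B_{j,\ell+1}$, yet the map to be modified, $H_j=\Phi_{j,\ell+1}^{-1}\circ h_{j,\ell+1}^{-1}\circ(\text{current map})\circ\Phi_{j,\ell+1}$, is only defined over the part of the patch already covered at earlier stages; the admissible sets are the $\Phi_j$-preimages of the overlaps with previously treated regions, as in \eqref{d:U1}--\eqref{d:U2}, and then the hypothesis $\dist(U_1,U_2^c)\gtrsim r_\ell/N_0$ is not automatic: it requires the separation argument of Lemma \ref{l:inside} (the paper's \eqref{e:distU}), which is absent from your sketch. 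Second, your shrinkage schedule loses a full unit of radius per stage: $\lambda_{mN_0}=8-m$, and $N_0=N_0(n)$ is a covering-multiplicity constant that is in general much larger than $8$ (it is not bounded by $n$, as your constant-tracking remark suggests), so the radii eventually become negative; even for small $m$, a patch $j$ in family $m\ge 2$ would only carry the form $\varphi_{j,\ell+1}^{-1}\circ\hat H_j\circ\Phi_{j,\ell+1}^{-1}$ on $\Phi_j\bigl((8-m)B_{j,\ell+1}\bigr)$, which does not contain $C^{j,\ell+1}_{N_0}=\Phi_j(7B_{j,\ell+1})$, so \eqref{e:h_ell2}--\eqref{e:h_ell3} would be unavailable exactly where the proposition needs them. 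The paper shrinks by only $1/N_0$ per stage (the regions $C^{j,\ell+1}_m$, $1\le m\le N_0$), so the cumulative loss over all stages is one unit, matching $E_\ell=\bigcup_j C^{j,\ell+1}_{N_0}$.

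Two further points. When a new patch meets patches from several earlier families, the single modification $\hat H_j$ must agree with the already-glued map on the union of those overlaps, and the closeness of $H_j$ to the identity there is not the pairwise statement alone: it has to be propagated through the earlier modifications, which is what conditions (1)--(2) of Lemma \ref{l:partial-h} do, using the conjugation estimate of Lemma \ref{l:C^2-comp}; your remark that the cumulative constant is still $\lesssim\ve$ asserts this without argument. Finally, for the first inclusion in \eqref{e:h_ell3'} the correct tool is Lemma \ref{l:topology} applied with $\|\hat H_j-\mathrm{Id}\|_{C^0}\lesssim\ve r_\ell$, not Corollary \ref{c:topology}, which is stated for affine bi-Lipschitz maps.
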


Fix $\ell \geq 0$ for the remainder of the subsection. When it is clear we will drop the subscript $\ell$ from our notation. Before proving the above proposition, we need some preliminary results.


\begin{lem}\label{l:C-U}
	For each $1 \leq m \leq N_0$ and $j,k \in J_{\ell+1}^m$ we have $C_0^j \cap C_0^k = \emptyset$ and $C^j_0 \subseteq 3U^{i(j)}.$ 
\end{lem}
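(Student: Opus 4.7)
My plan is to prove both statements by carefully unwinding the definitions and tracking the approximations between the affine maps $K_{i(j),j}$, the transition maps $I_{i,j}$, and the rough coordinates $\alpha_{j,\ell+1}, \beta_{i(j),\ell}$. The key quantitative inputs are: (i) $x_{j,\ell+1} \in 2B^{i(j),\ell}$ from hypothesis~(3) of Theorem~\ref{t:Reif}; (ii) $r_{\ell+1} = r_\ell/10$; (iii) the approximations \eqref{e:K1}, \eqref{e:K2}, and \eqref{e:prop}; and (iv) the definition of $J_{\ell+1}^m$ giving $d_X(x_{j,\ell+1},x_{k,\ell+1}) \geq 100 r_{\ell+1} = 10 r_\ell$ for distinct $j,k \in J_{\ell+1}^m$ (cf.\ Lemma~\ref{l:families}).

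First I would establish $C_0^j \subseteq 3U^{i(j)}$. Since $K_{i(j),j}$ is affine and $(1+C\varepsilon)$-bi-Lipschitz, for any $u \in 8B_{j,\ell+1}$ we have $\|K_{i(j),j}(u)\|_{i(j)} \leq \|K_{i(j),j}(0)\|_{i(j)} + (1+C\varepsilon)\cdot 8 r_{\ell+1}$. By \eqref{e:K1} applied at $0$ and \eqref{e:centre-pos}, $\|K_{i(j),j}(0) - \beta_{i(j)}(x_{j,\ell+1})\|_{i(j)} \lesssim \varepsilon r_\ell$; since $\beta_{i(j)}$ is a $\delta r_\ell$-GHA and $d_X(x_{j,\ell+1}, x_{i(j),\ell}) \leq 2r_\ell$, we get $\|K_{i(j),j}(0)\|_{i(j)} \leq 2r_\ell + C\varepsilon r_\ell$. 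Combining: $\|K_{i(j),j}(u)\|_{i(j)} \leq 2r_\ell + \tfrac{8}{10}r_\ell + C\varepsilon r_\ell < 3r_\ell$ for $\varepsilon$ small, so $K_{i(j),j}(8B_{j,\ell+1}) \subseteq 3B_{i(j),\ell}$ and the inclusion follows from the definition of $\Phi_{j,\ell+1}$.

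For disjointness, fix distinct $j,k \in J_{\ell+1}^m$ and suppose towards a contradiction that $y \in C_0^j \cap C_0^k$. Write $y = \varphi_{i(j)}^{-1}(K_{i(j),j}(u)) = \varphi_{i(k)}^{-1}(K_{i(k),k}(v))$ with $u \in 8B_{j,\ell+1}$, $v \in 8B_{k,\ell+1}$. By the inclusion just proved, $y \in 3U^{i(j)} \cap 3U^{i(k)}$. If $i(j) = i(k) =: i$, then $K_{i,j}(u) = K_{i,k}(v)$. Applying \eqref{e:K1} to both sides, $\|\beta_i(\alpha_j(u)) - \beta_i(\alpha_k(v))\|_i \lesssim \varepsilon r_\ell$, and since $\beta_i$ is a $\delta r_\ell$-GHA this yields $d_X(\alpha_j(u), \alpha_k(v)) \lesssim \varepsilon r_\ell + \delta r_\ell$. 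If $i(j) \neq i(k)$, then Remark~\ref{r:transition'} gives $K_{i(j),j}(u) \in \Omega_{i(j),i(k)}$ and $K_{i(k),k}(v) = I_{i(k),i(j)}(K_{i(j),j}(u))$; combining \eqref{e:prop} (which relates $\tilde I_{i(k),i(j)}$ to $\beta_{i(k)} \circ \alpha_{i(j)}$), \eqref{e:K1}, \eqref{e:almost-id}, and the fact that $\beta_{i(k)}$ is a $\delta r_\ell$-GHA, a telescoping estimate gives the same conclusion $d_X(\alpha_j(u), \alpha_k(v)) \lesssim \varepsilon r_\ell + \delta r_\ell$. In either case, since $\alpha_j$ is a $\delta r_\ell$-GHA sending $0$ to $x_{j,\ell+1}$ (and similarly for $k$) and $\|u\|_j, \|v\|_k \leq 8 r_{\ell+1}$, the triangle inequality yields $d_X(x_{j,\ell+1}, x_{k,\ell+1}) \leq 16 r_{\ell+1} + C(\varepsilon + \delta) r_\ell$, which is strictly less than $10 r_\ell$ for $\varepsilon,\delta$ small enough, contradicting the separation.

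The main obstacle is the cross-chart case $i(j) \neq i(k)$: one must correctly chain the approximations $K_{i,j} \approx \beta_i \circ \alpha_j$, $I_{i,k} \approx \beta_i \circ \alpha_k$ (which requires $15B^{i(j)} \cap 15B^{i(k)} \neq \emptyset$ to make sense of $I_{i(k),i(j)}$, a fact that follows from $y \in 3U^{i(j)} \cap 3U^{i(k)}$), and $\alpha_i \circ \beta_i \approx \mathrm{Id}$, while keeping the total error much smaller than the separation $10 r_\ell$. All the estimates are compatible with $\varepsilon \leq \delta$ sufficiently small depending only on $n$, since the ratio $r_{\ell+1}/r_\ell = 1/10$ gives absolute (rather than $\varepsilon$-dependent) slack.
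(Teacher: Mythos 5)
Your proposal is correct and follows essentially the same route as the paper: the inclusion $C_0^j\subseteq 3U^{i(j)}$ is obtained from \eqref{e:K1}, \eqref{e:centre-pos}, \eqref{e:dist-sim} and $x_{j,\ell+1}\in 2B^{i(j),\ell}$, and disjointness is proved by contradiction, using Remark \ref{r:transition'} to get $K_{i(k),k}(v)=I_{i(k),i(j)}(K_{i(j),j}(u))$, chaining \eqref{e:K1}, \eqref{e:prop}, \eqref{e:almost-id} and the GHA properties of $\alpha,\beta$ to conclude $d_X(\alpha_j(u),\alpha_k(v))\lesssim(\ve+\delta)r_\ell$, and then contradicting the $100r_{\ell+1}$-separation from Lemma \ref{l:families}. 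The only cosmetic difference is that you split the cases $i(j)=i(k)$ and $i(j)\neq i(k)$ explicitly, whereas the paper treats them in one chain of estimates; both are fine.
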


\begin{proof}
	Fix  $1 \leq  m \leq N_0$ and $j,k \in J_{\ell+1}^m$ and suppose towards a contradiction that there exists $x \in C^j_0 \cap C^k_0.$ Let $y \in 8B_j$ and $z \in 8B_k$ such that $ x = \Phi_j(y) =\Phi_k(z)$. Recalling the definition of $\Phi$ from \eqref{e:psi}, this implies $\vp_{i(j)}^{-1} \circ K_{i(j),j}(y) = \vp_{i(k)}^{-1} \circ K_{i(k),k}(z).$ Applying $\vp_{i(j)}$ to both sides and recalling \eqref{r:transition}, we get 
	\begin{align}
		K_{i(j),j}(y) = I_{i(j),i(k)} \circ K_{i(k),k}. 
	\end{align}
	Using this with \eqref{e:almost-id}, Lemma \ref{l:T-alpha}, Lemma \ref{l:K-alpha}, and the fact that $\beta_{i(j)}$ is a $\delta r_\ell$-GHA and $I_{i(k)}$ is bi-Lipschitz, it is not difficult to show that
	\begin{align}
		d(\away_j(y),\away_k(z)) &\leq d(\away_j(y),\away_{i(k)} \circ \toward_{i(k)} \circ \away_k(z)) + d(\away_{i(k)} \circ \toward_{i(k)} \circ \away_k(z) , \away_k(z)) \\
		&\leq \| \toward_{i(j)} \circ \away_j(y) - \toward_{i(j)} \circ \away_{i(k)} \circ \toward_{i(k)} \circ \away_k(z) \|_{i(j)} + \delta r_\ell \\
		&\leq \| K_{i(j)} - I_{i(j),i(k)} \circ K_{i(k),k} \|_{i(j)} + C\ve r_\ell \lesssim \ve r_\ell
	\end{align}
	Since $\away_j(y) \in 9B^j$ and $\away_k(z) \in 9B^k$ by \eqref{e:dist-sim}, it follows from above estimate and the triangle inequality that $d(x_j,x_k) \leq 20r_\ell.$ This contradicts the separation condition \eqref{e:large-sep}. 
	
	Let us now show that $C^j_0 \subseteq 3U^{i(j)}.$ Recall from \eqref{e:psi} and \eqref{e:C} that $C_0^j = \varphi_{i(j)}^{-1} \circ K_{i(j),j} (8B_{j}).$ For $x \in 8B_j$ we have by Theorem \ref{t:Reif} (3), Lemma \ref{l:K-alpha}, \eqref{e:dist-sim} and  \eqref{e:centre-pos} that 
	\begin{align}
		\|K_{i(j),j}(x)\|_{i(j)} &\leq \| \toward_{i(j)} \circ \away_j(x) - \toward_{i(j)}(x_{i(j)}) \|_{i(j)} + C\ve r_\ell \leq d(\away_j(x),x_{i(j)}) +C\ve r_\ell \\
		&\leq d(\away_j(x),x_j) + d(x_j,x_{i(j)}) + C\ve r_\ell \leq 9r_{\ell+1} + 2r_\ell +  C \ve r_\ell \leq 3r_\ell. 
	\end{align}
	Thus, $K_{i(j),j}(x) \in 3B_{i(j)}.$ By \eqref{e:U} this implies $\varphi_{i(j)}^{-1} \circ K_{i(j),j} (x) \in 3U^{i(j)}$. Since $x$ was arbitrary, we conclude $C_0^j \subseteq 3U^{i(j)},$ as required.  
\end{proof}

\begin{lem}\label{l:whereinX'}
	Let $i,j \in J_\ell$ and suppose $w \in 8U^i \cap 8U^j.$ Then, 
	\begin{align}\label{e:where1'}
		d_X(\away_i \circ \varphi_i(w),\away_j \circ \varphi_j(w)) \lesssim \ve r_\ell. 
	\end{align}
\end{lem}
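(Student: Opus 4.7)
The plan is to chain together the smooth transition map $\tilde{I}_{i,j}$ from Proposition \ref{p:coherent} with the rough transition $\beta_i \circ \alpha_j$ in $X$, using the approximation \eqref{e:prop}, and then convert the estimate in the Banach space $\R^n_i$ back into an estimate in $X$ via the GHA $\alpha_i$.

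First I would unpack what $w \in 8U^i \cap 8U^j$ means. By construction of $W_\ell$ (Section \ref{s:disconnected}) and Remark \ref{r:transition'}, the fact that $8U^i$ and $8U^j$ meet at $w$ forces the pair $(i,j)$ to lie in $\calF_\ell$ (otherwise $\tilde{I}_{i,j}$ is not defined and the equivalence relation $\sim$ cannot identify $\text{in}_i(\varphi_i(w))$ with $\text{in}_j(\varphi_j(w))$). In particular $15B^{i,\ell} \cap 15B^{j,\ell} \neq \emptyset$, so $d_X(x_{i,\ell}, x_{j,\ell}) \leq 30 r_\ell$, and moreover $\varphi_j(w) \in \Omega_{i,j} \subseteq 8B_j$ with $\varphi_i(w) = \tilde{I}_{i,j}(\varphi_j(w))$.

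Next I would apply \eqref{e:prop} at the point $\varphi_j(w) \in 8B_j \subseteq 45B_j$ to obtain
\begin{align}
\|\varphi_i(w) - \beta_i \circ \alpha_j(\varphi_j(w))\|_i \lesssim \ve r_\ell.
\end{align}
Before transferring this estimate to $X$, I need to check that all the compositions above are actually well-defined: by \eqref{e:dist-sim} we have $\alpha_j(\varphi_j(w)) \in 9 B^{j,\ell}$, and since $d_X(x_i, x_j) \leq 30 r_\ell$ this point lies in $39 B^{i,\ell} \subseteq 100 B^{i,\ell}$, the domain of $\beta_i$; applying \eqref{e:dist-sim} again shows that $\beta_i \circ \alpha_j(\varphi_j(w)) \in 40 B_{i,\ell} \subseteq 100 B_{i,\ell}$, the domain of $\alpha_i$.

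Now applying $\alpha_i$, which is a $\delta r_\ell$-isometry on $100 B_{i,\ell}$, to the displayed estimate yields
\begin{align}
d_X\!\bigl(\alpha_i \circ \varphi_i(w),\, \alpha_i \circ \beta_i \circ \alpha_j \circ \varphi_j(w)\bigr) \lesssim \ve r_\ell + \delta r_\ell \lesssim \ve r_\ell.
\end{align}
Finally, \eqref{e:almost-id} gives
\begin{align}
d_X\!\bigl(\alpha_i \circ \beta_i \circ \alpha_j \circ \varphi_j(w),\, \alpha_j \circ \varphi_j(w)\bigr) \leq \delta r_\ell,
\end{align}
and the triangle inequality combines these two bounds into the claim. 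The only genuinely substantive step is the first one (invoking \eqref{e:prop}); everything else is bookkeeping with the basic properties of GHAs and the near-identity relation between $\alpha$ and $\beta$. The main thing to be careful about is verifying the domain conditions for $\beta_i$ and $\alpha_i$, which is where the constant $100$ in the definition of $B_{j,\ell}$ and $B^{j,\ell}$ (Theorem \ref{t:Reif}(4)) is used.
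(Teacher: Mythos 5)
Your proof is correct and follows essentially the same route as the paper's: identify $\varphi_i(w)$ with $\tilde I_{i,j}(\varphi_j(w))$ via Remark \ref{r:transition'}, invoke \eqref{e:prop}, transfer the estimate to $X$ using the GHA property of $\alpha$, and finish with \eqref{e:almost-id}. The only difference is that you apply \eqref{e:prop} with the pair $(i,j)$ at $\varphi_j(w)$ and use the almost-isometry of $\alpha_i$, while the paper does the symmetric version with $\tilde I_{j,i}$ at $\varphi_i(w)$ and $\alpha_j$; your extra domain checks are fine and, if anything, more careful than the paper's.
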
 

\begin{proof}
	Recalling \eqref{r:transition} we have $\varphi_j(w) = I_{j,i}(\varphi_i(w)) = \tilde{I}_{j,i}(\varphi_i(w)) \in 8B_j.$ By \eqref{e:almost-id} and Proposition \ref{p:coherent} we have $d(\away_j(\toward_j(x)),x) \leq \delta r_\ell \leq \ve r_\ell$ for all $x \in 10B^j$ and $\| \tilde{I}_{j,i}(x) - \toward_j\circ\away_i(x)\|_j \lesssim \ve r_\ell$ for all $x \in 8B_i.$ Since $\varphi_i(w) \in 8B_i,$ combing the above we get  
	\begin{align}
		d(\away_i(\varphi_i(w)),\away_j(\varphi_j(w))) &= d( \away_i(\varphi_i(w)),\away_j(\tilde{I}_{j,i}(\varphi_i(w)))) \\
		&\leq d(  \away_i(\varphi_i(w)),\away_j(\toward_j(\away_i(\varphi_i(w))))) +C\ve r_\ell \lesssim \ve r_\ell. 
	\end{align}
\end{proof}

\begin{lem}\label{l:inside}
	Let $j,k \in J_{\ell+1}$ and set $V_m = \Phi_{j}^{-1}(C_m^{j} \cap C_m^{k})$ for $1 \leq m \leq N_0.$ Suppose $x \in V_m$ for some $m \geq 2$ and suppose $y \in 8B_{j,\ell+1}$ is such that $\|x-y\|_{j} \leq r_\ell/(4N_0).$ Then, $y \in V_{m-1}$
\end{lem}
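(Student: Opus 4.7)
The plan is to verify the two membership conditions defining $V_{m-1} = \lambda_{m-1}B_{j,\ell+1} \cap \Phi_j^{-1}(C_{m-1}^{k})$ separately. Since $x \in V_m$ gives $\|x\|_j \leq \lambda_m r_{\ell+1}$, the triangle inequality together with the hypothesis $\|x-y\|_j \leq r_\ell/(4N_0)$ readily produces $\|y\|_j \leq \lambda_{m-1}r_{\ell+1}$ (using $\lambda_{m-1} - \lambda_m = 1/N_0$ and $r_{\ell+1} = r_\ell/10$), after possibly adjusting the numerical constant through the proof. This handles the first inclusion.

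For the second inclusion, I would analyze the transition map $\Psi \coloneqq \Phi_k^{-1}\circ\Phi_j$. Unravelling \eqref{e:psi} gives
\begin{equation*}
 \Psi = K_{k,i(k)} \circ \bigl(\varphi_{i(k)}\circ \varphi_{i(j)}^{-1}\bigr) \circ K_{i(j),j}.
\end{equation*}
When $i(j) = i(k)$ the middle factor is the identity and $\Psi$ is the composition of two affine $(1+C\ve)$-bi-Lipschitz maps. When $i(j) \neq i(k)$, non-emptiness of $V_m$ forces $\Phi_j(x) \in 3U^{i(j)} \cap 3U^{i(k)}$ (via Lemma \ref{l:C-U}), so by construction of $W_\ell$ the pair $(i(k),i(j))$ lies in $\calF_\ell$ and the middle factor coincides with the smooth $(1+C\ve)$-bi-Lipschitz diffeomorphism $I_{i(k),i(j)}$ of Proposition \ref{p:coherent} on the appropriate open set (see \eqref{r:transition}). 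In either case, $\Psi$ is $(1+C\ve)$-bi-Lipschitz on an open neighbourhood of $x$ in $\R^n_j$.

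Writing $z \coloneqq \Psi(x)$, the condition $\Phi_j(x) \in C_m^k$ forces $z \in \lambda_m B_{k,\ell+1}$, i.e.\ $\|z\|_k \leq \lambda_m r_{\ell+1}$. Applying the bi-Lipschitz bound to $x$ and $y$ yields $\|\Psi(y) - z\|_k \leq (1+C\ve)\|x-y\|_j$, whence
\begin{equation*}
\|\Psi(y)\|_k \leq \lambda_m r_{\ell+1} + (1+C\ve)\|x-y\|_j \leq \lambda_{m-1} r_{\ell+1},
\end{equation*}
provided $\ve$ is small enough compared with $1/N_0$ (tracking the constants to reconcile them with the hypothesis bound $r_\ell/(4N_0)$ is where some care is required). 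Consequently $\Phi_j(y) = \Phi_k(\Psi(y)) \in C_{m-1}^k$, completing the proof.

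The main technical obstacle is the second inclusion in the case $i(j) \neq i(k)$: one must first verify that $y$ actually lies in the domain of $\Psi$, meaning that $K_{i(j),j}(y)$ sits inside the domain $\Omega_{i(k),i(j)}$ where the smooth transition $I_{i(k),i(j)}$ is defined (cf.\ \eqref{e:defn-I}). This uses the openness of $\Omega_{i(k),i(j)}$, continuity of the maps involved, and the fact that $x$ already maps well inside this domain because $\Phi_j(x) \in 3U^{i(j)} \cap 3U^{i(k)}$ sits comfortably away from the boundary of $8U^{i(k)}$, combined with the smallness of the perturbation $\|x-y\|_j$ relative to the scale $r_\ell$.
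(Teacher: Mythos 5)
Your proposal is correct in substance and hinges on exactly the same key fact as the paper's proof, namely the identity $\Phi_{k}^{-1}\circ\Phi_{j}=K_{k,i(k)}\circ\tilde{I}_{i(k),i(j)}\circ K_{i(j),j}$ coming from \eqref{e:psi} and \eqref{r:transition}, together with the $(1+C\ve)$-bi-Lipschitzness of each factor (Lemma \ref{l:T-alpha}, Lemma \ref{l:K-alpha}, Proposition \ref{p:coherent}) — but the packaging is different. The paper argues by contraposition: it shows that every $z\in 8B_{j,\ell+1}\setminus V_{m-1}$ has $\dist(z,V_m)>r_\ell/(4N_0)$, by choosing an almost-nearest point $w\in V_m$, taking a point $p\in\partial V_{m-1}$ on the segment from $w$ to $z$, and splitting according to whether $p\in\partial(\lambda_{m-1}B_{j})$ or $p\in\Phi_j^{-1}\circ\Phi_k(\partial(\lambda_{m-1}B_{k}))$, estimating the distance to $V_m$ in each case; because the transition map is only ever applied to points already known to lie in the relevant image sets, no domain verification is needed. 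You instead push $y$ forward and check the two membership conditions $y\in\lambda_{m-1}B_{j}$ and $\Phi_j(y)\in C^{k}_{m-1}$ directly; the price is the chart-domain issue you identify, i.e.\ that $K_{i(j),j}(y)$ must lie in $\Omega_{i(k),i(j)}$ (cf.\ \eqref{e:defn-I}) for the identity $\Phi_j(y)=\Phi_k(\Psi(y))$ to be legitimate. Your reduction of this to Lemma \ref{l:C-U} (which places $\Phi_j(x)$ in $3U^{i(j)}\cap 3U^{i(k)}$, hence $K_{i(j),j}(x)\in 3B_{i(j)}$ and $\tilde{I}_{i(k),i(j)}(K_{i(j),j}(x))\in 3B_{i(k)}$, comfortably inside the $8$-balls) plus the smallness of $\|x-y\|_j$ and the global bi-Lipschitzness of $\tilde{I}_{i(k),i(j)}$ is the right quantitative input and closes the argument; openness and continuity alone would not suffice, so do state it in this quantitative form. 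Your observation that nonemptiness of $V_m$ forces $(i(j),i(k))\in\calF_\ell$ when $i(j)\neq i(k)$ is also correct.

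One bookkeeping point you flag but should make explicit: since $\lambda_{m-1}-\lambda_m=1/N_0$ and $B_{j,\ell+1}$ has radius $r_{\ell+1}=r_\ell/10$, the annular gap between $\lambda_mB_{j,\ell+1}$ and the complement of $\lambda_{m-1}B_{j,\ell+1}$ is $r_\ell/(10N_0)$, which is smaller than the allowed perturbation $r_\ell/(4N_0)$; so your estimates only close after replacing $r_\ell/(4N_0)$ by $c\,r_\ell/N_0$ for a small absolute constant $c$. This is not a defect of your route: the paper's own proof has the identical scale slip (e.g.\ the assertion $\dist(p,V_m)\geq r_\ell/N_0$ for $p\in\partial(\lambda_{m-1}B_j)$), and the downstream application in Lemma \ref{l:partial-h} only uses a lower bound comparable to $r_\ell/N_0$, so the adjusted constant is harmless.
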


\begin{proof}
	It is enough to show $\dist(z,V_m) > r_\ell/(4N_0)$ for all $z \in 8B_{j,\ell+1} \setminus V_{m-1}.$ Pick such a $z$ and let $w \in V_m$ such that $\|w-z\|_{j} \leq 2\dist(z,V_m).$ Furthermore, choose a point $p \in \partial V_{m-1}$ on the line segment connecting $w$ with $z$. We have 
	\begin{align}
		{\dist}(p,V_m) \leq \| p - w \| =  \| w -z \| - \| p- z \| \leq 2{\dist}(z,V_m)
	\end{align}
	so in particular it suffices to prove $\dist(p,V_m) > r_\ell/(2N_0).$ This is our goal for the remainder of the proof. First notice, since $\Phi_{j}$ and $\Phi_{k}$ are homeomorphisms, we have 
	\[ \partial V_{m-1} \subseteq \partial(\lambda_{m-1}B_{j}) \cup \Phi_{j}^{-1} \circ \Phi_{k}(\partial(\lambda_{m-1}B_{k}))\]
	and
	\begin{align}\label{e:V_m}
		V_{m} \subseteq \lambda_m B_{j} \cap \Phi_{j}^{-1} \circ \Phi_{k}(\lambda_{m}B_{k}).
	\end{align}
	If $p \in \partial(\lambda_{m-1}B_{j}),$ then immediately $\dist(p,V_m) \geq r_\ell/{N_0}.$ Suppose instead that $p \in \Phi_{j}^{-1} \circ \Phi_{k}(\partial(\lambda_{m-1}B_{k}))$ and let $q  \in \lambda_{m-1}B_{k}$ such that $p = \Phi_{j}^{-1}(\Phi_{k}(q)) \in \lambda_{m-1}B_{k}$. Since $q  \in \lambda_{m-1}B_{k}$ we have $\dist(q,\lambda_{m}B_{k}) \geq r_\ell/{N_0}$. If $z' \in \Phi_k^{-1}(C^j_0 \cap C^k_0)$ it follows from the definition in \eqref{e:psi} and \eqref{r:transition} that  
	\[\Phi_{j}^{-1} (\Phi_{k}(z')) = K_{j,i(j)} \circ \tilde{I}_{i(j),i(k)} \circ K_{i(k),k}(z').\] 
	By \eqref{e:V_m} and since by Lemma \ref{l:T-alpha} and Lemma \ref{l:K-alpha} each of the $K_{a,b}$ and $\tilde{I}_{a,b}$ are $(1+C\ve)$-bi-Lipschitz, we then have 
	\begin{align}
		\dist(p,V_m) &\geq  \dist(\Phi_{j}^{-1}(\Phi_{k}(q)) , \Phi_{j}^{-1}(\Phi_{k}(\lambda_{m}B_{k}))) \\
	&\geq (1+C\ve)^{-1} \dist(q,\lambda_mB_k) \geq r_\ell/(2N_0)
	\end{align}
	for $\ve$ small enough. This finishes the proof of the lemma. 
\end{proof}

\begin{lem}\label{l:almost-transition}
	Let $j,k \in J_{\ell+1}$ and suppose there exists $x \in \emph{dom}(\Phi_{k}^{-1}\circ\Phi_{j}) \subseteq 8B_{j}.$ Then, $\tilde{I}_{k,j}$ is well-defined and 
	\begin{align}\label{e:almost-transition}
		\| \Phi_{k}^{-1}(\Phi_{j}(x)) - \tilde{I}_{k,j}(x) \|_{k} \lesssim  \ve r_{\ell}. 
	\end{align}
\end{lem}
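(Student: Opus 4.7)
The plan is to unfold $\Phi_k^{-1}\circ\Phi_j$ using the definition \eqref{e:psi} and the transition relation \eqref{r:transition}, reducing it to a composition of maps controlled by Lemma \ref{l:K-alpha} and Proposition \ref{p:coherent}, and then to compare the resulting composition piece-by-piece with $\tilde I_{k,j}(x)$, using that both approximate the "true" transition $\beta_{k,\ell+1}\circ\alpha_{j,\ell+1}$.

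First I would verify that $(k,j)\in\calF_{\ell+1}$, so that $\tilde I_{k,j}=\tilde I_{k,j,\ell+1}$ actually exists. Set $w=\Phi_j(x)\in W_\ell$. The hypothesis $x\in\mathrm{dom}(\Phi_k^{-1}\circ\Phi_j)$ yields $z\in 8B_{k,\ell+1}$ with $\Phi_k(z)=w$, so $w\in 8U^{i(j),\ell}\cap 8U^{i(k),\ell}$. Applying Lemma \ref{l:whereinX'} at level $\ell$ with the chart points $K_{i(j),j}(x)$ and $K_{i(k),k}(z)$, combined with \eqref{e:K1} and \eqref{e:almost-id}, I would extract $d_X(\alpha_{j,\ell+1}(x),\alpha_{k,\ell+1}(z))\lesssim \ve r_\ell$. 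Since $\alpha_{j,\ell+1}(x)\in 9B^{j,\ell+1}$ and $\alpha_{k,\ell+1}(z)\in 9B^{k,\ell+1}$ by \eqref{e:dist-sim}, the triangle inequality and $r_{\ell+1}=r_\ell/10$ give $d(x_{j,\ell+1},x_{k,\ell+1})\le 18 r_{\ell+1}+C\ve r_\ell<30 r_{\ell+1}$, so $15B^{j,\ell+1}\cap 15B^{k,\ell+1}\ne\emptyset$.

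Next I would compute $\Phi_k^{-1}(\Phi_j(x))$ explicitly. If $i(j)=i(k)$ then $\varphi_{i(k)}\circ\varphi_{i(j)}^{-1}=\mathrm{Id}$ on the overlap and $\Phi_k^{-1}(\Phi_j(x))=K_{k,i(k)}\circ K_{i(j),j}(x)$. If $i(j)\ne i(k)$, then Remark \ref{r:transition'} applied to $w\in 8U^{i(j)}\cap 8U^{i(k)}$ places $K_{i(j),j}(x)$ in $\Omega_{i(k),i(j)}$, and \eqref{r:transition} yields
\[
\Phi_k^{-1}(\Phi_j(x))=K_{k,i(k)}\circ \tilde I_{i(k),i(j),\ell}\circ K_{i(j),j}(x).
\]

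Finally, to prove \eqref{e:almost-transition} I would show that both $\Phi_k^{-1}(\Phi_j(x))$ and $\tilde I_{k,j}(x)$ are $\lesssim\ve r_\ell$-close to $\beta_{k,\ell+1}\circ\alpha_{j,\ell+1}(x)$. For the right-hand side, \eqref{e:prop} at level $\ell+1$ gives this directly (with error $\ve r_{\ell+1}\le\ve r_\ell$). For the left-hand side in the harder case $i(j)\ne i(k)$, I would apply in succession: \eqref{e:K1} to replace $K_{i(j),j}(x)$ by $\beta_{i(j)}\circ\alpha_{j,\ell+1}(x)$; \eqref{e:prop} to replace $\tilde I_{i(k),i(j),\ell}$ by $\beta_{i(k)}\circ\alpha_{i(j)}$; \eqref{e:almost-id} to collapse the middle $\alpha_{i(j)}\circ\beta_{i(j)}$ to the identity; and \eqref{e:K2} to handle the outer $K_{k,i(k)}$, producing $\beta_{k,\ell+1}\circ\alpha_{i(k)}\circ\alpha_j(x)$ and then $\beta_{k,\ell+1}\circ\alpha_{j,\ell+1}(x)$ after one more application of \eqref{e:almost-id}. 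Because every map in sight is $(1+C\ve)$-bi-Lipschitz or a $\delta r$-GHA, the individual errors add instead of amplifying, yielding the desired $\lesssim\ve r_\ell$ bound via the triangle inequality. The main obstacle is the bookkeeping to verify that each intermediate point lands in the prescribed domain ($45B_{\cdot}$, $99B^{\cdot}$, the overlaps $\Omega$, etc.); this is ensured by the scale ratio $r_{\ell+1}=r_\ell/10$, Theorem \ref{t:Reif}(3), and \eqref{e:dist-sim}, which keep all intermediate points well inside the balls where the preceding lemmas apply.
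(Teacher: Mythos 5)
Your proposal follows essentially the same route as the paper: well-definedness of $\tilde I_{k,j}$ is obtained by bounding $d(x_{j,\ell+1},x_{k,\ell+1})$ through Lemma \ref{l:whereinX'} together with \eqref{e:K1} and \eqref{e:almost-id}, and the estimate \eqref{e:almost-transition} is obtained by unfolding $\Phi_k^{-1}\circ\Phi_j = K_{k,i(k)}\circ\tilde I_{i(k),i(j)}\circ K_{i(j),j}$ via \eqref{r:transition} and comparing both $\Phi_k^{-1}(\Phi_j(x))$ and $\tilde I_{k,j}(x)$ to $\toward_{k}\circ\away_{j}(x)$ using Lemma \ref{l:K-alpha}, Proposition \ref{p:coherent} and \eqref{e:almost-id}, exactly as in the paper's displayed chain. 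The only differences are cosmetic: your explicit case split on $i(j)=i(k)$ (which the paper handles implicitly) and the typo in the intermediate expression, which should be $\toward_{k}\circ\away_{i(k)}\circ\toward_{i(k)}\circ\away_{j}(x)$ rather than what you wrote, a slip your final application of \eqref{e:almost-id} already presupposes is corrected.
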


\begin{proof}
	To prove that $\tilde{I}_{k,j}$ is well-defined, it suffices to show $d(x_{j},x_{k}) \leq 18r_{\ell+1}$ (in this case $15B^{j} \cap 15B^{k} \neq \emptyset$ and so $(j,k) \in \calF$, recall the definition in \eqref{e:F_ell}). Since $x \in \text{dom}(\Phi_{k}^{-1}\circ\Phi_{j})$, there exists $y \in 8B_k$ such that $\Phi_j(x) = \Phi_k(y).$ By \eqref{e:dist-sim} we have 
	\begin{align}
		d(x_j,x_k) &\leq d(x_j,\away_j(x)) + d(\away_{j}(x),\away_{k}(y))  + d(\away_{k}(y),x_k) \\
		&\leq d(\away_{j}(x),\away_{k}(y))  + 17r_{\ell+1}, 
	\end{align}
	so we only need to show $d(\away_{j}(x),\away_{k}(y)) \leq r_{\ell+1}.$ To see that this, we apply \eqref{e:almost-id}, Lemma \ref{l:K-alpha}, Lemma \ref{l:whereinX'} and \eqref{e:psi} to get 
	\begin{align}
		d(\away_j(x),\away_k(y)) &\leq d(\away_{i(j)} \circ \toward_{i(j)} \circ \away_j(x) , \away_{i(k)} \circ \toward_{i(k)} \circ \away_k(x)) + C\ve r_\ell \\
		&\leq d_X(\away_{i(j)}  \circ K_{i(j),j}(x) , \away_{i(k)} \circ K_{i(k),k}(y) ) + C\ve r_\ell \\
		&= d_X(\away_{i(j)} \circ \varphi_{i(j)} \circ \Phi_j(x)  ,  \away_{i(k)} \circ \varphi_{i(k)} \circ \Phi_k(y))  + C\ve r_\ell \lesssim \ve r_\ell. 
	\end{align}
	Taking $\ve$ small enough we get the desired estimate. We move onto proving \eqref{e:almost-transition}. Using the definition in \eqref{e:psi} and applying \eqref{r:transition}, we have 
	\[\Phi_{k}^{-1} (\Phi_{j}(x)) = K_{k,i(k)} \circ \tilde{I}_{i(k),i(j)} \circ K_{i(j),j}(x).\]
	Then, by combination of \eqref{e:almost-id}, Lemma \ref{l:T-alpha}, Lemma \ref{l:K-alpha} and Proposition \ref{p:coherent}, the left-hand side of  \eqref{e:almost-transition} is at most
	\begin{align}
		\begin{split}\label{e:Phi-I}
		 &\| K_{k,i(k)} \circ \tilde{I}_{i(k),i(j)} \circ K_{i(j),j}(x) - \tilde{I}_{k,j}(x) \|_{k} \\
		&\hspace{1em}\leq \| \toward_{k} \circ \away_{i(k)} \circ \toward_{i(k)} \circ \away_{i(j)} \circ \toward_{i(j)} \circ \away_{j}(x) - \tilde{I}_{k,j}(x) \|_{k} + C\ve r_\ell\\
		&\hspace{1em}\leq \| \toward_{k} \circ \away_{j}(x) - \tilde{I}_{k,j}(x) \|_{k} + C\ve r_{\ell+1} \lesssim \ve r_\ell \leq 18r_\ell.
		\end{split}
	\end{align}
\end{proof}

For the following lemma, recall the definition of $h_j$ from \eqref{e:h}. 

\begin{lem}\label{l:psi-almost}
	Let $j,k \in J_{\ell+1}$, $1 \leq m \leq {N_0}$ and suppose $C_{m-1}^j \cap C_{m-1}^k \neq \emptyset$. Then, 
	\begin{align}\label{e:psi-almost0}
		h_{j}(C_{m-2}^{j}\cap C_{m-2}^{k}) \subseteq h_{k}(C_0^{k}).
	\end{align}
	In particular, the map $\Phi_{k}^{-1} \circ h_{k}^{-1} \circ h_{j} \circ \Phi_{k}$ is well-defined on $\Phi^{-1}_{k}(C_{m-2}^{j} \cap C_{m-2}^{k})$. Furthermore, we have 
	\begin{align}\label{e:psi-almost}
		\| \Phi_{k}^{-1} \circ h_{k}^{-1} \circ h_{j} \circ \Phi_{k} - \id \|_{C^1(\Phi^{-1}_{k}(C_{m-2}^{j} \cap C_{m-2}^{k})),r_\ell} \lesssim \ve. 
	\end{align}
\end{lem}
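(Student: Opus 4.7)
The argument splits naturally into the two claims of the lemma.

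\emph{The inclusion \eqref{e:psi-almost0}.} Given $x \in C^j_{m-2} \cap C^k_{m-2}$, write $x = \Phi_{j,\ell+1}(y) = \Phi_{k,\ell+1}(z)$ with $y \in \lambda_{m-2} B_{j,\ell+1}$ and $z \in \lambda_{m-2} B_{k,\ell+1}$. Since $h_{j,\ell+1}(x) = \varphi_{j,\ell+1}^{-1}(y)$, the containment $h_{j,\ell+1}(x) \in 8U^{k,\ell+1} = h_{k,\ell+1}(C^k_0)$ is equivalent, via the construction of $W_{\ell+1}$ together with \eqref{r:transition} and \eqref{e:defn-I}, to $y \in \Omega_{k,j,\ell+1}$, i.e.\ $\tilde{I}_{k,j,\ell+1}(y) \in 8B_{k,\ell+1}$. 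Lemma \ref{l:almost-transition} applied to $y$ gives $\|\tilde{I}_{k,j,\ell+1}(y) - z\|_{k,\ell+1} \lesssim \ve r_\ell$; combined with $\|z\|_{k,\ell+1} \leq \lambda_{m-2} r_{\ell+1}$ and $r_\ell = 10 r_{\ell+1}$, this forces $\|\tilde{I}_{k,j,\ell+1}(y)\|_{k,\ell+1} \leq 8 r_{\ell+1}$ once $\ve$ is chosen small enough (the case $m=1$ being vacuous).

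\emph{Reduction of \eqref{e:psi-almost} to an affine composition.} By the previous paragraph, on $\Phi_{k,\ell+1}^{-1}(C^j_{m-2} \cap C^k_{m-2})$ we may replace $I_{\cdot,\cdot}$ by $\tilde{I}_{\cdot,\cdot}$, and using \eqref{e:psi}, \eqref{r:transition} and \eqref{e:defn-I} the map $\Phi_{k,\ell+1}^{-1} \circ h_{k,\ell+1}^{-1} \circ h_{j,\ell+1} \circ \Phi_{k,\ell+1}$ agrees on that set with the globally defined smooth diffeomorphism
\[
F \coloneqq \tilde{I}_{k,j,\ell+1} \circ K_{j,i(j)} \circ \tilde{I}_{i(j),i(k),\ell} \circ K_{i(k),k} \colon \R^n_k \to \R^n_k.
\]
Let $G$ be the analogous composition obtained by replacing each $\tilde{I}_{\cdot,\cdot}$ with its affine approximation $T_{\cdot,\cdot}$ from Proposition \ref{p:coherent}. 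Every factor of $F$ is $(1+C\ve)$-bi-Lipschitz and is $C^2$-close at scale $r_\ell$ to its affine counterpart (by Proposition \ref{p:coherent} for the $\tilde{I}$'s, and tautologically for the $K$'s), so Lemma \ref{l:composition} yields $\|F-G\|_{C^2(\R^n_k),r_\ell} \lesssim \ve$. It therefore remains to prove $\|G - \id\|_{C^1(\R^n_k),r_\ell} \lesssim \ve$.

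\emph{Closeness of the affine map $G$ to the identity.} Since $G$ is affine, Lemma \ref{l:linear-C^0} reduces matters to a pointwise bound $\|G(y) - y\|_{k,\ell+1} \lesssim \ve r_\ell$ on some ball of definite radius. Using the $C^0$-part of Proposition \ref{p:coherent} together with Lemma \ref{l:K-alpha}, each of the four factors of $G$ is $O(\ve r_\ell)$-close to the corresponding composition $\beta \circ \alpha$ drawn from $X$. The hypothesis $C^j_{m-1} \cap C^k_{m-1} \neq \emptyset$ ensures $(j,k) \in \calF_{\ell+1}$ so that $d(x_{j,\ell+1}, x_{k,\ell+1}) \lesssim r_{\ell+1}$; combined with \eqref{e:i(j)} and \eqref{e:dist-sim}, this guarantees $d(x_{i(j),\ell}, x_{i(k),\ell}) \lesssim r_\ell$, so every intermediate point produced along the chain lies in the ball where the next GHA relation is valid. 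Telescoping the string
\[
\beta_{k,\ell+1} \circ \alpha_{j,\ell+1} \circ \beta_{j,\ell+1} \circ \alpha_{i(j),\ell} \circ \beta_{i(j),\ell} \circ \alpha_{i(k),\ell} \circ \beta_{i(k),\ell} \circ \alpha_{k,\ell+1}
\]
by successive applications of $\alpha \circ \beta \approx \id$ and $\beta \circ \alpha \approx \id$ from \eqref{e:almost-id} collapses the full composition to $\beta_{k,\ell+1} \circ \alpha_{k,\ell+1} \approx \id$ with cumulative error $\lesssim \ve r_\ell$. Lemma \ref{l:linear-C^0} then upgrades this to $\|G - \id\|_{C^1(\R^n_k),r_\ell} \lesssim \ve$, and the triangle inequality finishes the proof.

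\emph{Principal obstacle.} The only real difficulty is the ball-by-ball bookkeeping in this $\alpha$-$\beta$ telescoping: at each of the seven intermediate points one must check that the point lies inside the ball (like $45B_{i(k),\ell}$, $\beta_{i(j),\ell}(99B^{j,\ell+1})$, etc.) on which the corresponding approximation in Lemma \ref{l:K-alpha} or Proposition \ref{p:coherent} applies. The quantitative closeness of the index data $j,k,i(j),i(k)$ supplied by $C^j_{m-1} \cap C^k_{m-1} \neq \emptyset$ is exactly what makes these inclusions hold with uniform room to spare.
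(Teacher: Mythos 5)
Your proposal is correct and takes essentially the same route as the paper: the inclusion \eqref{e:psi-almost0} via Lemma \ref{l:almost-transition} to place $\tilde{I}_{k,j}(y)$ in $8B_k$, the unravelling of $\Phi_k^{-1}\circ h_k^{-1}\circ h_j\circ \Phi_k$ into $I_{k,j}\circ K_{j,i(j)}\circ I_{i(j),i(k)}\circ K_{i(k),k}$, comparison with the affine composition via Lemma \ref{l:composition} and Proposition \ref{p:coherent}, and the upgrade from a pointwise bound to $C^1$ via Lemma \ref{l:linear-C^0}. The only differences are cosmetic: you re-derive inline the $\alpha$--$\beta$ telescoping that the paper simply cites as the estimate \eqref{e:Phi-I} from the proof of Lemma \ref{l:almost-transition}, and your final bound for the affine map $G$ should be stated on $U$ (or a ball of radius $\sim r_\ell$ containing it) rather than on all of $\R^n_k$, since the $C^0$ part of an affine perturbation of the identity is not globally small — exactly the restriction the paper uses.
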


\begin{proof}
	We start with \eqref{e:psi-almost0}. Let $x \in C_{m-2}^{j} \cap C_{m-2}^{k}$ and let $y = \Phi^{-1}_{j}(x).$ By definition $y \in \text{dom}(\Phi_k^{-1} \circ \Phi_j)$ and so $\tilde{I}_{k,j}$ is well-defined by Lemma \ref{l:almost-transition}. Recall the definition of $\Omega_{k,j}$ and $I_{k,j}$ from \eqref{e:defn-I}. Since $\Phi_k^{-1} \circ \Phi_j(y) \in \lambda_{m-2}B_k$, \eqref{e:almost-transition} implies $\tilde{I}_{k,j}(y) \in 8B_{k}.$ In particular $y\in \Omega_{k,j}$  and $I_{k,j}(y) = \tilde{I}_{k,j}(y).$ Now let $w = I_{k,j}(y) \in 8B_k$ and $v = \Phi_j(w) \in C^k_0.$ Then, using \eqref{r:transition} with \eqref{e:h} and the above definitions, we get 
	\begin{align}
		h_j(x) = \varphi^{-1}_j \circ \Phi_j^{-1}(x) =  \varphi_k^{-1}(I_{k,j}(y)) =  \varphi_k^{-1}(w) =	h_k(v) \in h_k(C_0^k).
	\end{align} 
	
	Let us prove \eqref{e:psi-almost}. Let $U \coloneqq \Phi^{-1}_{k}(C_{m-2}^{j} \cap C_{m-2}^{k})$. Unravelling the definition \eqref{e:psi} and \eqref{e:h} while making use of \eqref{r:transition} we have
	\begin{align}
		\Phi_{k}^{-1} \circ h_{k}^{-1} \circ h_{j} \circ \Phi_{k}|_U  &= I_{k,j}  \circ K_{j,i(j)} \circ I_{i(j),i(k)} \circ K_{i(k),k}|_U.
	\end{align}
	Using this with Lemma \ref{l:composition} and \eqref{e:prop'}, the left-hand side of \eqref{e:psi-almost} is at most   
	\begin{align}
		&\| \Phi_{k}^{-1} \circ h_{k}^{-1} \circ h_{j} \circ \Phi_{k} - T_{k,j}  \circ K_{j,i(j)} \circ T_{i(j),i(k)} \circ K_{i(k),k}\|_{C^1(U),r_\ell}  \\
		&\hspace{2em} +  \| T_{k,j}  \circ K_{j,i(j)} \circ T_{i(j),i(k)} \circ K_{i(k),k} - \text{Id} \|_{C^1(U),r_\ell} \\
		&\hspace{1em}\leq C\ve + \| T_{k,j}  \circ K_{j,i(j)} \circ T_{i(j),i(k)} \circ K_{i(k),k} - \text{Id} \|_{C^1(U),r_\ell}.
	\end{align}
	Applying \eqref{e:Phi-I} with Lemma \ref{l:composition} and \eqref{e:prop'}, we have $\| T_{k,j}  \circ K_{j,i(j)} \circ T_{i(j),i(k)} \circ K_{i(k),k} - x \|_{k} \lesssim \ve r_\ell$ for all $x \in U.$ Then, after applying Lemma \ref{l:linear-C^0}, we obtain $\| T_{k,j}  \circ K_{j,i(j)} \circ T_{i(j),i(k)} \circ K_{i(k),k} - \text{Id} \|_{C^1(U),r_\ell} \lesssim \ve.$ This finishes the proof of the lemma. 
\end{proof}

\begin{lem}\label{l:partial-h}
	There exists a sequence of maps $g_m : \bigcup_{k=1}^m \bigcup_{j \in J_{\ell+1}^k} C_m^j \to W_{\ell+1},$ $1 \leq m \leq N_0,$ such that the following holds: 
	\begin{enumerate}
		\item If $1 \leq m < m' \leq {N_0}$ and $i \in J_{\ell+1}^{m}, \ j \in J_{\ell+1}^{m'}$ are such that $C^i_{m'-1} \cap C^j_{m'-1} \neq \emptyset,$ then 
		\begin{align}
			g_{m}(C^i_{m'-1} \cap C^j_{m'-1}) \subseteq h_j(C_0^j) = B^{j,\ell+1}.
		\end{align}
		\item  If $1 \leq m < m' \leq {N_0}$ and $i \in J_{\ell+1}^{m}, \ j \in J_{\ell+1}^{m'}$ are such that $C^i_{m'-1} \cap C^j_{m'-1} \neq \emptyset,$ then 
		\begin{align}
			\| \Phi_j^{-1} \circ  h^{-1}_j \circ g_m \circ \Phi_j - \emph{Id} \|_{C^1(\Phi_j^{-1}(C^i_{m'-1} \cap C^j_{m'-1})),r_{j,k+1}} \lesssim_m \ve. 
		\end{align}
		\item If $1 \leq m \leq N_0$ and $j \in J^m_{\ell+1}$ then exists a diffeomorphism $\hat{H}_j \colon \R^n_j \to \R^n_j$ such that $\|\hat{H}_j - \emph{Id}\|_{C^1(\R^n),r_\ell} \lesssim \ve$ and 
		\[ g_{m'}(x) =  \varphi_j^{-1} \circ \hat{H}_j \circ \Phi_j^{-1}(x)\]
		for all $m < m' \leq N_0$ and $x \in C^j_{m'}.$
	\end{enumerate}
\end{lem}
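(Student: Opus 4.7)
The plan is to prove the lemma by induction on $m \in \{1, \ldots, N_0\}$. For the base case $m = 1$, Lemma \ref{l:C-U} ensures that the sets $\{C_0^{j} : j \in J^1_{\ell+1}\}$ are pairwise disjoint, so I set $g_1(x) := h_j(x)$ whenever $x \in C^{j}_1$, and declare $\hat{H}_j := \mathrm{Id}$ for each $j \in J^1_{\ell+1}$. Condition (3) is then trivial, and conditions (1), (2) for pairs $(i, j)$ with $i \in J^1$ and $j \in J^{m'}_{\ell+1}$ with $m' > 1$ follow directly from Lemma \ref{l:psi-almost}, since $g_1 = h_i$ on $C^i_{m'-1}$.

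For the inductive step, suppose $g_m$ has been constructed together with diffeomorphisms $\{\hat{H}_i\}_{i \in J^k, k \leq m}$ satisfying (1)--(3). On each old patch $C^i_{m+1}$ ($i \in J^k$, $k \leq m$) define $g_{m+1}(x) := \varphi_i^{-1} \circ \hat{H}_i \circ \Phi_i^{-1}(x)$, which equals $g_m(x)$ by (3), preserving the old $\hat{H}_i$. For each new index $j \in J^{m+1}_{\ell+1}$, form the comparison map $F_j := \Phi_j^{-1} \circ h_j^{-1} \circ g_m \circ \Phi_j$ on its natural domain $V_j \subseteq 8B_{j,\ell+1}$. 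On parts of $V_j$ lying over $C^i_m$ with $i \in J^k$ and $k < m$, hypothesis (3) gives $g_m = \varphi_i^{-1} \circ \hat{H}_i \circ \Phi_i^{-1}$, so Lemma \ref{l:psi-almost} together with $\|\hat{H}_i - \mathrm{Id}\|_{C^1, r_\ell} \lesssim \ve$ yields $\|F_j - \mathrm{Id}\|_{C^1, r_\ell} \lesssim_m \ve$ there; on parts over $C^i_m$ with $i \in J^m$, the same estimate follows directly from hypothesis (2). Using Lemma \ref{l:inside} to find open sets $U_1 \subseteq U_2 \subseteq V_j$ satisfying $\mathrm{dist}(U_1, U_2^c) \gtrsim r_\ell/N_0$ and $U_1 \supseteq \Phi_j^{-1}(C^j_{m+1}) \cap V_j$, I apply Lemma \ref{l:modification} to obtain a diffeomorphism $\hat{H}_j : \R^n_j \to \R^n_j$ with $\hat{H}_j = F_j$ on $U_1$, $\hat{H}_j = \mathrm{Id}$ outside $U_2$, and $\|\hat{H}_j - \mathrm{Id}\|_{C^1, r_\ell} \lesssim \ve$. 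Setting $g_{m+1}(x) := \varphi_j^{-1} \circ \hat{H}_j \circ \Phi_j^{-1}(x)$ on $C^j_{m+1}$, the identity $\hat{H}_j = F_j$ on $U_1$ forces $g_{m+1}$ to agree with the values already assigned on overlaps with old patches, so $g_{m+1}$ is globally well-defined and smooth.

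Conditions (1) and (2) for $g_{m+1}$ then involve pairs $(i, j)$ with $i \in J^{m+1}$ (the newest family) and $j \in J^{m'}_{\ell+1}$ with $m' > m+1$; on the relevant overlaps the map $g_{m+1}$ equals $\varphi_i^{-1} \circ \hat{H}_i \circ \Phi_i^{-1}$ with $\|\hat{H}_i - \mathrm{Id}\|_{C^1, r_\ell} \lesssim \ve$, so $\Phi_j^{-1} \circ h_j^{-1} \circ g_{m+1} \circ \Phi_j$ differs from $\Phi_j^{-1} \circ h_j^{-1} \circ h_i \circ \Phi_j$ by a $C^1$-perturbation of size $\lesssim \ve$, and Lemma \ref{l:psi-almost} concludes. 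Condition (3) for $j \in J^{m+1}$ is immediate from the definition. The main obstacle is the careful tracking of the implicit constants across the $N_0 = N_0(n)$ iterations: each application of Lemma \ref{l:modification} and Lemma \ref{l:composition} can inflate the bound in (2) by a bounded factor, and one must ensure the accumulated constant in the statement of (2) remains controlled by $\ve$ (rather than $\ve^{1/N_0}$ or similar), which is achieved because only finitely many steps occur and the leading bound $\|\hat{H}_j - \mathrm{Id}\|_{C^1, r_\ell} \lesssim \ve$ is preserved verbatim at each step. The geometric bookkeeping of the sets $V_j$, whose connected components are arranged over the overlaps $C^i_m \cap C^j_m$ from various earlier families, is the remaining technicality handled by repeated use of Lemma \ref{l:inside}.
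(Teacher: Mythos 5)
Your proposal follows essentially the same scheme as the paper's proof: induction on the family index, with $g_1=h_j$ and $\hat H_j=\mathrm{Id}$ at the base, and at each step the comparison map $\Phi_j^{-1}\circ h_j^{-1}\circ g_m\circ\Phi_j$ controlled by the inductive hypotheses, separated sets supplied by Lemma \ref{l:inside}, Lemma \ref{l:modification} producing $\hat H_j$, and Lemma \ref{l:psi-almost} together with the composition lemmas giving (1)--(2), with constants harmless since $N_0=N_0(n)$. The only cosmetic difference is that you define $g_{m+1}$ on old patches through the formula in (3) (equivalently, $g_{m+1}=g_m$ there) and split the $C^1$ estimate by family index, which matches the paper's argument in substance.
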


\begin{proof}
	We construct the maps by induction. First, define $g_1 \colon \bigcup_{j \in J^1} C_1^j \to W_{\ell+1}$ by setting 
	\[g_1(x) = h_j(x) \mbox{ whenver } x \in C_1^j.\] 
	This is well-defined since $C^i_1 \cap C^j_1 =\emptyset$ for all $i,j \in J^1$ such that $i \neq j$ by Lemma \ref{l:C-U}. Conditions (1) and (2) follow immediately from Lemma \ref{l:psi-almost}. Condition (3) follows if we set $\hat{H}_j = \text{Id}$ for each $j \in J^1.$ This completes the base case. 
	
	Assume now that we have defined the maps up to some integer $1 \leq m \leq N_0-1$ and let us describe how to construct $g_{m+1}.$ For $j \in J^{m+1}$, set 
	\begin{align}
		H_j &= \Phi_j^{-1} \circ h_j^{-1} \circ g_m \circ \Phi_j, \label{e:H_j} \\
		U_{1,j} &= \Phi_j^{-1}\left( \left( \bigcup_{k=1}^m \bigcup_{i \in J^k} C^i_{m+1} \right)\cap C^j_{m+1} \right); \label{d:U1} \\
		U_{2,j} &= \Phi_j^{-1}\left( \left( \bigcup_{k=1}^m \bigcup_{i \in J^k} C^i_{m} \right)\cap C^j_{m} \right). \label{d:U2}
	\end{align}
	We want to apply Lemma \ref{l:modification} with the map $H_j$ and the sets $U_{1,j}$ and $U_{2,j}.$ Clearly, $U_{1,j} \subseteq U_{2,j}.$ Combining conditions (1) and (2) for the map $g_m$ implies $H_j$ is well-defined on $U_{2,j}$ and satisfies 
	\begin{align}\label{e:H-lemma}
		\|H_j - \text{Id}\|_{C^1(U_{2,j}),r_{\ell}} \lesssim \ve.
	\end{align}
	It only remains to check 
	\begin{align}\label{e:distU}
		\dist(U_{1,j},U_{2,j}^c) \gtrsim \frac{r_\ell}{N_0}.  
	\end{align}
	To see that \eqref{e:distU} holds, let $x \in U_{1,j}$ and $y \in U_{2,j}^c.$ By definition $x \in \lambda_{m+1}B_{j,\ell+1}.$ If $y \not\in \lambda_m B_{j,\ell+1}$ then immediately we get $\|x-y\|_j \geq r_\ell/N_0.$ Suppose instead $y \in  \lambda_m B_{j,\ell+1}$ so that $\Phi_j(y) \in C^j_m.$  By \eqref{d:U1}, since $x \in U_{1,j}$ there exists $i \in \bigcup_{k=1}^m J^k$ such that $\Phi_j(x) \in C^i_{m+1} \cap C_{m+1}^j$ i.e. $x \in \Phi_j(C_{m+1}^i \cap C_{m+1}^j)$. By \eqref{d:U2}, since $y \in U_{2,j}^c$ and $\Phi_j(y) \in C^j_m$ it must be that $\Phi_j(y) \not\in C_m^i$. Hence, $y \not\in \Phi_j(C_m^i \cap C_m^j).$ Lemma \ref{l:inside} now implies $\|y-x\|_j > r_\ell/(4N_0)$. This completes the proof of \eqref{e:distU}. 
	
	Now, let $\hat{H}_j : \R^n \to \R^n$ be the map obtained from Lemma \ref{l:modification} by modifying $H_j.$ Let us recall here that by \eqref{e:H4} and \eqref{e:H-lemma} we have 
	\begin{align}\label{e:hat-H}
		\|\hat{H}_i - \text{Id}\|_{C^1(\R^n_i),r_{\ell+1}} \lesssim \ve.
	\end{align}
	Now, define
	\begin{align}\label{e:tilde-h}
		\tilde{h}_j = h_j \circ \Phi_j \circ \hat{H}_j \circ \Phi_j^{-1} : C_0^j \to W_{\ell+1}
	\end{align}
	and let
	\begin{align}\label{e:g_m}
		g_{m+1} = 	
		\begin{cases}
			g_m & \mbox{ on } \bigcup_{\ell=1}^m \bigcup_{i \in J^\ell} C_{m+1}^i \\
			\tilde{h}_j & \mbox { on } C_{m+1}^j  \mbox{ for each } j \in J^{m+1}.
		\end{cases}
	\end{align}
	Using \eqref{e:H1} with \eqref{e:H_j}, if $z \in \Phi_j(U_{1,j})$ then $h_j \circ \Phi_j \circ \hat{H}_j \circ \Phi_j^{-1}(z) = g_m(z)$ so that $g_{m+1}$ is well-defined.  Now that we have defined $g_{m+1},$ we are left to check conditions (1) -- (3). \\
	
	\noindent\textbf{Condition (1):} Suppose $m +1 < m' \leq N_0$ and $i \in J^{m+1}, j \in J^{m'}$ are such that $C^i_{m'-1} \cap C^j_{m'-1} \neq \emptyset$ and fix some $x \in C^i_{m'-1} \cap C^j_{m'-1}$. Since $m+1 < m'$ we have $m+2 \leq m'$ so that $m +1 \geq m'-1.$ This gives $x \in C_{m+1}^i$  and \eqref{e:g_m} now implies
	\begin{align}\label{e:g=h}
		g_{m+1}(x) = \tilde{h}_i(x).
	\end{align}
	By Lemma \ref{l:psi-almost} and \eqref{e:g=h}, it suffices to show 
	\begin{align}\label{e:tildehin}
		\tilde{h}_i(x) \in h_i(C_{m'-2}^i \cap C_{m'-2}^j).
	\end{align}
	To see \eqref{e:tildehin}, we first observe by \eqref{e:C} and \eqref{e:H-lemma} that $\Phi^{-1}_i(x) \in \lambda_{m'-1}B_i$ and	
	\begin{align}\label{e:H-phi}
		\| \hat{H}_i(\Phi_i^{-1}(x)) - \Phi_i^{-1}(x) \|_i \lesssim \ve r_\ell.
	\end{align}
	Hence, $\hat{H}_i(\Phi^{-1}_i(x)) \in \lambda_{m'-2} B_i$, so that 
	\begin{align}\label{e:Ci}
		\Phi_i\circ \hat{H}_i \circ \Phi_i^{-1}(x) \in C_{m'-2}^i
	\end{align}
	by \eqref{e:C}. Since $x \in C_{m'-1}^i \cap C_{m'-1}^j,$ we have $\Phi_i^{-1}(x) \in \text{dom}(\Phi_j^{-1} \circ \Phi_i).$ Combining this with Lemma \ref{l:inside} and \eqref{e:H-phi} gives $\hat{H}_i\circ \Phi_i^{-1}(x) \in \text{dom}(\Phi_j^{-1} \circ \Phi_i)$. Lemma \ref{l:almost-transition} now implies $\Phi_j^{-1} \circ \Phi_i \circ \hat{H}_i \circ \Phi_i^{-1}(x) \in \lambda_{m'-2}B_j$ so that 
	\begin{align}\label{e:Cj}
		\Phi_i \circ \hat{H}_i \circ \Phi_i^{-1}(x) \in C_{m'-2}^j.
	\end{align}
	Equation \eqref{e:tildehin} follows from \eqref{e:Ci}, \eqref{e:Cj}, and the definition of $\tilde{h}_i$ in \eqref{e:tilde-h}. \\
	
	\noindent\textbf{Condition (2):} Suppose $m +1 < m' \leq N_0$ and $i \in J^{m+1}, j \in J^{m'}$ are such that $C^i_{m'-1} \cap C^j_{m'-1} \neq \emptyset$. By \eqref{e:Cj}, if $y \in \Phi_j^{-1}(C^i_{m'-1} \cap C^j_{m'-1})$, then $ \Phi_i \circ \hat{H}_i \circ \Phi^{-1}_i \circ \Phi_j(y) \in \text{dom}(\Phi_j^{-1}).$ Furthermore, by the same reason as for \eqref{e:g=h}, we have 
	\[g_{m+1}(\Phi_j(y)) = \tilde{h}_i(\Phi_j(y)).\]
	Combining this with \eqref{e:tilde-h} gives
	\begin{align}
		\begin{split}\label{e:long}
			\Phi_j^{-1} \circ h_j^{-1} \circ g_{m+1} \circ \Phi_j(y)&= \Phi_j^{-1} \circ h_j^{-1} \circ h_i \circ \Phi_i \circ \hat{H}_i \circ \Phi^{-1}_i \circ \Phi_j(y) \\
			&= \Phi_j^{-1} \circ h_j^{-1} \circ h_i \circ \Phi_j \circ \Phi^{-1}_j  \circ \Phi_i \circ \hat{H}_i \circ \Phi^{-1}_i \circ \Phi_j(y) \\
			&= f \circ g \circ \hat{H}_i \circ g^{-1}(y), 
		\end{split}
	\end{align}
	where, $f = \Phi_j^{-1} \circ h_j^{-1} \circ h_i \circ \Phi_j$ and $g =  \Phi^{-1}_j  \circ \Phi_i = K_{j,i(j)} \circ I_{i(j),i(i)} \circ K_{i(i),i}.$ By Lemma \ref{l:psi-almost}, we have
	\begin{align}\label{e:f-I}
		\|f - \text{Id}\|_{C^1(\Phi_j^{-1}(C^i_{m'-1} \cap C^j_{m'-1})),r_{\ell+1}} \lesssim \ve.
	\end{align}
	Clearly $g$ is invertible with $g,g^{-1} \in C^2(\R^n)$. Furthermore, using \eqref{e:prop'} along with the fact that $T_{i(j),i(i)}$, $K_{j,i(j)}$, $ K_{i(i),i}$ are affine and $(1+C\ve)$-bi-Lipschitz, we have 
	\begin{align}
		\sup_{x \in \R^n} \left(\|Dg(x)\| + \|D[g^{-1}](x)\| + r_{\ell+1}\|D^2g(x) \| + r_{\ell+1} \|D^2[g^{-1}](x)\| \right)\lesssim 1. 
	\end{align}
	Then, Lemma \ref{l:C^2-comp} and \eqref{e:H-lemma} give
	\begin{align}\label{e:comp-I}
		\| g \circ \tilde{H}_i \circ g^{-1} - \text{Id} \|_{C^1(\R^n_j),r_{\ell+1}} \lesssim \ve.
	\end{align}
	Condition (2) follows from \eqref{e:long}, \eqref{e:f-I}, \eqref{e:comp-I} and Lemma \ref{l:composition}. \\
	
	\noindent\textbf{Condition (3):} Suppose $1 \leq m +1 < m' \leq N_0$ and $j \in J^m.$ Iterating \eqref{e:g_m}, gives $g_{m'} |_{C^j_{m'}} = g_m|_{C^j_{m'}} = \tilde{h}_j|_{C^j_{m'}}.$ Condition (3) now follows from \eqref{e:h}, \eqref{e:tilde-h},  and \eqref{e:H-lemma}.

\end{proof}

\begin{proof}[Proof of Proposition \ref{p:h_ell}]
	Recall the definition of $E_\ell$ from \eqref{e:h_ell1}. We set
	\[ h_\ell \coloneqq g_{N_0} \colon E_\ell \to W_{\ell+1}. \]
	Both \eqref{e:h_ell2} and \eqref{e:h_ell3} are then immediate from Lemma \ref{l:partial-h} (3). For the first inclusion in \eqref{e:h_ell3'}, we notice that for each $j \in J_{\ell+1}$ and $A \in \N$ such that $1 \leq A \leq 6$, by Lemma \ref{l:topology} and \eqref{e:h_ell3}, that $AB_j \subseteq \hat{H}_j((A+1)B_j).$ Using this with \eqref{e:U}, \eqref{e:C} and \eqref{e:h_ell2}, gives 
	\begin{align}
		AU^j &= \varphi_j^{-1}(AB_j) \subseteq \varphi_j^{-1} \circ \hat{H}_j((A+1)B_j) \\
		&= \varphi_j^{-1} \circ \hat{H}_j \circ \Phi_j^{-1}\left(C^j_{(7-A)N_0}\right) = h_\ell\left(C^j_{(7-A)N_0}\right). 
	\end{align}
	The second inclusion in \eqref{e:h_ell3'} is immediate from \eqref{e:C}. The final inclusion in \eqref{e:h_ell3'} follows from the definition of $E_\ell$ and Lemma \ref{l:C-U}. Equation \eqref{e:h_ell3''} is immediate from \eqref{e:h_ell3'}. For \eqref{e:h_ell4}, suppose $x \in E_{\ell+1}.$ By definition there exists $1 \leq m \leq N_0$ and $j \in J^m_{\ell+2}$ such that $x \in C^{j,\ell+2}_{N_0}.$ If $i(j) \in J_{\ell+1}$ is as in \eqref{e:i(j)}, it follows from Lemma \ref{l:C-U} that $x \in C^{j,\ell+2}_{N_0} \subseteq 3U^{i(j)} \subseteq 6U^{i(j)}$ and we are done. 
\end{proof}

\bigskip

\subsection{Construction of the connected manifolds $M_\ell$}\label{s:M}
In this section we construct a sequence of connected Finsler manifolds $M_\ell$ by filling between the connected components of $W_\ell$ using charts from $W_1,\dots,W_{\ell-1}.$ We define the $M_\ell$ (as smooth manifolds) inductively, together with topologies $\tau_\ell$ and diffeomorphisms $p_\ell \colon W_{\ell}^* \to p_\ell(W_\ell^*) \subseteq M_{\ell}$, where $W_\ell^*$ is as in \eqref{e:h_ell3''}. Note that we have only defined $W_{\ell}^*$ for $\ell \geq 1$ so let us declare now that
\begin{align}\label{e:W_0^*}
	W_0^* \coloneqq W_0 = 8B_{j_0},
\end{align}
where the last equality follows from Remark \ref{l:W_0}. After constructing the $M_\ell$ as smooth manifolds, we equip them with a suitable Finsler metric (see after Corollary \ref{c:connect}).

We being by letting $\tau_0$ be the usual topology on $\R^n$, 
\begin{align}\label{e:M_0}
	M_0 = \R^n_{j_0} \mbox{ and } p_{0} \coloneqq \text{Id} : W_0^* \to M_0 
\end{align}
Equipping $M_0$ with $\tau_0,$ it is clear that $(M_0,\tau_0)$ is a smooth connected manifold and $p_{0}$ is a diffeomorphism. Let $\ell \geq 0$ and suppose we have constructed $M_\ell,$ $\tau_\ell$ and $p_\ell : W_{\ell}^* \to p_\ell(W_{\ell}^*).$ Let us define $M_{\ell+1}, \tau_{\ell+1}$ and $p_{\ell+1},$ starting with $M_{\ell+1}.$ Recall the definition of $E_\ell$ from \eqref{e:h_ell1}. Define open subsets $E'_\ell \subseteq E''_\ell \subseteq E_\ell$ by 
\begin{align}
	E_\ell' \coloneqq \bigcup_{j \in J_{\ell+1}} C_{N_0+2}^{j,\ell+1} \quad \mbox{ and } \quad E_\ell'' \coloneqq \bigcup_{j \in J_{\ell+1}} C_{N_0+1}^{j,\ell+1}
\end{align}
and let
\begin{align}\label{e:F_ell'}
	G_{\ell} = p_{\ell}(E_\ell) \mbox{ and } G_{\ell}' = p_\ell(E_{\ell}').
\end{align}
It follows from Proposition \ref{p:h_ell} that $E_\ell \subseteq W_\ell^*$ so that $G_\ell$ and $G_{\ell}'$ are well-defined. Consider a punctured version of $M_{\ell},$ 
\begin{align}\label{e:M_2^*}
	M_{\ell}^* = M_{\ell} \setminus \overline{G_{\ell}'}.
\end{align}
Similar to Section \ref{s:disconnected}, the manifold $M_{\ell+1}$ will be define by taking the quotient of $M_{\ell}^* \sqcup W_{\ell+1}^*$ with respect to some equivalence relation $\sim$, which we now define. Let 
\[\text{in}_1 \colon M_\ell^* \to M_\ell^*\sqcup W_{\ell+1}^* \quad \mbox{ and } \quad \text{in}_2 \colon W_{\ell+1}^* \to M_\ell^*\sqcup W_{\ell+1}^*\]
be the natural injections. Suppose $x,y \in M_\ell^* \sqcup W_{\ell+1}^*$. We declare $x \sim y$ if either  
\begin{enumerate}[label=(\roman*)]
	\item $x = y;$
	\item there are $x' \in M_\ell^* \cap G_\ell$ and $y' \in W_{\ell+1}^*$ such that $x = \text{in}_1(x'), \ y = \text{in}_2(y')$ and $y' = h_\ell \circ p_{\ell}^{-1}(x');$ 
	\item there are $x' \in W_{\ell+1}^*$ and $y' \in M_\ell^* \cap G_\ell$ such that $x = \text{in}_2(x'), \ y = \text{in}_1(y')$ and $x' = h_\ell \circ p_{\ell}^{-1}(y').$ 
\end{enumerate} 
It is easy to check that the relation $\sim$ defines an equivalence relation on $M_{\ell}^* \sqcup W_{\ell+1}^*$. Now let


\[ M_{\ell+1} = \left( M_{\ell}^* \sqcup W_{\ell+1}^*\right) / \sim  \]
and 
\[\Pi : M_\ell^*\sqcup W_{\ell+1}^* \to M_{\ell+1}\]
be the quotient map induced by the equivalence relation $\sim$. Then, define 
\[ \pi_{\ell+1} \coloneqq \Pi \circ \text{in}_1 \colon M_\ell^* \to M_{\ell+1}, \  p_{\ell+1} \coloneqq \Pi \circ \text{in}_2 \colon W_{\ell+1}^* \to M_{\ell+1}.\] 
Since $\text{in}_1$ (resp. $\text{in}_2$) is injective and $\Pi$ is injective when restricted to $\text{in}(M_{\ell}^*)$ (resp. $\text{in}_2(W_{\ell+1}^*)$), we have that $\pi_{\ell+1}$ (resp. $p_{\ell+1}$) is invertible. Let $\tau_{\ell+1}$ be the coarsest topology on $M_{\ell+1}$ such that $\pi_{\ell+1}$ and $p_{\ell+1}$ are continuous i.e. 
\begin{align}
	 \tau_{\ell+1} = \{ U \subseteq M_{\ell+1} \colon &\pi_{\ell+1}^{-1}(U) \mbox{ is an open subset of } M_\ell^*, \\
&p_{\ell+1}^{-1}(U) \mbox{ is an open subset of } W_{\ell+1}^*\}.
\end{align}
\begin{rem}\label{r:trans}
	It follows from construction that if $x \in M_{\ell}^*$ and $y \in W_{\ell+1}^*$ are such that $\pi_{\ell+1}(x) = p_{\ell+1}(y)$ then 
	\begin{align}
		p_{\ell+1}^{-1}(\pi_{\ell+1}(x)) = h_{\ell}(p_{\ell}^{-1}(x)) \quad \mbox{ and } \quad  \pi_{\ell+1}^{-1}(p_{\ell+1}(y)) = p_{\ell}(h_{\ell}^{-1}(y)). 
	\end{align}
\end{rem}

\begin{lem}\label{l:smooth}
	The space $(M_{\ell+1},\tau_{\ell+1})$ is a smooth manifold and the maps $\pi_{\ell+1}$ and $\ p_{\ell+1}$ are diffeomorphisms onto their image. 
\end{lem}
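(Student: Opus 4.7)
The plan is to verify in turn that $\pi_{\ell+1}$ and $p_{\ell+1}$ are open injective maps, that $M_{\ell+1}$ admits a compatible smooth atlas built from those of $M_\ell^*$ and $W_{\ell+1}^*$, and finally that $M_{\ell+1}$ is Hausdorff. Once these are in place, the conclusion that $\pi_{\ell+1}$ and $p_{\ell+1}$ are diffeomorphisms onto their images follows immediately.

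First, I would show openness of both maps. Injectivity is built into the construction, so it suffices to check that images of opens are open in $\tau_{\ell+1}$. For an open $U \subseteq M_\ell^*$, injectivity gives $\pi_{\ell+1}^{-1}(\pi_{\ell+1}(U)) = U$, which is open in $M_\ell^*$, while Remark 5.5 identifies
\[ p_{\ell+1}^{-1}(\pi_{\ell+1}(U)) \;=\; h_\ell(p_\ell^{-1}(U \cap G_\ell)). \]
This is open in $W_{\ell+1}^*$ because $p_\ell$ is a diffeomorphism onto its open image (inductive hypothesis) and $h_\ell \colon E_\ell \to W_{\ell+1}^*$ is a diffeomorphism by Proposition 5.4.1. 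The argument for $p_{\ell+1}$ is symmetric, using $\pi_{\ell+1}^{-1}(p_{\ell+1}(V)) = p_\ell(h_\ell^{-1}(V)) \cap M_\ell^*$ for $V$ open in $W_{\ell+1}^*$.

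Second, I would construct the smooth structure. Since both maps are open and injective, they are homeomorphisms onto open subsets of $M_{\ell+1}$. Pushing forward charts from $M_\ell^*$ via $\pi_{\ell+1}$ and from $W_{\ell+1}^*$ via $p_{\ell+1}$ produces an atlas whose domains cover $M_{\ell+1}$. Transitions between charts from a single family are smooth by the inductive smoothness of $M_\ell$ and the manifold structure on $W_{\ell+1}$. On the overlap $\pi_{\ell+1}(M_\ell^* \cap G_\ell) = p_{\ell+1}(h_\ell(p_\ell^{-1}(M_\ell^* \cap G_\ell)))$, a mixed transition reduces (after composing with the underlying chart maps) to $h_\ell \circ p_\ell^{-1}$ or its inverse, both of which are smooth by Proposition 5.4.1 and the inductive hypothesis on $p_\ell$.

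Third, I would establish the Hausdorff property, which is the step that I expect to require the most care. Given distinct classes $[x] \neq [y] \in M_{\ell+1}$, the cases with both representatives in $M_\ell^*$, or both in $W_{\ell+1}^*$, follow from the Hausdorff property of those spaces combined with the openness of $\pi_{\ell+1}$ and $p_{\ell+1}$. The remaining case has $x \in M_\ell^*$ and $y \in W_{\ell+1}^*$. If $x \notin G_\ell$, the equivalence class $[x]$ is a singleton, and $\pi_{\ell+1}(M_\ell^* \setminus G_\ell)$ and $p_{\ell+1}(W_{\ell+1}^*)$ are disjoint open neighborhoods of $[x]$ and $[y]$. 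If instead $x \in G_\ell$, then $[x] = [y']$ for $y' = h_\ell(p_\ell^{-1}(x)) \in W_{\ell+1}^*$ with $y' \neq y$, and separating $y'$ from $y$ in the Hausdorff space $W_{\ell+1}^*$ and applying $p_{\ell+1}$ (which is an open embedding) gives the required neighborhoods.

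The main obstacle is controlling the gluing precisely enough for the Hausdorff argument: here the buffer built into the construction, namely the strict nesting $E'_\ell \subsetneq E''_\ell \subsetneq E_\ell$ (and the fact that $M_\ell^* = M_\ell \setminus \overline{G'_\ell}$ was excised only on the smaller set), ensures that sequences in $M_\ell^*$ whose projections approach $\overline{G'_\ell}$ are automatically identified with sequences in $W_{\ell+1}^*$ that converge inside $W_{\ell+1}^*$ via the diffeomorphism $h_\ell \circ p_\ell^{-1}$, preventing any ``doubled point'' pathology. With the three properties established, $\pi_{\ell+1}$ and $p_{\ell+1}$ are continuous, open, injective, and smooth in both directions by construction of the atlas, hence diffeomorphisms onto their images.
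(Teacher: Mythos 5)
Your first two steps (openness/injectivity of $\pi_{\ell+1}$, $p_{\ell+1}$ via the identities from Remark \ref{r:trans}, and the pushed-forward atlas whose mixed transitions reduce to $h_\ell\circ p_\ell^{-1}$ and its inverse) are correct and are essentially the paper's argument. The gap is in the Hausdorff step, in exactly the case you flag as the only nontrivial one. For $x\in M_\ell^*\setminus G_\ell$ and $[y]\notin\pi_{\ell+1}(M_\ell^*)$ you propose the separating neighborhoods $\pi_{\ell+1}(M_\ell^*\setminus G_\ell)$ and $p_{\ell+1}(W_{\ell+1}^*)$; but the first set is not open in $\tau_{\ell+1}$. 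Indeed $\pi_{\ell+1}^{-1}\bigl(\pi_{\ell+1}(M_\ell^*\setminus G_\ell)\bigr)=M_\ell^*\setminus G_\ell$, and since $G_\ell=p_\ell(E_\ell)$ is open but not closed in $M_\ell^*$ (points of $\partial G_\ell$ generally survive the excision of $\overline{G_\ell'}$, because $E_\ell'\subsetneq E_\ell$), the complement $M_\ell^*\setminus G_\ell$ is closed but not open. So a point $x\in M_\ell^*\cap\partial G_\ell$ has no neighborhood inside your proposed set, and the case is not settled. Your closing paragraph correctly identifies that the buffer $E_\ell'\subseteq E_\ell''\subseteq E_\ell$ must be what saves the day, but it remains a heuristic about sequences rather than an argument.

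The paper closes this case as follows: if $[y]\notin\pi_{\ell+1}(M_\ell^*)$, then writing $y=h_\ell(z)$ with $z\in E_\ell$, one must have $z\in E_\ell''$ — otherwise $z\in E_\ell\setminus E_\ell''\subseteq E_\ell\setminus\overline{E_\ell'}$, so $p_\ell(z)\in M_\ell^*\cap G_\ell$ and $[y]=\pi_{\ell+1}(p_\ell(z))$, a contradiction. Hence $[y]$ lies in the open set $p_{\ell+1}(h_\ell(E_\ell''))$, whose closure satisfies $\overline{p_{\ell+1}(h_\ell(E_\ell''))}\subseteq p_{\ell+1}(h_\ell(\overline{E_\ell''}))\subseteq p_{\ell+1}(W_{\ell+1}^*)$, which by the case assumption does not contain $[x]$; so $[x]$ lies in the open complement of this closure, and the two sets separate $[x]$ from $[y]$. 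To repair your proof you should replace your pair of neighborhoods by these (or an equivalent use of the strict nesting $\lambda_{N_0+2}<\lambda_{N_0+1}<\lambda_{N_0}$); everything else in your proposal stands.
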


\begin{proof}
	We first check that $M_{\ell+1}$ is a topological manifold. Both $M_{\ell}^*$ and $W_{\ell+1}^*$ are smooth manifolds (being open subsets of $M_\ell$ and $W_{\ell+1},$ respectively). Let $\{(V_1^i,\sigma_1^i)\}_{i \in I_1}$ be a smooth atlas for $M_{\ell}^*$ and $\{(V_2^i,\sigma_2^i)\}_{i \in I_2}$ a smooth atlas for $W_{\ell+1}^*.$ For each $i_1 \in I_1$ and $i_2 \in I_2$ the sets $\pi_{\ell+1}(V_1^i)$, $p_{\ell+1}(V_2^{i_2})$ are open in $M_{\ell+1}$ and by construction 
	\[ M_{\ell+1} \subseteq \bigcup_{i \in I_1} \pi_{\ell+1}(V_1^i) \cup \bigcup_{i \in I_2} p_{\ell+1}(V_2^i).\]
	
	To see that $M_{\ell+1}$ is locally Euclidean it suffices to show 
	\[\pi_{\ell+1} : M_{\ell}^* \to \pi_{\ell+1}(M_\ell^*) \mbox{ and } p_{\ell+1} \colon W_{\ell+1}^* \to p_{\ell+1}(W_{\ell+1}^*) \mbox{ are homeomorphisms.} \]
	We will show all the details in the case of $\pi_{\ell+1},$ the arguments for $p_{\ell+1}$ are similar and we omit them. Let $U \subseteq M_{\ell}^*$ open. It suffices to show $\pi_{\ell+1}(U) \in \tau_{\ell+1}$. Clearly, $\pi_{\ell+1}^{-1}(\pi_{\ell+1}(U))  = U$ is open in $M_{\ell}^*$ by definition. By Remark \ref{r:trans} we have 
	\[ p_{\ell+1}^{-1}(\pi_{\ell+1}(U)) = h_{\ell}(p_{\ell}^{-1}(U)).\]
	By Proposition \ref{p:h_ell} and the induction hypothesis we have that $h_\ell \colon E_{\ell} \to W_{\ell+1}^*$ and $p_{\ell} : W_{\ell}^* \to p_\ell(W_\ell^*)$ are homeomorphism. Thus, $p_{\ell+1}^{-1}(\pi_{\ell+1}(U))$ is open. Since both $\pi_{\ell+1}^{-1}(\pi_{\ell+1}(U))$ and $p_{\ell+1}^{-1}(\pi_{\ell+1}(U))$ are open, we conclude by definition that $\pi_{\ell+1}(U) \in \tau_{\ell+1}.$ 
	
	Let us now check that $M_{\ell+1}$ is Hausdorff. Let $x,y \in M_{\ell+1}.$ If $x,y \in \pi_{\ell+1}(M_{\ell}^*)$ or $x,y \in p_{\ell+1}(W_{\ell+1}^*)$ then we can find disjoint open sets $V_x \ni x$, $V_y \ni y$ in $M_{\ell+1}$ by using the Hausdorff properties in $M_{\ell}^*$ and $W_{\ell+1}^*$, respectively. Suppose then that 
	\[x \in \pi_{\ell+1}(M_{\ell^*}) \setminus p_{\ell+1}(W_{\ell+1}^*) \mbox{ and } y \in p_{\ell+1}(W_{\ell+1}^*) \setminus \pi_{\ell+1}(M_{\ell}^*).\]
	We claim that 
	\begin{align}\label{e:Hausdorff}
		x \in M_{\ell+1} \setminus \overline{p_{\ell+1}(h_\ell(E_{\ell}''))} \mbox{ and } y \in p_{\ell+1}(h_\ell(E_{\ell}'')).
	\end{align}
	Observing that $E_\ell''$ is open in $W_{\ell}$ and $h_\ell, \ p_{\ell+1}$ are homeomorphism, the sets in \eqref{e:Hausdorff} are disjoint open sets in $M_{\ell+1}$. Thus, \eqref{e:Hausdorff} is sufficient to prove $M_{\ell+1}$ is Hausdorff. The first part of \eqref{e:Hausdorff} follow since 
	\[ \overline{p_{\ell+1}(h_\ell(E_{\ell}''))} = p_{\ell+1}(h_{\ell}(\overline{E_\ell''})) \subseteq p_{\ell+1}(h_\ell(E_\ell)) = p_{\ell+1}(W_{\ell+1}). \]
	To see that the second part holds, suppose towards a contradiction that $y = p_{\ell+1}(h_\ell(z))$ for some $z \in E_{\ell} \setminus E_{\ell}'' \subseteq E_\ell \setminus \overline{E_{\ell}'}.$ By \eqref{e:F_ell'} and \eqref{e:M_2^*}, we have $p_{\ell}(z) \in M_{\ell}^*.$ Notice also that 
	\begin{align}
		h_{\ell} \circ p_{\ell}^{-1}(p_\ell(z)) = h_{\ell}(z) = p_{\ell+1}^{-1}(y).
	\end{align}
	It follows from the definition of $\sim$ that 
	\[ y = \pi_{\ell+1}(p_{\ell}(z)) \in \pi_{\ell+1}(M_{\ell}^*),\]
	which is a contradiction. This concludes the proof that $M_{\ell+1}$ is a topological manifold. 
	
	We now show that $M_{\ell+1}$ is smooth by showing the atlas
	\begin{align}
		\calA = \{(\pi_{\ell+1}(V_1^i) , \sigma_1^i \circ \pi_{\ell+1}^{-1}) : i \in I_1\} \cup \{(p_{\ell+1}(V_2^i) , \sigma_2^i \circ p_{\ell+1}^{-1}) : i \in I_2\}
	\end{align}
	is a smooth. Consider charts $(B,\phi),(B',\phi') \in \calA$ with $B \cap B' \neq \emptyset.$ If $B = \pi_{\ell+1}(V_1^i)$ and $B' = \pi_{\ell+1}(V_1^{i'})$ for some $i,i' \in I_1$, the associated transition maps are smooth using by the smoothness of transition maps for $M_{\ell}^*$. Similarly, if $B = p_{\ell+1}(V_2^i)$ and $B' = p_{\ell+1}(V_2^{i'})$ for some $i,i' \in I_2,$ the associated transition maps are smooth by using the smooth of transition maps for $W_{\ell+1}^*.$ Suppose then that $B = \pi_{\ell+1}(V_1^i)$ and $B = p_{\ell+1}(V_2^{i'})$ for some $i \in I_1$ and $i' \in I_2.$ By assumption the maps $p_{\ell}, \sigma_1^i$ and $\sigma_2^{i'}$ are smooth. By Proposition \ref{p:h_ell} the same is true of $h_\ell.$ Thus, we have transition maps
	\begin{align}
		\sigma_2^{i'} \circ p_{\ell+1}^{-1} \circ \pi_{\ell+1} \circ \sigma_1^i  |_{\sigma_1^i(V_1^i)} = 	\sigma_2^{i'} \circ h_\ell \circ p_{\ell}^{-1} \circ \sigma_1^i  |_{\sigma_1(V_1^i)} 
	\end{align}
	and
	\begin{align}
		\sigma_1^i \circ \pi_{\ell+1}^{-1} \circ p_{\ell+1} \circ \sigma_2^{i'} |_{\sigma_2^{i'}(V_2^{i'})}  = \sigma_1^i \circ p_{\ell} \circ h_{\ell}^{-1} \circ \sigma_2^{i'} |_{\sigma_2^{i'}(V_2^{i'})}  
	\end{align}
	which are all smooth. It now follows that $\calA$ is a smooth atlas. 
\end{proof}

\begin{lem}\label{l:f-homeo}
	The map $f_\ell \colon M_{\ell} \to M_{\ell+1}$ defined by 
	\begin{align}\label{e:def-f}
		f_{\ell}(x) = 	
		\begin{cases}
			p_{\ell+1} \circ h_{\ell} \circ p_{\ell}^{-1}(x) & \mbox{ if } x \in G_{\ell} \\
			\pi_{\ell+1}(x) & \mbox{ if } x \in M_{\ell}^*
		\end{cases}
	\end{align}
	is a well-defined diffeomorphism and satisfies
	\begin{align}\label{e:f(G)}
		f(G_\ell) = p_{\ell+1}(W_{\ell+1}^*).
	\end{align} 
\end{lem}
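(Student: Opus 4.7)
The plan is to verify four things in turn: (a) the two clauses in the definition of $f_\ell$ agree on the overlap $G_\ell \cap M_\ell^*$, so $f_\ell$ is well-defined as a function; (b) $M_\ell = G_\ell \cup M_\ell^*$, so the function is defined on all of $M_\ell$; (c) the identity $f_\ell(G_\ell) = p_{\ell+1}(W_{\ell+1}^*)$ holds; (d) $f_\ell$ is a smooth bijection whose inverse is smooth.

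For (a), let $x \in G_\ell \cap M_\ell^*$ and set $z = p_\ell^{-1}(x) \in E_\ell$. Condition (ii) in the definition of $\sim$ on $M_\ell^* \sqcup W_{\ell+1}^*$ says exactly that $\mathrm{in}_1(x) \sim \mathrm{in}_2(h_\ell(z))$, so $\pi_{\ell+1}(x) = p_{\ell+1}(h_\ell(z)) = p_{\ell+1} \circ h_\ell \circ p_\ell^{-1}(x)$. For (b), recall $M_\ell^* = M_\ell \setminus \overline{G_\ell'}$, so it suffices to show $\overline{G_\ell'} \subseteq G_\ell$. Since $\lambda_{N_0+2} < \lambda_{N_0+1} < \lambda_{N_0}$, by construction $\overline{C_{N_0+2}^{j,\ell+1}} \subseteq C_{N_0+1}^{j,\ell+1} \subseteq C_{N_0}^{j,\ell+1}$ (this is the reason we took the $\lambda_m$ nested), hence $\overline{E_\ell'} \subseteq E_\ell'' \subseteq E_\ell$. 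Since $p_\ell$ is a homeomorphism from $W_\ell^*$ onto its image, this gives $\overline{G_\ell'} \subseteq G_\ell$. For (c), using \eqref{e:h_ell3''},
\begin{align*}
f_\ell(G_\ell) = p_{\ell+1}(h_\ell(p_\ell^{-1}(G_\ell))) = p_{\ell+1}(h_\ell(E_\ell)) = p_{\ell+1}(W_{\ell+1}^*).
\end{align*}

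For (d), smoothness is inherited: on the open set $G_\ell \subseteq M_\ell$, $f_\ell$ is the composition $p_{\ell+1} \circ h_\ell \circ p_\ell^{-1}$ of three diffeomorphisms (by Lemma \ref{l:smooth}, Proposition \ref{p:h_ell}, and the inductive construction of $p_\ell$), and on the open set $M_\ell^*$, $f_\ell$ is the diffeomorphism $\pi_{\ell+1}$ of Lemma \ref{l:smooth}. For surjectivity, an arbitrary point of $M_{\ell+1}$ is either $\pi_{\ell+1}(x)$ for some $x \in M_\ell^*$, in which case it equals $f_\ell(x)$, or $p_{\ell+1}(y)$ for some $y \in W_{\ell+1}^*$; in the latter case \eqref{e:h_ell3''} gives $y = h_\ell(z)$ for some $z \in E_\ell$, and then $f_\ell(p_\ell(z)) = p_{\ell+1}(h_\ell(z)) = p_{\ell+1}(y)$.

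The main obstacle is injectivity, where one has to rule out the non-trivial identifications made by $\sim$. Suppose $f_\ell(x) = f_\ell(y)$ for $x,y \in M_\ell$. The case $x,y \in M_\ell^*$ is handled by injectivity of $\pi_{\ell+1}$; the case $x,y \in G_\ell$ is handled by injectivity of each of $p_{\ell+1}$, $h_\ell$, $p_\ell^{-1}$. In the mixed case, $x \in G_\ell$, $y \in M_\ell^* \setminus G_\ell$, we have $p_{\ell+1}(h_\ell(p_\ell^{-1}(x))) = \pi_{\ell+1}(y)$, so by Remark \ref{r:trans} applied to $y \in M_\ell^*$ and $h_\ell(p_\ell^{-1}(x)) \in W_{\ell+1}^*$, we obtain $h_\ell(p_\ell^{-1}(x)) = h_\ell(p_\ell^{-1}(y))$; but this forces $y \in G_\ell$, contradicting $y \notin G_\ell$. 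Finally, the inverse is smooth since $f_\ell$ restricted to $G_\ell$ has smooth inverse $p_\ell \circ h_\ell^{-1} \circ p_{\ell+1}^{-1}$ defined on the open set $p_{\ell+1}(W_{\ell+1}^*) = f_\ell(G_\ell)$, and restricted to $M_\ell^*$ has smooth inverse $\pi_{\ell+1}^{-1}$ on the open set $\pi_{\ell+1}(M_\ell^*)$; these agree on the overlap by the same equivalence-relation argument used in (a). This completes the verification.
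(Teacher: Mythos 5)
Your proof is correct and follows essentially the same route as the paper: well-definedness via the equivalence relation, the image identity from \eqref{e:h_ell3''} and \eqref{e:F_ell'}, injectivity by reducing the mixed case to the fact (Remark \ref{r:trans}) that identifications in $M_{\ell+1}$ occur only along $G_\ell$, and surjectivity plus local diffeomorphism giving a global diffeomorphism. You additionally spell out details the paper leaves implicit (that $\overline{G_\ell'}\subseteq G_\ell$ so $M_\ell = G_\ell\cup M_\ell^*$, and smoothness of the inverse on the two overlapping charts), which is fine.
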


\begin{proof}
	It is immediate from the definition of $\sim$ that if $x \in M_{\ell}^* \cap G_{\ell}$ then $\text{in}_1(x) \sim \text{in}_2(h_\ell\circ p_\ell^{-1}(x)).$ Recalling the definition of $\Pi$, $\pi_{\ell+1}$ and $p_{\ell+1},$ this implies $\pi_{\ell+1}(x) = p_{\ell+1} \circ h_{\ell} \circ p_{\ell}^{-1}(x) $ so that $f_{\ell}$ is well-defined. It follows from \eqref{e:h_ell3''}, \eqref{e:F_ell'} and \eqref{e:def-f} that \[f_\ell(G_\ell) = p_{\ell+1}(W_{\ell+1}^*).\]
	It only remains to check that $f_\ell$ is a diffeomorphism. Since each of the maps $\pi_{\ell+1},p_{\ell+1},h_{\ell}$ and $p_{\ell}$ are diffeomorphisms on their images, $f_{\ell}$ is a local diffeomorphism. To prove $f_\ell$ is a global diffeomorphism we just need to show that it is bijective. To see that $f_\ell$ is injective, suppose $x,y \in M_{\ell}$ are distinct. If $x,y \in G_{\ell}$ or $x,y \in M_\ell^*$ then $f_\ell(x) \neq f_\ell(y)$ since $p_{\ell+1} \circ h_{\ell} \circ p_{\ell}^{-1}$ and $\pi_{\ell+1}$ are injective. Thus, it suffices to consider the case $x \in G_\ell \setminus M_{\ell}^* = \bar{G_\ell'}$ and $y\in M_\ell^* \setminus G_\ell.$ In this case, if $f_\ell(x) = f_\ell(y)$ then by Remark \ref{r:trans} we have 
	\begin{align}
		x = p_\ell \circ h_\ell^{-1} \circ h_\ell \circ p_\ell^{-1}(x) = \pi_{\ell+1}^{-1} \circ p_{\ell+1} \circ h_\ell \circ p_\ell^{-1}(x) = y, 
	\end{align}
	a contradiction. To see that $f_{\ell}$ is surjective, we have by \eqref{e:def-f} and \eqref{e:f(G)} that
	\[ f_\ell(M_\ell) \supseteq f_\ell(G_\ell) \cup f_\ell(M_\ell^*) = p_{\ell+1}(W_{\ell+1}^*) \cup \pi_{\ell+1}(M_\ell^*) = M_{\ell+1}.\]
\end{proof}

By assumption $M_\ell$ is connected and by Lemma \ref{l:f-homeo}, $M_{\ell+1}$ is homeomorphic to $M_{\ell}.$ In particular, we obtain the following which closes the inductive step. 

\begin{cor}\label{c:connect}
	The space $M_{\ell+1}$ is connected. 
\end{cor}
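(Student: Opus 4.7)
The plan is to argue by induction on $\ell$, using the diffeomorphism $f_\ell \colon M_\ell \to M_{\ell+1}$ constructed in Lemma \ref{l:f-homeo}. For the base case $\ell = 0$, we have $M_0 = \R^n_{j_0}$ by \eqref{e:M_0}, which is connected as a normed vector space. For the inductive step, assume $M_\ell$ is connected. By Lemma \ref{l:f-homeo}, $f_\ell$ is a diffeomorphism, hence in particular a homeomorphism between $M_\ell$ and $M_{\ell+1}$. Since connectedness is preserved under continuous surjections, $M_{\ell+1} = f_\ell(M_\ell)$ is connected, closing the induction.

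The proof is essentially a one-line observation, and there is no substantive obstacle: all the work has already been done in establishing that $f_\ell$ is a diffeomorphism (which required checking well-definedness on the overlap $G_\ell \cap M_\ell^*$ via the equivalence relation $\sim$, and verifying bijectivity using Remark \ref{r:trans} together with the fact that $h_\ell$ is a diffeomorphism by Proposition \ref{p:h_ell}). The only minor point worth flagging is that the inductive hypothesis is consumed here in a genuinely iterative way: the very construction of $M_{\ell+1}$ from $M_\ell^* \sqcup W_{\ell+1}^*$ presupposes that $M_\ell$ (and hence $M_\ell^*$, up to removing a closed set) has already been built as a connected smooth manifold, which is precisely what the induction delivers.
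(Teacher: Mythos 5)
Your argument is correct and is exactly the paper's proof: the inductive hypothesis that $M_\ell$ is connected combined with the homeomorphism $f_\ell$ from Lemma \ref{l:f-homeo} immediately gives connectedness of $M_{\ell+1}$, closing the induction that runs through the construction of the $M_\ell$. No differences worth noting.
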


From now on we will view $M_{\ell}$ as equipped with the above smooth structure. In particular, if $x \in M_{\ell}$, the tangent space $T_{x}M_{\ell}$ is well-defined. We equip $M_\ell$ with a Finsler metric by defining, for each $x \in M_\ell,$ a norm $\|\cdot\|_{M_\ell,x}$ on $T_xM_\ell$ as follows. If $\ell = 0$ and $x \in M_0$ we simply let 
\begin{align}\label{e:norm-0}
	\|\cdot\|_{M_0,x} = \|\cdot\|_{j_0}.
\end{align}
For $\ell \geq 1,$ let $\{\theta_1^\ell,\theta_2^\ell\}$ be a partition of unity subordinate to $\{p_{\ell}(W_{\ell}^*),\pi_{\ell}(M_{\ell-1}^*)\}.$ Then, for $x \in M_\ell$ and $v \in T_xM_\ell,$ define 
\begin{align}\label{d:F}
	\hspace{2.15em}\|v\|_{M_\ell,x} \coloneqq \theta_1^\ell(x) \|D[p_\ell^{-1}](x)\cdot v\|_{W_\ell,p_\ell^{-1}(x)} + \theta_2^\ell(x) \|D[\pi_\ell^{-1}](x) \cdot v\|_{M_{\ell-1},\pi_\ell^{-1}(x)}.
\end{align}
By Lemma \ref{l:smooth} both $\pi_{\ell}$ and $p_{\ell}$ are diffeomorphisms so the metric is well-defined. With this metric we have the following. 

\begin{lem}\label{l:p-bi-lip}
	Let $\ell \geq 0.$ For each $x \in W_\ell^*,$ the map $Dp_\ell(x) \colon T_x W_{\ell} \to T_{p_\ell(x)}M_\ell$ is $(1+C\ve)$-bi-Lipschitz. Similarly, for each $x \in M_{\ell-1}^*,$ the map $D\pi_\ell(x) \colon T_x M_{\ell-1} \to T_{\pi_\ell(x)} M_\ell$ is $(1+C\ve)$-bi-Lipschitz. 
\end{lem}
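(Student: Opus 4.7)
The plan is to proceed by induction on $\ell$, exploiting the recursive definition \eqref{d:F} of the Finsler norm on $M_\ell$ together with the compatibility identities from Remark \ref{r:trans}. The base case $\ell = 0$ is immediate: by \eqref{e:M_0}, $M_0 = \R^n_{j_0}$ and $p_0 = \mathrm{Id}$; moreover, $W_0 = 8B_{j_0}$ carries the single chart $\vp_{j_0} = \mathrm{Id}$, so $\|\cdot\|_{W_0,x}$ is just $\|\cdot\|_{j_0} = \|\cdot\|_{M_0, x}$, making $Dp_0$ an isometry (there is no $\pi_0$ to consider).

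For the inductive step, I will treat $Dp_\ell(x)$ in detail; the statement for $D\pi_\ell$ proceeds analogously. Fix $x \in W_\ell^*$ and $v \in T_x W_\ell$, and set $y = p_\ell(x)$, $w = Dp_\ell(x) \cdot v$. Unfolding \eqref{d:F} at $y$ gives
\[\|w\|_{M_\ell, y} = \theta_1^\ell(y) \|D[p_\ell^{-1}](y) \cdot w\|_{W_\ell, x} + \theta_2^\ell(y) \|D[\pi_\ell^{-1}](y) \cdot w\|_{M_{\ell-1}, \pi_\ell^{-1}(y)}.\]
The first term simplifies to $\theta_1^\ell(y) \|v\|_{W_\ell, x}$ by the chain rule applied to $p_\ell^{-1}\circ p_\ell = \mathrm{Id}$. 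For the second term, whenever $\theta_2^\ell(y) > 0$ we have $y \in \pi_\ell(M_{\ell-1}^*)$, so Remark \ref{r:trans} supplies the key identity $\pi_\ell^{-1} \circ p_\ell = p_{\ell-1} \circ h_{\ell-1}^{-1}$ on a neighborhood of $x$. Differentiating, $D[\pi_\ell^{-1}](y) \cdot w = Dp_{\ell-1}(z) \cdot D[h_{\ell-1}^{-1}](x) \cdot v$ with $z := h_{\ell-1}^{-1}(x)$. Proposition \ref{p:h_ell} tells us $Dh_{\ell-1}$ (and hence $D[h_{\ell-1}^{-1}]$) is $(1+C\ve)$-bi-Lipschitz, while the inductive hypothesis yields the same for $Dp_{\ell-1}(z)$. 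Composing these bounds sandwiches the second term between $(1+C\ve)^{\pm 1}\theta_2^\ell(y)\|v\|_{W_\ell, x}$, so the convex combination gives the desired $(1+C\ve)$-bi-Lipschitz estimate on $Dp_\ell(x)$.

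The companion statement for $D\pi_\ell(x)$ at $x \in M_{\ell-1}^*$ works by exactly the same mechanism: setting $y = \pi_\ell(x)$ and $w = D\pi_\ell(x) v$, the second term in \eqref{d:F} now collapses directly to $\theta_2^\ell(y)\|v\|_{M_{\ell-1}, x}$, while the first term is controlled via the dual identity $p_\ell^{-1}\circ \pi_\ell = h_{\ell-1}\circ p_{\ell-1}^{-1}$ from Remark \ref{r:trans}, combined with Proposition \ref{p:h_ell} and the inductive bi-Lipschitz bound for $D[p_{\ell-1}^{-1}]$. The only delicate point to monitor is the tracking of constants across the induction: each level composes one new $(1+C\ve)$-factor from the smooth transition $h_{\ell-1}$ with the inductive bound on $Dp_{\ell-1}$, so one must verify — taking $\ve$ sufficiently small — that the accumulated factor remains of the advertised form $(1+C\ve)$ with $C$ absolute, exactly as in the $(1+C\ve)$-estimate of Lemma \ref{l:Dpsi} used for the base level.
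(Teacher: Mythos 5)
Your proposal has a genuine gap, and it is exactly the point you flag at the end as ``delicate'': the constant tracking in your induction does not close. At each level your argument composes the new $(1+C\ve)$-factor from $D[h_{\ell-1}^{-1}]$ (Proposition \ref{p:h_ell}) with the \emph{inductive} bi-Lipschitz bound for $Dp_{\ell-1}$, so even in the best case the estimate you obtain for $Dp_\ell$ is $(1+C\ve)^{\,\ell+1}$-bi-Lipschitz, with one extra factor per level. Since the construction runs over all $\ell \geq 0$ (the scales $r_\ell = 10^{-\ell}$ go to zero), this accumulated factor is unbounded in $\ell$ for any fixed $\ve>0$, and ``taking $\ve$ sufficiently small'' cannot repair it. A uniform constant of the advertised form $(1+C\ve)$ with $C$ absolute is precisely what is needed later (e.g.\ in Lemma \ref{l:f-bilip1} and in the construction of $\phi_\infty$ and $\rho$), so the loss is not cosmetic.

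The paper's proof is not an induction on $\ell$ and avoids the accumulation entirely by a structural observation your argument misses: at the specific point where the previous level enters, namely $z = h_{\ell-1}^{-1}(x) \in E_{\ell-1}$, the map $Dp_{\ell-1}(z)$ is an \emph{exact isometry}, not merely $(1+C\ve)$-bi-Lipschitz. This is because $p_{\ell-1}(z) \notin \pi_{\ell-1}(M_{\ell-2}^*)$, so $\theta_2^{\ell-1}$ vanishes there and \eqref{d:F} collapses to the $W_{\ell-1}$-norm. Proving this membership claim is the real content of the lemma: one argues by contradiction that if $p_{\ell-1}(z) = \pi_{\ell-1}(y)$ for some $y \in M_{\ell-2}^*$, then by Remark \ref{r:trans} $h_{\ell-2}\circ p_{\ell-2}^{-1}(y) = z \in E_{\ell-1}$, hence by \eqref{e:h_ell1} and Lemma \ref{l:C-U} it lies in some $C^{j,\ell}_{N_0} \subseteq 3U^{i(j)}$, and then the first inclusion in \eqref{e:h_ell3'} forces $p_{\ell-2}^{-1}(y) \in C^{i(j),\ell-1}_{4N_0} \subseteq \overline{E'_{\ell-2}}$, i.e.\ $y \in \overline{G'_{\ell-2}}$, contradicting the definition \eqref{e:M_2^*} of $M_{\ell-2}^*$. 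With this in hand only the single factor from $Dh_{\ell-1}^{-1}$ appears, giving the uniform $(1+C\ve)$ bound; the $D\pi_\ell$ case is handled the same way. To salvage your write-up you would need to replace the inductive bound on $Dp_{\ell-1}$ by this exact-isometry claim and its geometric proof.
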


\begin{proof}
	Let $x \in W_\ell^*$ and $v \in T_x W_\ell.$ If $p_\ell(x) \not\in \pi_\ell(M_{\ell-1}^*)$ then $\theta_2^\ell(p_\ell(x)) = 0$ and $\theta_1^\ell(p_\ell(x)) = 1.$ Hence $\|Dp_\ell(x) \cdot v\|_{M_\ell,p_\ell(x)} = \|v\|_{W_\ell,x}$ and there is nothing to show. Thus, we may suppose $p_\ell(x) \in \pi_\ell(M_{\ell-1}^*).$ Let $y \in M_{\ell-1}^*$ such that $p_\ell(x) = \pi_\ell(y).$ Then, by \eqref{d:F}, 
	\begin{align}\label{e:p-bi-lip}
		\|Dp_\ell(x)\cdot v\|_{M_\ell,p_\ell(x)} &= \theta_1^\ell(p_\ell(x))\|v\|_{W_\ell,x} \\
		&\hspace{2em} + \theta_2^\ell(p_\ell(x))\|D[\pi_\ell^{-1} \circ p_\ell](x) \cdot v \|_{M_{\ell-1},y}. 
	\end{align}
	Let us focus on the second term. From Remark \ref{r:trans}  we have that $\pi_\ell^{-1} \circ p_\ell(x) = p_{\ell-1} \circ h_{\ell-1}^{-1}(x),$ hence, 
	\begin{align}\label{e:here}
		D[\pi_\ell^{-1} \circ p_\ell](x) = D[p_{\ell-1}\circ h_{\ell-1}^{-1}](x)  = Dp_{\ell-1}(h_{\ell-1}^{-1}(x)) \circ D[h_{\ell-1}^{-1}](x). 
	\end{align} 
	
	We claim that 
	\[p_{\ell-1} (h_{\ell-1}^{-1}(x)) \not \in \pi_{\ell-1}(M_{\ell-2}^*).\] 
	Indeed, suppose towards a contradiction that there exists $y \in M_{\ell-2}^*$ such that $p_{\ell-1}(h_{\ell-1}^{-1}(x)) = \pi_{\ell-1}(y).$ By Remark \ref{r:trans} and the definition of $W_{\ell}^*$ in \eqref{e:h_ell3''}, it follows that 
	\begin{align}
		h_{\ell-2} \circ p_{\ell-2}^{-1}(y) = p_{\ell-1}^{-1} \circ \pi_{\ell-1}(y) = h_{\ell-1}^{-1}(x) \in E_{\ell-1}.
	\end{align}
	In particular, by \eqref{e:h_ell1} and Lemma \ref{l:C-U}, there exists an index $j \in J_{\ell}$ such that $h_{\ell-2} \circ p_{\ell-2}^{-1}(y) \in C^{j,\ell}_{N_0} \subseteq 3U^{i(j)},$ with $i(j) \in J_{\ell-1}$ as in \eqref{e:i(j)}. Then, \eqref{e:h_ell1} and the first inclusion in \eqref{e:h_ell3'} imply 
	\[p_{\ell-2}^{-1}(y) \in C^{i(j),\ell-1}_{4N_0} \subseteq \overline{E_{\ell-2}'},\]
	which gives $y \in \overline{G_{\ell-2}'}$ by \eqref{e:F_ell'}. This contradicts the definition of $M_{\ell-2}^*$ in \eqref{e:M_2^*}, and proves the claim.
	
	The claim gives that $\theta_{2}^{\ell-1}(p_{\ell-1} \circ h_{\ell-1}^{-1}(x)) = 0.$ From this and the definition of $\|\cdot\|_{M_{\ell-1},\cdot}$ in \eqref{d:F}, it follows that $Dp_{\ell-1}(h_{\ell-1}^{-1}(x))$ is an isometry. Returning to \eqref{e:here}, we now have 
	\begin{align}
		\|D[\pi_\ell^{-1} \circ p_\ell](x) \cdot v \|_{M_{\ell-1},y} = \|D[h_{\ell-1}^{-1}](x)\cdot v\|_{W_{\ell-1},h_{\ell-1}^{-1}(x)}.
	\end{align}
	Applying Proposition \ref{p:h_ell} gives  
	\begin{align}
		(1+C\ve)^{-1}\|v\|_{W_\ell,x} \leq \|D[\pi_\ell^{-1} \circ p_\ell](x) \cdot v \|_{M_{\ell-1},y} \leq (1+C\ve)\|v\|_{W_\ell,x}.
	\end{align}
	The fact that $Dp_\ell(x)$ is $(1+C\ve)$-bi-Lipschitz follows by plugging this estimate into \eqref{e:p-bi-lip} and using the fact that $\theta_1^\ell + \theta_2^\ell =1$. The proof for $D\pi_\ell(x)$ is virtually identical, we omit the details. 
\end{proof}

Since $M_\ell$ is a connected manifold, it is path connected. In particular, we can define the path metric. For $x,y \in M_{\ell},$ let $\Gamma_{x,y,\ell}$ be the collection of all differentiable curves $\gamma \colon [0,1] \to M_{\ell}$ such that $\gamma(0) = x$ and $\gamma(1) = y.$ Let $d_{\ell}$ be the path metric on $M_\ell$ defined by 
\begin{align}\label{e:path-metric}
	d_{\ell}(x,y) = \inf_{\gamma \in \Gamma_{x,y,\ell}} \int_0^1 \| \gamma'(t) \|_{M_\ell,\gamma(t)} \, dt
\end{align}

\begin{lem}\label{l:f-bilip1}
	If $x \in M_\ell$ and $v \in T_xM_\ell$ then 
	\begin{align}\label{e:f-bi-lip}
		(1+C\ve)^{-1}\|v\|_{M_\ell,x} \leq \|Df_\ell(x) \cdot v\|_{M_{\ell+1},f_\ell(x)} \leq (1+C\ve)\|v\|_{M_\ell,x}.
	\end{align}
	In particular, $f_\ell \colon M_\ell \to M_{\ell+1}$ is bi-Lipschitz. Assume further that $f_\ell(x) \in M_{\ell+1} \setminus \overline{p_{\ell+1}(W_{\ell+1}^*)}$. Then, 
	\begin{align}\label{e:f-isom}
		\|Df_\ell(x) \cdot v\|_{M_{\ell+1},f_\ell(x)} = \|v\|_{M_\ell,x}
	\end{align}
\end{lem}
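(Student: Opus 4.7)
The plan is to prove the derivative bounds \eqref{e:f-bi-lip} pointwise by cases according to the two formulas for $f_\ell$ in \eqref{e:def-f}, then derive the bi-Lipschitz property with respect to the path metric from the pointwise estimate, and finally obtain the isometry statement \eqref{e:f-isom} by examining which branch of the Finsler metric \eqref{d:F} is active at $f_\ell(x)$.

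For \eqref{e:f-bi-lip}, fix $x \in M_\ell$ and $v \in T_x M_\ell$. If $x \in M_\ell^*$, then by \eqref{e:def-f} we have $f_\ell(x) = \pi_{\ell+1}(x)$, so $Df_\ell(x) = D\pi_{\ell+1}(x)$, and Lemma \ref{l:p-bi-lip} applied to $\pi_{\ell+1}$ gives the bound directly. If instead $x \in G_\ell$, then $f_\ell(x) = p_{\ell+1} \circ h_\ell \circ p_\ell^{-1}(x)$, so by the chain rule
\begin{align}
Df_\ell(x) = Dp_{\ell+1}(h_\ell(p_\ell^{-1}(x))) \circ Dh_\ell(p_\ell^{-1}(x)) \circ D[p_\ell^{-1}](x).
\end{align}
Each of the three factors is $(1+C\ve)$-bi-Lipschitz: the outer and inner factors by Lemma \ref{l:p-bi-lip} (applied to $p_{\ell+1}$ and $p_\ell$ respectively), and the middle factor by Proposition \ref{p:h_ell}. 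Composing the three estimates, and absorbing constants into $C$, yields \eqref{e:f-bi-lip} on $G_\ell$. On the overlap $G_\ell \cap M_\ell^*$ the two formulas agree since $f_\ell$ is well-defined (Lemma \ref{l:f-homeo}), so no consistency issue arises.

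To upgrade \eqref{e:f-bi-lip} to a bi-Lipschitz estimate for $f_\ell \colon (M_\ell, d_\ell) \to (M_{\ell+1}, d_{\ell+1})$, I would use the path metric directly: for any differentiable curve $\gamma \in \Gamma_{x,y,\ell}$, the curve $f_\ell \circ \gamma$ lies in $\Gamma_{f_\ell(x),f_\ell(y),\ell+1}$ and by \eqref{e:f-bi-lip} its length is within a factor $(1+C\ve)$ of the length of $\gamma$. Since $f_\ell$ is a diffeomorphism by Lemma \ref{l:f-homeo}, we can argue symmetrically via $f_\ell^{-1}$ using the lower bound in \eqref{e:f-bi-lip} (transported through the inverse), and taking infima in \eqref{e:path-metric} gives $(1+C\ve)^{-1} d_\ell(x,y) \leq d_{\ell+1}(f_\ell(x),f_\ell(y)) \leq (1+C\ve) d_\ell(x,y)$.

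For the isometry statement \eqref{e:f-isom}, suppose $f_\ell(x) \in M_{\ell+1} \setminus \overline{p_{\ell+1}(W_{\ell+1}^*)}$. In particular $f_\ell(x) \notin p_{\ell+1}(W_{\ell+1}^*)$, so $f_\ell(x) \in \pi_{\ell+1}(M_\ell^*)$, which forces $x \in M_\ell^*$ and $f_\ell(x) = \pi_{\ell+1}(x)$ by \eqref{e:def-f}. Moreover, since $\supp \theta_1^{\ell+1} \subseteq p_{\ell+1}(W_{\ell+1}^*) \subseteq \overline{p_{\ell+1}(W_{\ell+1}^*)}$, we get $\theta_1^{\ell+1}(f_\ell(x)) = 0$ and $\theta_2^{\ell+1}(f_\ell(x)) = 1$. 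Plugging into the definition \eqref{d:F} of $\|\cdot\|_{M_{\ell+1}, f_\ell(x)}$, only the second term survives, giving
\begin{align}
\|Df_\ell(x)\cdot v\|_{M_{\ell+1},f_\ell(x)} = \|D[\pi_{\ell+1}^{-1}](\pi_{\ell+1}(x)) \circ D\pi_{\ell+1}(x)\cdot v\|_{M_\ell,x} = \|v\|_{M_\ell,x},
\end{align}
as required. The main subtlety, and where I would take the most care, is tracking that partition-of-unity supports interact correctly with the identifications produced by the quotient construction; in particular, using that $p_{\ell+1}$ and $\pi_{\ell+1}$ together cover $M_{\ell+1}$ and that on their intersection the transition $p_{\ell+1}^{-1} \circ \pi_{\ell+1} = h_\ell \circ p_\ell^{-1}$ from Remark \ref{r:trans} is a $(1+C\ve)$-bi-Lipschitz diffeomorphism at the tangent level, which is exactly the content used implicitly in Lemma \ref{l:p-bi-lip}.
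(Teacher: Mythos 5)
Your proposal is correct and follows essentially the same route as the paper: the pointwise bound \eqref{e:f-bi-lip} via the two branches of \eqref{e:def-f} (chain rule through $p_{\ell+1}$, $h_\ell$, $p_\ell^{-1}$ using Lemma \ref{l:p-bi-lip} and Proposition \ref{p:h_ell} on $G_\ell$, and Lemma \ref{l:p-bi-lip} for $\pi_{\ell+1}$ on $M_\ell^*$), and \eqref{e:f-isom} by noting $\theta_1^{\ell+1}(f_\ell(x))=0$, $\theta_2^{\ell+1}(f_\ell(x))=1$ and that $f_\ell$ coincides with $\pi_{\ell+1}$ near $x$, so $D[\pi_{\ell+1}^{-1}](f_\ell(x))\circ Df_\ell(x)=\mathrm{Id}$ in \eqref{d:F}. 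The paper leaves the passage from the pointwise derivative bound to bi-Lipschitzness of $f_\ell$ for the path metrics implicit, which you spell out correctly via curve lengths and the diffeomorphism property.
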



\begin{proof}
	Fix a pair $x \in M_\ell$ and $v \in T_xM_\ell.$ We start with \eqref{e:f-bi-lip}. Suppose first $x \in G_\ell$ so that $f_\ell(x) = p_{\ell+1} \circ h_\ell \circ p_\ell^{-1}(x)$ by \eqref{e:def-f}. By Proposition \ref{p:h_ell}, Lemma \ref{l:p-bi-lip} and \eqref{d:F} we have  
	\begin{align}
		\|Df_\ell(x)\cdot v\|_{M_{\ell+1},f_\ell(x)} &\leq (1+C\ve)\|D[ h_{\ell} \circ p_{\ell}^{-1}](x) \cdot v \|_{W_{\ell+1},h_{\ell} \circ p_{\ell}^{-1}(x)} \\
		&\leq (1+C\ve) \|D[p_\ell^{-1}](x) \cdot v\|_{W_\ell,p_\ell^{-1}(x)} \leq (1+C\ve) \|v\|_{M_\ell,x}.
	\end{align}
	The argument for the lower estimate is similar. If instead $x \in M_\ell^*,$ so that $f_\ell(x) = \pi_{\ell+1}(x)$ by \eqref{e:def-f}, we can apply Lemma \ref{l:p-bi-lip} to $D\pi_{\ell+1}$ which gives \eqref{e:f-bi-lip} immediately.
	
	We now prove \eqref{e:f-isom}. Suppose $f_\ell(x) \in M_{\ell+1} \setminus \overline{p_{\ell+1}(W_{\ell+1}^*)}$ and let $V$ be an open neighbourhood of $x$ such that $f_\ell(V) \subseteq M_{\ell+1} \setminus \overline{p_{\ell+1}(W_{\ell+1}^*)}.$ By \eqref{e:f(G)} we have $f_\ell(G_\ell) = p_{\ell+1}(W_{\ell+1}^*).$ It follows that $V \subseteq M_{\ell}^*$ and $f_{\ell}|_V = \pi_{\ell+1}|_V$. Thus, 
	\begin{align}\label{e:identity}
		I_n = D[\pi_{\ell+1}^{-1} \circ f_\ell](x) = D[\pi_{\ell+1}^{-1}](f_\ell(x))Df_{\ell}(x).
	\end{align} 
	Recall the definitions of $\theta_i^{\ell+1}$ from above \eqref{d:F}. Since $\theta^{\ell+1}_1$ is supported in $p_{\ell+1}(W_{\ell+1}^*)$ and $\{\theta_1^{\ell+1},\theta^{\ell+1}_2\}$ is a partition of unity, we have $\theta_1^{\ell+1}(f_\ell(x)) = 0$ and $\theta_2^{\ell+1}(f_\ell(x)) = 1.$ In particular, by \eqref{d:F} and \eqref{e:identity}, we have 
	\begin{align}
		\| Df_\ell(x) \cdot v \|_{M_{\ell+1},f_\ell(x)} = \| D[\pi_{\ell+1}^{-1}](f_\ell(x)) Df_\ell(x) \cdot v\|_{M_{\ell},\pi_{\ell+1}^{-1}(f_\ell(x))} = \| v \|_{M_\ell,x}. 
	\end{align}
\end{proof}

We introduce an important collection of charts on $M_{\ell}$ (not covering $M_\ell$) which roughly speaking correspond to the location of $W_{\ell}$ in $M_{\ell}.$ Recall the definitions of $\lambda U^{j,\ell}$ and $\vp_{j,\ell}$ from Section \ref{s:disconnected}. For $0 \leq \lambda \leq 6$, $\ell \geq 0$ and $j \in J_\ell$ let 
\begin{align}\label{d:D}
	\lambda D^j = p_\ell(\lambda U^j).
\end{align}
Furthermore, let 
\begin{align}\label{d:psi}
	\psi_j \coloneqq \varphi_j \circ p_\ell^{-1} \colon 6D^j \to 6B_j.
\end{align}
We prove some basic properties concerning the maps $\psi_j$ and the sets $\lambda D^j.$ The first lemma tells us that the $I_{i,j}$ act as transitions maps these coordinate charts in $M_\ell.$ 

\begin{lem}\label{l:tran-psi}
	Let $\ell \geq 1,$ $i,j \in J_\ell$ and suppose $x \in 6B_i$ is such that $\psi_i^{-1}(x) \in 6D^j.$ Then $x \in \Omega_{j,i}$ and
	\[ I_{j,i}(x) =  \psi_j(\psi_i^{-1}(x)). \]
\end{lem}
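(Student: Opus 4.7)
The plan is to reduce the claim directly to Remark \ref{r:transition'} by unwinding the factorizations $\psi_i = \vp_i \circ p_\ell^{-1}$ and $\psi_j = \vp_j \circ p_\ell^{-1}$. Both chart maps on $M_\ell$ factor through the single diffeomorphism $p_\ell^{-1}$ landing in $W_\ell$, so the transition between them on $M_\ell$ is essentially the transition $\vp_j \circ \vp_i^{-1}$ on $W_\ell$ that was already analyzed in Remark \ref{r:transition'}.

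To implement this, I would set $w \coloneqq \vp_i^{-1}(x)$, which makes sense because $x \in 6B_i \subseteq 8B_i = \vp_i(8U^i)$, and it yields $w \in 6U^i \subseteq 8U^i$. From the definition \eqref{d:psi} of $\psi_i$ we then have $\psi_i^{-1}(x) = p_\ell(w)$. The hypothesis $\psi_i^{-1}(x) \in 6D^j = p_\ell(6U^j)$, combined with injectivity of $p_\ell$ on $W_\ell^*$ (Lemma \ref{l:smooth}, together with the fact that $6U^i, 6U^j \subseteq W_\ell^*$ by \eqref{e:h_ell3''} since $\ell \geq 1$), forces $w \in 6U^j \subseteq 8U^j$. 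Hence $w \in 8U^i \cap 8U^j$ with $x = \vp_i(w)$, which is exactly the hypothesis required to invoke Remark \ref{r:transition'} with the roles of $i$ and $j$ swapped.

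That application of Remark \ref{r:transition'} immediately yields both conclusions: first $x \in \Omega_{j,i}$, and second $I_{j,i}(x) = \vp_j(\vp_i^{-1}(x))$. On the other hand, unwinding definitions once more gives $\psi_j(\psi_i^{-1}(x)) = \vp_j\bigl(p_\ell^{-1}(p_\ell(w))\bigr) = \vp_j(w) = \vp_j(\vp_i^{-1}(x))$, which matches the right-hand side. There is no substantive obstacle here; the lemma is a bookkeeping consequence of the fact that the charts $\psi_j$ on $M_\ell$ were defined by composing the charts $\vp_j$ on $W_\ell$ with the diffeomorphism $p_\ell^{-1}$, and the only items needing brief care are verifying $\vp_i^{-1}(x)$ lies in $8U^i \cap 8U^j$ and that $p_\ell^{-1}$ is defined at $\psi_i^{-1}(x)$, both of which follow cleanly from the hypotheses.
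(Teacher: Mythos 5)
Your proposal is correct and follows essentially the same route as the paper: unwind $\psi_i = \vp_i \circ p_\ell^{-1}$ and $\psi_j = \vp_j \circ p_\ell^{-1}$, deduce $\vp_i^{-1}(x) \in 6U^j$ from the hypothesis $\psi_i^{-1}(x) \in 6D^j$, and then apply Remark \ref{r:transition'} to get both $x \in \Omega_{j,i}$ and the transition formula. The only difference is that you spell out the injectivity of $p_\ell$ on $W_\ell^*$ explicitly, which the paper leaves implicit.
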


\begin{proof}
	Since $\psi_i^{-1}(x) \in 6D^j$, it follows from \eqref{d:D} and \eqref{d:psi} that $\vp_i^{-1}(x) \in 6U^j.$ In particular $x \in \Omega_{j,i}$ and $\vp_j(\vp_i^{-1}(x)) = I_{j,i}(x)$ by Remark \ref{r:transition'}. Using this and again \eqref{d:psi}, we get 
	\begin{align}
		\psi_j(\psi_i(x)) = \vp_j \circ p_\ell^{-1} \circ p_\ell \circ \vp_i^{-1}(x) = \vp_j(\vp_i^{-1}(x)) = I_{j,i}(x).
	\end{align}
\end{proof}

\begin{lem}\label{l:f-good}
	Let $\ell \geq 0$, $j \in J_{\ell+1}$ and suppose $x \in M_\ell$ is such that $f_\ell(x) \in 6D^j.$ Then, 
	\begin{align}
		f_\ell(x) = \psi_j^{-1}(x) \circ \hat{H}_j \circ K_{j,i(j)} \circ \psi_{i(j)}(x)
	\end{align}
	with $i(j)$ as in \eqref{e:i(j)}, $K_{i(j),j}$ as in Lemma \ref{l:K-alpha} and $\hat{H}_j$ as in Proposition \ref{p:h_ell}.
\end{lem}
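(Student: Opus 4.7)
The plan is to deduce the formula directly from the local expression for $h_\ell$ given in Proposition \ref{p:h_ell} by unravelling the definition of $f_\ell$ and identifying the correct index $j$ in the family $\{C^{k,\ell+1}_{N_0}\}_{k \in J_{\ell+1}}$ that contains $p_\ell^{-1}(x)$.

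First, I would show that $x \in G_\ell$. By hypothesis $f_\ell(x) \in 6D^j = p_{\ell+1}(6U^{j,\ell+1})$, which by \eqref{e:h_ell3''} is contained in $p_{\ell+1}(W_{\ell+1}^*)$. On the other hand, \eqref{e:f(G)} says that $f_\ell(G_\ell) = p_{\ell+1}(W_{\ell+1}^*)$, and by Lemma \ref{l:f-homeo} the map $f_\ell$ is a bijection. Therefore $x \in G_\ell$, and the first line of \eqref{e:def-f} applies to give $f_\ell(x) = p_{\ell+1} \circ h_\ell \circ p_\ell^{-1}(x)$.

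Next, I would identify the correct cell. Setting $y \coloneqq p_\ell^{-1}(x) \in E_\ell$, we have $h_\ell(y) = p_{\ell+1}^{-1}(f_\ell(x)) \in 6U^{j,\ell+1}$. The first inclusion in \eqref{e:h_ell3'} (applied with $A = 6$) gives $6U^{j,\ell+1} \subseteq h_\ell(C^{j,\ell+1}_{N_0})$, so there exists $y' \in C^{j,\ell+1}_{N_0}$ with $h_\ell(y') = h_\ell(y)$. Since $h_\ell$ is a diffeomorphism by Proposition \ref{p:h_ell}, in particular injective, we conclude $y = y' \in C^{j,\ell+1}_{N_0}$.

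Finally, since $y \in C^{j,\ell+1}_{N_0}$, the formula \eqref{e:h_ell2} applies with this $j$ and gives
\begin{align}
h_\ell(y) = \varphi_{j,\ell+1}^{-1} \circ \hat{H}_j \circ K_{j,i(j)} \circ \varphi_{i(j),\ell}(y).
\end{align}
Composing on the left with $p_{\ell+1}$, rewriting $y = p_\ell^{-1}(x)$, and using the definitions $\psi_j = \varphi_{j,\ell+1} \circ p_{\ell+1}^{-1}$ and $\psi_{i(j)} = \varphi_{i(j),\ell} \circ p_\ell^{-1}$ from \eqref{d:psi}, yields
\begin{align}
f_\ell(x) = p_{\ell+1} \circ \varphi_{j,\ell+1}^{-1} \circ \hat{H}_j \circ K_{j,i(j)} \circ \varphi_{i(j),\ell} \circ p_\ell^{-1}(x) = \psi_j^{-1} \circ \hat{H}_j \circ K_{j,i(j)} \circ \psi_{i(j)}(x),
\end{align}
which is the desired identity. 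The proof is essentially bookkeeping; the only non-trivial point is the use of injectivity of $h_\ell$ to pin down the cell $C^{j,\ell+1}_{N_0}$ containing $p_\ell^{-1}(x)$, so that the right index $j$ appears in \eqref{e:h_ell2}, and I expect no genuine obstacle.
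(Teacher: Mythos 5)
Your proof is correct and follows essentially the same route as the paper: deduce $x \in G_\ell$ from \eqref{e:h_ell3''} and \eqref{e:f(G)}, use the first inclusion of \eqref{e:h_ell3'} (with $A=6$) together with injectivity of $h_\ell$ to place $p_\ell^{-1}(x)$ in $C^{j,\ell+1}_{N_0}$, and then apply \eqref{e:h_ell2} and \eqref{d:psi}. The paper's proof is the same argument, with the injectivity step left implicit.
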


\begin{proof}
	By \eqref{d:D} and \eqref{e:h_ell3''} we have $f_\ell(x) \in 6D^j = p_{\ell+1}(6U^j) \subseteq p_{\ell+1}(W_{\ell+1}^*)$. Equation \eqref{e:f(G)} then implies $x \in G_\ell$ so that $f_\ell(x) = p_{\ell+1} \circ h_\ell \circ p_{\ell}^{-1}(x).$ Since $h_\ell(p_\ell^{-1}(x)) = p_{\ell+1}^{-1}(f_\ell(x)) \in 6U^j,$ It follows from the first inclusion in \eqref{e:h_ell3'} that there exists $p_\ell^{-1}(x) \in C_{N_0}^j$. Now, using \eqref{e:h_ell2} and \eqref{d:psi}, we get 
	\begin{align}
		f_\ell(x) = p_{\ell+1} \circ \vp_j^{-1} \circ \hat{H}_j \circ K_{j,i(j)} \circ \vp_{i(j)} \circ p_\ell^{-1}(x) = \psi_j^{-1} \circ \hat{H}_j \circ K_{j,i(j)} \circ \psi_{i(j)}(x)
	\end{align}
\end{proof}

If $j \in J_{\ell+1}$ and $i(j) \in J_\ell$ is as in \eqref{e:i(j)}, it follows from the triangle inequality that $6B^j \subseteq 3B^{i(j)}.$ The follow lemma tells us that the corresponding charts in $M_\ell$ satisfy a similar nesting property (up to pushing $3D^{i(j)}$ forward by $f_\ell$).

\begin{lem}\label{l:inclusion}
	If $\ell \geq 0$ and $j \in J_{\ell+1}$ then $6D^j \subseteq f_\ell(3D^{i(j)})$. Moreover, 
	\begin{align}\label{e:nested}
		\bigcup_{j \in J_{\ell+1}} 6D^j \subseteq p_{\ell+1}(W_{\ell+1}^*) \subseteq f_\ell\left( \bigcup_{j \in J_\ell} 3D^j\right) \subseteq f_\ell(p_\ell(W_\ell^*)).
	\end{align}
\end{lem}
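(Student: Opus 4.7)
The plan is to verify all four inclusions by chasing definitions, using in turn \eqref{e:h_ell3'} (which covers $6U^j$ by $h_\ell(C_{N_0}^j)$), Lemma \ref{l:C-U} (which puts $C_0^j \subseteq 3U^{i(j)}$), the definition of $f_\ell$ on $G_\ell = p_\ell(E_\ell)$ given in \eqref{e:def-f}, and the description $W_{\ell+1}^* = h_\ell(E_\ell)$ from \eqref{e:h_ell3''} together with $E_\ell = \bigcup_{j \in J_{\ell+1}} C_{N_0}^{j,\ell+1}$.

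For the first claim ``$6D^j \subseteq f_\ell(3D^{i(j)})$'', I will pick $y \in 6D^j$, write $y = p_{\ell+1}(z)$ with $z \in 6U^{j,\ell+1}$, and use \eqref{e:h_ell3'} (with $A=6$) to find $w \in C_{N_0}^{j,\ell+1}$ with $h_\ell(w) = z$. Since $C_{N_0}^{j,\ell+1} \subseteq C_0^{j,\ell+1} \subseteq 3U^{i(j),\ell}$ by Lemma \ref{l:C-U}, the point $x := p_\ell(w)$ lies in $3D^{i(j)}$. Because $w \in E_\ell$, we have $x \in G_\ell$, so the first branch of \eqref{e:def-f} gives $f_\ell(x) = p_{\ell+1} \circ h_\ell \circ p_\ell^{-1}(x) = p_{\ell+1}(z) = y$, as required.

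For the chain \eqref{e:nested}, the first inclusion is immediate from \eqref{e:h_ell3''} together with the definition $6D^j = p_{\ell+1}(6U^{j,\ell+1})$. For the middle inclusion, given $y \in p_{\ell+1}(W_{\ell+1}^*)$, I will write $y = p_{\ell+1}(h_\ell(w))$ for some $w \in E_\ell$, so that $w \in C_{N_0}^{j',\ell+1}$ for some $j' \in J_{\ell+1}$. Lemma \ref{l:C-U} then places $w$ in $3U^{i(j'),\ell}$; setting $x = p_\ell(w) \in 3D^{i(j')}$ and invoking \eqref{e:def-f} (since $x \in G_\ell$) yields $f_\ell(x) = y$, which lies in $f_\ell\bigl(\bigcup_{j \in J_\ell} 3D^j\bigr)$. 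The last inclusion in \eqref{e:nested} is immediate from $3D^j = p_\ell(3U^j) \subseteq p_\ell(W_\ell^*)$.

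The proof is essentially bookkeeping, so there is no real obstacle; the only subtlety is remembering to apply \eqref{e:h_ell3'} with the sharpest available constant (namely $A = 6$, giving the necessary containment in $h_\ell(C_{N_0}^j)$) and to keep careful track of which index set ($J_\ell$ vs.\ $J_{\ell+1}$) each symbol refers to, since $D^j$ is defined on every level via \eqref{d:D}.
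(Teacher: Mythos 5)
Your proposal is correct and follows essentially the same route as the paper: apply \eqref{e:h_ell3'} with $A=6$, place the relevant preimages in $3U^{i(j)}$ (you cite Lemma \ref{l:C-U} directly, the paper routes this through the last containment of \eqref{e:h_ell3'} and \eqref{e:h_ell4}, which are themselves derived from Lemma \ref{l:C-U}), and then unwind the definition of $f_\ell$ on $G_\ell = p_\ell(E_\ell)$ together with $W_{\ell+1}^* = h_\ell(E_\ell)$. The only cosmetic difference is that you chase individual points while the paper writes the same argument as a chain of set inclusions.
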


\begin{proof}
	Let $\ell \geq 0$ and $j \in J_{\ell+1}.$ Recalling the definitions of $G_\ell$ and $f_\ell$ from \eqref{e:F_ell'} and \eqref{e:def-f}, respectively, and applying \eqref{d:D} and \eqref{e:h_ell3'} we have 
	\begin{align}
		6D^j  &= p_{\ell+1}(6U^j) \subseteq p_{\ell+1} \circ h_\ell(E_\ell \cap 3U^{i(j)}) = p_{\ell+1} \circ h_\ell \circ p_\ell^{-1} \circ p_\ell(E_\ell \cap 3U^{i(j)}) \\
		&= p_{\ell+1} \circ h_\ell \circ p_\ell^{-1}(G_\ell \cap 3D^{i(j)}) =  f_\ell(G_\ell \cap 3D^{i(j)}) \subseteq f_\ell(3D^{i(j)}). 
	\end{align}
	We now check \eqref{e:nested}. Recall from \eqref{e:f(G)} that $f_\ell(G_\ell) = p_{\ell+1}(W_{\ell+1}^*).$ Using this with \eqref{e:h_ell3''} and \eqref{e:h_ell4} we get 
	\begin{align}
		\bigcup_{j \in J_{\ell+1}} 6D^j & = p_{\ell+1}\left(\bigcup_{j \in J_{\ell+1}}6U^j\right) \subseteq p_{\ell+1}(W_{\ell+1}^*) = f_\ell(G_\ell) = f_{\ell}(p_\ell(E_\ell))  \\
		&\subseteq f_{\ell}\left(p_\ell\left(\bigcup_{j \in J_\ell} 3U^j\right)\right)  = f_{\ell}\left(\bigcup_{j \in J_\ell} 3D^j\right). 
	\end{align}
	This take care of the first two inequality in \eqref{e:nested}. The final inequality follows from \eqref{e:h_ell3''} since 
	\[ f_{\ell}\left(\bigcup_{j \in J_\ell} 3D^j\right) = f_{\ell}\left(p_\ell\left(\bigcup_{j \in J_\ell} 3U^j\right)\right) \subseteq f_\ell(p_\ell(W_\ell^*)). \]
\end{proof}

The following lemma will be useful in showing that each $\psi_j$ is $(1+C\ve)$-bi-Lipschitz.

\begin{lem}\label{l:lower-bound}
	Let $\ell \geq 0,$ $j \in J_\ell$ and suppose $\gamma \colon [t_1,t_2] \to 6D^j$ is a smooth curve. Then, 
	\begin{align}
		\int_{t_1}^{t_2} \|\gamma'(t)\|_{M_\ell,\gamma(t)} \, dt \geq (1+C\ve)^{-1} \| \psi_j(\gamma(t_2)) - \psi_j(\gamma(t_1)) \|_j . 
	\end{align}
\end{lem}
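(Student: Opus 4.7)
The plan is to observe that $\psi_j = \varphi_j \circ p_\ell^{-1}$ has $(1+C\ve)$-Lipschitz differential (as a map from the Finsler manifold $M_\ell$ into the normed space $(\R^n,\|\cdot\|_j)$), and then apply the fundamental theorem of calculus in the normed space to the pushed-forward curve $\sigma = \psi_j \circ \gamma$. The lower bound on the length of $\gamma$ comes out as an upper bound on the straight-line displacement of $\sigma$, which is exactly the displayed inequality.

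First I would note that $\psi_j$ is well-defined and smooth on $6D^j$, since $6D^j = p_\ell(6U^j)$ with $p_\ell$ a diffeomorphism onto its image (Lemma \ref{l:smooth}) and $\varphi_j$ is defined on $8U^j \supseteq 6U^j$. Then I would establish the pointwise bound: for any $x \in 6D^j$ and $v \in T_xM_\ell$,
\begin{align}
\|D\psi_j(x)\cdot v\|_j \leq (1+C\ve)\|v\|_{M_\ell,x}.
\end{align}
This follows from the chain rule $D\psi_j(x) = D\varphi_j(p_\ell^{-1}(x)) \circ D[p_\ell^{-1}](x)$ together with Lemma \ref{l:p-bi-lip} (applied to $p_\ell$, which gives that $D[p_\ell^{-1}](x)$ is $(1+C\ve)$-bi-Lipschitz as a map $T_xM_\ell \to T_{p_\ell^{-1}(x)}W_\ell$) and Lemma \ref{l:Dpsi} (which gives that $D\varphi_j(p_\ell^{-1}(x))$ is $(1+C\ve)$-bi-Lipschitz from $T_{p_\ell^{-1}(x)}W_\ell$ into $(\R^n,\|\cdot\|_j)$). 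Composing the two $(1+C\ve)$-bounds yields the single $(1+C\ve)$-bound above (after absorbing constants).

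Next I would set $\sigma(t) \coloneqq \psi_j(\gamma(t)) \in 6B_j \subseteq \R^n$, so that $\sigma'(t) = D\psi_j(\gamma(t))\cdot \gamma'(t)$ in $T_{\sigma(t)}\R^n_j \cong \R^n$. Applying the pointwise bound above pointwise in $t$,
\begin{align}
\|\sigma'(t)\|_j \leq (1+C\ve)\|\gamma'(t)\|_{M_\ell,\gamma(t)}.
\end{align}
Since $\sigma$ is a smooth curve in the normed space $(\R^n,\|\cdot\|_j)$, the fundamental theorem of calculus gives $\sigma(t_2)-\sigma(t_1) = \int_{t_1}^{t_2}\sigma'(t)\,dt$, whence
\begin{align}
\|\psi_j(\gamma(t_2))-\psi_j(\gamma(t_1))\|_j \leq \int_{t_1}^{t_2}\|\sigma'(t)\|_j\,dt \leq (1+C\ve)\int_{t_1}^{t_2}\|\gamma'(t)\|_{M_\ell,\gamma(t)}\,dt.
\end{align}
Rearranging this chain of inequalities (and replacing the constant in $(1+C\ve)$ by the same kind of constant in its reciprocal) yields the lemma.

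There is no genuine obstacle here; the statement is essentially a packaging of the two previously-established bi-Lipschitz estimates (Lemma \ref{l:Dpsi} and Lemma \ref{l:p-bi-lip}) through a standard length-integration argument in a normed space. The only mild care needed is to verify the chain-rule identification of tangent vectors, which is unambiguous since $(\R^n,\|\cdot\|_j)$ is a normed space so that its tangent space at every point is canonically identified with $\R^n$ itself.
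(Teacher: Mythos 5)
Your proof is correct and follows essentially the same route as the paper: push the curve forward by $\psi_j = \varphi_j \circ p_\ell^{-1}$, bound $\|(\psi_j\circ\gamma)'(t)\|_j$ pointwise via Lemma \ref{l:Dpsi} and Lemma \ref{l:p-bi-lip}, and compare the straight-line displacement with the integral of the speed. (If anything, your write-up is slightly more careful than the paper's, which writes the first comparison as an equality where it should be the inequality coming from the norm of an integral.)
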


\begin{proof}
	Since $\gamma(t) \in 6D^j$ for all $t \in [t_1,t_2]$ we have that $\psi_j \circ \gamma \colon [0,1] \to 6B_j$ defines a smooth path from $\psi_j(\gamma(t_1))$ to $\psi_j(\gamma(t_2)).$ Using this with Lemma \ref{l:Dpsi} and Lemma \ref{l:p-bi-lip} gives
	\begin{align}
		\| \psi_j(\gamma(t_2)) - \psi_j(\gamma(t_1)) \|_j &= \int_{t_1}^{t_2} \| (\psi_j \circ \gamma)'(t) \|_j \, dt \\
		&= \int_{t_1}^{t_2} \| D\vp_j(p_\ell^{-1}(\gamma(t))) D[p_\ell^{-1}](\gamma(t))\cdot\gamma'(t) \|_j \, dt \\
		&\leq (1+C\ve) \int_{t_1}^{t_2} \| \gamma'(t) \|_{M_{\ell},\gamma(t)} \, dt.
	\end{align}
\end{proof}

\begin{lem}\label{l:psi-bi-lip}
	For each $\ell \geq 0$ and $j \in J_\ell,$ the map $\psi_j \colon 6D^j \to 6B_j$ is $(1+C\ve)$-bi-Lipschitz when $6D^j$ is equipped with $d_\ell$.
\end{lem}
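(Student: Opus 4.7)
We prove two inequalities:
\begin{align}
d_\ell(x,y) &\leq (1+C\ve)\|\psi_j(x)-\psi_j(y)\|_j, \label{e:plan-easy}\\
\|\psi_j(x)-\psi_j(y)\|_j &\leq (1+C\ve)\, d_\ell(x,y), \label{e:plan-hard}
\end{align}
the first by exhibiting a short path from $x$ to $y$, the second by bounding from below the length of an arbitrary path.

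For \eqref{e:plan-easy}, I would set $\sigma(t)=(1-t)\psi_j(x)+t\psi_j(y)$ for $t\in[0,1]$, which stays in $6B_j$ by convexity. The pullback $\gamma=\psi_j^{-1}\circ\sigma \colon [0,1]\to 6D^j\subseteq M_\ell$ is a smooth curve from $x$ to $y$ with $\gamma'(t)=D[\psi_j^{-1}](\sigma(t))\cdot(\psi_j(y)-\psi_j(x))$. Recalling that $\psi_j=\vp_j\circ p_\ell^{-1}$, Lemma \ref{l:Dpsi} gives that $D\vp_j$ is $(1+C\ve)$-bi-Lipschitz and Lemma \ref{l:p-bi-lip} that $Dp_\ell$ is $(1+C\ve)$-bi-Lipschitz, hence $D\psi_j$ and $D[\psi_j^{-1}]$ are $(1+C\ve)$-bi-Lipschitz. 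Therefore $\|\gamma'(t)\|_{M_\ell,\gamma(t)}\leq(1+C\ve)\|\psi_j(y)-\psi_j(x)\|_j$, and integrating on $[0,1]$ then taking the infimum in the definition \eqref{e:path-metric} of $d_\ell$ yields \eqref{e:plan-easy}.

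For \eqref{e:plan-hard}, let $\gamma\colon[0,1]\to M_\ell$ be any smooth path from $x$ to $y$. If $\gamma([0,1])\subseteq 6D^j$, then Lemma \ref{l:lower-bound} applied directly to $\gamma$ gives the bound. Otherwise, set $t^*=\sup\{t:\gamma([0,t])\subseteq 6D^j\}$ and $t^{**}=\inf\{t:\gamma([t,1])\subseteq 6D^j\}$. Then $\gamma([0,t^*])$ and $\gamma([t^{**},1])$ lie in $6D^j$ with $\gamma(t^*),\gamma(t^{**})\in\partial(6D^j)$, so $\|\psi_j(\gamma(t^*))\|_j=\|\psi_j(\gamma(t^{**}))\|_j=6r_\ell$. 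The chart $\psi_j=\vp_j\circ p_\ell^{-1}$ is actually defined on $8D^j$ (since $\vp_j$ is defined on $8U^j$), so Lemma \ref{l:lower-bound} and its proof apply verbatim to paths in $8D^j$. When $\gamma([t^*,t^{**}])\subseteq 8D^j$, summing the three resulting lower bounds for $\gamma|_{[0,t^*]}$, $\gamma|_{[t^*,t^{**}]}$, $\gamma|_{[t^{**},1]}$ and using the triangle inequality in $(\R^n,\|\cdot\|_j)$ gives
\[
\ell(\gamma)\geq (1+C\ve)^{-1}\bigl(\|\psi_j(\gamma(t^*))-\psi_j(x)\|_j+\|\psi_j(\gamma(t^{**}))-\psi_j(\gamma(t^*))\|_j+\|\psi_j(y)-\psi_j(\gamma(t^{**}))\|_j\bigr)\geq (1+C\ve)^{-1}\|\psi_j(y)-\psi_j(x)\|_j.
\]
If instead $\gamma$ exits $8D^j$, then any such excursion must traverse the annulus $8D^j\setminus 6D^j$ twice; applying Lemma \ref{l:lower-bound} on the two traversal sub-segments (each starting on $\partial(6D^j)$ and ending on $\partial(8D^j)$) shows the excursion alone contributes at least $(1+C\ve)^{-1}\cdot 4r_\ell$ of length, which combined with the in-chart contributions for $\gamma|_{[0,t^*]}$ and $\gamma|_{[t^{**},1]}$ and the fact that $\|\psi_j(y)-\psi_j(x)\|_j\leq 12r_\ell$ suffices to recover the bound after reshuffling via the triangle inequality.

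The main obstacle is the last case: treating paths that wander outside the largest chart $8D^j$ where $\psi_j$ is defined. Careful bookkeeping of the successive entries and exits across $\partial(6D^j)$ and $\partial(8D^j)$, together with the uniform $(1+C\ve)$-control from Lemma \ref{l:lower-bound} on the pieces that stay in $8D^j$, is what makes the argument close. Once this is in place, \eqref{e:plan-easy} and \eqref{e:plan-hard} together establish that $\psi_j$ is $(1+C\ve)$-bi-Lipschitz.
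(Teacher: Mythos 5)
Your first inequality (the upper bound $d_\ell(x,y)\leq(1+C\ve)\|\psi_j(x)-\psi_j(y)\|_j$ via the straight segment in $6B_j$) is correct and is exactly Claim 1 of the paper's argument. The gap is in the lower bound, specifically in how you treat paths that leave $6D^j$. First, the extended chart you invoke does not exist: $\lambda D^j=p_\ell(\lambda U^j)$ is only defined for $\lambda\leq 6$, because $p_\ell$ is only defined on $W_\ell^*$, and by \eqref{e:h_ell3'}--\eqref{e:h_ell3''} one only knows $\bigcup_j 6U^j\subseteq W_\ell^*$, never $8U^j\subseteq W_\ell^*$; the points of $M_\ell$ just outside $6D^j$ may lie in $\pi_\ell(M_{\ell-1}^*)$, i.e.\ in material glued in from the previous level, where $\psi_j$ has no meaning at all. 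So ``Lemma \ref{l:lower-bound} applies verbatim on $8D^j$'' is not available. Second, even granting such an extension, your accounting does not close: take $x,y$ with $\|\psi_j(x)\|_j,\|\psi_j(y)\|_j$ close to $6r_\ell$ and nearly antipodal, so $\|\psi_j(x)-\psi_j(y)\|_j$ is close to $12r_\ell$, and a competitor path that exits past radius $8$. Your lower bound (two annulus crossings of width $2r_\ell$ plus the two in-chart end pieces) gives only about $(1+C\ve)^{-1}\bigl(16r_\ell-\|\psi_j(x)\|_j-\|\psi_j(y)\|_j\bigr)\approx 4r_\ell$, and no triangle-inequality reshuffling turns $4r_\ell$ into $12r_\ell/(1+C\ve)$. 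The escape cost of an annulus of width $2r_\ell$ simply cannot dominate chart distances of size up to $12r_\ell$.

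The paper closes exactly this hole by a different mechanism, which is the idea missing from your proposal. It first proves the lower bound only for $x,y\in 3D^j$: there, if the path reaches $\partial((6-\ve)D^j)$, Lemma \ref{l:lower-bound} applied to the two end pieces gives length at least roughly $2\cdot(3-\ve)r_\ell/(1+C\ve)\geq 6r_\ell/(1+C\ve)$, while trivially $\|\psi_j(x)-\psi_j(y)\|_j\leq 6r_\ell$, so the diameter bound wins (this is why the margin works at radius $3$ inside a chart of radius $6$, but not at radius $6$ inside a hypothetical chart of radius $8$). It then upgrades from $3D^j$ to $6D^j$ by induction on $\ell$: by Lemma \ref{l:inclusion}, $6D^j\subseteq f_{\ell-1}(3D^{i(j)})$, so $f_{\ell-1}^{-1}(x),f_{\ell-1}^{-1}(y)\in 3D^{i(j)}$; applying the $3D$-case at level $\ell-1$, the bi-Lipschitz estimate for $f_{\ell-1}$ (Lemma \ref{l:f-bilip1}), and the identity $\psi_{i(j)}\circ f_{\ell-1}^{-1}=K_{i(j),j}\circ\hat{H}_j^{-1}\circ\psi_j$ from Lemma \ref{l:f-good}, with $K_{i(j),j}$ and $\hat{H}_j$ both $(1+C\ve)$-bi-Lipschitz, transfers the bound to all of $6D^j$ (the case $\ell=0$ being immediate). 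Without this inductive transfer, or some substitute for it, your argument does not establish the lower bound on $6D^j$.
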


\begin{proof}	
	We start by showing the following. \\
	
	\noindent\textbf{Claim 1:} For each $\ell \geq 0$, $j \in J_\ell$ and $x,y \in 6D^j$ we have  
	\begin{align}
		d_\ell(x,y) \leq (1+C\ve)  \|\psi_j(x) - \psi_j(y) \|_j.
	\end{align}
	
	Indeed, fix some $\ell \geq 0$, $j \in J_\ell$, $x,y \in 6D^j$ and let $\tilde{\gamma} \colon [0,1] \to 6B_j$ defined by $\tilde{\gamma}(t) = \psi_j(x) + t(\psi_j(y) - \psi_j(x))$ be the straight line connecting $\psi_j(x)$ with $\psi_j(y).$ Define a curve $\gamma \colon [0,1] \to M_\ell$ by 
	\[\gamma  = \psi_j^{-1} \circ \tilde{\gamma} = p_\ell \circ \vp_j^{-1} \circ \tilde{\gamma}. \]
	Applying Lemma \ref{l:Dpsi} and Lemma \ref{l:p-bi-lip} we get 
	\begin{align}
		d_\ell(x,y) &\leq \int_0^1 \| \gamma'(t) \|_{M_\ell,\gamma(t)} \, dt \leq (1+C\ve) \int_0^1 \| \tilde{\gamma}'(t)  \|_j \, dt \\
		&=  (1+C\ve)\| \psi_j(y) - \psi_j(x) \|_j.
	\end{align}
	
	For the reverse estimate, we start by showing a weaker statement. \\
	
	\noindent\textbf{Claim 2:} For each $\ell \geq 0$, $j \in J_\ell$ and $x,y \in 3D^j$ we have 
	\begin{align}\label{e:lip}
		\|\psi_j(x) - \psi_j(y) \|_j \leq (1+C\ve)d_\ell(x,y).
	\end{align}
	
	Indeed, fix some $\ell \geq 0$, $j \in J_\ell$ and $x,y \in 3D^j.$ If $x=y$ then $\psi_j(x) = \psi_j(y)$ and there is nothing to show so we may suppose $x$ and $y$ are distinct. Now, let $\gamma \colon [0,1] \to M_\ell$ a smooth curve with $\gamma(0) = x, \ \gamma(1) = y$ such that  
	\begin{align}\label{e:gamma-opt}
		\int_0^1 \|\gamma'(t)\|_{M_\ell,\gamma(t)} \, dt \leq (1+\ve)d_\ell(x,y).
	\end{align}
	If $\gamma(t) \in 6D^j$ for all $t \in [0,1]$ then the estimate \eqref{e:lip} follows immediately from \eqref{e:gamma-opt} and Lemma \ref{l:lower-bound}. Suppose instead that there exists $t \in [0,1]$ such that $\gamma(t) \in \partial(6D^j).$ In this case we can find $t_1,t_2 \in [0,1]$ such that 
	\begin{align}\label{e:boundary}
		\gamma(t_1),\gamma(t_2) \in \partial((6-\ve)D^j) \subseteq 6D^j
	\end{align}
	and 
	\[\gamma(t) \in (6-\ve)D^j \subseteq 6D^j \mbox{ for all } 0 \leq t < t_1 \mbox{ and } t_2 < t \leq 1.\]
	Recalling $\gamma(0) = x \in 3D^j$ and using \eqref{e:boundary}, we have $\psi_j(\gamma(0)) \in 3B_j$ and $\psi_j(\gamma(t_1)) \in \partial((6-\ve)B_j)$. This implies $\| \psi_j(\gamma(t_1)) - \psi_j(\gamma(0)) \|_j \geq (3-\ve)r_\ell.$ Similarly, $\| \psi_j(\gamma(1)) - \psi_j(\gamma(t_2)) \|_j \geq (3-\ve)r_\ell.$ Using these estimates along with Lemma \ref{l:lower-bound} and \eqref{e:gamma-opt} gives 
	\begin{align}
		\begin{split}\label{e:d-r}
			(1+\ve)d_\ell(x,y) &\geq \int_0^{t_1} \|\gamma'(t)\|_{M_\ell,\gamma(t)} \, dt + 	\int_{t_2}^1 \|\gamma'(t)\|_{M_\ell,\gamma(t)} \, dt \\
			&\geq \frac{1}{1+C\ve}\left(\| \psi_j(\gamma(t_1)) - \psi_j(\gamma(0)) \|_j  + \| \psi_j(\gamma(1)) - \psi_j(\gamma(t_2)) \|_j \right)\\
			&\geq \frac{6r_\ell}{1+C\ve}.
		\end{split}
	\end{align} 
	Since $x,y \in 3D^j$ we have $\psi_j(x),\psi_j(y) \in 3B_j$ so that $\|\psi_j(x) - \psi_j(y) \|_j \leq 6r_\ell.$ This observation and \eqref{e:d-r} finish the proof of Claim 2 since 
	\begin{align}
		\|\psi_j(x) - \psi_j(y) \| \leq 6r_\ell \leq (1+C\ve)d_\ell(x,y).
	\end{align}

	We use Claim 2 to prove the following, which completes the proof of the lemma. \\
	
	\noindent\textbf{Claim 3:} For each $\ell \geq 0$, $j \in J_\ell$ and $x,y \in 6D^j$ we have 
	\begin{align}\label{e:lip1}
		\|\psi_j(x) - \psi_j(y) \|_j \leq (1+C\ve)d_\ell(x,y).
	\end{align}
	The case $\ell = 0$ is immediate from construction, but let us outline where to find the relevant information. Recall from Theorem \ref{t:Reif} that $J_0 = \{j_0\}.$ By \eqref{e:norm-0} and \eqref{e:path-metric} the path metric $d_0$ is just the metric induced by $\|\cdot\|_{j_0}.$ By Remark \ref{l:W_0}, \eqref{e:M_0} and \eqref{d:psi} we have $\psi_{j_0} = \text{Id}$. Combining these facts gives \eqref{e:lip1}.
	
	Suppose instead that $\ell \geq 1$ and fix some $j \in J_\ell$ and $x,y \in 6D^j.$ By Lemma \ref{l:inclusion} we have $f_{\ell-1}^{-1}(x),f_{\ell-1}^{-1}(y) \in 3D^{i(j)}$ so that, by Lemma \ref{l:f-bilip1} and Claim 2, 
	\begin{align}\label{e:lip2}
		\hspace{1em}\| \psi_{i(j)}(f_{\ell-1}^{-1}(x)) - \psi_{i(j)}(f_{\ell-1}^{-1}(y)) \|_{i(j)} &\leq (1+C\ve)d_{\ell-1}(f_{\ell-1}(x),f_{\ell-1}(y)) \\
		&\leq (1+C\ve)d_{\ell}(x,y). 
	\end{align}
	By Lemma \ref{l:f-good}, 
	\begin{align}
		&\| \psi_{i(j)}(f_{\ell-1}^{-1}(x)) - \psi_{i(j)}(f_{\ell-1}^{-1}(y)) \|_{i(j)} \\
		&\hspace{2em}=  \| K_{i(j),j} \circ \hat{H}_{j}^{-1} \circ \psi_j(x) - K_{i(j),j} \circ \hat{H}_{j}^{-1} \circ \psi_j(y)\|_{i(j)},
	\end{align}
	with $K_{i(j),j}$ as in Lemma \ref{l:K-alpha} and $\hat{H}_j$ as in Proposition \ref{p:h_ell}. Since both $K_{i(j),j}$ and $\hat{H}_j$ are $(1+C\ve)$-bi-Lipschitz, this implies
	\begin{align}\label{e:lip3}
		\| \psi_j(x) - \psi_j(y)\|_j \leq (1+C\ve)	\| \psi_{i(j)}(f_{\ell-1}^{-1}(x)) - \psi_{i(j)}(f_{\ell-1}^{-1}(y)) \|_{i(j)} .
	\end{align}	
	Equations \eqref{e:lip2} and \eqref{e:lip3} finish the proof of Claim 3 and hence the proof of the lemma.

\end{proof} 

\begin{lem}\label{l:ball-D}
	Let $C_*$ be the constant appearing Lemma \ref{l:lower-bound} and let $0 < \lambda ,\mu < 6$ be real numbers such that $\nu \coloneqq \lambda + (1+C_*\ve)\mu < 6.$ Let $\ell \geq 0,\ j \in J_\ell$ and $x \in \lambda D^j.$ If $y \in M_\ell$ is such that $d_\ell(x,y) < \mu r_\ell$ then 
	\begin{align}
		y\in \nu D^j.
	\end{align}
\end{lem}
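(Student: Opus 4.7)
The plan is to argue by contradiction. Suppose $y \notin \nu D^j$, with the goal of showing $d_\ell(x,y) \geq \mu r_\ell$. By the definition of the path metric in \eqref{e:path-metric} it suffices to show that every smooth curve $\gamma\colon[0,1]\to M_\ell$ with $\gamma(0)=x$, $\gamma(1)=y$ has length $L \geq \mu r_\ell$. Fix such a $\gamma$ and define the exit time
\[ t_0 = \sup\{t\in[0,1] : \gamma([0,t]) \subseteq 6D^j\}, \]
which is positive since $x \in \lambda D^j \subseteq 6D^j$. The key observation is that since $\psi_j$ is a homeomorphism of $6D^j$ onto $6B_j$ (Lemma \ref{l:psi-bi-lip}) and $\nu B_j$, $6B_j$ are compact balls in $\R^n$, the images $\nu D^j$, $6D^j$ are compact (hence closed) subsets of $M_\ell$.

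I would split into two cases. In Case A ($t_0 = 1$) the whole curve $\gamma$ lies in the closed set $6D^j$, so Lemma \ref{l:lower-bound} applies to give
\[ L \geq (1+C_*\ve)^{-1}\,\|\psi_j(y) - \psi_j(x)\|_j. \]
Since $x \in \lambda D^j$ gives $\|\psi_j(x)\|_j \leq \lambda r_\ell$, and since $y \in 6D^j \setminus \nu D^j$ gives $\|\psi_j(y)\|_j > \nu r_\ell$, the reverse triangle inequality yields
\[ L > (1+C_*\ve)^{-1}(\nu-\lambda)r_\ell = \mu r_\ell. \]

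In Case B ($t_0 < 1$) I claim that $\sup_{t<t_0}\|\psi_j(\gamma(t))\|_j = 6r_\ell$. Indeed, if this supremum were some $R < 6r_\ell$, then $\gamma([0,t_0)) \subseteq \psi_j^{-1}(\overline{B}_{\|\cdot\|_j}(0,R))$, which is a compact (hence closed) subset of $6D^j$; by continuity $\gamma(t_0)$ would also lie in this set, and then continuity of $\gamma$ together with openness of $\psi_j^{-1}(\{u : \|u\|_j < 6r_\ell\})$ inside $6D^j$ would force $\gamma$ to remain in $6D^j$ slightly past $t_0$, contradicting the definition of $t_0$. Thus we can pick $t^* \in (0,t_0)$ with $\|\psi_j(\gamma(t^*))\|_j > \nu r_\ell$; since $\gamma|_{[0,t^*]} \subseteq 6D^j$, Lemma \ref{l:lower-bound} and the triangle inequality give
\[ L \geq (1+C_*\ve)^{-1}\|\psi_j(\gamma(t^*)) - \psi_j(x)\|_j > (1+C_*\ve)^{-1}(\nu-\lambda)r_\ell = \mu r_\ell, \]
again a contradiction.

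The only genuinely delicate point is the topological argument in Case B that forces $\|\psi_j(\gamma(t))\|_j$ to accumulate at $6r_\ell$ as $t\to t_0^-$; this is a properness/compactness statement about the chart $\psi_j^{-1}\colon 6B_j \to 6D^j$, and it is the reason the hypothesis $\nu < 6$ enters. Everything else is a routine application of Lemma \ref{l:lower-bound} and the choice of constant $\nu = \lambda + (1+C_*\ve)\mu$, which is tailored precisely so that the $(1+C_*\ve)^{-1}$ loss in Lemma \ref{l:lower-bound} exactly cancels against $\nu - \lambda$ to yield $\mu r_\ell$.
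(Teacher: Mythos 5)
Your proof is correct and follows essentially the same route as the paper's: the contrapositive, a first-exit-time argument along a (near-)optimal path, and Lemma \ref{l:lower-bound}, the only cosmetic difference being that the paper tracks the first hitting time of $\partial(\nu D^j)$ directly instead of your two cases based on exiting $6D^j$. One small caveat: where you invoke openness of $\psi_j^{-1}(\{u:\|u\|_j<6r_\ell\})$ ``inside $6D^j$'', you actually need openness in $M_\ell$ — which does hold, since $q_j$ sends open subsets of $8B_j$ to open subsets of $W_\ell$, $6U^j\subseteq W_\ell^*$ by \eqref{e:h_ell3''}, and $p_\ell$ is a homeomorphism onto an open subset of $M_\ell$ — and this is the same implicit topological fact the paper's own proof uses when it asserts $\psi_j(\gamma(t_0))\in\partial(\nu B_j)$.
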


\begin{proof}
	It suffices to prove the contrapositive, that is, if $y \in M_\ell \setminus \nu D^j$ then $d_\ell(x,y) \geq \mu r_\ell.$ Fix some $y \in  M_\ell \setminus \nu D^j.$ Let $\eta > 0$ and a choose smooth path $\gamma \colon [0,1] \to M_{\ell}$ with $\gamma(0) = x$, $\gamma(1) = y$ and such that 
	\begin{align}
		\int_0^1 \|\gamma'(t)\|_{M_{\ell},\gamma(t)} \, dt \leq d_\ell(x,y) + \eta. 
	\end{align}
	Since $y \not\in \nu D^j$ there exists $t \in [0,1]$ such that $\gamma(t) \in \partial(\nu D^j).$ Let $t_0$ be the infimum over all such $t.$  In this way we have  $\gamma(t_0) \in \partial(\nu B_j) \mbox{ and } \gamma(t) \in \nu D^j  \mbox{ for all } 0 \leq t < t_0.$ By the definition of $\psi_j$ and $\nu D^j$, we have $x \in \lambda B_j$ and $\psi_j(\gamma(t_0)) \in \partial(\nu B_j)$ so that 
	\begin{align}
		\| \psi_j(x) - \psi_j(\gamma(t_0)) \|_j \geq (1+C_*\ve) \mu r_\ell. 
	\end{align}
	Applying Lemma \ref{l:lower-bound} gives 
	\begin{align}
		\mu r_\ell \leq (1+C_*\ve)^{-1}	\| \psi_j(x) - \psi_j(\gamma(t_0)) \|_j \leq 	\int_0^1 \|\gamma'(t)\|_{M_{\ell},\gamma(t)} \, dt \leq d_\ell(x,y) + \eta. 
	\end{align}
	Since $\eta$ was arbitrary, this implies $d_\ell(x,y) \geq \mu r_\ell$ and finishes the proof of the lemma. 
\end{proof}

In the next section we will need to track the trajectory of points $x \in \R^n$ under the maps $f_\ell$ from \eqref{e:def-f}. To do this we define a sequence of maps $F_\ell \colon \R^n \to M_\ell$ inductively by setting
\begin{align}\label{e:map-F}
	F_0 = \text{Id} \mbox{ and } F_\ell = f_{\ell-1} \circ F_{\ell-1} \mbox{ for } \ell \geq 1. 
\end{align}

\begin{lem}\label{l:first}
	Let $k \geq 0$ and let $x \in \R^n$ such that $F_k(x) \in M_k \setminus \bigcup_{j \in J_k} 5D^j.$ Then, 
	\[{\dist}_\ell\left(F_\ell(x), \bigcup_{j \in J_{\ell}} 6D^j\right) \geq r_{k}/2\]
	for all $\ell \geq k+1,$ where $\dist_\ell$ is the distance in $M_\ell$ with respect to the path metric $d_\ell.$ 
\end{lem}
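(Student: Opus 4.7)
The plan is to establish the stronger claim
\[ d_\ell(F_\ell(x), \overline{G_\ell}) \geq r_k \]
for every $\ell \geq k$ by induction on $\ell$, and then to read off the conclusion. For the base case $\ell = k$, I first note that by Lemma \ref{l:C-U} one has $G_k \subseteq \bigcup_{i \in J_k} 3D^{i,k}$, since each $C^{j,k+1}_{N_0} \subseteq C^{j,k+1}_0 \subseteq 3U^{i(j),k}$. Given any $y \in 3D^{i,k}$, the contrapositive of Lemma \ref{l:ball-D} applied with $\lambda = 3$, $\nu = 5$, $\mu = 2/(1 + C_*\ve)$ forces $d_k(F_k(x), y) \geq \mu r_k \geq r_k$ for $\ve$ small, since $F_k(x) \notin 5D^{i,k}$. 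Taking the infimum, and using that the distance from a point to a set equals the distance to its closure, yields $d_k(F_k(x), \overline{G_k}) \geq r_k$.

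For the inductive step, assume $d_\ell(F_\ell(x), \overline{G_\ell}) \geq r_k$. Combining \eqref{e:f(G)} with the fact that $f_\ell$ is a homeomorphism gives $f_\ell^{-1}(\overline{p_{\ell+1}(W_{\ell+1}^*)}) = \overline{G_\ell}$, and by \eqref{e:f-isom} the differential $Df_\ell$ is a pointwise isometry of the tangent norms at every point of $M_\ell \setminus \overline{G_\ell}$. For any smooth curve $\gamma \colon [0,1] \to M_{\ell+1}$ from $F_{\ell+1}(x)$ to some $y \in \overline{p_{\ell+1}(W_{\ell+1}^*)}$, set $t_1 \coloneqq \inf\{t : \gamma(t) \in \overline{p_{\ell+1}(W_{\ell+1}^*)}\}$. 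Since $F_{\ell+1}(x) = f_\ell(F_\ell(x))$ with $F_\ell(x) \notin \overline{G_\ell}$, we have $t_1 > 0$, and the curve $f_\ell^{-1} \circ \gamma|_{[0,t_1)}$ lies in $M_\ell \setminus \overline{G_\ell}$. The pointwise isometry then gives
\[ \int_0^1 \|\gamma'(t)\|_{M_{\ell+1},\gamma(t)}\,dt \;\geq\; \int_0^{t_1} \|(f_\ell^{-1}\circ\gamma)'(t)\|_{M_\ell,(f_\ell^{-1}\circ\gamma)(t)}\,dt \;\geq\; d_\ell(F_\ell(x), \overline{G_\ell}) \;\geq\; r_k, \]
since the pullback curve joins $F_\ell(x)$ to a point of $\overline{G_\ell}$. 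Infimizing over $\gamma$ and $y$ yields $d_{\ell+1}(F_{\ell+1}(x), \overline{p_{\ell+1}(W_{\ell+1}^*)}) \geq r_k$.

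To close the induction, observe that by Lemma \ref{l:C-U} and \eqref{e:h_ell3''},
\[ E_{\ell+1} \subseteq \bigcup_{i \in J_{\ell+1}} 3U^{i,\ell+1} \subseteq \bigcup_{i \in J_{\ell+1}} 6U^{i,\ell+1} \subseteq W_{\ell+1}^*, \]
so $G_{\ell+1} \subseteq p_{\ell+1}(W_{\ell+1}^*)$ and therefore $\overline{G_{\ell+1}} \subseteq \overline{p_{\ell+1}(W_{\ell+1}^*)}$; this upgrades the previous bound to $d_{\ell+1}(F_{\ell+1}(x), \overline{G_{\ell+1}}) \geq r_k$, completing the induction. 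For the conclusion, applying the inductive step from level $\ell - 1$ to level $\ell$ for any $\ell \geq k+1$ gives $d_\ell(F_\ell(x), \overline{p_\ell(W_\ell^*)}) \geq r_k$; since $\bigcup_{j \in J_\ell} 6D^{j,\ell} = p_\ell(\bigcup_j 6U^{j,\ell}) \subseteq p_\ell(W_\ell^*)$ by \eqref{e:h_ell3''}, the desired estimate follows with even the stronger constant $r_k \geq r_k/2$.

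The main obstacle is handling the path length at the first time $t_1$ when $\gamma$ reaches $\overline{p_{\ell+1}(W_{\ell+1}^*)}$: one must exploit the \emph{pointwise isometry} \eqref{e:f-isom} on the complement of the glue region rather than only the global $(1+C\ve)$-bi-Lipschitz estimate \eqref{e:f-bi-lip}, because a multiplicative factor $(1+C\ve)^{\ell-k}$ accumulated over iterations would destroy the uniform lower bound $r_k$ for large $\ell$.
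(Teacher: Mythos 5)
Your argument is correct and rests on the same two ingredients as the paper's proof: Lemma \ref{l:ball-D} to obtain the separation at level $k$, and the pointwise isometry \eqref{e:f-isom} (rather than the lossy bi-Lipschitz bound \eqref{e:f-bi-lip}) to pull curves back without loss as long as they avoid the glued regions, the required nesting being supplied in your version by \eqref{e:f(G)}, \eqref{e:h_ell3''} and Lemma \ref{l:C-U} and in the paper's by \eqref{e:nested}. The only difference is bookkeeping: the paper pushes the ball $B(F_k(x),r_k/2)$ forward through the maps $f_m$ and pulls a curve back from level $\ell$ to level $k$ in one step, whereas you run a level-by-level induction on the invariant $d_\ell(F_\ell(x),\overline{G_\ell})\geq r_k$ with a first-hitting-time truncation, which even gives the slightly stronger constant $r_k$ in place of $r_k/2$.
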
 

\begin{proof}
	Let $B_{k} = B(F_k(x),r_{k}/2)$ (the ball taken with respect to $d_{k}$) and, for $\ell \geq k+1$, let $B_\ell = f_{\ell-1} \circ \dots \circ f_{k+1}(B_{k}).$ Since $F_k(x) \not\in \bigcup_{j \in J_k} 5D^j$, we have by Lemma \ref{l:ball-D} that $d_k(F_k(x),\bigcup_{j \in J_k} 4D^j) \geq (1+C\ve)^{-1}r_k.$ It follows from the definition of $B_k$ that 
	\begin{align}
		\overline{B_k} \cap \bigcup_{j \in J_{k}} 4D^j = \emptyset. 
	\end{align}
	Thus, for $\ell \geq k+1,$ this and \eqref{e:nested} imply 
	\begin{align}\label{e:isom}
		\overline{B_\ell} \cap \bigcup_{j \in J_\ell} 6D^j \subseteq \overline{B_{\ell}} \cap p_{\ell}(W_\ell^*) = \emptyset
	\end{align}
	so that 
	\begin{align}\label{e:disjoint-2}
		{\dist}_\ell\left(F_\ell(x),\bigcup_{j \in J_{\ell}} 6D^j\right) \geq {\dist}_{\ell}\left(F_\ell(x),\partial(B_\ell)\right).
	\end{align}
	Let us estimate ${\dist}_{\ell}\left(F_\ell(x),\partial(B_\ell)\right)$ from below. Suppose $\gamma \colon [0,1] \to B_\ell$ is a smooth curve connecting $F_\ell(x)$ with $\partial(B_\ell)$. It follows that \[\tilde{\gamma} \coloneqq f_k^{-1} \circ \cdots \circ f_{\ell-1}^{-1} \circ \gamma \colon [0,1] \to B_k\]
	is a smooth curve from $F_k(x)$ to $\partial(B_k).$ By \eqref{e:isom}, for each $k \leq m \leq \ell -1$ and each $t \in [0,1]$ we have 
	\begin{align}
		f_m^{-1} \circ \cdots \circ f_{\ell-1}^{-1}(\gamma(t)) \in B_m \subseteq M_m \setminus p_{m}(W_m^*). 
	\end{align}
	This and \eqref{e:f-isom} imply
	\[ \| \tilde{\gamma}'(t) \|_{M_k,\tilde{\gamma}(t)} = \| \gamma'(t) \|_{M_\ell,\gamma(t)} \mbox{ for all } t \in [0,1]. \]
	Hence, 
	\begin{align}
		\int_0^1 \| \gamma'(t) \|_{M_\ell,\gamma(t)} \, dt = \int_0^1 \| \tilde{\gamma}'(t) \|_{M_k,\tilde{\gamma}(t)} \, dt \geq {\dist}_k(F_k(x),\partial(B_k)) = r_k/2. 
	\end{align} 
	Taking the infimum over all such paths implies ${\dist}_{\ell}\left(F_\ell(x),\partial(B_\ell)\right) \geq r_k/2$ which, when coupled with \eqref{e:disjoint-2}, finishes the proof of the lemma. 
\end{proof}

\begin{lem}\label{l:outside}
	Let $x,y \in \R^n$ and suppose $k \geq 0$ is the largest integer such $F_k(x),F_k(y) \in 6D^j$ for some $j \in J_k.$ If $d_k(F_k(x),F_k(y)) \leq r_{k+1}/2$ then 
	\begin{align}\label{e:outside1}
		F_{k+1}(x),F_{k+1}(y) \not\in \bigcup_{j \in J_{k+1}} 5D^j
	\end{align}
	and
	\begin{align}\label{e:outisde2}
		{\dist}_\ell\left(F_\ell(x), \bigcup_{j \in J_{\ell}} 6D^j\right), \  {\dist}_\ell\left(F_\ell(y), \bigcup_{j \in J_{\ell}} 6D^j\right) \geq r_{k+1}/2
	\end{align}
	for all $\ell \geq k+2,$ where $\dist_\ell$ is the distance in $M_\ell$ with respect to the path metric $d_\ell.$ 
\end{lem}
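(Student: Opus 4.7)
The plan is to reduce everything to the two previously established results: Lemma \ref{l:ball-D} (which constrains points near a chart region) and Lemma \ref{l:first} (which propagates an ``outside'' condition forward under the maps $f_\ell$). The maximality of $k$ is the crucial input — it guarantees that at level $k+1$, the images $F_{k+1}(x)$ and $F_{k+1}(y)$ cannot both lie in a common $6D^{j}$, and this combined with the quantitative closeness of these two points must force each of them individually to be far from all $5D^j$.

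First I would transfer the hypothesis on $d_k$ to a bound on $d_{k+1}$. Lemma \ref{l:f-bilip1} says $Df_k$ is $(1+C\ve)$-bi-Lipschitz pointwise between the Finsler norms on the tangent spaces; integrating along any smooth curve and taking the infimum in the definition \eqref{e:path-metric} of the path metric yields that $f_k \colon (M_k,d_k)\to (M_{k+1},d_{k+1})$ is $(1+C\ve)$-bi-Lipschitz. Consequently,
\[
d_{k+1}(F_{k+1}(x),F_{k+1}(y)) \leq (1+C\ve)\, d_k(F_k(x),F_k(y)) \leq (1+C\ve)\frac{r_{k+1}}{2}.
\]

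Next I would prove \eqref{e:outside1} by contradiction. Suppose $F_{k+1}(x)\in 5D^{j_1}$ for some $j_1\in J_{k+1}$. Choose $\mu$ slightly larger than $(1+C\ve)/2$ (for instance $\mu = (1+C\ve)/2 + \ve$), so that on one hand $d_{k+1}(F_{k+1}(x),F_{k+1}(y)) < \mu r_{k+1}$, and on the other hand $\nu := 5 + (1+C_*\ve)\mu < 6$ provided $\ve$ is taken sufficiently small. Lemma \ref{l:ball-D}, applied at level $k+1$ with $\lambda = 5$ and this $\mu$, then gives $F_{k+1}(y)\in \nu D^{j_1} \subseteq 6D^{j_1}$. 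But then both $F_{k+1}(x),F_{k+1}(y)\in 6D^{j_1}$, contradicting the maximality of $k$. The symmetric argument (swap the roles of $x$ and $y$) shows likewise that $F_{k+1}(y)\notin \bigcup_{j\in J_{k+1}}5D^j$, so \eqref{e:outside1} holds.

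Finally, \eqref{e:outisde2} is an immediate application of Lemma \ref{l:first} with its $k$ replaced by $k+1$: \eqref{e:outside1} verifies the hypothesis $F_{k+1}(x),F_{k+1}(y)\in M_{k+1}\setminus \bigcup_{j\in J_{k+1}}5D^j$, and the lemma then yields for every $\ell\geq k+2$
\[
\dist_\ell\!\left(F_\ell(x),\bigcup_{j\in J_\ell}6D^j\right),\; \dist_\ell\!\left(F_\ell(y),\bigcup_{j\in J_\ell}6D^j\right) \;\geq\; r_{k+1}/2,
\]
as required. There is no real obstacle here beyond bookkeeping with the small constants; the only point that requires mild care is choosing $\mu$ strictly larger than $(1+C\ve)/2$ so that Lemma \ref{l:ball-D} (which needs strict inequality $d_{k+1}(\cdot,\cdot)<\mu r_{k+1}$) applies, while still keeping $\nu<6$, which is possible because $\ve$ may be assumed arbitrarily small.
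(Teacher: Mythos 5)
Your proof is correct and follows essentially the same route as the paper: use Lemma \ref{l:f-bilip1} to bound $d_{k+1}(F_{k+1}(x),F_{k+1}(y))$, then Lemma \ref{l:ball-D} to contradict the maximality of $k$ if one of the points lay in some $5D^j$, and finally Lemma \ref{l:first} (applied at level $k+1$) to deduce \eqref{e:outisde2}. Your extra care in choosing $\mu$ so that the strict inequality in Lemma \ref{l:ball-D} applies while keeping $\nu<6$ is just a more explicit version of the paper's bound $3r_{k+1}/4$.
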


\begin{proof}
	The inequalities in \eqref{e:outisde2} follow immediately from Lemma \ref{l:first} and \eqref{e:outside1}. Let us see how to prove \eqref{e:outside1}. By symmetry we need only show that $F_{k+1}(x) \not\in \bigcup_{j \in J_{k+1}} 5D^j.$ Suppose towards a contradiction that there exists $j \in J_{\ell+1}$ such that $F_{k+1}(x) \in 5D^j$. By Lemma \ref{l:f-bilip1} we have 
	\begin{align}
		d_{k+1}(F_{k+1}(x),F_{k+1}(y)) \leq (1+C\ve)d_k(F_k(x),F_k(y)) \leq 3r_{k+1}/4. 
	\end{align} 
	Since $F_{k+1}(x) \in 5D^j,$ Lemma \ref{l:ball-D} implies $F_{k+1}(y) \in 6D^j$ which contradicts the maximality of $k$ and proves \eqref{e:outside1}. 
\end{proof}

\bigskip

\subsection{Construction of the metric $\rho$ and the map $g$}\label{s:metric-map}
We turn our attention now to proving Theorem \ref{t:Reif}.

We obtain $g$ as a limit of maps $g_\ell$ defined as follows. Recall the definition of $F_\ell$ from \eqref{e:map-F}. For $x \in \R^n$ and $\ell \geq 0$ we will denote 
\[ x_\ell = F_\ell(x).\]
For each $x \in 50B_{j_0}$, let
\begin{align}\label{e:ell(x)}
	\ell(x) =	\sup\left\{\ell \geq 0 : x_\ell \in \bigcup_{j \in J_\ell}6D^j\right\},
\end{align}
where we set the supremum over the empty set to be zero. Set $j_0(x) = j_0$ and for each $0 \leq \ell \leq \ell(x)$ choose an arbitrary index $j_\ell(x)$ such that $x_\ell \in 6D^{j_\ell(x)}.$ Then, for $\ell \geq 0$, set
\begin{align}\label{e:g-bar}
	g_\ell(x) = 
	\begin{cases}
		\away_{j_\ell(x)} \circ \psi_{j_\ell(x)}\circ F_\ell(x) &\mbox{ if } \ell \leq \ell(x); \\
		\away_{j_{\ell(x)}(x)}  \circ \psi_{j_{\ell(x)}(x)} \circ F_{\ell(x)}(x)  &\mbox{ if } \ell > \ell(x).	
	\end{cases}
\end{align}

See Figure \ref{f:figure-for-5.6}. 

\begin{figure}	
	\tikzset{every picture/.style={line width=0.75pt}} 
	
	\begin{tikzpicture}[x=0.75pt,y=0.75pt,yscale=-0.85,xscale=0.85]
		
		\draw   (23.7,65) -- (58,65) -- (43.3,92) -- (9,92) -- cycle ;

		\draw [color={rgb, 255:red, 0; green, 0; blue, 0 }  ,draw opacity=1 ][line width=0.75] [line join = round][line cap = round]   (99.05,71.68) .. controls (104.11,59.56) and (119.51,68.61) .. (125.36,70.06) .. controls (129.49,71.07) and (133.5,67.47) .. (137.57,66.15) .. controls (146.94,63.13) and (156.82,70.21) .. (156.29,82.41) .. controls (155.74,94.79) and (146.53,96.59) .. (137.57,95.75) .. controls (133.21,95.34) and (129.95,90.09) .. (125.64,89.24) .. controls (118.13,87.78) and (110.07,90.5) .. (102.85,87.62) .. controls (94.41,84.24) and (90.78,76.94) .. (99.05,71.68) ;

		\draw [color={rgb, 255:red, 0; green, 0; blue, 0 }  ,draw opacity=1 ][line width=0.75] [line join = round][line cap = round]   (217.87,71) .. controls (222.13,61.05) and (233.68,64.85) .. (239.72,68.99) .. controls (241.78,70.4) and (243.12,73.48) .. (245.46,74.03) .. controls (250.23,75.15) and (254.61,70.13) .. (258.97,67.64) .. controls (261.18,66.38) and (263.45,69.08) .. (265.01,69.99) .. controls (266.62,70.94) and (268.19,68) .. (269.89,67.3) .. controls (271.28,66.74) and (282.42,70.55) .. (283.4,71.34) .. controls (285.03,72.64) and (285.54,83.28) .. (285.13,84.79) .. controls (283.96,89.04) and (282,92.94) .. (280.24,96.9) .. controls (276.9,104.41) and (271.78,105.68) .. (264.43,104.3) .. controls (263.2,104.07) and (262.23,102.56) .. (260.98,102.62) .. controls (259.7,102.68) and (258.74,104.11) .. (257.54,104.64) .. controls (256.97,104.88) and (254.22,103.69) .. (253.8,103.63) ;

		\draw [color={rgb, 255:red, 0; green, 0; blue, 0 }  ,draw opacity=1 ][line width=0.75] [line join = round][line cap = round]   (253.8,103.63) .. controls (247.52,88.99) and (241.14,100.1) .. (234.84,97.7) .. controls (234.17,97.45) and (234.34,95.99) .. (233.65,95.79) .. controls (232.57,95.48) and (231.47,96.43) .. (230.35,96.43) .. controls (229.85,96.43) and (229.95,95.19) .. (229.46,95.15) .. controls (220.81,94.44) and (208.92,90.88) .. (209.99,78.24) .. controls (210.17,76.15) and (213.73,76.82) .. (215.38,75.68) .. controls (216.5,74.62) and (217.04,71.82) .. (217.87,71) ;

		\draw [color={rgb, 255:red, 0; green, 0; blue, 0 }  ,draw opacity=1 ][line width=0.75] [line join = round][line cap = round]   (404.93,71.33) .. controls (405.89,69.54) and (407.31,65.16) .. (410.03,65.89) .. controls (411.79,66.36) and (413.06,69.73) .. (414.77,69.06) .. controls (416.03,68.58) and (416.9,67.14) .. (418.05,66.34) .. controls (419.62,65.26) and (427.12,67.8) .. (429.35,66.8) .. controls (431.31,65.91) and (432.44,61.54) .. (434.45,62.26) .. controls (437.05,63.18) and (445.67,65.86) .. (449.03,64.07) .. controls (452.94,61.98) and (458.94,57.67) .. (463.6,59.54) .. controls (469,61.7) and (477.29,66.53) .. (476.36,75.87) .. controls (475.36,85.79) and (471.91,86.53) .. (466.52,92.2) .. controls (464.74,94.08) and (464.09,97.4) .. (462.14,99.01) .. controls (460.47,100.4) and (452.95,98.38) .. (450.85,98.56) .. controls (448.71,98.73) and (447.69,96.65) .. (445.38,97.19) .. controls (444.23,97.46) and (442.25,99.74) .. (440.64,99.01) .. controls (439.09,98.31) and (437.54,96.86) .. (435.91,97.19) .. controls (434.42,97.5) and (433.01,98.88) .. (431.53,98.56) .. controls (429.97,98.22) and (429.01,96.06) .. (427.53,95.38) .. controls (419.85,91.86) and (409.76,95.58) .. (402.38,91.3) .. controls (396.37,87.8) and (397.49,81.71) .. (400.19,76.32) .. controls (401.32,74.08) and (401.35,71) .. (404.93,71.33) ;
		
		\draw [color={rgb, 255:red, 0; green, 0; blue, 0 }  ,draw opacity=1 ]   (54,81) -- (88,81) ;
		\draw [shift={(90,81)}, rotate = 180] [color={rgb, 255:red, 0; green, 0; blue, 0 }  ,draw opacity=1 ][line width=0.75]    (10.93,-3.29) .. controls (6.95,-1.4) and (3.31,-0.3) .. (0,0) .. controls (3.31,0.3) and (6.95,1.4) .. (10.93,3.29)   ;
		\draw [color={rgb, 255:red, 0; green, 0; blue, 0 }  ,draw opacity=1 ]   (162,81) -- (201,81) ;
		\draw [shift={(203,81)}, rotate = 180] [color={rgb, 255:red, 0; green, 0; blue, 0 }  ,draw opacity=1 ][line width=0.75]    (10.93,-3.29) .. controls (6.95,-1.4) and (3.31,-0.3) .. (0,0) .. controls (3.31,0.3) and (6.95,1.4) .. (10.93,3.29)   ;
		\draw [color={rgb, 255:red, 0; green, 0; blue, 0 }  ,draw opacity=1 ]   (290,81) -- (329,81) ;
		\draw [shift={(331,81)}, rotate = 180] [color={rgb, 255:red, 0; green, 0; blue, 0 }  ,draw opacity=1 ][line width=0.75]    (10.93,-3.29) .. controls (6.95,-1.4) and (3.31,-0.3) .. (0,0) .. controls (3.31,0.3) and (6.95,1.4) .. (10.93,3.29)   ;
		\draw [color={rgb, 255:red, 0; green, 0; blue, 0 }  ,draw opacity=1 ]   (370,81) -- (390,81) ;
		\draw [shift={(392,81)}, rotate = 180] [color={rgb, 255:red, 0; green, 0; blue, 0 }  ,draw opacity=1 ][line width=0.75]    (10.93,-3.29) .. controls (6.95,-1.4) and (3.31,-0.3) .. (0,0) .. controls (3.31,0.3) and (6.95,1.4) .. (10.93,3.29)   ;
		\draw [color={rgb, 255:red, 0; green, 0; blue, 0 }  ,draw opacity=1 ]   (492,81) -- (524,81) ;
		\draw [shift={(526,81)}, rotate = 181.68] [color={rgb, 255:red, 0; green, 0; blue, 0 }  ,draw opacity=1 ][line width=0.75]    (10.93,-3.29) .. controls (6.95,-1.4) and (3.31,-0.3) .. (0,0) .. controls (3.31,0.3) and (6.95,1.4) .. (10.93,3.29)   ;
		
		\draw (450,189.5) -- (446.08,201.55) -- (435.83,209) -- (423.17,209) -- (412.92,201.55) -- (409,189.5) -- (412.92,177.45) -- (423.17,170) -- (435.83,170) -- (446.08,177.45) -- cycle ;
		\draw[shift={(-7,-1)}]     (361,236) -- (368,247) -- (360,265) -- (342,272) -- (332,260) -- (329,245) -- (336,236) -- (349,232) -- cycle ;
		\draw    (454,108) -- (443.41,159.04) ;
		\draw [shift={(443,161)}, rotate = 281.73] [color={rgb, 255:red, 0; green, 0; blue, 0 }  ][line width=0.75]    (10.93,-3.29) .. controls (6.95,-1.4) and (3.31,-0.3) .. (0,0) .. controls (3.31,0.3) and (6.95,1.4) .. (10.93,3.29)   ;
		\draw    (411,209) -- (366.8,230.14) ;
		\draw [shift={(365,231)}, rotate = 334.44] [color={rgb, 255:red, 0; green, 0; blue, 0 }  ][line width=0.75]    (10.93,-3.29) .. controls (6.95,-1.4) and (3.31,-0.3) .. (0,0) .. controls (3.31,0.3) and (6.95,1.4) .. (10.93,3.29)   ;
		\draw    (38,50) .. controls (78,20) and (397,13) .. (441,55) ;
		\draw [shift={(441,55)}, rotate = 218.93] [color={rgb, 255:red, 0; green, 0; blue, 0 }  ][line width=0.75]    (10.93,-4.9) .. controls (6.95,-2.3) and (3.31,-0.67) .. (0,0) .. controls (3.31,0.67) and (6.95,2.3) .. (10.93,4.9)   ;
		\draw    (39,111) .. controls (150,111) and (269.08,183.96) .. (322.21,226.37) ;
		\draw [shift={(323,227)}, rotate = 218.93] [color={rgb, 255:red, 0; green, 0; blue, 0 }  ][line width=0.75]    (10.93,-4.9) .. controls (6.95,-2.3) and (3.31,-0.67) .. (0,0) .. controls (3.31,0.67) and (6.95,2.3) .. (10.93,4.9)   ;
		\draw[shift={(-1,-2)}]   (476.5,82.75) -- (460.76,99.16) -- (435.29,92.89) -- (435.29,72.61) -- (460.76,66.34) -- cycle ;

		\draw (343,81) node [anchor=north west][inner sep=0.75pt]    {$...$};
		\draw (540,81) node [anchor=north west][inner sep=0.75pt]    {$...$};
		\draw (11,101.4) node [anchor=north west][inner sep=0.75pt]    {\hspace{-4em} \eqref{e:M_0} $M_{0}$};
		\draw (113,101.4) node [anchor=north west][inner sep=0.75pt]    {$M_{1}$};
		\draw (229,101.4) node [anchor=north west][inner sep=0.75pt]    {$M_{2}$};
		\draw (410,101.4) node [anchor=north west][inner sep=0.75pt]    {$M_{\ell }$};

		\draw (375,248.4) node [anchor=north west][inner sep=0.75pt]    {$6B^{j,\ell } \subset X$};
		\draw (454,185.4) node [anchor=north west][inner sep=0.75pt]    {$6B_{j,\ell } \subset \mathbb{R}^{n}$};
		\draw (374,198.4) node [anchor=north west][inner sep=0.75pt]    {$\alpha _{j}$};
		\draw (154,144.4) node [anchor=north west][inner sep=0.75pt]    {\hspace{-2em} $g_{\ell }$ \eqref{e:g-bar}};
		\draw (237,5.4) node [anchor=north west][inner sep=0.75pt]    {$F_{\ell }$ \eqref{e:map-F}};
		
		\draw (63,86.4) node [anchor=north west][inner sep=0.75pt]    {$f_{1}$};
		\draw (172,86.4) node [anchor=north west][inner sep=0.75pt]    {$f_{2}$};
		
		\draw (282,86.4) node [anchor=north west][inner sep=0.75pt]    {\eqref{e:def-f} $f_{3}$};
		
		\draw (437.3,75.01) node [anchor=north west][inner sep=0.75pt]    {$6D_{j}$};
		
		\draw (452,128.4) node [anchor=north west][inner sep=0.75pt]    {$\psi_j  \hspace{0.5em} 6D_j\to 6B_{j,\ell}$ \eqref{d:psi}};
		\draw (469,132) node [anchor=north west][inner sep=0.75pt]    {$.$};
		\draw (469.5,137.5) node [anchor=north west][inner sep=0.75pt]    {$.$};

	\end{tikzpicture}
	\caption{The map $g_\ell$}
	\label{f:figure-for-5.6}
\end{figure}

\begin{lem}\label{l:whereinX}
	Let $x \in 6B_{j_0}$, $0 \leq \ell \leq \ell(x)$ and $i \in J_{\ell}$ such that $x_\ell \in 6D^i$. Then, 
	\begin{align}\label{e:where1}
		d_X(g_\ell(x),\away_i \circ \psi_i(x_\ell)) \lesssim \ve r_\ell. 
	\end{align}
\end{lem}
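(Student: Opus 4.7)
The plan is to reduce the lemma directly to Lemma \ref{l:whereinX'}, which is the analogous statement one level down (on the manifold $W_\ell$ rather than on $M_\ell$). The key observation is that the map $\psi_j \colon 6D^j \to 6B_j$ is, by its very definition \eqref{d:psi}, nothing but the manifold chart $\vp_j$ transported through $p_\ell$; explicitly, $\psi_j = \vp_j \circ p_\ell^{-1}$.

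First I would set $w = p_\ell^{-1}(x_\ell) \in W_\ell$. By the hypothesis $x_\ell \in 6D^i$ and the definition of $j_\ell(x)$ as an index with $x_\ell \in 6D^{j_\ell(x)}$, together with \eqref{d:D}, we get
\begin{align}
  w = p_\ell^{-1}(x_\ell) \in 6U^i \cap 6U^{j_\ell(x)} \subseteq 8U^i \cap 8U^{j_\ell(x)}.
\end{align}
This places $w$ in exactly the intersection required by the hypothesis of Lemma \ref{l:whereinX'}.

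Next, if $\ell \geq 1$, I would apply Lemma \ref{l:whereinX'} at level $\ell$ with the pair of indices $i$ and $j_\ell(x)$ and the point $w$, obtaining
\begin{align}
  d_X\bigl(\away_i \circ \vp_i(w),\ \away_{j_\ell(x)} \circ \vp_{j_\ell(x)}(w)\bigr) \lesssim \ve r_\ell.
\end{align}
Substituting $\vp_j(w) = \vp_j(p_\ell^{-1}(x_\ell)) = \psi_j(x_\ell)$ (by \eqref{d:psi}) for $j \in \{i, j_\ell(x)\}$, and recalling that the case $\ell \leq \ell(x)$ of \eqref{e:g-bar} gives $g_\ell(x) = \away_{j_\ell(x)} \circ \psi_{j_\ell(x)}(x_\ell)$, the bound above reads exactly as the claimed estimate \eqref{e:where1}. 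The case $\ell = 0$ is trivial: by Theorem \ref{t:Reif}(1), $J_0 = \{j_0\}$, so necessarily $i = j_0 = j_0(x)$, and both sides of \eqref{e:where1} coincide.

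There is no real obstacle here, since all the geometric content — the fact that two charts $\alpha_i$ and $\alpha_j$ of nearby tangent patches agree in $X$ up to error $O(\ve r_\ell)$, via the Proposition \ref{p:coherent} transition map $\tilde I_{j,i}$ and the near-inverse property \eqref{e:almost-id} — is already encoded in Lemma \ref{l:whereinX'}. The only thing to verify carefully is the translation of the hypothesis ``$x_\ell \in 6D^i \cap 6D^{j_\ell(x)}$'' into the hypothesis ``$w \in 8U^i \cap 8U^{j_\ell(x)}$'' of that lemma, which is immediate from the definitions \eqref{e:U}, \eqref{d:D}, \eqref{d:psi}.
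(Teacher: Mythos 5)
Your proof is correct and in substance identical to the paper's: the paper proves the lemma by re-running on $M_\ell$ exactly the computation behind Lemma \ref{l:whereinX'} (using Lemma \ref{l:tran-psi} to write $\psi_{j_\ell(x)}(x_\ell) = \tilde{I}_{j_\ell(x),i}(\psi_i(x_\ell))$ and then invoking Proposition \ref{p:coherent} together with \eqref{e:almost-id}), whereas you transport the point through $p_\ell^{-1}$ and cite Lemma \ref{l:whereinX'} directly via the identity $\psi_j = \vp_j \circ p_\ell^{-1}$. Your reduction is valid — the hypothesis translation $x_\ell \in 6D^i \cap 6D^{j_\ell(x)}$ giving $p_\ell^{-1}(x_\ell) \in 6U^i \cap 6U^{j_\ell(x)} \subseteq 8U^i \cap 8U^{j_\ell(x)}$ is exactly as you say — and it is a slightly cleaner packaging of the same argument.
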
 

\begin{proof}
	Let $j = j_\ell(x).$ By definition $\psi_j(x_\ell) = I_{j,i}(\psi_i(x_\ell)) = \tilde{I}_{j,i}(\psi_j(x_\ell)) \in 6B_j.$ By \eqref{e:almost-id}, and Proposition \ref{p:coherent} we have $d_X(\away_j(\toward_j(x)),x) \leq \delta r_\ell \leq \ve r_\ell$ for all $x \in 10B^j$ and $\| \tilde{I}_{j,i}(x) - \toward_j\circ\away_i(x)\|_j \lesssim \ve r_\ell$ for all $x \in 8B_i.$ Also, by definition we have $\psi_i(x_\ell) \in 8B_i.$ Combing the above, we get  
	\begin{align}
		d_X(g_\ell(x),\away_i \circ \psi_i(x_\ell))  &=  d_X(\away_j(\psi_j(x_\ell)), \away_i(\psi_i(x_\ell))) \\
		&= d_X(\away_j(\tilde{I}_{j,i}(\psi_i(x_\ell))), \away_i(\psi_i(x_\ell))) \\
		&\leq d_X( \away_j(\toward_j(\away_i(\psi_i(x_\ell)))), \away_i(\psi_i(x_\ell))) +C\ve r_\ell \lesssim \ve r_\ell. 
	\end{align}
\end{proof}

\begin{lem}\label{l:gbar}
	For each $\ell \geq 0$ and $x \in 50B_{j_0}$ we have 
	\begin{align}
		d_X(g_\ell(x),g_{\ell+1}(x)) \lesssim \ve r_\ell. 
	\end{align}
	In particular, the limit $g \coloneqq \lim_{\ell \to \infty} g_\ell \colon 50B_{j_0} \to X$ exists, takes values in $50B^{j_0},$ and satisfies
	\[ d_X(g(x),g_\ell(x)) \lesssim \ve r_\ell.\]
\end{lem}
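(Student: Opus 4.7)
\textbf{Proof plan for Lemma \ref{l:gbar}.}
The proof splits into two cases according to the position of $\ell$ relative to $\ell(x)$. If $\ell \geq \ell(x)$ then, by the second branch of \eqref{e:g-bar}, both $g_\ell(x)$ and $g_{\ell+1}(x)$ equal $g_{\ell(x)}(x)$, so there is nothing to prove. The substance of the argument is the case $\ell < \ell(x)$, in which both $g_\ell(x)$ and $g_{\ell+1}(x)$ come from the first branch of \eqref{e:g-bar}.

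Set $i = j_\ell(x) \in J_\ell$ and $j = j_{\ell+1}(x) \in J_{\ell+1}$, so that $x_\ell \in 6D^{i,\ell}$ and $x_{\ell+1} \in 6D^{j,\ell+1}$. My first step will be to locate $x_\ell$ with respect to the ancestor $i(j) \in J_\ell$ from \eqref{e:i(j)}. Since $f_\ell(x_\ell) = x_{\ell+1} \in 6D^{j,\ell+1}$, Lemma \ref{l:inclusion} gives $x_\ell \in 3D^{i(j),\ell}$, so $x_\ell \in 3D^{i(j),\ell} \cap 6D^{i,\ell}$. Applying Lemma \ref{l:f-good} I can write
\begin{equation*}
\psi_j(x_{\ell+1}) \;=\; \hat H_j \circ K_{j,i(j)} \circ \psi_{i(j)}(x_\ell).
\end{equation*}

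Next I will reduce $g_{\ell+1}(x) = \alpha_j(\psi_j(x_{\ell+1}))$ to $g_\ell(x)$ in four elementary steps, each introducing an error of order $\ve r_\ell$. First, \eqref{e:h_ell3} from Proposition \ref{p:h_ell} allows me to replace $\hat H_j$ by the identity at a cost $\lesssim \ve r_\ell$ (after noting that $\alpha_j$ is a $\delta r_{\ell+1}$-GHA, hence Lipschitz-like on any bounded set). Second, \eqref{e:K1} of Lemma \ref{l:K-alpha} lets me replace $K_{j,i(j)}$ by $\toward_j \circ \away_{i(j)}$, again at cost $\lesssim \ve r_\ell$; here I must check that $\psi_{i(j)}(x_\ell) \in 45 B_{i(j)}$, which follows because $x_\ell \in 3D^{i(j),\ell}$ and Lemma \ref{l:psi-bi-lip} puts $\psi_{i(j)}(x_\ell) \in 3B_{i(j)}$. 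Third, \eqref{e:almost-id} collapses $\alpha_j \circ \toward_j$ to the identity on points inside $10 B^j$ up to error $\delta r_{\ell+1} \leq \ve r_\ell$; the relevant point $\away_{i(j)}(\psi_{i(j)}(x_\ell))$ lies in the appropriate ball by \eqref{e:dist-sim} and the containment $x_\ell \in 3D^{i(j),\ell}$, combined with the proximity $d(x_{i(j)}, x_j) \leq 2 r_\ell$ from \eqref{e:i(j)}. After these three reductions, $g_{\ell+1}(x)$ differs from $\alpha_{i(j)}(\psi_{i(j)}(x_\ell))$ by $\lesssim \ve r_\ell$. Fourth, since $p_\ell^{-1}(x_\ell) \in 6U^{i,\ell} \cap 3U^{i(j),\ell} \subseteq 8U^{i,\ell} \cap 8U^{i(j),\ell}$, Lemma \ref{l:whereinX'} gives $d_X(\alpha_{i(j)}(\psi_{i(j)}(x_\ell)),\alpha_i(\psi_i(x_\ell))) \lesssim \ve r_\ell$, and the right-hand side equals $g_\ell(x)$. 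Chaining the four estimates by the triangle inequality gives $d_X(g_\ell(x), g_{\ell+1}(x)) \lesssim \ve r_\ell$.

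Once the one-step estimate is in hand, the ``in particular'' clause follows by a standard Cauchy argument: for $m > \ell$,
\begin{equation*}
d_X(g_\ell(x), g_m(x)) \;\leq\; \sum_{k=\ell}^{m-1} d_X(g_k(x), g_{k+1}(x)) \;\lesssim\; \ve \sum_{k\geq \ell} r_k \;=\; \ve \sum_{k\geq \ell} 10^{-k} \;\lesssim\; \ve r_\ell,
\end{equation*}
so $\{g_\ell(x)\}$ is Cauchy in $X$, the limit $g(x)$ exists, and $d_X(g(x), g_\ell(x)) \lesssim \ve r_\ell$. Taking $\ell = 0$ and noting that $g_0(x) = \alpha_{j_0}(\psi_{j_0}(x)) = \alpha_{j_0}(x)$ (since $\psi_{j_0} = \mathrm{Id}$ by \eqref{e:M_0} and Remark \ref{l:W_0}) shows, via \eqref{e:dist-sim}, that $g_0$ maps $50 B_{j_0}$ into (essentially) $50 B^{j_0}$, and since $\ve$ is small the limit $g$ still takes values in $50 B^{j_0}$.

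The main technical annoyance, rather than obstacle, is the bookkeeping in Step 2 of the above reduction — verifying at each substitution that the relevant point lies in the domain where the GHAs $\alpha, \beta$, the affine map $K$, and the bump-corrected diffeomorphism $\hat H_j$ are well-behaved. This is routine given \eqref{e:dist-sim}, \eqref{e:almost-id}, Lemma \ref{l:psi-bi-lip}, Lemma \ref{l:inclusion}, and the tree compatibility \eqref{e:i(j)}, but it is where one must be careful not to lose control of the constants.
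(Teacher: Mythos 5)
Your argument is correct and is essentially the paper's own proof: the same trivial-case reduction, the identity $g_{\ell+1}(x)=\alpha_j\circ\hat H_j\circ K_{j,i(j)}\circ\psi_{i(j)}(x_\ell)$ from Lemma \ref{l:f-good}, the same three substitutions via \eqref{e:h_ell3}, Lemma \ref{l:K-alpha} and \eqref{e:almost-id}, the comparison of the charts at $i$ and $i(j)$ (the paper quotes Lemma \ref{l:whereinX}, you re-derive it from Lemma \ref{l:whereinX'}, which is equivalent), and the telescoping sum. The only slip is cosmetic: the estimate you need when swapping $K_{j,i(j)}$ for $\toward_j\circ\away_{i(j)}$ is \eqref{e:K2} rather than \eqref{e:K1}, with the corresponding domain condition, exactly as the paper uses it.
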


\begin{proof}
	If $x \in 50B_{j_0} \setminus 6B_{j_0}$ then $g_\ell(x) = g_0(x)$ for all $\ell \geq 0$ and there is nothing to show. Suppose then that $x \in 6B_{j_0}$ and fix some $\ell \geq 0$. If $\ell +1 > \ell(x)$ then $g_{\ell+1}(x) = g_\ell(x)$ and there is nothing to show. Let us suppose instead that $\ell +1 \leq \ell(x)$ so that $f_\ell(x_\ell) = x_{\ell+1} \in \bigcup_{j \in J_{\ell+1}} 6D^j.$ Let $j = j_{\ell+1}(x)$ as above \eqref{e:g-bar}. It follows from Lemma \ref{l:f-good} and \eqref{e:g-bar} that 
	\begin{align}
		g_{\ell+1}(x) = \alpha_j \circ \psi_j \circ f_\ell(x_\ell) = \alpha_j \circ \hat{H}_j \circ K_{j,i(j)} \circ \psi_{i(j)}(x_\ell).
	\end{align}
	Using this with \eqref{e:almost-id}, \eqref{e:h_ell3} and Lemma \ref{l:K-alpha}, we have 
	\begin{align}
		d_X(g_{\ell+1}(x),\away_{i(j)}(\psi_{i(j)}(x_\ell))) &\leq d_X(g_{\ell+1}(x) , \away_j \circ \toward_j \circ \away_{i(j)} \circ \psi_{i(j)}(x_\ell)) + C \ve r_{\ell} \\
		&\leq d_X(g_{\ell+1}(x) , \away_j \circ K_{j,i(j)} \circ \psi_{i(j)}(x_\ell)) + C \ve r_{\ell} \\
		&\leq d_X(g_{\ell+1}(x) , \away_j \circ H_j \circ K_{j,i(j)} \circ \psi_{i(j)}(x_\ell)) + C \ve r_{\ell}\\
		&\lesssim \ve r_\ell. 
	\end{align}
	By Lemma \ref{l:whereinX}, $d_X(g_\ell(x),\away_{i(j)}(\psi_{i(j)}(x_\ell)) \lesssim \ve r_{\ell}$ and the result now follow from the triangle inequality. 
\end{proof}

We now construct the semi-metric $\phi_\infty$ and the metric $\rho.$ We begin by defining a sequence of semi-metrics $\phi_\ell$ on $M_\ell$ as follows. For $\ell = 0$ and $x,y \in \R^n$ set 
\begin{align}\label{e:rho_0}
	\phi_0(x,y) \coloneqq \|x-y\|_{j_0}.
\end{align}
Let $\ell \geq 1$ and suppose we have defined $\phi_\ell$ on $M_\ell$. We define $\phi_{\ell+1}$ on $M_{\ell+1}$ by setting 
\begin{align}\label{d:rho}
	\phi_{\ell+1}(x,y) = 
	\begin{cases}
		d_{{\ell+1}}(x,y) & \mbox{ if } x,y \in 6D^j \mbox{ for some } j \in J_{\ell+1}; \\
		\phi_{\ell}(f_\ell^{-1}(x),f_\ell^{-1}(y)) & \mbox{ otherwise. } 
	\end{cases}
\end{align}
Recall the definition of $F_\ell$ in \eqref{e:map-F} and let $\bar{\phi}_\ell$ denote the pull-back of $\phi_\ell$ by $F_\ell$, that is,  
\begin{align}\label{e:bar-phi}
	\bar{\phi}_\ell = F^*_\ell \phi_\ell.
\end{align}

\begin{lem}\label{l:phi-cauchy}
	If $\ell \geq 0$ and $x,y \in M_\ell$ then
	\begin{align}\label{e:phi-cauchy}
		| \phi_{\ell+1}(f_\ell(x),f_{\ell}(y))  - \phi_\ell(x,y) | \lesssim \ve \min\{\phi_\ell(x,y),r_\ell\}. 
	\end{align}
	Furthermore, the limit $\phi_\infty \coloneqq \lim_{\ell \to \infty} \bar\phi_\ell$ exists and for each $\ell \geq 0$ and $w,z \in \R^n$ we have 
	\begin{align}\label{e:infty-ell}
		| \phi_\infty(x,y) - \phi_\ell(x_\ell,y_\ell) | \lesssim \ve r_\ell. 
	\end{align}
\end{lem}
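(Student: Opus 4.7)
The plan is to prove (1) by a case split based on which branch of the definition \eqref{d:rho} applies, then derive (2) and (3) from (1) via a telescoping geometric series.

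For (1), fix $x,y \in M_\ell$. If $f_\ell(x)$ and $f_\ell(y)$ do not lie in a common $6D^j$ with $j \in J_{\ell+1}$, then the second branch of \eqref{d:rho} gives $\phi_{\ell+1}(f_\ell(x), f_\ell(y)) = \phi_\ell(f_\ell^{-1}(f_\ell(x)), f_\ell^{-1}(f_\ell(y))) = \phi_\ell(x,y)$, and there is nothing to prove. Otherwise, let $j \in J_{\ell+1}$ with $f_\ell(x), f_\ell(y) \in 6D^j$. The first branch of \eqref{d:rho} at level $\ell+1$ then gives $\phi_{\ell+1}(f_\ell(x), f_\ell(y)) = d_{\ell+1}(f_\ell(x), f_\ell(y))$. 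By Lemma \ref{l:inclusion}, $6D^j \subseteq f_\ell(3D^{i(j)})$, so $x,y \in 3D^{i(j)} \subseteq 6D^{i(j)}$; hence if $\ell \geq 1$ the first branch of \eqref{d:rho} at level $\ell$ yields $\phi_\ell(x,y) = d_\ell(x,y)$, while if $\ell = 0$ the same identity $\phi_0(x,y) = \|x-y\|_{j_0} = d_0(x,y)$ holds because the Finsler norm on $M_0$ defined in \eqref{e:norm-0} is the norm $\|\cdot\|_{j_0}$ itself, whose induced path metric is $\|\cdot\|_{j_0}$.

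It remains to compare the path metrics on either side of $f_\ell$. Lemma \ref{l:f-bilip1} shows that $Df_\ell$ is $(1+C\ve)$-bi-Lipschitz between the Finsler norms; integrating this estimate along any smooth curve $\gamma$ in $M_\ell$ (resp.~$M_{\ell+1}$) and its image shows that $f_\ell \colon (M_\ell, d_\ell) \to (M_{\ell+1}, d_{\ell+1})$ is itself $(1+C\ve)$-bi-Lipschitz, so
\[
|d_{\ell+1}(f_\ell(x), f_\ell(y)) - d_\ell(x,y)| \lesssim \ve\, d_\ell(x,y) = \ve\, \phi_\ell(x,y).
\]
Finally, Lemma \ref{l:psi-bi-lip} applied to the chart $\psi_{i(j)}$ gives $d_\ell(x,y) \leq (1+C\ve)\|\psi_{i(j)}(x) - \psi_{i(j)}(y)\|_{i(j)} \lesssim r_\ell$ since $\psi_{i(j)}(x), \psi_{i(j)}(y) \in 3B_{i(j)}$, so the error is also $\lesssim \ve r_\ell$. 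This establishes (1).

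For (2) and (3), set $w = F_\ell(x)$ and $z = F_\ell(y)$ and apply (1) with these points; using $F_{\ell+1} = f_\ell \circ F_\ell$, we get
\[
|\bar\phi_{\ell+1}(x,y) - \bar\phi_\ell(x,y)| = |\phi_{\ell+1}(f_\ell(w), f_\ell(z)) - \phi_\ell(w,z)| \lesssim \ve\, r_\ell.
\]
Since $\sum_{k \geq \ell} r_k = \tfrac{10}{9} r_\ell$, the sequence $\{\bar\phi_\ell\}$ is uniformly Cauchy on $\R^n \times \R^n$, so the limit $\phi_\infty$ exists, and summing the geometric series,
\[
|\phi_\infty(x,y) - \bar\phi_\ell(x,y)| \leq \sum_{k=\ell}^{\infty} |\bar\phi_{k+1}(x,y) - \bar\phi_k(x,y)| \lesssim \ve \sum_{k=\ell}^{\infty} r_k \lesssim \ve\, r_\ell,
\]
which is \eqref{e:infty-ell} after recalling $\bar\phi_\ell(x,y) = \phi_\ell(x_\ell, y_\ell)$.

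The only non-routine point is the observation, supplied by Lemma \ref{l:inclusion}, that the first branch of \eqref{d:rho} at level $\ell+1$ forces $x,y$ into a common $3D^{i(j)}$ at level $\ell$, which in turn forces the first branch of \eqref{d:rho} at level $\ell$. This bookkeeping is what allows the bi-Lipschitz comparison of Lemma \ref{l:f-bilip1} to be applied with both $\phi_\ell = d_\ell$ and $\phi_{\ell+1} = d_{\ell+1}$.
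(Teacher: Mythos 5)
Your proposal is correct and follows essentially the same route as the paper's proof: the same case split on the branches of \eqref{d:rho}, the same use of Lemma \ref{l:inclusion} to place $x,y$ in $3D^{i(j)}$ so that both sides become path distances, the same bi-Lipschitz comparison via Lemma \ref{l:f-bilip1}, the bound $d_\ell(x,y)\lesssim r_\ell$ from $x,y\in 3D^{i(j)}$, and the same telescoping of $\bar\phi_\ell$ using $\sum_{m\geq \ell} r_m \lesssim r_\ell$. Your explicit handling of the $\ell=0$ case (where $\phi_0=\|\cdot\|_{j_0}=d_0$) is a minor extra care the paper leaves implicit.
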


\begin{proof}
	If there does not exist $j\in J_{\ell+1}$ such that $f_\ell(x),f_\ell(y) \in 6D^j$ then \eqref{e:phi-cauchy} is immediate from \eqref{d:rho}. Suppose instead that there exists $j \in J_{\ell+1}$ such that $f_\ell(x),f_\ell(y) \in 6D^j$. Lemma \ref{l:inclusion} gives $x,y \in 3D^{i(j)}$ so that, using \eqref{d:rho} and Lemma \ref{l:f-bilip1}, we have 
	\begin{align}
		| \phi_{\ell+1}(f_\ell(x),f_{\ell}(y))  - \phi_\ell(x,y) |  &= | d_{\ell+1}(f_\ell(x),f_{\ell}(y))  - d_\ell(x,y) | \\
		&\lesssim \ve d_\ell(x,y) = \ve \phi_\ell(x,y).
	\end{align}
	Since $x,y \in 3D^{i(j)}$ we also have $d_\ell(x,y) \lesssim r_\ell$. This and the above estimate complete the proof of \eqref{e:phi-cauchy}. To see that $\phi_\infty$ exists, we observe from \eqref{e:bar-phi} and \eqref{e:phi-cauchy} that  
	\begin{align}
		| \bar{\phi}_{\ell+1}(x,y) - \bar{\phi}_\ell(w,z) | = | \phi_{\ell+1}(f_\ell(F_\ell(x)),f_\ell(F_\ell(y))) - \phi_\ell(F_\ell(x),F_\ell(y)) | \lesssim \ve r_\ell. 
	\end{align} 
	From this we also get \eqref{e:infty-ell}, noting that $\sum_{m = \ell}^\infty r_m \lesssim r_\ell.$ 
\end{proof}

\begin{lem}\label{l:k(x,y)}
	For $x,y \in \R^n$ let 
	\[ k(x,y) = \sup\{\ell \geq 0 : x_k,y_k \in 6D^j \mbox{ for some } j \in J_\ell\}.\]
	For each $\ell \geq k(x,y)$ we have 
	\begin{align}\label{e:phi_infty1}
		\phi_\infty(x,y) = \bar{\phi}_\ell(x,y) = \phi_\ell(x_\ell,y_\ell) = d_\ell(x_\ell,y_\ell).
	\end{align}
\end{lem}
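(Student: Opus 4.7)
Let $k=k(x,y)$. The middle equality $\bar\phi_\ell(x,y)=\phi_\ell(x_\ell,y_\ell)$ is immediate from the pull-back definition \eqref{e:bar-phi} and the notation $x_\ell=F_\ell(x)$, so the content lies in the outer two equalities.

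To obtain the left equality I will show that the sequence $\bar\phi_\ell(x,y)$ is constant in $\ell$ for $\ell\geq k$. Indeed, by the supremum definition of $k$, for every $\ell>k$ there is no $j\in J_\ell$ with $x_\ell,y_\ell\in 6D^j$, so the second clause of \eqref{d:rho} gives $\phi_\ell(x_\ell,y_\ell)=\phi_{\ell-1}(x_{\ell-1},y_{\ell-1})$, i.e.\ $\bar\phi_\ell=\bar\phi_{\ell-1}$. The existence of $\phi_\infty=\lim_\ell\bar\phi_\ell$ from Lemma \ref{l:phi-cauchy} then forces $\phi_\infty(x,y)=\bar\phi_\ell(x,y)$ for every $\ell\geq k$.

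For the right equality the case $\ell=k$ is immediate: either $x_k,y_k$ lie in a common $6D^j$ (so the first clause of \eqref{d:rho} yields $\phi_k(x_k,y_k)=d_k(x_k,y_k)$ directly), or $k=0$ with the supremum empty, in which case \eqref{e:rho_0} together with $d_0$ being the metric induced by $\|\cdot\|_{j_0}$ on $M_0=\R^n_{j_0}$ gives the same conclusion. For $\ell>k$, iterating the recursion in the previous paragraph gives $\phi_\ell(x_\ell,y_\ell)=\phi_k(x_k,y_k)=d_k(x_k,y_k)$, so the remaining identity to establish is
\begin{equation}
d_\ell(x_\ell,y_\ell)=d_k(x_k,y_k). \tag{$\star$}
\end{equation}
I plan to prove $(\star)$ by invoking Lemma \ref{l:outside} together with (the argument in) the proof of Lemma \ref{l:first}: the proof of Lemma \ref{l:first} actually shows more than its statement, namely that suitable balls around $x_m,y_m$ are disjoint from all of $p_m(W_m^*)$ (not only from $\bigcup_j 6D^j$), so the isometry formula \eqref{e:f-isom} of Lemma \ref{l:f-bilip1} applies along any near-geodesic lying in such a neighbourhood. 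Pulling a near-geodesic realising $d_\ell(x_\ell,y_\ell)$ back through $f_{\ell-1}^{-1}\circ\cdots\circ f_k^{-1}$ then preserves its length, and pushing a near-geodesic realising $d_k(x_k,y_k)$ forward gives the reverse inequality, yielding $(\star)$.

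The main obstacle is guaranteeing that the near-geodesics in question can be chosen to remain in the safe region throughout. When $d_k(x_k,y_k)\leq r_{k+1}/2$, Lemma \ref{l:outside} delivers the hypotheses of Lemma \ref{l:first} (with index shifted to $k+1$), and the strengthened conclusion from that proof supplies the desired chart-free neighbourhoods of $x_m,y_m$ for all $m\geq k+1$. The complementary case, where $d_k(x_k,y_k)$ is already comparable to $r_k$ and $x_k,y_k$ lie in a common chart $6D^j$, is handled by replacing arbitrary near-geodesics by the explicit curve obtained as the $\psi_j$-preimage of the straight segment from $\psi_j(x_k)$ to $\psi_j(y_k)$ inside $6B_j$ whose length is controlled via Lemma \ref{l:psi-bi-lip}, and then tracking the image of this curve through the successive $f_m$'s using the chart-containment \eqref{e:nested} and the radius bookkeeping of Lemma \ref{l:ball-D}.
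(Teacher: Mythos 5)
Your first two paragraphs (the middle equality, the constancy of $\bar\phi_\ell$ for $\ell\geq k$ via the second clause of \eqref{d:rho}, and the evaluation at level $k$ via the first clause, resp.\ \eqref{e:rho_0} when $k=0$) are correct, and this is all the paper's own one-line proof (``immediate from \eqref{d:rho}'') actually establishes: the chain of equalities with the distance taken at level $k(x,y)$, i.e.\ $\phi_\infty(x,y)=\bar\phi_\ell(x,y)=\phi_\ell(x_\ell,y_\ell)=d_{k(x,y)}(x_{k(x,y)},y_{k(x,y)})$, which is also the only form ever used later (e.g.\ \eqref{l:useful}, \eqref{e:ob'}, Lemma \ref{l:ball-D-phi}).

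The gap is your third step: you treat the literal reading ``$=d_\ell(x_\ell,y_\ell)$ for every $\ell>k$'' as a genuine extra claim $(\star)$, $d_\ell(x_\ell,y_\ell)=d_k(x_k,y_k)$, and your argument for it is only a plan that cannot be completed, because the exact identity is not available from the construction (and appears false in general). The maps $f_m$ are only $(1+C\ve)$-bi-Lipschitz (Lemma \ref{l:f-bilip1}); the isometry statement \eqref{e:f-isom} holds only at points mapped outside $\overline{p_{m+1}(W_{m+1}^*)}$, and the Finsler norm is genuinely re-averaged through the partition of unity \eqref{d:F} on all of $p_{m+1}(W_{m+1}^*)$, a set strictly larger than $\bigcup_j 6D^j$. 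In your ``small'' case, Lemma \ref{l:outside} only places $x_{k+1},y_{k+1}$ outside $\bigcup_j 5D^{j}$, which does not put them, nor the connecting near-geodesics, outside $\overline{p_{k+1}(W_{k+1}^*)}$, so already the first step $d_{k+1}(x_{k+1},y_{k+1})=d_k(x_k,y_k)$ is unjustified. In your ``complementary'' case ($d_k(x_k,y_k)\sim r_k$) the region between $x_\ell$ and $y_\ell$ keeps being re-covered by level-$m$ charts for every $k<m\leq\ell$ (cf.\ \eqref{e:nested}, \eqref{e:h_ell4}), where the metric changes by factors up to $1+C\ve$ at each level; moreover your explicit curve through $\psi_j$ has length only $(1+C\ve)$-comparable to $d_k(x_k,y_k)$, since Lemma \ref{l:psi-bi-lip} is bi-Lipschitz, not isometric, so no exact equality can come out of it. The correct resolution is not to chase $(\star)$: read (and prove) the lemma with the final quantity at level $k(x,y)$; for $\ell>k(x,y)$ one only gets $d_\ell(x_\ell,y_\ell)$ up to the errors of Lemma \ref{l:f-bilip1} or \eqref{e:infty-ell}, which is all the subsequent arguments need.
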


\begin{proof}
	This is immediate from \eqref{d:rho}
\end{proof}

\begin{lem}\label{l:phi_infty}
	Let $x,y \in 50B_{j_0}$ and $k(x,y)$ as in Lemma \ref{l:k(x,y)}. Then, 
	\begin{align}\label{e:phi_infty2}
		| d_X(g(x),g(y)) - \phi_\infty(x,y) | \lesssim \ve r_{k(x,y)}
	\end{align}	
	Furthermore, if $x_\ell \in \bigcup_{j \in J_\ell} 6D^j$ for some $\ell \geq 0$ then 
	\begin{align}\label{e:phi_infty3}
		| d_X(g(x),g(y)) - \phi_\infty(x,y) | \lesssim \ve \left(\phi_\infty(x,y) + r_\ell\right). 
	\end{align}	
\end{lem}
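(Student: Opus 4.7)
The plan is to deduce both inequalities from a single chain of approximations at the maximum common chart scale $k = k(x,y)$, exploiting the properties of $g_k$, $\psi_j$, and $\alpha_j$ assembled earlier in the section.

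\textbf{First inequality.} Fix $j \in J_k$ with $x_k, y_k \in 6D^j$, which exists by the definition of $k$, and recall from Lemma \ref{l:k(x,y)} that $\phi_\infty(x,y) = d_k(x_k, y_k)$. I will estimate $d_X(g(x), g(y))$ by running the triangle inequality through the intermediate points $g_k(x), g_k(y)$ and then through $\alpha_j(\psi_j(x_k)), \alpha_j(\psi_j(y_k))$. The first link contributes $O(\ve r_k)$ by Lemma \ref{l:gbar}, and the second by Lemma \ref{l:whereinX} applied with $i = j$ at scale $k$ (which applies since $k \leq \ell(x)$, and since $k \geq 1$ forces $x \in 6B_{j_0}$ via Lemma \ref{l:first}, the case $k=0$ being direct). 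Since $\psi_j(x_k), \psi_j(y_k) \in 6B_j \subseteq 100B_j$ and $\alpha_j$ is a $\delta r_k$-GHA with $\delta \leq \ve$, the GHA property yields
\begin{align*}
\bigl| d_X\bigl(\alpha_j \circ \psi_j(x_k), \alpha_j \circ \psi_j(y_k)\bigr) - \|\psi_j(x_k) - \psi_j(y_k)\|_j \bigr| \leq \ve r_k.
\end{align*}
Finally, Lemma \ref{l:psi-bi-lip} (stating that $\psi_j \colon 6D^j \to 6B_j$ is $(1+C\ve)$-bi-Lipschitz when $6D^j$ carries $d_k$) replaces the norm distance by $d_k(x_k, y_k) = \phi_\infty(x,y)$ with an error of $C\ve\, d_k(x_k, y_k) \leq C\ve r_k$, since both points lie in a single chart of $d_k$-diameter $\sim r_k$. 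Summing the four error terms gives \eqref{e:phi_infty2}.

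\textbf{Second inequality.} Fix $\ell \geq 0$ with $x_\ell \in \bigcup_{j \in J_\ell} 6D^j$ and compare $\ell$ with $k$. If $\ell \leq k + 1$, then $r_k \leq 10 r_\ell$, and the first inequality immediately yields
\begin{align*}
|d_X(g(x), g(y)) - \phi_\infty(x,y)| \lesssim \ve r_k \leq 10\ve r_\ell \leq C\ve \bigl( \phi_\infty(x,y) + r_\ell \bigr).
\end{align*}
If instead $\ell \geq k + 2$, the condition $F_\ell(x) \in 6D^{j^*}$ for some $j^* \in J_\ell$ gives $\dist_\ell\bigl(F_\ell(x), \bigcup_{j \in J_\ell} 6D^j\bigr) = 0$, which contradicts the conclusion \eqref{e:outisde2} of Lemma \ref{l:outside}. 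Hence the hypothesis $d_k(x_k, y_k) \leq r_{k+1}/2$ of that lemma must fail, so $\phi_\infty(x,y) = d_k(x_k, y_k) > r_{k+1}/2 = r_k/20$. Therefore $r_k \lesssim \phi_\infty(x,y)$, and the first inequality produces $|d_X(g(x), g(y)) - \phi_\infty(x,y)| \lesssim \ve r_k \lesssim \ve\, \phi_\infty(x,y) \leq C\ve(\phi_\infty(x,y) + r_\ell)$.

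I do not anticipate a real obstacle: the first inequality is a straightforward stacking of triangle inequalities using approximations already established; the second merely requires identifying Lemma \ref{l:outside} as the correct tool for lower-bounding $\phi_\infty(x,y)$ when the chart-containment hypothesis persists at scales strictly finer than $k(x,y) + 1$.
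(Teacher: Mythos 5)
Your proposal is correct and takes essentially the same route as the paper's proof: for \eqref{e:phi_infty2} the same triangle-inequality chain through $g_k(x),g_k(y)$ and $\away_j\circ\psi_j(x_k),\away_j\circ\psi_j(y_k)$ using Lemma \ref{l:gbar}, Lemma \ref{l:whereinX}, the $\delta r_k$-GHA property of $\away_j$ and Lemma \ref{l:psi-bi-lip}, and for \eqref{e:phi_infty3} the same dichotomy $\ell \leq k+1$ versus $\ell \geq k+2$ with the contrapositive of Lemma \ref{l:outside} forcing $\phi_\infty(x,y)=d_k(x_k,y_k) > r_{k+1}/2 \gtrsim r_k$. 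Your extra attention to the $k=0$ case and to the domain hypothesis of Lemma \ref{l:whereinX} corresponds to the paper's separate treatment of $k(x,y)=0$ via the GHA property of $\away_{j_0}$.
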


\begin{proof}
	We start with \eqref{e:phi_infty2}. If $k(x,y) = 0$ then $g_0(x,y) = \alpha_{j_0}(x,y)$ by definition and $\phi_\infty(x,y) = \|x-y\|_{j_0}$ by \eqref{e:rho_0} and Lemma \ref{l:k(x,y)}. Equation \eqref{e:phi_infty2} now follows since $\alpha_{j_0} \colon 100B_{j_0} \to 100B^{j_0}$ is a $\delta r_0$-GHA. Suppose instead that $k = k(x,y) \geq 1.$ Let $j \in J_k$ is such that $x_k,y_k \in 6D^j.$ By Lemma \ref{l:whereinX} we have 
	\[ d_X(g_k(x),\away_j \circ \psi_j(x_k)) \lesssim \ve r_k \quad \mbox{ and } \quad d_X(g_k(y),\away_j \circ \psi_j(y_k)) \lesssim \ve r_k. \]
	Using this with Lemma \ref{l:psi-bi-lip}, Lemma \ref{l:gbar}, \eqref{e:phi_infty1} and the fact that $\away_j$ is a $\delta r_k$ isometry, the left-hand side of \eqref{e:phi_infty2} is at most 
	\begin{align}
		&| d_X(g(x),g(y))  - d_X(g_k(x),g_k(y)) | \\
		&\hspace{2em}+ | d_X(g_k(x),g_k(y)) - d_X(\away_j \circ \psi_j(x_k),\away_j \circ \psi_j(y_k)) | \\
		&\hspace{4em} + |d_X(\away_j \circ \psi_j(x_k),\away_j \circ \psi_j(y_k)) - d_k(x_k,y_k) | \lesssim \ve r_k. 
	\end{align}
	Let us move onto \eqref{e:phi_infty3}. If $\ell \leq k+1$ then since $r_{k} \lesssim r_\ell.$ Suppose then that $\ell \geq k+2.$ Since $x_\ell \in \bigcup_{j \in J_\ell} 6D^j,$ Lemma \ref{l:outside} implies
	\begin{align}
		\phi_\infty(x,y) = d_k(x_k,y_k) > r_{k+1}/2 \gtrsim r_k. 
	\end{align}
	Using this with \eqref{e:phi_infty2} gives \eqref{e:phi_infty3}. 
\end{proof}

\begin{lem}\label{l:ball-D-phi}
	Let $\ell \geq 0$ and $x,y \in \R^n$ such that $\phi_\ell(x_\ell,y_\ell) \leq r_{\ell}/2$. Suppose also that there exists $j \in J_\ell$ satisfying $x_\ell \in 3D^{j,\ell}.$ Then, $y_{\ell} \in 4D^{j,\ell}.$ 
\end{lem}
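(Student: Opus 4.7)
The plan is to show that Case~A of \eqref{d:rho} holds at level $\ell$, namely that some $j' \in J_\ell$ satisfies $x_\ell, y_\ell \in 6D^{j',\ell}$. Once this is established, \eqref{d:rho} gives $\phi_\ell(x_\ell, y_\ell) = d_\ell(x_\ell, y_\ell) \leq r_\ell/2$, and Lemma~\ref{l:ball-D} applied with $\lambda = 3$, $\mu = 1/2$ (choosing $\ve$ small enough that $3 + (1+C_*\ve)/2 \leq 4$) yields $y_\ell \in 4D^{j,\ell}$, as required.

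The main work is therefore to rule out Case~B at level $\ell$ (i.e.\ that no single $j' \in J_\ell$ contains both $x_\ell$ and $y_\ell$ in its $6D^{j',\ell}$). A crucial preliminary is that Case~A is downward-closed in the level: if Case~A holds at level $m+1$ with chart $j' \in J_{m+1}$, then the first inclusion of Lemma~\ref{l:inclusion} gives $x_m, y_m \in 3D^{i(j'),m} \subseteq 6D^{i(j'),m}$, so Case~A also holds at level $m$. Contrapositively, Case~B at $\ell$ propagates to every level $m \geq \ell$. Thus, provided the set in Lemma~\ref{l:k(x,y)} is non-empty, $k := k(x,y) \leq \ell - 1$, and iterating the second branch of \eqref{d:rho} at levels $k+1,\ldots,\ell$ combined with the first branch at level $k$ yields $\phi_\ell(x_\ell, y_\ell) = d_k(x_k, y_k)$.

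Assuming Case~B at $\ell$ and $k$ as above, we then have $d_k(x_k, y_k) = \phi_\ell(x_\ell, y_\ell) \leq r_\ell/2 \leq r_{k+1}/2$, so the hypotheses of Lemma~\ref{l:outside} are satisfied. That lemma gives either $x_\ell = F_\ell(x) \notin \bigcup_{j'} 5D^{j',\ell}$ (when $\ell = k+1$) or $\dist_\ell(x_\ell, \bigcup_{j'} 6D^{j',\ell}) \geq r_{k+1}/2 > 0$ (when $\ell \geq k+2$); either conclusion contradicts $x_\ell \in 3D^{j,\ell}$. In the remaining edge case when the set in Lemma~\ref{l:k(x,y)} is empty (i.e.\ Case~A fails already at level~$0$), iterating \eqref{e:nested} from level $\ell$ down to $0$ shows $x_0 \in 3D^{j_0,0} = 3B_{j_0}$, so failure of Case~A at~$0$ forces $y \notin 6B_{j_0}$ and hence $\|x-y\|_{j_0} > 3$. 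But iterating the second branch of \eqref{d:rho} through every level yields $\phi_\ell(x_\ell, y_\ell) = \phi_0(x,y) = \|x-y\|_{j_0} > 3$, contradicting $\phi_\ell \leq r_\ell/2 \leq 1/2$. In both sub-cases Case~B at $\ell$ is impossible, so Case~A holds and the argument of the first paragraph concludes the proof.
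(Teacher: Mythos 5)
Your argument is correct and is essentially the paper's proof: both show that $x_\ell$ and $y_\ell$ must lie in a common $6D^{j',\ell}$ by combining the identity $\phi_\ell(x_\ell,y_\ell)=d_{k}(x_k,y_k)$ with Lemma \ref{l:outside} to rule out $k\leq \ell-1$, and then conclude with Lemma \ref{l:ball-D}. The only difference is that you spell out two points the paper leaves implicit — the downward-closedness of ``Case A'' via Lemma \ref{l:inclusion} and the vacuous edge case where no level admits a common chart — which is extra care, not a change of method.
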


\begin{proof}
	Let $k$ be the largest integer such that $x_k,y_k \in 6D^i$ for some $i \in J_k.$ Suppose, towards a contradiction, that $k \leq \ell-1.$ Since $x_\ell \in 3D^j$ we must have $d_k(x_k,y_k) > r_{k+1}/2$ by Lemma \ref{l:outside}. Lemma \ref{l:phi_infty} then gives
	\begin{align}
		r_\ell/2 \geq \phi_\ell(x_\ell,y_\ell) = \phi_k(x_k,y_k) = d_k(x_k,y_k) > r_{k+1}/2 \geq r_\ell/2
	\end{align}
	which is a contradiction. Thus, we have $k \geq \ell$, so there exists $i \in J_\ell$ satisfying $x_\ell,y_\ell \in 6D^i.$ It follows from \eqref{d:rho} that $d_{\ell}(x_\ell,y_\ell) = \phi_\ell(x_\ell,y_\ell) \leq r_\ell/2$ so that $y_\ell \in 4D^{j,\ell}$ by Lemma \ref{l:ball-D}. 
\end{proof}

We turn our attention to the metric $\rho,$ which we define below. 

\begin{defn}\label{d:rho'}
	Let $x,y \in \R^n.$ A \textit{chain} from $x$ to $y$ is a sequence of points $\{z^i\}_{i=1}^N$ in $\R^n$ such that $z^1 = x$ and $z^N = y.$ Let   
	\[ \rho(x,y)  = \inf \sum_{i=1}^{N-1} \phi_\infty(x_i,x_{i+1}),\]
	where the infimum is taken over all chains $\{z^i\}_{i=1}^N$ from $x$ to $y.$
\end{defn} 

The following says that $\rho$ is bi-Lipschitz equivalent to $\phi_\infty$ with constant close to 1.

\begin{lem}\label{l:pert}
	Let $\eta > 0.$ For $\ve$ small enough depending on $\eta$ we have  
	\begin{align}\label{e:pert}
		(1+C\eta)^{-1} \phi_\infty(x,y) \leq \rho(x,y) \leq  \phi_\infty(x,y)
	\end{align}
	for all $x,y \in \R^n.$ 
\end{lem}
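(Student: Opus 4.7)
The upper bound $\rho(x,y) \le \phi_\infty(x,y)$ follows immediately from Definition \ref{d:rho'} by taking the trivial chain $z^1 = x$, $z^2 = y$.

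For the lower bound, I fix $\eta > 0$ and take $\ve$ small depending on $\eta$. Given an arbitrary chain $\{z^i\}_{i=1}^N$ from $x$ to $y$, the goal is to prove
\[
\phi_\infty(x,y) \le (1+C\eta)\sum_{i=1}^{N-1}\phi_\infty(z^i, z^{i+1}),
\]
so that taking the infimum over chains yields $\phi_\infty(x,y) \le (1+C\eta)\rho(x,y)$. The strategy is to transfer the estimate to $X$ via the map $g$, where the triangle inequality genuinely holds, and transfer back using Lemma \ref{l:phi_infty}. Specifically, equation \eqref{e:phi_infty3} provides
\[
|d_X(g(z), g(w)) - \phi_\infty(z,w)| \le C\ve(\phi_\infty(z,w) + r_\ell)
\]
for any $z, w \in 50B_{j_0}$ and any scale $\ell \ge 0$ with $z_\ell \in \bigcup_{j \in J_\ell} 6D^j$.

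The plan is to apply this estimate with scales $\ell_0$ for the pair $(x,y)$ and $\ell_i$ for each link $(z^i, z^{i+1})$, selected so that $r_{\ell_0} \le \eta\phi_\infty(x,y)$ and $r_{\ell_i} \le \eta\phi_\infty(z^i, z^{i+1})$. Combined with the triangle inequality $d_X(g(x),g(y)) \le \sum d_X(g(z^i), g(z^{i+1}))$, this converts the additive errors into multiplicative ones and gives
\[
(1 - C\ve(1+\eta))\phi_\infty(x,y) \le (1 + C\ve(1+\eta))\sum_{i=1}^{N-1}\phi_\infty(z^i, z^{i+1}).
\]
Choosing $\ve$ sufficiently small relative to $\eta$ then yields the desired inequality.

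The principal technical obstacle is the existence of a suitable scale $\ell$ for every pair. Set $\ell^*(z) = \sup\{\ell : z_\ell \in \bigcup_j 6D^j\}$, the largest scale available for $z$. If $r_{\ell^*(z)} \le \eta\phi_\infty(z,w)/\ve$, one simply takes $\ell = \ell^*(z)$. In the opposing regime where $\phi_\infty(z,w)$ is very small compared to $r_{\ell^*(z)}$, I would invoke Lemma \ref{l:ball-D-phi}: since $\phi_\ell(z_\ell, w_\ell) \le \phi_\infty(z,w) + C\ve r_\ell$ is small by \eqref{e:infty-ell}, the points $z_\ell$ and $w_\ell$ are forced into a common chart $6D^{j,\ell}$ at that scale (provided $z_\ell$ lies in some $3D^{j,\ell}$), giving $k(z,w) \ge \ell$ and allowing the use of \eqref{e:phi_infty2} with its sharper $r_{k(z,w)}$ error which can also be absorbed multiplicatively. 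I expect the main difficulty to lie in the careful case analysis required to cover every pair in the chain simultaneously, particularly for pairs where neither point sits comfortably inside a $3D^{j,\ell}$ chart; for such pairs one may need to refine the chain locally, exploiting that chain refinement cannot increase $\rho(x,y)$.
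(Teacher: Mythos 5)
Your upper bound and your overall plan for well-separated pairs (push the chain into $X$ with $g$, use the genuine triangle inequality there, and come back with Lemma \ref{l:phi_infty}) is indeed the mechanism the paper uses in its second case. But there is a genuine gap in how you propose to absorb the additive errors. The error in \eqref{e:phi_infty2}--\eqref{e:phi_infty3} is $C\ve r_{k(z,w)}$, and for a link whose $\phi_\infty$-length is tiny compared with $r_{k(z,w)}$ no choice of scale makes this multiplicative: the chart structure simply stops (in the hypotheses of Theorem \ref{t:Reif} the nets $\{x_{j,\ell}\}$ need not refine forever, and in the application they stop at scale $s_0$), so Lemma \ref{l:ball-D-phi} cannot be iterated to push the common scale deeper --- Lemma \ref{l:outside} tells you the two points exit all charts one scale below $k(z,w)$, it does not give $\phi_\infty(z,w)\gtrsim r_{k(z,w)}$. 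Two points can be arbitrarily $\phi_\infty$-close and still carry an error $\approx \ve r_{k(z,w)}$, and an adversarial chain made of many such tiny links accumulates a total error $\sum_i \ve r_{k(i)}$ that is controlled neither by $\eta\sum_i\phi_\infty(z^i,z^{i+1})$ nor by $\eta\,\phi_\infty(x,y)$. This is exactly the obstruction the paper's proof is built around: it performs surgery on the chain, collapsing each maximal run of consecutive short links (those with $\phi_\infty(z^i,z^{i+1})\le \eta r_{k(i)}$, confined via \eqref{e:bari} to a ball of radius $r_{k(i)+2}$) into a single link, proves the new chain costs at most $(1+C\eta)\rho(x,y)+C\eta\,\phi_\infty(x,y)$ (Claims 2--4), and shows that each surviving short link is followed by a long one so that $\sum_{i\in I}\ve r_{\tilde k(i)}\lesssim\eta\,\phi_\infty(x,y)$ as in \eqref{e:sum-small}; only after this bookkeeping does the transfer through $g$ close. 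Your fallback of ``refining the chain locally'' goes the wrong way: you must bound an \emph{arbitrary given} chain from below, and since $\phi_\infty$ has no exact triangle inequality, inserting points changes the chain's length uncontrollably; what is needed is coarsening, together with an estimate of its cost.

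A second missing piece is the regime where $x$ and $y$ are themselves close at their common scale, i.e.\ $d_k(x_k,y_k)\le\eta r_k$ with $k=k(x,y)$ (the paper's Case 1). There the error $\ve r_k$ from \eqref{e:phi_infty2} already swamps $\phi_\infty(x,y)$ for the pair $(x,y)$, so the transfer via $g$ cannot even be started. The paper handles this with a different argument: it shows every link of a near-optimal chain satisfies $k(i)-1\le k\le k(i)+2$, so the whole chain can be read off at the single scale $k$, and then the honest triangle inequality for the path metric $d_k$ on $M_k$ (via Lemma \ref{l:f-bilip1}) gives the lower bound directly, without ever using $g$. You would need both of these ingredients --- the chain surgery with the error-sum estimate, and the separate one-scale argument for close endpoints --- to turn your outline into a proof.
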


\begin{proof}
	The second inequality is true by definition (consider the chain $z^1 = x$, $z^2 = y$) so let us prove the first. Before proving \eqref{e:pert} for all $x,y \in \R^n$ we will prove it for all $x,y \in 6B_{j_0}.$ We will show how to get \eqref{e:pert} for all $x,y \in \R^n$ at the end of the proof. Let $\eta > 0$, assume $x,y \in 6B_{j_0}$ are distinct and let 
	\[k = k(x,y)\]
	Let $\{z^i\}_{i=1}^N$ a chain from $x$ to $y$ such that 
	\begin{align}\label{e:optimal}
		\sum_{i=1}^{N-1} \phi_\infty(z^i,z^{i+1}) \leq (1+\ve)\rho(x,y) \leq 13r_k,
	\end{align}
	where the last inequality follows from Lemma \ref{l:psi-bi-lip}. For each $\ell \geq 0$ and $1 \leq i \leq N$ let
	\begin{align}
		k(i) = k(z^i,z^{i+1})\quad \mbox{ and } \quad z^i_\ell = F_{\ell}(z^i). 
	\end{align}
	It follows from Lemma \ref{l:phi_infty} that 
	\begin{align}\label{l:useful}
		\phi_\infty(x,y)  = d_k(x_k,y_k) \quad \mbox{ and } \quad 	\phi_\infty(z^i,z^{i+1}) = d_{k(i)}(z_{k(i)}^i,z_{k(i)}^{i+1}).
	\end{align}
	\noindent\textbf{Claim:} We have 
	\begin{align}
		\label{e:1} z^i \in 20B_{j_0} \quad &\mbox{ for all } 1 \leq i \leq N;\\
		\label{e:2} k(i) \geq k-2 \quad &\mbox{ for all } 1 \leq i \leq N-1. 
	\end{align}
	
	Indeed, suppose \eqref{e:1} is false and let $1 < i_0 < N$ be the smallest index such that $z^i \not\in 20B_{j_0}.$ Let $1 \leq i_1 < i_0$ be the largest integer such that $z^{i_0} \in 6B_{j_0}.$ It follows that $k(i) = 0$ for all $1 \leq i_1 \leq i_0.$ Then, by \eqref{e:optimal} and \eqref{l:useful}, 
	\begin{align}
		14 r_0 \leq d_0(z^{i_0},z^{i_1}) \leq \sum_{i=i_1}^{i_0-1} \phi_\infty(z^i,z^{i+1}) \leq 13r_k,
	\end{align}
	which is a contradiction. We now consider \eqref{e:2}. If $k \leq 2$ then this is immediate. Suppose then that $k \geq 3$ and suppose towards a contradiction that there exists $1 \leq i \leq N-1$ such that $k(i) \leq k-3$. Let $i_2$ be the smallest such index so that 
	\[ k(i_2) \leq k-3 \mbox{ and } k(i) \geq k-2 \mbox{ for all } 1 \leq i < i_2.\]
	We have 
	\begin{align}
		d_{k(i_2)}(z^{i_2}_{k(i_2)}, z^{i_2+1}_{k(i_2)}) = \phi_\infty(z^{i_2}, z^{i_2+1}) \leq (1+\ve)\rho(x,y)  \leq 13r_k \leq \frac{13r_{k(i_2)}}{1000}. 
	\end{align}
	It follows from Lemma \ref{l:outside} that 
	\[ {\dist}_k\left(z^{i_2}_k,\bigcup_{j \in J_k} 6D^j\right) \geq r_{k(i_2)+1}/2 \geq 50r_k .\]
	Since $k = k(x,y)$ and $z^1 = x,$ there exists $j \in J_k$ such that $z^1 \in 6D^j.$ In particular $i_2 \geq 2.$ Let $1 \leq i_1 < i_2$ be the largest integer such that $z^{i_3}_k \in \bigcup_{j \in J_k} 6D^j.$ It follows that if $i_3 \leq i < i_2$ then 
	\begin{align}
		k-2 \leq k(i) \leq k-1. 
	\end{align}
	Hence, 
	\begin{align}
		\sum_{i=i_3}^{i_2-1} \phi_\infty(z^i,z^{i+1}) \geq (1+C\ve)^{-1}\sum_{i=i_3}^{i_2-1} d_k(z^i_k,z^{i+1}_k) \geq (1+C\ve)^{-1}d_k(z^{i_2}_k,z^{i_3}_k) \geq 49r_k. 
	\end{align}
	This is a contradiction. \\
	
	Let $\eta > 0$ to be chosen small enough. We split the proof of \eqref{e:pert} into two cases. \\
	
	\noindent \textbf{Case 1:} Suppose first that $d_k(x_k,y_k) \leq \eta r_k.$ We claim that 
	\begin{align}\label{e:k(i)}
		k(i) -1 \leq k \leq k(i) + 2
	\end{align}
	for all $1 \leq i \leq N-1.$ Assuming \eqref{e:k(i)} for the moment, by applying \eqref{l:useful}, Lemma \ref{l:f-bilip1} and Lemma \ref{l:phi_infty} we get
	\begin{align}
		\phi_\infty(x,y) &= d_k(x_k,y_k) \leq \sum_{i=1}^{N-1} d_{k}(z_{k}^i,z_{k}^{i+1}) \leq (1+C\ve)  \sum_{i=1}^{N-1} d_{k(i)}(z_{k(i)}^i,z_{k(i)}^{i+1}) \\
		&\leq (1+C\ve)(	\tau r_k + \rho(x,y) ).
	\end{align}
	Since $\tau$ was arbitrary this finishes the proof of \eqref{e:pert} in Case 1 up to showing \eqref{e:k(i)}. We will prove \eqref{e:k(i)} below.

	We get the second inequality from \eqref{e:2}, so let us prove the first. Suppose towards a contradiction that there exists $1 \leq i \leq N-1$ such that $k(i) \geq k+2.$ Let $i_0$ be the smallest such index. We have 
	\begin{align}\label{e:zin}
		z_{k+2}^{i_0} \in \bigcup_{j \in J_{k+2}} 6D^j.
	\end{align}
	Furthermore, for any $1 \leq i < i_0$, since $k-1 \leq k(i) \leq k+1$ and $\phi_{k(i)}(z^i_{k(i)},z^{i+1}_{k(i)}) = d_{k(i)}(z^i_{k(i)},z^{i+1}_{k(i)})$, we have by Lemma \ref{l:f-bilip1} that 
	\[ (1+C\ve)^{-1}\phi_{k(i)}(z^i_{k(i)},z^{i+1}_{k(i)}) \leq  d_{k+2}(z_{k+2}^i,z_{k+2}^{i+1}) \leq (1+C\ve)\phi_{k(i)}(z^i_{k(i)},z^{i+1}_{k(i)}) \]
	Using this with \eqref{e:zin} and Lemma \ref{l:outside} gives
	\begin{align}
		(1+C\ve)\phi_{k(i_0-1)}(z_{k(i_0-1)}^{i_0-1},z_{k(i_0-1)}^{i_0}) &\geq d_{k+2}(z_{k+2}^{i_0-1},z_{k+2}^{i_0}) \\
		&\geq {\dist}_{k+2}\left(z_{k+2}^{i_0-1} , \bigcup_{j \in J_{k+2}} 6D^j\right) \\
		&\geq {\dist}_{k+2}\left(x_{k+2}, \bigcup_{j \in J_{k+2}} 6D^j\right) \\
		&\hspace{2em}- \sum_{i=1}^{i_0-2} d_{k+2}(z_{k+2}^i,z_{k+2}^{i+1}) \\
		&\geq r_{k+1}/2 - (1+C\ve)\sum_{i=1}^{i_0-2} \phi_{k(i)}(z_{k(i)}^i,z_{k(i)}^{i+1}) \\
		&\geq r_{k+2}/2. 
	\end{align}
	For $\ve$ small enough, this implies
	\begin{align}
		(1+\ve)\eta r_k \geq (1+\ve)\rho(x,y) \geq \phi_{k(i_0-1)}(z_{k(i_0-1)}^{i_0-1},z_{k(i_0-1)}^{i_0}) \geq r_{k}/4 \geq r_k,
	\end{align}
	which is a contradiction for $\eta$ small enough. This finishes the proof of the claim.  \\
	
	\noindent\textbf{Case 2:} Suppose that $d_k(x_k,y_k) > \eta r_k.$ We will find a new chain $\{w^i\}_{i=1}^K \subseteq 50B_{j_0}$ from $x$ to $y$ such that the following holds. 
	\begin{enumerate}
		\item We have 
		\begin{align}\label{e:optimal'}
			\sum_{i=1}^{K-1} \phi_\infty(w^i,w^{i+1}) \leq (1+C\eta) \rho(x,y) + C\eta \phi_\infty(x,y).
		\end{align}
		\item For $1 \leq i \leq K-1$ let $\tilde{k}(i)$ be the largest integer such that there exists $j \in J_{\tilde{k}(i)}$ satisfying $w^i_{\tilde{k}(i)},w^{i+1}_{\tilde{k}(i)} \in 6D^j.$ If
		\[I = \{1 \leq  i \leq K-1 : \phi_\infty(w^i,w^{i+1}) \leq \eta^2 r_{\tilde{k}(i)}\}\] then
		\begin{align}\label{e:sum-small}
			\sum_{i \in I} \ve r_{\tilde{k}(i)}  \lesssim \eta \phi_\infty(x,y). 
		\end{align}
	\end{enumerate}
	Before seeing how to construct $\{w^i\}_{i=1}^K$, let use it to complete the proof of \eqref{e:pert} in Case 2. Recall the map $g \colon 50B_{j_0} \to X$ constructed in Lemma \ref{l:gbar}. If $C'$ is the constant appearing in \eqref{e:phi_infty2} and $i \in I^c$ then by taking $\ve$ small enough with respect to $\eta$ we have 
	\begin{align}
		(1+\eta)\phi_\infty(w^i,w^{i+1}) &\geq (1+C'\ve \eta^{-2}) \phi_\infty(w^i,w^{i+1}) \geq  \phi_\infty(w^i,w^{i+1})  + C'\ve r_{\tilde{k}(i)} \\
		&\geq d_X(g(w^i),g(w^{i+1})). 
	\end{align}
	Combine this estimate with \eqref{e:phi_infty2}, \eqref{l:useful}, \eqref{e:sum-small}, using that $\phi_\infty(x,y) = d_k(x_k,y_k) > \eta r_k$ and taking $\ve$ small enough depending on $\eta,$ we have 
	\begin{align}
		\sum_{i=1}^{K-1} \phi_\infty(w^i,w^{i+1}) &= \sum_{i \in I} \phi_\infty(w^i,w^{i+1}) + \sum_{i \in I^c} \phi_\infty(w^i,w^{i+1})  \\
		&\geq \sum_{i \in I}[d_X(g(w^i),g(w^{i+1})) - C\ve r_{\tilde{k}(i)} ] \\
		&\hspace{2em}+ \sum_{i \in I^c} (1+C\ve\eta^{-2})^{-1}d_X(g(w^i),g(w^{i+1})) \\
		& \geq (1+C\eta)^{-1} \sum_{i=1}^{K-1} d_X(g(w^i),g(w^{i+1})) - C\eta \phi_\infty(x,y) \\
		&\geq (1+C\eta)^{-1} d_X(g(x),g(y)) - C\eta \phi_\infty(x,y) \\
		&\geq (1+C\eta)^{-1}(\phi_\infty(x,y) - C\ve r_k) -C\eta \phi_\infty(x,y) \\
		&\geq (1+C\eta)^{-1}\phi_\infty(x,y).
	\end{align}
	After rearranging, this and \eqref{e:optimal'} gives \eqref{e:pert}. \\
	
	For the remainder of the proof we describe how to construct such a chain. If $\phi_\infty(z^i,z^{i+1}) > \eta^2 r_{k(i)}$ for all $1 \leq i \leq N-1$ then we simply take $w^i = z^i$ for all $1 \leq i \leq N-1$ (in this case $I = \emptyset).$ Otherwise, we construct $\{w^i\}_{i=1}^{K}$ by removing certain links from the original chain. Let
	\begin{align}\label{e:calI}
		\calI = \{ 1 \leq i \leq N-1 : \phi_\infty(z^i,z^{i+1}) \leq \eta r_{k(i)}\} \neq \emptyset
	\end{align}
	and for each $i \in \calI$ let $i+1 \leq \bar{i} \leq N$ be the maximal index such that 
	\begin{align}\label{e:bari}
		d_{k(i)}(z_{k(i)}^i,z^{i'}_{k(i)}) \leq r_{k(i)+2} \mbox{ for all } i \leq i' \leq \bar{i}.
	\end{align}
	Let $i_1$ be the smallest index in $\calI.$ Then, supposing $i_m$ has been defined for some $m \geq 1,$ if 
	\[\{i \in \calI : i \geq \bar{i}_m\}\neq\emptyset\]
	let $i_{m+1}$ be the smallest such index. Otherwise, we stop. Let $\{i_m\}_{m=1}^{\Lambda}$, $\Lambda \in \N$, be the resulting sequence. For each $1 \leq m \leq \Lambda$ we remove from $\{z^i\}_{i=1}^N$ the links in the chain lying between $i_m$ and $\bar{i}_m$. More precisely, let 
	\begin{align}
		\calI_0 = 
		\begin{cases}
			\emptyset & \mbox{ if } i_1 = 1; \\
			\{1,\cdots,i_1-1\} & \mbox{ otherwise},
		\end{cases}
	\end{align}
	and for $1 \leq m \leq \Lambda,$ let 
	\begin{align}
		\calI_m = \begin{cases}
			\emptyset &\mbox{ if } m < \Lambda \mbox{ and } \bar{i}_m= i_{m+1};\\
			\{\bar{i}_m,\dots,i_{m+1}-1\} &\mbox{ if } m < \Lambda \mbox{ and } \bar{i}_m < i_{m+1}; \\
			\emptyset &\mbox{ if } m = \Lambda \mbox{ and } \bar{i}_m = N; \\
			\{\bar{i}_m,\dots,N\} &\mbox{ if } m=\Lambda \mbox{ and } \bar{i}_m < N. 
		\end{cases}
	\end{align}
	Then, set 
	\begin{align}\label{e:defw}
		Z = \{z^i\}_{i=1}^N \setminus \bigcup_{m=1}^\Lambda \bigcup_{i_m < i < \bar{i}_m} \{z^i\}  = \bigcup_{m=1}^\Lambda \{z^{i_m},z^{\bar{i}_m}\} \cup \bigcup_{m=0}^\Lambda \bigcup_{i \in \calI_m} \{z^i\}.
	\end{align}
	We let $\{w^i\}_{i=1}^K$ be an enumeration of $Z$ with order inherited from $\{z^i\}_{i=1}^N$ i.e. if $1 \leq i \leq j \leq K$ and $1 \leq k,\ell \leq N$ are such that $w^i = z^k$ and $w^j = z^\ell$, then $k \leq \ell.$ \\

	Before proving conditions (1) and (2) we make and prove several claims. 
	For $1 \leq m \leq \Lambda$ such that $k(i_m) \geq 1,$ choose $j(m) \in J_{k(i_m)-1}$ such that
	\begin{align}\label{e:j(i)}
		z^i_{k(i)-1}\in 3D^{j(m)},
	\end{align} 
	which exists by Lemma \ref{l:inclusion}. \\
	
	\noindent\textbf{Claim 1:} If $1 \leq m \leq \Lambda$ is such that $k(i_m) \geq 1$ and $i_m \leq i \leq \bar{i}_m$ then  
	\begin{align}\label{e:in5}
		z^i_{k(i_m)-1} \in 4D^{j(m)}.
	\end{align}
	If $1 \leq m \leq \Lambda$ and $i_m \leq i < \bar{i}_m$ then
	\begin{align}\label{e:k(i_m)}
		k(i_m) -1 \leq k(i) \leq k(i_m) +1.
	\end{align}
	If $\bar{k}(i_m) = k(z^{i_m},z^{\bar{i}_m})$ then
	\begin{align}\label{e:bark}
		k(i_m)-1 \leq \bar{k}(i_m) \leq k(i_m) + 1.
	\end{align}
	
	Equation \eqref{e:in5} follows from \eqref{e:bari}, \eqref{e:j(i)} and Lemma \ref{l:ball-D}. Suppose now that $1 \leq m \leq \Lambda$ and $i_m \leq i < i_m$. If $k(i_m) = 0$ then the first inequality in \eqref{e:k(i_m)} is clear. If $k(i_m) \geq 1,$ since \eqref{e:in5} holds for $i$ and $i+1$, the first inequality in \eqref{e:k(i_m)} follows from this and the definition of $k(i)$. For the second inequality, note that, by Lemma \ref{l:outside} and \eqref{e:bari}  we have 
	\begin{align}\label{e:out}
		{\dist}_{k(i_m)+2}\left(z_{k(i_m)+2}^{i_m}, \bigcup_{j \in J_{k+2}} 6D^j \right) \geq r_{k(i_m)+1}/2.
	\end{align}
	Also, by Lemma \ref{l:f-bilip1} and \eqref{e:bari}, we have 
	\begin{align}
		d_{k(i_m)+2}(z^{i_m}_{k(i_m)+2},z^i_{k(i_m)+2}) \leq (1+C\ve)r_{k(i_m)+2} \leq r_{k(i_m)+1}/4.
	\end{align}
	These two inequalities imply 
	\[z_{k(i_m)+2}^{i} \not\in \bigcup_{j \in J_{k(i_m)+2}} 6D^j \]
	and \eqref{e:k(i_m)} now follows from the definition of $k(i).$ The first inequality in \eqref{e:bark} holds by essentially the same argument used to establish the first inequality in \eqref{e:k(i_m)}. The second inequality follows from the definition of $\bar{k}(i_m)$ and \eqref{e:out}. \\

	Let  
	\begin{align} \calS_1 &= \left\{1 \leq m \leq \Lambda : \phi_{\infty}(z^{i_m},z^{\bar{i}_m}) > 10\eta^2 r_{k(i_m)}\right\}; \\
		\calS_2 &= \left\{1 \leq m \leq \Lambda : \phi_{\infty}(z^{i_m},z^{\bar{i}_m}) \leq 10\eta^2 r_{k(i_m)} \mbox{ and } \bar{i}_m < N\right\}; \\
		\calS_3 &= \left\{1 \leq m \leq \Lambda : \phi_{\infty}(z^{i_m},z^{\bar{i}_m}) \leq 10\eta^2 r_{k(i_m)} \mbox{ and } \bar{i}_m = N\right\} \subseteq \{\Lambda\}.
	\end{align} \\
	
	\noindent\textbf{Claim 2:} We have
	\begin{align}
		\sum_{m \in \calS_1} \phi_\infty(z^{i_m},z^{\bar{i}_m}) \leq (1+C\ve) \sum_{m \in \calS_1} \sum_{i=i_m}^{\bar{i}_m-1} \phi_\infty(z^i,z^{i+1}).
	\end{align}
	
	Suppose $m \in S_1.$ By \eqref{e:phi_infty1} and the definitions of $\bar{k}(i_m)$ and $k(i),$ we have 
	\begin{align}\label{e:ob'}
		\phi_\infty(z^{i_m},z^{\bar{i}_m}) = d_{\bar{k}(i_m)}(z_{\bar{k}(i_m)}^{i_m}, z_{\bar{k}(i_m)}^{\bar{i}_m}) \quad \mbox{ and } \quad \phi_\infty(z^i,z^{i+1}) = d_{k(i)}(z^i_{k(i)},z^{i+1}_{k(i)}) .
	\end{align} 
	Using this with Lemma \ref{l:f-bilip1}, \eqref{e:k(i_m)} and \eqref{e:bark}, we have 
	\begin{align}
		\phi_\infty(z^{i_m},z^{\bar{i}_m}) & \leq(1+C\ve) d_{k(i_m)}(z^{i_m}_{k(i_m)},z^{\bar{i}_m}_{k(i_m)}) \\
		&\leq (1+C\ve)\sum_{i=i_m}^{\bar{i}_m-1} d_{k(i_m)}(z^{i}_{k(i_m)},z^{i+1}_{k(i_m)}) \leq (1+C\ve) \sum_{i=i_m}^{\bar{i}_m-1} \phi_\infty(z^i,z^{i+1}).
	\end{align}
	The claim now follows by taking the sum over $\calS_1.$ \\
	
	\noindent\textbf{Claim 3:} We have 
	\begin{align}\label{e:M_2}
		\sum_{m \in \calS_2} \phi_\infty(z^{i_m},z^{\bar{i}_m}) \lesssim \eta \sum_{m \in \calS_2} \phi_\infty(z^{\bar{i}_m},z^{\bar{i}_m+1}) \leq \eta \sum_{m=1}^\Lambda \sum_{i \in \calI_m} \phi_\infty(z^{i},z^{i+1}). 
	\end{align}
	Let $m \in \calS_2$. The first inequality follows once we show
	\begin{align}\label{e:bigjump}
		\phi_\infty(z^{i_m},z^{\bar{i}_m}) \leq \eta \phi_\infty(z^{\bar{i}_m},z^{\bar{i}_m+1})
	\end{align}
	and the second inequality follows once we show 
	\begin{align}\label{e:iincali}
		\bar{i}_m \in \calI_m.
	\end{align}
	Crucial to both \eqref{e:bigjump} and \eqref{e:iincali} are the inequalities 
	\begin{align}\label{e:phi>eta}
		\phi_\infty(z^{\bar{i}_m},z^{\bar{i}_m+1}) > r_{k(\bar{i}_m)+4}/2 > \eta r_{k(\bar{i}_m)}
	\end{align} 
	and 
	\begin{align}\label{e:bari<i}
		k(\bar{i}_m) \leq k(i_m) + 4.
	\end{align}
	Before proving these, lets see how they imply \eqref{e:bigjump} and \eqref{e:iincali}. First, \eqref{e:phi>eta} and \eqref{e:bari<i} give \[\phi_{\infty}(z^{\bar{i}_m},z^{\bar{i}_m+1}) > \eta r_{k(\bar{i}_m)} \gtrsim \eta r_{k(i_m)}.\]
	Using this with Lemma \ref{l:f-bilip1}, \eqref{e:bark} and the definition of $\calS_2,$ we get 
	\begin{align}
		\phi_\infty(z^{i_m},z^{\bar{i}_m}) \leq 10  \eta^2 r_{k(i_m)} \lesssim \eta \phi_{\infty}(z^{\bar{i}_m},z^{\bar{i}_m+1})
	\end{align}
	which implies \eqref{e:bigjump}. Now for \eqref{e:iincali}. If $m = \Lambda$, the \eqref{e:iincali} follows immediately from the definitions of $\calS_2$ and $\calI_m.$ Suppose then that $m < \Lambda.$ If \eqref{e:phi>eta} holds then, recalling the definition of $\calI$ from \eqref{e:calI}, we have $\bar{i}_m \not\in \calI.$ 
	Since $i_{m+1}$ is the smallest index in $\{i \in \calI : i \geq \bar{i}_m\}$ this gives $\bar{i}_m < i_{m+1}$. Equation \eqref{e:iincali} then follows from the definition of $\calI_m.$ 
	
	We now prove \eqref{e:phi>eta} and \eqref{e:bari<i}, starting with \eqref{e:bari<i}. By maximality of $\bar{i}_m$ we have 
	\begin{align}\label{e:z1}
		d_{k(i_m)}(z_{k(i_m)}^{i_m},z^{\bar{i}_m+1}_{k(i_m)}) > r_{k(i_m)+2}.
	\end{align}
	Using Lemma \ref{l:f-bilip1} with \eqref{e:bark} and the fact that $m \in \calS_2,$ we also have 
	\begin{align}\label{e:z2}
		d_{k(i_m)}(z^{i_m}_{k(i_m)},z^{\bar{i}_m}_{k(i_m)}) &\leq (1+C\ve)d_{\bar{k}(i_m)}(z^{i_m}_{\bar{k}(i_m)},z^{\bar{i}_m}_{\bar{k}(i_m)}) \\
		&= (1+C\ve)\phi_{\infty}(z^{i_m},z^{\bar{i}_m}) \lesssim \eta^2 r_{k(i_m)}.
	\end{align}
	Combining this with \eqref{e:z1} gives \begin{align}\label{e:estimate}
		d_{k(i_m)}(z_{k(i_m)}^{\bar{i}_m},z_{k(i_m)}^{\bar{i}_m+1}) \geq r_{k(i_m) + 3},
	\end{align}
	which, after applying Lemma \ref{l:f-bilip1},  implies \[d_{k(i_m)+5}(z_{k(i_m)+5}^{\bar{i}_m},z_{k(i_m)+5}^{\bar{i}_m+1}) \geq (1+C\ve)^{-1}r_{k(i_m)+3} \geq 50r_{k(i_m)+5}.\]
	By Lemma \ref{l:psi-bi-lip} we have 
	\[{\diam}_{k(i_m)+5}(6D^j) \leq 13r_{k(i_m)+5}\]  
	for all $j \in J_{k(i_m)+5}$, and the above two inequalities imply \eqref{e:bari<i}. Let us prove \eqref{e:phi>eta}. The second inequality is immediate if $\eta$ is chosen small enough, so let us focus on the first inequality. Suppose first that $k(i_m) \geq k(\bar{i}_m) + 2$ (so that $k(i_m) \geq 2)$. By \eqref{e:in5} we have $z_{k(i_m)-1}^{\bar{i}_m} \in 4D^{j(i_m)}$ so (the contrapositive of) Lemma \ref{l:outside} gives the first inequality in \eqref{e:phi>eta}. Suppose instead that
	\begin{align}\label{e:bari>i}
		k(i_m)  \leq k(\bar{i}_m) +1  .
	\end{align}
	Then, using Lemma \ref{l:f-bilip1} with each of \eqref{e:bari<i}, \eqref{e:estimate} and \eqref{e:bari>i}, we get  
	\begin{align}
		\phi_\infty(z^{\bar{i}_m},z^{\bar{i}_m+1}) &= d_{k(\bar{i}_m)}(z^{\bar{i}_m}_{k(\bar{i}_m)},z^{\bar{i}_m+1}_{k(\bar{i}_m)}) \\
		&\geq (1+C\ve)^{-1} d_{k(i_m)}(z^{\bar{i}_m}_{k(i_m)},z^{\bar{i}_m+1}_{k(i_m)}) \geq r_{k(\bar{i}_m)+4}/2
	\end{align}
	and we are done.\\
	
	\noindent\textbf{Claim 4:} If $m \in \calS_3,$ then 
	\[ \phi_\infty(z^{i_m},z^{\bar{i}_m}) \lesssim \eta \phi_\infty(x,y).\]
	
	Indeed, recall in Case 2 that $\phi_\infty(x,y) = d_k(x_k,y_k) > \eta r_k.$ Then, by \eqref{e:2} and the definition of $\calS_3$, we have 
	\[ \phi_\infty(z^{i_m},z^{\bar{i}_m}) \leq 10 \eta^2 r_{k(i_m)} \lesssim \eta^2 r_k \leq \eta \phi_\infty(x,y).\]
	
	We now have everything we need to verify conditions (1) and (2). First, if we combine \eqref{e:defw} with Claim 2, Claim 3 and Claim 4, then 
	\begin{align}
		\sum_{i=1}^{K-1} \phi_\infty(w^i,w^{i+1}) &= \sum_{m=1}^\Lambda \phi_\infty(z^{i_m},z^{\bar{i}_m}) + \sum_{m=0}^\Lambda \sum_{i \in \calI_m} \phi_\infty(z^i,z^{i+1}) \\
		&\leq (1+C\ve)\sum_{m =1}^\Lambda \sum_{i=i_m}^{\bar{i}_m-1} \phi_\infty(z^i,z^{i+1}) \\
		&\hspace{2em} + (1+C\eta) \sum_{m=0}^\Lambda \sum_{i \in \calI_m} \phi_\infty(z^{i},z^{i+1}) + C\eta \phi_\infty(x,y) \\
		&\leq (1+C\eta)\sum_{i=1}^N \phi_\infty(z^i,z^{i+1}) + \eta \phi_\infty(x,y) \\
		&\leq (1+C\eta)\rho(x,y) + C\eta \phi_\infty(x,y) 
	\end{align}
	which gives (1). Let us focus our attention on (2). Recall the definition of $I$ from the statement of (2). If $I =\emptyset$ there is nothing to show, so let us assume $I \neq \emptyset$ and let $i \in I.$ We now show that  
	\begin{align}\label{e:w^i}
		w^i = z^{i_m} \mbox{ for some } m \in \calS_2 \cup \calS_3.
	\end{align}
	Suppose this is false and let $1 \leq j \leq N-1$ such that $w^i = z^j.$ By \eqref{e:defw} we have either $j = i_m$ for some $m \in \calS_1$ or $j \in \calI_m$ for some $m \geq 0.$ In the first case $w^{i+1} = z^{\bar{j}}$. The definition of $\calS_1$ implies then Lemma \ref{l:f-bilip1}, \eqref{e:bark} and the definition of $\calS_1$ imply 
	\begin{align}
		\phi_\infty(w^i,w^{i+1}) = \phi_\infty(z^{j},z^{\bar{j}}) >10 \eta^2 r_{k(j)} = 10\eta^2r_{\tilde{k}(i)},
	\end{align}
	which is a contradiction since $i \in I.$ In the second case, $w^{i+1} = z^{j+1},$ so that $\phi_\infty(w^i,w^{i+1}) = \phi_\infty(z^j,z^{j+1}) > \eta r_{k(j)} = \eta r_{\tilde{k}(i)}$ since $j \not\in \calI.$ This is again a contradiction.
	
	We now use \eqref{e:w^i} to finish the proof of (2). Recalling the definition of $\tilde{k}(i)$ and $\bar{k}(i_m),$ we see that $\tilde{k}(i) = \bar{k}(i_m).$ If $m \in \calS_2$ then by taking $\ve$ small enough and using \eqref{e:bark}, \eqref{e:phi>eta} and \eqref{e:bari<i} we have 
	\[\ve r_{\tilde{k}(i)}  = \ve r_{\bar{k}(i_m)} \lesssim \ve r_{k(\bar{i}_m)} \leq \eta \phi_\infty(z^{\bar{i}_m},z^{\bar{i}_m+1}).\]
	If $m \in \calS_3,$ then by taking $\ve$ small enough and using \eqref{e:2} and \eqref{e:bark} we have 
	\begin{align}
		\ve r_{\tilde{k}(i)} = \ve r_{\bar{k}(i_m)} \lesssim \ve r_{k(i_m)} \leq \ve r_k \leq \eta  \phi_\infty(x,y). 
	\end{align}
	Combing these estimates with \eqref{e:optimal} and the fact that $\rho(x,y) \leq \phi_\infty(x,y)$ we have 
	\begin{align}
		\sum_{i \in I} \ve r_{\tilde{k}(i)} \lesssim \eta \left(\sum_{m \in \calS_2} \phi_\infty(z^{\bar{i}_m},z^{\bar{i}_m+1}) + \phi_\infty(x,y)\right) \lesssim \eta \phi_\infty(x,y),
	\end{align}
	which completes the proof of (2) and the construction of $\{w^i\}_{i=1}^K.$ 
\end{proof}

\begin{cor}\label{c:rho-phi}
	Let $x,y \in \R^n$ and $\ell \geq 0$. Then, 
	\begin{align}\label{e:r1}
		| \rho(x,y) - \phi_\ell(x_\ell,y_\ell)| = | \rho(x,y) - \tilde{\phi}_\ell(x,y)| \lesssim \ve (r_\ell + \rho(x,y)). 
	\end{align}
	If $x_\ell \in \bigcup_{j \in J_\ell} 6D^j$ then 
	\begin{align}\label{e:r2}
		|d_X(g(x),g(y)) - \rho(x,y)| \lesssim	\ve (r_\ell + \rho(x,y))
	\end{align}
\end{cor}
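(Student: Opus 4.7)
The plan is to derive the corollary by chaining three estimates already established: the comparison of $\rho$ and $\phi_\infty$ from Lemma \ref{l:pert}, the Cauchy-type convergence of $\bar\phi_\ell$ to $\phi_\infty$ from Lemma \ref{l:phi-cauchy}, and the comparison of $d_X(g(\cdot),g(\cdot))$ with $\phi_\infty$ from Lemma \ref{l:phi_infty}. No new geometric input is needed.

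First I would establish the auxiliary estimate $|\rho(x,y) - \phi_\infty(x,y)| \lesssim \ve \rho(x,y)$ for all $x,y \in \R^n$. To do this, apply Lemma \ref{l:pert} with $\eta = \ve$; this is admissible because the hypothesis ``$\ve$ small enough depending on $\eta$'' becomes merely ``$\ve$ smaller than an absolute constant'' when $\eta$ is set to $\ve$ itself. The conclusion gives $(1+C\ve)^{-1}\phi_\infty(x,y) \leq \rho(x,y) \leq \phi_\infty(x,y)$, which rearranges to $|\rho(x,y) - \phi_\infty(x,y)| \lesssim \ve \phi_\infty(x,y) \lesssim \ve \rho(x,y)$, where the second step uses that $\phi_\infty \leq (1+C\ve)\rho \lesssim \rho$.

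For \eqref{e:r1}, I would next invoke Lemma \ref{l:phi-cauchy}, specifically \eqref{e:infty-ell}, which gives $|\phi_\infty(x,y) - \phi_\ell(x_\ell,y_\ell)| = |\phi_\infty(x,y) - \bar\phi_\ell(x,y)| \lesssim \ve r_\ell$ (the first equality is just the definition \eqref{e:bar-phi} of $\bar\phi_\ell$). Combining with the auxiliary estimate via the triangle inequality yields
\begin{align}
|\rho(x,y) - \phi_\ell(x_\ell,y_\ell)| &\leq |\rho(x,y) - \phi_\infty(x,y)| + |\phi_\infty(x,y) - \phi_\ell(x_\ell,y_\ell)| \\
&\lesssim \ve \rho(x,y) + \ve r_\ell,
\end{align}
which is exactly \eqref{e:r1}.

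For \eqref{e:r2}, assuming $x_\ell \in \bigcup_{j \in J_\ell} 6D^j$, I would apply \eqref{e:phi_infty3} of Lemma \ref{l:phi_infty} to obtain $|d_X(g(x),g(y)) - \phi_\infty(x,y)| \lesssim \ve(\phi_\infty(x,y) + r_\ell)$. Combining this with the auxiliary estimate and once again using $\phi_\infty \lesssim \rho$, the triangle inequality gives
\begin{align}
|d_X(g(x),g(y)) - \rho(x,y)| &\leq |d_X(g(x),g(y)) - \phi_\infty(x,y)| + |\phi_\infty(x,y) - \rho(x,y)| \\
&\lesssim \ve(\phi_\infty(x,y) + r_\ell) + \ve \rho(x,y) \lesssim \ve(\rho(x,y) + r_\ell),
\end{align}
giving \eqref{e:r2}. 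There is no substantive obstacle here; the work is simply to assemble the three comparison lemmas correctly and to note that one may select $\eta = \ve$ in Lemma \ref{l:pert}.
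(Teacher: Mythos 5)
Your proposal is correct and follows essentially the same route as the paper: the paper likewise obtains \eqref{e:r1} by the triangle inequality through $\phi_\infty$, combining Lemma \ref{l:pert} with \eqref{e:infty-ell} of Lemma \ref{l:phi-cauchy}, and \eqref{e:r2} by combining Lemma \ref{l:pert} with \eqref{e:phi_infty3} of Lemma \ref{l:phi_infty}. Your extra remark on choosing $\eta$ comparable to $\ve$ in Lemma \ref{l:pert} only makes explicit a step the paper leaves implicit.
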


\begin{proof}
	The equality in \eqref{e:r1} is immediate from \eqref{e:phi_infty1}. For the inequality, we apply Lemma \ref{l:phi-cauchy} and Lemma \ref{l:pert} to get 
	\begin{align}
		| \rho(x,y) - \tilde{\phi}_\ell(x,y) | &\leq | \rho(x,y) - \phi_\infty(x,y) | + | \phi_\infty(x,y) - \tilde{\phi}_\ell(x,y) | \\
		&\lesssim \ve \rho(x,y) + \ve r_\ell. 
	\end{align}
	For \eqref{e:r2}, we apply Lemma \ref{l:phi_infty} and Lemma \ref{l:pert} to get 
	\begin{align}
		| d_X(g(x),g(y)) - \rho(x,y) | &\leq | d_X(g(x),g(y)) - \phi_\infty(x,y) | + |\phi_\infty(x,y) - \rho(x,y)\\
		&\lesssim \ve \left(r_\ell + \rho(x,y)\right).
	\end{align}
\end{proof}

\bigskip

\subsection{Bi-H\"older, quasi-isometry, and Reifenberg Flatness}\label{s:quasi}

In the following two lemmas, we establish the properties of $\rho$ stated in Theorem \ref{t:Reif}. First, it is bi-H\"older equivalent to and a small perturbation of $\|\cdot\|_{j_0}$ as in \eqref{e:Reif-Holder} and \eqref{e:Reif1} respectively. Second, that $(\R^n,\rho)$ is Reifenberg flat as in \eqref{e:Reif-Reif}.

\begin{lem}
	The metric $\rho$ is bi-H\"older equivalent to $\|\cdot\|_{j_0}$ with exponent $\alpha = 1 - C\ve$ and 
	\begin{align}\label{e:dist-rho1}
		| \rho(x,y) - \|x-y\|_{j_0} | \leq C\ve 
	\end{align}
	for all $x,y \in 10B_{j_0}.$ 
\end{lem}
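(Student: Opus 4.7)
The plan is to combine Corollary \ref{c:rho-phi}—which gives $|\rho(x,y) - \phi_\ell(x_\ell, y_\ell)| \lesssim \ve(r_\ell + \rho(x,y))$ for every $\ell \geq 0$—with an iterated multiplicative form of Lemma \ref{l:phi-cauchy}. Since $|\phi_{k+1}-\phi_k| \leq C\ve\min\{\phi_k, r_k\} \leq C\ve\phi_k$, an induction on $k$ starting from $\phi_0(x,y) = \|x-y\|_{j_0}$ (as $F_0 = \id$ and by \eqref{e:rho_0}) will yield, for every $\ell \geq 0$ and all $x,y \in \R^n$, the two-sided estimate
\[
(1-C\ve)^\ell \|x-y\|_{j_0} \;\leq\; \phi_\ell(x_\ell, y_\ell) \;\leq\; (1+C\ve)^\ell \|x-y\|_{j_0}.
\]

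To prove \eqref{e:dist-rho1}, I will take $\ell = 0$ in Corollary \ref{c:rho-phi} to get $|\rho(x,y) - \|x-y\|_{j_0}| \lesssim \ve(1+\rho(x,y))$. Lemma \ref{l:pert} combined with \eqref{e:infty-ell} at $\ell=0$ gives $\rho(x,y) \leq \phi_\infty(x,y) \leq \|x-y\|_{j_0} + C\ve \leq 25$ whenever $x,y \in 10B_{j_0}$; substituting this back yields \eqref{e:dist-rho1}.

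For the Hölder upper bound $\rho(x,y) \lesssim \|x-y\|_{j_0}^{1-C\ve}$, set $s = \|x-y\|_{j_0}$ and (for $s\leq 1$) take $\ell = \lceil\log_{10}(1/s)\rceil$, so $s/10 \leq r_\ell \leq s$. The multiplicative upper bound together with $(1+C\ve)^\ell \leq (10/s)^{C'\ve}$ (where $C' = C/\ln 10$) gives $\phi_\ell(x_\ell, y_\ell) \lesssim s^{1-C'\ve}$; plugging into Corollary \ref{c:rho-phi} and absorbing the $C\ve\rho(x,y)$ term on the left yields the claim.

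For the Hölder lower bound $\rho(x,y) \gtrsim \|x-y\|_{j_0}^{1+C\ve}$, I will take $\ell$ to be the smallest integer for which $\phi_\ell(x_\ell, y_\ell) \geq 2 r_\ell$; the multiplicative lower bound shows this $\ell$ exists and satisfies $\ell \leq (1+O(\ve))\log_{10}(1/s) + O(1)$. A short check will verify that once $\phi_\ell \geq 2r_\ell$, the inequality $\phi_{k+1}/r_{k+1} \geq (10-C\ve)\phi_k/r_k$ (and hence $\phi_{k+1} \geq 2r_{k+1}$) propagates to every later scale, so the absolute tail estimate $\sum_{k\geq \ell}C\ve r_k \lesssim \ve r_\ell$ preserves $\phi_\infty(x,y) \geq \phi_\ell/2$. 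Since $(1-C\ve)^\ell \geq c\, s^{C'\ve}$ at the chosen $\ell$, I obtain $\phi_\ell \gtrsim s^{1+C'\ve}$, and Lemma \ref{l:pert} closes the argument. The principal delicacy lies exactly in this calibration: $\ell$ must be large enough that $\phi_\ell$ exceeds $r_\ell$ (so the additive error in Corollary \ref{c:rho-phi} is dominated by the main term) but small enough that the geometric decay $(1-C\ve)^\ell$ does not fall below $s^{C\ve}$; for $s \gtrsim \ve$ the choice $\ell = 0$ already suffices, and only the very-small-$s$ regime requires the full argument.
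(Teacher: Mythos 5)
Your proposal is correct and takes essentially the same route as the paper: \eqref{e:dist-rho1} follows from Corollary \ref{c:rho-phi} at $\ell=0$ together with the a priori bound $\rho\lesssim 1$ on $10B_{j_0}$, and the bi-H\"older estimates come from iterating Lemma \ref{l:phi-cauchy} multiplicatively down to a stopping scale $\ell\sim\log_{10}(1/\|x-y\|_{j_0})$ and then passing from $\phi_\ell$ (or $\phi_\infty$) to $\rho$ via Lemma \ref{l:pert}. The paper uses one maximal scale $m$ with $\phi_\ell\le r_\ell$ and concludes by taking logarithms, whereas you use two slightly different scale calibrations and direct exponent arithmetic; this difference is immaterial (and your propagation claim $\phi_{k+1}/r_{k+1}\ge(10-C\ve)\phi_k/r_k$ is not even needed, since the tail bound $\sum_{k\ge\ell}C\ve r_k\lesssim\ve r_\ell$ already suffices).
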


\begin{proof}
	For $x,y \in 10B_{j_0}$ we have by \eqref{e:infty-ell} and Lemma \ref{l:pert} that 
	\[ \rho(x,y) \lesssim \phi_\infty(x,y) \leq \|x-y\|_{j_0} + C\ve s_0 \lesssim s_0 \lesssim 1.\]
	Recalling from \eqref{e:rho_0} that $\phi_0(x,y) = \|x-y\|_{j_0},$ \eqref{e:dist-rho1} is then immediate from \eqref{e:r1}. By Lemma \ref{l:pert}, to show that $\rho$ is bi-H\"older equivalent to $\|\cdot\|_{j_0}$ with exponent $\alpha$, it is enough to show $\phi_\infty$ is bi-H\"older equivalent to $\|\cdot\|_{j_0}$ with the same exponent. This is our goal for the rest of the proof. Let $C'$ be the implicit constant appearing in \eqref{e:infty-ell} and let $x,y \in \R^n$. If $\| x - y \| > r_0$ then choosing $\ve$ small enough depending on $C'$, we have 
	\begin{align}
		| \phi_\infty(x,y) - \| x -y \|_{j_0} | \leq C'\ve r_0 \leq \tfrac{1}{2} \| x-y \|_{j_0}.
	\end{align}
	Suppose instead that $\| x-y \|_{j_0} \leq r_0.$ By Lemma \ref{l:phi-cauchy}, we have 
	\begin{align}\label{e:upper-phi}
		\phi_\ell(x,y) \leq (1+C\ve)^\ell \|x-y\|_{j_0}
	\end{align}
	for all $\ell \geq 0.$ If $m \geq 0$ is such that $\phi_\ell(x,y) \leq r_\ell$ for all $0 \leq \ell \leq m$ then Lemma \ref{l:phi-cauchy} also gives 
	\begin{align}\label{e:lower-phi}
		10^m = r_m \geq \phi_m(x,y) \geq (1+C\ve)^{-m}\| x-y \|_{j_0}.
	\end{align}
	Since the left hand-side decays more rapidly than the right, there is a maximal such $m \geq 0$, which we do not relabel. By maximality, we have 
	\begin{align}
		\phi_m(x,y) \geq (1+C\ve)^{-1} \phi_{m+1}(x,y) \geq \frac{r_{m+1}}{2}.
	\end{align}
	Using this estimate with \eqref{e:upper-phi} and \eqref{e:lower-phi}, gives
	\begin{align}
		\begin{split}\label{e:phi-infty-m-1}
		\phi_\infty(x,y) &\leq \phi_m(x,y) + C\ve r_m \leq (1+C\ve) \phi_m(x,y) \\
		&\leq (1+C\ve)^{m+1} \| x -y \|_{j_0} 
		\end{split}
	\end{align}
	and 
	\begin{align}
		\begin{split}\label{e:phi-infty-m-2}
		\phi_\infty(x,y) &\geq \phi_m(x,y) - C\ve r_m \geq (1+C\ve) \phi_m(x,y) \\
		&\geq (1+C\ve)^{-m-1} \|x-y\|_{j_0}. 
		\end{split}
	\end{align}
	After taking logarithms in \eqref{e:lower-phi} and rearranging, we obtain 
	\begin{align}
		m \leq \frac{1}{\log\left(\frac{10}{1+C\ve}\right)}\log\left( \frac{1}{\|x-y\|_{j_0}}\right) \leq C \log\left( \frac{1}{\|x-y\|_{j_0}}\right).
	\end{align}
	Combing this estimate for $m$ with \eqref{e:phi-infty-m-1} and \eqref{e:phi-infty-m-2}, we see 
	\begin{align}
		\abs{\log\left( \frac{\phi_\infty(x,y)}{\|x-y\|_{j_0}} \right) } \leq (m+1)\log(1+C\ve) \leq C\ve \log\left( \frac{1}{\|x-y\|_{j_0}} \right). 
	\end{align}
	After rearranging, it follows that 
	\begin{align}
		\|x-y\|^{1+C\ve}_{j_0} \leq \phi_\infty(x,y) \leq \|x-y\|_{j_0}^{1-C\ve}.
	\end{align}
\end{proof}

\begin{lem}\label{l:Reif-flat}
	The space $(\R^n,\rho)$ is $(C\ve,n)$-Reifenberg flat. 
\end{lem}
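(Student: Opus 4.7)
The aim is to exhibit, for every $z \in \R^n$ and $r > 0$, a norm $\|\cdot\|$ on $\R^n$ and a $C\ve r$-GHA $\vp : B_\rho(z, r) \to B_{\|\cdot\|}(0, r)$. The approach is to transport $B_\rho(z, r)$ through the diffeomorphism $F_\ell : \R^n \to M_\ell$ (a composition of the $f_m$ from Lemma~\ref{l:f-homeo}) at a well-chosen scale $\ell$ into a chart $6D^{j, \ell}$, and then push into $(\R^n, \|\cdot\|_{j, \ell})$ via the local coordinate $\psi_j$. Concretely, the norm is $\|\cdot\|_{j, \ell}$ and
\begin{equation*}
\vp(y) \ := \ p_{0, r}\bigl(\psi_j(F_\ell(y)) - \psi_j(F_\ell(z))\bigr),
\end{equation*}
where $p_{0, r}$ is the truncation from Lemma~\ref{l:p-lambda} that guarantees $\vp(y) \in B_{\|\cdot\|_{j, \ell}}(0, r)$.

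For scale selection, if $r \geq r_0/30$ or no tree-level chart is available, fall back to $\ell = 0$, $j = j_0$ (recalling $M_0 = \R^n_{j_0}$, $F_0 = \psi_{j_0} = \mathrm{Id}$, so $\vp(y) = p_{0, r}(y - z)$). Otherwise let $\ell \geq 1$ be the largest integer with $r \leq r_\ell/30$, so that $r \sim r_\ell$, and pick $j \in J_\ell$ with $F_\ell(z) \in 3D^{j, \ell}$; if no such $j$ exists at this level, drop to the largest $\ell' \leq \ell$ that affords such a chart. With this choice, the verification is as follows. \emph{(Well-defined)} For $y \in B_\rho(z, r)$, Corollary~\ref{c:rho-phi} gives $\phi_\ell(F_\ell(y), F_\ell(z)) \leq \rho(y, z) + C\ve(r_\ell + \rho(y, z)) < r_\ell/2$ for $\ve$ small, and Lemma~\ref{l:ball-D-phi} then forces $F_\ell(y) \in 4D^{j, \ell} \subset 6D^{j, \ell}$. \emph{(Isometry)} For $y, y' \in B_\rho(z, r)$, both $F_\ell(y), F_\ell(y') \in 6D^{j, \ell}$, so \eqref{d:rho} gives $\phi_\ell = d_\ell$ on this pair; combining Lemma~\ref{l:psi-bi-lip} (with $\psi_j$ being $(1+C\ve)$-bi-Lipschitz w.r.t.\ $d_\ell$), Corollary~\ref{c:rho-phi}, the 2-Lipschitz property of $p_{0, r}$, and $r_\ell \leq 30r$, we obtain $|\rho(y, y') - \|\vp(y) - \vp(y')\|_{j, \ell}| \leq C\ve r$. \emph{(Density)} Given $u \in B_{\|\cdot\|_{j, \ell}}(0, r)$, set $u' := (1 - C\ve)u + \psi_j(F_\ell(z)) \in 6B_{j, \ell}$, $w := \psi_j^{-1}(u') \in 6D^{j, \ell}$, and $y := F_\ell^{-1}(w)$ (valid since $F_\ell$ is a global diffeomorphism by Lemma~\ref{l:f-homeo}); Lemma~\ref{l:psi-bi-lip} and Corollary~\ref{c:rho-phi} then give $\rho(y, z) \leq r$ and $\|\vp(y) - u\|_{j, \ell} \leq C\ve r$.

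The outside-the-tree fallback rests on the observation that when $F_m(z)$ lies in $M_m \setminus p_m(W_m^*)$ for all relevant $m$---which is what ``no chart available at scale $r$'' amounts to, via Lemma~\ref{l:first} propagating this flat condition forward---each $f_m$ restricts to a local isometry near $F_m(z)$ by \eqref{e:f-isom}, and \eqref{d:rho} then forces $\phi_\ell(F_\ell(y), F_\ell(z)) = \|y - z\|_{j_0}$ for $y$ in a neighborhood of $z$. Hence $\phi_\infty(y, z) = \|y - z\|_{j_0}$, and Lemma~\ref{l:pert} delivers $(1+C\ve)^{-1}\|y - z\|_{j_0} \leq \rho(y, z) \leq \|y - z\|_{j_0}$, so the translation $y \mapsto y - z$ is an approximate GHA even at large scale. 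The main obstacle is the careful treatment of borderline cases where $F_\ell(z)$ lies in $6D^{j,\ell} \setminus 3D^{j,\ell}$ at the natural level $\ell$; one absorbs these either by switching to a neighboring chart (using the controlled transition maps of Lemma~\ref{l:tran-psi}) or by dropping to level $\ell - 1$, where the chart becomes ten times larger relative to $r$ and easily contains the ball $B_\rho(z, r)$.
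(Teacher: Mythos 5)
Your map (truncation $\circ$ chart $\psi_j$ $\circ$ $F_\ell$) is the same as the paper's, and your regime where $r\sim r_\ell$ and a chart at level $\ell$ contains $F_\ell(z)$ reproduces, correctly, what the paper does in its "large ball" case. The genuine gap is the complementary regime, which the paper isolates as a separate case: the ball $B_\rho(z,r)$ can be far smaller than the last scale at which its trajectory meets a chart. If the trajectory leaves the tree at some level $\ell'\geq 1$ (e.g.\ below a minimal cube of a stopping region), your rule ``drop to the largest $\ell'\le\ell$ that affords such a chart'' places you in a chart of scale $r_{\ell'}\gg r$, and the estimates you cite no longer give anything: Corollary \ref{c:rho-phi} only yields $|\rho(y,y')-\phi_{\ell'}(y_{\ell'},y'_{\ell'})|\lesssim\ve\bigl(r_{\ell'}+\rho(y,y')\bigr)$, and the additive term $\ve r_{\ell'}$ can dwarf $r$, so neither your isometry bound (which explicitly invokes $r_\ell\le 30r$) nor your density bound $\rho(y,z)\le r$ follows. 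What is needed there is a multiplicative comparison: once the trajectory is outside all charts at levels beyond $\ell'$, the freezing rule \eqref{d:rho} together with Lemmas \ref{l:outside} and \ref{l:k(x,y)} gives the exact identity $\phi_\infty(y,y')=d_{\ell'+2}(y_{\ell'+2},y'_{\ell'+2})$ for pairs in the small ball (no $\ve r_{\ell'}$ loss), and then Lemmas \ref{l:f-bilip1} and \ref{l:pert} show that $F_{\ell'}$ restricted to $B_\rho(z,r)$ is $(1+C\ve)$-bi-Lipschitz onto its image in $(M_{\ell'},d_{\ell'})$; only after that does composing with $\psi_j$ produce errors of size $C\ve r$. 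This is precisely the paper's Case 1 and it is absent from your verification.

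Your ``outside-the-tree fallback'' does not close this hole. It treats only the situation in which the conclusion is $\phi_\infty(y,z)=\|y-z\|_{j_0}$, i.e.\ exit at level $0$; when the exit happens at a level $\ell'\ge 1$ the frozen metric near $z$ is the Finsler path metric $d_{\ell'}$, not the top norm, so the claimed identity is false there. Moreover the fallback controls only distances to the center $z$, whereas a GHA needs $|\rho(y,y')-\|\vp(y)-\vp(y')\|\,|\le C\ve r$ for all pairs $y,y'$ in the ball, and one must rule out that the trajectories of $y,y'$ re-enter finer charts and distort $\rho(y,y')$ — this is what Lemmas \ref{l:first} and \ref{l:outside}, combined with the bi-Lipschitz (not $\ve r_{\ell'}$-additive) transfer, accomplish in the paper. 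Finally, \eqref{e:f-isom} requires the image point to avoid $\overline{p_{m+1}(W_{m+1}^*)}$, a larger set than $\bigcup_j 6D^{j}$, so ``far from the charts'' must be upgraded via \eqref{e:nested} before the local-isometry claim can be used. In short: your construction and the large-ball analysis match the paper, but the small-ball regime needs the exact-freezing/bi-Lipschitz argument, which your citations do not supply.
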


\begin{proof}
	Let $x \in \R^n$ and $0 < r < \infty$. We begin by defining a norm $\|\cdot\|$ and a mapping $\phi \colon B_\rho(x,r) \to B_{\|\cdot\|}(0,r).$ We will check afterwards that $\phi$ defines a $C\ve r$-GHA. For $\ell \geq 0$ set $B_\ell = F_\ell(B_\rho(x,r)) \subseteq M_\ell$ and note that $B_0 = B_\rho(x,r)$ by \eqref{e:map-F}. Furthermore, let 
	\begin{align}
		k = \max\{\ell \geq 0 \colon \mbox{there exists } j \in J_\ell \mbox{ such that } B_\ell \subseteq 6D^j\}.
	\end{align}
	We define the maximum of the empty set to be zero. If $k \geq 1$ fix some $i \in J_k$ satisfying $B_k \subseteq 6D^{i,k}$, let $\|\cdot\| = \|\cdot\|_{i,k}$ and $\psi  = \psi_{i,k} \colon 6D^{i,k} \to 6B_{i,k}$. Otherwise, set $\|\cdot\|=\|\cdot\|_{j_0}$ and $\psi = \text{Id}.$ In either case we have that $\psi$ is a $(1+C\ve)$-bi-Lipschitz map by Lemma \ref{l:psi-bi-lip}. By translation, we may assume 
	\begin{align}\label{e:F=0}
		\psi(F_k(x)) = 0,
	\end{align}
	noting that this does not affect the bi-Lipschitz constant. Let $p_{0,r}$ be the map from Lemma \ref{l:p-lambda} and set
	\begin{align}\label{e:phi}
		\phi \coloneqq p_{0,r} \circ \psi \circ F_k \colon B_0 \to B_{\|\cdot\|}(0,r).
	\end{align}
	
	Let $\eta > 0$ to be chosen sufficiently small. We now check that $\phi$ defines a $C\ve r$-GHA, the proof splits into two cases.   \\
	
	\noindent\textbf{Case 1:} Suppose that $d_k(x_k,p_k) \leq \eta r_k$ for all $p \in B_0.$ We begin by showing 
	\begin{align}\label{e:F_k-bi-Lip}
		F_k|_{B_0} : (B_0,\rho) \to (B_k,d_k) \mbox{ is a } (1+C\ve)\mbox{-bi-Lipschitz equivalence.} 
	\end{align}
	Indeed, by maximality of $k$ there exists a pair of points $p,q \in B_0$ such that if $\ell > k$ and $j \in J_\ell$ then either $p_\ell \not\in 6D^j$ or $q_\ell \not\in 6D^j.$ By assumption, $\diam(B_k) \leq 2 \eta r_k,$ so that $\diam(B_{k+2}) \leq 3\eta r_k$ by  Lemma \ref{l:f-bilip1}. Furthermore, Lemma \ref{l:outside} implies $\dist_{k+2}(p_{k+2},\bigcup_{j \in J_{k+2}} 6D^j) \geq r_{k+1}/2.$ Combining the previous two estimates with the triangle inequality gives
	\begin{align}\label{e:disjoint}
		B_{k+2} \cap \bigcup_{j \in J_{k+2}} 6D^j =\emptyset. 
	\end{align}	
	Let us see how \eqref{e:disjoint} implies \eqref{e:F_k-bi-Lip}. Let $a,b \in B_0$ and as above let us write $a_\ell = F_\ell(a)$ and $b_\ell = F_\ell(b)$ for $\ell \geq 0.$ Combining \eqref{e:disjoint} with \eqref{e:phi_infty1} we have $\phi_\infty(a,b) = d_{k+2}(a_{k+2},b_{k+2}).$ Applying Lemma \ref{l:pert} and Lemma \ref{l:f-bilip1} then gives
	\begin{align}
		| \rho(a,b) - d_k(a_k,b_k) | &\leq |\rho(a,b) - d_{k+2}(a_{k+2},b_{k+2})| + |d_{k+2}(a_{k+2},b_{k+2}) - d_k(a_k,b_k)| \\
		&\lesssim \ve (d_{k+2}(a_{k+2},b_{k+2}) + d_k(a_k,b_k)) \lesssim \ve d_k(a_k,b_k)
	\end{align}
	and we conclude \eqref{e:F_k-bi-Lip}. 
	
	Now we check that $\phi$ defines a $C\ve r$-isometry. Let $y,z \in B_0.$ Combining \eqref{e:F=0} with that fact that $\psi$ and $F_k$ are both $(1+C\ve)$-bi-Lipschitz, we know $\psi(F_k(y)) \in B_{\|\cdot\|}(0,(1+C\ve)r)$ so that $\| \phi(y) - \psi(F_k(y)) \| \leq C\ve r$ by \eqref{e:phi} and Lemma \ref{l:p-lambda}. The same argument also gives $\| \phi(z) - \psi(F_k(z)) \| \leq C\ve r.$ Using this, and again the fact that $\psi$ and $F_k$ are $(1+C\ve)$-bi-Lipschitz, we have  
	\begin{align}\label{e:g-1}
		| \| \phi(y) - \phi(z) \| - \rho(y,z) | &\leq C\ve r + | \| \psi(F_k(y)) - \psi(F_k(z)) \| - \rho(y,z) | \\
		&\leq C\ve r + C\ve \rho(y,z) \leq C\ve r. 
	\end{align}
	Since $y,z \in B_0$ were arbitrary, we are done. 
	
	It remains to check that $\phi$ is $C\ve r$-dense. Let $u \in B_{\|\cdot\|}(0,r)$ and set $v = \tfrac{u}{1+C\ve}$ and $w = F_k^{-1}(\psi^{-1}(v)).$ Then $\|u-v\| \leq C\ve r$ and $\| v \| \leq (1+C\ve)^{-1} r$. Recalling \eqref{e:F=0} and using that $F_k$ and $\psi$ are $(1+C\ve)$-bi-Lipschitz we have
	\begin{align}
		\rho(w,x) = \rho(F_k^{-1}(\psi^{-1}(v)),F_k^{-1}(\psi^{-1}(0))) \leq (1+C\ve) \| v  \| \leq r
	\end{align} 
	so that $w \in B_0.$ Since $v \in B_{\|\cdot\|}(0,r),$ Lemma \ref{l:p-lambda} implies $p_{0,r}(v) = v$. The triangle inequality, \eqref{e:phi} and the definition of $w$ then gives 
	\begin{align}\label{e:g-2}
		\hspace{2.5em}\| \phi(w) - u \| \leq \| \phi(w) - v \| + \| v - u \| = \| \psi(F_k(w)) - v \| + \| v - u \| \leq C\ve r. 
	\end{align}
	Since $u$ was arbitrary, this completes the proof in this case. \\
	
	\noindent\textbf{Case 2:} Suppose now that there exists $p \in B_0$ such that $d_k(x_k,p_k) > \eta r_k.$ For $\ve$ small enough depending on $\eta$, it follows from Lemma \ref{l:pert} and \eqref{e:infty-ell} that $\phi_k(x_k,p_k) \leq (1+C\ve\eta^{-1}) \rho(x,p) \leq 2r$ for such a point. Note that
	\begin{align}\label{e:=}
		d_k(y_k,z_k) = \phi_k(y_k,z_k) \mbox{ for all } y,z \in B_0.
	\end{align}
	This follows from \eqref{e:rho_0} if $k=0$ and from \eqref{d:rho} if $k \geq 1,$ recalling that $B_k \subseteq 6D^{i,k}.$ Combing the above two observations with our assumption on $d_k(x_k,p_k)$ gives 
	\[r \geq \eta r_k/2.\]
	
	Let us check that $\phi$ defines a $C\ve r$-isometry. Let $y,z \in B_0.$ By Corollary \ref{c:rho-phi} and \eqref{e:=} we have  
	\begin{align}
		d_k(x_k,y_k)\leq (1+C\ve) \rho(x,y) + C\ve r_k \leq (1+C\ve \eta^{-1}) r. 
	\end{align}
	Using \eqref{e:F=0} and the fact that $\psi$ is $(1+C\ve)$-bi-Lipschitz, this gives $\psi(F_k(y)) \in B_{\|\cdot\|}(0,(1+C\ve \eta^{-1})r).$ Lemma \ref{l:p-lambda} then implies $\| \phi(y) - \psi(F_k(y)) \| \leq C\ve \eta^{-1} r.$ Similarly, we have $\| \phi(z) - \psi(F_k(z)) \| \leq C\ve \eta^{-1} r.$ Using these estimates with \eqref{e:=} and Corollary \ref{c:rho-phi}, we get    
	\begin{align}\label{e:g-3}
		| \| \phi(y) - \phi(z) \| - \rho(y,z) | &\leq C\ve \eta^{-1} r +  | \| \psi(F_k(y)) - \psi(F_k(z)) \| - \rho(y,z) |  \\
		&\leq C\ve \eta^{-1} r \\
		&\hspace{2em}+  |  \| \psi(F_k(y)) - \psi(F_k(z)) \|  - d_k(F_k(y),F_k(z)) | \\
		&\hspace{4em} + | d_k(F_k(y),F_k(z)) - \rho(y,z) |  \\
		&\lesssim \ve \eta^{-1} r  + \ve \rho(y,z) + \ve r_k \lesssim \frac{\ve r}{\eta}. 
	\end{align}
	
	Let us now show that $\phi$ is $C\ve r$-dense. Let $u \in B_{\|\cdot\|}(0,r)$ and let $r' = \tfrac{(r - C\ve r_k)}{1+C\ve}$, $v = p_{z,r'}(u)$ and $w = \psi^{-1}(F_k^{-1}(v)).$ Notice that $\|v\| \leq r' $ and by Lemma \ref{l:p-lambda} that $\| u -v \| \leq  C\ve(r + r_k) \leq C\ve\eta^{-1} r$. Then, by \eqref{e:F=0}, Corollary \ref{c:rho-phi}, and the fact that $\psi$ is $(1+C\ve)$-bi-Lipschitz, we have 
	\begin{align}
		\rho(w,z) &= \rho(F_k^{-1}(\psi^{-1}(v)),F_k^{-1}(\psi^{-1}(0))) \leq C\ve r_k + (1+C\ve)d_k(\psi^{-1}(v),\psi^{-1}(0))\\
		&\leq C\ve r_k + (1+C\ve) \| v\| \leq r,
	\end{align}
	so that $w \in B_0.$ By exactly the same estimate as in \eqref{e:g-2}, we get
	\begin{align}\label{e:g-4}
		\| \phi(w) - u \| \leq C\ve r.
	\end{align}
	Since $u$ was arbitrary, this finishes the proof.  
\end{proof}

\bigskip

\subsection{The map $\bar{g}$}\label{s:bar-g}

	For each $\ell \geq 0$ and $j \in J_\ell$ it is possible, by Lemma \ref{l:GH-scale}, to find a $C\delta r_\ell$-GHA $\tilde{\beta}_{j,\ell} \colon 100B^{j,\ell} \to 100B_{j,\ell}$ which satisfies \eqref{e:centre-pos}, \eqref{e:almost-id} (with constant $C\delta r_\ell$) and $\tilde{\beta}_{j,\ell}(6B^{j,\ell}) \subseteq 6B_{j,\ell}$. Thus, we might as well (and will) assume that 
\begin{align}\label{r:assump} \beta_{j,\ell}(6B^{j,\ell}) \subseteq 6B_{j,\ell}.
\end{align}
This is needed for example in \eqref{e:g-def} below, since $\text{dom}(\psi_{j,\ell}^{-1}) = 6B_{j,\ell}.$

Let us now define the map $\bar{g} : 6B^{j_0} \to \R^n.$ For $x \in 6B^{j_0},$ let 
\begin{align}\label{e:k(x)}
	k(x) = \sup\left\{ \ell \geq 0 : x \in \bigcup_{j \in J_\ell} 6B^j\right\},
\end{align}
where again we set the supremum over the empty set to be zero. For each $0 \leq k \leq k(x)$ choose an arbitrary index $i_k(x)$ such that $x \in 6B^{i_k(x)}.$ Then, for $k \geq 0$, set 
\begin{align}\label{e:g-def}
	\bar{g}_k(x) = 
	\begin{cases}
		F_k^{-1} \circ  \psi^{-1}_{i_k(x)} \circ \toward_{i_k(x)}(x)  &\mbox{ if } \ell \leq k(x); \\
		F_{k(x)}^{-1} \circ  \psi^{-1}_{i_{k(x)}(x)} \circ \toward_{i_{k(x)}(x)}(x)   &\mbox{ if } k > k(x). 	
	\end{cases}
\end{align}
Note that $\bar{g}_k$ is well-defined by \eqref{r:assump}.

\begin{lem}\label{l:g-close}
	Let $k \geq 0$ and $i,j \in J_k$ such that $x \in 6B^i \cap 6B^j.$ Then, 
	\begin{align}
		\phi_k(\psi_i^{-1} \circ \toward_i(x),\psi_j^{-1} \circ \toward_j(x)) \lesssim \ve r_k. 
	\end{align}
\end{lem}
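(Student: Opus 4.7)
The plan is to show that the two candidate values $u := \psi_i^{-1}(\beta_i(x))$ and $v := \psi_j^{-1}(\beta_j(x))$ in $M_k$ differ only by a small perturbation governed by the flatness parameter $\ve$, and then conclude using the bi-Lipschitz property of the charts (Lemma \ref{l:psi-bi-lip}) together with the definition of $\phi_k$. Denote $y_i = \beta_i(x) \in 6B_i$ and $y_j = \beta_j(x) \in 6B_j$ (using \eqref{r:assump}).

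The first step will be to prove the key tangent-space estimate
\[
\|\tilde I_{j,i}(y_i) - y_j\|_j \lesssim \ve r_k.
\]
This follows by the triangle inequality once we combine three ingredients: $d_X(\alpha_i(y_i),x) \leq \delta r_k$ from \eqref{e:almost-id}; the $\delta r_k$-GHA property of $\beta_j$, giving $\|\beta_j\circ \alpha_i(y_i) - \beta_j(x)\|_j \leq d_X(\alpha_i(y_i),x) + \delta r_k$; and Proposition \ref{p:coherent}\eqref{e:prop}, which gives $\|\tilde I_{j,i}(y_i) - \beta_j\circ \alpha_i(y_i)\|_j \lesssim \ve r_k$ (valid since $y_i\in 6B_i\subseteq 45B_i$). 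Setting $z := \tilde I_{j,i}(y_i)$, this together with $y_j \in 6B_j$ yields $z \in (6+C\ve)B_j \subseteq 8B_j$, so by \eqref{e:defn-I} we have $y_i \in \Omega_{j,i}$ and $I_{j,i}(y_i)=z$.

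Next I will identify $u$ inside the chart $\psi_j$. By Remark \ref{r:transition'}, $\vp_j(\vp_i^{-1}(y_i)) = I_{j,i}(y_i) = z$, so $\vp_i^{-1}(y_i) = \vp_j^{-1}(z) \in 8U^i\cap 8U^j$. Applying $p_k$ (both points lying in $W_k^*$ via \eqref{e:h_ell3''}) gives the identification $u = p_k(\vp_j^{-1}(z))$.

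The final step is to estimate $\phi_k(u,v)$ by constructing a short path in $M_k$. Parameterize the straight segment $\sigma(t) := (1-t)z + t y_j \in 8B_j$ for $t\in[0,1]$, let $t^* \in [0,1]$ be the smallest value with $\sigma(t)\in 6B_j$ for all $t \geq t^*$ (possibly $t^*=0$ when $z\in 6B_j$), and set $w := p_k(\vp_j^{-1}(\sigma(t^*))) \in 6D^j$. On $[t^*,1]$, the lifted curve $t\mapsto \psi_j^{-1}(\sigma(t))$ lies in $6D^j$, and by Lemma \ref{l:psi-bi-lip} its length is at most $(1+C\ve)\|\sigma(t^*)-y_j\|_j \leq (1+C\ve)\|z-y_j\|_j \lesssim \ve r_k$, giving $\phi_k(w,v)=d_k(w,v)\lesssim \ve r_k$. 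For the remaining segment $[0,t^*]$ I will run the same argument with the roles of $i$ and $j$ reversed: since $\tilde I_{i,j}\circ\tilde I_{j,i}=\mathrm{Id}$ (coming from $I_{a,b}^{-1}=I_{b,a}$ in Lemma \ref{l:single-mod}), the image $\tilde I_{i,j}(\sigma(t))$ lies within $C\ve r_k$ of $y_i$ for all $t\in[0,t^*]$, hence (for $\ve$ small) inside $(6+C\ve)B_i$, and by an analogous argument inside the $i$-chart this piece of the path can be replaced by one lying in $6D^i$ from $u$ to $w$ (since $u,w$ both sit in $8U^i\cap 8U^j$ by construction) of length $\lesssim \ve r_k$, so $\phi_k(u,w)=d_k(u,w)\lesssim \ve r_k$. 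The semi-metric triangle inequality then concludes $\phi_k(u,v)\lesssim \ve r_k$.

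The main obstacle is the last point: when $z\notin 6B_j$ (which is allowed by $y_j$ sitting close to $\partial 6B_j$), the naive straight-line path in the $j$-chart exits $6B_j$, so $\phi_k=d_k$ does not apply on the whole path. Overcoming this requires splitting the path at $t^*$ and re-expressing its initial portion in the $i$-chart via the cyclic relation $\tilde I_{i,j}\circ\tilde I_{j,i}=\mathrm{Id}$; the estimate $\|\tilde I_{i,j}(\sigma(t))-y_i\|_i\lesssim\ve r_k$ is what allows the initial piece to be contained in $6D^i$ (after possibly shrinking the segment by a factor $(1-C\ve)$ to absorb boundary effects, using that the initial point $u$ itself sits in $6D^i$), thereby verifying the common-chart hypothesis of the definition of $\phi_k$ on each piece.
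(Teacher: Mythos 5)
Your Steps 1 and 2 are exactly the paper's argument: the key estimate $\|\tilde I_{j,i}(\toward_i(x))-\toward_j(x)\|_j\lesssim\ve r_k$ via \eqref{e:prop}, \eqref{e:almost-id} and the GHA property, and the chart identification via Remark \ref{r:transition'}/Lemma \ref{l:tran-psi}, are precisely what the paper uses (it expresses $v=\psi_j^{-1}(\toward_j(x))$ in the $i$-chart rather than $u$ in the $j$-chart, which is the same thing). The divergence, and the gap, is your final step. You bound the length of a two-piece path, i.e.\ you bound $d_k(u,w)+d_k(w,v)$, and then pass to $\phi_k(u,v)$ by ``the semi-metric triangle inequality.'' But $\phi_k$ is only a semi-metric in the sense that it is \emph{not} known to satisfy the triangle inequality: by \eqref{d:rho}, when $u$ and $v$ do not lie in a common $6D^m$ (exactly the case $z\notin 6B_j$ that forced your splitting), $\phi_k(u,v)$ is defined as $\phi_{k-1}(f_{k-1}^{-1}(u),f_{k-1}^{-1}(v))$, and no comparison with $d_k(u,w)+d_k(w,v)$ is available without further argument. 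Indeed the failure of the triangle inequality for the $\phi_\ell$'s is the whole reason the paper introduces the chain infimum $\rho$ in Definition \ref{d:rho'} and proves the nontrivial Lemma \ref{l:pert}; you cannot invoke it for free. (A secondary weak point: your claim that the initial piece lies in $6D^i$ only yields containment in $(6+C\ve)B_i$, and the ``shrink by $(1-C\ve)$'' remark does not produce a point $w$ lying simultaneously in $6D^i$ and $6D^j$ as your split requires.)

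The gap is repairable, but the repair is essentially what makes the path construction unnecessary. If $u,v$ lie in a common $6D^m$, then $\phi_k(u,v)=d_k(u,v)\le(1+C\ve)\|\psi_m(u)-\psi_m(v)\|_m\lesssim\ve r_k$ directly by Claim 1 of Lemma \ref{l:psi-bi-lip} and your Step 1 -- this is the paper's proof, with the same harmless $(6+C\ve)$ boundary slop that the paper also ignores. If they do not, unwind \eqref{d:rho} one level: by Lemma \ref{l:inclusion} one has $f_{k-1}^{-1}(v)\in 3D^{i(j)}$, by Lemma \ref{l:f-bilip1} one has $d_{k-1}(f_{k-1}^{-1}(u),f_{k-1}^{-1}(v))\le(1+C\ve)d_k(u,v)\lesssim\ve r_k\ll r_{k-1}$, and then Lemma \ref{l:ball-D} places both preimages in a common level-$(k-1)$ chart, so $\phi_k(u,v)=d_{k-1}(f_{k-1}^{-1}(u),f_{k-1}^{-1}(v))\le(1+C\ve)d_k(u,v)\lesssim\ve r_k$. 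Either supply this passage from $d_k$ to $\phi_k$, or drop the splitting and argue in a single chart as the paper does.
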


\begin{proof} 
	By assumption, $\beta_i$ is a $\delta r_i$-GHA and by Lemma \ref{l:psi-bi-lip}, $\psi_i$ is bi-Lipschitz. Using this with \eqref{e:almost-id} and Lemma \ref{l:tran-psi} gives 
	\begin{align}
		\phi_k(\psi_i^{-1} \circ \toward_i(x),\psi_j^{-1} \circ \toward_j(x)) &= \phi_k( \psi_i^{-1}\circ \toward_i(x)   , \psi_i^{-1} \circ I_{i,j} \circ \toward_j(x)   )\\
		&\lesssim \| \toward_i(x) - I_{i,j} \circ \toward_j(x) \|_i \\
		&\leq \| \toward_i(x) - \toward_i \circ \away_j \circ \toward_j(x) \|_i + C\ve r_k \lesssim \ve r_k. 
	\end{align}
\end{proof}

\begin{lem}\label{l:g-exists}
	For each $k \geq 0$ and $x \in 6B^{j_0}$ we have  
	\begin{align}\label{e:est-g}
		\rho(\bar{g}_k(x),\bar{g}_{k+1}(x)) \lesssim \ve r_k.
	\end{align}
	In particular, the limit $\bar{g} = \lim_{k \to \infty} \bar{g}_k$ exists, takes values in $6B_{j_0}$ and satisfies 
	\[ \rho(\bar{g}_{k}(x),\bar{g}(x)) \lesssim \ve r_k. \]
\end{lem}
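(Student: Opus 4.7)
The ``in particular'' statement is immediate from \eqref{e:est-g}: the sequence $\{\bar g_k(x)\}$ is $\rho$-Cauchy, hence also Euclidean-Cauchy by the bi-H\"older equivalence recorded in \eqref{e:dist-rho1}, so the limit $\bar g(x)\in\R^n$ exists; telescoping \eqref{e:est-g} gives the bound on $\rho(\bar g_k(x),\bar g(x))$. Since $F_0=\mathrm{Id}$, $\psi_{j_0}=\mathrm{Id}$, and $\beta_{j_0}(6B^{j_0})\subseteq 6B_{j_0}$ by \eqref{r:assump}, one has $\bar g_0(x)\in 6B_{j_0}$, and $\rho$-closeness of $\bar g(x)$ to $\bar g_0(x)$ places $\bar g(x)$ in $6B_{j_0}$ up to the usual $O(\ve)$ slack in the radius.

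For \eqref{e:est-g}, the case $k\geq k(x)$ is vacuous by \eqref{e:g-def}, so assume $k+1\leq k(x)$. Let $i=i_k(x)\in J_k$, $j=i_{k+1}(x)\in J_{k+1}$, and $i'=i(j)\in J_k$ the parent from Theorem \ref{t:Reif}(3); from $x\in 6B^{j,k+1}$ and $x_{j,k+1}\in 2B^{i',k}$ one gets $x\in 3B^{i',k}$, so $\beta_{i'}(x)$ is defined. Set
\[ u=\psi_i^{-1}(\beta_i(x)),\quad u'=\psi_{i'}^{-1}(\beta_{i'}(x))\in M_k,\quad v=\psi_j^{-1}(\beta_j(x))\in M_{k+1}, \]
so that $F_k(\bar g_k(x))=u$ and $F_{k+1}(\bar g_{k+1}(x))=v$. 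By Lemma \ref{l:pert} and \eqref{e:infty-ell}, the bound $\rho(\bar g_k(x),\bar g_{k+1}(x))\lesssim\ve r_k$ reduces to showing $\phi_{k+1}(f_k(u),v)\lesssim \ve r_k$. Insert the pivot $f_k(u')$: Lemma \ref{l:phi-cauchy} together with Lemma \ref{l:g-close} controls $\phi_{k+1}(f_k(u),f_k(u'))\leq(1+C\ve)\phi_k(u,u')\lesssim \ve r_k$, so it remains to bound $\phi_{k+1}(f_k(u'),v)$.

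The plan for the second piece is to compute $f_k^{-1}(v)$ explicitly. Since $v\in 6D^{j,k+1}\subseteq p_{k+1}(W_{k+1}^*)=f_k(G_k)$ (by \eqref{e:f(G)}) and $i(j)=i'$, unravelling the formula $h_k(x)=\varphi_j^{-1}\circ\hat H_j\circ\Phi_{j,k+1}^{-1}(x)$ from Proposition \ref{p:h_ell} together with the identity $\Phi_{j,k+1}=\varphi_{i',k}^{-1}\circ K_{i',j}$ yields
\[ f_k^{-1}(v)=\psi_{i'}^{-1}\bigl(K_{i',j}(\hat H_j^{-1}(\beta_j(x)))\bigr). \]
A short chain of estimates involving Lemma \ref{l:K-alpha}, \eqref{e:almost-id}, \eqref{e:h_ell3}, and the $\delta r_k$-GHA property of $\beta_{i'}$ then gives $\|\beta_{i'}(x)-K_{i',j}(\hat H_j^{-1}(\beta_j(x)))\|_{i'}\lesssim \ve r_k$, placing $f_k^{-1}(v)$ in $6D^{i',k}$; Lemma \ref{l:psi-bi-lip} upgrades this to $d_k(u',f_k^{-1}(v))\lesssim\ve r_k$. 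Depending on whether $f_k(u')$ happens to lie in $6D^{j,k+1}$, the bound $\phi_{k+1}(f_k(u'),v)\lesssim \ve r_k$ then follows either from Lemma \ref{l:f-bilip1} or directly from the definition \eqref{d:rho}. The main technical point is the need to pivot through $u'$ based on the parent index $i'=i(j)$ rather than through $u$ based on $i_k(x)$: only for the parent does the transition map $K_{i',j}$ of Lemma \ref{l:K-alpha} appear naturally in the formula for $h_k$, which is what makes $f_k^{-1}(v)$ tractable in $W_k$-coordinates; everything else is routine bookkeeping of GHA errors at scale $r_k$.
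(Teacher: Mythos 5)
Your proposal is correct and follows essentially the same route as the paper: the paper also reduces to $\phi_k$-estimates via Corollary \ref{c:rho-phi}, pivots through the point $F_k^{-1}\circ\psi_{i(j)}^{-1}\circ\beta_{i(j)}(x)$ (your $u'$ with $i'=i(j)$), controls that pivot against $\bar g_k(x)$ by Lemma \ref{l:g-close}, and compares it with $F_k(\bar g_{k+1}(x))=f_k^{-1}(v)$ using the explicit formula from Lemma \ref{l:f-good}/Proposition \ref{p:h_ell} together with Lemma \ref{l:K-alpha}, \eqref{e:almost-id} and \eqref{e:h_ell3}. The only differences are cosmetic (you phrase the final step in $\phi_{k+1}$ and derive the formula for $f_k^{-1}(v)$ by hand rather than citing Lemma \ref{l:f-good} directly), so no further comment is needed.
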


\begin{proof}
	Let $k \geq 0$ and $x \in 6B^{j_0}.$ If $k +1 > k(x)$ then $\bar{g}_k(x) = \bar{g}_{k+1}(x)$ and there is nothing to show. Suppose then, that $k +1 \leq k(x).$ Let $i = i_{k}(x)$ and $j = i_{k+1}(x)$ as above \eqref{e:g-def}. By \eqref{r:assump}, we have $\toward_j(x) \in 6B_j$ and by \eqref{d:D} we have $\psi_j^{-1} \circ \toward_j(x) \in 6D^j.$ It follows from Lemma \ref{l:f-good} that 
	\begin{align}\label{e:g_k}
		\bar{g}_{k+1}(x) = F_k^{-1} \circ \psi_{i(j)}^{-1} \circ K_{i(j),j} \circ \hat{H}_j^{-1} \circ \toward_j(x),
	\end{align}
	where $i(j) \in J_k$ and $K_{i(j),j}$ are as in \eqref{e:i(j)} and Lemma \ref{l:K-alpha}, respectively, and $\hat{H}_j \colon \R^n \to \R^n$ satisfies 
	\begin{align}\label{e:est-H}
		\| \hat{H}_j- \text{Id} \|_{C^1(\R^n),r_{k+1}} \lesssim \ve.
	\end{align}
	
	Let us now estimate the left-hand side of \eqref{e:est-g}. Of course, we have  
	\begin{align}
		\rho(\bar{g}_k(x),\bar{g}_{k+1}(x)) \leq \rho(\bar{g}_k(x) , F_k^{-1} \circ \psi_{i(j)}^{-1} \circ \toward_{i(j)}(x)) +   \rho(F_k^{-1} \circ \psi_{i(j)}^{-1} \circ \toward_{i(j)}(x), \bar{g}_{k+1}(x)).
	\end{align}
	To estimate the first term we apply \eqref{e:r1} and Lemma \ref{l:g-close} which gives
	\begin{align}
		\rho(\bar{g}_k(x) , F_k^{-1} \circ \psi_{i(j)}^{-1} \circ \toward_{i(j)}(x)) \lesssim \phi_k(\psi_{i(j)} \circ \toward_{i(j)}(x) , \psi_i \circ \toward_i(x) ) + \ve r_k \lesssim \ve r_k. 
	\end{align}
	By Corollary \ref{c:rho-phi}, \eqref{e:almost-id} and Lemma \ref{l:K-alpha}, the second term is at most
	\begin{align}
		&\phi_k( \psi_{i(j)}^{-1} \circ \toward_{i(j)}(x) , F_k \circ \bar{g}_{k+1}(x)) + \ve r_k \\
		&\hspace{2em}\lesssim \phi_k(  \psi_{i(j)}^{-1} \circ \toward_{i(j)}\circ \away_j \circ \toward_j(x) ,  F_k \circ \bar{g}_{k+1}(x)) + \ve r_k \\
	&\hspace{2em}\lesssim \phi_k(\psi_{i(j)}^{-1} \circ K_{i(j),j} \circ \toward_j(x)   ,  F_k \circ \bar{g}_{k+1}(x) ) + \ve r_k. 
	\end{align}
	Making using of the definition of $\bar{g}_{k+1}(x)$ in \eqref{e:g_k} and \eqref{e:est-H}, the last term is at most a constant multiple of $\ve r_k.$ This now finishes the proof of the lemma.  
\end{proof}

The following proves \eqref{e:Reif3.5}. 

\begin{lem}\label{l:e-isometry}
	Let $k \geq 0, \ j \in J_k$ and $x,y \in 6B^j.$ Then, 
	\begin{align}
		| \rho(\bar{g}(x),\bar{g}(y)) - d_X(x,y) | \lesssim \ve r_k. 
	\end{align}
\end{lem}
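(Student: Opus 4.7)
The plan is to reduce the claim to an estimate for the discrete approximants $\bar g_k$, then unwind their definition and compare via the chart $\psi_j$ to the ``rough chart'' $\beta_j$.

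First, by Lemma \ref{l:g-exists}, $\rho(\bar g(x),\bar g_k(x)) \lesssim \ve r_k$ and $\rho(\bar g(y),\bar g_k(y)) \lesssim \ve r_k$. By the triangle inequality it therefore suffices to prove
\begin{align}\label{e:reduce}
	|\rho(\bar g_k(x),\bar g_k(y)) - d_X(x,y)| \lesssim \ve r_k.
\end{align}
Next, since $x,y \in 6B^j$ with $j \in J_k$, the definition of $\bar g_k$ allows us to choose $i_k(x) = i_k(y) = j$. For any other choice $i_k(x),i_k(y) \in J_k$ of the indices in \eqref{e:g-def}, Lemma \ref{l:g-close} together with \eqref{e:r1} of Corollary \ref{c:rho-phi} shows that the resulting points differ in $\rho$ by at most $C\ve r_k$, so such a replacement is harmless.

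With this choice, $\bar g_k(x) = F_k^{-1} \circ \psi_j^{-1} \circ \beta_j(x)$ and similarly for $y$. Set $\tilde x = \psi_j^{-1}(\beta_j(x))$ and $\tilde y = \psi_j^{-1}(\beta_j(y))$; note by \eqref{r:assump} that $\tilde x,\tilde y \in 6D^j$. I will compare three quantities: $d_X(x,y)$, $d_k(\tilde x,\tilde y)$, and $\rho(\bar g_k(x),\bar g_k(y))$.

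For the first comparison, by Lemma \ref{l:psi-bi-lip} the map $\psi_j$ is $(1+C\ve)$-bi-Lipschitz on $6D^j$ with the path metric $d_k$, and $\beta_j$ is a $\delta r_k$-GHA with $\delta \leq \ve$. Since $x,y \in 6B^j$ gives $d_X(x,y) \leq 12 r_k$, the multiplicative distortion from $\psi_j$ contributes at most $C\ve r_k$, so
\begin{align}
	|d_k(\tilde x,\tilde y) - d_X(x,y)|
	&\leq (1+C\ve)\|\beta_j(x)-\beta_j(y)\|_j - d_X(x,y) + C\ve r_k \\
	&\lesssim \ve r_k.
\end{align}
For the second comparison, since $\tilde x,\tilde y \in 6D^j$, the definition \eqref{d:rho} gives $\phi_k(\tilde x,\tilde y) = d_k(\tilde x,\tilde y)$ (the case $k=0$ is handled directly by \eqref{e:rho_0} since then $j = j_0$ and $\psi_{j_0} = \mathrm{Id}$). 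Applying \eqref{e:r1} at level $k$ with the points $\bar g_k(x),\bar g_k(y)$ (whose images under $F_k$ are precisely $\tilde x,\tilde y$) yields
\begin{align}
	|\rho(\bar g_k(x),\bar g_k(y)) - d_k(\tilde x,\tilde y)| \lesssim \ve\bigl(r_k + \rho(\bar g_k(x),\bar g_k(y))\bigr).
\end{align}
A first application bounds $\rho(\bar g_k(x),\bar g_k(y)) \leq d_k(\tilde x,\tilde y) + C\ve r_k \lesssim r_k$, and feeding this back absorbs the second summand into $\ve r_k$. Combining with the previous estimate yields \eqref{e:reduce}.

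The main (minor) obstacle is the interplay between the multiplicative error $\ve\rho$ in Corollary \ref{c:rho-phi} and the additive error $\ve r_k$ I am aiming for; this is resolved by the a priori bound $\rho(\bar g_k(x),\bar g_k(y)) \lesssim r_k$ just noted, which in turn relies on $x,y$ lying in the same ball $6B^j$.
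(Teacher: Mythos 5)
Your proof is correct and follows essentially the same route as the paper's: reduce to the level-$k$ approximant $\bar g_k$ via Lemma \ref{l:g-exists}, compare $\phi_k(F_k(\bar g_k(x)),F_k(\bar g_k(y)))$ with $d_X(x,y)$ using Lemma \ref{l:g-close}, the $(1+C\ve)$-bi-Lipschitzness of $\psi_j$ and the GHA property of $\toward_j$, and control the multiplicative error in Corollary \ref{c:rho-phi} by the a priori bound $\rho(\bar g_k(x),\bar g_k(y))\lesssim r_k$ (the paper achieves the same absorption by first proving the intermediate estimate with the term $\ve\rho(\bar g(x),\bar g(y))$ and then using $d_X(x,y)\lesssim r_k$). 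The only cosmetic difference is your explicit remark about replacing the fixed indices $i_k(x),i_k(y)$ by $j$, which is exactly what Lemma \ref{l:g-close} is invoked for in the paper.
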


\begin{proof}
	It suffices to show 
	\begin{align}\label{e:suf}
		| \rho(\bar{g}(x),\bar{g}(y)) - d_X(x,y) | \lesssim \ve (\rho(\bar{g}(x),\bar{g}(y)) + r_k).
	\end{align}
	Indeed, applying the above estimate twice and using that $x,y \in 6B^j$ (which implies $d_X(x,y) \lesssim r_k)$, we get  
	\begin{align}
		| \rho(\bar{g}(x),\bar{g}(y)) - d_X(x,y) | \lesssim \ve (d_X(x,y) + \ve r_k + r_k) \lesssim \ve r_k. 
	\end{align}
	To obtain \eqref{e:suf}, we first apply Corollary \ref{c:rho-phi}, Lemma \ref{l:g-exists} and the triangle inequality to get  
	\begin{align} \label{e:r3}
		| \rho(\bar{g}(x),\bar{g}(y)) - d_X(x,y) | &\lesssim \ve (\rho(\bar{g}(x),\bar{g}(y)) + r_k) \\
		&\hspace{2em} + 	| \phi_k(F_k(\bar{g}_k(x)),F_k(\bar{g}_k(y))) - d_X(x,y) |. 
	\end{align}
	Since $x \in 6B^j$, it follows from \eqref{e:g-def} and Lemma \ref{l:g-close} that 
	\[\phi_k(F_k(\bar{g}_k(x)) , \psi^{-1}_j(\toward_j(x))) \lesssim \ve r_k.\] Similarly, $\phi_k(F_k(\bar{g}_k(y)) , \psi^{-1}_j(\toward_j(y)) \lesssim \ve r_k$. Using this and the fact that $\psi_j$ is $(1+C\ve)$-bi-Lipschitz and $\toward_j$ is a $C\delta r_k$-GHA, the final term in \eqref{e:r3} is at most   
	\begin{align}
		C\ve r_k + | \phi_k(\psi_j^{-1}(\toward_j(x)),\psi_j^{-1}(\toward_j(y))) - d_X(x,y) |  + C\ve r_k \lesssim \ve r_k
	\end{align}
	which proves \eqref{e:suf}.
\end{proof}

The following proves \eqref{e:Reif2.0}.

\begin{lem}\label{l:g-inv1}
	Let $\ell \geq 0$ and suppose $x \in \bigcup_{j \in J_\ell} 6B^j$. Then,  
	\begin{align}\label{e:almost-inverse-ell}
		d_X(g_\ell(\bar{g}_\ell(x)),x) \lesssim \ve r_\ell. 
	\end{align}
	From this, it follows that 
	\begin{align}\label{e:almost-inverse}
		d_X(g(\bar{g}(x)),x) \lesssim \ve r_\ell. 
	\end{align}
\end{lem}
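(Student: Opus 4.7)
The plan is to prove \eqref{e:almost-inverse-ell} by unwinding the definitions of $g_\ell$ and $\bar{g}_\ell$, and then to derive \eqref{e:almost-inverse} by combining that estimate with the convergence statements for $g$ and $\bar{g}$ together with the $\rho$ versus $d_X$ comparison.

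For \eqref{e:almost-inverse-ell}, fix $\ell \leq k(x)$ and set $j = i_\ell(x) \in J_\ell$, so that $x \in 6B^j$ and, by \eqref{e:g-def}, $y \coloneqq \bar{g}_\ell(x) = F_\ell^{-1}(\psi_j^{-1}(\beta_j(x)))$. The assumption \eqref{r:assump} gives $\beta_j(x) \in 6B_j$, and hence by \eqref{d:psi} and \eqref{d:D},
\[ F_\ell(y) = \psi_j^{-1}(\beta_j(x)) \in 6D^j. \]
In particular $\ell \leq \ell(y)$ in the sense of \eqref{e:ell(x)}, and $j$ is an admissible choice for $j_\ell(y)$. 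Applying Lemma \ref{l:whereinX} to $y$ (whose conclusion gives the estimate for \emph{any} admissible index $i$, not just the particular choice fixed in \eqref{e:g-bar}), with $i = j$, yields
\[ d_X(g_\ell(y),\, \alpha_j \circ \psi_j(F_\ell(y))) = d_X(g_\ell(y),\, \alpha_j(\beta_j(x))) \lesssim \ve r_\ell. \]
The hypothesis \eqref{e:almost-id} gives $d_X(\alpha_j(\beta_j(x)), x) \leq \delta r_\ell \leq \ve r_\ell$, so the triangle inequality closes the argument.

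For \eqref{e:almost-inverse}, I would use the splitting
\[ d_X(g(\bar{g}(x)), x) \leq d_X(g(\bar{g}(x)), g(\bar{g}_\ell(x))) + d_X(g(\bar{g}_\ell(x)), g_\ell(\bar{g}_\ell(x))) + d_X(g_\ell(\bar{g}_\ell(x)), x). \]
The third term is controlled by \eqref{e:almost-inverse-ell}, and the second is $\lesssim \ve r_\ell$ by Lemma \ref{l:gbar} evaluated at $\bar{g}_\ell(x)$. For the first, Lemma \ref{l:g-exists} gives $\rho(\bar{g}(x), \bar{g}_\ell(x)) \lesssim \ve r_\ell$, and since we have already verified that $F_\ell(\bar{g}_\ell(x)) \in 6D^j \subseteq \bigcup_{k \in J_\ell} 6D^k$, Lemma \ref{l:phi_infty}, equation \eqref{e:phi_infty3} (together with Lemma \ref{l:pert} to pass between $\rho$ and $\phi_\infty$) converts the $\rho$-bound into $d_X(g(\bar{g}(x)), g(\bar{g}_\ell(x))) \lesssim \ve r_\ell$.

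There is no substantial obstacle: the proof is a short bookkeeping argument. The only minor subtlety is coordinating the arbitrary index choices $i_k(\,\cdot\,)$ and $j_k(\,\cdot\,)$ that appear in the definitions of $\bar{g}_\ell$ and $g_\ell$; this is handled cleanly by the fact that Lemma \ref{l:whereinX} applies to any admissible index, so we may force the two choices to agree at the point $y = \bar{g}_\ell(x)$, and the composition $\alpha_j \circ \beta_j$ then collapses to an almost-identity on $X$ by \eqref{e:almost-id}.
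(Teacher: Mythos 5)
Your proof is correct and follows essentially the same route as the paper: the paper writes out the transition-map computation (via Lemma \ref{l:tran-psi}, \eqref{e:prop} and \eqref{e:almost-id}) explicitly at the point $\bar{g}_\ell(x)$, while you absorb exactly that step by citing Lemma \ref{l:whereinX}, and the derivation of \eqref{e:almost-inverse} via the three-term triangle inequality with Lemma \ref{l:gbar}, Lemma \ref{l:g-exists} and the $\rho$--$d_X$ comparison (Corollary \ref{c:rho-phi}, which is just \eqref{e:phi_infty3} plus Lemma \ref{l:pert}) is identical. Only your closing remark about "forcing the two choices to agree" is slightly loose — Lemma \ref{l:whereinX} handles the case where the index chosen in \eqref{e:g-bar} differs from $j$, rather than making them agree — but the argument as executed is fine.
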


\begin{proof}
	Let us start with \eqref{e:almost-inverse-ell}. If $\ell = 0$ then $g_\ell(\bar{g}_\ell(x)) = \alpha_{j_0}(\beta_{j_0}(x))$ and \eqref{e:almost-inverse-ell} follows from \eqref{e:almost-id}. Suppose now that $\ell \geq 1.$  Since $x \in \bigcup_{j \in J_\ell} 6B^j$ we have $\ell \leq k(x)$ (recall \eqref{e:k(x)}). It follows from \eqref{e:g-def} that there exists $j \in J_\ell$ such that $\bar{g}_\ell(x)  = F_\ell^{-1} \circ \psi_{j}^{-1} \circ \toward_{j}(x)$. Since 
	\begin{align}\label{e:inD}
		F_\ell (\bar{g}_\ell(x)) = \psi_{j}^{-1} \circ \toward_{j}(x) \in 6D^j
	\end{align}
	we have $\ell \leq \ell(\bar{g}_\ell(x))$, recall the definition of $\ell(z)$ from above \eqref{e:g-bar}. In particular, by \eqref{e:g-bar}, there exists $i \in J_\ell$ such that $g_\ell(\bar{g}_\ell(x)) = \alpha_i \circ \psi_i \circ F_\ell(\bar{g}_\ell(x)).$ Using this with Lemma \ref{l:tran-psi} we get 
	\begin{align}
		g_\ell(\bar{g}_\ell(x)) = \away_{i} \circ \psi_{i} \circ \psi_j^{-1} \circ \toward_j(x) = \away_{i} \circ I_{i,j} \circ \toward_j(x).
	\end{align}
	Using \eqref{e:almost-id}, \eqref{e:prop} and the fact that $\away_j$ is a $\delta r_\ell$-GHA now gives \eqref{e:almost-inverse-ell} since 
	\begin{align}
		d_X(g_\ell(\bar{g}_\ell(x)),x) &\leq C\delta r_\ell + d_X(  \away_{i} \circ I_{i,j} \circ \toward_j(x) ,  \away_{i} \circ \toward_{i} \circ \away_{j} \circ \toward_j(x)) \\
		&\leq C\delta r_\ell + \| I_{i,j} \circ \toward_j(x) - \toward_{i} \circ \away_{j} \circ \toward_j(x)\|_{i} \lesssim \ve r_\ell .
	\end{align}

	Let us see how to get \eqref{e:almost-inverse} from \eqref{e:almost-inverse-ell}. Since $F_\ell(\bar{g}_\ell(x)) \in 6D^j,$ we have by Corollary \ref{c:rho-phi} and Lemma \ref{l:g-exists} that 
	\begin{align}
		d_X(g(\bar{g}(x)) , g(\bar{g}_\ell(x))) \lesssim \rho(\bar{g}(x),\bar{g}_\ell(x)) + \ve r_\ell \lesssim \ve r_\ell. 
	\end{align}
	Using this with Lemma \ref{l:gbar} and \eqref{e:almost-inverse-ell}, the left-hand side of \eqref{e:almost-inverse} is at most 
	\begin{align}
		d_X(g(\bar{g}(x)) , g(\bar{g}_\ell(x)) + d_X(g(\bar{g}_\ell(x)) , g_\ell(\bar{g}_\ell(x))  )   + d_X(  g_\ell(\bar{g}_\ell(x))  ,  x) \lesssim \ve r_\ell,
	\end{align}
	as required. 
\end{proof}

The following proves \eqref{e:Reif2.5}.

\begin{lem}
	Let $\ell \geq 0$, $x \in 6B_{j_0}$ and suppose $g(x) \in \bigcup_{j \in J_\ell} 6B^j.$ Then,  
	\begin{align}\label{e:almost-inverse2}
		\rho(\bar{g}(g(x)),x) \lesssim \ve r_\ell. 
	\end{align}
\end{lem}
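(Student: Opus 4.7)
The plan is to prove \eqref{e:almost-inverse2} by introducing an intermediate point $w' = \bar{g}_\ell(g(x))$ (a finite-stage approximation to $\bar{g}(g(x))$) and then bounding both $\rho(w',x)$ and $\rho(\bar{g}(g(x)),w')$ separately. The key insight is that even though the hypothesis $g(x) \in 6B^j$ for some $j \in J_\ell$ does not obviously force $x_\ell$ to sit inside some $6D^{j',\ell}$ (which would be needed to apply Corollary \ref{c:rho-phi} directly to $x$), it \emph{does} force $F_\ell(w') \in 6D^j$, so we can apply \eqref{e:r2} with the roles of the two points interchanged.

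Concretely, let $y = g(x)$ and pick $j \in J_\ell$ with $y \in 6B^j$. Set
\begin{align}
w' \coloneqq \bar{g}_\ell(y) = F_\ell^{-1}\circ \psi_j^{-1}\circ \toward_j(y),
\end{align}
which is well-defined since $\toward_j(6B^j) \subseteq 6B_j$ by \eqref{r:assump} and $\psi_j^{-1}(6B_j) = 6D^j$ by \eqref{d:D}. By construction, $F_\ell(w') = \psi_j^{-1}(\toward_j(y)) \in 6D^j$, so $\ell(w')\geq \ell$ (in the sense of \eqref{e:ell(x)}) and we may choose $j_\ell(w') = j$ in the definition of $g_\ell(w')$. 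This gives
\begin{align}
g_\ell(w') = \away_j\circ\psi_j\circ\psi_j^{-1}\circ\toward_j(y) = \away_j(\toward_j(y)),
\end{align}
so $d_X(g_\ell(w'), y) \leq \delta r_\ell \leq \ve r_\ell$ by \eqref{e:almost-id}. Combined with Lemma \ref{l:gbar}, this yields $d_X(g(w'), g(x)) = d_X(g(w'),y) \lesssim \ve r_\ell$.

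Since $F_\ell(w') \in \bigcup_{j'\in J_\ell} 6D^{j'}$, we may apply \eqref{e:r2} of Corollary \ref{c:rho-phi} with the roles of the two arguments exchanged (the statement is symmetric in $x$ and $y$) to get
\begin{align}
|\,d_X(g(w'),g(x)) - \rho(w',x)\,| \lesssim \ve(r_\ell + \rho(w',x)).
\end{align}
Combined with the bound $d_X(g(w'),g(x)) \lesssim \ve r_\ell$ from the previous paragraph, this gives $(1-C\ve)\rho(w',x) \lesssim \ve r_\ell$, hence $\rho(w',x) \lesssim \ve r_\ell$ for $\ve$ small. Finally, Lemma \ref{l:g-exists} guarantees $\rho(\bar{g}(y), \bar{g}_\ell(y)) = \rho(\bar{g}(g(x)), w') \lesssim \ve r_\ell$, so the triangle inequality yields $\rho(\bar{g}(g(x)),x) \lesssim \ve r_\ell$, as desired. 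The main technical point (and the one I expect to be slightly delicate) is the verification that \eqref{e:r2} can be applied with $w'$ playing the ``$6D$-side'' role even though $x$ itself may fail to satisfy the analogous hypothesis; once one notices this asymmetry, however, the rest is a routine assembly of previously established estimates.
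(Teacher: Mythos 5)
Your argument is correct and is essentially the paper's proof: both reduce via Lemma \ref{l:g-exists} to bounding $\rho(\bar{g}_\ell(g(x)),x)$, observe that $F_\ell(\bar{g}_\ell(g(x)))\in\bigcup_{j\in J_\ell}6D^j$ so that Corollary \ref{c:rho-phi} converts this to a bound on $d_X(g(\bar{g}_\ell(g(x))),g(x))$, and then use Lemma \ref{l:gbar} together with the almost-inverse estimate for $g_\ell\circ\bar{g}_\ell$. The only difference is that you re-derive that last estimate inline (where, since the index $j_\ell(\bar{g}_\ell(g(x)))$ was fixed arbitrarily in \eqref{e:g-bar}, you should strictly invoke Lemma \ref{l:whereinX} rather than ``choosing'' it equal to $j$), whereas the paper simply cites Lemma \ref{l:g-inv1}, whose proof handles exactly this point.
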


\begin{proof}
	By Lemma \ref{l:g-exists} it suffices to show 
	\begin{align}\label{e:pho-g_ell}
		\rho(\bar{g}_{\ell}(g(x)),x) \lesssim \ve r_\ell.
	\end{align}
	By construction $F_\ell(\bar{g}_\ell(g(x))) \in \bigcup_{j \in J_\ell} 6D^j.$ It then follows from Lemma \ref{l:gbar}, Corollary \ref{c:rho-phi} and Lemma \ref{l:g-inv1} that 
	\begin{align}
		\rho(\bar{g}_\ell(g(x)),x) &\lesssim d_X(g(\bar{g}_\ell(g(x))) , g(x)) + \ve r_\ell \\
		&\lesssim d_X(g_\ell(\bar{g}_\ell(g(x))) , g(x)) + \ve r_\ell \lesssim \ve r_\ell. 
	\end{align}
\end{proof}

Equation \eqref{e:Reif3.0} follows from \eqref{e:Reif2.5} and \eqref{e:Reif3.5}. Indeed, setting $x = g(w)$ and $y = g(z)$ gives
\[| d_X(g(w),g(z)) - \rho(w,z) | \lesssim \ve r_\ell . \]

\bigskip

\subsection{The Ahlfors-regular case}\label{s:AR}

We now prove \eqref{e:regularity}. 

\begin{lem}
	If $X$ is Ahlfors $n$-regular then $(\R^n,\rho)$ is Ahlfors $n$-regular. Moreover, the regularity constant for $(\R^n,\rho)$ depends only on $n$ and the regularity constant for $X$.  
\end{lem}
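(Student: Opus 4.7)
Let $C_0$ be the Ahlfors regularity constant of $X$. The plan is to transfer Ahlfors $n$-regularity from $X$ to $(\mathbb{R}^n,\rho)$ using the approximate-isometry maps $g\colon 6B_{j_0}\to 6B^{j_0}$ and $\bar g\colon 6B^{j_0}\to 6B_{j_0}$ together with their estimates \eqref{e:Reif2.0}--\eqref{e:Reif3.5}. Fix $x\in\mathbb{R}^n$ and $0<r<\infty$; I want $C^{-1}r^n\leq \mathcal{H}^n_\rho(B_\rho(x,r))\leq Cr^n$ with $C=C(n,C_0)$. Choose $\ell\geq 0$ with $r_{\ell+1}<r\leq r_\ell$.

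For the upper bound, I push a $\delta$-cover of $B_\rho(x,r)$ forward via $g$ to obtain a $(1+C\varepsilon)\delta$-cover (modulo the additive error discussed below) of $B_X(g(x),r+C\varepsilon r_\ell)\subseteq X$, using \eqref{e:Reif3.0} applied at a tree scale $r_{m'}$ comparable to $\delta$. Ahlfors regularity of $X$ then gives $\mathcal{H}^n_X(B_X(g(x),r+C\varepsilon r_\ell))\leq C_0 r^n$, and letting $\delta\to 0$ bounds $\mathcal{H}^n_\rho(B_\rho(x,r))$ above by $(1+C\varepsilon)^n C_0 r^n$. Symmetrically, for the lower bound, \eqref{e:Reif2.5} and \eqref{e:Reif3.5} give $\bar g(B_X(g(x),r/2))\subseteq B_\rho(x,r)$, and Ahlfors regularity of $X$ provides $\mathcal{H}^n_X(B_X(g(x),r/2))\geq C_0^{-1}(r/2)^n$, which transports via $\bar g$ to a lower bound on $\mathcal{H}^n_\rho(B_\rho(x,r))$ of order $C_0^{-1}r^n$ (up to a factor $(1-C\varepsilon)^n$).

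The main difficulty is that $\bar g$ (and $g$) is only bi-Hölder, not bi-Lipschitz, so Hausdorff measures cannot be compared by a single Lipschitz estimate; the additive error $\varepsilon r_\ell$ in \eqref{e:Reif3.5}/\eqref{e:Reif3.0} is harmless at scales $\gtrsim r_\ell$ but would be catastrophic at much finer scales. The resolution is to iterate the approximate isometry down the tree: conditions (2)--(3) of Theorem \ref{t:Reif} and Remark \ref{r:contained-cubes}-style nesting allow, inside any active ball $B^{j,\ell}$, a refining family of active balls $B^{j',m'}$ at deeper scales $r_{m'}$, and \eqref{e:Reif3.5} applied at each such scale retains the same multiplicative constant $1+C\varepsilon$. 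Thus a $\delta$-cover of $\bar g(A)$ can be refined scale-by-scale by choosing tree balls at scale $r_{m'}\sim \delta$ so that the additive error $\varepsilon r_{m'}$ is itself of order $\delta$; as $\delta\to 0$ the product of multiplicative distortions stays bounded by $(1+C\varepsilon)^n\leq 2$.

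In regions where the tree $J_m$ no longer covers a neighborhood of $F_m(x)$ at the required scale (in particular when $x\notin 5B_{j_0}$ or $r\geq 1$, where \eqref{e:Reif1} gives $|\rho-\|\cdot\|_{j_0}|\leq C\varepsilon$), the metric $\rho$ is locally $(1+C\varepsilon)$-bi-Lipschitz to a ball in an $n$-dimensional normed space via the chart $\psi_{j,\ell}$ inherited from the last active tree scale. Ahlfors $n$-regularity at such scales then reduces to the trivial Ahlfors $n$-regularity of $(\mathbb{R}^n,\|\cdot\|)$, with constant depending only on $n$ (and independent of the norm by Remark \ref{r:BM}). Combining these two cases yields the desired Ahlfors regularity of $(\mathbb{R}^n,\rho)$ with constant depending only on $n$ and $C_0$.
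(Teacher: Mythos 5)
Your argument has a genuine gap, and it occurs at the step you flag as "the main difficulty" and then claim to resolve. The claim that "conditions (2)--(3) of Theorem \ref{t:Reif} \ldots allow, inside any active ball $B^{j,\ell}$, a refining family of active balls $B^{j',m'}$ at deeper scales" is false: hypothesis (3) only says that each ball at level $\ell$ has a nearby ball at level $\ell-1$ (children lie near parents), and nothing forces a ball to have descendants. In the intended application the tree genuinely terminates (minimal cubes of stopping-time regions), so below the local stopping scale $r_\ell$ the only control on $g$ and $\bar g$ is the additive estimate $\ve r_\ell$ in \eqref{e:Reif3.0}, \eqref{e:Reif3.5}; a map with additive distortion $\ve r_\ell$ can crush sets of positive $n$-measure onto an $\ve r_\ell$-net. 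Consequently neither transport step works: pushing a cover of $B_\rho(x,r)$ forward through $g$ only bounds $\calH^n(g(B_\rho(x,r)))$ by $\calH^n_\rho(B_\rho(x,r))$, which is the wrong direction for the upper bound, and the inclusion $\bar g(B_X(g(x),r/2))\subseteq B_\rho(x,r)$ gives no lower bound on $\calH^n_\rho$ because $\bar g$ is not measure-non-contracting at scales below $\ve r_\ell$. The only regime where your scheme is sound is the small-ball case $r\ll r_k$ (the last scale whose patch contains the whole ball), where $F_k$ and $\psi_j$ really are $(1+C\ve)$-bi-Lipschitz into a normed space; that is the paper's Case 1, and there the constant comes from $n$ alone.

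The hard regime is $r\sim r_k$, and the paper's proof supplies exactly the two ingredients your proposal lacks. For the upper bound it splits $B_\rho(x,r)$ into the set $E_\infty$ of points whose orbits remain in the tree at every scale -- there $g$ is genuinely $(1+C\ve)$-bi-Lipschitz and the upper regularity of $X$ transports -- and the stopped regions $R^{j,\ell}_\infty$, whose measures ($\sim r_\ell^n$, computed intrinsically via the charts) are summed by attaching to each one a ball $\mathfrak{B}^{j,\ell}\subseteq B_X(g_k(x),3r)$ in $X$ and proving bounded overlap; this packing argument uses both the lower and upper regularity of $X$ and has no counterpart in your outline. For the lower bound the paper does not transport measure from $X$ at all: it builds $\sigma_2\colon B_\rho(x,r)\to\R^n_j$ as a $C(n)$-Lipschitz extension of the inverse of the almost-isometry restricted to a net, shows $\sigma=\sigma_2\circ\sigma_1$ is within $C\ve\eta^{-1}r$ of the identity on a normed ball, and invokes the topological Lemma \ref{l:topology} to force $\sigma_2(B_\rho(x,r))$ to contain a ball of radius $\sim r$, whence $\calH^n_\rho(B_\rho(x,r))\gtrsim_n r^n$. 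Some degree-theoretic device of this kind (or an equivalent surjectivity argument) is needed precisely because the almost-isometry has an additive error comparable to $\ve r$; without it your lower bound does not close.
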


\begin{proof}
	Let $x \in \R^n$ and $r > 0$. Let $B_0 = B_\rho(x,r)$ and for $\ell \geq 1$ set $B_\ell = F_\ell(B_0) \subseteq M_\ell$. Let 
	\begin{align}
		k = \max\{\ell \geq 0 \colon \mbox{there exists } j \in J_\ell \mbox{ such that } B_\ell \subseteq 6D^j\},
	\end{align}
	where the maximum of the empty set is defined to be zero. If $k \geq 1$ let $j \in J_k$ such that $B_k \subseteq 6D^{i,k}$. Let $\eta > 0.$ We split into two cases. \\
	
	\noindent\textbf{Case 1:} Suppose that $d_k(x_k,y_k) \leq \eta r_k$ for all $y \in B_0$. The setup in this case is exactly the same as in Case 1 of the proof of Lemma \ref{l:Reif-flat}. In particular, \eqref{e:F_k-bi-Lip} and \eqref{e:disjoint} hold for $B_k$. We will see that
	\begin{align}\label{e:inclusion}
		B(x_k,r/2) \subseteq B_k.
	\end{align}
	This is immediate for $k =0$ so let us suppose $k \geq 1.$ We begin by showing
	\[ r \leq 2\eta r_k.\]
	Indeed, suppose $y \in B_0$ is such that $\rho(x,y) = r$ and let $k(x,y)$ be as in Lemma \ref{l:k(x,y)}. By \eqref{e:disjoint} we know $k \leq k(x,y) \leq k+1$. It then follows from Corollary \ref{c:rho-phi} and Lemma \ref{l:f-bilip1} that 
	\begin{align}
		r &= \rho(x,y) \leq (1+C\ve)[d_{k(x,y)}(x,y) + C\ve r_{k(x,y)}] \\
		&\leq (1+C\ve)[d_{k}(x,y) + C\ve r_{k}] \leq 2\eta r_k. 
	\end{align}
	We return to \eqref{e:inclusion}. Let $w \in B(x_k,r/2)$ and let $z \in \R^n$ such that $w = z_k.$ Since $d_k(x_k,z_k) \leq r/2 \leq \eta r_k$ and $x_k \in 6D^j$ we have $k(x,z) \geq k-1.$ Using \eqref{e:disjoint} again, we have $k(x,z) \leq k + 1.$ Thus, 
	\begin{align}
		\rho(x,z) \leq (1+C\ve)\phi_\infty(x,z) \leq (1+C\ve)d_k(x,z) \leq (1+C\ve)r/2 \leq r. 
	\end{align}
	If follows that $z \in B_0$ and $w = z_k \in B_k.$ The measure estimates for $B_0$ now follow from \eqref{e:F_k-bi-Lip} since 
	\begin{align}
		r^n &\gtrsim \calH^n(6D^j) \gtrsim (1+C\ve)\calH^{n}(B_k) \geq  \calH^n(B_0) \geq (1+C\ve)^{-1} \calH^n(B_k)\\
		&\geq \calH^n(B(x_k,r/2)) \gtrsim r^n. 
	\end{align}
	\\
	\noindent\textbf{Case 2:} Suppose instead that there exists $y \in B_0$ such that $d_k(x_k,y_k) > \eta r_k.$ We begin by estimating $\calH^n(B_0)$ from above. Let 
	\[E_\infty = \{ x \in B_0 \colon \mbox{for every } \ell \geq 0 \mbox{ there exists } j \in J_\ell \mbox{ such that } x_\ell \in 6D^j\}\] 
	and, for each $\ell \geq k$ and $j \in J_\ell,$ let 
	\begin{align}\label{e:R}
		R^{j,\ell} = 6D^{j,\ell} \setminus f_\ell^{-1}\left( \bigcup_{i \in J_{\ell+1}} 6D^{i,\ell+1} \right) \mbox{ and } R^{j,\ell}_\infty = F_\ell^{-1}(R^{j,\ell}). 
	\end{align}
	Set
	\begin{align}
		J'_\ell = \{ j \in J_\ell \colon R^{j,\ell}_\infty \cap B_0 \neq \emptyset \} \mbox{ and } J' = \bigcup_{\ell \geq k} J_\ell'.
	\end{align}
	It follows that
	\begin{align}\label{e:B-infty-contained}
		B_0 \subseteq E_\infty \cup \bigcup_{\ell \geq k}\bigcup_{j \in J'_\ell} R^{j,\ell}_\infty.
	\end{align}
	Let us bound the measure of $E_\infty$ from above. Observe that $g|_{E_\infty}$ is $(1+C\ve)$-bi-Lipschitz. Indeed, suppose $w,z \in E_\infty$ and let $k' = k(w,z)$ as in Lemma \ref{l:k(x,y)}. Since $w,z \in E_\infty,$ we have by Lemma \ref{l:outside} that $\phi_\infty(w,z) = d_{k'}(w_{k'},z_{k'}) \gtrsim r_{k'}$ so that $\rho(w,z) \gtrsim r_{k'}$ by Lemma \ref{l:pert}. Since $w_{k'},z_{k'} \in 6D^j$ for some $j \in J_{k'},$ it follows from Corollary \ref{c:rho-phi} that
	\begin{align}
		|d_X(g(x),g(y)) - \rho(x,y) | \lesssim \ve (r_{k'} + \rho(x,y)) \lesssim \ve \rho(x,y). 
	\end{align} 
	Now that we know $g|_{E_\infty}$ is $(1+C\ve)$-bi-Lipschitz, we have that $g(E_\infty)$ is contained in a ball of radius $2r$ in $X$. Using Ahlfors regularity of $X$ and the bi-Lipschitz estimates of $g|_{E_\infty}$ again, we get
	\begin{align}\label{e:upper-E-infty}
		\calH^n(E_\infty) \leq (1+C\ve) \calH^n(g(E_\infty)) \lesssim r^n. 
	\end{align}
	Let us bound from above the measure of the second set in \eqref{e:B-infty-contained}. For each $j \in J_\ell',$ fix $z^{j,\ell} \in R^{j,\ell}_\infty \cap B_0$ and let 
	\[w^{j,\ell} =  \away_{j,\ell} (\psi_{j,\ell}(z^{j,\ell}_\ell)) \in X.\]
	Since $d_k(x_k,y_k) > \eta r_k$ for some $y \in B_0,$ we have $r \gtrsim \eta r_k$ by Lemma \ref{l:phi-cauchy} and Lemma \ref{l:pert} (see Case 2 in the proof of Lemma \ref{l:Reif-flat}). By taking $\ve$ small enough, this, Lemma \ref{l:whereinX}, Corollary \ref{c:rho-phi} (noting that $z^{j,\ell}_\ell \in 6D^{j,\ell}$) and the fact that $\ell \geq k$, give 
	\begin{align}
		\begin{split}\label{e:centre-close}
		d_X(g_k(x),w^{j,\ell}) &\leq d_X(g_k(x),g_\ell(z^{j,\ell})) + C\ve r_\ell \leq d_X(g_k(x),g_k(z^{j,\ell})) + C\ve r_k \\
		&\leq (1+C\ve)\rho(x,z^{j,\ell}) + C\ve r_k \leq (1+C\ve \eta^{-1}) \rho(x,z^{j,\ell})\\
		&\leq 2r.
		\end{split}
	\end{align}
	Define 
	\begin{align}\label{e:frak-B}
		\mathfrak{B}^{j,\ell} \coloneqq B_X(w^{j,\ell},\eta r_\ell) \subseteq B_X(g_k(x),3r),
	\end{align}
	where the final inclusion follows from \eqref{e:centre-close}. We claim that $\{\mathfrak{B}^j\}_{\ell \geq k, \ j \in J_\ell'}$ has bounded overlap i.e. there exists a constant $C \geq 1$ such that, for all $q \in X,$ we have 
	\begin{align}\label{e:boundedoverlap1}
		\sum_{\ell \geq k} \sum_{j \in J_\ell'} \mathds{1}_{\mathfrak{B}^{j,\ell}}(q) \leq C.
	\end{align}
	Indeed, fix $q \in X$ and suppose the left-hand side of \eqref{e:boundedoverlap1} is non-zero (otherwise there is nothing to show). 	We first show that
	\begin{align}\label{e:boundedoverlap3}
		| \{\ell \geq k : \sum_{j \in J_\ell'} \mathds{1}_{\mathfrak{B}^{j,\ell}}(q) \neq \emptyset \} | \leq 2. 
	\end{align}
	Let $\ell \geq k$ the smallest integer such that $\sum_{j \in J_\ell'} \mathds{1}_{\mathfrak{B}^{j,\ell}}(q) \neq \emptyset$ and let $i \in J_\ell'$ such that $q \in \mathfrak{B}^{i,\ell}.$ Suppose further that $m \geq \ell+2$ and $j \in J_m'.$ Recalling the definition of $k(\cdot,\cdot)$ from Lemma \ref{l:k(x,y)}, we have $k'' = k(z^{i,\ell},z^{j,\ell}) \leq \ell.$ Since $z^{j,m} \in \bigcup_{j \in J_m} 6D^j$ and $m \geq \ell+2 \geq k''+2$ we have by Lemma \ref{l:outside} that 
	\begin{align}
		\phi_\infty(z^{i,\ell},z^{j,m}) = d_{k''}(z^{i,\ell}_{k''},z^{j,m}_{k''}) \geq r_{k''+1}/2 \geq r_{\ell+1}/2.
	\end{align}
	Lemma \ref{l:whereinX}, Lemma \ref{l:gbar} and Lemma \ref{l:phi_infty} then give  
	\begin{align}
		d_X(y^{i,\ell},y^{j,m}) &\geq d_X(\bar{g}_\ell(z^{i,\ell}),\bar{g}_{m}(z^{j,m})) - C\ve r_\ell \geq d_X(\bar{g}(z^{i,\ell}),\bar{g}(z^{j,m})) - C\ve r_\ell \\
		&\geq \phi_\infty(z^{i,\ell},z^{j,m}) - C\ve r_\ell \geq r_{\ell+1}/8 \geq \eta r_\ell + \eta r_m. 
	\end{align}
	This implies $\mathfrak{B}^{i,\ell} \cap \mathfrak{B}^{j,m} = \emptyset$ so that $x \not\in \mathfrak{B}^{j,m}.$ Since $m \geq \ell+2$ and $j \in J_m'$ are arbitrary, we get \eqref{e:boundedoverlap3}. 
	
	Next, we show that 
	\begin{align}\label{e:boundedoverlap2}
		\sum_{j \in J_m'} \mathds{1}_{\mathfrak{B}^{j,m}}(q) \lesssim 1 \mbox{ for all } m \in \{\ell,\ell+1\}.
	\end{align}
	Fix $i \in J_m'$ such that $q \in \mathfrak{B}^{i,m}$ (we may assume such an index exists otherwise there is nothing to show). Suppose $j \in J_{m}'$ is such that $q \in \mathfrak{B}^{i,m} \cap \mathfrak{B}^{j,m}$. By definition $w^{i,m} \in 6B^{i,m}$ and $w^{j,m} \in 6B^{j,m}$. It follows that $q \in 7B^{i,m} \cap 7B^{j,m}$ and $x_{j,m} \in 15B^{i,m}.$ Thus, \[\{x_{j,m} : j \in J_{m}', \  x \in \mathfrak{B}^{j,m}\}\]
	is an $r_m$-separated collection of points contained in $15B^{i,m}.$ Since $X$ is Ahlfors $n$-regular, this implies \eqref{e:boundedoverlap2}. 
	
	Using \eqref{e:boundedoverlap1} and \eqref{e:frak-B} we have
	
	\begin{align}\label{e:upper-R}
		\sum_{\ell \geq k} \sum_{j \in J_\ell'} \calH^n(R^{j,\ell}_\infty) &= \sum_{\ell \geq k} \sum_{j \in J_\ell'} \calH^n(R^{j,\ell}) \leq \sum_{\ell \geq k} \sum_{j \in J_\ell'} \calH^n(6D^{j,\ell}) \\
		&\lesssim_\eta \sum_{\ell \geq k} \sum_{j \in J_\ell'} \calH^n(\mathfrak{B}^{j,\ell}) \lesssim r^n. 
	\end{align}
	Combining \eqref{e:B-infty-contained}, \eqref{e:upper-E-infty} and \eqref{e:upper-R} gives 
	\begin{align}
		\calH^n(B_0) \lesssim r^n. 
	\end{align}
	
	Let us now prove the lower Ahlfors estimate. Let $z = \psi_j(x_k).$ We construct a continuous mapping $\sigma : B_{\|\cdot\|_j}(z,r/4) \to B_{\|\cdot\|_j}(z,r/4)$ such that $\sigma = \sigma_2 \circ \sigma_1$ for some continuous $\sigma_1 \colon B_{\|\cdot\|_j}(z,r/4) \to B_0$ and some $C(n)$-Lipschitz $\sigma_2 \colon B_0 \to \R^n_j$. We will do this in such a way that 
	\begin{align}\label{e:f-identity}
		\| \sigma(w) - w \|_j \lesssim \ve \eta^{-1} r \mbox{ for all } w \in B_{\|\cdot\|_j}(z,r/4).
	\end{align}
	
	Before constructing $\sigma$, let us see how the existence of such a map gives the lower Ahlfors bound. First, Lemma \ref{l:topology} implies $B_{\|\cdot\|_j}(z,r/8) \subseteq \sigma(B_{\|\cdot\|_j}(z,r/4)).$ Then, since $\sigma(B_{\|\cdot\|_j}(z,r/4)) \subseteq \sigma_2(B_0)$ and $\sigma_2$ is $C(n)$-Lipschitz, we have 
	\begin{align}
		r^n \lesssim \calH^n(B_{\|\cdot\|_j}(z,r/8)) \leq \calH^n(\sigma_2(B_0)) \lesssim_n \calH^n(B_0).
	\end{align}
	
	To define the mapping we first need to know
	\begin{align}\label{e:in}
		B_{\|\cdot\|_j}(z,r/4) \subseteq \psi_j(B_k). 
	\end{align}
	Since $\psi_j$ is $(1+C\ve)$-bi-Lipschitz it is enough to show $B(x_k,r/2) \subseteq B_k \subseteq 6D^j.$ If $k =0$ then this is immediate. Suppose instead that $k \geq 1.$ Recall from above \eqref{e:centre-close} that $r \gtrsim \eta r_k$. Thus, if $p \in B_0$ is such that $\rho(x,p) = r,$ then for $\ve$ small enough, Corollary \ref{c:rho-phi} gives 
	\begin{align}
		13r_k \geq d_k(x_k,p_k) \geq (1+C\ve)^{-1}\rho(x,p) - C\ve r_k \geq r/2.  
	\end{align} 
	In the first inequality we used that $\diam(6D^j) \leq 13r_k.$ Now suppose $z \in \R^n$ such that $z_k \in B(x_k,r/2)$ and let us prove \eqref{e:in}. By the above inequality and Lemma \ref{l:f-bilip1} we have $d_{k-1}(z_{k-1},x_{k-1}) \leq 14r_k \leq 2r_{k-1}.$ By Lemma \ref{l:inclusion} we have $x_{k-1} \in 3D^{i(j)}$ so that $z_{k-1} \in 4D^{i(j)}$ by Lemma \ref{l:ball-D}. All of this is to say that $k(x,z) \geq k-1$ and so $\phi_k(x_k,z_k) \leq (1+C\ve)d_k(x_k,z_k) \leq r/4$ by Lemma \ref{l:f-bilip1}. Now, using Corollary \ref{c:rho-phi} and choosing $\ve$ small enough depending on $\eta,$ we have 
	\[ \rho(x,z) \leq (1+C\ve)(\phi_k(x_k,z_k) + C\ve \eta^{-1} r)  \leq r. \]
	This gives $z \in B_0$ which implies $z_k \in B_k,$ and we are done. 
	
	Now let us define the function $\sigma.$ Set $\sigma_1 = F_k^{-1} \circ \psi_j^{-1} \colon B_{\|\cdot\|_j}(z,r/4) \to \R^n.$ Then, by Lemma \ref{l:phi-cauchy}, Lemma \ref{l:pert} and since $\psi_j$ is $(1+C\ve)$-Lipschitz, we have 
	\begin{align}\label{e:almost-isom}
		| \rho(\sigma_1(p) , \sigma_1(q) ) - \| p -q \|_j | \leq C\ve r_k \leq C_1\ve \eta^{-1} r. 
	\end{align} 
	for all $p,q \in B_{\|\cdot\|_j}(x_k,r/4).$ Thus, 
	\begin{align}\label{e:in-B-infty}
		\sigma_1(B_{\|\cdot\|_j}(z,r/4)) \subseteq B_0.
	\end{align}
	Let $\mathcal{N}$ be a maximal $\eta^2 r_k$-separated net in $B_{\|\cdot\|_j}(z,r/4)$ such that $z \in \mathcal{N}.$ By \eqref{e:almost-isom}, $\sigma_1|_{\mathcal{N}}$ is $(1+C\ve)$-bi-Lipschitz. Let $\mathcal{N}' = \sigma_1(\mathcal{N}),$ which is contained in $B_0$ by \eqref{e:in-B-infty}, and extend $(\sigma_1|_{\mathcal{N}})^{-1}$ to a $C(n)$-Lipschitz map $\tilde{\sigma}_2 \colon B_0 \to \R^n_j.$ Finally, let 
	\[ \sigma_2 = p_{z,r/4} \circ \tilde{\sigma}_2,  \]
	where $p_{z,\lambda}$ is the map defined in Lemma \ref{l:p-lambda}. Since $\tilde{F}_2$ is $C(n)$-Lipschitz and $p_{z,r/4}$ is 2-Lipschitz, it only remains to show \eqref{e:f-identity}. Fix $w \in B_{\|\cdot\|_j}(z,r/4)$ and let $q \in \mathcal{N}$ such that $\| q - w \|_j \leq \eta^2 r_k \lesssim \eta r.$ Since $q \in \mathcal{N}$ we have $q = \tilde{\sigma}_2 \circ \sigma_1(p).$ Hence, 
	\begin{align}\label{e:-w}
		\| \tilde{\sigma}_2(\sigma_1(w)) - w \|_j &\leq \|  \tilde{\sigma}_2(\sigma_1(w)) - \tilde{\sigma}_2(\sigma_1(q)) \| + \| q - w \|_j  \lesssim_n d( \sigma_1(w) , \sigma_1(q) ) + \eta r \\
		&\leq \|q-w\|_j + \eta r \lesssim \eta r. 
	\end{align}
	Since $w \in B_{\|\cdot\|_j}(z,r/4),$ this implies $\| \tilde{\sigma}_2(\sigma_1(w))  - z\| \lesssim \eta r.$ Now, Lemma \ref{l:p-lambda} and \eqref{e:-w} gives  
	\begin{align}
		\| \sigma(w) - w \|_j \leq \| p_{z,r/4}(\tilde{\sigma}_2(\sigma_1(w)) ) - \tilde{\sigma}_2(\sigma_1(w)) \|_j + \| \tilde{\sigma}_2(\sigma_1(w))  - w \|_j \lesssim \ve r. 
	\end{align}
\end{proof}

\bigskip 

\subsection{Proof of Theorem \ref{t:Reif-intro}}\label{s:last}

In this section we see how Theorem \ref{t:Reif} implies Theorem \ref{t:Reif-intro}. So, fix $\ve > 0$ and suppose $X$ is $(\delta,n)$-Reifenberg flat up to scale $s_0 > 0$ and let $x \in X$ and $0 < s_0 < r.$ By scaling we may suppose $r =1.$ As in Theorem \ref{t:Reif}, we let $r_\ell = 10^{-\ell}$. Let $k \in \N$ such that  
\begin{align}\label{e:r-size}
	r_{k+1} \leq s_0 < r_k.
\end{align}

For $\ell = 0$, define the index set $J_0 = \{j_0\}$ the point $x_{j_0,0} = x.$ Then, for $1 \leq \ell \leq k$, let $\{x_{j,\ell}\}_{j \in J_\ell}$ be a maximal $r_\ell$-separated net in $B_X(x,1)$. Conditions (1) - (3) are immediate to verify. Since $X$ is $(\delta,n)$-Reifenberg flat up to scale $s_0$ and $100 r_\ell \geq s_0$ for all $0 \leq \ell \leq k,$ we can find norms $\|\cdot\|_{j,\ell}$ satisfying (4) with constant $C\delta$ (note, the maps obtained from the Reifenberg condition do not automatically satisfy \eqref{e:centre-pos}, however, at the cost of using the slightly worse constant $C\delta$, we can modify these maps so that \eqref{e:centre-pos} holds by Lemma \ref{l:GH-scale}). 

Choose $\delta$ small enough so that the conclusion of Theorem \ref{t:Reif} holds with constant $\ve/20$ and let $\rho$ be the metric and $g,\bar{g}$ the maps we obtain. The statement concerning Ahlfors regularity is immediate from the corresponding statement in Theorem \ref{t:Reif}. The fact that $\bar{g}$ maps $B_X(x,r)$ into $B_\rho(0,2r)$ follows from \eqref{e:Reif1} and \eqref{e:Reif3.5}. To verify \eqref{e:coarse-bi-lip} we proceed as follows. Let $y,z \in B_X(x,r)$ and let $0 \leq \ell \leq k$ be the largest integer such that there exists $j \in J_\ell$ with $y,z \in 6B^{j,\ell}.$ Suppose to begin with that $\ell < k.$ By maximality there exists $i \in J_{\ell+1}$ such that $d(y,x_{i,\ell+1}) \leq r_{\ell+1}$, in particular, $y \in 6B^{i,\ell+1}.$ By maximality of $\ell$ we have $z \not \in 6B^{i,\ell+1}.$ It now follows that $d(y,z) \geq 5r_{\ell+1}.$ Using this with \eqref{e:Reif3.5} gives 
\begin{align}
	| \rho(\bar{g}(y),\bar{g}(z)) - d_X(y,z) | \leq \ve r_\ell /20 \leq \ve d(y,z) \leq \ve(d_X(y,z) + s_0).
\end{align}
Suppose instead that $\ell = k.$ Using \eqref{e:Reif3.5} again, with \eqref{e:r-size}, we have 
\begin{align}
	| \rho(\bar{g}(y),\bar{g}(z)) - d_X(y,z) | \leq \ve r_k/20 \leq \ve s_0 \leq \ve(d_X(y,z) + s_0).
\end{align} 
Let's now verify \eqref{e:last}. Pick some $u \in B_\rho(0,1-\ve).$ By \eqref{e:g-near-centre} we have $d_X(g(0),x) \leq \ve/2$ (recall we specified the conclusion of Theorem \ref{t:Reif} to hold with constant $\ve/20$). By \eqref{e:Reif1} we have $u \in B_{j_0,0}$ so that $g(u) \in 6B^{j_0,0}$. Furthermore, by \eqref{e:g-near-centre} we have $d_X(g(0),x) \leq \ve/2$ (recall we specified the conclusion of Theorem \ref{t:Reif} to hold with constant $\ve/20$) so that $g(0) \in 6B^{j_0,0}.$ Applying \eqref{e:Reif3.0} now gives 
\begin{align}
	d_X(g(u),x) \leq d_X(g(u),g(0)) + d_X(g(0),x) \leq \rho(u,0) + \ve/20 + \ve/20 \leq 1.
\end{align}
This implies $g(u) \in B_X(x,1)$, so, by maximality, there exists $j \in J_k$ such that $g(u) \in 6B^{j,k}.$ Using \eqref{e:Reif2.5} and \eqref{e:r-size} gives now
\begin{align}
	{\dist}_\rho(u,\bar{g}(B_X(x,1))) \leq \rho(u,\bar{g}(g(u))) \leq \ve r_k /20 \leq \ve s_0.
\end{align}

\newpage

\section{RF metric spaces and Corona Decompositions (CD)}\label{s:Reif}
\etocsettocstyle{Contents of this section}{}
\etocsettocmargins{.01\linewidth}{.01\linewidth}

\localtableofcontents

\bigskip

\subsection{Main result}
In this section we show how to use Theorem \ref{t:Reif} to prove some interesting results concerning Reifenberg flat metric spaces, recall Definition \ref{d:RF}. The definition is stated in terms of $\bilat_X$ which can be found in Definition \ref{d:bilat}. 

Our main goal is the following. The notion of corona decomposition by normed space will be defined in Section \ref{s:corona-def}. 

\begin{thm}\label{t:corona-reif}
	Suppose $X$ is a $(\delta,n)$-Reifenberg flat and Ahlfors
	$(C_0,n)$-regular metric space with a system of Christ-David cubes
	$\calD.$ Provided $\delta$ is small enough, depending only on $n$, then
	$X$ admits a corona decomposition by normed spaces with constants
	$\Corona_1,\Corona_2$ depending only on $n$. The Carleson norms in
	Definition \ref{d:corona-Y} depend additionally on $C_0,n$ and $\delta.$
\end{thm}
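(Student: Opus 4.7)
I would follow the two-stage strategy outlined after Theorem \ref{t:equiv} in the introduction. In the first stage, I would establish that every Ahlfors $n$-regular $(\delta,n)$-Reifenberg flat metric space $X$ is UR (i.e.\ Proposition \ref{p:reif-UR}); applying Theorem \ref{t:approx-UR} then yields a Carleson condition for the $\gamma$-coefficients on $X$ with constant $K$ and Carleson norms depending only on $n,C_0,\delta$. In the second stage, I would feed this $\gamma$-Carleson condition back into the Reifenberg flatness to build the corona decomposition by normed spaces via a stopping-time argument.

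\textbf{Stage 1: RF implies UR.} For each $x \in X$ and $0 < r < \diam(X)$, Theorem \ref{t:Reif-intro} (applied with $s_0$ proportional to $r$, or equivalently to $s_0 \to 0$ since $X$ is RF at all scales) produces a metric $\rho$ on $\R^n$ such that $(\R^n,\rho)$ is Ahlfors $n$-regular with constants depending only on $n$ and $C_0$, a $1$-Lipschitz parametrization $g \colon B_\rho(0,r) \to B_X(x,r)$ whose image is $\ve s_0$-dense, and an almost-inverse $\bar g$ which is a $(1+\ve)$-isometry. Since $g$ is a Lipschitz map from an Ahlfors $n$-regular metric space into $X$, I would invoke G.\,C.~David's bi-Lipschitz decomposition theorem \cite{david2016bi}: it decomposes $B_\rho(0,r)$, up to a set of controllable small measure, into a quantitative (in $n,C_0$) number of pieces on which $g$ is bi-Lipschitz. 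By the pigeonhole principle, one of these pieces has image of $\mathcal{H}^n$-measure at least $\theta r^n$ inside $B_X(x,r)$, giving BPBI and hence UR.

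\textbf{Stage 2: Corona decomposition via stopping times.} Armed with the $\gamma$-Carleson condition from Theorem \ref{t:approx-UR}, I would build stopping-time regions as follows. Given a top cube $Q_0$, pick the bi-lateral normed-space approximation $(\R^n,\|\cdot\|_{Q_0})$ and GHA $\vp_{Q_0} \colon B^{Q_0} \to B_{Q_0}$ guaranteed by RF. Extend $\vp_{Q_0}$ to a Lipschitz map $f \colon X \to (\R^n,\|\cdot\|_{Q_0})$ via Lemma \ref{l:Lipschitz-extend}, and apply the $\gamma$-Carleson condition to $f$. For $Q \subseteq Q_0$, the coefficient $\gamma_{X,f,\|\cdot\|_{Q_0}}^K(Q)$ witnesses both a local norm $\|\cdot\|_Q$ (which is close to $\|\cdot\|_{Q_0}$ because $\bilat_X$ is uniformly small on $X$; use Lemma \ref{l:GH-linear}) and a local $K$-Lipschitz affine map $A_Q$ approximating $f$ on $B_Q$. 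Descend through $\calD(Q_0)$, adding cubes to the stopping-time region $S$ containing $Q_0$ until either $\gamma_{X,f,\|\cdot\|_{Q_0}}^K(Q) > \ve$ or the slope/intercept of $A_Q$ has changed by more than a fixed threshold from $A_{Q_0}$; the stopped cubes then become top cubes of new regions. The $\gamma$-Carleson condition immediately gives a Carleson packing bound on the first type of stopping cube, and a standard telescoping argument (combining the oscillation of $A_Q$ with the $\gamma$-control) bounds the second type by another Carleson packing. Inside each $S$, all local norms are $\Corona_1$-close to the single norm $\|\cdot\|_{Q(S)}$, and the maps $A_Q$ track a common affine function — this is the content of the corona decomposition by normed spaces.

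\textbf{Main obstacles.} The hardest step is Stage 1: extracting a bi-Lipschitz-to-Euclidean piece from the Reifenberg parametrization, since Theorem \ref{t:Reif-intro} only delivers an almost-isometry to the \emph{variable} target $(\R^n,\rho)$ that is itself only bi-Hölder to Euclidean space. The essential input here is David's quantitative bi-Lipschitz decomposition \cite{david2016bi}, and one must verify that the quantitative constants produced (number of pieces, bi-Lipschitz bound) depend only on $n$ and $C_0$ and not on the fine structure of $\rho$. In Stage 2 the delicacy lies in the fact that the norm witnessing the bi-lateral flatness at each scale is only determined up to small Banach–Mazur perturbations; reconciling these local witnesses into a single norm attached to $S$ requires the compactness/perturbation results of Section \ref{s:prelim} (notably Lemma \ref{l:GH-linear} and Remark \ref{r:BM}) and careful bookkeeping in the telescoping stopping-time argument.
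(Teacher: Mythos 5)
Your overall architecture (Stage 1: RF $\Rightarrow$ UR via Theorem \ref{t:Reif} and G.~C.~David's theorem; Stage 2: a stopping-time construction driven by the $\gamma$-Carleson condition) matches the paper's, but both stages have genuine gaps as written.

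In Stage 1 you apply David's decomposition to the parametrization $g \colon B_\rho(0,r) \to B_X(x,r)$. This is not available: Theorem \ref{t:david} requires the \emph{target} to be a normed $\R^n$ (or a space with $n$-manifold weak tangents), and $X$ is an arbitrary Ahlfors regular metric space, while the domain must be an AR, LLC, oriented topological $n$-manifold (which $(\R^n,\rho)$ is, but only after proving Proposition \ref{p:Reif-LLC}). Moreover, even granting a decomposition of $g$ into bi-Lipschitz pieces, this does not give BPBI: uniform rectifiability asks for Lipschitz (bi-Lipschitz) images of subsets of \emph{Euclidean} $\R^n$, and $(\R^n,\rho)$ is only bi-H\"older equivalent to $\|\cdot\|_{j_0}$, so a piece of $B_\rho$ is not metrically a subset of $\R^n$. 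The paper's Proposition \ref{p:reif-UR} resolves this by going in the other direction: it takes a $C(n)$-Lipschitz GHA $\vp \colon B_\rho(0,100r) \to B_{\|\cdot\|}(0,100r)$ (Lemma \ref{l:Lipschitz-GHA}), shows $\calH^n(\vp(B_\rho(0,r))) \gtrsim r^n$ by the topological lower bound of Corollary \ref{c:Reif-lower-reg}, applies Theorem \ref{t:david} to $\vp$ (legitimate target), and inverts on a piece of large image; composing with $g$, which is already $(1+C\ve)$-bi-Lipschitz on $B_\rho(0,2r)$ by Lemma \ref{l:main-Reif}, gives the big bi-Lipschitz piece in $X$. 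Your proposal omits both the LLC verification and, more importantly, the large-image lower bound for $\vp$, which is exactly where the Reifenberg hypothesis is used beyond producing $\rho$.

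In Stage 2 your stopping rule (stop when the slope/intercept of $A_Q$ drifts by a fixed threshold) cannot be packed by ``telescoping with the $\gamma$-control.'' The Carleson condition for the $\gamma$-coefficients is an $\ve$-level (weak) condition, not a square-function bound: on good cubes the affine maps can drift by $\sim \ve$ at every generation, so a fixed drift threshold is reached after $\sim 1/\ve$ generations \emph{everywhere}, and the resulting top cubes contribute $\approx \ell(R)^n$ per restart, summing to infinity inside any $R$ — no Carleson packing. In addition, coherence of the $A_Q$ does not yield the lower bound in Definition \ref{d:corona-Y}(3): the $\gamma$-coefficient only bounds $\mathrm{Lip}(A_Q)$ from above, and $A_Q$ may be degenerate, so the map attached to $S$ need not be co-Lipschitz on $\theta\ell(Q)$-separated pairs. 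The paper's Proposition \ref{p:UR+BP-corona} instead stops precisely when $\pi_{Q(S)}$ contracts a $\theta\ell(Q)$-separated pair (Definition \ref{d:stop-time}(3)), so the needed lower bound holds by fiat inside $S$, and the packing of these stops is obtained from the big-projection property $\calH^n(\vp(3B_Q)) \gtrsim \ell(Q)^n$ (supplied by Corollary \ref{c:Reif-lower-reg}): a contracted pair together with the affine approximation forces $\pi_Q(R)$ into a thin slab of small measure (Lemma \ref{l:in-box}), and large total image measure then limits the number of such stopped cubes. That measure-theoretic mechanism, absent from your outline, is the heart of the second stage.
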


\bigskip

\subsection{RF  metric spaces are bi-H\"older to  normed  spaces}\label{r:proof-RF}
The first result on Reifenberg flat metric spaces is a generalization of \cite[Theorem A.1.1]{cheeger1997structure}. For Cheeger-Colding, Reifenberg flat means that every ball is flat with respect to $(\R^n,|\cdot|)$ i.e. $\R^n$ equipped with the Euclidean norm. For us, we allow the norm to vary as different scales and locations. The result in \cite{cheeger1997structure} is itself a metric space generalization of Reifenberg's topological disk theorem which appears in \cite{reifenberg1960solution}. First, let us state a version of Theorem \ref{t:Reif} tailored to Reifenberg flat metric spaces.

\begin{lem}\label{l:main-Reif}
	Let $\ve,\delta > 0.$ Suppose $(X,d)$ is $(\delta,n)$-Reifenberg flat, $x \in X$, $r > 0$ and $\|\cdot\|$ is a norm on $\R^n$ such that $\bilat_X(x,100r,\|\cdot\|) \leq 100\delta r.$ If $\delta$ is chosen small enough, depending on $\ve$ and $n$, there exists a metric $\rho$ on $\R^n$ which is bi-H\"older equivalent to $\|\cdot\|$ with exponent $1-\ve$ such that $(\R^n,\rho)$ is $(\ve,n)$-Reifenberg flat and 
	\begin{align}\label{e:Reif1'}
		\abs{ \rho(y,z) - \|y-z\|} \leq \ve r
	\end{align} 
	for all $y,z \in \R^n.$ Furthermore, there are maps $g :B_{\|\cdot\|}(0,6r) \to B(x,6r)$ and $\bar{g} \colon B(x,6r) \to B_{\|\cdot\|}(0,6r)$ such that $g|_{B_{\rho}(0,2r)}$ and $\bar{g}|_{B(x,2r)}$ are $(1+C\ve)$-bi-Lipschitz. We also have $g(0) \in B(x,10\ve r)$ and $\bar{g}(x) \in B_{\rho}(0,10\ve r).$ If $y \in B_{\rho}(0,2r)$ and $z \in B(x,2r),$ then 
	\begin{align}\label{e:Reif2'}
		\bar{g}(g(y)) = y \quad \mbox{ and } \quad g(\bar{g}(z)) = z. 
	\end{align}
	If $y,z \in B_{\|\cdot\|}(0,6r)$ $($resp. $y,z \in B(x,6r))$ then 
	\begin{align}\label{e:Reif3'}
		\abs{ d(g(y),g(z)) - \rho(y,z) } \leq \ve r
		\quad (\text{resp.}  \ \abs{ \rho(\bar{g}(y),\bar{g}(z)) - d(y,z) } \leq \ve r	) . 
		\end{align}
	If additionally $X$ is Ahlfors $n$-regular, then $(\R^n,\rho)$ is Ahlfors $n$-regular with regularity constant depending on $n$ and the regularity constant for $X$. 

\end{lem}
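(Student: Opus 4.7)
The strategy is to deduce Lemma \ref{l:main-Reif} from Theorem \ref{t:Reif}, exploiting that the Reifenberg flat hypothesis holds at \emph{every} point and \emph{every} scale, so that the tree of balls required by that theorem can be built arbitrarily deep and can be taken to cover more than the single ball $B(x,r)$. After rescaling so that $r=1$, I would apply Theorem \ref{t:Reif} at top scale $r_0=3$, so the nets cover $B(x,3)\supset B(x,2)$. Set $J_0=\{j_0\}$, $x_{j_0,0}=x$, and for $\ell\geq 1$ let $\{x_{j,\ell}\}_{j\in J_\ell}$ be a maximal $r_\ell$-separated net in $B_X(x,3)$, with $r_\ell=3\cdot 10^{-\ell}$; conditions (2) and (3) of Theorem \ref{t:Reif} follow from maximality. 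For each pair $(j,\ell)$, the Reifenberg flat hypothesis at scale $100r_\ell$ supplies a norm $\|\cdot\|_{j,\ell}$ and a $C\delta r_\ell$-GHA, which Lemma \ref{l:GH-scale} converts into the two-sided format required by condition (4); the top-scale norm $\|\cdot\|_{j_0,0}$ is taken to be any norm $\|\cdot\|'$ witnessing Reifenberg flatness at scale $3$. For $\delta$ small enough depending on $\ve$ and $n$, Theorem \ref{t:Reif} produces a metric $\rho_0$ on $\R^n$ and maps $g_0,\bar g_0$ satisfying \eqref{e:Reif-Holder}--\eqref{e:regularity}.

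Since the final statement is phrased in terms of the prescribed norm $\|\cdot\|$, I would next convert these data via an affine change of variables. Both $\|\cdot\|'$ and $\|\cdot\|$ give $C\delta$-GHAs at scale $100$, so Lemma \ref{l:GH-linear} supplies an affine $(1+C\ve)$-bi-Lipschitz map $T:(\R^n,\|\cdot\|')\to(\R^n,\|\cdot\|)$ that is $C\ve$-close to the identity on $B_{\|\cdot\|'}(0,100)$. Define $\rho(u,v)=\rho_0(T^{-1}u,T^{-1}v)$, $g=g_0\circ T^{-1}|_{B_{\|\cdot\|}(0,6)}$, and $\bar g=T\circ\bar g_0|_{B(x,6)}$ (composing with the radial projection of Lemma \ref{l:p-lambda} if necessary to keep images inside the prescribed balls). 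Bi-H\"older equivalence, $(\ve,n)$-Reifenberg flatness, and Ahlfors regularity of $\rho$ follow from their counterparts for $\rho_0$, since all three are preserved under a $(1+C\ve)$-bi-Lipschitz affine change of variables. The approximation \eqref{e:Reif1'} combines \eqref{e:Reif1} for $\rho_0$ with the proximity of $T$ to the identity at scale $100$, and the near-centre conditions come from \eqref{e:g-near-centre}.

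The sharper features specific to Lemma \ref{l:main-Reif}---the $(1+C\ve)$-bi-Lipschitz control on $B_\rho(0,2)$ and $B(x,2)$ and the exact two-sided inverse \eqref{e:Reif2'}---are extracted from the infinite depth of the tree. Given $y,z\in B(x,2)$ with $t=d(y,z)>0$, choose $\ell$ with $r_{\ell+1}<t\leq r_\ell$; by density of the net in $B_X(x,3)$ there is $j\in J_\ell$ with $y\in 2B^{j,\ell}$, whence $z\in 6B^{j,\ell}$. Then \eqref{e:Reif3.5} yields
\[
|\rho_0(\bar g_0(y),\bar g_0(z))-d(y,z)|\lesssim\ve\, r_\ell\lesssim\ve\, d(y,z),
\]
so $\bar g_0$---and hence $\bar g$, after tracking the bi-Lipschitz constant of $T$---is $(1+C\ve)$-bi-Lipschitz on $B(x,2)$; the argument for $g$ on $B_\rho(0,2)$ is symmetric, using \eqref{e:Reif3.0}. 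For the exact inverse, given $p\in B(x,2)$, pick $j_\ell\in J_\ell$ with $p\in 2B^{j_\ell,\ell}$ for every $\ell\geq 0$; \eqref{e:Reif2.0} gives $d(g_0(\bar g_0(p)),p)\lesssim\ve r_\ell$, and letting $\ell\to\infty$ forces $g_0\circ\bar g_0=\id$ on $B(x,2)$, hence $g\circ\bar g=\id$ there. The other composition follows analogously from \eqref{e:Reif2.5}.

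The main obstacle is the bookkeeping around the change of top-scale norm and the slight size mismatches this introduces between domains and codomains of the maps: it requires tracking the $(1+C\ve)$-bi-Lipschitz constant of $T$ throughout and occasionally invoking the radial projection of Lemma \ref{l:p-lambda} to keep images inside the prescribed balls. Everything else is a direct application of Theorem \ref{t:Reif} together with the $\ell\to\infty$ limit that only becomes available in the globally Reifenberg flat setting.
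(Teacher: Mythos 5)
Your overall route is the paper's route: build the nets of Theorem \ref{t:Reif} at every scale using global Reifenberg flatness (with the deeper nets maximal in $B_X(x,3r)$ so that every point of $B(x,2r)$, and every image point of $g$, sits inside some $6B^{j,\ell}$ at every scale $\ell$), and then upgrade the additive errors \eqref{e:Reif2.0}--\eqref{e:Reif3.5} to the $(1+C\ve)$-bi-Lipschitz bounds and the exact inverses by choosing $\ell$ comparable to the relevant distance, resp.\ letting $\ell\to\infty$. That second half of your argument is correct and is essentially verbatim what the paper does. The gap is in your first step, the change of top-scale norm. First, Lemma \ref{l:GH-linear} produces an affine $T$ close to the \emph{composed GHA} between $B_{\|\cdot\|'}(0,100r)$ and $B_{\|\cdot\|}(0,100r)$, not close to the identity map of $\R^n$; nothing forces that GHA to be near the identity (e.g.\ $\|\cdot\|'$ could be $\|\cdot\|$ precomposed with a rotation), so the "proximity of $T$ to the identity" you invoke is unjustified. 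Second, and more seriously, even using only what Lemma \ref{l:GH-linear} actually gives --- $T$ affine and $(1+C\ve)$-bi-Lipschitz --- the pulled-back metric $\rho(y,z)=\rho_0(T^{-1}y,T^{-1}z)$ satisfies only
\begin{align}
	\abs{\rho(y,z)-\|y-z\|}\lesssim \ve\bigl(r+\|y-z\|\bigr),
\end{align}
because the conversion introduces a \emph{multiplicative} error $C\ve\|y-z\|$ that is unbounded in $y,z$. This is strictly weaker than \eqref{e:Reif1'}, which demands the additive bound $\ve r$ for all $y,z\in\R^n$; no choice of a smaller parameter in Theorem \ref{t:Reif} repairs it, since only an exactly isometric $T$ would preserve the global additive estimate.

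The fix is to avoid the conversion altogether, which is exactly how the paper proceeds: keep the top scale equal to $r$ and take the top-scale norm to be the given $\|\cdot\|$ itself --- the hypothesis $\bilat_X(x,100r,\|\cdot\|)\leq 100\delta r$ is precisely condition (4) of Theorem \ref{t:Reif} at the top scale, after using Lemma \ref{l:GH-scale} to arrange \eqref{e:centre-pos} and \eqref{e:almost-id} at the cost of a constant $C\delta$. The enlarged coverage you wanted from a top scale of $3r$ is obtained instead by letting the nets at scales $\ell\geq 1$ be maximal $r_\ell$-separated nets in $B_X(x,3r)$ rather than in $B_X(x,r)$; conditions (2) and (3) still hold by maximality, and then \eqref{e:Reif1} of Theorem \ref{t:Reif} gives \eqref{e:Reif1'} directly, with no norm change needed. (Two smaller points in your write-up would then also disappear: $g(0)\in B(x,10\ve r)$ follows from \eqref{e:g-near-centre} without tracking $T^{-1}(0)$, and there is no need to post-compose with the radial projection of Lemma \ref{l:p-lambda}, which as written risks interfering with the exact inverse identities near the boundary of the $6r$-balls.)
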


\begin{proof}
	By scale invariance we may assume $r = 1.$ Let $J_0 = \{j_0\}$ and set $x_{j_0,0} = x.$ Then, supposing we have defined collections of points $\{x_{j,\ell}\}_{j \in J_\ell}$ up to some $\ell \geq 0,$ obtain $\{x_{j,\ell+1}\}_{j \in J_{\ell+1}}$ by completing $\{x_{j,\ell}\}_{j \in J_\ell}$ to a maximal $r_{\ell+1}$-separated net in $B_X(x,3).$ For each $\ell \geq 0$ and $j \in J_{\ell}$ choose a norm $\|\cdot\|_{j,\ell}$ such that $\bilat(x_{j,\ell},100r_\ell,\|\cdot\|_{j,\ell}) \leq 100\delta r_\ell.$ It is immediate that the hypotheses of Theorem \ref{t:Reif} hold for the pair $(\{x_{j,\ell}\},\{\|\cdot\|_{j,\ell}\})_{\ell \geq 0, j \in J_\ell}$ with constant $C\delta.$ By choosing $\delta$ small enough, we can find a metric $\rho$ and maps $g,\bar{g}$ satisfying the conclusion of Theorem \ref{t:Reif} with constant $\ve.$ We immediately get that $\rho$ is bi-H\"older to $\|\cdot\|$ with the correct exponent and \eqref{e:Reif1'}. The statement concerning Ahlfors regularity is also immediate. The fact that $g(0) \in B(x,10\ve)$ and $\bar{g}(x) \in B_{\|\cdot\|}(0,10\ve)$ follows from \eqref{e:g-near-centre}. Equation \eqref{e:Reif3'} follows by combining \eqref{e:Reif3.0} and \eqref{e:Reif3.5}. It only remains to check the bi-Lipschitz conditions and \eqref{e:Reif2'}.
	
	 Let $y,z \in B_{\rho}(0,2).$ By \eqref{e:g-near-centre} we know $g(0) \in B(x,10\ve)$. Combing this with \eqref{e:Reif3.0} implies $g(y),g(z) \in B(x,3).$ Let $\ell \geq 0$ such that there exists $j \in J_\ell$ satisfying $g(y),g(z) \in 6B^j.$ Let $i \in J_\ell$ such that $d_X(x_{i,\ell+1},g(y)) \leq r_{\ell+1}$ (we know such a index exists by maximality because $g(y) \in B(x,3)$). By maximality of $\ell$, we know $g(z) \not\in 6B^{i},$ hence, $d_X(g(y),g(z)) \gtrsim r_\ell.$ If now follows from \eqref{e:Reif3.0} that 
	\begin{align}
		\abs{ d_X(g(y),g(z)) - \rho(y,z) } \leq \ve r_\ell \lesssim \ve d(g(y),g(z)). 
	\end{align}
	Suppose now that $w,x \in B(x,1).$ Let $k \geq 0$ be the largest integer such that there exists $i \in J_k$ such that $w,x \in 6B^i.$ By the same reasoning above, we have $d_X(w,x) \gtrsim r_\ell.$ Using \eqref{e:Reif3.5} again we get 
	\begin{align}
		\abs{ \rho(\bar{g}(w),\bar{g}(x) ) - d_X(w,x) } \leq \ve r_\ell \lesssim \ve d_X(w,x). 
	\end{align}
	
	Now for \eqref{e:Reif2'}. Suppose $y \in B_\rho(0,2)$ and $z \in B_X(x,2).$ As above, we know $g(y) \in B_X(x,3).$ By maximality, we know for each $\ell \geq 0$ that there exists $i,j \in J_\ell$ such that $g(y) \in 6B^i$ and $z \in 6B^j.$ Equation \eqref{e:Reif2'} then follows from \eqref{e:Reif2.0} and \eqref{e:Reif2.5}. 
\end{proof}

\begin{prop}\label{p:Reif-classical}
		Let $\ve,\delta > 0$. Suppose $(X,d)$ is $(\delta,n)$-Reifenberg flat, $x \in X$, $r > 0$ and $\|\cdot\|$ is a norm on $\R^n$ such that $\bilat(x,100r,\|\cdot\|) \leq 100\delta r.$ If $\delta$ is small enough, depending on $\ve$, then there exists a map $h \colon B_{\|\cdot\|}(0,r) \to B_X(x,r)$ which is bi-H\"older with exponent $\alpha = 1-\ve$ such that $h(0) \in B_X(x,10\ve r),$ 
		\begin{align}\label{e:Reif-dist}
			| d(h(y),h(z)) - \| y-z \| \, | \leq \ve r 
		\end{align} 
		for all $y,z \in B_{\|\cdot\|}(0,r)$ and 
		\begin{align}\label{e:g-inclusion}
			h(B_{\|\cdot\|}(0,r)) \supseteq B_X(0,(1-\ve)r).
		\end{align}
		
\end{prop}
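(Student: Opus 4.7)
The plan is to derive Proposition \ref{p:Reif-classical} almost directly from Lemma \ref{l:main-Reif}, which already produces a bi-H\"older map $g$ together with an essentially inverse map $\bar g$ and a comparison metric $\rho$ on $\R^n$. The only real work is checking that, after harmlessly shrinking the parameter, each conclusion of the proposition follows by combining the various estimates the lemma provides.

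First I would apply Lemma \ref{l:main-Reif} to $(X,d)$ at the point $x$, radius $r$, and norm $\|\cdot\|$, with parameter $\ve' = \ve/C$ for a large constant $C=C(n)$ to be determined (so the hypothesis $\bilat_X(x,100r,\|\cdot\|)\leq 100\delta r$ continues to match after choosing $\delta=\delta(\ve')$ small). This yields a metric $\rho$ on $\R^n$ satisfying $|\rho(y,z)-\|y-z\||\leq \ve' r$ and bi-H\"older to $\|\cdot\|$ with exponent $1-\ve'$, together with maps $g\colon B_{\|\cdot\|}(0,6r)\to B_X(x,6r)$ and $\bar g\colon B_X(x,6r)\to B_{\|\cdot\|}(0,6r)$ that are mutual inverses on $B_\rho(0,2r)$ and $B(x,2r)$ respectively, with $g$ being $(1+C\ve')$-bi-Lipschitz on $B_\rho(0,2r)$, $g(0)\in B(x,10\ve' r)$, $\bar g(x)\in B_\rho(0,10\ve' r)$, and $|d(g(y),g(z))-\rho(y,z)|\leq\ve' r$. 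Since $B_{\|\cdot\|}(0,r)\subseteq B_\rho(0,(1+\ve')r)\subseteq B_\rho(0,2r)$, the map $g$ is well-behaved on the whole ball we care about. To enforce that the image lands inside $B_X(x,r)$, define
\[
h(y) = g(\lambda y), \qquad \lambda = 1 - C_0\ve',
\]
with $C_0=C_0(n)$ chosen large enough that $d(h(y),x)\leq r$ for all $y\in B_{\|\cdot\|}(0,r)$; the triangle inequality together with the bi-Lipschitz estimate for $g$ and the bound on $g(0)$ gives exactly this after a short computation.

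Next I would verify the remaining properties. The estimate $h(0)=g(0)\in B(x,10\ve' r)\subseteq B(x,10\ve r)$ is immediate. For \eqref{e:Reif-dist}, combine the three inequalities
\[
|d(g(\lambda y),g(\lambda z))-\rho(\lambda y,\lambda z)|\leq \ve' r,\quad |\rho(\lambda y,\lambda z)-\lambda\|y-z\||\leq \ve' r,\quad (1-\lambda)\|y-z\|\leq 2C_0\ve' r,
\]
and choose $C$ large enough so the total error is at most $\ve r$. For the bi-H\"older property, the contraction $y\mapsto\lambda y$ is $\lambda$-bi-Lipschitz with respect to $\|\cdot\|$, the identity $\R^n\to\R^n$ from $\|\cdot\|$ to $\rho$ is bi-H\"older with exponent $1-\ve'$ by Lemma \ref{l:main-Reif}, and $g$ is $(1+C\ve')$-bi-Lipschitz from $(B_\rho(0,2r),\rho)$ to $(B(x,2r),d)$; composing gives that $h$ is bi-H\"older with exponent at least $1-\ve'\geq 1-\ve$.

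The final and least automatic step is the surjectivity \eqref{e:g-inclusion}, which I would obtain from the near-inverse $\bar g$ rather than from a topological argument. Given $z\in B_X(x,(1-\ve)r)$, set $w=\bar g(z)\in B_{\|\cdot\|}(0,6r)$. Applying the near-isometry $|\rho(\bar g(z),\bar g(x))-d(z,x)|\leq\ve' r$ together with $\rho(\bar g(x),0)\leq 10\ve' r$ and $|\rho(w,0)-\|w\||\leq \ve' r$ produces
\[
\|w\|\leq d(z,x)+C'\ve' r\leq (1-\ve)r+C'\ve' r.
\]
Then $y:=w/\lambda$ satisfies $\|y\|\leq((1-\ve)+C'\ve')r/(1-C_0\ve')$, which is $\leq r$ as soon as $\ve'\leq \ve/(C'+C_0+1)$; this dictates the final choice of $C$ above. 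Since $w\in B_\rho(0,2r)$ and $z\in B(x,2r)$, the inverse identity \eqref{e:Reif2'} yields $h(y)=g(w)=g(\bar g(z))=z$. The main (minor) obstacle is purely bookkeeping: all the constants in the conclusions must be independently driven to $\ve$, which is why one works with $\ve'=\ve/C$ from the outset and absorbs the accumulated multiplicative factors into $C$.
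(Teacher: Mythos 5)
Your proposal is correct and follows essentially the same route as the paper: apply Lemma \ref{l:main-Reif} (with a suitably reduced parameter), define $h$ as $g$ composed with a slight contraction of the ball, deduce the bi-H\"older property by composing the contraction, the $\|\cdot\|$-to-$\rho$ comparison and the bi-Lipschitz estimate for $g$, and obtain the surjectivity \eqref{e:g-inclusion} from the near-inverse $\bar g$ together with \eqref{e:Reif2'}. The only difference is cosmetic bookkeeping (your $\lambda=1-C_0\ve'$ versus the paper's $s(y)=y/(1+\ve)$, and your upfront rescaling $\ve'=\ve/C$ versus the paper's convention of proving conclusions with $C\ve$).
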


\begin{proof}
	Assume without loss of generality that $r =1.$ Choose $\delta$ small enough so that the conclusion of Lemma \ref{l:main-Reif} holds with constant $\tfrac{\ve}{2}.$ Let $\rho$ be the metric and $g \colon B_{\|\cdot\|}(0,6r) \to B(x,6r)$ the map we obtain. Let $s \colon \R^n \to \R^n$ be the map defined by $s(x) = \tfrac{x}{1+\ve}$ and set $h \coloneqq g \circ s.$ Let us check that $h$ is bi-H\"older. Clearly $s$ is $(1+\ve)$-bi-Lipschitz and $s(B_{\|\cdot\|}(0,1)) \subseteq B_{\|\cdot\|}(0,1).$ Thus, it suffices to check $g$ is bi-H\"older on $B_{\|\cdot\|}(0,1).$ Let $y,z \in B_{\|\cdot\|}(0,1).$ By \eqref{e:Reif1'} we know $B_{\|\cdot\|}(0,1) \subseteq B_\rho(0,2)$. Then, since $g$ is bi-Lipschitz on $B_\rho(0,2)$ and $\rho$ is bi-H\"older equivalent to $\|\cdot\|$ with exponent $\alpha = 1-\ve,$ we have 
	\begin{align}
		d_X(g(y),g(z)) \lesssim \rho(y,z) \leq \| y-z\|^{1-\ve}
	\end{align}
	and 
	\begin{align}
		d_X(g(y),g(z)) \gtrsim \rho(y,z) \gtrsim \| y-z \|^{1+\ve}. 
	\end{align}
	Equation \eqref{e:Reif-dist} follows from \eqref{e:Reif1'}, \eqref{e:Reif3'}, and the fact that 
	\[\abs{ \|y-z\| - \| s(y) - s(z) \|} \lesssim \ve \]
	for all $y,z \in B_{\|\cdot\|}(0,1).$ Finally, we check \eqref{e:g-inclusion} (with a slightly worse constant). 
	Pick a point $u \in B_X(x,1-13\ve)$ and set $v = s^{-1}(\bar{g}(u)) = (1+\ve)\bar{g}(u).$ By \eqref{e:Reif2'} we know $g(\bar{g}(u)) = u$ so that $h(v) = u.$ By \eqref{e:Reif1'}, \eqref{e:Reif3'} and using that fact that $\bar{g}(x) \in B_\rho(0,10\ve),$ we have  
	\begin{align}
		\| \bar{g}(u) \| &\leq \rho(\bar{g}(u),0) + \ve \leq \rho(\bar{g}(u),\bar{g}(x)) + \rho(\bar{g}(x),0) + \ve \\
		&\leq d_X(u,x) + 12\ve \leq 1 - \ve \leq \tfrac{1}{1+\ve}. 
	\end{align}
	It now follows that $\|v\| = (1+\ve)\|\bar{g}(u)\| \leq 1$ so that $v \in B_{\|\cdot\|}(0,1).$
\end{proof}

\bigskip

\subsection{AR+RF implies UR}\label{s:RF-UR}
One major goal of this section is to show that upper Ahlfors regular Reifenberg flat metric spaces are UR (Proposition \ref{p:reif-UR}). The proof of this fact is reminiscent of the proof that \textit{Weak Geometric Lemma} and \textit{Big Projections} imply \textit{Big Pieces of Lipschitz Graphs} in \cite{david1993quantitative}. Crucial for us is the use of Theorem \ref{t:Reif-intro}. It allows us to show that Reifenberg flat metric spaces have a suitable metric space analogue of the big projections property (Corollary \ref{c:Reif-lower-reg}) and satisfy certain topological conditions required to apply a result of Guy C. David \cite{david2016bi} (see Theorem \ref{t:david}) on the bi-Lipschitz decomposition of Lipschitz functions defined on certain metric spaces.

To state the result of Guy C. David, we need the following definition.

\begin{defn}\label{d:LLC}
	A metric space $X$ is called \textit{linearly locally contractible} (LLC) if there exist constant $A \geq 1$ and $R > 0$ such that every ball $B$ of radius $0 < r < R$ is contractible in $AB$ (i.e. there exists a continuous map $H \colon B \times [0,1] \to AB$ such that $H(\cdot,0)$ is the identity map on $B$ and $H(\cdot,1)$ is a constant map).
\end{defn} 

\begin{thm}[{\cite[Theorem 1.1]{david2016bi}}]\label{t:david}
	Let $X$ be an Ahlfors $n$-regular, LLC, complete, oriented topological $n$-manifold and $\|\cdot\|$ a norm on $\R^n.$ Let $\calD$ be a system of Christ-David cubes for $X$ and $z : Q_0 \to (\R^n,\|\cdot\|)$ be Lipschitz. For each $\ve > 0$ there exist constants $L,N \geq 1$ depending only on $\ve, n$, the regularity constant of $X$ and the Lipschitz constant of $z,$ and measurable subsets $E_1 , \dots, E_N$ such that $z|_{E_i}$ is $L$-bi-Lipschitz and 
	\begin{align}
		\calH^n\left(z\left(Q_0 \setminus \bigcup_{i=1}^N E_i\right)\right) \leq \ve \calH^n(Q_0). 
	\end{align}
\end{thm}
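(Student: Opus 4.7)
The plan is to combine quantitative differentiation of the Lipschitz map $z$ with topological degree arguments that are made available by the oriented topological manifold hypothesis, and to extract the bi-Lipschitz pieces through a stopping time construction over the Christ--David cubes of $X$. By rescaling I will assume $z$ is $1$-Lipschitz and $\ell(Q_0)=1$, and fix small parameters $\eta \ll \ve$ to be chosen later depending on $\ve$ and the ambient constants.

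First, I would invoke a quantitative differentiation statement for Lipschitz maps into $\R^n$ defined on Ahlfors regular LLC spaces (in the spirit of Cheeger's generalized Rademacher theorem, quantified as in Dorronsoro's inequality) to produce a Carleson collection $\calB_1 \subseteq \calD$ of ``bad'' cubes outside of which $z$ admits a sup-norm affine approximation: for $R \notin \calB_1$ there is an affine map $A_R \colon \R^n \to \R^n$ with $\mathrm{Lip}(A_R) \lesssim 1$ such that
\[
\sup_{x\in R}\|z(x) - A_R(\psi_R(x))\|_0 \leq \eta\, \ell(R),
\]
where $\psi_R$ is a local bi-H\"older chart into $\R^n$; such a chart is produced by Proposition \ref{p:Reif-classical} or Lemma \ref{l:main-Reif} applied at scale $\ell(R)$, after discarding a further Carleson collection $\calB_2$ of cubes where $\bilat_X$ is too large. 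The cubes in $\calB_1\cup\calB_2$ satisfy a Carleson packing condition with constant controlled by the regularity and Lipschitz data.

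Next, I would organize the good cubes into stopping time regions $\mathcal{S}$ with top cube $Q(\mathcal{S})$, descending from $Q(\mathcal{S})$ until either the affine approximation $A_R$ or the tangent norm $\|\cdot\|_{j,\ell}$ rotates by more than $\eta$. For each such $\mathcal{S}$ I would split into two cases according to whether the linear part of $A_{Q(\mathcal{S})}$ is degenerate (its smallest singular value is below $\eta$) or non-degenerate (bounded below by a definite constant). In the degenerate case, the image $z(Q(\mathcal{S}))$ is trapped within an $O(\eta\ell(Q(\mathcal{S})))$-neighbourhood of an at-most $(n-1)$-dimensional affine slab, whose $\calH^n$-measure is $\lesssim \eta\, \ell(Q(\mathcal{S}))^n$; summing over disjoint stopping time tops and using the packing bound on $\calB_1\cup\calB_2$, the total image measure contributed is $\lesssim \eta\,\calH^n(Q_0) \leq \ve\calH^n(Q_0)$ once $\eta$ is chosen small. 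In the non-degenerate case, the approximation $\|z - A_{Q(\mathcal{S})}\circ\psi_{Q(\mathcal{S})}\|_\infty \lesssim \eta \ell(Q(\mathcal{S}))$ together with the LLC hypothesis (which lets us homotope $z$ to $A_{Q(\mathcal{S})}\circ\psi_{Q(\mathcal{S})}$ along boundary-fixed paths inside a controlled ambient ball) allows a topological degree computation on the oriented manifold $X$ to force $z$ to have nonzero degree onto a large affine ball, hence to be a local homeomorphism on a subset of $Q(\mathcal{S})$ of definite measure. Combining the degree output with the sup-norm bound yields that $z$ is $(1+C\eta)$-bi-Lipschitz when restricted to that subset, once the Reifenberg chart $\psi_{Q(\mathcal{S})}$ is used to compare $d_X$-distances to $\|\cdot\|$-distances.

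The bound on $N$ then comes from the fact that the cubes in $\calB_1\cup\calB_2$ plus the stopping time tops form a collection with Carleson norm depending only on $\eta$, $n$, the regularity constants, and $\mathrm{Lip}(z)$, while each stopping time region contributes one bi-Lipschitz piece $E_i$. The main obstacle, as I see it, is making the degree argument quantitative enough to extract a uniform positive-measure bi-Lipschitz subset of $Q(\mathcal{S})$ rather than mere injectivity on a dense set: this requires carefully controlling how the LLC contraction constant $A$ interacts with the scale $\ell(Q(\mathcal{S}))$ and the rotation of tangent norms, so that the boundary homotopy needed for degree remains valid throughout the stopping time region, and so that the bi-Lipschitz constant $L$ does not degenerate as $\eta\to 0$.
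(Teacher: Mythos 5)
First, note that the paper does not prove this statement at all: Theorem \ref{t:david} is imported verbatim from \cite{david2016bi} (with a remark explaining that the version there is more general), and it is precisely invoked as a black box in the proof of Proposition \ref{p:reif-UR}. So there is no internal proof to compare against; what can be assessed is whether your sketch would constitute a proof of G.~C.~David's theorem, and as written it would not.

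The central gap is that your first step assumes structure that the hypotheses do not provide and that the theorem is specifically designed to avoid. An Ahlfors $n$-regular, LLC, complete oriented topological $n$-manifold need not satisfy any BWGL-type condition, so the collection of cubes where $\bilat_X$ is large has no reason to be Carleson, and the bi-H\"older charts of Proposition \ref{p:Reif-classical} or Lemma \ref{l:main-Reif} are simply unavailable (they require Reifenberg flatness). Likewise, the Dorronsoro-type sup-norm affine approximation for Lipschitz maps \emph{defined on} such an $X$ is exactly the $\gamma$-coefficient Carleson condition, which in this paper is only established for UR spaces (Theorem \ref{t:approx-UR}); assuming it here is circular, both with respect to the hypotheses and with respect to how the theorem is used (in Section \ref{s:RF-UR} it is applied to a space for which one only knows bi-H\"older, not bi-Lipschitz, structure, precisely in order to manufacture bi-Lipschitz pieces). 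Even granting charts, a bi-H\"older map $\psi_R$ distorts distances by power functions, so it cannot be used to ``compare $d_X$-distances to $\|\cdot\|$-distances'' with bounded multiplicative error, and no $(1+C\eta)$-bi-Lipschitz conclusion can be extracted that way. Two further quantitative problems: in the degenerate case you sum $\eta\,\ell(Q(\mathcal S))^n$ over stopping-time tops, but the tops are nested rather than disjoint and their packing constant depends on $\eta$, so the total is $\eta\,C(\eta)\,\calH^n(Q_0)$, which is not small; and in the non-degenerate case nonzero degree yields surjectivity onto a ball, not injectivity or a lower Lipschitz bound, so closeness to a non-degenerate affine model at the top scale of a region does not prevent $z$ from collapsing pairs of points at smaller scales. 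Controlling that collapse via Semmes-style quantitative topology at every scale is the actual content of \cite{david2016bi}, and it is the step your sketch explicitly defers; as it stands the proposal does not close it.
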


\begin{rem}
	The statement in \cite{david2016bi} is more general than that stated above. In particular, it allows for Ahlfors $s$-regular (i.e. non-integer) domains and Ahlfors $s$-regular targets spaces possessing $n$-\textit{manifold weak tangents}, see \cite[Definition 1.5]{david2016bi}. It is easy to check that $(\R^n,\|\cdot\|)$ has $n$-manifold weak tangents, see the first point in \cite[Remark 1.7]{david2016bi} for an explanation in the case of $\R^n$ equipped with Euclidean norm. The argument for general norms is exactly the same. 
\end{rem}

In order to make use of Theorem \ref{t:david}, we of course need to know Reifenberg flat metric spaces are LLC. This is a consequence of Proposition \ref{p:Reif-classical} and is proven below.

\begin{prop}\label{p:Reif-LLC}
	Suppose $X$ is a $(\delta,n)$-Reifenberg flat metric space. If $\delta > 0$ is chosen small enough then $X$ is LLC with constants $A = 2$ and $R = \infty.$ 
\end{prop}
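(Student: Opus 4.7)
The strategy is to transplant the obvious straight-line contraction in a normed space to $X$ via the bi-Hölder parametrization from Proposition \ref{p:Reif-classical}. Fix $x \in X$ and $r > 0$; we must produce a continuous map $H \colon B(x,r) \times [0,1] \to B(x,2r)$ with $H(\cdot,0) = \mathrm{id}$ and $H(\cdot,1)$ constant. Choose a small absolute constant $\ve \in (0, 1/10)$ so that $(1-\ve)(1+3\ve) \geq 1$, set $r' = (1+3\ve)r$, and take $\delta$ small enough (depending only on $\ve$, hence absolute) that Proposition \ref{p:Reif-classical} applies at $(x, r')$ with parameter $\ve$ for some norm $\|\cdot\|$ furnished by the Reifenberg flat hypothesis $\bilat_X(x, 100r') \leq \delta$.

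Proposition \ref{p:Reif-classical} then yields a bi-Hölder map $h \colon B_{\|\cdot\|}(0,r') \to B_X(x,r')$ with exponent $1-\ve$ such that $h(0) \in B_X(x, 10\ve r')$ and, crucially,
\[
h(B_{\|\cdot\|}(0,r')) \supseteq B_X(x, (1-\ve)r') \supseteq B_X(x,r)
\]
by our choice of $r'$. Being bi-Hölder, $h$ is a continuous injection, and the usual argument shows $h^{-1}$ is continuous on its image. Since $r' \leq 2r$, the image of $h$ also lies in $B(x, 2r)$. Define
\[
H(y, t) = h\bigl((1-t)\, h^{-1}(y)\bigr), \qquad (y, t) \in B(x,r) \times [0,1].
\]
This is well-defined because $B(x,r) \subseteq h(B_{\|\cdot\|}(0, r'))$, and continuous since $h$, $h^{-1}$ and scalar multiplication are all continuous. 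The endpoint conditions $H(y, 0) = y$ and $H(y,1) = h(0)$ (a single point) are immediate.

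It remains to verify that $H$ takes values in $B(x, 2r)$. For $y \in B(x,r)$ we have $h^{-1}(y) \in B_{\|\cdot\|}(0,r')$, so $(1-t)h^{-1}(y) \in B_{\|\cdot\|}(0,r')$ for every $t \in [0,1]$, and hence $H(y,t) \in h(B_{\|\cdot\|}(0,r')) \subseteq B_X(x,r') \subseteq B_X(x, 2r)$. This exhibits the required contraction of $B(x,r)$ inside $B(x,2r)$ for every $x$ and $r$, proving $X$ is LLC with constants $A = 2$ and $R = \infty$. The only substantive step is invoking Proposition \ref{p:Reif-classical} with a scale slightly larger than $r$ so that its surjectivity-up-to-$(1-\ve)r'$ conclusion actually covers $B(x,r)$; everything else is bookkeeping about the parameter $\ve$.
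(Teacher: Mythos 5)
Your proof is correct and takes essentially the same route as the paper: both invoke Proposition \ref{p:Reif-classical} and transplant the straight-line contraction of a norm ball to $X$ through the bi-H\"older map $h$ and its continuous inverse. The only differences are cosmetic --- the paper works at scale $2r$, contracts toward $h^{-1}(x)$, and checks that the homotopy stays in $B(x,2r)$ via \eqref{e:Reif-dist}, while you contract toward the origin at the slightly inflated scale $r'=(1+3\ve)r$, which makes that containment immediate.
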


\begin{proof}
	Let $x \in X$, $r > 0$ and choose a norm $\|\cdot\|$ such that $\bilat_X(x,200r,\|\cdot\|) \leq 200\delta r.$ Let $\ve > 0$ and choose $\delta$ small enough so the conclusion of Proposition \ref{p:Reif-classical} holds with constant $\ve.$ Let $h \colon B_{\|\cdot\|}(0,2r) \to B_X(x,2r)$ be the map we obtain. By \eqref{e:g-inclusion} we know $h^{-1}$ is well-defined on $B_X(x,r).$ Since $h(0) \in B_X(x,20 \ve r)$ we know by \eqref{e:Reif-dist} that $h^{-1}(x) \in B_{\|\cdot\|}(0,22\ve r).$ Using \eqref{e:Reif-dist} again implies $h^{-1}(B_X(x,r)) \subseteq B_{\|\cdot\|}(0,(1+24\ve)r).$ Now consider the map $H : B(x,r) \times [0,1] \to X$ defined by 
	\begin{align}
		H(z,t)  = h \left( h^{-1}(x) + t(h^{-1}(z) - h^{-1}(x)) \right).
	\end{align}
	We observe first that $H$ takes values in $B_X(x,2r).$ Indeed, suppose $z \in B(x,r)$ and $0 \leq t \leq 1.$ By the discussion above we know both $h^{-1}(x),h^{-1}(z) \in B_{\|\cdot\|}(0,(1+24\ve)r) \subseteq B_{\|\cdot\|}(0,3r/2).$ By convexity, this implies $h^{-1}(x) + t(h^{-1}(z) - h^{-1}(x)) \in B_{\|\cdot\|}(0,3r/2).$ Since $h(0) \in B_X(0,20\ve r),$ equation \eqref{e:Reif-dist} now implies $H(z,t) \in B_X(x,2r),$ as required. Since $h$ is bi-H\"older it is continuous and its inverse $h^{-1}$ is continuous on $B_X(x,r).$ Since the map $(y,s) \mapsto h^{-1}(x) + s(y  - h^{-1}(x))$ is continuous on $\R^n \times [0,1]$, it follows that $H$ is continuous $B(x,r) \times [0,1].$ Finally, note that $H(z,0) = x$ and $H(z,1) = z$. Recalling Definition \ref{d:LLC}, this finishes the proof of Proposition \ref{p:Reif-LLC}. 
\end{proof}

Proposition \ref{p:Reif-classical} also gives us lower bounds on the Hausdorff measure of balls in Reifenberg flat sets (this is why we only need to assume upper Ahlfors regularity). 

\begin{cor}\label{c:Reif-lower-reg}
	Suppose $X$ is $(\delta,n)$-Reifenberg flat. If $\delta$ is small enough then the following holds. Let $x \in X, \ r > 0$ and choose a norm $\|\cdot\|$ and a $C(n)$-Lipschitz map $\varphi \colon B(x,100r) \to B_{\|\cdot\|}(0,100r)$ such that $\bilat_X(x,100r,\|\cdot\|,\vp) \lesssim \delta$ (such objects exist by Lemma \ref{l:Lipschitz-GHA}). Then, 
	\begin{align}\label{e:lower-reg}
		 \calH^n(B(x,r)) \gtrsim_n \calH^n(\vp(B(x,r))) \gtrsim_n r^n.
	\end{align}
	In particular, $X$ is Ahlfors lower $n$-regular, with regularity constant depending only on $n$. 
\end{cor}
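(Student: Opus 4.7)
The first inequality in \eqref{e:lower-reg} is immediate from the fact that $C(n)$-Lipschitz maps do not increase $\calH^n$ by more than a factor of $C(n)^n$, so $\calH^n(B(x,r)) \geq C(n)^{-n}\calH^n(\vp(B(x,r)))$. For the second inequality I plan to find a continuous map from $B_{\|\cdot\|}(0,r)$ into $\vp(B(x,r))$ whose image contains a ball of radius comparable to $r$ in $(\R^n,\|\cdot\|)$; since $\calH^n$ with respect to any norm on $\R^n$ is comparable to Lebesgue measure with constants depending only on $n$ (Remark \ref{r:BM}), this yields the desired lower bound.

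Fix a small universal constant $\ve > 0$ depending only on $n$, and shrink $\delta$ so that Proposition \ref{p:Reif-classical} applies at scale $r$ with error $\ve$. Let $h \colon B_{\|\cdot\|}(0,r) \to B_X(x,r)$ be the resulting bi-H\"older map and set $F \coloneqq \vp \circ h$. Combining \eqref{e:Reif-dist} with the $C\delta r$-GHA property of $\vp$ yields
\begin{align}
\abs{ \|F(y)-F(z)\| - \|y-z\| } \leq C(\ve+\delta)r \quad\text{for all } y,z \in B_{\|\cdot\|}(0,r),
\end{align}
and using $h(0) \in B_X(x,10\ve r)$ together with \eqref{e:near-centre} one also obtains $\|F(0)\| \leq C(\ve+\delta) r$. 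Write $\eta = \ve + \delta$.

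Next I restrict $F$ to a ball $B_{\|\cdot\|}(0,r_1)$ with $r_1$ comparable to $r$, chosen so that $F$ becomes a $C\eta r$-GHA onto $B_{\|\cdot\|}(F(0), r_1)$. The isometry bound is automatic; for the density condition, given $u \in B_{\|\cdot\|}(F(0), r_1)$, the density of $\vp(B(x,100r))$ in $B_{\|\cdot\|}(0, 100r)$ produces $p \in B(x,100r)$ with $\|\vp(p) - u\| \leq C\delta r$, and the near-isometry of $\vp$ forces $p \in B_X(x, (1-\ve)r)$ once $r_1$ is chosen small enough to absorb the error. Then \eqref{e:g-inclusion} together with \eqref{e:Reif-dist} give $p = h(y)$ for some $y$ in a slightly enlarged ball, which can be absorbed by shrinking $r_1$ by a factor $1 + C\eta$. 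Lemma \ref{l:GH-linear} then produces an affine $(1+C\eta)$-bi-Lipschitz map $T \colon \R^n \to \R^n$ satisfying $\sup_{B_{\|\cdot\|}(0,r_1)} \|F - T\| \leq C\eta r$.

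Finally, $H \coloneqq T^{-1} \circ F$ is a continuous map on $B_{\|\cdot\|}(0,r_1)$ satisfying $\|H(y) - y\| \leq C\eta r$ (using that $T^{-1}$ is $(1+C\eta)$-bi-Lipschitz). For $\eta$ small, Lemma \ref{l:topology} gives $H(B_{\|\cdot\|}(0,r_1)) \supseteq B_{\|\cdot\|}(0, r_1/2)$, and hence by Corollary \ref{c:topology},
\begin{align}
F(B_{\|\cdot\|}(0,r_1)) = T(H(B_{\|\cdot\|}(0,r_1))) \supseteq T(B_{\|\cdot\|}(0,r_1/2)) \supseteq B_{\|\cdot\|}(T(0), r_1/4).
\end{align}
Since $\vp(B(x,r)) \supseteq F(B_{\|\cdot\|}(0,r_1))$, the desired bound $\calH^n(\vp(B(x,r))) \gtrsim_n r^n$ follows. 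The main technical obstacle will be matching the radii and verifying the density hypothesis needed to apply Lemma \ref{l:GH-linear} at the restricted scale $r_1$; everything else amounts to careful bookkeeping of how the error terms $\ve$, $\delta$, and $\eta$ propagate through the three approximations $h$, $\vp$, and $T$.
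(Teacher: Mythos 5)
Your proposal is correct, and the first half (obtaining $h$ from Proposition \ref{p:Reif-classical}, forming $F=\vp\circ h$, and recording the near-isometry estimate $\abs{\|F(y)-F(z)\|-\|y-z\|}\leq C(\ve+\delta)r$) coincides with the paper's argument. Where you genuinely diverge is the last step: the paper simply composes with $p_{0,r}$ so that $f=p_{0,r}\circ\vp\circ h$ maps the ball to itself, and then cites an external Brouwer-type lemma (\cite[Lemma 7.1]{bate2020purely}) stating that a continuous map of a ball that roughly preserves distances has image of measure $\gtrsim r^n$; no density or surjectivity information is needed for that route, which is why the paper never verifies any. You instead stay inside the paper's own toolkit: you upgrade $F$ (on a slightly smaller ball $r_1\sim r$) to a genuine GHA by checking the density condition -- this is the one extra input your route requires, and you correctly supply it from the $\eta_X$ part of $\bilat_X(x,100r,\|\cdot\|,\vp)\lesssim\delta$ together with \eqref{e:g-inclusion} and \eqref{e:Reif-dist} -- then linearize via Lemma \ref{l:GH-linear} and conclude with Lemma \ref{l:topology} and Corollary \ref{c:topology}. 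The remaining issues you flag (that $F$ lands in $B_{\|\cdot\|}(F(0),r_1+C\eta r)$ rather than $B_{\|\cdot\|}(F(0),r_1)$, and that the point $y$ produced in the density check lies in a ball of radius $r_1+C\eta r$) are routine and are fixed exactly as in Lemma \ref{l:GH-scale}: compose with $p_{F(0),r_1}$ and first move $u$ inward by $C\eta r$. The trade-off: your argument is longer and needs the density verification, but it is self-contained within the paper's lemmas and in fact proves the slightly stronger statement that $\vp(B(x,r))$ contains a ball of radius comparable to $r$, whereas the paper's shorter proof offloads the degree-theoretic content to the cited reference and only yields the measure bound.
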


\begin{proof}
	The first inequality in \eqref{e:lower-reg} since we assumed $\vp$ to be $C(n)$-Lipschitz. It only remains to prove the second inequality. Let $\ve > 0$ and choose $\delta$ small enough such that Proposition \ref{p:Reif-classical} holds with constant $\ve$. Let $h : B_{\|\cdot\|}(0,r) \to B_X(x,r)$ be the map we obtain. Now consider the map $f \coloneqq p_{0,r} \circ \vp \circ h$ which is continuous from $B_{\|\cdot\|}(0,r)$ to itself. Recall, $p_{z,\lambda}$ is the map from Lemma \ref{l:p-lambda}. Suppose $y,z \in B_{\|\cdot\|}(0,r).$ By definition $h(y),h(z) \in B_X(x,r)$ and $\vp(g(y)),\vp(g(z)) \in B_{\|\cdot\|}(0,r+100\delta r).$ So, by Lemma \ref{l:p-lambda} and \eqref{e:Reif-dist} we have $\abs{ \|f(y) - f(z) \| - \|y-z \| } \leq  C\ve r.$ It follows from this that 
	\begin{align}
		\calH^n_{\|\cdot\|}(f(B_{\|\cdot\|}(0,r))) \gtrsim r^n
	\end{align}
	(this is a consequence of Brouwer's Fixed Point Theorem, see \cite[Lemma 7.1]{bate2020purely} for a proof involving the unit Euclidean ball). Finally, using that $p_{0,r}$ is 2-Lipschitz to get
	\begin{align}
		\calH^n_{\|\cdot\|}(\vp(B(x,r))) \geq \calH^n_{\|\cdot\|}(\vp(h(B_{\|\cdot\|}(0,r)))) \gtrsim \calH^n_{\|\cdot\|}(f(B_{\|\cdot\|}(0,r))) \gtrsim r^n. 
	\end{align}
\end{proof}

We now have everything we need to prove UR.

\begin{prop}\label{p:reif-UR}
	Suppose $X$ is $(\delta,n)$-Reifenberg flat and Ahlfors upper $(C_0,n)$-regular. If $\delta > 0$ is small enough, depending only on $n$, then $X$ is uniformly $n$-rectifiable with \emph{UR} constants depending only on $\delta, C_0$ and $n$.  
\end{prop}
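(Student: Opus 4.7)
The plan is to establish BPBI (big pieces of bi-Lipschitz images), which trivially implies BPLI and thus UR, by applying G.~C.~David's bi-Lipschitz decomposition theorem (Theorem~\ref{t:david}) to the Lipschitz approximation supplied by the Reifenberg hypothesis. To invoke Theorem~\ref{t:david} I first verify its hypotheses on $X$. Corollary~\ref{c:Reif-lower-reg} combined with Ahlfors upper regularity makes $X$ Ahlfors $n$-regular with constant depending on $C_0$ and $n$. Proposition~\ref{p:Reif-LLC} supplies the LLC property. Proposition~\ref{p:Reif-classical} produces bi-H\"older homeomorphisms between small balls in $X$ and balls in normed spaces, so $X$ is locally homeomorphic to $\R^n$ and is therefore a topological $n$-manifold. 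Completeness and global orientability can be arranged by passing, respectively, to the metric completion (which preserves Reifenberg flatness and upper regularity) and to the oriented double cover (which inherits all local metric properties, and whose UR passes down to $X$ via the covering projection).

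With the hypotheses verified, fix $x_0 \in X$ and $0 < r < \diam(X)$. Pick a Christ--David cube $Q_0 \ni x_0$ with $\ell(Q_0) \sim r$ and $Q_0 \subseteq B_X(x_0,r)$, and use Lemma~\ref{l:Lipschitz-GHA} (applied to any GHA guaranteed by $\bilat_X(x_0,100r) \leq 100\delta$) to produce a norm $\|\cdot\|$ on $\R^n$ and a $C(n)$-Lipschitz map $\vp \colon B_X(x_0,100r) \to B_{\|\cdot\|}(0,100r)$ which is also a $C(n)\delta r$-GHA. Applying Theorem~\ref{t:david} with $z = \vp|_{Q_0}$ and a fixed small $\ve_0 > 0$ yields constants $L, N$ and measurable sets $E_1,\dots,E_N \subseteq Q_0$ on each of which $\vp$ is $L$-bi-Lipschitz, with
\[
\calH^n\bigl(\vp(Q_0 \setminus \textstyle\bigcup_i E_i)\bigr) \leq \ve_0\, \calH^n(Q_0).
\]
Corollary~\ref{c:Reif-lower-reg} provides $\calH^n_{\|\cdot\|}(\vp(Q_0)) \gtrsim_n r^n$, and since $\calH^n(Q_0) \lesssim_{C_0} r^n$, choosing $\ve_0$ small makes $\calH^n_{\|\cdot\|}(\vp(\bigcup_i E_i)) \gtrsim r^n$. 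Pigeonholing gives $i_\ast$ with $\calH^n_{\|\cdot\|}(\vp(E_{i_\ast})) \gtrsim r^n/N$, hence $\calH^n(E_{i_\ast}) \gtrsim r^n/(NL^n)$ by the bi-Lipschitz estimate on $\vp|_{E_{i_\ast}}$.

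To convert this piece into BPLI, I use John's theorem (Remark~\ref{r:BM}) to choose a linear isomorphism $T \colon (\R^n,|\cdot|) \to (\R^n,\|\cdot\|)$ with $\|T\|_{\op} \leq 1$ and $\|T^{-1}\|_{\op} \leq n$. Setting $F = (100n)^{-1}T^{-1}(\vp(E_{i_\ast}))$, one checks $F \subseteq B_n(0,r)$, and defining $f \colon F \to X$ by $f(y) = (\vp|_{E_{i_\ast}})^{-1}(T(100ny))$, one verifies that $f$ is $100nL$-Lipschitz, $f(F) = E_{i_\ast} \subseteq B_X(x_0,r)$, and $\calH^n(f(F) \cap B_X(x_0,r)) \geq \theta r^n$ for a constant $\theta$ depending only on $C_0$ and $n$. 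This is precisely BPLI in the sense of Definition~\ref{d:UR-intro-metric}, which combined with Ahlfors $n$-regularity yields UR.

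The principal technical obstacle is verifying the global oriented complete topological $n$-manifold hypothesis of Theorem~\ref{t:david}: while local manifold structure, LLC, and regularity are immediate from the preceding results, orientability and completeness are not intrinsic to Reifenberg flatness and require the preliminary passages to the metric completion and oriented double cover. A subsidiary point is that the H\"older parametrization of Proposition~\ref{p:Reif-classical} is not itself enough to build the Lipschitz image, since H\"older maps are not Lipschitz; it is precisely the role of Theorem~\ref{t:david} to upgrade the one-sided Lipschitz GHA $\vp$ to a bi-Lipschitz parametrization on a set of comparable $\calH^n$-measure.
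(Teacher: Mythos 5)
Your measure-theoretic pigeonholing and the final rescaling via John's theorem are fine and essentially match the paper, but the core of your argument has a genuine gap: you apply Theorem \ref{t:david} directly to $X$, and Theorem \ref{t:david} requires its domain to be a \emph{complete, oriented topological $n$-manifold}. You dispose of these hypotheses in one sentence by ``passing to the metric completion and to the oriented double cover,'' and neither reduction is justified. For the double cover you would first have to prove that a $(\delta,n)$-Reifenberg flat metric space is a topological manifold in a strong enough sense to admit one, then construct a metric on the cover for which the projection is a local isometry, and then verify that this cover is again Ahlfors $n$-regular, LLC and $(\delta',n)$-Reifenberg flat \emph{at all scales}: at scales comparable to where the non-orientability lives, the cover's geometry is genuinely different from that of $X$, and none of these properties is automatic. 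Similarly, for the completion you assert without proof that UR of $\overline{X}$ descends to $X$; this needs, e.g., control of $\calH^n(\overline{X}\setminus X)$, since the big Lipschitz piece produced in $\overline{X}$ could a priori sit in the added points. As written, the topological hypotheses of Theorem \ref{t:david} are simply not verified for the space you feed into it.

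The paper avoids exactly this obstruction by never applying Theorem \ref{t:david} to $X$. Instead it first invokes the Reifenberg parametrization (Lemma \ref{l:main-Reif}, i.e.\ Theorem \ref{t:Reif}) to produce a metric $\rho$ on $\R^n$, bi-H\"older equivalent to a norm, which is Ahlfors $n$-regular and $(\ve,n)$-Reifenberg flat, together with a map $g$ that is $(1+C\ve)$-bi-Lipschitz from $B_\rho(0,r)$ into $B_X(x,R)$. Since $(\R^n,\rho)$ is topologically just $\R^n$, it is trivially a complete, oriented topological $n$-manifold, and it is LLC by Proposition \ref{p:Reif-LLC}; so Theorem \ref{t:david} applies to a Lipschitz GHA $\vp$ on $(\R^n,\rho)$, and the resulting bi-Lipschitz piece is pushed into $X$ by composing with $g$. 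If you want to salvage your route, you should either carry out this transfer to the model space $(\R^n,\rho)$ as the paper does, or supply full proofs that $X$ itself (or a completion/cover of it) satisfies the manifold, completeness and orientability hypotheses --- which is a substantial undertaking the one-line reductions do not cover.
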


\begin{proof}
	Let $\ve > 0$ (to be chosen small enough) and choose $\delta$ small enough so that Corollary \ref{c:Reif-lower-reg} and conclusion of Lemma \ref{l:main-Reif} holds with constant $\ve.$ It is immediate that $X$ is Ahlfors $n$-regular and it remains to check $X$ has big pieces of Lipschitz images. Let $x \in X$, $R > 0$ and choose a norm $\|\cdot\|$ such that $\bilat_X(x,100R,\|\cdot\|) \leq 100\delta.$ Now, let $\rho$ be the metric and $g$ the map we obtain by applying Lemma \ref{l:main-Reif}. Let $r = R/2.$ \\
	
	\noindent\textbf{Claim:} There exists $A \subseteq B_{\|\cdot\|}(0,r)$ and an $L(n)$-Lipschitz map $f \colon A \to B_{\rho}(0,r)$ such that 
	\[\calH^n(f(A) \cap B_\rho(0,r) ) \gtrsim r^n.\] 
	Before proving the claim, let us see how it finishes the proof of the lemma. Recalling Remark \ref{r:BM}, after rescaling we can find a $C(n)$-bi-Lipschitz map $T \colon (\R^n,\|\cdot\|) \to \R^n$ such that $B_{\|\cdot\|}(0,r) \subseteq B_{|\cdot|}(0,R).$ Let $A' = T(A) \subseteq B_{|\cdot|}(0,R).$ Combing the fact that $g(0) \in B(x,10\ve R)$ with \eqref{e:Reif3'} we know $g(B_\rho(0,r)) \subseteq B(x,R).$ Since $g$ is $(1+\ve)$-bi-Lipschitz on $B_\rho(0,r)$, the map $F \coloneqq f \circ g \circ T^{-1} \colon A' \to B(x,R)$ is $C(n)$-Lipschitz. We now get 
	\begin{align}
		\calH^n(F(A') \cap B(x,R)) \gtrsim \calH^n( f(A) \cap B_\rho(0,r)) \gtrsim r^n \gtrsim R^n. 
	\end{align}
	
	For the remainder of the proof we focus on proving the claim. Since is $(\ve,n)$-Reifenberg flat, by applying Lemma \ref{l:Lipschitz-GHA} we can find a norm $\|\cdot\|$ and a $C(n)$-Lipschitz mapping $\vp \colon B_\rho(0,100r) \to B_{\|\cdot\|}(0,100r)$ satisfying \[\bilat_{(\R^n,\rho)}(0,100r,\|\cdot\|,\vp) \leq C\ve.\]
	For $\ve$ small enough, Lemma \ref{c:Reif-lower-reg} implies 
	\begin{align}\label{e:rho-big}
		\calH^n_{\|\cdot\|}(\vp(B_\rho(0,r))) \gtrsim r^n.
	\end{align}
	 By Theorem \ref{t:Reif} and Proposition \ref{p:Reif-LLC} we know $(\R^n,\rho)$ satisfies the hypotheses of Theorem \ref{t:david}. Let $\eta > 0$ (to be chosen small enough) and find constant $L,N \geq 1$, depending on $\eta, n$ and $C_0$ such that $\varphi|_{E_i}$ is $L$-Lipschitz for each $i = 1,\dots,N$ and 
	\begin{align}\label{e:small-image}
		\calH^n\left(\varphi\left(B_\rho(0,r) \setminus \bigcup_{i=1}^N E_i\right)\right) \leq \eta \calH^n(B_\rho(0,r)).
	\end{align}
	 Combining \eqref{e:rho-big}, \eqref{e:small-image} and choosing $\eta$ small enough, we have 
	\begin{align}
		\calH^n\left(\varphi\left(\bigcup_{i=1}^N E_i\right)\right) \geq \calH^n(\varphi(B_\rho(0,r))) - \eta \calH^n(B_\rho(0,r)) \gtrsim r^n.
	\end{align}
	In particular, there exists some $i \in  \{ 1,\dots,N\}$ satisfying $\calH^n(\varphi(E_i)) \gtrsim 1/N.$ Let $A = \vp(E_i) \subseteq B_{\|\cdot\|}(0,r)$ and set $f = \vp^{-1}|_A \colon A \to B_\rho(0,r).$ It follows that $f$ is $C(n)$-bi-Lipschitz and 
	\begin{align}
		\calH^n(f(F_i)) \gtrsim_n \calH^n(E_i) \gtrsim r^n,
	\end{align}
	as required. 
\end{proof}

\bigskip

\subsection{Corona Decompositions and CD($\mathscr{Y}$)} \label{s:corona-def}
The corona decomposition for subsets of Euclidean space was introduced in \cite{david1991singular}. Roughly speaking, a set $E \subseteq \R^n$ admits a corona decomposition if any system of Christ-David cubes may be partition into good and bad cubes which satisfy the following conditions. First, the number of bad cubes is controlled (in the sense that they satisfy a Carleson packing condition). Second, the good cubes may be decomposed into a controlled number of stopping-time regions such that for each of these regions there exists a Lipschitz graph which well-approximated the region. The definition of stopping-time region in given below. Again, controlled means that the top cubes satisfy a Carleson packing condition. For us, we need an alternative to the approximation by Lipschitz graphs.

We take the opportunity here to introduce a version of the corona decomposition for metric spaces. The final part of this section will be showing the existence of such a decomposition for Reifenberg flat metric spaces.

\begin{defn}[Corona Decomposition by $\mathscr{Y}$]\label{d:corona-Y}
	Suppose $(X,d)$ is an Ahlfors $n$-regular metric space and $\calD$ a system of Christ-David cubes on $X$. Let $\mathscr{Y}$ be a family of Ahlfors $n$-regular metric spaces with uniform regularity constant. We say $X$ admits a \textit{Corona Decomposition by $\mathscr{Y}$} (CD($\mathscr{Y}$)), with constants $\Corona_1,\Corona_2 \geq 1$, if for each $\theta > 0$ there exists a partition $\calD = \mathcal{G} \cup \mathcal{B}$ such that the following conditions hold. 
	\begin{enumerate}
		\item There exists a partition of $\mathcal{G}$ into disjoint stopping-time regions $\{S\}_{S \in \calF}.$ 
		\item The families $\calB$ and $\{Q(S)\}_{S \in \calF}$ satisfy a Carleson packing condition with Carleson constants depending only on $\theta.$ 
		\item For each $S \in \calF$ there exists $Y_S \in \mathscr{Y}$, a point $y \in Y$ and a map $\varphi_S \colon 3B_{Q(S)} \to B_Y(y,3\ell(Q(S)))$ such that if $Q \in S$ and $x,y \in 3B_Q$ satisfy $d(x,y) \geq \theta \ell(Q),$ then 
		\begin{align}
			\frac{1}{\Corona_1}d(x,y) \leq d_{Y_S}(\varphi_{S}(x), \varphi_{S}(y) ) \leq \Corona_2 d(x,y).
		\end{align}
	\end{enumerate}
\end{defn}

\begin{defn}\label{d:CDV}
	We say $X$ admits a \textit{Corona Decomposition by normed spaces} (CD(normed spaces)) with constant $\Lambda_1,\Lambda_2$ if it admits a corona decomposition by 
	\[\mathscr{Y}  = \{(\R^n,\|\cdot\|) : \|\cdot\| \mbox{ is a norm on } \R^n\} \]
	with constant $\Lambda_1,\Lambda_2$. We say $X$ admits a \textit{Corona Decomposition by Ahlfors $(C,n)$-regular $(\ve,n)$-Reifenberg flat spaces} with constant $\Lambda_1,\Lambda_2$ if $X$ admits a corona decomposition by 
	\[\mathscr{Y}(C,\ve) = \{ Y : Y \mbox{ is Ahlfors } (C,n)\mbox{-regular and } (\ve,n)\mbox{-Reifenberg flat}\}\]
	with constant $\Corona_1,\Corona_2.$ We say $X$ admits a \textit{Corona Decomposition by Ahlfors regular Reifenberg flat spaces} with constant $\Corona_1,\Corona_2$ if there exists $C \geq 1$ such that $X$ admits a corona decomposition by Ahlfors $(C,n)$-regular $(\ve,n)$-Reifenberg flat spaces with constant $\Corona_1,\Corona_2$ for all $\ve > 0.$ 
\end{defn}

\begin{rem}\label{r:usual-corona}
	Let us describe the relationship between the usual corona decomposition and the corona decomposition by normed spaces. Items (1) and (2) are lifted verbatim from the definition of corona decomposition in \cite{david1991singular}, with (2) quantifying the control on the number of bad cubes and stopping-times regions, respectively. For subsets of $\R^n$, item (3) is replaced by the following condition: 
	\begin{enumerate}[label={(\arabic*$'$})]
		\setcounter{enumi}{2}
		\item For each $S \in \calF$, there exists a Lipschitz graph $\Gamma_S \subseteq \R^n$ with constant at most $\eta > 0$ such that if $Q \in S$ and $x \in 3B_Q$ then $\dist(x,\Gamma_S) \leq \theta \ell(Q).$
	\end{enumerate}
	Condition (3) is an intrinsic condition, but it implies an extrinsic condition in the spirit of ($3'$) in the following way. Let $\mathcal{N}$ be a maximal net in $3B_{Q(S)}$ such that if $Q \in S$ and $x,y \in \mathcal{N} \cap Q$ then $d(x,y) \geq \theta \ell(Q).$ Item (3) tells us that $\mathcal{N}$ is bi-Lipschitz equivalent to a subset $A \subseteq B_{\|\cdot\|_S}(0,3\ell(Q(S))).$ Applying Lemma \ref{l:extension}, we can find a Banach space $\calB$, an isometric embedding on $X$ into $\calB$ and a surface $\Sigma_S \subseteq \calB$ which is bi-Lipschitz equivalent to $\R^n$ such that, after identifying $X$ with its image in $\calB$, we have $\mathcal{N} \subseteq \Sigma_S.$ By maximality, if $z \in Q \in S$ then $\dist_\calB(z,\Sigma_S) \leq \dist_X(z,\mathcal{N}) \leq \theta \ell(Q).$ In particular, (3) implies a version of ($3'$) with bi-Lipschitz images of $\R^n$ (with possibly large constant) replacing the role of Lipschitz graphs with small constants. 
	\end{rem}

	\begin{rem}\label{r:bad}
		Suppose $X$ is an Ahlfors $n$-regular metric space, $\calD$ is a system of Christ-David cubes for $X$ and suppose $X$ admits a corona decomposition by $\mathscr{Y}$. It is easy to obtain from this a local corona decomposition by $\mathscr{Y}$ i.e. for each $Q \in \calD$ a decomposition of $\calD(Q)$ satisfying Definition \ref{d:corona-Y} (1) - (3) with uniform estimates. Suppose $Q \in \calD$ and that we have such a decomposition of $\calD(Q)$ into bad cubes $\calB$ and stopping-time regions $\{S\}_{S \in \calF'}.$ Viewing each cube in $\calB$ as a stopping-time region consisting of a single cube, we get a decomposition of $\calD(Q)$ into stopping-time regions $\{S\}_{S \in \calF}$ whose top cubes satisfy a Carleson packing condition. Furthermore, each $S \in \calF$ which is not a singleton satisfies Definition \ref{d:corona-Y} (3). This reformulated decomposition will be used in Section \ref{s:corona-UR}. 
	\end{rem}

		\begin{rem}
		In Euclidean space, the fact that existence of a corona decomposition implies UR is a crucial step in showing BWGL implies UR. The same is true for us here. The proof of this fact in metric space is very similar to the argument given in \cite{david1991singular} and the details are given in Section \ref{s:corona-UR}. 
	\end{rem}

\bigskip

\subsection{UR+RF implies CD(normed spaces)}
%

Recalling Proposition \ref{p:reif-UR} and Corollary \ref{c:Reif-lower-reg} above, Theorem \ref{t:corona-reif} is an easy consequence of the following proposition. 

\begin{prop}\label{p:UR+BP-corona}
	Let $X$ be uniformly $n$-rectifiable and $\calD$ a system of Christ-David cubes for $X$. Suppose for each $\ve > 0$ and each $Q \in \calD$ with $\bilat_X(Q) \leq \ve$ that there exists a norm $\|\cdot\|$ and a map $\varphi \colon 3B_Q \to B_{\|\cdot\|}(0,3\ell(Q))$ satisfying $\bilat(Q,\|\cdot\|,\varphi) \leq 2\ve$ and $\calH^n_{\|\cdot\|}(\varphi(3B_Q)) \geq c\ell(Q)^n.$ Then $X$ admits a corona decomposition by normed spaces with constants $\Corona_1, \Corona_2$ depending only on $n$. The Carleson norms in Definition \ref{d:corona-Y} depend additionally2 on $n$ and the constants in Definition \ref{d:UR-intro-metric}. 
\end{prop}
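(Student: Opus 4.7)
The plan is a metric-space analogue of the David--Semmes stopping-time argument deriving a corona decomposition from BWGL together with a ``big projection'' type lower bound. The measure lower bound $\calH^n_{\|\cdot\|}(\vp(3B_Q))\geq c\ell(Q)^n$ will play the role of big projections, and the Carleson condition for the $\gamma$-coefficients (available since $X$ is UR, by Theorem \ref{t:approx-UR}) will play the role of the WALA. Fix $\theta>0$ and choose $\ve = \ve(\theta,n)$ small, to be determined. The output of the construction will be the partition $\calD = \calG \cup \calB$ required in Definition \ref{d:corona-Y}.

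First I would build the pool of bad cubes and select approximating data on the rest. Let $\calB_0=\{Q\in\calD:\bilat_X(3Q)>\ve\}$; this is Carleson by Lemma \ref{l:cubes-BWGL} and Theorem \ref{t:approx-UR} (via Lemma \ref{l:WALAM-BWGL}). For every $Q\notin\calB_0$ pick, using the hypothesis, a norm $\|\cdot\|_Q$ and a map $\vp_Q:3B_Q\to B_{\|\cdot\|_Q}(0,3\ell(Q))$ with $\bilat_X(3Q,\|\cdot\|_Q,\vp_Q)\leq 2\ve$ and $\calH^n_{\|\cdot\|_Q}(\vp_Q(3B_Q))\geq c\ell(Q)^n$. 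Using Lemma \ref{l:Lipschitz-GHA} followed by Lemma \ref{l:Lipschitz-extend}, I replace each $\vp_Q$ with a $C(n)$-Lipschitz $C(n)\ve\ell(Q)$-GHA which is globally defined on $X$.

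Next I would run the stopping-time construction. Given a candidate top $R\notin\calB_0$, apply the Carleson condition for $\gamma$ to $f=\vp_R:X\to(\R^n,\|\cdot\|_R)$ (which is uniformly Lipschitz in $R$); let $\calB_R=\{Q\subseteq R:\gamma_{X,\vp_R,\|\cdot\|_R}^K(3Q)>\ve\}$, a family with uniform Carleson norm. For $Q\subseteq R$ outside $\calB_R$, the smallness of $\gamma$ produces a local tangent norm $\|\cdot\|$, a GHA $\phi$, and an affine $K$-Lipschitz $A:(\R^n,\|\cdot\|)\to(\R^n,\|\cdot\|_R)$ with $\vp_R\approx A\circ\phi$ on $3B_Q$ up to error $C\ve\ell(Q)$. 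Combined with $\bilat_X(3Q)\leq\ve$, the measure lower bound for $\vp_Q$, and Lemma \ref{l:GH-linear}, this forces $\|\cdot\|_Q$ to be a $(1+C\ve)$-affine perturbation of $\|\cdot\|_R$, and the affine map $A$ to be $(1+C\ve)$-bi-Lipschitz (nondegeneracy is what the big-projection hypothesis buys us). I declare $Q\in S(R)$ if every cube $Q'$ with $Q\subseteq Q'\subseteq R$ lies outside $\calB_0\cup\calB_R$ and, moreover, the composition of the local affine linearizations from $R$ down to $Q'$ remains $(1+C'\ve)$-bi-Lipschitz and close to the identity; stop otherwise. Since every stopping is charged either to a cube in $\calB_0\cup\calB_R$ or to an event where a Carleson-type ``$\gamma$-jump'' accumulates, a standard stopping-time bookkeeping (as in \cite[Part IV]{david1993analysis}) yields a Carleson packing estimate for the tops $\{Q(S)\}_{S\in\calF}$, with norms depending only on $n$ and the Carleson norms from Theorem \ref{t:approx-UR}.

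For Definition \ref{d:corona-Y}(3), take $Y_S=(\R^n,\|\cdot\|_{Q(S)})$ and $\vp_S=\vp_{Q(S)}$. For $Q\in S$ and $x,y\in 3B_Q$, I telescope the local affine approximations from $Q(S)$ down through the chain of ancestors in $S$ using Lemma \ref{l:composition}; because each link is $(1+C\ve)$-bi-Lipschitz and close to an isometry, the composition yields $\big|\|\vp_S(x)-\vp_S(y)\|_{Q(S)}-d(x,y)\big|\leq C\ve\ell(Q)$ on $3B_Q$ (rather than merely on $3B_{Q(S)}$). When $d(x,y)\geq\theta\ell(Q)$, choosing $\ve\ll\theta$ absorbs the additive error into a multiplicative $(1\pm C\ve/\theta)$ factor, yielding $\Corona_1,\Corona_2$ depending only on $n$. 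The main obstacle, and the place I would spend the most care, is precisely this telescoping: controlling the cumulative drift of the tangent norms and of the affine linearizations of $\vp_{Q(S)}$ across many scales inside a stopping-time region. This is where both pieces of the hypothesis are essential; the measure lower bound forces (via Lemma \ref{l:GH-linear}) that the GHAs between neighbouring flat balls can be replaced by genuine bi-Lipschitz affine maps, while the $\gamma$-Carleson condition ensures that the stopping rule can be violated only on a Carleson set of cubes, so that the telescoping is effective on all of $S$.
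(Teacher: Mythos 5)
Your overall flavour (stop-time regions driven by the $\gamma$-coefficients of the top map $\pi_{Q(S)}$, with the measure lower bound playing the role of big projections) matches the paper, but two linked steps in your argument do not work as stated, and they are exactly the places where the paper's proof is organised differently. First, you claim that for a cube $Q$ inside the region the smallness of $\gamma^K_{X,\vp_R,\|\cdot\|_R}(3Q)$ together with $\bilat_X(3Q)\leq\ve$, the lower bound $\calH^n_{\|\cdot\|_Q}(\vp_Q(3B_Q))\geq c\ell(Q)^n$, and Lemma \ref{l:GH-linear} force the affine linearization $A$ of $\vp_R$ on the tangent of $Q$ to be $(1+C\ve)$-bi-Lipschitz. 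This is not justified: the hypothesis controls the image of the map $\vp_Q$ attached to the cube $Q$ itself, not the affine map $A$ approximating the \emph{top} map $\vp_R$, and nothing prevents $A$ from being degenerate --- indeed, degeneration (contraction of a $\theta\ell(Q)$-separated pair by $\pi_{Q(S)}$) is precisely the event one must stop for. In the paper this is dealt with head-on: the stopping rule (Definition \ref{d:stop-time}(3)) demands that $\pi_Q$ itself not contract $\theta$-separated pairs on any sibling, so condition (3) of Definition \ref{d:corona-Y} holds by fiat (lower bound $=\Corona_1$ from the stopping rule, upper bound from $\pi_Q$ being $C(n)$-Lipschitz), and no telescoping of affine maps across scales is ever performed. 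Your telescoping step, by contrast, compounds $(1+C\ve)$-bi-Lipschitz errors over arbitrarily many generations (Lemma \ref{l:composition} has constants depending on the number of factors), and you acknowledge but do not resolve this; without a square-function/Carleson control of the drift the claimed additive estimate $|\,\|\vp_S(x)-\vp_S(y)\|-d(x,y)|\leq C\ve\ell(Q)$ at the scale of $Q$ is unavailable, and it is also more than the proposition needs.

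Second, and consequently, your packing argument for the tops is incomplete. Stops caused by large $\gamma$ can indeed be charged to a Carleson family (the paper still needs the $N$-fold iteration $\calF_Q^0,\dots,\calF_Q^{N-1}$ plus Chebyshev, via \eqref{e:WALA-proof}, because a Carleson condition alone does not bound the pointwise number of bad ancestors), but stops caused by contraction are \emph{not} charged to any Carleson family by ``standard bookkeeping''. The actual mechanism, and the only place the hypothesis $\calH^n_{\|\cdot\|}(\vp(3B_Q))\geq c\ell(Q)^n$ is used, is at the top cube: if a cube $R$ is stopped because $\pi_Q$ contracts a separated pair in a child, then the affine approximation from the $\gamma$-condition forces $\pi_Q(R)$ into a thin slab, so $\calH^n(\pi_Q(R))\lesssim\ell(R)^n/\Corona$ (Lemma \ref{l:in-box}); summing, the image of all minimal cubes is a small fraction of $\calH^n(\pi_Q(Q))\gtrsim\ell(Q)^n$, hence the ``saved'' sets $z(Q)$ have measure $\gtrsim\ell(Q)^n$, are pairwise disjoint (Lemma \ref{l:z}), and this is what packs the tops (Lemma \ref{l:top-control}). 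You should reorganise your proof around these two points: make non-contraction of the top map the stopping condition itself, and use the image-measure/big-projection argument (not a charge to a Carleson family) to control how often you stop.
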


	Let us say a few words about the proof of Proposition \ref{p:UR+BP-corona}. Let $Q_0 \in \calD$ and let $\vp$ be its associate map. By modifying $\vp$ slightly we may assume it is $C(n)$-Lipschitz. Since $X$ is UR it satisfies a Carleson condition for the $\gamma$-coefficients so $\vp$ is approximated by affine functions at most scales and locations. Let's assume for simplicity that $\vp$ is approximated by affine functions at \textit{all} scales and locations (additional considerations are needed to deal with the places where $\vp$ is not approximated by affine functions). We run a stopping-time procedure, stopping at sub-cubes $Q \subseteq Q_0$ if there exists a pair of $\theta\ell(Q)$-separated points which are squished under $\vp.$ In this stopping-time region the upper bound in Definition \ref{d:corona-Y} holds because we assume $\vp$ to be Lipschitz and the lower bound holds because of the stopping condition. We get our collection of stopping-time regions by restarting on the stopped cubes (with new $\vp$ maps). We also need to control how often we stop. Suppose $Q$ is a stopped cube in the region associated to $Q_0$. Since there are two points in $Q$ which are squished under $\vp$ and since $\vp$ is approximated by affine functions, this forces the squishing of the whole of $Q$ in one direction. In particular, $\vp(Q)$ has small measure in the image. Since we are assuming large projection for $Q_0,$ this prevents there from being to many stopped cubes. A similar argument over all stopping-time regions gives the desired Carleson packing condition.

\begin{rem}\label{r:local-corona}
	We will prove a local decomposition, namely, for each $Q_0 \in \calD$ we will find a decomposition of the sub-cubes $\calD(Q_0) = \calG \cup \calB$ satisfying conditions (1) - (3) with estimates independent of $Q_0$ (recall the definition of $\calD(Q_0)$ from \eqref{d:subcubes}). The fact that such a local decomposition yields a global decomposition is already known and relies only on the properties of Christ-David cubes, see \cite[Page 38]{david1991singular}.
\end{rem}

Now we can begin. For the remainder of this section, fix a metric space $(X,d)$ satisfying the hypotheses of Proposition \ref{p:UR+BP-corona} with $\calD$, a system of Christ-David cubes on $X$, and $Q_0 \in \calD$. Let $\theta > 0$ (the constant appearing in Definition \ref{d:corona-Y}) and let $\ve > 0$, which will be chosen small enough depending on $\theta.$ 

Recall the definitions of $\bilat_X(AQ)$ and $\gamma_{X,f,\|\cdot\|}^K(AQ)$ from Definition \ref{d:notation-cubes}. We start by setting 
\begin{align}\label{e:G,B}
	\mathcal{G} = \{Q \in \calD(Q_0) : \bilat_X(3Q) \leq \ve\} \quad \mbox{ and } \quad \calB = \calD(Q_0) \setminus \calG.
\end{align}

\begin{rem}\label{r:packing}
In the previous section we showed UR implies BWGL (recall Lemma  \ref{l:WALAM-BWGL} and Theorem \ref{t:approx-UR}). Thus, by applying Lemma \ref{l:cubes-BWGL} we know that $\calB$ satisfies a Carleson packing condition as required for part of Definition \ref{d:corona-Y} (2).
\end{rem}

Our next step is to define the family of stopping-time regions which partition $\calG$. By Lemma \ref{l:Lipschitz-GHA}, for each $Q \in \calG$ there exists a norm $\|\cdot\|_Q$ and a $C(n)$-Lipschitz map $\pi_Q \colon 3B_Q \to B_{\|\cdot\|_Q}(0,3\ell(Q))$ such that 
\[\bilat_X(3Q,\|\cdot\|_Q,\pi_Q) \leq \bilat_X(3Q) + \ve \leq 2\ve.\]
Let $K \geq 1$ be the constant obtained from Theorem \ref{t:approx-UR} for $X.$ Then set 
\begin{align} \calG'(Q,\ve) &= \{ R \in \calD(Q) : \gamma_{X,\pi_Q,\|\cdot\|_Q}^K(3R) \leq \ve \mbox{ for all } R' \in \text{Sibling}(R)\}; \\
\calB'(Q,\ve) &= \calD(Q) \setminus \calG(Q,\ve).
\end{align}

\begin{defn}\label{d:stop-time}
	Let $\Corona \geq 1,$ to be chosen sufficiently large. For each $Q \in \calG$ and $R \in \calG$ such that $R \subseteq Q,$ define a stopping-time region $S_Q(R)$ as follows. First, add $R$ to $S_Q(R).$ Then, inductively, add cubes $T \subset R$ to $S_Q(R)$ if each of the following conditions hold. 
	\begin{enumerate}
		\item $\text{Parent}(T) \in S_Q(R)$.
		\item $T \in \calG'(Q,\ve).$
		\item For each $T' \in \text{Sibling}(T)$ we have $\| \pi_Q(x) - \pi_Q(y) \|_Q \geq d(x,y)/\Corona$ for all $x,y \in 3B_{T'}$ such that $d(x,y) \geq \theta \ell(T').$ 
	\end{enumerate} 
\end{defn}
If $T \in \min(S_Q(R))$ then there exists $T' \in \text{Child}(T)$ such that either $T' \in \calB'(Q,\ve)$ or $\| \pi_Q(x) - \pi_Q(y) \|_Q < d(x,y)/\Corona$ for some $x,y \in 3B_{T'}$ with $d(x,y) \geq \theta \ell(T').$ We partition $\text{min}(S_Q(R))$ by setting 
\begin{align}
	{\min}_1(S_Q(R)) &= \{ T \in \min(S_Q(R)) : \mbox{ there exists } T' \in \text{Child}(T) \cap \calB'(Q,\ve)\}; \\
	{\min}_2(S_Q(R)) &= \min(S_Q(R)) \setminus {\min}_1(S_Q(R)). 
\end{align}

\begin{rem}
In words, if $T \in \text{min}_1(S_Q(R))$ then there exists a child $T'$ of $T$ such that $\pi_Q$ is not well-approximated by affine functions in $T'.$ If $T \in \text{min}_2(S_Q(R))$ then $\pi_Q$ is well-approximated by affine functions on each of its children, however there exists a child $T'$ such that $\pi_Q$ contracts a pair of well-separated points in $T'.$ We will use this property later to show $\calH^n(\pi_Q(T))$ is small for such a $T$. 
\end{rem}

Before defining the family of stopping-time regions $\calF,$ we first define a family $\calF_Q$ associated to $Q \in \calG$ and prove some properties of it. Let $\tau > 0$. 

\begin{lem}
	There exists a constant $N = N(C_0,n,\ve,\tau)$ such that 
	\begin{align}\label{e:WALA-proof}
		\calH^n\left(\left\{ x \in Q : \sum_{R \in \calB'(Q,\ve)} \mathds{1}_R(x) \geq N\right\}\right) \leq \tau \ell(Q)^n \quad \mbox{ for all } Q \in \calG.
	\end{align}
\end{lem}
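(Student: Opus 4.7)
The plan is to deduce this from a Carleson packing estimate for $\calB'(Q,\ve)$, and then apply a standard Chebyshev-type argument.

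First, I would establish that $\calB'(Q,\ve)$ satisfies a Carleson packing condition inside $Q$ with constant depending only on $C_0, n, \ve$. By Theorem \ref{t:approx-UR}, $X$ satisfies a Carleson condition for the $\gamma$-coefficients, so by Lemma \ref{l:cubes-wala}, for any $1$-Lipschitz map $f \colon X \to (\R^n, \|\cdot\|_0)$ the set $\{R \in \calD : \gamma_{X,f,\|\cdot\|_0}^K(3R) > \ve'\}$ satisfies a Carleson packing condition for every $\ve' > 0$. Since $\pi_Q \colon 3B_Q \to B_{\|\cdot\|_Q}(0,3\ell(Q))$ is $C(n)$-Lipschitz, I would extend it (via the Lipschitz extension available because the target is finite-dimensional, see Lemma \ref{l:Lipschitz-extend}) to a $C(n)$-Lipschitz map on all of $X$, rescale by $1/C(n)$ to make it $1$-Lipschitz, and apply the above. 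After undoing the rescaling (which changes $\ve$ to $\ve/C(n)$ inside $\gamma$), one obtains that
\[
\calB_*(Q,\ve) \coloneqq \{R \in \calD(Q) : \gamma_{X,\pi_Q,\|\cdot\|_Q}^K(3R) > \ve\}
\]
satisfies a Carleson packing condition with constant depending only on $C_0,n,\ve$. By the definition of $\calG'(Q,\ve)$, each $R \in \calB'(Q,\ve)$ has some sibling $R' \in \calB_*(Q,\ve)$. Since the doubling property of $X$ ensures $|\text{Sibling}(R)| \lesssim_n 1$ and each sibling has the same side-length, $\calB'(Q,\ve)$ also satisfies a Carleson packing condition with constant $C'=C'(C_0,n,\ve)$.

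Next, I would run a Chebyshev argument. For each $R \in \calB'(Q,\ve)$, Ahlfors regularity gives $\calH^n(R) \leq C_0 \ell(R)^n$, so integrating the sum over $Q$ yields
\[
\int_Q \sum_{R \in \calB'(Q,\ve)} \mathds{1}_R(x)\, d\calH^n(x) = \sum_{R \in \calB'(Q,\ve)} \calH^n(R) \leq C_0 \sum_{R \in \calB'(Q,\ve)} \ell(R)^n \leq C_0 C' \ell(Q)^n.
\]
By Chebyshev's inequality, for any $N \geq 1$,
\[
\calH^n\left(\left\{x \in Q : \sum_{R \in \calB'(Q,\ve)} \mathds{1}_R(x) \geq N \right\}\right) \leq \frac{C_0 C'}{N}\ell(Q)^n.
\]
Choosing $N \geq C_0 C'/\tau$ (so $N = N(C_0,n,\ve,\tau)$) gives the desired bound.

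The only real obstacle is the first step: carefully translating the global Carleson condition for the $\gamma$-coefficients, which is stated for $1$-Lipschitz functions on all of $X$, into the packing condition for $\calB'(Q,\ve)$, which involves the locally defined, $C(n)$-Lipschitz map $\pi_Q$ on $3B_Q$ and a sibling-wise condition. This requires the Lipschitz extension and the rescaling/sibling bookkeeping described above, but is otherwise straightforward.
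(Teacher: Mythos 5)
Your proof is correct and follows essentially the same route as the paper: the sibling observation reduces $\calB'(Q,\ve)$ to the cubes where $\gamma_{X,\pi_Q,\|\cdot\|_Q}^K(3R) > \ve$, the Carleson packing condition supplied by Theorem \ref{t:approx-UR} (through Lemma \ref{l:cubes-wala}) gives $\sum_{R \in \calB'(Q,\ve)} \ell(R)^n \lesssim_{C_0,n,\ve} \ell(Q)^n$, and Chebyshev with $N \gtrsim_{C_0,n,\ve} \tau^{-1}$ concludes. The only difference is that you make explicit the Lipschitz extension and rescaling needed because $\pi_Q$ is $C(n)$-Lipschitz and defined only on $3B_Q$, a bookkeeping step the paper leaves implicit.
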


\begin{proof}
	Let $Q \in \calG.$ If $R \in \calB'(Q,\ve)$ then there exists $T \in \text{Sibling}(R)$ such that $\gamma_{X,\pi_Q,\|\cdot\|_Q}^K(T) > \ve.$ By Ahlfors regularity each cube in $\calD$ has a bounded number of siblings, depending on the regularity constant and $n$. Then, by Theorem \ref{t:approx-UR}, Ahlfors regularity of $X$ and Chebyshev's Inequality, the left-hand side of \eqref{e:WALA-proof} is at most 
	\begin{align}
		\frac{1}{N} \sum_{R \in \calB'(Q,\ve)} \ell(R)^n \lesssim_{C_0,n} \frac{1}{N}  \sum_{\substack{T \subseteq Q \\ \gamma_{X,\pi_Q,\|\cdot\|_Q}^K(T) > \ve}} \ell(R)^n \lesssim_{C_0,\ve,n} \frac{\ell(Q)^n}{N}. 
	\end{align}
	Choosing $N$ large enough gives the required result. 
\end{proof}

From now on we fix $N$ as in the above lemma. For $Q \in \calG,$ let $\text{Top}_Q^0 = \{Q\}$ and $\calF_Q^0 = \{S_Q(Q)\}$. Then, supposing $\text{Top}_Q^k$ and $\calF_Q^k$ have been defined for some integer $0 \leq k \leq N-2,$ let 
\begin{align}\label{e:F^k}
	\text{Top}_Q^{k+1} &= \bigcup_{R \in \text{Top}_Q^k} \bigcup_{T \in \text{min}_1(S_Q(R))} \text{Child}(T); \\
	\calF_Q^{k+1} &= \{S_Q(R) : R \in \text{Top}_Q^{k+1}\}.
\end{align} 
Now, let 
\begin{align}
	\calF_Q &\coloneqq \bigcup_{k=1}^{N-1} \calF_Q^k
\end{align}
and
\begin{align}
	\text{min}(\calF_Q) &= \bigcup_{S_Q(R) \in \calF_Q} {\min}_2(S_Q(R)) \cup \bigcup_{S_Q(R) \in \calF_Q^{N-1}} {\min}_1(S_Q(R)); \\
\end{align}

\begin{lem}\label{l:dis-FQ}
	Let $Q \in \calG.$ Then $\calF_Q$ is a disjoint collection of stopping-time regions. 
\end{lem}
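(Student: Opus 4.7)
The plan is to verify two separate assertions: (a) for every top cube $R$ used in the construction, the collection $S_Q(R)$ is itself a stopping-time region in the sense of Definition \ref{StoppingTime}, and (b) as $R$ varies over $\bigcup_{k=0}^{N-1}\mathrm{Top}_Q^k$, the resulting stopping-time regions are pairwise disjoint. Since $\calF_Q$ is the union of these regions over all generations, both items together give the conclusion.

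For (a), fix $R$ with $S_Q(R) \in \calF_Q$. The top cube of $S_Q(R)$ is $R$ by construction, so Definition \ref{StoppingTime}(1) is immediate. Definition \ref{StoppingTime}(2), closure under taking supercubes inside $R$, follows inductively from condition (1) of Definition \ref{d:stop-time}: if a descendant $T\subseteq R$ lies in $S_Q(R)$, the addition of $T$ went through its parent, grandparent, etc., each of which must then also belong to $S_Q(R)$. For Definition \ref{StoppingTime}(3), sibling-closure, I note that both of the stopping criteria (2) and (3) in Definition \ref{d:stop-time} are \emph{symmetric in siblings}: membership in $\calG'(Q,\ve)$ is defined via a condition ranging over the full sibling class (and since $\mathrm{Sibling}(T)$ contains $T$ itself, it equals $\mathrm{Sibling}(T')$ for any sibling $T'$), and condition (3) of Definition \ref{d:stop-time} explicitly quantifies over all $T' \in \mathrm{Sibling}(T)$. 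Hence whenever $T$ was added to $S_Q(R)$, every sibling $T'$ of $T$ satisfies the same stopping criteria and has the same parent (already in $S_Q(R)$ by item (1)), so $T'$ is also added.

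For (b), I would argue by induction on the generation $k$. First I would show that the cubes in $\mathrm{Top}_Q^k$ are pairwise disjoint as elements of $\calD$: at $k=0$ this is trivial, and at the inductive step cubes in $\mathrm{Top}_Q^{k+1}$ are children of minimal cubes drawn from stopping-time regions indexed by $\mathrm{Top}_Q^k$, which themselves sit inside pairwise-disjoint descendants (by the induction hypothesis applied to the regions $S_Q(R)$ with $R \in \mathrm{Top}_Q^k$). Disjointness of different children of the same minimal cube, and of children of different minimal cubes inside disjoint regions, then gives pairwise disjointness of $\mathrm{Top}_Q^{k+1}$. Once this is established, disjointness of the stopping-time regions follows: if $R,R' \in \bigcup_{k=0}^{N-1}\mathrm{Top}_Q^k$ lie in the same generation they are disjoint cubes and so $S_Q(R) \subseteq \calD(R)$, $S_Q(R')\subseteq \calD(R')$ are disjoint; if they lie in different generations, say $R$ earlier and $R'$ later, then $R'$ is a strict descendant of some minimal cube $T \in \min_1(S_Q(R))$, so by minimality no descendant of $T$ lies in $S_Q(R)$, in particular $S_Q(R') \subseteq \calD(R') \subseteq \calD(T) \setminus \{T\}$ is disjoint from $S_Q(R)$.

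The only slightly delicate point is the sibling-closure step in (a); everything else is a bookkeeping argument about nested families and minimal cubes. I do not anticipate serious obstacles beyond being precise about which siblings of which cubes satisfy which of the three conditions in Definition \ref{d:stop-time}, and carefully tracking that children of $\min_1$-cubes lie strictly outside the old region.
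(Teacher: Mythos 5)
Your proposal is correct and follows essentially the same route as the paper's much terser argument, whose whole content is your dichotomy: either the two top cubes are disjoint (trivial case), or the smaller top sits strictly inside a minimal cube of the larger region, and minimality forces the regions to be disjoint; your additional verifications (that each $S_Q(R)$ is a genuine stopping-time region, and the generation-by-generation disjointness of top cubes) are elaborations of the same idea rather than a different approach. One cosmetic slip: in the cross-generation case your blanket claim that $R'$ is a strict descendant of some $T \in {\min}_1(S_Q(R))$ fails when $R' \cap R = \emptyset$, but that sub-case is disposed of exactly as in your same-generation case, so nothing essential is missing.
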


\begin{proof}
	Fix distinct stopping-time regions $S,S' \in \calF_Q.$ Let $R,R' \subseteq Q$ such that $S = S_Q(R)$ and $S' = S_Q(R').$ Since $S$ and $S'$ are distinct, we must have $R \neq R'.$ If $R \cap R' \neq \emptyset$ there is nothing to show. Suppose then that $R \cap R' \neq \emptyset$ and suppose without loss of generality that $R \subseteq R'.$ By construction we have $R \subseteq R''$ for some $R'' \in \min(S')$ and so $S,S'$ are disjoint
\end{proof}

We will show that the image of $\min(\calF_Q)$ under $\pi_Q$ has small $\calH^n$-measure, starting with the cubes in $\min_1.$ 

\begin{lem}
	Let $Q \in \calG.$ Then, 
	\begin{align}\label{e:min_1}
		\calH^n\left(\pi_Q\left(\bigcup_{S \in \calF_Q^{N-1}}\bigcup_{R \in\emph{min}_1(S)} R\right)\right) \lesssim_{C_0,n} \tau \ell(Q)^n. 
	\end{align}
\end{lem}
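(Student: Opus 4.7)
My plan is to reduce the image estimate to a preimage estimate using the $C(n)$-Lipschitz map $\pi_Q$: since Lipschitz maps decrease $\calH^n$ by the $n$-th power of the Lipschitz constant, it suffices to bound
\[
\calH^n\left(\bigcup_{S \in \calF_Q^{N-1}}\bigcup_{R \in \min_1(S)} R\right) \lesssim_{C_0,n} \tau \ell(Q)^n.
\]
Once this is done, the statement follows from the choice of $N$ via \eqref{e:WALA-proof}, provided I can show that every point of this union lies in at least $N$ distinct cubes from $\calB'(Q,\ve)$.

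The central observation I will rely on is that $\calB'(Q,\ve)$-membership is a \emph{sibling-propagated} property. Unwinding the definition of $\calG'(Q,\ve)$, one has $R \in \calB'(Q,\ve)$ exactly when some sibling $R'$ of $R$ satisfies $\gamma_{X,\pi_Q,\|\cdot\|_Q}^K(3R') > \ve$. Consequently, if a cube $T$ has even one child in $\calB'(Q,\ve)$, the corresponding ``bad'' child of $T$ is a sibling of every other child of $T$, so every child of $T$ lies in $\calB'(Q,\ve)$. Applying this to each $T \in \min_1$ of a stopping-time region (which by the very definition of $\min_1$ has at least one $\calB'(Q,\ve)$-child), I conclude that every cube in $\text{Top}_Q^{k+1}$ lies in $\calB'(Q,\ve)$ for every $k \geq 0$.

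With this observation in hand the counting is routine. For any $R^* \in \min_1(S)$ with $S \in \calF_Q^{N-1}$, the construction of $\text{Top}_Q^\bullet$ produces a strictly descending chain of ancestors $Q = R_0 \supsetneq R_1 \supsetneq \cdots \supsetneq R_{N-1} \supseteq R^*$ with $R_i \in \text{Top}_Q^i$, and by the observation $R_1, \ldots, R_{N-1}$ all lie in $\calB'(Q,\ve)$. Since $R^* \in \min_1$, every child of $R^*$ is also in $\calB'(Q,\ve)$, so any $x \in R^*$ sits in some child $C_0 \subsetneq R_{N-1}$ with $C_0 \in \calB'(Q,\ve)$. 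This exhibits $N$ distinct cubes of $\calB'(Q,\ve)$ containing $x$, placing $x$ in the set on the left-hand side of \eqref{e:WALA-proof}. Applying that inequality and then the Lipschitz property of $\pi_Q$ finishes the proof. I expect the sibling-propagation observation to be the main conceptual step; once it is in place, identifying $N$ distinct bad cubes per point in the union is a straightforward walk down the stopping-time tree.
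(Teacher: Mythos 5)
Your proof is correct and follows essentially the same route as the paper: both reduce the estimate via the $C(n)$-Lipschitz bound on $\pi_Q$ and then invoke \eqref{e:WALA-proof}, the key point in each case being that every point of the union lies in at least $N$ nested cubes of $\calB'(Q,\ve)$ — the cubes of $\text{Top}_Q^1,\dots,\text{Top}_Q^{N-1}$ along the chain (bad by your sibling-propagation observation, which is exactly the "by construction" step the paper leaves implicit) plus a bad child at the bottom. The only difference is bookkeeping: the paper selects one bad child $T_R$ of each $R \in \min_1(S)$ and sums $\calH^n(T_R)$ over the disjoint family using Ahlfors regularity, whereas you note that \emph{all} children of $R$ are bad, so the entire union is contained in the set appearing in \eqref{e:WALA-proof}, making the Ahlfors-regularity comparison unnecessary.
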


\begin{proof}
	Suppose $S \in \calF_Q^{N-1}$ and $R \in \text{min}_1(S).$ By construction there exists a $T_R \in \text{Child}(R)$ which is contained in exactly $N$ cubes from $\calB'(Q,\ve).$ Recalling that $\pi_Q$ is $C(n)$-Lipschitz and using \eqref{e:WALA-proof} with Ahlfors regularity and the fact that $\min(\calF_Q)$ is a collection of disjoint cubes, the left-hand side of \eqref{e:min_1} is at most
	\begin{align}
		\sum_{S \in \calF_Q^{N-1}} \sum_{R \in \text{min}_1(S)} \calH^n(\pi_Q(R)) &\lesssim_n \sum_{S \in \calF_Q^{N-1}} \sum_{R \in \text{min}_1(S)} \calH^n(R) \\
		&\lesssim_{C_0,n} \sum_{S \in \calF_Q^{N-1}} \sum_{R \in \text{min}_1(S)} \calH^n(T_R) \\		
		&\leq \calH^n\left(\left\{ x \in Q : \sum_{R \in \calB'(Q,\ve)} \mathds{1}_R(x) \geq N\right\}\right) \\
		&\leq \tau \ell(Q). 
	\end{align}
\end{proof}

To show that the image of cubes in $\min_2$ have small measure we need the following preliminary lemma. 

\begin{lem}\label{l:in-box}
	Let $\ve, \eta > 0.$ Suppose $\|\cdot\|_1$ and $\|\cdot\|_2$ are norms on $\R^n$ and $A : (\R^n,\|\cdot\|_1) \to (\R^n,\|\cdot\|_2)$ is an $L$-Lipschitz affine map such that $\| A(x) - A(y) \|_2 \leq \eta \| x - y \|_1$ for some $x,y \in \R^n$. Then $\calH^n(\mathcal{N}_{\ve r_B}(A(B))) \lesssim_{L,n} (\ve + \eta) r_B^n$ for all balls $B \subseteq (\R^n,\|\cdot\|_1)$, where $\mathcal{N}_\delta(F)$ denotes the $\delta$-neighbourhood of $F.$ 
\end{lem}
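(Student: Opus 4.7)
The plan is to show that $A(B)$ is contained in a small neighbourhood of an $(n-1)$-dimensional affine subspace, and then compute the measure of the resulting slab directly.

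Write $A = L_0 + a$ where $L_0$ is the linear part. Setting $v = (x-y)/\|x-y\|_1$, the hypothesis becomes $\|v\|_1 = 1$ and $\|L_0 v\|_2 \leq \eta$. I will use $v$ as the ``thin direction''. By Remark~\ref{r:BM} the norms $\|\cdot\|_1$ and $\|\cdot\|_2$ are $n$-bi-Lipschitz equivalent to the Euclidean norm $|\cdot|$, with constants depending only on $n$. In particular, the Lipschitz bound on $A$ gives $\|L_0\|_{op} \lesssim_{n} L$ for any choice of operator norm computed relative to $|\cdot|$, and likewise $|L_0 v| \lesssim_{n} \eta$.

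Fix a Euclidean orthogonal splitting $\R^n = \mathrm{span}(v) \oplus V$ with $\dim V = n-1$, and let $H \subseteq (\R^n, \|\cdot\|_2)$ be the affine $(n-1)$-dimensional subspace $H = A(x_B) + L_0(V)$. For any $z \in B_{\|\cdot\|_1}(x_B, r_B)$, write $z - x_B = tv + w$ with $w \in V$. Then $|t| \leq |z - x_B| \lesssim_n r_B$, and
\begin{align}
\mathrm{dist}_{\|\cdot\|_2}(A(z), H) = \|t L_0(v)\|_2 \lesssim_n \eta r_B.
\end{align}
Hence $A(B) \subseteq \mathcal{N}_{C(n)\eta r_B}(H)$. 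Since $A$ is $L$-Lipschitz we also have $A(B) \subseteq B_{\|\cdot\|_2}(A(x_B), L r_B)$. Combining these two inclusions,
\begin{align}
\mathcal{N}_{\ve r_B}(A(B)) \subseteq \mathcal{N}_{C(n)(\eta + \ve) r_B}(H) \cap B_{\|\cdot\|_2}(A(x_B), (L+\ve) r_B).
\end{align}

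Since $\calH^n$ on $\R^n$ relative to any norm is a constant multiple (depending only on $n$) of Lebesgue measure, the last set has $\calH^n$-measure bounded by $C(n) (\eta + \ve) r_B \cdot ((L+\ve) r_B)^{n-1} \lesssim_{L,n} (\ve + \eta) r_B^n$, which is the desired inequality.

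The only real step that requires care is the constant tracking through the equivalence of norms; this is entirely handled by Remark~\ref{r:BM}. The geometric core of the argument — that an affine map contracting one direction sends balls into a thin slab — is otherwise entirely straightforward linear algebra.
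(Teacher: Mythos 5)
There is a genuine gap at the very first step: Remark \ref{r:BM} does not give what you use it for. John's theorem provides, for each norm, a \emph{linear isomorphism} onto $(\R^n,|\cdot|)$ with distortion at most $n$; it does not say that the identity map between $(\R^n,\|\cdot\|_i)$ and $(\R^n,|\cdot|)$ is $C(n)$-bi-Lipschitz, and in general it is not, since the unit ball of $\|\cdot\|_1$ can be an arbitrarily eccentric symmetric convex body in the given coordinates. Consequently the estimates $\|L_0\|_{\op}\lesssim_n L$, $|L_0v|\lesssim_n\eta$ and $|t|\le|z-x_B|\lesssim_n r_B$ are false in general, and with them the containment $A(B)\subseteq\mathcal{N}_{C(n)\eta r_B}(H)$ on which the whole proof rests. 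Concretely, for $n=2$ let $\|\cdot\|_1$ have unit ball $K=\mathrm{conv}\{\pm(1,0),\pm(R,1)\}$, let $\|\cdot\|_2=|\cdot|$, and let $A=L_0$ be linear with $L_0(1,0)=(\eta,0)$ and $L_0(R,1)=(0,1)$; then $\|L_0u\|_2\le\|u\|_1$ for all $u$, so $A$ is $1$-Lipschitz, and it contracts the direction $x-y=(1,0)$ (which has $\|x-y\|_1=1$) by the factor $\eta$, so the hypotheses hold with $L=1$. Here $v=(1,0)$, $V=\mathrm{span}(e_2)$ and $L_0e_2=(-R\eta,1)$, so for $B=K$ (i.e.\ $x_B=0$, $r_B=1$) your $H$ is the line through $0$ in direction $(-R\eta,1)$; but the point $A((R,1))=(0,1)\in A(B)$ lies at $\|\cdot\|_2$-distance $R\eta/\sqrt{1+R^2\eta^2}$ from $H$, which is at least $1/\sqrt{2}$ as soon as $R\eta\ge 1$, far larger than $C(n)\eta r_B$ for small $\eta$. (The link that breaks in this example is $|z-x_B|\lesssim_n r_B$: for $z=(R,1)$ one has $\|z\|_1=1$ but $|z|\approx R$.) So the slab you measure at the end need not contain $\mathcal{N}_{\ve r_B}(A(B))$.

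The repair is exactly the reduction the paper performs before any geometry: use Remark \ref{r:BM} to get linear isomorphisms $T_i:(\R^n,\|\cdot\|_i)\to(\R^n,|\cdot|)$ with distortion $C(n)$, replace $A$ by $A'=T_2\circ A\circ T_1^{-1}$ (which satisfies the same hypotheses with $L'\sim_n L$, $\eta'\sim_n\eta$), note that $T_1(B)$ is contained in a Euclidean ball of radius $\sim_n r_B$, and that neighbourhoods and $\calH^n$ in the target change only by $C(n)$ factors under $T_2$. In these normalized coordinates your argument — split $z$ orthogonally along the (transformed) direction of $x-y$, observe the image lies in a ball of radius $\lesssim Lr_B$ and within $\lesssim(\eta+\ve)r_B$ of the image hyperplane — is correct and is essentially the paper's cuboid argument. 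A secondary imprecision, also cured by this normalization: the constant relating $\calH^n_{\|\cdot\|_2}$ to Lebesgue measure depends on the norm, not only on $n$, so the final volume count should be done after applying $T_2$, or else by bounding $\calH^n_{\|\cdot\|_2}$ of the slab directly (for instance by covering it with $\lesssim_n\left((L+\ve)/(\eta+\ve)\right)^{n-1}$ balls of radius $(\eta+\ve)r_B$, each of $\calH^n_{\|\cdot\|_2}$-measure $\lesssim(\eta+\ve)^nr_B^n$).
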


\begin{proof}
	By translating and rotating we may also assume $B$ is centred at the origin and $A$ is linear.	It suffices to prove the case in which both the domain and image are equipped with the Euclidean norm. Indeed, suppose we have Lemma \ref{l:in-box} for this case and suppose $\|\cdot\|_1$ and $\|\cdot\|_2$ are norms and $A$ is a map satisfying the hypotheses above with constant $\eta$ and $L$. By Remark \ref{r:BM} there exist $C(n)$-bi-Lipschitz linear maps $T_i : (\R^n,\|\cdot\|_i) \to (\R^n,|\cdot|)$, $i =1,2.$ It is immediate to check that $A' = T_2 \circ A \circ T_1^{-1} \colon \R^n \to \R^n$ satisfies the hypotheses of Lemma \ref{l:in-box} with constant $\eta'  \sim \eta$ and $L' \sim_n L.$ If $B \subseteq (\R^n,\|\cdot\|_1)$ is a ball then $T_1(B)$ is contained in a Euclidean ball $B'$ centred at the origin satisfying $r_B \sim r_B.$ Hence
	\begin{align}
		\calH^n(\mathcal{N}_{\ve r_B}(A(B))) &\leq \calH^n(\mathcal{N}_{\ve r_B}(A(T_1^{-1}(B')))) \lesssim_n \calH^n(\mathcal{N}_{C\ve r_{B'}}(A'(B'))) \\
		&\lesssim_{n,L'}(\ve + \eta') r_{B'}^n \lesssim_{n,L} (\ve + \eta) r_B^n
	\end{align}
	
	We proceed now with the proof of Lemma \ref{l:in-box} in this special case. It is enough to show that $\mathcal{N}_{\ve r_B}(A(B))$ is contained in a cuboid of dimension
		\begin{align}\label{e:dimensions}
			 \underbrace{r_B \times \cdots \times r_B}_\text{$(n-1)$-times}  \times (\ve + \eta)r_B
		\end{align}
		up to some constant depending on $L$. So, let $u = x-y$ and set $V$ to be the $(n-1)$-dimensional subspace which is perpendicular to $u$. Let $W = A(V)$ (also an $(n-1)$-dimensional subspace) and denote by $\pi_W$ the orthogonal projection onto $W$ and $\pi_{W^\perp}$ the orthogonal projection onto $W^\perp.$ Let $p \in \mathcal{N}_{\ve r_B}(A(B))$ and suppose $z \in B$ is such that $| A(z) - p | \leq \ve r_B.$ Write $z = \lambda u + \mu v$ for some $v \in V$. Since $A$ is $L$-Lipschitz we have  
		\begin{align}
			|\pi_W(p)| \leq |\pi_W(A(z)) | +\ve r_B \leq | A(z) | +\ve r_B \leq L | z | + \ve r_B \leq 2L r_B. 
		\end{align}
		Since $A$ is linear and $A(\mu v) \in W$ we know $\pi_{W^\perp}(A(z)) = \pi_{W^\perp}(A(\lambda u)).$ Since $\lambda u$ is the orthogonal projection of $z$ onto the line through $u$ we know $| \lambda u | \leq |z| \leq r_B.$ Combining these facts gives 
		\begin{align}
			| \pi_{W^\perp}(p)| \leq 
			| \pi_{W^\perp}(A(z)) | +\ve r_B = | \pi_{W^\perp}(A(\lambda u)) | +\ve r_B \leq \eta | \lambda u | +\ve r_B \leq (\eta + \ve)r_B.
		\end{align}
		Since $p$ was arbitrary, it follows that $\mathcal{N}_{\ve r_B}(A(B))$ is contained in a cuboid with the same dimensions as in \eqref{e:dimensions} up to some constant depending only on $L$.  
\end{proof}

\begin{lem}
	Let $Q \in \calG.$ Then, 
	\begin{align}\label{e:small-M}
		\calH^n\left(\pi_Q\left(\bigcup_{S \in \calF_Q}\bigcup_{R \in\emph{min}_2(S)} R\right)\right) \lesssim \frac{1}{\Corona} \ell(Q)^n. 
	\end{align}
\end{lem}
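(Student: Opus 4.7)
The plan is to show that for each $R$ in the collection, the affine approximation afforded by $R \in \calG'(Q,\ve)$ combined with the failure of the non-squishing stopping criterion at some child forces $\pi_Q(R)$ to lie inside a thin slab in $(\R^n,\|\cdot\|_Q)$ whose Hausdorff measure is controlled by Lemma \ref{l:in-box}; summing over a disjoint collection of such $R$'s yields the required bound.

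First I extract the relevant data for each $R \in \text{min}_2(S)$. Since $R \in S$, the admission condition (2) of Definition \ref{d:stop-time} forces $R \in \calG'(Q,\ve)$; since $R \in \min(S) \setminus \text{min}_1(S)$, no child of $R$ lies in $\calB'(Q,\ve)$ yet no child lies in $S$, so condition (3) must fail for some child of $R$. This yields $T' \in \text{Child}(R)$ and $x,y \in 3B_{T'} \subseteq 3B_R$ with $d(x,y) \geq \theta\ell(T') = \theta\varrho\ell(R)$ but $\|\pi_Q(x)-\pi_Q(y)\|_Q < d(x,y)/\Corona$. I will also verify that $\bigcup_{S}\bigcup_{R \in \text{min}_2(S)} R$ is a disjoint union: minimal cubes within a single $S$ are disjoint by the upward-closure of stopping-time regions, the distinct $S \in \calF_Q$ are disjoint by Lemma \ref{l:dis-FQ}, and \eqref{e:F^k} spawns new top cubes only from $\text{min}_1$ children, so no cube of any $S \in \calF_Q$ is a proper descendant of a $\text{min}_2$ cube.

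Next I use $\gamma_{X,\pi_Q,\|\cdot\|_Q}^K(3R) \leq \ve$ to choose a norm $\|\cdot\|'$, a map $\vp \in \Phi(x_R,3\ell(R),\|\cdot\|')$, and a $K$-Lipschitz affine map $A : (\R^n,\|\cdot\|') \to (\R^n,\|\cdot\|_Q)$ with
\[
\bilat_X(3R,\|\cdot\|',\vp) + \Omega_{X,\pi_Q,\|\cdot\|_Q}^K(3R,\|\cdot\|',\vp) \leq 2\ve.
\]
The bilateral bound yields $\|\vp(x)-\vp(y)\|' \geq d(x,y) - 6\ve\ell(R) \geq \theta\varrho\ell(R)/2$, provided $\ve \lesssim \theta\varrho$; the $\Omega$-approximation combined with the squishing hypothesis yields $\|A\vp(x) - A\vp(y)\|_Q \leq d(x,y)/\Corona + 12\ve\ell(R) \lesssim \ell(R)/\Corona$, provided $\ve \lesssim 1/\Corona$. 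Thus $A$ squishes the pair $\vp(x),\vp(y)$ by ratio $\eta \lesssim 1/(\theta\varrho\Corona)$, and Lemma \ref{l:in-box} applied to $A$ on the ball $B_{\|\cdot\|'}(0,3\ell(R))$ with thickness $6\ve\ell(R)$ gives
\[
\calH^n_{\|\cdot\|_Q}\bigl(\mathcal{N}_{6\ve\ell(R)}(A(B_{\|\cdot\|'}(0,3\ell(R))))\bigr) \lesssim_{K,n} (\ve + \eta)\ell(R)^n.
\]
Since the $\Omega$-bound places $\pi_Q(R) \subseteq \pi_Q(3B_R)$ inside this neighborhood, the same estimate controls $\calH^n(\pi_Q(R))$.

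Summing over the disjoint collection and using Ahlfors regularity to bound $\sum \ell(R)^n \lesssim \ell(Q)^n$ produces a total bound $\lesssim (\ve + 1/\Corona)\ell(Q)^n$, with implicit constants depending on $n,\theta,\varrho,K$ and the regularity constants, which becomes the desired $\lesssim \ell(Q)^n/\Corona$ once $\ve \leq 1/\Corona$. The main delicacy is the scale pairing: the squishing hypothesis lives at the child scale $\varrho\ell(R)$ while the affine approximation is guaranteed only at the parent scale $\ell(R)$, so the $\ve$-errors in both the bilateral and $\Omega$ terms must be small compared both to $\theta\varrho$ (to preserve the lower bound $\|\vp(x)-\vp(y)\|' \gtrsim \theta\varrho\ell(R)$) and to $1/\Corona$ (to preserve the upper bound $\|A\vp(x)-A\vp(y)\|_Q \lesssim \ell(R)/\Corona$). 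This forces the quantitative hierarchy $\ve \ll \min(\theta\varrho, 1/\Corona)$, consistent with the overall ordering of constants in the proof of Proposition \ref{p:UR+BP-corona}.
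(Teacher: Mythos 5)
Your strategy coincides with the paper's: extract a $\theta$-separated squished pair from the failure of condition (3) at a child of $R$, combine it with an $\ve$-accurate affine approximation of $\pi_Q$, conclude via Lemma \ref{l:in-box} that $\pi_Q(R)$ sits in a thin slab of measure $\lesssim(\ve+\eta)\ell(R)^n$, and sum over the disjoint family of minimal cubes using Ahlfors regularity. Two steps, however, do not go through as written.

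First, your claim that ``$R\in S$ forces $R\in\calG'(Q,\ve)$'' via condition (2) of Definition \ref{d:stop-time} fails when $R$ is the top cube $Q(S)$ of its region: condition (2) is only imposed on cubes added strictly below the top, and by \eqref{e:F^k} the tops of the regions in $\calF_Q^{k+1}$ are exactly the children of $\min_1$ cubes of the previous generation. Such a child need not lie in $\calG'(Q,\ve)$ (for instance, the very child witnessing a large $\gamma$-coefficient becomes a top cube), yet it can perfectly well belong to $\min_2$ of its own region, since membership in $\min_2$ is decided by its children, not by $R$ itself. For those cubes you have no affine approximation of $\pi_Q$ at scale $3B_R$, so the containment $\pi_Q(R)\subseteq\mathcal{N}_{6\ve\ell(R)}\bigl(A(B_{\|\cdot\|'}(0,3\ell(R)))\bigr)$ is unjustified and the case is genuinely uncovered. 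The paper sidesteps this by taking the norm, the map and the affine function from the child $T$ containing the squished pair: every child of a $\min_2$ cube lies in $\calG'(Q,\ve)$, so that approximation always exists (your parent-scale choice is the natural way to get the slab containment over all of $R$, but it must be reconciled with the top-cube case rather than asserted from condition (2)).

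Second, your bookkeeping gives a squishing ratio $\eta\lesssim 1/(\theta\varrho\Corona)$ and hence a final bound whose implicit constant depends on $\theta$, which you acknowledge. That is strictly weaker than the statement as it is used: in Lemma \ref{l:top-control} and Proposition \ref{p:UR+BP-corona} the constant multiplying $1/\Corona$ must be independent of $\theta$, because $\Corona=\Corona_1$ has to be fixed once and serve every $\theta>0$ in Definition \ref{d:corona-Y}. The repair is exactly the computation \eqref{e:contractions} in the paper: compare $\|A(\vp(x))-A(\vp(y))\|_Q$ with $\|\vp(x)-\vp(y)\|'$ rather than with $\ell(R)$, i.e.\ use $d(x,y)/\Corona\leq 2\|\vp(x)-\vp(y)\|'/\Corona$ and absorb the additive $12\ve\ell(R)$ error by demanding $\ve\lesssim\theta\varrho/\Corona$; then $\eta\lesssim 1/\Corona$ with an absolute constant, and all $\theta$-dependence is pushed harmlessly into the smallness requirement on $\ve$, which is chosen last.
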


\begin{proof}
	Suppose $S \in \calF_Q$ and $R \in \min_2(S)$. 
	We claim 
	\begin{align}\label{e:small-proj}
		\calH^n(\pi_Q(R)) \lesssim \frac{\ell(R)^n}{\Corona}. 
	\end{align}
	Recalling the definition of $\text{min}_2(S),$ there exists $T \in \text{Child}(R)  \cap \calG'(Q,\ve)$ and $x_0,y_0 \in 3B_{T}$ such that $d(x_0,y_0) \geq \theta \ell(T)$ and 
	\begin{align}\label{e:contract}
		\| \pi_Q(x_0) - \pi_Q(y_0) \|_Q < \frac{d(x_0,y_0)}{\Corona}.
	\end{align} 
	By definition of $\calG'(Q,\ve),$ there exists a norm $\|\cdot\|$, a map $\varphi \colon 3B_T \to B_{\|\cdot\|}(0,3\ell(T))$ and a $K$-Lipschitz affine function $A \colon (\R^n,\|\cdot\|) \to (\R^n,\|\cdot\|_Q)$ such that 
	\begin{align}\label{e:flat}
		\sup_{x,y \in 3B_T} \abs{ d(x,y) - \| \varphi(x) - \varphi(y) \| } \leq 6\ve \ell(T) 
	\end{align}
and
	\begin{align}\label{e:affine}
		\sup_{x \in 3B_T} \| \pi_Q(x) - A(\varphi(x)) \|_Q \leq 6\ve \ell(T). 
	\end{align}
	Let $p_0 = \varphi(x_0)$ and $q_0 = \varphi(y_0).$ For $\ve$ small enough, depending on $\theta,$ we have 
	\begin{align}
		\|p_0-q_0\| \geq d(x_0,y_0) - 6\ve \ell(T) \geq \theta \ell(T) - 6\ve \ell(T) \geq \frac{\theta \ell(T)}{2}. 
	\end{align}
	Using this, with \eqref{e:contract}, \eqref{e:flat} and \eqref{e:affine}, and choosing $\ve$ small enough, depending on $\theta$ and $\Corona$, we have 
	\begin{align}
		\begin{split}\label{e:contractions}
		\|A(p_0) - A(q_0) \|_Q &\leq \|A(p_0) - \pi_Q(x_0) \|_Q + \|\pi_Q(x_0) - \pi_Q(y_0) \|_Q  \\
		&\hspace{2em} + \|\pi_Q(y_0) - A(q_0)\|_Q \\
		&\leq 12\ve \ell(T) + \frac{1}{\Corona}d(x_0,y_0) \\
		&\leq \frac{24 \ve}{\theta} \| p_0 -q_0 \| + \frac{1}{\Corona} \left(\|p_0-q_0\| + 6\ve \ell(T)\right) \\
		&\leq \left( \frac{36 \ve}{\theta}  + \frac{1}{\Corona} \right) \|p_0-q_0\| \lesssim \frac{1}{\Corona} \|p_0-q_0\|. 
		\end{split}
	\end{align}
	Since $T$ is a child of $R$ we know $\ell(T) \sim \ell(R)$. Then, by \eqref{e:affine} we have $\pi_Q(R) \subseteq \mathcal{N}_{C\ve \ell(R)}(A(B))$ for some ball $B \subseteq (\R^n,\|\cdot\|)$ with radius comparable to $\ell(R).$ This and \eqref{e:contractions} now imply \eqref{e:small-proj} by Lemma \ref{l:in-box}. 
	Using \eqref{e:small-proj}, Ahlfors regularity of $X$, and the fact that $\{R : R \in S, \ S \in \calF_Q\}$ is a disjoint collection of cubes in $Q$, the left-hand side of \eqref{e:small-M} is at most 
	\begin{align}
		\sum_{S \in \calF_Q} \sum_{R \in \text{min}_2(S)} \calH^n(\pi_Q(R)) \lesssim \frac{1}{\Corona} 	\sum_{S \in \calF_Q} \sum_{R \in \text{min}_2(S)} \calH^n(R) \leq \frac{\calH^n(Q)}{\Corona}. 
	\end{align}
\end{proof}

Recalling the definition of $\text{min}(\calF_Q),$ the previous two lemmas combine to give the following. 

\begin{cor}\label{c:small-proj}
	Let $Q \in \emph{Top}.$ Then, 
	\begin{align}
		\calH^n\left(\pi_Q\left( \bigcup_{R \in \min(\calF_Q)} R\right)\right) \lesssim \left( \delta + \frac{1}{\Corona} \right) \ell(Q)^n. 
	\end{align}
\end{cor}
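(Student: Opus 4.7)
The plan is to observe that this corollary is essentially bookkeeping: it combines the two lemmas immediately preceding it with the definition of $\min(\calF_Q)$ and countable subadditivity of $\calH^n$.

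First I would unwind the definition of $\min(\calF_Q)$ given just above Lemma \ref{l:dis-FQ}, namely
\[
    \min(\calF_Q) = \bigcup_{S \in \calF_Q} {\min}_2(S) \ \cup\ \bigcup_{S \in \calF_Q^{N-1}} {\min}_1(S).
\]
Applying $\pi_Q$ to the union of all cubes in $\min(\calF_Q)$, the image splits as
\[
    \pi_Q\left( \bigcup_{R \in \min(\calF_Q)} R\right) = \pi_Q\!\left(\bigcup_{S \in \calF_Q^{N-1}} \bigcup_{R \in \min_1(S)} R\right) \ \cup \ \pi_Q\!\left(\bigcup_{S \in \calF_Q} \bigcup_{R \in \min_2(S)} R\right).
\]

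Then I would apply subadditivity of $\calH^n$ together with \eqref{e:min_1} to bound the first piece by a constant multiple of $\tau\ell(Q)^n$, and with \eqref{e:small-M} to bound the second piece by a constant multiple of $\Corona^{-1}\ell(Q)^n$. Summing yields
\[
    \calH^n\!\left(\pi_Q\!\left(\bigcup_{R \in \min(\calF_Q)} R\right)\right) \lesssim_{C_0,n} \left(\tau + \frac{1}{\Corona}\right)\ell(Q)^n,
\]
matching the claimed estimate (with $\tau$ in place of the $\delta$ that appears in the displayed statement, which is evidently the parameter $\tau$ from \eqref{e:WALA-proof} that governs the choice of $N$).

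There is essentially no obstacle here: all the work was carried out in establishing the two lemmas. The only thing worth double-checking is that the indexing on $\calF_Q^{N-1}$ versus $\calF_Q$ matches between the definition of $\min(\calF_Q)$ and the two measure estimates, which it does by construction.
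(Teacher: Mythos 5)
Your proposal is correct and is exactly the paper's argument: the corollary is stated as an immediate combination of \eqref{e:min_1} and \eqref{e:small-M} via the definition of $\min(\calF_Q)$ and subadditivity, with no further work. Your observation that the $\delta$ in the display should be read as the parameter $\tau$ from \eqref{e:WALA-proof} is also right.
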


We can now define the family $\calF.$ Let $\text{Top}_0$ be the set of maximal cubes in $\calG$ and 
\[\calF_0 = \bigcup_{Q \in \text{Top}_0} \calF_Q .\]
Then, supposing $\text{Top}_k$ and $\calF_k$ have been defined for some integer $k \geq 0,$ let $\text{Top}_{k+1}$ be the set of maximal cubes $Q \in \calG(Q_0)$ such that there exists $R \in \text{Top}_k$, $T \in \min(\calF_R)$ and $T' \in \text{Child}(T)$ with $Q \subseteq T'.$ Let 
\[ \calF_{k+1} = \bigcup_{Q \in \text{Top}_{k+1}} \calF_Q = \bigcup_{Q \in \text{Top}} \bigcup_{i=1}^{N-1} \calF_Q^i.\]
Finally, let 
\begin{align}\label{e:F}
	\calF = \bigcup_{k=0}^\infty \calF_k \quad \mbox{ and } \quad \text{Top} = \bigcup_{k=0}^\infty \text{Top}_k.
\end{align}

\begin{lem}\label{l:dis-F}
	The collection $\{S\}_{S \in \calF}$ forms a disjoint partition of $\calG.$
\end{lem}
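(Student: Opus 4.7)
The plan is to establish two things: (A) the stopping-time regions in $\calF$ are pairwise disjoint, and (B) every $R \in \calG$ lies in some $S \in \calF$. Disjointness within a single $\calF_Q$ is already Lemma \ref{l:dis-FQ}, so for (A) the new cases are different $Q$ at the same big level and different big levels.

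For (A), I would fix $S = S_Q(R) \in \calF_k$ and $S' = S_{Q'}(R') \in \calF_{k'}$ with $k \leq k'$ and $(Q, R) \neq (Q', R')$. If $k = k'$ and $Q \neq Q'$, then $Q, Q' \in \text{Top}_k$ are both \emph{maximal} cubes in the relevant subfamily of $\calG$, hence disjoint as cubes, so $S \subseteq \calD(Q)$ and $S' \subseteq \calD(Q')$ are disjoint. If $k < k'$, I trace the chain of big-level ancestors of $Q'$ dictated by \eqref{e:F}: at each step $R_{j+1} \in \text{Top}_{j+1}$ is contained in a child of some $T_j \in \min(\calF_{R_j})$ for some $R_j \in \text{Top}_j$, so by induction $Q' \subsetneq T_\ast$ for some $T_\ast \in \min(\calF_{R_k})$ and $R_k \in \text{Top}_k$. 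If $R_k \neq Q$ then $\calD(Q) \cap \calD(Q') = \emptyset$ and we are done. If $R_k = Q$, every cube of $S'$ is a strict descendant of $T_\ast \in \min(\calF_Q)$. By the definition of $\min(\calF_Q)$, $T_\ast$ is either a $\min_2$-cube at some internal level or a $\min_1$-cube at internal level $N-1$, so no strict descendant of $T_\ast$ can appear in any region of $\calF_Q$: descendants are excluded from the unique region containing $T_\ast$ by minimality, from other regions of $\calF_Q$ by Lemma \ref{l:dis-FQ}, and from later internal levels since $T_\ast$ is not a $\min_1$-cube at an intermediate level. Hence $S \cap S' = \emptyset$.

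For (B), given $R \in \calG$, I iteratively locate its region. Choose the unique $Q_1 \in \text{Top}_0$ containing $R$ and advance through internal levels $i = 0, 1, \ldots, N-1$ of $\calF_{Q_1}$: at each step we have a stopping-time region $S_{Q_1}(R_i)$ with $R \subseteq R_i$, and either $R \in S_{Q_1}(R_i)$ (in which case we are done), or $R$ lies strictly below some $T \in \min(S_{Q_1}(R_i))$. In the latter case, nesting of Christ--David cubes places $R$ in a unique child $T'$ of $T$; if $T \in \min_1$ and $i < N-1$ we set $R_{i+1} = T'$ and continue, otherwise $T \in \min(\calF_{Q_1})$ and we restart inside $\calF_{Q_2}$ with the unique $Q_2 \in \text{Top}_1$ containing $R$ and contained in $T'$. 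At every advance the current root strictly shrinks by at least one cube generation, while $\ell(R)$ is fixed, so the process terminates after finitely many steps with $R \in S$ for some $S \in \calF$.

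The main obstacle is the bookkeeping that cleanly separates internal descent inside a single $\calF_Q$ (through $\min_1$ chains of length at most $N-1$) from descent between big levels via $\min(\calF_Q)$; once that distinction is unpacked from the construction, both parts reduce to routine nested-cube arguments.
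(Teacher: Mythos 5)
Your proposal is correct and follows essentially the same route as the paper: pairwise disjointness is reduced to Lemma \ref{l:dis-FQ} plus the maximality (hence mutual disjointness) of the cubes in each $\text{Top}_k$ together with the structural fact that cubes of regions in $\calF_Q$ cannot be strict descendants of cubes in $\min(\calF_Q)$, and coverage is obtained by locating, for a given $R \in \calG$, the deepest big level and deepest internal level whose root contains $R$ (your iterative descent terminates exactly at the paper's maximal indices). The only stylistic difference is that you argue the "no strict descendant of $T_\ast$ lies in any region of $\calF_Q$" step a bit more explicitly than the paper does; both presentations rely on the same combination of the coherence property in Definition \ref{StoppingTime}(2), minimality, and the fact that new internal roots arise only from $\min_1$-cubes at intermediate levels.
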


\begin{proof}
	We begin by checking $\{S\}_{S \in \calF}$ is disjoint. Suppose $S,S' \in \calF$ are distinct. Let $Q,Q' \in \text{Top}$, $R \in \text{Top}_Q$ and $R' \in \text{Top}_{Q'}$ such that $S = S_Q(R)$ and $S' = S_{Q'}(R')$. If $Q \cap Q' = \emptyset$ then there is nothing to show. Suppose instead that $Q \cap Q' \neq\emptyset$ and suppose without loss of generality that $Q \subseteq Q'.$ By maximality, $\text{Top}_m$ is a disjoint collection of cubes for each $m \geq 0.$ Thus $Q \in \text{Top}_k$ and $Q' \in \text{Top}_\ell$ for some $\ell \geq k.$ By construction, if $\ell > k$ then $Q \subset T$ for some $T \in \min(S_{Q'}(R'))$. It follows that $S$ is disjoint from $S'$. If $k = \ell$ then $Q = Q'$ since $\text{Top}_k$ is disjoint. In particular, $S,S' \in \calF_Q.$ Lemma \ref{l:dis-FQ} now implies $S$ is disjoint from $S'$.  
	
	Let us now check that 
	\begin{align}\label{e:G=} 
		\calG = \bigcup_{S \in \calF} S.
	\end{align}
	The fact that $\bigcup_{S \in \calF} S \subseteq \calG$ is clear from Definition \ref{d:stop-time} (2). For the forward inclusion, fix some $T \in \calG.$  Let $k \geq 0$ be the largest integer for which there exists a cube $Q \in \text{Top}_k$ satisfying $T \subseteq Q$. Let $m \geq 0$ be the largest integer for which there exists $R \in \text{Top}_Q^m$ satisfying $T \subseteq R.$ By maximality there exists a cube in $\text{Top}_0$ which contains $T$ and so $k$ and $Q$ are well-defined. Since $T \subseteq Q$ and $\text{Top}_Q^0 = \{Q\}$ we have that $m$ and $R$ are also well-defined. We will show
	\begin{align}\label{e:in-S}
		 T \in S_Q(R),
	\end{align}
	which is enough for \eqref{e:G=}. First, if $T \cap R' = \emptyset$ for all $R' \in \min(S_Q(R))$ then \eqref{e:in-S} is true by Definition \ref{StoppingTime} (2) (the child of $R'$ must be in $S_Q(R)$). Suppose then that there exists $R' \in \min(S_Q(R))$ such that $T \cap R' \neq \emptyset$. Let $R'' \in \text{Child}(R')$ such that $T \cap R'' \neq\emptyset.$ By Definition \ref{StoppingTime}, it is enough to show $T \supseteq R'$ or equivalently $T \supset R''.$ To see this final inclusion, we proceed as follows. Suppose first that $m \leq N-2.$ Then $R'' \in \text{Top}_{Q}^{m+1}$ and $T \supset R''$ by maximality of $m$. Suppose instead that $m = N-1.$ Since $T \in \calG(Q_0),$ if $T \subseteq R''$ then by maximality we would have $T \in \text{Top}_{k+1}.$ This contradicts that maximality of $k$ and we conclude that $T \supset R''.$

\end{proof}

	\begin{lem}\label{l:z}
	For $Q \in \emph{Top}$ define
	\begin{align}
		z(Q) = Q \setminus \bigcup_{R \in \min(\calF_Q)} R. 
	\end{align}
		If $Q,Q' \in  \emph{Top}$ are distinct then $z(Q) \cap z(Q') = \emptyset.$  
	\end{lem}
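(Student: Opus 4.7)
The plan is to reduce to the case where $Q \subsetneq Q'$ and then show the stronger statement $Q \subseteq \bigcup_{T \in \min(\calF_{Q'})} T$, which immediately gives $z(Q) \cap z(Q') \subseteq Q \cap z(Q') = \emptyset$. If instead $Q \cap Q' = \emptyset$, then the conclusion is trivial since $z(Q) \subseteq Q$ and $z(Q') \subseteq Q'$.

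The key structural observation is that $\{\emph{Top}_k\}_{k \geq 0}$ forms a ``tree of top cubes'': each level $\emph{Top}_k$ is, by construction, a collection of maximal cubes, hence is pairwise disjoint, and each cube $R \in \emph{Top}_{k}$ with $k \geq 1$ is contained (by definition \eqref{e:F}) in some $T' \in \text{Child}(T)$ with $T \in \min(\calF_{R^*})$ for a unique $R^* \in \emph{Top}_{k-1}$; uniqueness follows because any two candidates would be two cubes in the disjoint collection $\emph{Top}_{k-1}$ both containing $R$. Iterating, each $Q \in \emph{Top}_k$ determines a unique ascending chain $Q = R_0 \subsetneq R_1 \subsetneq \cdots \subsetneq R_k$ with $R_i \in \emph{Top}_{k-i}$.

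Now suppose $Q \in \emph{Top}_k$, $Q' \in \emph{Top}_\ell$ are distinct with $Q \cap Q' \neq \emptyset$. Both cubes lie in a common dyadic tree, so they are comparable; by disjointness of $\emph{Top}_m$ at each level $m$, we cannot have $k = \ell$, so WLOG $k > \ell$ and $Q \subsetneq Q'$. Comparing the ascending chain of $Q$ against $Q'$ at level $\ell$, the uniqueness just noted forces $R_{k-\ell} = Q'$. Setting $R := R_{k-\ell-1} \in \emph{Top}_{\ell+1}$ (which equals $Q$ when $k - \ell = 1$), the defining property of $\emph{Top}_{\ell+1}$ together with uniqueness of parents gives a $T \in \min(\calF_{Q'})$ and $T' \in \text{Child}(T)$ with $R \subseteq T'$. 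Since $Q \subseteq R$, we obtain $Q \subseteq T \in \min(\calF_{Q'})$, which is exactly what we wanted.

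The step that requires the most care is verifying uniqueness of the parent top cube at each level, i.e., ruling out a second cube in $\emph{Top}_{k-1}$ that also contains $R \in \emph{Top}_k$. This is where the maximality built into the construction of $\emph{Top}_{k-1}$ (and the consequent disjointness of that family, already invoked in the proof of Lemma \ref{l:dis-F}) is essential. Once this is in hand, everything else is a straightforward walk up the tree of top cubes.
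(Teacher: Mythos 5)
Your proof is correct, but it takes a genuinely different route from the paper's. The paper argues by contradiction at a point $x \in z(Q)\cap z(Q')$: since $x$ avoids every cube of $\min(\calF_Q)$ and of $\min(\calF_{Q'})$, it lies in arbitrarily small cubes of some stopping-time region from each family; choosing a common generation $\calD_k$ then produces a single cube belonging to two distinct regions of $\calF$, contradicting the disjointness established in Lemma \ref{l:dis-F}. You instead prove the stronger, purely combinatorial statement that when $Q\in\mathrm{Top}_k$ and $Q'\in\mathrm{Top}_\ell$ intersect with $k>\ell$, the whole cube $Q$ sits inside a single $T\in\min(\calF_{Q'})$, so already $Q\cap z(Q')=\emptyset$; your only inputs are the pairwise disjointness of each level $\mathrm{Top}_m$ (coming from maximality) and the unique-parent structure of the tree of top cubes, and you never invoke Lemma \ref{l:dis-F}. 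Two points you leave implicit but which are easily filled: uniqueness of the parent $R^*\in\mathrm{Top}_{k-1}$ uses that every $T\in\min(\calF_{R^*})$ is contained in $R^*$ (clear from the construction, since all regions in $\calF_{R^*}$ consist of cubes inside $R^*$), and the assertion $Q\subsetneq Q'$ is really a consequence of the later identification $R_{k-\ell}=Q'$ (which follows from disjointness of $\mathrm{Top}_\ell$, because $R_{k-\ell}\supseteq Q$ meets $Q'$) rather than something known beforehand. On balance, the paper's argument is shorter once Lemma \ref{l:dis-F} is in hand, while yours is more self-contained, sidesteps the "arbitrarily small cubes in one region" step, and yields the sharper containment $Q\subseteq\bigcup_{T\in\min(\calF_{Q'})}T$.
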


	\begin{proof}
		Since $z(Q) \subseteq Q$ and $z(Q') \subseteq Q',$ if $Q \cap Q' = \emptyset$ then the result follows immediately. Suppose instead that $Q \cap Q' \neq\emptyset$ and suppose towards a contradiction that there exists $x \in z(Q) \cap z(Q')$. By definition, we can find $S \in \calF_Q$ and $S' \in \calF_{Q'}$ such that $x$ is contained in arbitrarily small cubes from each stopping-time region. Let $k \geq 0$ such that $5\varrho^k \leq \min\{\ell(Q(S)),\ell(Q(S'))\}$ and choose $R \in S \cap \calD_k$ and $R' \in S' \cap \calD_k$ such that $x \in R \cap R'.$ Since cubes in $\calD_k$ are disjoint it follows that $R = R',$ however, this contradict Lemma \ref{l:dis-F}.
	\end{proof}

\begin{lem}\label{l:top-control}
	If $\Corona$ is chosen sufficiently large then for each $R \in \calD(Q_0)$ we have 
	\begin{align}
		\sum_{\substack{S \in \calF \\ Q(S) \subseteq R} }\ell(Q(S))^n \lesssim \ell(R)^n. 
	\end{align}
\end{lem}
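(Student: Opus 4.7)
The plan is to execute the standard Carleson packing argument via the disjoint ``mass sets'' $z(Q)$ introduced in Lemma \ref{l:z}. The key reduction is to show that $\calH^n(z(Q)) \gtrsim \ell(Q)^n$ uniformly over $Q \in \mathrm{Top}$; combined with the pairwise disjointness of $\{z(Q)\}$ (Lemma \ref{l:z}) and Ahlfors regularity of $X$, this immediately yields Carleson packing for $\mathrm{Top}$ itself. The full sum over $\calF$ is then controlled by combining this with a uniform geometric bound $\sum_{S \in \calF_Q} \ell(Q(S))^n \lesssim \ell(Q)^n$ coming from the recursive layer structure $\mathrm{Top}_Q^i$.

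The main step, and the main obstacle, is the lower bound $\calH^n(z(Q)) \gtrsim \ell(Q)^n$; this is what forces $\Corona$ to be chosen sufficiently large. By the hypothesis of Proposition \ref{p:UR+BP-corona} together with Lemma \ref{l:Lipschitz-GHA}, the map $\pi_Q$ is $C(n)$-Lipschitz and satisfies $\calH^n_{\|\cdot\|_Q}(\pi_Q(3B_Q)) \gtrsim \ell(Q)^n$. The difficulty is that $z(Q) \subseteq Q$ rather than $3B_Q$, so one must first refine the large-projection lower bound to $\calH^n(\pi_Q(Q)) \gtrsim \ell(Q)^n$. I would do this by a single-scale topological argument in the spirit of the proof of Corollary \ref{c:Reif-lower-reg}, but without invoking Reifenberg flatness: using the $C\ve\ell(Q)$-GHA property of $\pi_Q$ on $3B_Q$, build a Lipschitz approximate inverse to $\pi_Q$ on a small ball around $\pi_Q(x_Q)$ from a net of approximate preimages together with Lemmas \ref{l:Lipschitz-GHA} and \ref{l:Lipschitz-extend}; composing with $\pi_Q$ produces a continuous near-identity self-map of this small ball, to which Lemma \ref{l:topology} applies and gives $\pi_Q(B(x_Q, c_0\ell(Q)/2)) \supseteq B_{\|\cdot\|_Q}(\pi_Q(x_Q), c\ell(Q))$. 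Since $B(x_Q, c_0\ell(Q)/2) \subseteq Q$ by Lemma \ref{cubes}, this yields the desired lower bound on $\calH^n(\pi_Q(Q))$. Combined with Corollary \ref{c:small-proj} and the elementary inclusion
\[ \pi_Q(Q) \setminus \pi_Q\Bigl(\bigcup_{R \in \min(\calF_Q)} R\Bigr) \subseteq \pi_Q(z(Q)), \]
choosing $\tau$ small and $\Corona$ large so that $C(\tau + \Corona^{-1})$ is at most half the constant in the $\pi_Q(Q)$ bound gives $\calH^n(\pi_Q(z(Q))) \gtrsim \ell(Q)^n$. The $C(n)$-Lipschitz property of $\pi_Q$ transfers this to $\calH^n(z(Q)) \gtrsim \ell(Q)^n$ as required.

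With this lower bound in hand, Lemma \ref{l:z} immediately yields
\[ \sum_{Q \in \mathrm{Top}, \, Q \subseteq R} \ell(Q)^n \lesssim \sum_{Q \in \mathrm{Top},\,Q\subseteq R} \calH^n(z(Q)) \leq \calH^n(R) \lesssim \ell(R)^n. \]
For each $Q \in \mathrm{Top}$, the recursion $\mathrm{Top}_Q^{i+1} = \bigcup_{R' \in \mathrm{Top}_Q^i}\bigcup_{T \in \min_1(S_Q(R'))} \mathrm{Child}(T)$, together with Ahlfors regularity ensuring $\sum_{T \in \min_1(S_Q(R'))} \ell(T)^n \lesssim \ell(R')^n$ by disjointness, and the factor $\varrho^n$ coming from $\ell(\mathrm{Child}(T)) = \varrho \ell(T)$, gives the geometric decay $\sum_{R \in \mathrm{Top}_Q^{i+1}} \ell(R)^n \lesssim \varrho^n \sum_{R' \in \mathrm{Top}_Q^i} \ell(R')^n$; summing the resulting geometric series yields $\sum_{S \in \calF_Q} \ell(Q(S))^n \lesssim \ell(Q)^n$. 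Decomposing the outer sum $\sum_{S \in \calF,\, Q(S) \subseteq R}$ by the containing top $Q \in \mathrm{Top}$ and splitting into the cases $Q \subseteq R$ (controlled by the two previous displayed bounds) and $Q \supsetneq R$ (controlled by the disjointness of $\mathrm{Top}_Q^i \cap \calD(R)$ within each layer $i$ together with the geometric $\varrho^n$ decay in $i$) then completes the proof of Lemma \ref{l:top-control}.
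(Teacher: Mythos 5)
Your overall architecture (lower bound on $\calH^n(z(Q))$, disjointness from Lemma \ref{l:z}, then a layerwise reduction from $\calF$ to $\mathrm{Top}$) matches the paper's proof, but the step you flag as the main obstacle is where your argument genuinely breaks. You try to upgrade the hypothesis $\calH^n_{\|\cdot\|_Q}(\pi_Q(3B_Q))\gtrsim \ell(Q)^n$ to $\calH^n(\pi_Q(Q))\gtrsim\ell(Q)^n$ ``without invoking Reifenberg flatness,'' by building a Lipschitz approximate inverse to $\pi_Q$ from a net of approximate preimages and then applying Lemma \ref{l:topology}. This cannot work as described: to make the composed map a continuous self-map of a small ball \emph{whose image lies in $\pi_Q(Q)$}, you need a continuous map from that ball \emph{into} $B(x_Q,c_0\ell(Q)/2)\subseteq X$, and Lemma \ref{l:Lipschitz-extend}/McShane only extends maps into normed spaces, not into a general metric space $X$. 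If instead you extend on the target side (net point $u\mapsto \pi_Q(y_u)$, then McShane in $\R^n$), Lemma \ref{l:topology} only shows that a slightly smaller ball is covered by the extension's image, which is merely contained in a $C\ve\ell(Q)$-neighbourhood of $\pi_Q(Q)$; this gives $\calH^n(\mathcal{N}_{C\ve\ell(Q)}(\pi_Q(Q)))\gtrsim\ell(Q)^n$, which does not bound $\calH^n(\pi_Q(Q))$ from below and cannot be pushed through the final step $\calH^n(z(Q))\gtrsim\calH^n(\pi_Q(z(Q)))$. The failure is not cosmetic: a $C(n)$-Lipschitz $C\ve\ell(Q)$-GHA on an Ahlfors $n$-regular space can have image of measure zero (e.g.\ $X\cap 3B_Q$ a union of well-separated tiny blobs centred on an $\ve\ell(Q)$-net, with $\pi_Q$ collapsing each blob to its net point), so a lower bound on the image measure is genuinely extra information and cannot be derived from bilateral flatness at a single scale. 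This is exactly why the paper carries the large-image property as a standing hypothesis in Proposition \ref{p:UR+BP-corona} (verified for Reifenberg flat spaces in Corollary \ref{c:Reif-lower-reg} precisely via the Reifenberg parametrization of Proposition \ref{p:Reif-classical}, which supplies the continuous map into $X$ that you are missing); the proof of Lemma \ref{l:top-control} simply invokes that hypothesis to write $\calH^n(\pi_Q(Q))\geq\tfrac12\ell(Q)^n$ and then subtracts Corollary \ref{c:small-proj}, choosing $\Corona$ large and $\tau$ small.

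A secondary, fixable slip: your claimed per-layer geometric decay $\sum_{R\in\mathrm{Top}_Q^{i+1}}\ell(R)^n\lesssim\varrho^n\sum_{R'\in\mathrm{Top}_Q^{i}}\ell(R')^n$ is false, since each minimal cube $T$ has on the order of $\varrho^{-n}$ children, which cancels the factor $\varrho^n$ coming from $\ell(\mathrm{Child}(T))=\varrho\ell(T)$; one only gets $\sum_{\mathrm{Top}_Q^{i+1}}\ell^n\lesssim\sum_{\mathrm{Top}_Q^{i}}\ell^n$ with a constant that may exceed $1$, so there is no convergent geometric series. The bound $\sum_{S\in\calF_Q}\ell(Q(S))^n\lesssim\ell(Q)^n$ is nonetheless true, and the paper proves it the easy way: for each fixed $i\leq N-1$ the tops $\{Q(S):S\in\calF_Q^i\}$ are pairwise disjoint subcubes of $Q$, so each layer contributes $\lesssim\calH^n(Q)$, and there are at most $N=N(C_0,n,\ve,\tau)$ layers.
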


\begin{proof}
	Fix some $R \in \calD(Q_0).$ Using the Ahlfors regularity of $X$ and the definition of $\calF,$ we have 
	\begin{align}
		\sum_{\substack{S \in \calF \\ Q(S) \subseteq R}} \ell(Q(S))^n =  \sum_{\substack{Q \in \text{Top} \\ Q \subseteq R}} \sum_{i=0}^{N-1} \sum_{S \in \calF_Q^i} \ell(Q(S))^n \lesssim \sum_{\substack{Q \in \text{Top} \\ Q \subseteq R}} \sum_{i=0}^{N-1} \sum_{S \in \calF_Q^i} \calH^n(Q(S)).
	\end{align}
	By the construction in \eqref{e:F^k}, for each fixed $Q \in \text{Top}$ and $i \in \{0,\dots,N\}$ we know $\{Q(S) : S \in \calF_Q^i\}$ is a disjoint collection of cubes contained in $Q.$ Using this fact and the Ahlfors regularity of $X$ again, we get  
	\[\sum_{\substack{S \in \calF \\ Q(S) \subseteq R}} \ell(Q(S))^n \lesssim_N \sum_{\substack{Q \in \text{Top} \\ Q \subseteq R}} \calH^n(Q) \lesssim \sum_{\substack{Q \in \text{Top} \\ Q \subseteq R}} \ell(Q)^n. \]
	Thus, it suffices to show 
	\begin{align}
		\sum_{\substack{Q \in \text{Top} \\ Q \subseteq R}} \ell(Q)^n \lesssim \ell(R)^n. 
	\end{align}
	Suppose $Q \in \text{Top}$ is such that $Q \subseteq R.$ Applying Corollary \ref{c:small-proj} and choosing $\delta$ small enough and $\Corona$ large enough, we have 
	\begin{align}
		\calH^n(z(Q)) &\gtrsim_n \calH^n(\pi_Q(z(Q))) \geq \calH^n(\pi_Q(Q)) - \calH^n\left(\pi_Q\left( \bigcup_{T \in \min(\calF_Q)} T\right)\right) \\
		&\geq \left( \frac{1}{2} - C\left(\delta + \frac{1}{\Corona}\right)\right) \ell(Q)^n \gtrsim \ell(Q)^n.  
	\end{align}
	By Lemma \ref{l:z}, $\{z(Q)\}_{Q \in \text{Top}}$ forms a disjoint collection of subsets of $R.$ Hence, 
	\begin{align}
		\sum_{Q \in \text{Top}} \ell(Q)^n \lesssim \sum_{Q \in \text{Top}} \calH^n(z(Q)) \leq \calH^n(R) \lesssim \ell(R)^n. 
	\end{align}
\end{proof}

\begin{proof}[Proof of Proposition \ref{p:UR+BP-corona}]
	By Remark \ref{r:local-corona} is suffices to prove a decomposition of the cubes in $\calD(Q_0).$ We decompose $\calD(Q_0) = \calG \cup \calB$ with $\calG,\calB$ as in \eqref{e:G,B}. Defining the family of stopping-time regions $\calF$ as in \eqref{e:F}, Definition \ref{d:corona-Y} (1) follows from Lemma \ref{l:dis-F}.  Definition \ref{d:corona-Y} (2) follows directly from Remark \ref{r:packing} and Lemma \ref{l:top-control}. Finally, observe by construction that if $S \in \calF$ then $S = S_Q(R)$ for some $Q \in \calG$ and $R \subseteq Q.$ If $T \in S$ then the lower bound in Definition \ref{d:corona-Y} (3) follows from Definition \ref{d:stop-time} with constant $\Corona_1 = \Corona.$ The upper bound follows with constant $\Corona_2$, depending only on $n$, since we assume $\pi_{Q(S)}$ to be $C(n)$-Lipschitz. This finishes the proof. 
\end{proof}

\newpage

\section{BWGL implies $\text{CD}(\text{RF metric})$ and $\text{CD(CD}$(normed spaces))}\label{s:final}

In this section we prove the following.

\begin{thm}\label{t:final}
	Let $\ve > 0$ and $\Corona_1,\Corona_2 > 1.$ Let $X$ be an Ahlfors $n$-regular metric space. For $\delta > 0$ sufficiently small, if $X$ satisfies the \emph{BWGL($\delta$)} then $X$ admits a corona decomposition by Ahlfors regular $(\ve,n)$-Reifenberg flat metric spaces with constants $\Corona_1,\Corona_2.$
\end{thm}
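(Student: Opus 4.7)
The plan is to combine the discrete form of BWGL$(\delta)$ with a standard stopping-time argument and apply Theorem \ref{t:Reif} inside each stopping-time region. First I would pass from BWGL$(\delta)$ to its cube version via Lemma \ref{l:cubes-BWGL}: the collection
\[\calB = \{Q \in \calD : \bilat_X(AQ) > \eta\}\]
satisfies a Carleson packing condition, where $A$ is the discretization constant and $\eta$ will be chosen much smaller than the threshold $\delta_{\mathrm{Reif}}(\ve, n)$ required by Theorem \ref{t:Reif} to produce a $(\ve, n)$-Reifenberg flat output. Set $\calG = \calD \setminus \calB$ and decompose $\calG$ into stopping-time regions in the usual way: at each maximal good $Q_0$, include in $S(Q_0)$ every descendant $R$ such that $R$ and each sibling of each ancestor between $R$ and $Q_0$ lies in $\calG$, and iterate at the children of minimal cubes. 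A standard argument shows the family $\{Q(S)\}_{S \in \calF}$ of top cubes satisfies a Carleson packing condition controlled by that of $\calB$.

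For each stopping-time region $S$ I would then construct the tree data required by Theorem \ref{t:Reif}. After rescaling so $\ell(Q(S)) \sim 1$, at each level $\ell \geq 0$ let $J_\ell$ index the cubes $Q \in S$ whose scale is comparable to $r_\ell$; below the minimal cubes of $S$ take $J_\ell = \emptyset$, which makes conditions (2) and (4) of Theorem \ref{t:Reif} vacuous. For each $j \in J_\ell$, take $x_{j,\ell}$ to be the center of the associated cube, extract a norm $\|\cdot\|_{j,\ell}$ from the flatness $\bilat_X(AQ) \leq \eta$, and use Lemma \ref{l:Lipschitz-GHA} together with Lemma \ref{l:GH-scale} to produce maps $\alpha_{j,\ell}, \beta_{j,\ell}$ that are $\eta' r_\ell$-GHAs (with $\eta' \lesssim \eta$), satisfy the normalization \eqref{e:centre-pos}, and are approximate inverses in the sense of \eqref{e:almost-id}. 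The separation (2) and nesting (3) hypotheses are inherited from the Christ-David cube structure and the parent--child relation inside $S$, while (4) holds with parameter $\delta_{\mathrm{Reif}}$ once $\eta$ is chosen accordingly.

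Applying Theorem \ref{t:Reif} then yields an Ahlfors $n$-regular, $(\ve, n)$-Reifenberg flat metric $\rho_S$ on $\R^n$, together with a map $\bar g_S$ that I would take, after suitable restriction, as $\varphi_S : 3B_{Q(S)} \to (\R^n, \rho_S)$. For $R \in S$ and $x, y \in 3B_R$ with $d_X(x,y) \geq \theta \ell(R)$, applying \eqref{e:Reif3.5} at the level $\ell$ with $r_\ell \sim \ell(R)$ gives
\[\abs{\rho_S(\varphi_S(x), \varphi_S(y)) - d_X(x,y)} \leq \ve\, r_\ell \lesssim \tfrac{\ve}{\theta}\, d_X(x,y),\]
which, after choosing $\delta$ small enough so that $\ve$ is small compared to $\theta$, $1 - 1/\Corona_1$, and $\Corona_2 - 1$, yields the two-sided bound of Definition \ref{d:corona-Y}(3). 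The main technical obstacle is setting up the tree data uniformly within each stopping-time region, in particular reconciling the Christ-David scaling $5\varrho^k$ with the geometric sequence $r_\ell$ and making coherent choices of norms and GHAs across levels; everything else is quantitative bookkeeping through the lemmas cited and verifying that the parameters $\eta, \delta_{\mathrm{Reif}}, \ve$ propagate correctly to deliver the advertised constants $\Corona_1, \Corona_2$.
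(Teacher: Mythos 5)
Your overall strategy coincides with the paper's: discretize BWGL$(\delta)$, run the sibling-coherent stopping time on the flat cubes, control the tops by the bad-cube packing, apply Theorem \ref{t:Reif} inside each stopping-time region, and deduce Definition \ref{d:corona-Y}(3) from \eqref{e:Reif3.5} after tuning parameters. The one step that would fail as written is your choice of tree data: hypothesis (2) of Theorem \ref{t:Reif} requires the points $\{x_{j,\ell}\}_{j \in J_\ell}$ to be $r_\ell$-separated, but the centers of the cubes of $S$ at the comparable dyadic generation $k(\ell)$ (with $5\varrho^{k(\ell)} \leq r_\ell < 5\varrho^{k(\ell)-1}$) are only $\varrho^{k(\ell)}$-separated, and since $\varrho = 1/1000$ one can have $\varrho^{k(\ell)}$ barely larger than $r_\ell/5000$; so the separation is \emph{not} inherited from the Christ--David structure, and taking all centers at the matching scale does not satisfy (2). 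The paper repairs exactly this point by letting $\{x_{j,\ell}\}_{j\in J_\ell}$ be a maximal $r_\ell$-separated net inside $\{x_Q : Q \in S \cap \calD_{k(\ell)}\}$, and then verifying the nesting condition (3) by maximality, splitting into the cases $k(\ell) = k(\ell-1)$ and $k(\ell) > k(\ell-1)$; with that modification (and the containment $3B_Q \subseteq 4B^{j,\ell}$, again from maximality, needed to invoke \eqref{e:Reif3.5}), your argument becomes the paper's.

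A second, smaller issue is a parameter conflation at the end: the flatness parameter $\ve$ of the output spaces is fixed in the statement, so you cannot ``choose $\delta$ so that $\ve$ is small compared to $\theta$, $1 - 1/\Corona_1$, $\Corona_2 - 1$.'' The correct bookkeeping, as in the paper, is to pick $\gamma > 0$ with $\Corona_1^{-1} \leq 1 - \gamma/\theta$ and $1 + \gamma/\theta \leq \Corona_2$, and to apply Theorem \ref{t:Reif} with error parameter $\min\{\ve,\gamma\}/10$ (this is what forces $\delta$ small depending on $\ve,\theta,\Corona_1,\Corona_2$); then the additive error in \eqref{e:Reif3.5} is at most $\gamma \ell(Q) \leq (\gamma/\theta)\, d_X(x,y)$, which yields the two-sided bound with the prescribed constants, while the $(\ve,n)$-Reifenberg flatness and the $\ve$-independent Ahlfors regularity of $(\R^n,\rho_S)$ come directly from Theorem \ref{t:Reif}.
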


Suppose Theorem \ref{t:final} for the minute. Let $\ve > 0$ be sufficiently small for Theorem \ref{t:corona-reif} and let $\delta > 0$ be sufficiently small such that Theorem \ref{t:final} holds for this choice of $\ve.$ Recalling Definition \ref{d:CDV}, Theorem \ref{t:final} implies that $X$ admits a $\text{CD}(\mathscr{Y})$ with $\mathscr{Y}$ the family of all Ahlfors $(C,n)$-regular and $(\ve,n)$-Reifenberg flat metric spaces. This and Theorem \ref{t:corona-reif} give the following corollary. 

\begin{cor}\label{c:BWGL implies CD^2}
	Let $X$ be an Ahlfors $n$-regular metric space. For $\delta > 0$ sufficiently small, if $X$ satisfies the \emph{BWGL}$(\delta)$, then $X$ admits a $\emph{CD}(\mathscr{Y})$ for a family $\mathscr{Y}$ of uniformly Ahlfors $n$-regular metric spaces each admitting corona decomposition by normed spaces with uniform constant. 
\end{cor}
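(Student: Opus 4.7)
The plan is a stopping-time argument. For each $\theta > 0$, I will partition $\calD$ into a Carleson family $\calB$ of bad cubes and a complementary collection $\calG$ organized into stopping-time regions, each of which will be parametrized by an Ahlfors regular $(\ve,n)$-Reifenberg flat space produced by Theorem \ref{t:Reif}.

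First, fix parameters with the hierarchy $\delta \ll_n \eta \ll_{\Corona_1,\Corona_2,n} \ve$, where $\eta$ will be the Reifenberg parameter fed into Theorem \ref{t:Reif} and $\ve$ is the target Reifenberg flatness of the approximating spaces. Using Lemma \ref{l:cubes-BWGL} together with the monotonicity estimate \eqref{e:u1'}, the BWGL$(\delta)$ hypothesis implies that for a fixed scaling constant $A$ (large enough that $B_X(x_Q, A\ell(Q))$ dominates the ball $100B^{j,\ell}$ required by Theorem \ref{t:Reif} at the commensurate scale), the collection $\calB_0 = \{Q \in \calD : \bilat_X(x_Q, A\ell(Q)) > \eta\}$ satisfies a Carleson packing condition with norm depending on $\eta,\delta,n,A$. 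Enlarge $\calB_0$ to $\calB$ by adjoining all siblings of its members; this preserves the Carleson packing condition (increasing the constant by an Ahlfors-regular factor) and ensures that $\calG \coloneqq \calD \setminus \calB$ is sibling-closed.

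Define $\mathrm{Top} \subseteq \calG$ to consist of the maximal cubes of $\calG$ together with every $Q \in \calG$ whose parent lies in $\calB$, and for each $Q_0 \in \mathrm{Top}$ set
\[
S(Q_0) = \{Q \in \calD(Q_0) : R \in \calG \text{ for every } R \text{ with } Q \subseteq R \subseteq Q_0\}.
\]
Sibling-closedness of $\calG$ makes $S(Q_0)$ a valid stopping-time region in the sense of Definition \ref{StoppingTime}, and the top cubes satisfy a Carleson packing condition because every non-maximal element of $\mathrm{Top}$ is a child of a $\calB$-cube, hence inherits Carleson packing from $\calB$. For each $S = S(Q_0)$, assemble tree data for Theorem \ref{t:Reif} from the cubes in $S$: after rescaling so $\ell(Q_0) \sim 1$, set $r_\ell = 10^{-\ell}$, pick $k_\ell$ with $5\varrho^{k_\ell}$ commensurate with $r_\ell$, let $J_\ell$ index the cubes $Q \in S$ of side $5\varrho^{k_\ell}$ whose center lies in $B_X(x_{Q_0},1)$, and set $x_{j,\ell} = x_Q$ with $\|\cdot\|_{j,\ell}$ an $\eta$-optimal norm witnessing the Reifenberg condition and $\away_{j,\ell}, \toward_{j,\ell}$ produced via Lemma \ref{l:Lipschitz-GHA} together with the centering adjustment of Lemma \ref{l:GH-scale}. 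The separation condition \eqref{e:thm12} is automatic from the Christ-David construction, parentage \eqref{e:thm13} from the ancestor relation in $S$, and the Reifenberg hypothesis from $Q \in \calG$. Theorem \ref{t:Reif} then produces an Ahlfors regular $(\ve,n)$-Reifenberg flat metric $\rho_S$ on $\R^n$ and a map $\bar{g}_S$ defined on $6B_{Q_0}$; take $Y_S = (\R^n, \rho_S)$ and $\varphi_S = \bar{g}_S$.

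It remains to verify the bi-Lipschitz estimate in Definition \ref{d:corona-Y}(3). For $Q \in S$ and $x, y \in 3B_Q$ with $d(x,y) \geq \theta \ell(Q)$, using the tree density \eqref{e:thm13} one checks that there is a common tree ball $6B^{j,\ell^*}$ containing both points with $r_{\ell^*}$ comparable to $d(x,y)$ (because a smaller common ball is ruled out by the covering/separation estimates for the tree), so \eqref{e:Reif3.5} yields $|\rho_S(\bar{g}_S(x), \bar{g}_S(y)) - d(x,y)| \leq C\ve d(x,y)$, which for $\ve$ small enough with respect to $\Corona_1, \Corona_2$ delivers the required two-sided bound. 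Ahlfors regularity of $(\R^n, \rho_S)$ with uniform constant is the final assertion of Theorem \ref{t:Reif}. The main obstacle is the last step: ensuring that the tree assembled from $S$ is dense enough at all relevant scales down to $\theta \ell(Q)$ so that $r_{\ell^*} \sim d(x,y)$ rather than $r_{\ell^*} \sim \ell(Q)$. This is handled by refining the stopping rule to absorb into $\calB$ any minimal cubes of $S$ whose scale would obstruct the density requirement at the chosen $\theta$; the resulting enlargement of $\calB$ remains Carleson because these additional cubes are controlled by the original $\calB$ together with a depth parameter depending on $\theta$ (which is allowed, since the Carleson norms in Definition \ref{d:corona-Y}(2) may depend on $\theta$).
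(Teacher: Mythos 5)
Your proposal never reaches the actual conclusion of the corollary. What you construct (modulo the technical points below) is a corona decomposition of $X$ by Ahlfors regular $(\ve,n)$-Reifenberg flat spaces $Y_S=(\R^n,\rho_S)$ — that is, you are re-proving Theorem \ref{t:final}. But the statement requires the family $\mathscr{Y}$ to consist of spaces \emph{each admitting a corona decomposition by normed spaces with uniform constants}, and your argument neither proves nor cites this. In the paper this is exactly Theorem \ref{t:corona-reif}, which is a substantial result in its own right: one first shows Ahlfors regular Reifenberg flat spaces are UR (Proposition \ref{p:reif-UR}, via Theorem \ref{t:Reif}, the LLC property, and G.~C.~David's bi-Lipschitz decomposition theorem), then uses UR $\Rightarrow$ Carleson condition for the $\gamma$-coefficients together with the lower-regularity/``big projection'' estimate of Corollary \ref{c:Reif-lower-reg} to run the stopping-time construction of Proposition \ref{p:UR+BP-corona}. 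The paper's own proof of the corollary is simply: Theorem \ref{t:final} (BWGL $\Rightarrow$ CD(RF)) combined with Theorem \ref{t:corona-reif} (RF + AR $\Rightarrow$ CD(normed spaces) with constants depending only on $n$). Without that second ingredient your argument establishes a strictly weaker statement.

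Two further points on the part you do carry out. First, the separation condition \eqref{e:thm12} is \emph{not} automatic from the Christ--David construction: centers of distinct cubes at scale $5\varrho^{k_\ell}$ are only $\varrho^{k_\ell}$-separated while $r_\ell\sim 5\varrho^{k_\ell}$, so you must thin to a maximal $r_\ell$-separated subnet of the centers, as the paper does; this is minor and fixable. Second, your verification of Definition \ref{d:corona-Y}(3) insists on a common tree ball at scale $\sim d(x,y)$, which forces the tree to be populated down to scale $\theta\ell(Q)$ near $3B_Q$; this fails at minimal cubes of $S$ (and at points of $3B_Q$ outside $Q$), and your proposed repair — absorbing minimal cubes into $\calB$ — only shrinks $S$ and does not create tree balls at the needed small scales, so the density claim is not established as stated. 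The paper avoids this entirely: it takes $r_\ell\sim\ell(Q)$, uses \eqref{e:Reif3.5} to get an additive error $\lesssim\gamma\ell(Q)\leq \gamma d(x,y)/\theta$, and chooses the Reifenberg input parameter (hence $\delta$) small depending on $\theta$ and $\Corona_1,\Corona_2$. Adopting that route removes your ``main obstacle'' without any refinement of the stopping rule.
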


Let's return to the proof of Theorem \ref{t:final}. Fix $\Corona_1,\Corona_2 > 1$ and let $\ve > 0$. We will find a constant $C$, independent of $\ve,$ and show that $X$ admits a corona decomposition  by the family of Ahlfors $(C,n)$-regular $(\ve,n)$-Reifenberg flat metric spaces with constants $\Corona_1,\Corona_2$. 

Let $Q_0 \in \calD.$ By Remark \ref{r:local-corona} it suffices to construct a corona decomposition of the cubes $\calD(Q_0).$ Now, let $\theta > 0$ be the coarse parameter appearing in Definition \ref{d:corona-Y} and we may suppose $\delta > 0$ is a parameter to be chosen sufficiently small depending on $\ve$ and $\theta.$ Note that the Carleson estimates in Definition \ref{d:corona-Y} (2) are allowed to depend on $\delta.$ Let's now define the families of good and bad cubes, and the family of stopping time regions. First, let
\begin{align}
	\calG = \{Q \in \calD(Q_0) :  \bilat_X(100Q) \leq \delta \} \quad \mbox{ and } \quad \calB = \calD(Q_0) \setminus \calG.
\end{align}

\begin{defn}
	For $Q \in \calG$, let $S_Q$ be a stopping-time region defined inductively as follows. First, add $Q$ to $S_Q.$ Then, add cubes $R \subset Q$ to $S_Q$ if $\text{Parent}(R) \in S_Q$ and $\bilat_X(100R') \leq \delta$ for all $R' \in \text{Sibling}(R).$ 
\end{defn}
Let $\text{Top}_0$ be the maximal collection of cubes $T \in \calG$ and $\calF_0 = \{S_Q\}_{Q \in \text{Top}_0}.$ Then, supposing we have define $\calF_k$ and $\text{Top}_k$ up to some integer $k \geq 0,$ let $\text{Top}_{k+1}$ be the maximal collection of cubes $T \in \calG$ such that there exists $Q \in \text{Top}_k$ and $R \in \min(S_Q)$ satisfying $T \subset R$. We also let
\[ \calF_{k+1} = \bigcup_{i=0}^{k+1} \{ S_Q : Q \in \text{Top}_i\}. \]
Finally, define
\[ \calF = \bigcup_{k=0}^\infty \calF_k. \]

One can easily verify that $\{S\}_{S \in \calF}$ defines a collection of disjoint stopping-time regions in much the same way as we did in Lemma \ref{l:dis-FQ}. Similarly, we can check it forms a partition of $\calG$ as in Lemma \ref{l:dis-F}. In particular, we have Definition \ref{d:corona-Y} (1).

\begin{lem}
	Definition \ref{d:corona-Y} (2) holds with Carleson norms depending on $\delta.$
\end{lem}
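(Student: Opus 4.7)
The plan is to verify the two Carleson packings of Definition \ref{d:corona-Y} (2) separately. For the bad family $\calB$, note that $Q \in \calB$ precisely when $\bilat_X(100Q) > \delta$; since $\bilat_X$ is quasi-monotone under ball containment (cf.\ \eqref{e:u1'}), Lemma \ref{l:set-packing} applied with $f = \bilat_X$, $K = 100$, and threshold $\delta$ converts the hypothesis BWGL$(\delta)$ into a Carleson packing condition for $\calB$, with constant depending on $\delta$, $n$, and $C_0$.

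For the tops $\{Q(S)\}_{S \in \calF} = \mathrm{Top}$, the plan is to define a bounded-multiplicity assignment $T \mapsto B(T) \in \calB$ satisfying $\ell(B(T)) \geq \ell(T)$ and $B(T) \subseteq R_0$ whenever $T \subsetneq R_0$, so that the packing for $\calB$ transfers. Fix $T \in \mathrm{Top} \setminus \{Q_0\}$. If $T \in \mathrm{Top}_0$, then maximality of $T$ in $\calG$ forces $\mathrm{Parent}(T) \in \calB$, and we set $B(T) = \mathrm{Parent}(T)$. If $T \in \mathrm{Top}_{k+1}$, so that $T$ is strictly contained in some $R \in \min(S_{Q'})$ with $Q' \in \mathrm{Top}_k$, we distinguish two subcases: if $\mathrm{Parent}(T) \subsetneq R$, maximality of $T$ inside $R$ forces $\mathrm{Parent}(T) \in \calB$ and we set $B(T) = \mathrm{Parent}(T)$; if instead $\mathrm{Parent}(T) = R$, minimality of $R$ in $S_{Q'}$ means some child of $R$ lies in $\calB$, and we take $B(T)$ to be any such sibling of $T$. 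In every case $\ell(B(T)) \in \{\ell(T), \varrho^{-1}\ell(T)\}$ and $B(T) \subseteq \mathrm{Parent}(T)$, so $\ell(T)^n \leq \ell(B(T))^n$ and $B(T) \subseteq R_0$ whenever $T \subsetneq R_0$.

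The multiplicity of $T \mapsto B(T)$ is controlled by Ahlfors regularity: for fixed $B \in \calB$, the preimage consists of at most the children of $B$ (from parent-type assignments) and the siblings of $B$ (from sibling-type assignments), and Lemma \ref{l:BO} bounds both by a constant depending on $n$ and $C_0$. Combining everything, for every $R_0 \in \calD(Q_0)$,
\[ \sum_{\substack{S \in \calF \\ Q(S) \subseteq R_0}} \ell(Q(S))^n \;\leq\; \ell(R_0)^n \,+\, C(n,C_0) \sum_{\substack{B \in \calB \\ B \subseteq R_0}} \ell(B)^n \;\lesssim_\delta\; \ell(R_0)^n, \]
where the first $\ell(R_0)^n$ on the right accounts for a possible contribution from $T = Q_0$ (arising only when $R_0 = Q_0$). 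The main obstacle is combinatorial rather than analytic: one must carefully verify that the case split above genuinely covers all possibilities and that the chosen $B(T)$ both lies in $\calB$ and respects the containment $B(T) \subseteq R_0$ whenever $T \subsetneq R_0$. Beyond this bookkeeping, the proof is immediate.
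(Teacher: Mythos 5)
Your proof is correct and takes essentially the same route as the paper: the packing of $\calB$ comes straight from BWGL$(\delta)$ via Lemma \ref{l:set-packing} (the paper cites Lemma \ref{l:cubes-BWGL}, which is the same statement), and the tops are packed by charging each top cube to a bad cube of comparable scale with bounded multiplicity and then invoking the packing of $\calB$ — the paper routes this charge through the minimal cubes of the previous generation and their bad children, while you charge directly to the bad parent or a bad sibling, which in fact treats more carefully the tops whose parent is a bad cube strictly contained in the enclosing minimal cube, a case the paper's one-line claim that every $Q(S)$ is a child of a minimal cube glosses over. The only blemish is the parenthetical ``arising only when $R_0 = Q_0$'': the extra diagonal term $\ell(R_0)^n$ is needed whenever $R_0$ itself happens to be a top cube, not just when $R_0=Q_0$, but since at most one top can equal $R_0$ your displayed inequality is unaffected.
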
 

\begin{proof}
	The fact that $\calB$ satisfies a Carleson packing condition with Carleson norm depending on $\delta$ follows immediately from our assumption that $X$ satisfies the BWGL and Lemma \ref{l:cubes-BWGL}.  It only remains to check the Carleson packing condition on $\{Q(S)\}_{S \in \calF}$. By construction, if $S \in \calF$ then $Q(S)$ is the child of some $R_S \in \bigcup_{Q \in \text{Top}} \min(S_Q).$ Since children cubes are disjoint, if $R \in \bigcup_{Q \in \text{Top}} \min(S_Q)$ then $\{Q(S) : S \in \calF \mbox{ and } R_S = R\}$ is a disjoint collection of cubes. In particular, by Ahlfors regularity, 
	\begin{align}
		\sum_{\substack{S \in \calF \\ R_S = R}} \ell(Q(S))^n \lesssim \ell(R)^n. 
	\end{align}
	For each $R \in \bigcup_{Q \in \text{Top}} \min(S_Q)$ there exists $T_R \in \text{Child}(R)$ such that $\bilat_X(100T_R) > \delta.$ Since $T_R \in \text{Child}(R)$ we have $\ell(R) \sim \ell(T_R).$ Since every cube has a unique parent, if $T \in \calD$ then 
	\[\left|\left\{R \in \bigcup_{Q \in \text{Top}} \min(S_Q) : T_R = T\right\}\right| \leq 1.\]
	Combining the above and using that $X$ satisfies the BWGL, we get 
	\begin{align}
		\sum_{S \in \calF} \ell(Q(S))^n  &= \sum_{Q \in \text{Top}} \sum_{R \in \min(S_Q)} \sum_{\substack{S \in \calF \\ R_S = R}} \ell(Q(S))^n \lesssim \sum_{Q \in \text{Top}} \sum_{R \in \min(S_Q)} \ell(R)^n\\
		&\lesssim \sum_{Q \in \text{Top}} \sum_{R \in \min(S_Q)} \ell(T_R)^n \leq \sum_{\substack{T \in \calD \\ \bilat_X(100T) > \delta}} \ell(T)^n  \lesssim_\delta \ell(Q_0)^n. 
	\end{align}
\end{proof}

The proof of Theorem \ref{t:final} is concluded with the following lemma. 

\begin{lem}
	If $\delta$ is chosen small enough, depending on $\ve$, $\theta$ and $\Corona_1,\Corona_2$, then the following holds. There exists $C_*$ independent of $\ve$ and for each $S \in \calF$ a metric $\rho_S$ and a map $\vp_S \colon 3B_{Q(S)} \to \R^n$ such that $(\R^n,\rho_S)$ is an Ahlfors $(C_*,n)$-regular $(\ve,n)$-Reifenberg flat metric space and if $Q \in S$ and $x,y \in 3B_Q$ are such that $d_X(x,y) > \theta \ell(Q)$ then 
	\begin{align}\label{e:final-eq}
		\frac{1}{\Corona_1} d_X(x,y) \leq \rho_S(\vp_S(x),\vp_S(y)) \leq \Corona_2d_X(x,y). 
	\end{align}
\end{lem}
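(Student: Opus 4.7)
Fix $S \in \calF$ and set $Q_0 = Q(S)$. After rescaling we may assume $\ell(Q_0) = 1$, and set $s_S = \inf\{\ell(R) : R \in \min(S)\}$ (with $s_S = 0$ if $S$ has no minimal cubes). The strategy is to apply Theorem \ref{t:Reif} with a tree of balls $(\{x_{j,\ell}\}, \{\|\cdot\|_{j,\ell}\})$ built from the cubes of $S$ and an auxiliary smallness parameter $\ve'$ to be chosen later depending on $\ve$, $\theta$, $\Corona_1$, $\Corona_2$; then I define $\vp_S$ and $\rho_S$ from the resulting map $\bar g$ and metric $\rho$.

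\textbf{Tree construction.} Set $J_0 = \{j_0\}$ with $x_{j_0,0} = x_{Q_0}$. For each $\ell \geq 1$ with $r_\ell \gtrsim s_S$, choose $J_\ell$ to index a maximal $r_\ell$-separated net in $B_X(x_{Q_0}, 1)$ with the property that every net point lies in some cube $R_{j,\ell} \in S$ with $\ell(R_{j,\ell}) \sim r_\ell$; such nets exist because $S$ is a stopping-time region with top cube $Q_0$, and one can ensure nesting \eqref{e:thm13} by building the nets in a top-down fashion from the Christ-David cube structure. For $r_\ell \ll s_S$, set $J_\ell = \emptyset$, so that conditions (2)--(4) of Theorem \ref{t:Reif} become vacuous at those scales. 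Since $R_{j,\ell} \in S$ we have $\bilat_X(100 R_{j,\ell}) \leq \delta$, and hence $\bilat_X(x_{j,\ell}, 100 r_\ell) \lesssim \delta$ by \eqref{e:u1'}. Then Lemma \ref{l:Lipschitz-GHA} together with Lemma \ref{l:GH-scale} produces norms $\|\cdot\|_{j,\ell}$ and GHAs $\alpha_{j,\ell}, \beta_{j,\ell}$ satisfying condition (4) of Theorem \ref{t:Reif} with parameter $C\delta$.

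\textbf{Applying Theorem \ref{t:Reif} and verifying the bi-Lipschitz bound.} Choose $\ve' > 0$ small enough (in terms of $\ve$, $\theta$, $\Corona_1$, $\Corona_2$) and $\delta > 0$ small enough that $C\delta$ satisfies the hypothesis of Theorem \ref{t:Reif} yielding output error $\ve'$. The theorem then produces a metric $\rho_S$ on $\R^n$ that is $(\ve', n)$-Reifenberg flat, Ahlfors $n$-regular with constant $C_*$ depending only on $n$ and the regularity constant of $X$ (via \eqref{e:regularity}), and a map $\bar g : 6B^{j_0,0} \to 6B_{j_0,0}$ satisfying \eqref{e:Reif3.5}. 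Set $\vp_S := \bar g|_{3B_{Q_0}}$. Given $Q \in S$ and $x, y \in 3B_Q$ with $d_X(x,y) > \theta \ell(Q)$, choose $\ell$ with $r_\ell \sim \ell(Q)$; by our tree construction there exists $j \in J_\ell$ with $3B_Q \subseteq 6B^{j,\ell}$, and then \eqref{e:Reif3.5} gives
\[
\abs{\rho_S(\vp_S(x), \vp_S(y)) - d_X(x, y)} \leq \ve' r_\ell \lesssim \frac{\ve'}{\theta}\, d_X(x,y),
\]
which yields \eqref{e:final-eq} once $\ve'/\theta$ is small in terms of $\Corona_1, \Corona_2$. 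Picking $\ve'$ also no larger than $\ve$ gives the $(\ve,n)$-Reifenberg flatness of $\rho_S$.

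\textbf{Main obstacle.} The delicate step is arranging the tree data so that condition (3) of Theorem \ref{t:Reif} (compatibility of nets at consecutive scales) holds uniformly across $S$, while ensuring that every net point carries a cube in $S$ of the correct size. Near the boundary of $S$ and near its minimal cubes, the scale structure of $S$ is irregular and one must be careful that terminating $J_\ell$ abruptly at scale $s_S$ does not destroy the nesting condition used by the scheme in Section \ref{s:Reif-approx}. The buffer provided by working with $100r_\ell$-balls in condition (4) (so that $\bilat_X(100 R_{j,\ell}) \leq \delta$ gives room to spare), together with the observation that Theorem \ref{t:Reif} tolerates the tree terminating at deep scales because the construction freezes below the last active level, makes this technical but manageable.
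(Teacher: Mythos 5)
Your proposal is correct and follows essentially the same route as the paper: build the tree of points and norms from the cubes of $S$ (whose control $\bilat_X(100\,\cdot)\leq\delta$ supplies hypothesis (4) of Theorem \ref{t:Reif} after Lemma \ref{l:GH-scale}), apply Theorem \ref{t:Reif} with an auxiliary small parameter, and combine \eqref{e:Reif3.5} with the separation $d_X(x,y)>\theta\ell(Q)$ to convert the additive error into the multiplicative bounds \eqref{e:final-eq}, with the Ahlfors regularity of $(\R^n,\rho_S)$ coming from \eqref{e:regularity}. The only cosmetic difference is that the paper takes the net at scale $r_\ell$ directly among the centres of cubes in $S\cap\calD_{k(\ell)}$, so the nesting condition (3) of Theorem \ref{t:Reif} follows immediately from coherence of the stopping-time region plus maximality, and the ``main obstacle'' you flag (uniform termination at $s_S$) does not actually arise, since condition (3) only constrains points of $J_\ell$ relative to $J_{\ell-1}$ and the tree may simply stop locally wherever $S$ does.
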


\begin{proof}
	Let $S \in \calF.$ By scaling we may assume $Q(S) \in \calD_0$ so that $B_{Q(S)} = B(x_{Q(S)},1)$ and $3\ell(Q(S)) =15.$ We construct $\rho_S$ by applying Theorem \ref{t:Reif}. First, we need to define a collection of points and norms which satisfy the hypotheses. 
	
	For each $\ell \geq 0$, let $k(\ell) \in \Z$ such that $5\varrho^{k(\ell)} \leq r_\ell < 5\varrho^{k(\ell)-1}.$ Then, let $\{x_{j,\ell}\}_{j \in J_{\ell}}$ be a maximal $r_\ell$-separated net in $\{x_Q : Q \in S \cap \calD_{k(\ell)} \}$. Since $S \cap \calD_0 = \{Q(S)\}$ we have Theorem \ref{t:Reif} (1). The separation condition implies Theorem \ref{t:Reif} (2). 
	
	Now we consider Theorem \ref{t:Reif} (3). For $\ell \geq 0$ and $j \in J_\ell$ set $B^{j,\ell} = B(x_{j,\ell},r_\ell)$. Fix $\ell \geq 1$ and $j \in J_\ell.$ Suppose first that $k(\ell) = k(\ell-1).$ By maximality, there exists $i \in J_{\ell-1}$ such that $d(x_{j,\ell},x_{i,\ell-1}) \leq r_{\ell-1},$ which implies $x_{j,\ell} \in B^{i,\ell-1}.$ Suppose instead that $k(\ell) > k(\ell-1).$ There exists $Q \in S \cap \calD_{k(\ell-1)}$ such that $d(x_{j,\ell} , x_Q) \leq \ell(Q) \leq r_{\ell-1}$ and, by maximality, $i \in J_{\ell-1}$ such that $d(x_Q, x_{i,\ell-1}) \leq r_{\ell-1}.$ By the triangle inequality we have $x_{j,\ell} \in 2B^{i,\ell-1}.$ 
	
	Finally, let us check Theorem \ref{t:Reif} (4). Let $\ell \geq 0$, $j \in J_\ell$ and let $Q \in S \cap \calD_{k(\ell)}$ such that $x_{j,\ell} = x_Q.$ Since $100B^{j,\ell} \subseteq 100B_Q$ and $r_\ell \sim \ell(Q)$ we have \begin{align}\label{e:C'}
		\bilat_X(100B^{j,\ell}) \leq C\bilat_X(100Q) \leq C\delta.
	\end{align}
	Choose a parameter $\gamma > 0$, depending on $\theta$ and $\Corona_1,\Corona_2,$ such that 
	\begin{align}\label{e:gamma}
		\Corona_1^{-1} \leq (1-\gamma/\theta) \quad \mbox{ and } \quad (1+\gamma/\theta) \leq \Corona_2. 
	\end{align}
	Let $\delta' = C\delta$ (with $C$ as in \eqref{e:C'}) and choose $\delta'$ (hence $\delta$) small enough, depending on $\ve$ and $\gamma$, such that the conclusion of Theorem \ref{t:Reif} holds with constant $\min\{\ve,\gamma\}/10.$ Let $\rho_S = \rho$ and $\vp_S = \bar{g}$ be the metric and map we obtain by applying Theorem \ref{t:Reif} with this choice of parameter. 
	
	Since $X$ is Ahlfors $n$-regular, $(\R^n,\rho_S)$ is Ahlfors $n$-regular with regularity constant $C_*$ depending only on the regularity constant of $X$. Since $\ve < \min\{\ve,\gamma\}/10,$ it is also $(\ve,n)$-Reifenberg flat. It remains to check \eqref{e:final-eq}. Suppose $Q \in S$ and $x,y \in 3B_Q$ are such that $d_X(x,y) > \theta \ell(Q).$ Let $\ell \geq 0$ be the largest integer such that $\ell(Q) \leq r_\ell.$ In this way, if $k \geq 0$ is such that $Q \in \calD_k$ then $k(\ell) = k.$ Note that by maximality of $\ell$ we also have $r_\ell \leq 10\ell(Q).$ By the maximality of $\{x_{j,\ell}\}_{j \in J_\ell}$ there exists $j \in J_\ell$ such that $d(x_{j,\ell},x_Q) \leq r_\ell$ such that $3B_Q \subseteq 4B^{j,\ell}.$ In particular, by applying \eqref{e:Reif3.5}, we have 
	\begin{align}
		| \rho_S(\vp_S(x),\vp_S(y)) - d_X(x,y) | \leq \frac{\gamma r_\ell}{10} \leq \gamma \ell(Q) \leq \frac{\gamma d_X(x,y)}{\theta}.
	\end{align}
	The estimate \eqref{e:final-eq} follows from this and the estimates on $\gamma$ in \eqref{e:gamma}. 
\end{proof}

\newpage


\section{$\text{CD}^2$(normed spaces) implies CD(normed spaces)}\label{s:corona-corona}

Recall the notion of $\text{CD}(\mathscr{Y})$ and corona decomposition by normed spaces from Definition \ref{d:corona-Y} and Definition \ref{d:CDV}, respectively. The main goal of this section is to prove the following, which states that a corona decomposition by metric spaces with corona decompositions implies a corona decomposition. 

\begin{prop}\label{prop:corona-of-corona-1}
	Let $\Corona_1, \Corona_2,\Corona_1',\Corona_2' \geq 1$ be given. Suppose $\mathscr{Y}$ is a family of Ahlfors $n$-regular sets with uniform regularity constant each of which admits a corona decomposition by normed spaces with constants $\Corona_1,\Corona_2.$ If $X$ admits a $\emph{CD}(\mathscr{Y})$ with constants $\Corona_1',\Corona_2'$, then $X$ admits a corona decomposition by normed spaces with constant $\Corona_1'',\Corona_2''$, where $\Corona_1'' = \Corona_1\Corona_1'$ and $\Corona_2'' = \Corona_2\Corona_2'.$ 
\end{prop}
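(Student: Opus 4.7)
Fix the target coarse parameter $\theta>0$ from Definition~\ref{d:corona-Y} for the conclusion, and choose auxiliary parameters $0<\theta_2\ll\theta_1\ll\theta$ (where $\theta_1$ will be tuned to $\theta$ via the bi-Lipschitz constants $\Corona_1',\Corona_2'$, and $\theta_2$ via the ratio of Christ–David scales). The plan is a two-layer construction: first use the $\mathrm{CD}(\mathscr{Y})$ decomposition of $X$ at scale $\theta_1$ to reduce to stopping-time regions $S$ with approximating spaces $Y_S \in \mathscr{Y}$, and then inside each $Y_S$ use its corona decomposition by normed spaces at scale $\theta_2$ to further refine $S$. Composing the two approximations should yield a corona decomposition of $X$ by normed spaces with the claimed constants.

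More concretely, Step~1 invokes the $\mathrm{CD}(\mathscr{Y})$ hypothesis on $X$ at parameter $\theta_1$, giving $\calD = \calG_1 \cup \calB_1$ and a family $\{S\}_{S\in\calF_1}$ of stopping-time regions, each carrying $Y_S\in\mathscr{Y}$ and a map $\vp_S : 3B_{Q(S)} \to Y_S$ that is $(\Corona_1',\Corona_2')$-bi-Lipschitz on $\theta_1\ell(Q)$-separated pairs in $3B_Q$ for $Q\in S$. Step~2 fixes $S\in\calF_1$ and applies the corona decomposition by normed spaces to $Y_S$ at parameter $\theta_2$, yielding bad cubes $\tilde\calB_S \subseteq \calD(Y_S)$ and stopping-time regions $\{\tilde S\}_{\tilde S \in \tilde\calF_S}$, each equipped with $(\R^n,\|\cdot\|_{\tilde S})$ and a map $\psi_{\tilde S}$ satisfying the $(\Corona_1,\Corona_2)$ bi-Lipschitz bound on $\theta_2\ell(\tilde Q)$-separated pairs. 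Step~3 transports this refinement back: for each $Q\in S$ assign a Christ–David cube $\tilde Q(Q)\in \calD(Y_S)$ whose center is near $\vp_S(x_Q)$ and whose sidelength is comparable to $\ell(Q)$ (with comparability constant depending on $\Corona_1',\Corona_2'$, the regularity constants, and the scale parameter $\varrho$ in Lemma~\ref{cubes}). Declare $Q$ bad in $X$ if $\tilde Q(Q)\in \tilde\calB_S$, and otherwise place $Q$ in the stopping-time region $S''_{\tilde S}$ that contains $\tilde Q(Q)\in \tilde S\in \tilde\calF_S$; take the top cube $Q(S''_{\tilde S})$ to be a maximal element of this preimage family. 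The final good/bad partition of $\calD$ is $\mathscr{B} = \calB_1 \cup \{Q\in \calG_1 : \tilde Q(Q) \in \tilde\calB_{S(Q)}\}$ together with the refined stopping-time regions $\{S''_{\tilde S} : S\in\calF_1,\ \tilde S\in\tilde\calF_S\}$, and for each such region we attach the normed space $(\R^n,\|\cdot\|_{\tilde S})$ and the composed map $\psi_{\tilde S}\circ \vp_S$, restricted to $3B_{Q(S''_{\tilde S})}$.

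Verifying the axioms is then largely bookkeeping. The approximation condition (Definition~\ref{d:corona-Y}(3)) for $Q\in S''_{\tilde S}$ and $x,y\in 3B_Q$ with $d_X(x,y)\ge\theta\ell(Q)$ follows by composing the two bi-Lipschitz estimates, provided $\theta_1$ is small enough to guarantee $d_X(x,y)\ge \theta_1\ell(Q)$ and $\theta_2$ is small enough that the $\vp_S$-images $\vp_S(x),\vp_S(y)$ are $\theta_2 \ell(\tilde Q)$-separated (using $\ell(\tilde Q)\sim \ell(Q)$ from the cube-matching); the constants multiply to give $\Corona_1''=\Corona_1\Corona_1'$ and $\Corona_2''=\Corona_2\Corona_2'$. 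For the Carleson packing (Definition~\ref{d:corona-Y}(2)): $\calB_1$ packs by hypothesis; the new bad cubes $\{Q\in\calG_1 : \tilde Q(Q) \in \tilde\calB_{S(Q)}\}$ pack by pulling back the Carleson packing of $\tilde\calB_S$ through the bounded-multiplicity, scale-preserving map $Q\mapsto \tilde Q(Q)$, summed over $S\in\calF_1$ using the Carleson packing of $\{Q(S)\}_{S\in\calF_1}$; the new top cubes $\{Q(S''_{\tilde S})\}$ pack analogously from $\{Q(\tilde S)\}_{\tilde S\in\tilde\calF_S}$ together with the packing of $\{Q(S)\}$.

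The main obstacle will be Step~3, the scale- and boundary-matching between Christ–David cubes of $X$ and of $Y_S$: because $\vp_S$ is only bi-Lipschitz on well-separated pairs, the map $Q\mapsto \tilde Q(Q)$ is not literally a bijection of cube systems, and cubes of $X$ that sit near $\partial(3B_{Q(S)})$ (or whose images land in cubes straddling two different elements of the $Y_S$-decomposition) must be handled as additional bad cubes or boundary layers. Controlling these via Lemma~\ref{cubes}(4) and the bounded-overlap argument used already in Lemma~\ref{l:Carleson-I} is what produces the final $\theta_1\ll\theta$ constraint and ensures the extra packing loss is absorbed into the constants. Once this matching is set up cleanly, the rest of the verification is a direct composition of the two corona structures.
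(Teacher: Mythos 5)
Your overall route is the same as the paper's (apply the $\mathrm{CD}(\mathscr{Y})$ layer, refine each $Y_S$ by its own corona decomposition, compose the maps so the constants multiply, and pull the packing conditions back through a boundedly-many-to-one cube matching), but there is a genuine gap at exactly the point you flag as "the main obstacle," and the tools you propose there do not close it. Grouping the good cubes $Q\in S$ of $X$ according to which stopping-time region of $\calD(Y_S)$ contains the matched cube $\tilde Q(Q)$ does not, by itself, produce stopping-time regions in the sense of Definition \ref{StoppingTime}: coherence (if $Q\subseteq R\subseteq Q(S'')$ then $R\in S''$), sibling closure, and the existence of a genuine top cube can all fail, because the matching $Q\mapsto\tilde Q(Q)$ is only approximately scale- and location-preserving, so a cube and its parent (or two siblings) in $X$ may be matched to cubes lying in different regions of the decomposition of $Y_S$, or to cubes straddling a top cube or a bad cube of $Y_S$. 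The small-boundary estimate of Lemma \ref{cubes}(4) and the bounded-overlap argument of Lemma \ref{l:Carleson-I} control measure and packing of such boundary layers, but they say nothing about coherence of the resulting families, which is the actual structural requirement of Definition \ref{d:corona-Y}(1).

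The paper resolves this with two devices you would need to either import or reprove. First, an $A$-closeness enlargement: cubes of $Y_S$ that are $A$-close to bad cubes of $Y_S$, to top cubes of its regions, or to cubes belonging to a \emph{different} region are put into a new Carleson family $\cB_1(Y_S)$, and cubes of $X$ that are $A$-close to $\cB$ or to some $Q(S)$, or whose $\phi_{X,S}$-image is $A$-close to the relevant part of $\cB_1(Y_S)$, are put into a Carleson family $\cB_1(X)$ (packing is preserved under $A$-closeness by \cite[Lemma 3.27]{david1993analysis}, and the mixed case is handled as in the paper's estimate \eqref{e:packing'}). Second, and crucially, the David--Semmes re-coronization lemma \cite[Lemma 3.22]{david1993analysis} is applied to $\cB_1(X)$ to produce a \emph{new} coronization of $X$ whose stopping-time regions $S'$ automatically avoid $\cB_1(X)$; only after this step does one know that each $S'$ is contained in a single old region $S$ and that all matched cubes $Q_R$, $R\in S'$, lie in one single region $T(S')$ of $Y_S$, so that the composed map $\phi_{Y_S,T(S')}\circ\phi_{X,S}$ is defined consistently on all of $S'$ and the thresholds ($\theta'>\theta_1$ and $\theta'/\Corona_1'\gtrsim\theta_2$) give the bound with constants $\Corona_1\Corona_1'$, $\Corona_2\Corona_2'$. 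Without this re-coronization step (or an equivalent argument building coherent regions and Carleson-packed top cubes directly), your families $S''_{\tilde S}$ and their proposed top cubes are not justified; the rest of your outline matches the paper.
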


Proposition \ref{prop:corona-of-corona-1} is a variation of \cite[Lemma 3.41]{david1993analysis}. The proof there is said to be a variation their proof of \cite[Lemma 3.21]{david1993analysis}. Let us remark that what is needed is in fact a variation of \cite[Proposition 3.32]{david1993analysis}. Our proof below is almost identical to that of \cite[Proposition 3.32]{david1993analysis}, where we only deviate towards the end. At various points we will make use of \cite[Lemmas 3.22-3.29]{david1993analysis} which are about \textit{coronization} (and not corona decomposition) and hold in the metric setting.

\begin{proof}[Proof of Proposition \ref{prop:corona-of-corona-1}]
	Let $\COARSE_1,\COARSE_2 > 0.$ Let $(\cB, \cG, \calF)$ be a coronization of $X$ satisfying the assumptions in Definition \ref{d:CDV} with constant $\COARSE_1$. In particular, for each $S\in \calF$ there is a $Y=Y_S \in \mathscr{Y}$ and 
	$\phi_{X,S}:3B_{Q(S)}\to W$ such that if $Q\in S$ and $x,y\in 3B_Q$ with 
	$\dist(x,y)>\COARSE_1\ell(Q)$ then 
	\begin{align}\label{e:corona-map}
		\frac1{\Corona_1'}d_X(x,y)\leq d_Y(\phi_{X,S}(x),\phi_{X, S}(y))\leq \Corona_2' d_X(x,y).
	\end{align}
	We also have a decomposition of $Y_S$ into Christ-David cubes $\calD(Y_S)$ and a coronization $(\cB(Y_S), \cG(Y_S), \calF(Y_S))$ of $Y_S$
	such that for every $T\in \calF(Y_S)$ there is $$\phi_{Y_S, T}:3B_{Q(T)}\to \bR^n_{\|\cdot\|_{Q(T)}}$$
	such that if $R\in T$ and $x,y\in 3B_{R}$ with $\dist(x,y)>\COARSE_2 \ell(R)$ 
	then
	$$\frac1{\Corona_1}d_{Y_S}(x,y)\leq \|\phi_{Y_S, T}(x)-\phi_{Y_S,T}(y)\|_{Q(T)}\leq \Corona_2 d_{Y_S}(x,y).$$
	Note that $R$ is a cube in the dyadic structure of $Y_S$ and not in the dyadic structure of $X$.

	Let $A$ be a large constant (to be chosen later). We say two set $Q,R$ in some common space are \textit{$A$-close} if 
	\begin{align}
		A^{-1} \diam(Q) \leq \diam(R) \leq A \diam(Q)
	\end{align}
	and 
	\begin{align}
		\dist(Q,R) \leq A(\diam(Q) + \diam(R)). 
	\end{align}
	For each $S\in \calF$ we introduce a new bad set $\cB_1(Y_S)$ composed of all cubes in $\calD(Y_S)$ which are $A$-close to some cube in $\cB(Y_S)$ \textit{or} some top cube of a stopping time region in $\cT(Y_S)$ \textit{or} are in $\cG(Y_S)$ and are $A$-close to some other cube of $\cG(Y_S)$ that belongs to a different stopping time region of $\cT(Y_S)$. It follows from the proof of \cite[Lemma 3.26]{david1993analysis} that $\cB_1(Y_S)$ satisfies a Carleson packing condition with controlled constants.
	
	Let $\cB_2(Y_S)$ be the subset of $\cB_1(Y_S)$ which are $A$-close to elements of 
	$\phi_{X,S}(S)$. 
	Note that $Q\in S$ implies that $\phi_{X,S}(Q)$ is of comparable diameter and we may talk about cubes in $\calD(Y_S)$ that have comparable diameter to that and are not too far (up to $A$ times the diameter).

	Let $\cB_1(X)$ be cubes in $\calD(X)$ that satisfy at least one of:
		 \begin{enumerate}
		 \item $A$-close to an element of $\cB$
		 \item $A$-close to an element of $Q(S)$ for $S\in \calF$;
			 \item have a $\phi_{X,S}$ image that is $A$-close to a cube of
			$\cB_2(Y_S)$ for $S\in \calF$.
			\end{enumerate} We claim that $\calB_1(X)$ satisfies a Carleson packing condition. Indeed, one can prove a packing condition of the cubes in $\calB_1(X)$ satisfying (1) and (2) by  \cite[Lemma 3.27]{david1993analysis} which says that $A$-closeness does not ruin a packing condition. Thus, it suffices to prove a packing condition for those cubes satisfying (3). Fix $Q_0 \in \calD(X)$ and for $S \in \calF$ let $\calB(S)$ be those cubes in $\calB_1(X)$ satisfying (3).  We want to show 
		\begin{align}\label{e:packing'}
			\sum_{S \in \calF} \sum_{\substack{Q \in \calB(S) \\ Q \subseteq Q_0}} \ell(Q)^n \lesssim \ell(Q_0)^n. 
		\end{align}
		Suppose that $S \in \calF$ contributes to the left-hand side of \eqref{e:packing'} and that $Q(S) \subseteq Q_0.$ By definition of $A$-closeness, if $Q \in \calB(S)$ then there exists $R_Q \in \calB_2(Y_S)$ with $\ell(R_Q) \sim_A \ell(Q)$ such that $\dist(R_Q,\phi_{X,S}(Q)) \lesssim_A \ell(Q).$ Using this and \eqref{e:corona-map}, there exists $z \in Y_S$ and a constant $C$ depending $A$ and $\Corona_2$ such that $R_Q \subseteq B(z,C\ell(Q(S))).$	Furthermore, the correspondence $Q \mapsto R_Q$ is at most $C(A)$ to 1. It now follows from the packing conditions on $\calB_2(Y_S)$ and $\{Q(S)\}_{S \in \calF}$ that 
		\begin{align}\label{e:packing''}
			\sum_{\substack{S \in \calF \\ Q(S) \subseteq Q_0}}  \sum_{\substack{Q \in \calB(S) \\ Q \subseteq Q_0}} \ell(Q)^n \lesssim \sum_{\substack{S \in \calF \\ Q(S) \subseteq Q_0}} \ell(Q(S))^n \lesssim \ell(Q_0)^n. 
	\end{align}
	If $S \in \calF$ contributes to the left-hand side of \eqref{e:packing'} but $Q_0 \subseteq Q(S)$ then we proceed as follows. Since there exists $Q \in \calB(S) \subseteq S$ such that $Q \subseteq Q_0 \subseteq Q(S)$ we have $Q_0 \in S$ by Definition \ref{StoppingTime} (2). In particular, there is at most one such $S$. As below \eqref{e:packing'}, using the definition of $A$-closeness and \eqref{e:corona-map}, for each $Q \in \calB(S)$ there exists $R_Q \in \calB_2(Y_S)$ satisfying the same properties except $R_Q \subseteq B(z,C\ell(Q_0)).$ The packing condition on $\calB_2(Y_S)$ then gives 
	\begin{align}\label{e:packing'''}
		\sum_{\substack{Q \in S \\ Q_0 \subseteq Q(S)}}  \sum_{\substack{Q \in \calB(S) \\ Q \subseteq Q_0}} \ell(Q)^n \lesssim \ell(Q_0)^n. 
\end{align} 
	Combining \eqref{e:packing''} and \eqref{e:packing'''} finishes the proof of the claim.
	
	By \cite[Lemma 3.22]{david1993analysis} there is a coronization $(\cB'(X), \cG',\calF')$ of $X$ such that $\cB_1(X)\subset \cB'(X)$. Let $S'\in \calF'$ be given. We want to find a norm $\|\cdot\|_{Q(S')}$ and a map $\phi_{S'}: 3B_{Q(S')} \to \bR^n_{\|\cdot\|_{Q(S')}}$ such
	that if $Q\in S'$ and $x,y\in 3B_{Q}$ with $d_X(x,y)>\COARSE' \ell(Q)$ 
	then
	$$\frac1{\Corona_1''}d_X(x,y)\leq \|\phi_{S'}(x)-\phi_{S'}(y)\|\leq \Corona_2'' d_X(x,y).$$

	By the construction of $\cB_1(X)$ and the procedure in \cite[Lemma 3.22]{david1993analysis},  we have that there is an $S\in \calF$ such that $S'\subset S$. There is then a $Y=Y_S \in \mathscr{Y}$ and a $\phi_{X,S}:3B_{Q(S)}\to Y$ such that if $Q\in S$ and $x,y\in 3B_Q$ satisfy
	$\dist(x,y)>\COARSE_1\ell(Q)$ then 
	$$\frac1{\Corona_1}d_X(x,y)\leq d_Y(\phi_{X,S}(x), \phi_{X, S}(y) )\leq \Corona_2 d_X(x,y).$$
	Thus, for any $R\in S'$ there is $Q_R\in \calD(Y_S)$ such that $\dist(\phi_{X,S}(R),Q_R)\lesssim \ell(R)\sim \ell(Q_R)$. If $A$ is large enough, then $Q_R\notin \cB_1(Y_S)$ whenever $R\in S'$.  
	Further, all the neighbours (in the stopping-time region $S'$) are associated (by the map $R\mapsto Q_R$) to the same $T\in \calF(Y_S)$, which we now denote by $T(S')$.
	
	Let $\|\cdot\|_{Q(S')} = \|\cdot\|_{Q(T(S'))}$ and let $\phi_{S'}$ be defined for $x\in 3B_{Q(S')}$ by
	$$\phi_{S'}(x) \coloneqq \phi_{Y_{S}, T(S')}\circ \phi_{X,S}(x).$$
	Now, if $R\in S'$ and  $x,y\in 3B_R$ satisfy
	$$d_X(x,y)>\COARSE' \ell(R)$$
	then,
	provided 
	$\COARSE'>\COARSE_1$,
	we have
	$$d_{Y_S}(\phi_{X,S}(x),\phi_{X,S}(y))>\frac1{\Corona_1}d_X(x,y).$$
	Since $\ell(Q_R)\sim \ell(R)$,
	if we also have
	$\COARSE'/\Corona_1 \gtrsim \COARSE_2$ then this gives \[d_{Y_S}(\phi_{X,S}(x),\phi_{X,S}(y)) > \theta_2 \ell(Q_R).\]
	Thus, we have
	$$\|\phi_{S'}(x)-\phi_{S'}(y)\|\geq 
	\frac1{\Corona_2} d_{Y_S}(\phi_{X,S}(x),\phi_{X,S}(y))>\frac1{\Corona_1\Corona_2}d_X(x,y).$$  
	The upper bound on $\|\phi_{S'}(x)-\phi_{S'}(y)\|$ follows similarly.
	
	
	
\end{proof}

	\newpage

\section{CD(normed spaces) implies UR}\label{s:corona-UR}

In this section we prove the following. 

\begin{thm}\label{t:corona-UR}
	Let $(X,d)$ be an Ahlfors $n$-regular metric space which admits a corona decomposition by normed spaces with constants $\Corona_1,\Corona_2$. Then $X$ is UR.
\end{thm}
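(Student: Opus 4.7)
The plan is to prove that $X$ has Big Pieces of Bi-Lipschitz Images of $\R^n$ (BPBI), which combined with Ahlfors regularity is equivalent to UR by \cite[Corollary 1.2]{schul2009bi}. The argument is a metric-space transcription of \cite[Sections 16--17]{david1991singular}: none of the Euclidean features used there are essential, only the Christ--David cube machinery of Lemma \ref{cubes}, the Carleson packing bookkeeping, Chebyshev's inequality, and the coarse bi-Lipschitz property of the corona maps $\varphi_S$, all of which are available in our setting.

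Fix $x_0 \in X$ and $0 < r < \diam(X)$, pick $Q_0 \in \calD$ with $Q_0 \subseteq B(x_0, r)$ and $\ell(Q_0) \sim r$, and fix the coarse parameter $\theta$ in Definition \ref{d:corona-Y} small (to be chosen depending on $\Corona_1, \Corona_2, n$). Apply the hypothesis to obtain $(\calB, \calG = \bigsqcup_{S \in \calF} S, \{\varphi_S\}_{S \in \calF})$, with $\varphi_S : 3B_{Q(S)} \to B_{Y_S}(y_S, 3\ell(Q(S)))$ for $Y_S = (\R^n, \|\cdot\|_S)$, satisfying the Carleson packing of $\calB \cup \{Q(S)\}_{S \in \calF}$ and the coarse bi-Lipschitz inequality of Definition \ref{d:corona-Y}(3). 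The goal is to exhibit, inside $Q_0$, a single $S \in \calF$ and a subset $E \subseteq Q_0$ with $\calH^n(E) \gtrsim \ell(Q_0)^n$ on which $\varphi_S|_E$ is bi-Lipschitz into $Y_S$ with constants depending only on $\Corona_1, \Corona_2$. Composing with the $n$-bi-Lipschitz linear identification $Y_S \to \R^n$ of Remark \ref{r:BM} and translating/rescaling will then yield the map witnessing BPBI at $B(x_0, r)$.

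The construction of $(S, E)$ uses a recursive stopping. First, using the small-boundary property Lemma \ref{cubes}(4) at the dyadic scales involved, extract a subset $E^* \subseteq Q_0$ of relative measure close to one on which $d(x, y) \geq \theta \ell(Q(x, y))$ for every distinct pair $x, y \in E^*$, where $Q(x, y) \in \calD$ is the smallest cube containing both. Starting from $Q_0$ inside its region $S_1$, set $E(Q_0) := E^* \cap (Q(S_1) \setminus \bigcup \min(S_1))$: on $E(Q_0)$ every pair $x, y$ has $Q(x, y)$ inside $S_1$, so Definition \ref{d:corona-Y}(3) applied to $Q(x, y)$ gives
\[ \tfrac{1}{\Corona_1}\, d(x, y) \leq \|\varphi_{S_1}(x) - \varphi_{S_1}(y)\|_{S_1} \leq \Corona_2 \, d(x, y). \]
If $\calH^n(E(Q_0)) \geq c \ell(Q_0)^n$ for a small absolute constant $c$, set $(S, E) = (S_1, E(Q_0))$ and stop. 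Otherwise, by Definition \ref{StoppingTime} every child in $\calG$ of a cube in $\min(S_1)$ is itself the top cube of a new region in $\calF$; repeat the construction inside each such child and iterate. This organizes $Q_0$ into a tree $\calT$ of top cubes with disjoint good pieces $\{E(Q)\}_{Q \in \calT}$ whose union covers $Q_0$ modulo $\calH^n$-null sets (the residual set of points descending forever is Borel--Cantelli null given the Carleson packing of $\calB$ and $\{Q(S)\}_{S \in \calF}$), so in particular $\sum_{Q \in \calT} \calH^n(E(Q)) \sim \ell(Q_0)^n$. The David--Semmes bookkeeping of \cite[\S 16--17]{david1991singular}, combining this measure identity with the Carleson packing of tops and a judicious choice of $c, \theta$ depending on $\Corona_1, \Corona_2$ and the packing constants, furnishes a top cube $Q^* \in \calT$ with $\ell(Q^*) \gtrsim \ell(Q_0)$ and $\calH^n(E(Q^*)) \gtrsim \ell(Q^*)^n$; we then set $(S, E) = (S_{Q^*}, E(Q^*))$ and conclude as above.

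The main obstacle is the last step: upgrading the ``large relative good piece somewhere in the tree'' output of the naive iteration to a piece at scale $\sim \ell(Q_0)$. This is the heart of \cite[\S 16--17]{david1991singular} and occupies most of those sections, intertwining the Carleson packing of tops, the measure identity above, and the small-boundary correction needed to enforce $d(x, y) \geq \theta \ell(Q(x, y))$ at every scale encountered by pairs in $E$. Each ingredient is available in the metric setting — Lemma \ref{cubes}, the Carleson packing bounds, Chebyshev, and Definition \ref{d:corona-Y}(3) — so the original argument transcribes essentially verbatim, with Christ--David cubes in the role of Euclidean dyadic cubes.
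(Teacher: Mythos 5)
There is a genuine gap, and it is the central one. Your plan is to locate a \emph{single} stopping-time region $S$ and a set $E\subseteq Q_0$ with $\calH^n(E)\gtrsim\ell(Q_0)^n$ on which $\varphi_S$ alone is bi-Lipschitz, and you assert that the David--Semmes bookkeeping of \cite[\S 16--17]{david1991singular} "furnishes a top cube $Q^*\in\calT$ with $\ell(Q^*)\gtrsim\ell(Q_0)$ and $\calH^n(E(Q^*))\gtrsim\ell(Q^*)^n$." No such statement is proved there, and it is false in general: a corona decomposition places no lower bound on the "unstopped" part $Q(S)\setminus\bigcup\min(S)$ of any individual region, so every region may stop after a bounded number of generations. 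The Carleson packing of $\calB\cup\{Q(S)\}_{S\in\calF}$ only controls the aggregate: your measure identity $\sum_{Q\in\calT}\calH^n(E(Q))\sim\ell(Q_0)^n$ together with $\sum_{Q\in\calT}\ell(Q)^n\lesssim\ell(Q_0)^n$ gives, by pigeonhole, some region whose good part is a definite fraction of \emph{its own} top cube, but that top cube may be of scale $\ell(Q^*)\ll\ell(Q_0)$ and located anywhere in $Q_0$; a piece of measure $\sim\ell(Q^*)^n$ is then useless for verifying BPBI at the ball $B(x_0,r)$ with $r\sim\ell(Q_0)$, since that requires measure $\gtrsim r^n$. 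This is exactly why the argument in \cite{david1991singular}, and in Section 9 of this paper, does not work region by region: one must build a \emph{single} map $f$ on a large subset $F\subseteq B(x,r)$ by gluing affinely rescaled copies of the maps $\pi_S$ across the different regions met below $Q_0$. The gluing is organized by the family of transition cubes, assigning to each transition cube $Q$ a cube $g(Q)\subseteq\R^n$ of diameter $C_3^{-N(Q)}\ell(Q)$ (where $N(Q)$ counts the transition cubes strictly containing $Q$) with the relative-distance estimates \eqref{e:g-lip}; the set $F$ is obtained by deleting only the boundary strips $\sigma(Q)$ of transition cubes and the points with $N(Q)\geq N$, both of which are small thanks to the packing bound \eqref{e:bound-T}. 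The bi-Lipschitz constant then degrades by $C_3^{N}$, which is acceptable because $N$ is fixed; nothing in your single-region scheme replaces this step, and without it the theorem does not follow.

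A secondary but real problem is your preliminary set $E^*$: you require $d(x,y)\geq\theta\,\ell(Q(x,y))$ for \emph{every} pair $x,y\in E^*$, where $Q(x,y)$ is the smallest cube containing both. Achieving this forces the removal of boundary strips of cubes at \emph{all} generations, and by Lemma \ref{cubes}(4) each generation costs measure $\sim\eta^{1/C}\ell(Q_0)^n$, so the total removed is infinite and $E^*$ will generically be $\calH^n$-null. The paper avoids this by removing only the boundaries of the Carleson-packed family of transition cubes, choosing $\theta=2\varrho$ in \eqref{e:theta} so that for a pair inside one region the separation hypothesis of Definition \ref{d:corona-Y}(3) is met simply by evaluating it on a cube $Q\in S$ of the correct scale $\ell(Q)\sim d(x,y)$, rather than by thinning the set at every scale.
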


This theorem is an immediate consequence of the following proposition. 

\begin{prop}\label{p:Cor-UR}
	Let $\ve > 0$ and $\Corona \geq 1.$ Suppose $(X,d)$ is Ahlfors $n$-regular and admits a corona decomposition with constants $\Corona_1,\Corona_2$. Then, there is $L \geq 1,$ depending only on $\ve,\Corona_1$ and $\Corona_2$, such that for each $x \in X$ and $r > 0$ there exists $F \subseteq B(x,r)$ satisfying $\calH^n(B(x,r) \setminus F) \leq \ve r^n$ and an $L$-bi-Lipschitz map $f : F \to \R^n$.
\end{prop}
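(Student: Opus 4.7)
The plan is to transfer the Euclidean argument of David--Semmes \cite[Sections~16--17]{david1991singular} to the metric setting. Given $x \in X$ and $r > 0$, I fix $Q_0 \in \calD$ with $x \in c_0 B_{Q_0}$ and $\ell(Q_0) \sim r$ and apply the local corona decomposition (Remark~\ref{r:bad}) to $\calD(Q_0)$ with the coarse parameter $\theta > 0$ chosen sufficiently small in $\ve, \Corona_1, \Corona_2, n$ (to be fixed). For each $y \in Q_0 \setminus \bigcup \calB$, let $S^*(y) \in \calF$ denote the deepest stopping-time region in $y$'s descending tower.

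The first main step is to isolate a single ``dominant'' stopping-time region $S_* \in \calF$. A tree pigeonhole argument using the Carleson packing of $\{Q(S)\}_{S \in \calF}$ and of $\calB$ yields $S_* \in \calF$ with $Q(S_*) \subseteq 3B_{Q_0}$, $\ell(Q(S_*)) \gtrsim r$, and $\calH^n(\{y \in B(x,r) : S^*(y) = S_*\}) \gtrsim r^n$; this combinatorial step uses only Carleson packing and the structure of Christ--David cubes (Lemma~\ref{cubes}), and transfers from \cite[Section~17]{david1991singular} essentially verbatim. I then trim this set further via Lemma~\ref{l:David-Semmes} applied to $\calB \cup \{Q(S') : S' \subsetneq S_*\}$ together with Lemma~\ref{cubes}(4) at the finitely many scales in $[\eta r, r]$, for some $\eta = \eta(\ve)$, to obtain $F \subseteq B(x,r)$ with $\calH^n(B(x,r) \setminus F) \leq \ve r^n$ such that every $y \in F$ belongs to a cube of $S_*$ at each scale $\varrho^k \in [\eta r, r]$ and satisfies the quantitative well-inside condition $\dist(y, X \setminus Q) \geq \eta \ell(Q)$ for each such $Q \in S_*$ containing $y$.

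For the map, I use Remark~\ref{r:BM} to fix an $n$-bi-Lipschitz linear identification $T \colon (\R^n, \|\cdot\|_{S_*}) \to \R^n$ and set $f \coloneqq T \circ \varphi_{S_*}|_F$. The upper bound $\|f(y) - f(z)\| \leq n\Corona_2 \, d(y,z)$ is immediate from Definition~\ref{d:corona-Y}(3). For the lower bound, given distinct $y, z \in F$, the smallest cube $Q^* \in S_*$ containing both is well defined and, by the stopping-time sibling property (Definition~\ref{StoppingTime}(3)), $y$ and $z$ lie in two distinct children of $Q^*$ that both belong to $S_*$. The well-inside condition at scale $\ell(Q^*)$ then forces $d(y,z) \geq \eta \varrho \, \ell(Q^*)$, so choosing $\theta \leq \eta \varrho$ ensures Definition~\ref{d:corona-Y}(3) applies to give $\|\varphi_{S_*}(y) - \varphi_{S_*}(z)\|_{S_*} \geq \Corona_1^{-1} d(y,z)$, whence $f$ is $L$-bi-Lipschitz with $L = n \max(\Corona_1, \Corona_2)$.

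The main obstacle is the tree pigeonhole step producing $S_*$: the Carleson packing bounds $\sum_S \ell(Q(S))^n \lesssim \ell(Q_0)^n$ but by itself does not single out a dominant region carrying a definite fraction of $Q_0$'s mass. The standard resolution from \cite[Section~17]{david1991singular} is to stratify stopping-time regions by generation in the tree rooted at $S_0$ and apply Chebyshev to a generation where the mass concentrates; this carries over unchanged. A secondary delicacy is that for pairs $y, z \in F$ with $d(y,z) < \eta r$ the relevant scale $\ell(Q^*) \sim d(y,z)$ lies below the range where the quantitative well-inside control has been imposed, requiring one to work in the ``limit set'' of $S_*$ (points whose tower stays inside $S_*$ indefinitely) and to iterate Lemma~\ref{cubes}(4) cube-by-cube in a scale-wise fashion; as in the Euclidean case, this last point is where the majority of the careful bookkeeping lies.
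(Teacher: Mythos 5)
There is a genuine gap, and it sits exactly where you located the ``main obstacle.'' The proposition asks for $\calH^n(B(x,r)\setminus F)\leq \ve r^n$ for \emph{every} $\ve>0$, with $L=L(\ve)$. Your plan is to isolate a single dominant stopping-time region $S_*$ and take $f=T\circ\varphi_{S_*}|_F$, but the Carleson packing of $\{Q(S)\}_{S\in\calF}$ and of $\calB$ cannot produce such an $S_*$ covering all but $\ve r^n$ of the ball: the packing only bounds $\sum_S\ell(Q(S))^n\lesssim \ell(Q_0)^n$, and nothing prevents the mass of $Q_0$ from fragmenting among many regions at every generation (a region's minimal cubes may cover all of its top cube). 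A Chebyshev argument on generations gives that most points pass through at most $N=N(\ve)$ regions; it does not give one region carrying mass $\gtrsim r^n$, and certainly not mass $(1-\ve)r^n$. At best the single-region route yields a big-pieces statement with a fixed fraction, which is strictly weaker than the proposition. Relatedly, your claimed constant $L=n\max(\Corona_1,\Corona_2)$, independent of $\ve$, is a warning sign: the statement (and the truth) requires $L$ to degrade as $\ve\to 0$, precisely because one must follow the corona structure through more and more stopping-time regions to shrink the exceptional set.

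The second gap is in the lower bi-Lipschitz bound at small scales. Your trimming only guarantees that $y\in F$ sits in cubes of $S_*$ down to scale $\eta r$; for a pair $y,z\in F$ with $d(y,z)<\eta r$ lying in a common minimal cube of $S_*$, Definition \ref{d:corona-Y}(3) gives no lower bound for $\|\varphi_{S_*}(y)-\varphi_{S_*}(z)\|_{S_*}$, and $\varphi_{S_*}$ may collapse such pairs entirely. Passing to the ``limit set'' of $S_*$ restores injectivity but destroys the measure estimate, since that set can be $\calH^n$-null. The paper (following \cite[Proposition 16.1]{david1991singular}, not Section 17) resolves both issues simultaneously: it keeps all points that meet at most $N(\ve)$ \emph{transition cubes} and avoid $\eta$-boundary layers $\sigma(Q)$, and it glues the charts $\pi_S$ of the successive regions by mapping each transition cube $Q$ to a Euclidean cube of diameter $C_3^{-N(Q)}\ell(Q)$ nested inside the image of its predecessor; the geometric contraction $C_3^{-N(Q)}$ is what makes the stitched map $f(x)=\phi_{S(x)}(h_{S(x)}(x))$ bi-Lipschitz, with constant of size roughly $C_3^{N(\ve)}$ --- this is where the $\ve$-dependence of $L$ enters and it cannot be avoided by a single-chart construction.
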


\begin{rem}\label{r:similar-DS}
	This proposition is analogous to \cite[Proposition 16.1]{david1991singular}. Although our definition of corona decomposition differs from the one used in \cite{david1991singular}, the proof of Proposition \ref{p:Cor-UR} follows a very similar method and only differs in our use of Definition \ref{d:corona-Y} (3). We will give a brief outline of the proof and give the details of any significant changes. 
\end{rem}

Fix constants $\ve,\Corona_1,\Corona_2$ and fix a point $x \in X$ and radius $r > 0$ for the remainder of the section. Our first step is to define $F$. Roughly speaking, $F$ will be the parts of $B(x,r)$ which are not contained in too many stopping-time regions from corona decomposition and which do not lie too near to the boundaries of certain cubes. 

Let's be more precise. Let $\calD$ be a system of Christ-David cubes on $X$. Fix $k_0 \in \N$ such that $5\varrho^{k_0+1} < r \leq 5\varrho^{k_0}$ and let $\{Q_{0,i}\}_{i \in I}$ be the cubes in $\mathcal{D}_{k_0}$ which intersect $B(x,r).$ Since $X$ is Ahlfors $n$-regular, we know that 
\begin{align}\label{e:I-bound}
	|I| \lesssim_{C_0,n} 1. 
\end{align}
Let 
\begin{align}\label{e:theta}
	\theta = 2\varrho \leq 2c_0,
\end{align}
where $\varrho$ and $c_0$ are the constants appearing in Definition \ref{cubes}. Since $X$ admits a corona decomposition by normed spaces with constants $\Corona_1,\Corona_2$ (recalling as well Remark \ref{r:bad}), for each $i \in I$ there exists a collection of stopping-time regions $\mathcal{F}_i$ such that 
\begin{align}\
	\{Q \in \calD : Q \subseteq Q_{0,i}\} &= \bigcup_{S \in \calF_i} S,  \label{e:S_i} \\
	\sum_{S \in \calF_i } \ell(Q(S))^d  &\lesssim_\theta \ell(Q_{0,i})^d, \label{e:control}
\end{align} 
and for each $S \in \calF_i$ which is not a singleton, there is a norm $\|\cdot\|_S$ and a map $\pi_S \colon 3B_{Q(S)} \to B_{\|\cdot\|_S}(0,3\ell(Q(S)))$ such that if $Q \in S$ and $x,y \in 3B_Q$ satisfy $d(x,y) \geq \theta \ell(Q),$ then 
\begin{align}\label{e:(4)}
	 \frac{1}{\Corona_1}d(x,y) \leq \| \pi_S(x) - \pi_S(y) \|_S \leq \Corona_2d(x,y).
\end{align}

Let
\[ R_0 = \bigcup_{i \in I} Q_{0,i} \quad \mbox{ and } \quad \calF = \bigcup_{i \in I} \{S : S \in \calF_i\}. \]
Using the same terminology as in \cite{david1991singular}, we call a cube $Q \subseteq R_0$ a \textit{transition cube} if it is the top cube, or a minimal cube of some $S \in \calF$. Let $T$ denote the set of transition cubes. Combining \eqref{e:I-bound} with \eqref{e:control}, we have 
\begin{align}\label{e:bound-T}
	\sum_{Q \in T} \ell(Q)^n \lesssim_\theta \sum_{i \in I} \ell(Q_{i,0})^n \lesssim_{C_0,n} r^n. 
\end{align}
For $Q \in \mathcal{D},$ let $N(Q)$ denote the number of transition cubes strictly containing $Q$ (this is the quantity $\ell(Q)$ in \cite{david1991singular}). Let $\eta > 0$ (which will be chosen small momentarily) and define the boundary of a cube $Q \in \calD_k$ as 
\[ \sigma(Q) =  \{ x \in Q : \dist(x,E \setminus Q) \leq \eta \varrho^k \}. \] 
Let $N \geq 1$ (to be chosen large) and define $F \subseteq B(x,r)$ by setting 
\[ F = B(x,r) \cap \left(\bigcup_{Q \in T} \sigma(Q) \right)^c \cap\left( \bigcup_{\substack{Q \in T\\ N(Q) \geq N}} Q \right)^c. \]

\begin{lem}
	For $N$ large enough and $\eta$ small enough, both depending on $C_0,\ve$ and $\theta$, we have $\calH^n(B(x,r) \setminus F) \leq \ve r^n.$
\end{lem}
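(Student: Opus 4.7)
The plan is to split $B(x,r) \setminus F$ into two pieces according to the two "bad" sets cut out in the definition of $F$, and bound each by $\ve r^n / 2$ after appropriate choice of parameters. Concretely, since
\[ B(x,r) \setminus F \subseteq \bigcup_{Q \in T} \sigma(Q) \; \cup \; \bigcup_{\substack{Q \in T \\ N(Q) \geq N}} Q, \]
it suffices to estimate each of these two unions.

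For the first piece (the boundary term), I would invoke Lemma \ref{cubes}(4), which gives $\calH^n(\sigma(Q)) \lesssim_{C_0} \eta^{1/C} \ell(Q)^n$ for each $Q \in \calD$, together with the packing estimate \eqref{e:bound-T}, $\sum_{Q \in T} \ell(Q)^n \lesssim_{C_0,n,\theta} r^n$. Summing,
\[ \calH^n\!\left(\bigcup_{Q \in T} \sigma(Q)\right) \leq \sum_{Q \in T} \calH^n(\sigma(Q)) \lesssim_{C_0,n,\theta} \eta^{1/C} r^n, \]
which can be made $\leq \ve r^n / 2$ by choosing $\eta$ small depending on $C_0, n, \theta$ and $\ve$.

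For the second piece, define the counting function $g(y) = \#\{Q \in T : y \in Q\}$. The key observation is that if $y \in Q$ with $Q \in T$ and $N(Q) \geq N$, then $y$ lies in $Q$ plus each of the $\geq N$ transition cubes strictly containing $Q$, so $g(y) \geq N$. Hence
\[ \bigcup_{\substack{Q \in T \\ N(Q) \geq N}} Q \;\subseteq\; \{y \in R_0 : g(y) \geq N\}. \]
Now I would integrate $g$ using Fubini together with the Ahlfors upper regularity of $X$ and the packing estimate \eqref{e:bound-T}:
\[ \int_{R_0} g \, d\calH^n = \sum_{Q \in T} \calH^n(Q) \lesssim_{C_0} \sum_{Q \in T} \ell(Q)^n \lesssim_{C_0,n,\theta} r^n. \]
Chebyshev's inequality then yields $\calH^n(\{g \geq N\}) \lesssim_{C_0,n,\theta} r^n / N$, and choosing $N$ large (depending on $C_0, n, \theta$ and $\ve$) makes this $\leq \ve r^n / 2$, completing the proof.

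The two steps are essentially independent, neither one is the main obstacle; both are routine consequences of the small-boundary property of Christ–David cubes and the packing bound on transition cubes inherited from the corona decomposition. The real substance of Proposition \ref{p:Cor-UR} will come later, when one constructs the bi-Lipschitz map $f$ on $F$ itself using the charts $\pi_S$ from \eqref{e:(4)} and glues them across stopping-time regions — this lemma just ensures that $F$ has almost full measure so that the construction captures a big piece.
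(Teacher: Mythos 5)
Your proposal is correct and follows essentially the same route as the paper: the boundary term is handled exactly as in the paper via Lemma \ref{cubes}(4) and the packing bound \eqref{e:bound-T}, and your Chebyshev argument with the counting function $g$ is just an unwound version of the paper's one-line estimate $\calH^n\bigl(\bigcup_{Q \in T,\, N(Q)\geq N} Q\bigr) \leq \frac{1}{N}\sum_{Q\in T}\calH^n(Q)$. No gaps.
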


\begin{proof}
	Ahlfors regularity, Lemma \ref{cubes} (4) and \eqref{e:bound-T} imply, after choosing $\eta$ small enough depending on $C_0,\ve$ and $\theta$, that
	\begin{align}
		\calH^n\left(\bigcup_{Q \in T} \sigma(Q)\right) \lesssim \sum_{Q \in T} \eta^\frac{1}{C} \ell(Q)^n \lesssim_{C_0,\theta} \eta^\frac{1}{C} r^n \leq \ve r^n/2. 
	\end{align}
	By taking $N$ large enough, depending on $C_0,\ve$ and $\theta,$ we also get 
	\begin{align}
		\calH^n\left(\bigcup_{\substack{Q \in T \\ N(Q) \geq N}} Q\right) \leq \frac{1}{N} \sum_{Q \in T} \calH^n(Q) \lesssim_{C_0,n,\theta} \frac{1}{N} r^n \leq \ve r^n/2. 
	\end{align}
	The result now follows.
\end{proof}

From now on, we fix $N$ large enough such that the above lemma holds. The next step in the proof of Proposition \ref{p:Cor-UR} is to construct the bi-Lipschitz map from $F$ to $\R^n$. This is done by stitching together the maps associated to each stopping-time region $S \in \calF$. The amount of stitching is controlled by the constant $N$. We begin by defining a preliminary map $g$ on the set of cubes $Q$ contained in $R_0$.

\begin{lem}
	There exists a map $g$ from the set of transition cubes $T$ such that if $Q \in T$ then $g(Q)$ is a cube in $\R^d$ with $\diam(g(Q)) = C_3^{-N(Q)} \ell(Q)$ for some large constant $C_3 > 1$. Furthermore, for any $Q,Q' \in T$ satisfying $Q \subseteq Q'$ we have $g(Q) \subseteq \tfrac{1}{2}g(Q').$
\end{lem}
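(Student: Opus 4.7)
The map $g$ will be constructed by induction on the depth $N(Q)$ in the tree of transition cubes. In the inductive step I will need a simple packing argument in $\R^n$: the children of a parent transition cube fit comfortably inside a concentric half--sized image provided $C_3$ is large enough.

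First I would set up the induction. At the root level, the cubes $\{Q_{0,i}\}_{i\in I}$ with $N(Q_{0,i})=0$ are finite in number by \eqref{e:I-bound}, so I choose disjoint (in $\R^n$) axis--parallel $\ell^\infty$--cubes $g(Q_{0,i})$ with $\diam g(Q_{0,i})=\ell(Q_{0,i})$ placed anywhere. (Disjointness between different trees plays no role in the conclusion, only for bookkeeping.) Now suppose the map has been defined on all transition cubes with $N(\cdot)\leq k$. Fix $Q'\in T$ with $N(Q')=k$, and let $\mathscr C(Q')$ denote the set of transition cubes $Q\subsetneq Q'$ for which $Q'$ is the smallest transition cube strictly containing $Q$; these are exactly the ``$T$--children'' of $Q'$, and they have $N(Q)=k+1$.

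The main step is to place, for each $Q\in\mathscr C(Q')$, an axis--parallel cube $g(Q)\subset\tfrac12 g(Q')$ with prescribed diameter $C_3^{-(k+1)}\ell(Q)$, in such a way that the chosen cubes are pairwise disjoint (disjointness is not strictly required by the statement, but it is convenient). Since the cubes in $\mathscr C(Q')$ are pairwise disjoint subcubes of $Q'\in\calD$, Ahlfors regularity of $X$ gives
\[
\sum_{Q\in\mathscr C(Q')}\ell(Q)^n\lesssim_{C_0,n}\ell(Q')^n.
\]
Consequently, the total $\ell^\infty$--volume of the cubes we must place satisfies
\[
\sum_{Q\in\mathscr C(Q')}\diam g(Q)^n
=C_3^{-n(k+1)}\sum_{Q\in\mathscr C(Q')}\ell(Q)^n
\lesssim_{C_0,n} C_3^{-n(k+1)}\ell(Q')^n,
\]
whereas $\tfrac12 g(Q')$ has $\ell^\infty$--volume $2^{-n}C_3^{-nk}\ell(Q')^n$. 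Thus the ratio of total child--volume to available volume is at most a dimensional constant times $C_3^{-n}$, which can be made as small as we wish by enlarging $C_3$. A standard greedy dyadic packing in $\ell^\infty$ (subdivide $\tfrac12 g(Q')$ dyadically until the cells are comparable to the largest child to be placed, place it in an empty cell, and iterate through children in decreasing order of side length) then fits all the $g(Q)$, $Q\in\mathscr C(Q')$, disjointly inside $\tfrac12 g(Q')$. This closes the induction and defines $g$ with the required diameter property.

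Finally, I would verify the nesting condition for \emph{arbitrary} containments $Q\subseteq Q'$ in $T$ (not only parent--child pairs) by a short inductive chain: if $Q=Q_0\subsetneq Q_1\subsetneq\cdots\subsetneq Q_m=Q'$ is the sequence of successive transition--cube parents, then by construction $g(Q_{i})\subseteq\tfrac12 g(Q_{i+1})\subseteq g(Q_{i+1})$ for each $i$, so iterating gives $g(Q)\subseteq\tfrac12 g(Q')$ as required. The only substantive step is the packing estimate, and the only potential obstacle is choosing $C_3$ correctly: I expect a choice $C_3=C_3(C_0,n)$ (independent of $\ve$, $\Corona_1,\Corona_2$, $N$) to suffice, since the bound on total volume comes purely from Ahlfors regularity and the disjointness of subcubes in $\calD$. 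No use of the stopping--time structure or of the maps $\pi_S$ is needed for this lemma --- those will enter only when stitching the $\pi_S$ together using the combinatorial skeleton $g$ in subsequent lemmas.
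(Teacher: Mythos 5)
Your packing argument does produce a map satisfying the words of the statement, but it is not the paper's construction, and the divergence matters: this lemma is really the record of a specific construction whose extra, unstated features carry the whole of the rest of Section \ref{s:corona-UR}. In the paper, when $Q$ is a \emph{minimal} transition cube its children's images are placed inside $\tfrac12 g(Q)$ with the quantitative separation $\dist(g(R),g(R'))\geq C_3^{-N(R)}\ell(R)$ (see \eqref{e:g-estimate}); and when $Q=Q(S)$ is the top of a non-singleton stopping-time region, each $R\in\min(S)$ receives the cube $g(R)$ \emph{centred at} $\phi_S(h_S(x_R))$, where $h_S=T_S\circ\pi_S$ is the corona map composed with a John-type isomorphism and $\phi_S$ is an affine contraction into $\tfrac13 g(Q)$ satisfying \eqref{e:in-g(Q)} and \eqref{e:phi-est}. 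These two features are exactly what the next lemma \eqref{e:g-lip} and the bi-Lipschitz verification for $f$ (Cases 1(i), 1(ii) and 2) use: the comparability of $\dist(g(R),g(R'))$ with $\ell(R)+\ell(R')+\dist(R,R')$ comes from \eqref{e:(4)} applied at the centres $x_R,x_{R'}$, and the compatibility of $f(x)=\phi_{S(x)}(h_{S(x)}(x))$ with the cubes $g(R)$ is only true because $g(R)$ sits at $\phi_S(h_S(x_R))$.

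So the genuine gap is your closing claim that ``no use of the stopping-time structure or of the maps $\pi_S$ is needed for this lemma.'' A greedy volumetric packing of the children into $\tfrac12 g(Q')$ satisfies the displayed conclusion, but it destroys the metric information: two minimal cubes of $S$ that are far apart in $X$ could be packed adjacent to one another (killing the lower bound in \eqref{e:g-lip}), and two adjacent tiny minimal cubes could be packed at opposite corners of $\tfrac12 g(Q')$ (killing the upper bound), after which the map $f$ built in the sequel is no longer bi-Lipschitz on $F$. The fix is to prove the lemma the way the paper does: use Ahlfors regularity only to handle the boundedly many children of a minimal transition cube (with the separation \eqref{e:g-estimate}), and for top cubes of stopping-time regions place the images via $\phi_S\circ T_S\circ\pi_S$ evaluated at cube centres, the containment in $\tfrac12 g(Q)$ coming from \eqref{e:in-g(Q)}, \eqref{e:phi-est} and the triangle inequality. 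Note also that with this construction $C_3$ cannot be taken to depend only on $C_0$ and $n$: the affine rescaling constant $C_4$ depends on $\Corona_2$, and the following lemma requires $C_3$ large depending on $\Corona_1,\Corona_2$.
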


\begin{proof}
	We define $g$ inductively, starting with the top cubes $Q_{0,i}.$ We set $\{g(Q_{0,i})\}_{i \in I}$ to be a collection of cubes in $\R^n$ each with diameter $5\varrho^{k_0}$ and with mutual distances between $5\varrho^{k_0}$ and $5C\varrho^{k_0}$ for some $C > 1$ (with ${k_0}$ as defined after the statement of Remark \ref{r:similar-DS}). Such a prescription is possible if $C$ is chosen large enough by \eqref{e:I-bound}. Since $N(Q_{i,0}) = 0$ for each $i \in I,$ the cubes $g(Q_{i,0})$ have the correct diameter.  
	
	We now define $g(Q)$ inductively for the remaining transition cubes. Suppose $g(Q)$ has been defined for some $Q \in T \cap \calD_k$ with $k \geq k_0.$ If $Q$ is a minimal cube of a stopping-time region, each of its children is in $T$. We want to $g$ for these cubes. By definition $N(R) = N(Q) + 1$ for each child $R$ of $Q$. Since $X$ Ahlfors $n$-regular, $Q$ has a bounded number of children (with constant depending only on $n$ and the regularity constant). Thus, for $C_3> 1$ large enough, we can choose $g(R)$ to be a collection of cubes with diameters $C_3^{-N(R)}\ell(R)$ contained in $\tfrac{1}{2}g(Q)$ such that 
	\begin{align}\label{e:g-estimate}
		\dist(g(R),g(R')) \geq C_3^{-N(R)} \ell(R).
	\end{align} 
	
	Assume now that $Q$ is the top cube of a stopping-time region $S \in \calF$, which is not a singleton. Each minimal cube of $S$ is in $T$ and we want to define $g$ for each one. At this point we deviate from what is written in \cite{david1991singular}, we make instead use of our definition of corona decomposition.

	By Remark \ref{r:BM}, we can find a linear isomorphism $T_S : (\R^n,\|\cdot\|_S) \to \R^n$ such that $\|T_S\|_{\op} \leq 1$ and $\|T^{-1}_S\|_{\op} \leq n$. Set $h_S = T_S \circ \pi_S.$ By \eqref{e:(4)} we have that $\pi_S(Q)$ is contained in a ball of radius $\Corona_2\ell(Q)$ in $(\R^n,\|\cdot\|_S).$ Since $\|T_S\|_\op \leq 1,$ the same is true of $h_S(Q)$ in $\R^n.$ By assumption, we have $\diam(g(Q)) = C_3^{-N(Q)}\ell(Q),$ so there exists an affine map $\phi_S \colon \R^n \to \R^n$ such that
	\begin{align}\label{e:in-g(Q)}
		\phi_S(h_S(Q)) \subseteq \frac{1}{3}g(Q)
	\end{align}
	and 
	\begin{align}\label{e:phi-est} C_4^{-1}C_3^{-N(Q)}|p-q| \leq |\phi_S(p) - \phi_S(q)| \leq C_3^{-N(Q)}|p-q| 
	\end{align}
	each $p,q \in \R^n$ and some $C_4 \geq 1$ chosen large enough depending on $\Corona_2$. Then, if $R \in \min(S)$, define $g(R)$ to be the cube centred at $\phi_S(h_S(x_R))$ of size $C_3^{-N(Q)-1} \ell(R) = C_3^{-N(R)}\ell(R).$ It is easy to see that $g(R) \subseteq \tfrac{1}{2}g(Q)$ by the triangle inequality. 
\end{proof}

The following lemma lets us estimate relative distance between cubes $g(R)$ and $g(R').$

\begin{lem}
	If $C_3$ is chosen large enough depending on $\Corona_1$ and $\Corona_2$ we get the following. Let $S \in \calF$ be a stopping-time that is not a singleton and suppose $R,R' \in \min(S).$ Then,  
	\begin{align}\label{e:g-lip} 
		C_3^{-N(Q) -1}    [\ell(R) + \ell(R') + \dist(R,R')] &\leq \dist(g(R),g(R')) \\
		&\hspace{2em}\leq   3C_3^{-N(Q)} [\ell(R) + \ell(R') + \dist(R,R')] .
	\end{align}	
\end{lem}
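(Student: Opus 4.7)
The plan is to bound $\dist(g(R),g(R'))$ by the distance between the centers $\phi_S(h_S(x_R))$ and $\phi_S(h_S(x_{R'}))$, which is in turn controlled by the bi-Lipschitz estimates for $\phi_S$ from \eqref{e:phi-est} and the norm bounds $\|T_S\|_{\op}\le 1$, $\|T_S^{-1}\|_{\op}\le n$, provided one can apply the corona estimate \eqref{e:(4)} to the pair $(x_R,x_{R'})$. The diameter of $g(R)$ (resp. $g(R')$) is $C_3^{-N(Q)-1}\ell(R)$ (resp. $C_3^{-N(Q)-1}\ell(R')$), which is smaller by a factor $C_3^{-1}$ than the target comparison quantity $C_3^{-N(Q)}[\ell(R)+\ell(R')+\dist(R,R')]$, so once we have the center-to-center comparison the diameter terms will be absorbed by choosing $C_3$ large.

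The first step, which is the crux of the argument, is to find a cube $Q''\in S$ for which \eqref{e:(4)} applies to the pair $(x_R,x_{R'})$. Take $Q''$ to be the smallest cube in $\calD$ containing $R$ such that $x_{R'}\in 3B_{Q''}$; this exists since $x_{R'}\in Q(S)\subseteq 3B_{Q(S)}$, and satisfies $R\subseteq Q''\subseteq Q(S)$, hence $Q''\in S$ by Definition \ref{StoppingTime}(2). If $Q''\supsetneq R$, let $Q''_-$ be the child of $Q''$ containing $R$. Minimality of $Q''$ forces $x_{R'}\notin 3B_{Q''_-}$, so $d(x_{R'},x_{Q''_-})>3\varrho\ell(Q'')$ while $d(x_R,x_{Q''_-})\le \varrho\ell(Q'')$, giving $d(x_R,x_{R'})>2\varrho\ell(Q'')=\theta\ell(Q'')$. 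If $Q''=R$, then since $R\cap R'=\emptyset$ and $B(x_R,c_0\ell(R))\subseteq R$, we get $d(x_R,x_{R'})\ge c_0\ell(R)=2\varrho\ell(R)=\theta\ell(R)$, using the definitions $c_0=1/500$, $\varrho=1/1000$, and $\theta=2\varrho$ from \eqref{e:theta}. Either way \eqref{e:(4)} applies with $Q=Q''$, yielding
\[\Corona_1^{-1}d(x_R,x_{R'})\le \|\pi_S(x_R)-\pi_S(x_{R'})\|_S \le \Corona_2\, d(x_R,x_{R'}).\]

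The second step is to compare $d(x_R,x_{R'})$ with $\ell(R)+\ell(R')+\dist(R,R')$. The upper bound $d(x_R,x_{R'})\le \ell(R)+\ell(R')+\dist(R,R')$ is immediate by the triangle inequality through the boundaries of $R$ and $R'$. The lower bound $d(x_R,x_{R'})\gtrsim_{c_0}\ell(R)+\ell(R')+\dist(R,R')$ follows because $B(x_R,c_0\ell(R))\subseteq R$ is disjoint from $R'\ni x_{R'}$ (and symmetrically), giving $d(x_R,x_{R'})\ge c_0\max(\ell(R),\ell(R'))$, which combined with $d(x_R,x_{R'})\ge\dist(R,R')$ yields the two-sided comparison up to a constant depending only on $c_0$. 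Chaining these estimates with \eqref{e:phi-est} and the norm bounds on $T_S$ produces constants $c_-,c_+$ depending on $n,\Corona_1,\Corona_2,c_0,C_4$ such that
\[c_-C_3^{-N(Q)}[\ell(R)+\ell(R')+\dist(R,R')]\le |\phi_S(h_S(x_R))-\phi_S(h_S(x_{R'}))|\le c_+C_3^{-N(Q)}[\ell(R)+\ell(R')+\dist(R,R')].\]
Choosing $C_3$ sufficiently large depending on $n,\Corona_1,\Corona_2,c_0,C_4$ absorbs both the dimensional cube-diameter correction $\sim\sqrt{n}\,C_3^{-N(Q)-1}(\ell(R)+\ell(R'))$ and the constants $c_\pm$ into the stated bounds $C_3^{-N(Q)-1}$ and $3C_3^{-N(Q)}$. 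The main obstacle is the first step: showing that \eqref{e:(4)} can be applied even when $d(x_R,x_{R'})\ll\ell(Q(S))$, which is resolved by the descent to the critical ancestor $Q''$ whose minimality is precisely matched to the scale of separation between $x_R$ and $x_{R'}$, using the calibration $\theta=2\varrho=c_0$ built into \eqref{e:theta}.
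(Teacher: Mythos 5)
Your argument is essentially the route the paper intends: the paper only sketches this lemma (deferring to David--Semmes and to Hyde), and records that the two inputs needed are $d(x_R,x_{R'})\gtrsim \ell(R)+\ell(R')+\dist(R,R')$ and the lower bound $\|\pi_S(x_R)-\pi_S(x_{R'})\|_S\geq \Corona_1^{-1}d(x_R,x_{R'})$, obtained by applying \eqref{e:(4)} at an intermediate cube of $S$ whose scale matches the separation of the centres. Your selection of that cube (the smallest ancestor $Q''$ of $R$ with $x_{R'}\in 3B_{Q''}$, which lies in $S$ by Definition \ref{StoppingTime}(2) since $R\subseteq Q''\subseteq Q(S)$) differs only cosmetically from the paper's (the smallest-scale cube of $S$ whose triple ball contains both centres); in either case the minimality argument plus the calibration $\theta=2\varrho=c_0$ from \eqref{e:theta} gives $d(x_R,x_{R'})\geq\theta\ell(Q'')$, so \eqref{e:(4)} applies and this part of your proof is correct.

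The one step that does not work as written is the final constant-matching for the upper bound in \eqref{e:g-lip}. From \eqref{e:phi-est}, $\|T_S\|_{\op}\le 1$ and \eqref{e:(4)}, your chain gives $\dist(g(R),g(R'))\le |\phi_S(h_S(x_R))-\phi_S(h_S(x_{R'}))|\le \Corona_2\, C_3^{-N(Q)}[\ell(R)+\ell(R')+\dist(R,R')]$, i.e.\ $c_+=\Corona_2$, and enlarging $C_3$ cannot reduce this to the stated $3C_3^{-N(Q)}$: the factor $C_3^{-N(Q)}$ appears on both sides of the upper inequality, so their ratio is independent of $C_3$ (the absorption trick only works for the lower bound, where the target carries the extra factor $C_3^{-1}$). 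To get the literal constant $3$ you must instead use the freedom in the choice of $\phi_S$: in view of \eqref{e:in-g(Q)} one takes $\phi_S$ with scaling of order $C_3^{-N(Q)}/\Corona_2$ (this is exactly why $C_4$ in \eqref{e:phi-est} is allowed to depend on $\Corona_2$), after which the upper bound comes with a harmless absolute constant and the extra factor $\Corona_2$ is shifted to the lower bound, where it is absorbed by choosing $C_3$ large depending on $\Corona_1$ and $\Corona_2$, as the hypothesis of the lemma permits. Alternatively, an upper constant $\Corona_2$ in place of $3$ would suffice for every later use of \eqref{e:g-lip} (the bi-Lipschitz estimates for $f$ only invoke comparability with constants depending on $C_3$ and $N$), but as a proof of the inequality as stated, your last sentence needs this repair.
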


\begin{rem}
	The proof of the above lemma can be found on \cite[Page 104]{david1991singular} or in the proof of \cite[Lemma 7.6]{hyde2022d} with some more details. In our notation, the crucial estimates for these proofs are
	\begin{align}
		d(x_R , x_{R'}) \geq C^{-1} (\diam R + \diam R' + \dist(R,R'))
	\end{align}
	and 
	\begin{align}\label{e:lower-bound}
		\| \pi_S(x_R) - \pi_S(x_{R'}) \|_S \geq \frac{1}{2}d(x_R , x_{R'} ).
	\end{align}
	See above \cite[Equation 16.11]{david1991singular} for the corresponding estimates. The first estimate is immediate from the construction of the Christ-David cubes. For the second estimate, David and Semmes appeal to condition (3') in Remark \ref{r:usual-corona}. For us, we get the estimate with $\tfrac{1}{\Corona_1}d(x_R,x_{R'})$ on the right-hand side, by Definition \ref{d:corona-Y} (3). Indeed, let $k \geq k_0$ be the largest integer such that there exists $Q \in S \cap \calD_k$ with  $x_R,x_{R'} \in 3B_Q$. Suppose there exists $Q' \in S \cap \calD_{k+1}$ such that $x_R \in Q'$ or $x_{R'} \in Q$. By maximality of $k$ and the triangle inequality we have $\dist(x_R,x_{R'}) \geq 10\varrho^{k+1} = 2\varrho \ell(Q).$ The estimate above then follows from \eqref{e:(4)} and our choice of $\theta$ in \eqref{e:theta}. If there does not exists $Q' \in S \cap \calD_{k+1}$ satisfying the above it must be that $R,R' \in \calD_k.$ In this case we get $d(x_R,x_{R'}) \geq 2c_0\ell(Q)$ and the estimate again follows from \eqref{e:(4)} and our choice of $\theta$. The fact that we prove \eqref{e:lower-bound} with a worse constant does not cause any significant difficulties in the proof, one just needs to take the constant $C_3$ large enough depending on $\Corona_1$. 
\end{rem}

Now that we have defined $g,$ we can use it to define the mapping $f$. By definition, for each $x \in F$ there is a minimal transition cube, which we denote by $Q(x),$ such that $x \in Q(x).$ It must be that $Q(x)$ is the top cube of a stopping-time region $S(x) \in \calF$ (which is not a singleton), otherwise $x$ would be contained in a smaller transition cube. Thus, $x$ is contained in arbitrarily small cubes from $S(x)$. We define
\[ f(x) = \phi_{S(x)}(h_{S(x)}(x)). \] 
By \eqref{e:in-g(Q)}, $f(x) \in g(Q(x)).$ \\

The proof of Theorem \ref{t:corona-UR} is concluded with the following lemma. 

\begin{lem}
	The map $f: F \rightarrow \R^n$ is $L$-bi-Lipschitz, where $L$ depends on $C_0,\ve,\theta, \Corona_1$ and $\Corona_2$.  
\end{lem}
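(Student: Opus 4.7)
The plan is to show $|f(x)-f(y)|\sim d(x,y)$ for all $x,y\in F$, with constants depending only on $n, C_0, \Corona_1, \Corona_2$ and the auxiliary parameters $\eta, N, \theta, C_3, C_4$ already fixed above (each of which is quantitatively controlled by $\ve,\theta,\Corona_1,\Corona_2$). I would split into two cases depending on whether $x$ and $y$ belong to a common stopping time region and reduce each case to one of the two constructive ingredients behind $g$: the corona approximation $\pi_S$ from Definition \ref{d:corona-Y}~(3), and the separation estimates \eqref{e:g-lip} and \eqref{e:g-estimate} for $g$ at transitions.

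For Case 1 ($S(x)=S(y)=:S$), I would first observe that $S$ cannot be a singleton and that the dyadic branch of cubes containing $x$ stays in $S$ at every scale below $\ell(Q(S))$: if some dyadic descendant of $Q(S)$ containing $x$ were not in $S$, it would have to lie inside a minimal cube of $S$, but then $x$ would belong to a transition cube strictly smaller than $Q(S(x))=Q(S)$, contradicting the minimality of $Q(S(x))$. Consequently there exists $R\in S\cap\calD_k$ with $x\in R$ and $\ell(R)\sim d(x,y)/\theta$; the triangle inequality together with $\theta<2$ from \eqref{e:theta} places $y\in 3B_R$ and $d(x,y)\geq\theta\ell(R)$. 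Then \eqref{e:(4)} gives $\|\pi_S(x)-\pi_S(y)\|_S\sim d(x,y)$ with ratios controlled by $\Corona_1$ and $\Corona_2$; composing with the $n$-bi-Lipschitz linear map $T_S$ and with the affine $\phi_S$ (which is bi-Lipschitz with ratio $C_3^{-N(Q(S))}$ up to $C_4$ via \eqref{e:phi-est}) yields $|f(x)-f(y)|\sim C_3^{-N(Q(S))}d(x,y)$. Since $x\in F$ forces $N(Q(S))\leq N$, the factor $C_3^{-N(Q(S))}$ is bounded above and below by constants depending on $N$.

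For Case 2 ($S(x)\neq S(y)$), let $Q^*$ be the smallest transition cube containing both $x$ and $y$. There are two subcases: either $Q^*$ is the top of a stopping region $S^*$ and $x,y$ lie in distinct minimal cubes $R_x,R_y\in\min(S^*)$, or $Q^*$ is itself a minimal cube of some region $S^{**}$ and $x,y$ lie in distinct dyadic children $R_x,R_y$ of $Q^*$ (which are then tops of new stopping regions). In both subcases, minimality of $Q(S(x))$ and $Q(S(y))$ forces $Q(S(x))\subseteq R_x$ and $Q(S(y))\subseteq R_y$, so by the nested property of $g$, $f(x)\in \tfrac12 g(R_x)$ and $f(y)\in \tfrac12 g(R_y)$. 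In the first subcase I would invoke \eqref{e:g-lip} to sandwich $\dist(g(R_x),g(R_y))$ between constant multiples of $C_3^{-N(Q^*)}(\ell(R_x)+\ell(R_y)+\dist(R_x,R_y))$; in the second subcase, \eqref{e:g-estimate} provides the lower bound $\dist(g(R_x),g(R_y))\geq C_3^{-N(R_x)}\ell(R_x)$, while the containment $g(R_x),g(R_y)\subseteq\tfrac12 g(Q^*)$ bounds $\dist(g(R_x),g(R_y))$ and the diameters $\diam(g(R_x)),\diam(g(R_y))$ from above by $C_3^{-N(Q^*)}\ell(Q^*)$. The crucial metric-to-cube comparison $\ell(R_x)+\ell(R_y)+\dist(R_x,R_y)\sim_\eta d(x,y)$ comes from $\sigma$-avoidance: since $x\notin\sigma(R_x)$ and $y\in X\setminus R_x$, one has $d(x,y)\geq(\eta/5)\ell(R_x)$ and similarly for $y$ and $R_y$, while $\dist(R_x,R_y)\leq d(x,y)$ is trivial. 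Feeding this back, together with the elementary inequality $\dist(\tfrac12 g(R_x),\tfrac12 g(R_y))\geq\dist(g(R_x),g(R_y))$ for the lower bound and $N(Q^*)\leq N$, yields $|f(x)-f(y)|\sim d(x,y)$ with constants of the required form.

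The main obstacle will be the Case 2 bookkeeping: correctly identifying $Q^*$ and the cubes $R_x, R_y$, and arguing that the lower bound inherited from the separation of the $g$-cubes remains comparable to $d(x,y)$ rather than to the a priori much larger quantity $\ell(Q^*)$. The $\sigma$-avoidance condition, with its parameter $\eta$, is precisely what prevents this blow-up by forcing $\ell(R_x)\sim_\eta d(x,y)$, and the successive application of $\pi_S$, $T_S$, $\phi_S$ in Case 1 must be tracked with the correct scale-dependent constants to confirm that the resulting $L$ depends only on the stated parameters.
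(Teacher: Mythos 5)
Most of your argument tracks the paper's proof: your Case 1 is the paper's Case 1(iii) (both points unstopped in a common region $S$, estimate via a cube $R\in S$ at scale $\ell(R)\sim d(x,y)/\theta$ with $x,y\in 3B_R$, then \eqref{e:(4)}, the $n$-bi-Lipschitz map $T_S$, \eqref{e:phi-est} and $N(Q(S))\le N$), your subcase 2(a) is the paper's Case 1(i) (two distinct minimal cubes, \eqref{e:g-lip} plus $\sigma$-avoidance), and your subcase 2(b) is the paper's Case 2 (distinct children of a minimal cube, \eqref{e:g-estimate} plus containment in $\tfrac12 g(Q^*)$). Those estimates are correct, modulo the harmless edge case in Case 1 where $d(x,y)/\theta$ exceeds $\ell(Q(S))$ (take $R=Q(S)$ there).

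The genuine gap is that your dichotomy inside Case 2 is not exhaustive, and the missing configuration cannot be absorbed into either subcase. Suppose $Q^*$ is the top of $S^*$ and exactly one of the points, say $x$, lies in a minimal cube $R_x\in\min(S^*)$, while $y$ lies in no minimal cube of $S^*$; then $S(y)=S^*$, $Q(y)=Q^*$, so indeed $S(x)\neq S(y)$ and $Q^*$ is the smallest common transition cube, yet $x,y$ are neither in two distinct minimal cubes nor in two distinct children of a minimal cube. In this situation $f(y)=\phi_{S^*}(h_{S^*}(y))$ is a point not confined to any $g$-cube disjoint from $g(R_x)$, so neither \eqref{e:g-lip} nor \eqref{e:g-estimate} applies; this is exactly the paper's Case 1(ii), which requires a point-to-cube analogue of \eqref{e:g-lip}, namely $C_3^{-N(Q^*)-1}\left[\ell(R_x)+\dist(y,R_x)\right]\le\dist\bigl(\phi_{S^*}(h_{S^*}(y)),g(R_x)\bigr)\le 3C_3^{-N(Q^*)}\left[\ell(R_x)+\dist(y,R_x)\right]$, combined with $\ell(R_x)+\dist(y,R_x)\sim_\eta d(x,y)$ coming from $x\notin\sigma(R_x)$ and $y\notin R_x$. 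You never state or justify such an estimate, so your written argument simply skips these pairs $(x,y)$; the fix is routine (it follows from the same computation behind \eqref{e:g-lip}, using \eqref{e:lower-bound} with $x_{R_x}$ replaced by $y$), but it is a separate estimate that must be supplied. A second, smaller omission: if $x$ and $y$ lie in different top-level cubes $Q_{0,i}\neq Q_{0,j}$ then no common transition cube exists and your $Q^*$ is undefined; one handles this directly from the initial placement of the cubes $g(Q_{0,i})$ at mutual distances comparable to $\varrho^{k_0}\sim r$ together with $x\notin\sigma(Q_{0,i})$, which forces $d(x,y)\gtrsim_\eta\varrho^{k_0}$.
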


\begin{proof}
	Let $x,y \in F$ be distinct points. We split the estimates on $|f(x) - f(y)|$ into several cases. Let $Q_1$ be the maximal cube contained in $R_0$ such that $x \in Q_1$ but $y \not \in Q_1.$ \\
	
	\noindent\textbf{Case 1:} Suppose that $Q_1$ is contained in some stopping-time region $S \in \calF$, but that $Q_1 \neq Q(S)$. By maximality, any cube containing $Q_1$ contains both $x$ and $y$, in particular, this holds true for $Q(S).$ \\
	
	\noindent\textbf{Case 1(i):} Suppose further that there are cubes $R_x,R_y \in \min(S)$ such that $x \in R_x$ and $y \in R_y.$ This implies, first, that 
	\begin{align}\label{e:R_x-low}
		d(x,y) \leq \ell(R_x) + \ell(R_y) + \dist(R_x,R_y).
	\end{align}  
	Secondly, since $x \not \in \sigma(R_x)$ and $y \not\in \sigma(R_y)$ (by the definition of $F$), we also have 
	\begin{align}\label{e:R_x-up} 
		d(x,y) \gtrsim  \ell(R_x) + \ell(R_y) + \dist(R_x,R_y).
	\end{align}
	The estimates on $f$ then follow from the fact that $f(x) \in g(R_x),$ $f(y) \in g(R_y),$ $N(Q) \leq N$ and the estimates \eqref{e:g-lip}, \eqref{e:R_x-low}, and \eqref{e:R_x-up}, since
	\begin{align}
		\begin{split}\label{e:done}
		d(x,y) &\leq  \ell(R_x) + \ell(R_y) + \dist(R_x,R_y) \lesssim_{C_3,N} \dist(g(R_x),g(R_y) \\
		&\leq |f(x) - f(y)| \leq \dist(g(R_x),g(R_y)) + \diam(R_x) + \diam(R_y) \\
		&\lesssim_{C_3,N} \ell(R_x) + \ell(R_y) + \dist(R_x,R_y) \lesssim d(x,y).
	\end{split}
	\end{align} 
	Recall that $C_3$ depends only on $\Corona_1$ and $\Corona_2$ while $N$ depends only on $C_0,\ve$ and $\theta.$ \\
	
	\noindent\textbf{Case 1(ii):} Suppose that $x \in R_x \in \min(S)$ but $y$ is not contained in any cubes from $\text{min}(S)$ or that $y \in R_y \in \min(S)$ but $x$ is not contained in any cube from $\text{min}(S).$ By symmetry, we may suppose that $x \in R_x$ for some $R_x \in \text{min}(S).$ If follows that $f(x) = \phi_{S}(h_S(x))$ and by a similar argument to \eqref{e:g-lip}, it can be shown that
	\[ C_3^{-N(Q) -1}    [ \ell(R_y) + \dist(x,R_y)] \leq \dist(x,g(R_y)) \leq   3C_3^{-N(Q)} [\ell(R_y) + \dist(x,R_y)]. \]
	Using the same estimates as in \eqref{e:done} (after replacing $\dist(g(R_x),g(R_y))$ with $\dist(x,g(R_y))$) proves the bi-Lipschitz estimates for $f$ in this case. \\

	\noindent\textbf{Case 1(iii):} Suppose that neither $x$ nor $y$ are contained in a minimal cubes of $S$. In this case $f(x) = \phi_S(h_S(x))$ and $f(y) = \phi_S(h_S(y))$ and the bi-Lipschitz estimates for $f$ follow from the bi-Lipschitz estimate for $T_S$, \eqref{e:phi-est}, and \eqref{e:(4)}. \\
	
	\noindent\textbf{Case 2:} Suppose that $Q_1$ is the top of a stopping-time region. Let $Q_2$ be the parent of $Q_1.$ By construction, $Q_2$ is the minimal cube of some stopping-time region, hence, a transition cube. By maximality, it follows that $x,y \in Q_2.$ Since $x,y \in Q_2$, $x \in Q_1\setminus \sigma(Q_1)$ and $y \not\in Q_1$ we have $d(x,y) \sim \ell(Q_1).$
	To estimate $|f(x) - f(y)|$, first note that since $x,y \in Q_2,$ we have $f(x),f(y) \in g(Q_2)$ and so 
	\[|f(x) - f(y)| \leq \diam(g(Q_2)) \lesssim \ell(Q_2) \sim \ell(Q_1) \sim d(x,y) .\]
	Let $Q_1'$ be the sibling of $Q_1$ with $y \in Q_1'.$ By \eqref{e:g-lip} we have
	\[ |f(x) - f(y)| \geq \dist(g(Q_1),g(Q_1')) \gtrsim \ell(Q_1) \sim d(x,y) \]
	and this finishes the proof of the bi-Lipschitz estimates. 
	
	\end{proof}

\newpage

\section{An example and application}\label{s:example}

We conclude by giving an example of a discrete use of Theorem \ref{t:UR-BWGL} to construct a UR metric space
from the BWGL. Conversely this gives a condition for testing whether a discrete set has bi-Lipschitz coordinate charts.

Let $(Y,d_Y)$ be a discrete metric space and, for each $y\in Y$,
suppose that $r_y\in(0,\diam(Y)]$ is such that $B(y,r_y)=\{y\}$.
Let $\nu$ be a measure on $Y$ and suppose that for all $y\in Y$ and
$r\in (r_y,\diam(Y)]$ we have $C^{-1} r^n \leq \nu(B(y,r))\leq C r^n$ for some fixed $C \geq 1.$ 

For each $y\in Y,$ let $D_{y,r_y}$ be the ball of radius $r_y$ centred at the origin in a
normed space $(\R^n, \|\cdot\|_y)$ and consider the metric measure space
given by taking a disjoint union of the $D_{y,r_y}$.
That is, let $X=\bigsqcup_{y\in Y} D_{y,r_y}$ and for $x\in D_{y,r_y}$ and $w\in
D_{z,r_z}$ let 
\[d_X(x,w):=\|x\|_y + d_{Y}(z,y) + \|w\|_z,\]
endowing $X$ with the $n$-dimensional Hausdorff measure.
Note that $X$ is Ahlfors $n$-regular (Definition \ref{d:ADR}) with constant comparable to $C$. 
For $x\in D_{y,r_y}$ we will write $y(x):=y$.
The claim is that, correctly interpreted, $Y$ is uniformly rectifiable if
and only if $X$ is.

One may say that $Y$ is UR if there are constants $\theta, L>0$ such that,
for every $y\in Y$ and $r\in (r_y,\diam(Y)]$ we have a set
$F_0\subset B(0,r)\subset \bR^n$ and an $L$-Lipschitz map
$f_0:F_0\to Y$ such that
$\nu(B(y,r)\cap f_0(F_0)) \geq \theta r^n$.
It is easy to see that if
$X$ is UR then   $Y$ is UR.
Conversely,
if $Y$ is UR then  $X$ is UR.
Indeed,  the main issue is (with minor abuse of notation) with  $y$
being the center of $D_{y,r_y}$ and $r\in (r_y,\diam(Y)]$.
In this case consider the map $f_0\colon F_0\to Y$ given by the UR of $Y$
associated to $B(y,r)\subset Y$.
For each $z$ in the image of $f_0$,  every element of $f_0^{-1}(z)$ has a ball around it 
of radius of size at least $r_z/L$ which is disjoint from $F_0\setminus
f_0^{-1}(z)$.
Thus, by increasing the Lipschitz constant of $f$ by a factor of $2L$, we
obtain a map $f\colon F\to X$ which
covers all balls $\{D_{z,r_z} \colon z\in B(y,r)\}$.
Here, $F\supset F_0$ may be taken to be
\[\bigcup_{a\in F_0} B(a,\frac1{2L} r_{f_0(a)})\subset \bR^n.\]

For $\ve>0$, we may say that $Y$ satisfies  BWGL$(\ve)$
if the set 
$$\mathcal Y:=\{(y,r) \in Y \times (0,\diam(Y)) : r>r_y/\ve\textrm{ and }
\bilat_{Y}(y,r) > \ve\}$$
is a Carleson set (see Definition \ref{d:Carleson-set}).
It is clear that if  $X$ satisfies BWGL$(\ve)$ then $Y$ satisfies BWGL$(2\ve)$. 
Conversely, suppose $Y$ satisfies BWGL$(\ve).$ We verify that $X$ satisfies BWGL$(2\ve)$. 
Let $x\in X$ and $R\in [0,\diam(X)]$.
Consider
$$\mathscr{A}:=\{(w,r) \in B_{X}(x,R) \times (0,R) : \bilat_{X}(w,r) > 2\ve\}.$$
Using  the geometry of the ball $D_{y(w),r_{y(w)}}$ and  the
Ahlfors-regularity of $Y$ and $X$, it is easy to check that
\begin{equation}\label{e:example-1}
	\mathscr{A}_1:=\{(w,r) \in \mathscr{A} : r\leq  r_{y(w)} \} 
\end{equation}
is a Carleson set. 
Note that the condition $\bilat_{X}(w,r)>2\ve$ prevents balls $B(w,r)$ which are disjoint from the  the boundary of $D_{y(w),r_{y(w)}}$ from having $(w,r)\in \mathscr{A}_1$.

Consider now
\begin{equation}\label{e:example-2}
	\mathscr{A}_2:=\{(w,r) \in \mathscr{A} : r_{y(w)}<r<\frac1{\ve}r_{y(w)} \} .
\end{equation}
Then the Carleson measure of $\mathscr{A}_2$ is controlled by a constant multiple of  $\log(\ve) R^n$.  
This only uses the controlled range of $r$ and the Ahlfors-regularity of $X$, and does not make use of the condition $\bilat>2\ve$.

Finally, consider 
\begin{equation}\label{e:example-3}
	\mathscr{A}_3:=\{(w,r) \in \mathscr{A} :\frac1{\ve}r_{y(w)}<r \}. 
\end{equation}
We consider three subsets of $\mathscr{A}_3$ as follows.
First let
\begin{equation}\label{e:example-3.1}
	\mathscr{A}_{3.1}\coloneqq\{(w,r) \in \mathscr{A}_3: \mbox{ there is no }y\in Y\mbox{ such that } 0<d_Y(y,y(w))\leq r\}
\end{equation}
and note that if $(w,r)\in \mathscr{A}_{3.1}$ then $\xi_Y(w,r)>\ve$ as $B(y(w),r)$ is a singleton.
Thus, the Carleson measure of $\mathscr{A}_{3.1}$ is bounded above by the Carleson measure of $\mathcal Y$
Second, consider
\begin{equation}\label{e:example-3.2}
	\mathscr{A}_{3.2}\coloneqq\{(w,r) \in \mathscr{A}_3: \mbox{ there is  }y\in Y\mbox{ such that } d_Y(y,y(w))\leq r, \mbox{ and } r<r_y/\ve\}.
\end{equation}
For $y$ as in the RHS of  \eqref{e:example-3.2} we have that 
$r\in [r_y,r_y/\ve]$
and we may proceed as in $\mathscr{A}_2$ 
to control the Carleson measure of $\mathscr{A}_{3.2}$.
Finally let
\begin{equation}\label{e:example-3.3}
	\mathscr{A}_{3.3}\coloneqq\{(w,r) \in \mathscr{A}_3: \mbox{ if   }y\in Y\mbox{ such that } d_Y(y,y(w))\leq r, \mbox{ then } r\geq r_y/\ve\}.
\end{equation}
For $(w,r) \in \mathscr{A}_{3.3}$
it is easy to check, since $\xi_X(w,r) > 2\ve$, that $\xi_Y(y(w),r) > \ve.$
As for $\mathscr{A}_{3.1}$, the Carleson measure of $\mathscr{A}_{3.3}$ is bounded above by the Carleson measure of $\mathcal Y$.
Therefore, we see that $\mathcal Y$ is a Carleson set.

Combining these definitions, we see that Theorem \ref{t:UR-BWGL} states that $Y$ is UR if and
only if it satisfies BWGL.

\newpage

\appendix

	\section{Table of selected notation}

\begin{itemize}
	\setlength\itemsep{1em}
	\item $X$ -- A fixed metric space from statement of main theorems.
	\item $C_0$ -- The regularity constant of $X$. 
	\item $c_0$ -- The constant appearing in Definition \ref{cubes}.
	\item GHA, $\delta$-Gromov-Hausdorff Approximation -- Definition \ref{d:GHA}.
	\item RF, 
	$(\delta,n)$-Reifenberg flat -- Definition \ref{d:RF}. 
	\item UR, Uniformly Rectifiable -- Definition \ref{d:UR-intro-metric}.
	\item BWGL, Bi-lateral Weak Geometric Lemma -- Definition \ref{d:BWGL}.
	\item CD($\mathscr{Y}$), Corona Decomposition by $\mathscr{Y}$ for $\mathscr{Y}$ a family of metric spaces -- Definition \ref{d:corona-Y}.
	\item $\Phi({\rm point,radius, norm})$ -- Definition \ref{d:bilat}.
	\item $\unilat_X({\rm point, radius, norm,map})$ -- Definition  \ref{d:bilat}. See also Definition \ref{d:notation-cubes}.
	\item $\eta_X({\rm point, radius, norm,map})$ -- Definition  \ref{d:bilat}. See also Definition \ref{d:notation-cubes}.
	\item $\bilat_X({\rm point, radius, norm,map}$) -- Definition  \ref{d:bilat}. See also Definition \ref{d:notation-cubes}.
	\item $\bilat_X({\rm point, radius})$ -- Definition  \ref{d:bilat}. See also Definition \ref{d:notation-cubes}.
	\item $\Omega_{X,{\rm map, norm}}^{{\rm constant}}({\rm point, radius, another \  norm, another \  map})$ -- Definition \ref{d:gamma}. See also Definition \ref{d:notation-cubes}.
	\item $\gamma_{X,{\rm map, norm}}^{{\rm constant}}({\rm point, radius})$ -- Definition \ref{d:gamma}. See also Definition \ref{d:notation-cubes}.
	
	\item $B^{j}$ -- Balls in the metric space $X$.
	\item $B_{j} $ -- Balls in $\R^n$ endowed with the norm  $\|\cdot\|_{j}$.
	\item $W_\ell$ -- A (possibly non-connected) manifold constructed in Section \ref{s:disconnected}.
	\item $M_\ell$ -- A connected manifold constructed from Section \ref{s:M} onwards.
	\item  $\text{md}_f(I)$, $\text{md}_f^L(I)$ -- See before the statement of Theorem \ref{t:metric-diff}.
\end{itemize}

\newpage

\bibliographystyle{alpha}
\bibliography{/Users/matthewhyde/Documents/BibTex/Ref.bib}

\end{document}